\newcommand{\into}{\hookrightarrow}
\newcommand{\ds}{\mathrm{d}s}
\newcommand{\dt}{\mathrm{d}t}
\newcommand{\dx}{\mathrm{d}x}
\newcommand{\dy}{\mathrm{d}y}
\newcommand{\intt}{\int_{I(t)}}
\newcommand\abs[1]{\left|#1\right|}
\newcommand\absf[1]{|#1|}
\newcommand\absb[1]{\big|#1\big|}
\newcommand\absB[1]{\Big|#1\Big|}
\newcommand\norm[1]{\left\|#1\right\|}
\newcommand\normf[1]{\|#1\|}
\newcommand{\ef}{\eqref}
\DeclareMathOperator{\dive}{div}
\renewcommand{\AA}{\mathbb{A}}
\newcommand{\BB}{\mathbb{B}}
\newcommand{\CC}{\mathbb{C}}
\newcommand{\II}{\mathbb{I}}
\newcommand{\NN}{\mathbb{N}}
\newcommand{\PP}{\mathbb{P}}
\newcommand{\RR}{\mathbb{R}}
\newcommand{\ZZ}{\mathbb{Z}}
\newcommand{\cC}{\mathcal{C}}
\newcommand{\cE}{\mathcal{E}}
\newcommand{\cG}{\mathcal{G}}
\newcommand{\cJ}{\mathcal{J}}
\newcommand{\cL}{\mathcal{L}}
\newcommand{\cQ}{\mathcal{Q}}
\newcommand{\cR}{\mathcal{R}}
\newcommand{\cV}{\mathcal{V}}
\newcommand{\wE}{\widehat{\cE}}
\newcommand{\wU}{\widehat{U}}
\newcommand{\weta}{\widehat{\eta}}
\newtheorem{Definition}{Definition}
\newtheorem{Theorem}{Theorem}[section]
\newtheorem{Lemma}{Lemma}[section]
\newtheorem{Proposition}{Proposition}
\newtheorem{Remark}{Remark}
\newtheorem{Corollary}{Corollary}[section]
\numberwithin{Remark}{section}
\numberwithin{Proposition}{section}
\numberwithin{Definition}{section}
\numberwithin{Lemma}{section}
\numberwithin{equation}{section}
\numberwithin{Theorem}{section}
\title[Viscous Saint-Venant System]{Global-in-time Well-posedness of Classical Solutions to the Vacuum Free Boundary Problem for the Viscous Saint-Venant System with Large Data}
\date{\today}
\author{Zhouping Xin}
\address[Zhouping Xin]{The Institute of Mathematical Sciences and
Department of Mathematics, The Chinese University of Hong Kong, Shatin, N.T., Hong Kong.}
\email{\tt zpxin@ims.cuhk.edu.hk}
\author{Jiawen Zhang}
\address[Jiawen Zhang]{School of Mathematical Sciences, Shanghai Jiao Tong University,
Shanghai 200240, P. R. China} \email{\tt zhangjiawen317@sjtu.edu.cn}
\author{Shengguo Zhu}
\address[Shengguo Zhu]{School of Mathematical Sciences, CMA-Shanghai  and MOE-LSC,  Shanghai Jiao Tong University, Shanghai 200240, P. R. China}
 \email{\tt  zhushengguo@sjtu.edu.cn}
\pgfplotsset{compat=1.18}
\begin{document}
\begin{abstract}
In this paper,  we establish the global-in-time well-posedness of classical  solutions  to the vacuum free boundary problem  of the one-dimensional viscous Saint-Venant system  for laminar shallow water with large data.
Since the depth $\rho$ of the fluid vanishes on the moving boundary,  the momentum equations become degenerate both in the time evolution and spatial dissipation, which may lead to  singularities for the derivatives of the velocity $u$ of the fluid  and then makes it challenging to study  classical  solutions.  By exploiting the intrinsic degenerate-singular structures of the viscous Saint-Venant system,  we are able to identify  two classes of admissible  initial depth profile and obtain the   global well-posedness theory   here:  $\rho_0^\alpha\in H^3$ $(\frac{1}{3}<\alpha<1)$ vanishes as  the distance to the moving boundary, which satisfies the BD entropy condition; while $\rho_0\in H^3$  vanishes as  the distance to the moving boundary, which satisfies the physical vacuum boundary condition, but violates  the  BD entropy condition. Further, it is  shown that for arbitrarily large time, the solutions  obtained here are smooth (in Sobolev spaces) all the way up to  the moving boundary. Moreover, in contrast to the classical  theory,  the  $L^\infty$ norm of $u$ of the global classical   solution  obtained here does not  decay to zero as time $t$ goes to infinity.
One of the key ingredients of the analysis here is to establish some degenerate weighted estimates for the effective velocity $v=u+ (\log\rho)_y$ ($y$ is the Eulerian spatial coordinate) via its  transport properties, which enables one to  obtain the upper bounds for  the first order derivatives of the flow map $\eta(t,x)$  with respect to the Lagrangian spatial coordinate $x$.   Then  the global-in-time regularity uniformly up to the vacuum boundary can be obtained  by carrying out a series of singular or degenerate weighted energy estimates carefully designed for this system.   It is worth pointing out that the result here seems to be the first  global existence theory of classical  solutions with large data  that is  independent of the BD entropy for such  degenerate systems, and the   methodology developed here  can be applied to more general degenerate compressible Navier-Stokes equations.
\end{abstract}
\subjclass[2020]{35A01, 35A09, 35B65, 35Q30, 76N06, 76N10.}
\keywords{
Viscous Saint-Venant system, Degenerate compressible Navier-Stokes equations, Vacuum free boundary, Classical  solution, Global-in-time well-posedness, Large data.}
\maketitle

\tableofcontents

\section{Introduction}\label{section1}

The time evolution of a general viscous isentropic compressible fluid occupying a spatial domain $\Omega\subset \mathbb{R}^N$ with  the mass density $\rho\geq 0$ and the velocity $\textbf{u}=(u^{(1)},\cdots, u^{(N)})^\top$ $\in \mathbb{R}^N$  is governed by the following isentropic compressible Navier-Stokes system (\textbf{CNS}):
\begin{equation}\label{eq:1.1}
\begin{cases}
\rho_t+\dive(\rho \textbf{u})=0,\\[4pt]
(\rho \textbf{u})_t+\dive(\rho \textbf{u}\otimes \textbf{u})
+\nabla P =\dive\mathbb{T}.
\end{cases}
\end{equation}
Here, $\textbf{y}=(y_1,\cdots, y_N)^{\top}\in \Omega$, $t\geq 0$ are the space and time variables, respectively. For the polytropic gases, the constitutive relation is given by
\begin{equation}\label{eq:1.1p}
P=A\rho^{\gamma}, \quad A>0,\quad  \gamma> 1,
\end{equation}
where $A$ is an entropy constant and $\gamma$ is the adiabatic exponent. $\mathbb{T}$ denotes the viscous stress tensor as: 
\begin{equation}\label{eq:1.1t}
\mathbb{T}=2\mu(\rho)D(\textbf{u})+\lambda(\rho)\dive \textbf{u}\,\mathbb{I}_N,
\end{equation} 
where $D(\textbf{u})=\frac{1}{2}\left(\nabla \textbf{u}+(\nabla \textbf{u})^\top\right)$ is the deformation tensor, $\mathbb{I}_N$ is the $N\times N$ identity matrix,
\begin{equation}\label{fandan}
\mu(\rho)=a_1  \rho^\nu,\quad \lambda(\rho)=a_2 \rho^\nu,
\end{equation}
for some  constant $\nu\geq 0$, $\mu(\rho)$ is the shear viscosity coefficient, $\lambda(\rho)+\frac{2}{N}\mu(\rho)$ is the bulk viscosity coefficient,  $a_1$ and $a_2$ are both constants satisfying
\begin{equation}\label{kelaoxiusi}
a_1>0 \quad\text{and}\quad 2a_1+Na_2\geq 0.
\end{equation}

In  the rarefied gas dynamics, the \textbf{CNS} can be derived from the Boltzmann equation through the Chapman-Enskog expansion, cf. Chapman-Cowling \cite{chap} and Li-Qin \cite{tlt}. Under some proper physical assumptions,   the viscosity coefficients $(\mu,\lambda)$ and the heat  conductivity coefficient $\kappa$ are  functions of the absolute temperature $\theta$. Actually, 
for the cut-off inverse power force models, if the intermolecular potential varies as $\ell^{-\varkappa}$,
where $\ell$ is the intermolecular distance and $\varkappa$ is a positive constant, then   
\begin{equation}\label{eq:1.6g}
\mu(\theta)=b_1 \theta^{\frac{1}{2}+b},\quad \lambda(\theta)=b_2 \theta^{\frac{1}{2}+b} \quad \text{and} \quad \kappa(\theta)=b_3 \theta^{\frac{1}{2}+b}\quad \text{with}\quad b=\frac{2}{\varkappa} \in [0,\infty),
\end{equation}
for some constants $b_i$ $(i=1,2,3)$ (see \cite{chap}).
In particular (see \S 10 of \cite{chap}), for the ionized gas,
$\varkappa=1$ and $b=2$;
for Maxwellian molecules,
$\varkappa=4$ and $b=\frac{1}{2}$; 
while for rigid elastic spherical molecules,
$\varkappa=\infty$ and $b=0$.
According to Liu-Xin-Yang \cite{taiping}, for isentropic and polytropic fluids, such a dependence is inherited through the laws of Boyle and Gay-Lussac:
\begin{equation*}
P=\cR\rho \theta=A\rho^\gamma \quad \text{for constant } \cR>0,
\end{equation*}
i.e., $\theta=A\cR^{-1}\rho^{\gamma-1}$, and  the viscosity coefficients become functions of $\rho$ taking   the form $(\ref{fandan})$.  Note that there exist other  physical models satisfying the density-dependent viscosities  assumption \eqref{fandan}, such as the Korteweg system, the shallow water equations, the lake equations and the quantum Navier-Stokes system and so on (see \cites{ bd6,bd2,gp,jun}).

The current paper concerns the following vacuum free boundary problem ({\rm\textbf{VFBP}}) for  the one-dimensional (\text{1-D}) viscous Saint-Venant system for laminar shallow water,
\begin{equation}\label{shallow}
\begin{cases}
\rho_t+(\rho u)_y=0& \text{in} \ \ I(t),\\[2pt]
(\rho u)_t+(\rho u^2+\rho^2)_y-(\rho u_y)_y=0&\text{in} \ \ I(t),\\[2pt]
\rho>0& \text{in} \ \ I(t),\\[2pt]
\rho=0& \text{on} \ \ \Gamma(t),\\[2pt]
\cV(\Gamma(t))=u& \text{on} \ \ \Gamma(t),\\[2pt]
(\rho,u)|_{t=0}=(\rho_0,u_0)& \text{on} \ \ I:=I(0)=(0,1),
\end{cases}
\end{equation}
where $\rho\geq 0$ denotes the depth of  the fluid, $u$  the Eulerian velocity of the fluid, the open and bounded interval  $I(t)\subset \mathbb{R}$  the changing domain occupied by the fluid, $\Gamma(t):=\partial I(t)$ the moving vacuum boundary, $y\in I(t)$  the Eulerian spatial coordinates, $t\geq 0$  the time coordinate, and $\cV(\Gamma(t))$ the velocity of $\Gamma(t)$, respectively. The viscous Saint-Venant system $\eqref{shallow}_1$-$\eqref{shallow}_2$ in \eqref{shallow} can be derived rigorously from the incompressible Navier-Stokes equations with a free moving interface by Gerbeau-Perthame \cite{gp}, which corresponds to the degenerate \textbf{CNS} \eqref{eq:1.1}-\eqref{kelaoxiusi} with $N=\nu=A=1$, $a_1=1/2$, $a_2=0$ and  $\gamma=2$. Indeed, such models appear naturally and frequently in geophysical flows \cites{bd6,bd2}. It is worth pointing out that the assumption that $(a_1,A)=(1/2,1)$ in $\eqref{shallow}_1$-$\eqref{shallow}_2$ is  just used to simplify the  description in our analysis, and one can regard $(a_1,A)$ as any two positive constants. $\eqref{shallow}_3$ states that there is no vacuum inside  the fluid;  $\eqref{shallow}_4$ states that the depth vanishes along the moving vacuum boundary $\Gamma(t)$; $\eqref{shallow}_5$ states that the vacuum boundary $\Gamma(t)$ is moving with speed equal to the fluid velocity, and $\eqref{shallow}_6$ provides  the initial conditions for the depth, velocity, and domain.

The  main goal here is to  establish the local/global-in-time well-posedness of classical  solutions  to the problem \eqref{shallow} for general data with  the initial depth profile such that 
\begin{equation}\label{distance}
\rho_0^\alpha\in H^3(I) \ \ \text{and} \ \ \cC_1d(y)\leq \rho_0^\alpha(y)\leq \cC_2d(y) \ \ \text{for all }y\in \bar I,
\end{equation}
for some  constants $\cC_1>0$, $\cC_2>0$ and $0<\alpha\leq 1$, where $d(y):=\mathrm{dist}\, (y,\Gamma)$ ($\Gamma:=\Gamma(0)$) is the distance  function from $y\in \bar I$ to $\Gamma$. It is interesting to note that the set of $\rho_0$ defined by  \eqref{distance}   contains two  different classes of  initial profiles. Indeed, for  $0<\alpha<1$, \eqref{distance} implies that  $\rho_0$  satisfies the  so-called BD entropy condition, i.e.,
\begin{equation}\label{BDcondition}
\|(\sqrt{\rho_0})_y\|_{L^2(I)}<\infty,
\end{equation}
which was  initiated with a series of papers by Bresch-Desjardins \cites{bd6,Bresch,bd8} (started in 2003 with Lin \cite{bd2} in the context of Navier-Stokes-Korteweg with a linear shear viscosity).
On the other hand, one denotes by 
$c=\sqrt{P'(\rho)}$
the speed of the sound, $c_0=c|_{t=0}$, and $\textbf{n}$ the outward unit normal vector to the initial  boundary. Then when $\alpha=1$, $\rho_0$   satisfies the so-called  physical vacuum boundary  condition
\begin{equation}\label{PVcondition}
-\infty<\frac{\partial c^2_0}{\partial \mathbf{n}}<0 \quad \text{on} \ \Gamma,
\end{equation}
which was first proposed by Liu \cite{LiuTP} when he studied the self-similar solutions to compressible Euler with damping. This assumption means that   
the initial vacuum boundary  moves with a nontrivial finite normal acceleration.
Further more, it is easy to check that these  two types of initial conditions on the depth  shown in \eqref{BDcondition}-\eqref{PVcondition} are not compatible.

The study  of the vacuum  is crucial in the analysis of the dynamics of  viscous compressible fluids (\cites{BJ, fu3,JZ,lions,MPI,zx}). In fact, when $(\mu,\lambda,\kappa)$  are all constants, some  singular behaviors of solutions with vacuum to the Cauchy problem of \textbf{CNS} have already been observed. In particular, Hoff-Serre \cite{hoffserre} shows that the weak solutions of the 1-D isentropic \textbf{CNS} need not depend continuously on their initial data when the initial density contains an interval of vacuum states; Li-Wang-Xin \cite{LWX3} proves the instantaneous blow up of $L^2$-norm of $H^s(\RR^N)$ solutions for $s>[\frac{N}{2}]+1$; and Xin-Zhu \cite{zz2} and Duan-Xin-Zhu \cite{DXZ1} prove that,  for both the isentropic and  non-isentropic flow, the classical solutions with vacuum of the three-dimensional (3-D) \textbf{CNS} cannot preserve the conservation of the momentum.  These counterintuitive behaviors can be attributed to the unphysical assumption that $(\mu,\lambda,\kappa)$ are all constants when one utilizes \textbf{CNS} to deal with the vacuum problems in fluids \cite{taiping}, which makes that the vacuum exerts a force on the fluid on the vacuum boundary. Thus, viscous compressible fluids near vacuum should be better modeled by the \textbf{CNS} with degenerate viscosities and heat conductivity, as was mentioned in \eqref{fandan}-\eqref{eq:1.6g}. 

However, for the isentropic \textbf{CNS} \eqref{eq:1.1}-\eqref{kelaoxiusi} with $\nu>0$, the momentum equations are degenerate both in the time evolution and spatial dissipation near the vacuum, 
\begin{equation}\label{dengshang}
\displaystyle
\underbrace{\rho(\textbf{u}_t+\textbf{u}\cdot \nabla \textbf{u})}_{\circledast}+\nabla P= \underbrace{\dive (\rho^{\nu}Q(\textbf{u}))}_{\Diamond},
\end{equation}
where   $\circledast$ denotes the degenerate time evolution, $\Diamond$ the degenerate dissipation, and $Q(\textbf{u})=2a_1D(\textbf{u})+a_2\text{div}\,\textbf{u}\,\mathbb{I}_N$.
Such a double degenerate structure  in \eqref{dengshang} may lead to singular behaviors of solutions compared with  the  uniform   parabolic systems, which makes it challenging to study the well-posedness of large solutions with vacuum. 
This degenerate system has attracted extensive attentions recently, and 
some important achievements both on weak and strong solutions with vacuum to its Cauchy problem have been obtained, cf. \cites{bvy,zhenhua,lz,vayu,sz3,zz2,zz,clz,germain,sz333}. 

It is worth pointing out that, for the important physical model, the shallow water equations that corresponds to \eqref{eq:1.1}-\eqref{kelaoxiusi} with $\nu=1$ and $\gamma=2$, the well-posedness theories of  classical solutions  in \cites{clz,sz3}   allow vacuum only at  far fields, and it is still unclear how to deal with the corresponding Cauchy  problem with vacuum appearing in some open sets with nonzero measures. Actually, in  the derivation of hydrodynamic equations from physical principles, the underlying assumption is that the fluid is non-dilute and can be described as a continuum, which means that one can not use hydrodynamic equations to study  the time evolution of  thermodynamical states in the vacuum region. Such kind of  considerations  leads to studies on  the  vacuum problem for  compressible fluids by the {\rm\textbf{VFBP}} instead of the Cauchy problem, which  arises in many important physical situations such as astrophysics, shallow water waves, etc., and have received much attention.  For the {\rm\textbf{VFBP}} of the isentropic compressible Euler equations (\eqref{eq:1.1}-\eqref{kelaoxiusi} with $a_1=a_2=0$), some significant progresses  on the well-posedness of smooth solutions satisfying \eqref{PVcondition} have been obtained. The local existence  theory was developed by Coutand-Shkoller \cites{coutand1,coutand3} and Jang-Masmoudi \cites{Jang-M1,Jang-M2},
and the unconditional uniqueness was proved by Luo-Xin-Zeng \cites{LXZ1}. Recently,  Jang-Hadžić \cite{Jang-Hadzic} constructed global unique solutions when $\gamma \in (1,\frac{5}{3}]$, and the initial data lie sufficiently close to the expanding compactly supported affine motions constructed by Sideris \cite{sideris} and they satisfy \eqref{PVcondition}. We also refer readers to \cites{coutand2,GL1,LZ4,tpy,Oli1, Tra-Wang} and the references therein for some other related progress.

For the {\rm\textbf{VFBP}} of the degenerate \textbf{CNS}, the key issue is whether  the double degenerate structure shown in \eqref{dengshang} can propagate the initial regularity of $u$, which is subtle and surprisingly different from  the inviscid case. Actually, on the one hand, due to the appearance of the degenerate dissipation, the classical argument on the div-curl type estimates that is used in inviscid flows for establishing the normal estimates fails here. On the other hand, the viscosity degenerates at vacuum, which makes it difficult to adapt the standard regularity estimates theory of elliptic equations to the current case.
Until now, only a few papers have concerned with the well-posedness theory of strong or classical solutions to the {\rm\textbf{VFBP}} of the degenerate isentropic \textbf{CNS} \eqref{eq:1.1}-\eqref{kelaoxiusi} and some  related physical models. By taking the effect of gravity force into account, when the initial datum is a small perturbation of the steady solution, the global existence of the 1-D strong solution satisfying \eqref{PVcondition} was proved by Ou-Zeng \cite{OZ}. Later, under proper smallness assumption, Luo-Xin-Zeng \cite{LXZ3}  established the global existence of  strong solutions satisfying \eqref{PVcondition}  of the 3-D spherical symmetric  compressible Navier-Stokes-Poisson system with degenerate viscosities. Recently, assuming that $\rho_0\in H^5(I)$ and $u_0$ stays in one weighted $H^6(I)$ space, Li-Wang-Xin \cites{LWX} established the local well-posedness of classical solutions  satisfying \eqref{PVcondition} to \eqref{shallow}, and then extended this theory to the two-dimensional (2-D) shallow water equations in \cites{LWX2} under the assumption that $\rho_0\in H^7$ and $u_0$ stays in one weighted $H^8$ space.
Some other related progress can also be found in \cites{Fang-Zhang, Guo-Zhu,VYZ, Yang-Zhu} and the references therein.

Despite these important progresses on the {\rm\textbf{VFBP}} for viscous compressible fluids, the global well-posedness of smooth solutions with large data remains an open problem, which is extremely difficult due to the degeneracies in the presence of the vacuum.   Indeed, almost all the known results either on the local well-posedness with large data of classical solutions  or global well-posedness for perturbed data of strong solutions to the \textbf{VFBP} of \eqref{eq:1.1}-\eqref{kelaoxiusi} (\cites{LWX,LWX2,LXZ3,OZ}) were obtained under the assumption of physical vacuum condition \eqref{PVcondition}, which makes it possible to exclude the singularity formation near the vacuum boundary. Yet it seems hard to generalize the techniques in \cites{LWX,LWX2,LXZ3,OZ} to the case that \eqref{PVcondition} fails or global well-posedness of classical solutions  even \eqref{PVcondition} is satisfied. 
Due to the double degenerate structures in \eqref{dengshang} in the presence of vacuum,  it is  challenging to establish global uniform estimates on high order derivatives in general unless some additional constraints, such as the BD entropy condition \eqref{BDcondition}, are imposed. In fact, as far as we know, all the known theories for global well-posedness of strong or classical solutions to \eqref{eq:1.1} with either general Cauchy data or initial boundary data on a fixed domain require that the initial density satisfies the BD entropy condition \eqref{BDcondition}, see \cites{clz,cons, BH,vassu2}. These seem to indicate that it is plausible to obtain the global well-posedness of classical solutions to the \textbf{VFBP} of \eqref{eq:1.1}-\eqref{kelaoxiusi} for general large data by exploiting the effects of both physical vacuum and BD entropy conditions. Unfortunately, such an attractive approach fails to apply to our case since as discussed earlier, for general initial density profiles satisfying \eqref{distance}, these two constraints are not compatible. Thus new ideas and techniques are needed to achieve the global well-posedness of classical solutions to the \textbf{VFBP} of \eqref{eq:1.1}-\eqref{kelaoxiusi} under either the physical vacuum condition or BD entropy condition alone (but not both). Fortunately, by exploiting the underlying intrinsic degenerate-singular structure of \eqref{shallow} and some elaborate analysis, we are able to identify a class of initial data defined in \eqref{distance} for $\alpha\in \left(\frac{1}{3},1\right]$ so that global well-posed theory of classical solutions to the \textbf{VFBP} of \eqref{eq:1.1}-\eqref{kelaoxiusi} holds without restrictions on the size of the initial data.

For simplicity, in the rest of  this paper, for any function space $X$  appearing  in this paper, unless otherwise specified, $X=X(I)$, and the following conventions are used:
\begin{align*}
&W^{k,p}_0=\{f\in W^{k,p}\ \text{and}\  f|_{\Gamma}=0\},\quad H^k=W^{k,2},\quad H^k_0=W^{k,2}_0,\quad H^{-k}=(H^k_0)^*,\\
&\abs{f}_p=\norm{f}_{L^p},\quad \norm{f}_{m,p}=\norm{f}_{W^{m,p}},\quad \norm{f}_s=\norm{f}_{H^s},\quad \int f\,\dx=\int_I f\,\dx,\\
&H^k_{\omega^p}=\big\{f\in L^1_{\mathrm{loc}}: \omega^\frac{p}{2}\partial_x^j f\in L^2,\,\,0\leq j\leq k\big\},\quad L^2_{\omega^p}=H^0_{\omega^p},\quad H^{-k}_{\omega^p}:=(H^k_{\omega^p})^*,\\
&\abs{f}_{2,\omega^p}=\norm{f}_{L^2_{\omega^p}}=\absb{\omega^\frac{p}{2}f}_2, \ \ \norm{f}_{k,\omega^p}=\norm{f}_{H^k_{\omega^p}}=\sum_{j=0}^k \absb{\omega^\frac{p}{2}\partial_x^j f}_2, \ \ d(x)\!:=\mathrm{dist}(x,\Gamma), \\
&X([0,T]; Y)=X([0,T]; Y(I)),\ \ \norm{f}_{X_t(Y)}=\norm{f}_{X([0,T];Y)},\ \  \|(f,g)\|_X=\|f\|_{X}+\|g\|_{X}, 
\end{align*}
where $\omega\in L^1_{\text{loc}}$ stands for a  generic weight function. Moreover, we denote by $X^*$ the dual space of $X$. More details on   weighted Sobolev spaces can be found in  Kufner  \cite{kufner}.

\subsection{Main results in Lagrangian coordinates}

Denote  by $\eta(t,x)$ the position of the fluid  particle $x\in I$ at time $t$ so that
\begin{equation}\label{flow-map}
\eta_t(t,x)=u(t,\eta(t,x)) \ \ \text{for} \ t>0 \ \ \text{and} \ \ 
\eta(0,x)=x,
\end{equation}
and $(t,x)$ is the Lagrangian coordinate. Set
\begin{equation}
 H(t,x):=\rho(t,\eta(t,x)),\quad U(t,x):=u(t,\eta(t,x)).\end{equation}
Then the {\rm\textbf{VFBP}} \eqref{shallow} can be rewritten into the following initial-boundary value problem in the fixed domain $I$  in  Lagrangian coordinate $(t,x)$:
\begin{equation}\label{lagrange}
\begin{cases}
\displaystyle H_t+H\frac{U_x}{\eta_x}=0&\text{in}  \ \ (0,T]\times I,\\[2pt]
\displaystyle \eta_x  H U_t +(H^2)_x -\left(H\frac{U_x}{\eta_x}\right)_x=0&\text{in} \ \  (0,T]\times I,\\[2pt]
\displaystyle \eta_t= U&\text{in} \ \  (0,T]\times I,\\[2pt]
H>0&\text{in} \ \  (0,T]\times I,\\[2pt]
H=0&\text{on}  \ \  (0,T]\times \Gamma,\\[2pt]
(H,U,\eta) =(\rho_0,u_0,\mathrm{id})&\text{on}  \ \  \{t=0\}\times I.
\end{cases}
\end{equation}

$\eqref{lagrange}_1$ and $\eqref{lagrange}_3$ imply that
\begin{equation}\label{HHH}
H(t,x)=\frac{\rho_0(x)}{\eta_x   (t,x)}.
\end{equation}
Thus \eqref{lagrange} becomes  the following initial boundary value problem for $(U,\eta)$,
\begin{equation}\label{secondreformulation}
\begin{cases}
\displaystyle\rho_0 U_t+ \left(\frac{\rho_0^2}{\eta_x^2}\right)_x-\left(\frac{\rho_0 U_x}{\eta_x^2}\right)_x=0&\text{in}  \ \ (0,T]\times I,\\[2pt]
\displaystyle \eta_t= U&\text{in} \ \  (0,T]\times I,\\[2pt]
(U,\eta) =(u_0,\mathrm{id})&\text{on}  \ \  \{t=0\}\times I.
\end{cases}
\end{equation}

The  classical solutions  to  \eqref{secondreformulation} can be defined as follows.
\begin{Definition}\label{definition-lag}
Let $T$ be any positive number. $(U(t,x),\eta(t,x))$ is called to be a classical solution on $[0,T]\times \bar I $ to the problem \eqref{secondreformulation}, if 
\begin{gather*}
U\in C([0,T];C^2(\bar I))\cap C^1([0,T];C(\bar I)),\ \
\eta\in C^1([0,T];C^2(\bar I))\cap C^2([0,T];C(\bar I)),
\end{gather*}
satisfy the equations $\eqref{secondreformulation}_1$-$\eqref{secondreformulation}_2$ pointwisely in $(0,T]\times I $, and take the initial data $\eqref{secondreformulation}_3$ continuously.
\end{Definition}

In order to construct smooth solutions  to \eqref{secondreformulation},  we consider the following two types of high-order energy functions:

\begin{equation}\label{E-1}
\begin{aligned}
E (t,U)&:=\sum_{k=0}^2 \big|\rho_0^\alpha\partial_t^k U(t)\big|_2^2+\sum_{k=0}^1 \big|\rho_0^\alpha\partial_t^k U_x(t)\big|_2^2\\
&\quad +\abs{\rho_0^\alpha\partial_t U_{xx}(t)}_2^2+\sum_{k=2}^4 \big|\rho_0^\alpha\partial_x^k U(t)\big|_2^2,\\
\widetilde{E} (t,U)&:=\sum_{k=0}^2 \big|\rho_0^\frac{1}{2}\partial_t^k U(t)\big|_2^2+\sum_{k=0}^1 \big|\rho_0^\frac{1}{2}\partial_t^k U_x(t)\big|_2^2\\
&\quad +\big|\rho_0^{\left(\frac{3}{2}-\varepsilon_0\right)\alpha}\partial_t U_{xx}(t)\big|_2^2+\sum_{k=2}^4 \big|\rho_0^{\left(\frac{3}{2}-\varepsilon_0\right)\alpha}\partial_x^k U(t)\big|_2^2,
\end{aligned}
\end{equation}
where 
\begin{equation}\label{varepsilon0}
\begin{cases}
0<\varepsilon_0\leq \frac{3\alpha-1}{2\alpha}\text{ and }0<\varepsilon_0<\frac{1}{\alpha}-1 &\text{for }\frac{1}{3}<\alpha<1,\\
0<\varepsilon_0<1 &\text{for }\alpha=1.
\end{cases}
\end{equation}
In addition, we define the following spaces: for $\ell\in \NN$,
\begin{equation*}
\begin{split}
\mathscr{C}^\ell([0,T];E):=&\left\{
F\left|\begin{array}{cc}
\rho_0^\alpha\partial_t^j F,\,\, \rho_0^\alpha\partial_t^k F_x\in C^\ell([0,T];L^2),\quad j=0,1,2,\,\,k=0,1.\\[4pt]
\rho_0^\alpha\partial_t F_{xx},\,\,\rho_0^\alpha\partial_x^j F\in C^\ell([0,T];L^2),\quad j=2,3,4.
\end{array}\right.
\right\},\\
\mathscr{C}^\ell([0,T];\widetilde E):=&\Bigg\{
F\Bigg|\begin{array}{cc}
\rho_0^\frac{1}{2}\partial_t^j F,\,\, \rho_0^\frac{1}{2}\partial_t^k F_x\in C^\ell([0,T];L^2),\quad j=0,1,2,\,\,k=0,1.\\[4pt]
\rho_0^{\left(\frac{3}{2}-\varepsilon_0\right)\alpha}\partial_t F_{xx},\,\,\rho_0^{\left(\frac{3}{2}-\varepsilon_0\right)\alpha}\partial_x^j F\in C^\ell([0,T];L^2),\quad j=2,3,4.
\end{array}\Bigg.
\Bigg\}.
\end{split}
\end{equation*}

Now, we are ready to state the main results in Lagrangian coordinates. The first one is the  local-in-time well-posedness of  classical solutions to \eqref{secondreformulation}.
\begin{Theorem}\label{theorem3.1}
Assume  that \eqref{distance} holds for $0<\alpha \leq 1$.
\begin{enumerate}
\item [$\mathrm{i)}$] If $0<\alpha\leq\frac{1}{3}$ and $(\rho_0,u_0)$ satisfies 
\begin{equation}\label{a1}
 E(0,U)<\infty,
\end{equation}
then there exist a time $T_*>0$ and a unique classical solution $(U,\eta)$  in $[0,T_*]\times \bar I $ to  \ef{secondreformulation} such that 
\begin{equation}\label{b111}
\begin{gathered}
U\in \mathscr{C}([0,T_*];E),\quad \eta\in \mathscr{C}^1([0,T_*];E);\\
\frac{1}{2}\leq \eta_x(t,x)\leq  \frac{3}{2} \ \ \text{for all } (t,x)\in [0,T_*]\times\bar I.
\end{gathered}
\end{equation}
In particular, 
\begin{equation}\label{continuous}
\begin{gathered}
\ \quad \quad\quad U\in C([0,T_*];H^3)\cap C^1([0,T_*];H^1), \
\eta\in C^1([0,T_*];H^3)\cap C^2([0,T_*];H^1).
\end{gathered}
\end{equation}

Moreover, such a classical solution admits the following Neumann boundary condition,
\begin{equation}\label{N111}
U_x(t,x)=0\ \ \text{on} \  (0,T_*]\times \Gamma,
\end{equation}
and the asymptotic behavior,
\begin{equation}\label{AN111}
\abs{U_x(t,x)}\leq C d(x)\ \ \text{in} \  (0,T_*]\times \bar{I}.
\end{equation}
\item [$\mathrm{ii)}$] If $\frac{1}{3}<\alpha\leq 1$ and $(\rho_0,u_0)$ satisfies  
\begin{equation}\label{a1'}
\widetilde E(0,U)<\infty,
\end{equation}
then there exist a time $T_*>0$ and a unique classical solution $(U,\eta)$  in $[0,T_*]\times \bar I $ to  \ef{secondreformulation} such that
\begin{equation}\label{b111'}
\begin{gathered}
U\in \mathscr{C}([0,T_*];\widetilde E),\quad \eta\in \mathscr{C}^1([0,T_*];\widetilde E);\\
\frac{1}{2}\leq \eta_x(t,x)\leq  \frac{3}{2} \ \ \text{for all } (t,x)\in [0,T_*]\times\bar I.
\end{gathered}
\end{equation}
Moreover,  \eqref{N111}-\eqref{AN111} hold, and 
\begin{equation}\label{continuous'}
\begin{gathered}
U\in C([0,T_*];W^{3,1})\cap C^1([0,T_*];W^{1,1}),\\
\eta\in C^1([0,T_*];W^{3,1})\cap C^2([0,T_*];W^{1,1}).
\end{gathered}
\end{equation}
\end{enumerate}
\end{Theorem}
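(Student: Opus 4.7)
The plan is to construct the classical solution via a Picard-type linearization-iteration scheme and to close the high-order estimates in the weighted energy $E(t,U)$ for case (i) or $\widetilde E(t,U)$ for case (ii) on a short time interval. Given an iterate $\eta^{(n)}$ satisfying $\tfrac{1}{2}\le \eta_x^{(n)}\le \tfrac{3}{2}$, I would define $U^{(n+1)}$ as the solution of the linear degenerate parabolic problem
$$\rho_0 U^{(n+1)}_t-\Big(\frac{\rho_0 U^{(n+1)}_x}{(\eta_x^{(n)})^2}\Big)_x=\Big(\frac{\rho_0^2}{(\eta_x^{(n)})^2}\Big)_x,\qquad U^{(n+1)}|_{t=0}=u_0,$$
and set $\eta^{(n+1)}(t,x)=x+\int_0^t U^{(n+1)}(s,x)\,\ds$. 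Existence for the linearized equation can be obtained by regularizing the weight $\rho_0\mapsto\rho_0+\delta$ so the equation becomes uniformly parabolic, invoking classical theory, and then passing $\delta\to 0^+$ with the uniform weighted estimates developed below.

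The core of the argument is to establish a nonlinear a priori bound of the form
$$\sup_{t\in[0,T_*]}E(t,U)+(\text{dissipation})\le C(E(0,U))$$
on a short time $T_*$ depending only on the initial energy. Time-derivative estimates arise from differentiating $\eqref{secondreformulation}_1$ in $t$ up to twice and testing against $\partial_t^k U$; the coefficient of $U_t$ being $\rho_0$ makes $\rho_0^{1/2}$ the natural weight for $\partial_t^k U$, which appears explicitly in $\widetilde E$, and Hardy-type inequalities combined with $\rho_0^\alpha\sim d$ upgrade it to the stronger $\rho_0^\alpha$ control used in $E$ when $\alpha\le \tfrac{1}{3}$. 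For spatial-derivative estimates, I would invert the degenerate elliptic operator by rewriting the equation as
$$\frac{\rho_0 U_{xx}}{\eta_x^2}=\rho_0 U_t+\Big(\frac{\rho_0^2}{\eta_x^2}\Big)_x-\frac{(\rho_0)_x U_x}{\eta_x^2}+\frac{2\rho_0 U_x\eta_{xx}}{\eta_x^3},$$
which transfers bounds on $\rho_0 U_t$ and on lower-order quantities into bounds on $\rho_0 U_{xx}$; differentiating this identity in $x$ and iterating with weighted Hardy and Sobolev inequalities delivers the higher-order components of $E$ or $\widetilde E$. The parameter $\varepsilon_0$ from \eqref{varepsilon0} is calibrated precisely so that the weight $\rho_0^{(3/2-\varepsilon_0)\alpha}$ absorbs the most singular contributions of $\partial_x^3\rho_0^\alpha\in L^2$ in the pressure source while still matching the $\rho_0^{1/2}$-weighted time-derivative control; the threshold $\alpha=\tfrac{1}{3}$ marks the transition between the two regimes.

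The Neumann condition \eqref{N111} and the asymptotic bound \eqref{AN111} are derived from the estimates rather than imposed: once the bootstrap above gives sufficient interior weighted regularity of $\rho_0 U_x/\eta_x^2$, the linear vanishing of $\rho_0^\alpha$ on $\Gamma$ together with $\eta_x\ge\tfrac{1}{2}$ forces $U_x|_{\Gamma}=0$, and a weighted Sobolev embedding then yields $|U_x(t,x)|\le Cd(x)$. The bound $\tfrac{1}{2}\le\eta_x\le\tfrac{3}{2}$ propagates from $t=0$ via $\eta_x=1+\int_0^t U_x\,\ds$ and the $L^\infty$-control of $U_x$ extracted from the energies, provided $T_*$ is taken sufficiently small. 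Convergence of the iterates $(U^{(n)},\eta^{(n)})$ is obtained by showing that the differences satisfy a contraction estimate in a low-order weighted norm; the limit $(U,\eta)$ is then a classical solution in the sense of Definition \ref{definition-lag}, with the continuity-in-time statements \eqref{continuous} and \eqref{continuous'} following by interpolation and from the equation itself. Uniqueness is obtained by the same difference-estimate applied directly to two candidate classical solutions.

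The main obstacle is the double degenerate-singular structure highlighted in \eqref{dengshang}: the weights in $E$ and $\widetilde E$ must simultaneously tame the vanishing of $\rho_0$ in both the time-evolution and the dissipation, absorb singular factors from high-order spatial derivatives of $\rho_0^\alpha\in H^3$ appearing in the nonlinear source, and remain compatible with the nonlinearities generated by the flow map $\eta$. The admissible ranges of $\alpha$ and $\varepsilon_0$ as in \eqref{varepsilon0} are precisely those for which this balance closes on a nontrivial time interval, and making the time-weighted and space-weighted estimates interlock consistently at every order is what I expect to be the most technically demanding step.
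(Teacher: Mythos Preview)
Your overall architecture—linearize, solve the linear degenerate parabolic problem, establish uniform weighted estimates, iterate and contract in a low norm—matches the paper's, and your derivation of the Neumann condition, propagation of $\tfrac{1}{2}\le\eta_x\le\tfrac{3}{2}$, and uniqueness are all in line with what the paper does. There are, however, three specific points where your plan diverges from or underspecifies the paper's execution.

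First, for existence to the linearized problem the paper does not regularize $\rho_0\mapsto\rho_0+\delta$; instead it runs a Galerkin scheme with the Neumann eigenbasis of the Laplacian and handles a genuine obstruction: the initial value $U_{tt}(0,\cdot)$ lives only in $L^2_{\rho_0}$, so its Galerkin coefficients $\langle U_{tt}(0),e_j\rangle$ need not be defined and uniform control of $|\rho_0^{1/2}X^n_{tt}(0)|_2$ cannot come from Bessel's inequality. The paper resolves this by solving separate linear problems for $w^{(1)}=U_t$ and $w^{(2)}=U_{tt}$ with smooth initial approximants and then proving $w^{(k)}=\partial_t^k U$ by an energy argument. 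Your $\rho_0+\delta$ regularization may well work, but you would need to say what boundary condition you impose on the uniformly parabolic problem and check that the weighted estimates survive the limit $\delta\to 0$; the paper's route avoids this.

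Second, for $0<\alpha\le\tfrac{1}{3}$ the paper does not obtain $\rho_0^\alpha$-weighted control by ``upgrading'' a $\rho_0^{1/2}$-weighted estimate via Hardy; since $\rho_0^\alpha\sim d$ vanishes more slowly than $\rho_0^{1/2}\sim d^{1/(2\alpha)}$, such an upgrade goes the wrong way. Instead the equation is multiplied through by $\rho_0^{2\alpha-1}$ and recast as \eqref{fp''}, so that the natural energy weight is $\phi_0=\rho_0^\alpha$ from the outset; this is precisely why the paper treats the two ranges of $\alpha$ with two different reformulations.

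Third, the elliptic step is not just ``Hardy and Sobolev.'' Differentiating the equation in $x$ produces cross terms of the form $\phi_0^s\partial_x^{j+1}U+\kappa\phi_0^{s-1}(\phi_0)_x\partial_x^{j}U$, and the paper's key device (Proposition~\ref{prop2.1}) is a cross-derivatives embedding that bounds $|\phi_0^s\partial_x^{j+1}U|_2$ by the $L^2$ norm of this combination plus $|\phi_0^s\partial_x^jU|_2$, \emph{without} raising the weight exponent. This is what allows closing at $\rho_0^\alpha\in H^3$ and $u_0$ in a weighted $H^4$; the older Coutand--Shkoller version would force the weight to grow with each derivative and require more initial regularity. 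Your sketch should single out this lemma explicitly.
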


The second  one is the  global-in-time well-posedness of  classical solutions with large data  to \eqref{secondreformulation}.
\begin{Theorem}\label{Theorem1.1} 
Assume   that $\frac{1}{3}<\alpha\leq 1$ and $(\rho_0,u_0)$ satisfies  \eqref{distance} and \eqref{a1'}.
Then for arbitrarily large time $T>0$, there  exists a unique classical solution $(U,\eta)$  in $[0,T]\times \bar I $ to \ef{secondreformulation}, satisfying  
\begin{equation}\label{b1}
\begin{gathered}
U\in \mathscr{C}([0,T];\widetilde E),\quad \eta\in \mathscr{C}^1([0,T];\widetilde E);\\
C^{-1}(T)\leq  \eta_x(t,x) \leq C(T) \ \ \text{for all }(t,x)\in [0,T]\times \bar I,
\end{gathered}
\end{equation}
where $C(T)$ is a  positive constant  depending  only on  $\alpha$, $\varepsilon_0$, $|I|$, $(\rho_0,u_0)$ and  $T$.
Moreover, \eqref{N111}-\eqref{AN111} and \eqref{continuous'}  hold with  $T_*$ replaced by  $T$.
\end{Theorem}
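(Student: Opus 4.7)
The plan is to extend the local classical solution given by Theorem \ref{theorem3.1} ii) to arbitrary time $T>0$ via a continuation argument, which reduces to establishing global-in-time a priori estimates that simultaneously (a) prevent $\eta_x$ from touching $0$ or $\infty$ and (b) keep the high-order energy $\widetilde E(t,U)$ finite on every bounded time interval. Since the compatibility constants must be allowed to depend on $T$, the goal is not smallness but rather controlled growth in $T$.

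The first and central step is to exploit the effective velocity. In Lagrangian coordinates, using $H=\rho_0/\eta_x$, the natural analogue of $v=u+(\log\rho)_y$ becomes $V:=U+(\log\rho_0)_x/\eta_x-\eta_{xx}/\eta_x^2$. I would rewrite $\eqref{secondreformulation}_1$ as a transport-type equation for $V$, whose structure absorbs the troublesome degenerate dissipation $(\rho_0 U_x/\eta_x^2)_x$ into a nondissipative form. From this transport equation one can obtain degenerate weighted $L^p$ bounds on $V$ (and on $\rho_0^\sigma V$ for appropriate $\sigma$), which are global in time because no spatial ellipticity is actually needed. Coupling these bounds with the defining relation for $V$ then yields a first-order ODE inequality for weighted norms of $\eta_{xx}/\eta_x$ and $\log\eta_x$, from which one extracts the two-sided bound $C^{-1}(T)\leq\eta_x(t,x)\leq C(T)$ asserted in \eqref{b1}.

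Once $\eta_x$ is controlled, I would close the hierarchy of weighted energy estimates associated with $\widetilde E$. The structure of $\widetilde E$ mixes two weights: $\rho_0^{1/2}$ on time derivatives (encoding the BD-entropy scale when $\alpha<1$) and $\rho_0^{(3/2-\varepsilon_0)\alpha}$ on the top spatial derivatives. The standard scheme is: differentiate $\eqref{secondreformulation}_1$ in $t$, multiply by $\partial_t^k U$ with the appropriate $\rho_0$ weight, integrate by parts using that $U_x=0$ on $\Gamma$ (from \eqref{N111}) and that $\eta_x$ is bounded, and use the asymptotic bound \eqref{AN111} to justify the boundary terms. Spatial derivatives are then recovered from the momentum equation viewed as a weighted elliptic identity for $U_{xx}$, with $\rho_0 U_t$ and $(\rho_0^2/\eta_x^2)_x$ on the right. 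The precise constraints \eqref{varepsilon0} on $\varepsilon_0$ are exactly what is needed for the weighted Hardy inequality and the commutator terms $[\partial_x^k,\eta_x^{-2}]$ to close without loss of regularity up to $\Gamma$.

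The main obstacle is that BD entropy \eqref{BDcondition} fails at $\alpha=1$, so one cannot unify the two ranges by that mechanism. My argument sidesteps this by deriving the pointwise control on $\eta_x$ purely from the transport of $V$, which is agnostic to whether \eqref{BDcondition} or \eqref{PVcondition} holds. The delicate balancing is between the rate at which $\rho_0$ vanishes ($\rho_0\sim d^{1/\alpha}$), the weight $(3/2-\varepsilon_0)\alpha$ on top derivatives, and the singularity produced by $(\log\rho_0)_x$ inside $V$: for $\alpha\in(\tfrac13,1]$ this balance is exactly achievable with the range \eqref{varepsilon0}, but any overshoot produces non-integrable boundary singularities. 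A secondary subtle point is that the energy bound will grow in $T$ (consistent with the fact that $|U|_\infty$ need not decay, as emphasized in the abstract), so the a priori estimates must be of Gronwall type with $T$-dependent but finite constants, rather than uniform bounds.
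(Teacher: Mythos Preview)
Your overall continuation strategy and the central role you assign to the effective velocity $V$ match the paper's approach. However, your proposed route to the two-sided $\eta_x$ bound---extracting it directly from the defining relation $\eta_{xx}/\eta_x^2=(\log\rho_0)_x/\eta_x+U-V$ via an ODE inequality---has a circularity: since $(\log\rho_0)_x\sim d(x)^{-1}$ is singular at $\Gamma$ and the available control on $V$ is only weighted ($\rho_0^{\alpha}V\in L^\infty$, $\rho_0^{r\alpha}V\in L^p$), isolating $\eta_{xx}/\eta_x^2$ or $\log\eta_x$ from that relation already presupposes a lower bound on $\eta_x$.

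The paper breaks this circularity by ordering the steps. The \emph{lower} bound $\eta_x\geq C^{-1}(T)$ is obtained first and does not use $V$ at all: integrating $\eqref{secondreformulation}_1$ in $x$ yields $|H|_\infty\leq C$, hence $\eta_x\geq \rho_0/C$; combined with the boundary value $\eta_x|_\Gamma=1$ inherited from \eqref{N111}, a contradiction argument excludes $\eta_x\to 0$ anywhere (Lemmas~\ref{bound of density}--\ref{L.B.J}). Only with this lower bound in hand are the weighted $V$-estimates developed (Lemmas~\ref{boundv}, \ref{boundv'}), and they feed not into the defining relation for $V$ but into weighted energy estimates on $U_x$ and $(U_x/\eta_x)_x$ (Lemma~\ref{W.E.V}, Corollary~\ref{W.E.VI}). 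The \emph{upper} bound then comes from the time-integral identity $\log\eta_x=\int_0^t (U_x/\eta_x)\,ds$, the resulting weighted $L^1_tL^\infty_x$ control on $U_x/\eta_x$, and once more $\eta_x|_\Gamma=1$ via contradiction (Lemma~\ref{U.B.J}). In particular, the Neumann condition is not merely a convenience for integration by parts in the energy hierarchy as your sketch suggests, but the essential ingredient that closes both pointwise bounds on $\eta_x$.
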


\subsection{Main results in Eulerian coordinates}
Denote $\mathbb{I}(T)=\{(t,y)|  t\in (0,T], \ y\in I(t)\}$. The classical solutions to the {\rm\textbf{VFBP}} \eqref{shallow} in $\overline{\mathbb{I}(T)}$ can be defined as follows.
\begin{Definition}
Let $T$ be any positive number. A triple $(\rho(t,y),u(t,y),\Gamma(t))$ is said to be a classical solution to the {\rm\textbf{VFBP}} \eqref{shallow} in $\overline{\mathbb{I}(T)}$, if 
\begin{equation*}
\begin{split}
\rho, \ \rho_t, \ \rho_y,\ u,\ u_y, \ u_{yy}, \ u_t \in C( \overline{\mathbb{I}(T)}),\ \Gamma(t)\in C^2([0,T]),
\end{split}
\end{equation*}
$(\rho,u,\Gamma)$ satisfies the equations $\eqref{shallow}_1$-$\eqref{shallow}_3$  pointwisely in $\mathbb{I}(T)$, takes the initial data $\eqref{shallow}_6$, and satisfies the boundary conditions $\eqref{shallow}_4$-$\eqref{shallow}_5$ continuously.
\end{Definition}

Now, the main results in the previous section can be transformed in the Eulerian coordinates as follows.
\begin{Theorem}\label{theorem1.3}
Assume  that \eqref{distance} and \eqref{a1} hold  for  $0<\alpha\leq \frac{1}{3}$;  while \eqref{distance} and \eqref{a1'} hold for $\frac{1}{3}<\alpha\leq 1$. Then there exist a time $T_*>0$ and a unique classical solution $(\rho(t,y),u(t,y),\Gamma(t))$ in $\overline{\mathbb{I}(T_*)}$ to the {\rm\textbf{VFBP}} \ef{shallow}  such that for  $0<\alpha\leq \frac{1}{3}$,
\begin{equation}\label{eulerregularity1}
\begin{split}
\sup_{t\in [0,T_*]}\big(\|\rho^\alpha\|_{H^3(I(t))}+\|(\rho^\alpha)_t\|_{H^2(I(t))}+\|u\|_{H^3(I(t))}+\|u_t\|_{H^1(I(t))}\big)<\infty,
\end{split}
\end{equation}
while for 
$\frac{1}{3}<\alpha\leq 1$,
\begin{equation}\label{eulerregularity2}
\begin{split}
\sup_{t\in [0,T_*]}\big(\|\rho^\alpha\|_{W^{3,1}(I(t))}+\|(\rho^\alpha)_t\|_{W^{2,1}(I(t))}+\|u\|_{W^{3,1}(I(t))}+\|u_t\|_{W^{1,1}(I(t))}\big)<\infty.
\end{split}
\end{equation}
Moreover, the velocity  satisfies 
\begin{equation}\label{N111euler}
u_y(t,y)=0 \ \ \text{for all }t\in [0,T_*]\text{ and }y\in \Gamma(t).
\end{equation}
\end{Theorem}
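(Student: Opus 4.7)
The plan is to reduce Theorem \ref{theorem1.3} to the Lagrangian existence result, Theorem \ref{theorem3.1}, by transporting the solution back through the flow map $\eta$. First, I invoke Theorem \ref{theorem3.1} to obtain the local Lagrangian solution $(U,\eta)$ on $[0,T_*]\times\bar I$ with the regularity \eqref{b111} or \eqref{b111'} and the uniform bound $\frac12\le\eta_x(t,x)\le\frac32$. This bound ensures that for every $t\in[0,T_*]$ the map $\eta(t,\cdot):\bar I\to\overline{I(t)}$ is a bi-Lipschitz diffeomorphism onto its image $I(t):=\eta(t,I)$, with $\Gamma(t):=\eta(t,\Gamma)$, so one can define
\[\rho(t,y):=H(t,\eta^{-1}(t,y)),\qquad u(t,y):=U(t,\eta^{-1}(t,y)),\]
where $H=\rho_0/\eta_x$ as in \eqref{HHH}. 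Differentiating via the chain rule and using $\eta_t=U$ together with the Lagrangian equations \eqref{lagrange} shows that $(\rho,u,\Gamma)$ satisfies $\eqref{shallow}_1$--$\eqref{shallow}_5$ pointwise in $\mathbb{I}(T_*)$; the initial data $\eqref{shallow}_6$ and the kinematic condition $\cV(\Gamma(t))=u$ follow from $\eta(0,\cdot)=\mathrm{id}$ and $\eta_t=U$ respectively.

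Next, I translate the regularity. Since $\eta(t,\cdot)$ is a diffeomorphism with Jacobian bounded above and below, the change of variables $y=\eta(t,x)$ yields $|f|_{L^p(I(t))}\simeq|f\circ\eta|_{L^p(I)}$, and derivatives in $y$ pull back to derivatives in $x$ divided by powers of $\eta_x$, e.g.~$u_y\circ\eta=U_x/\eta_x$. From $\rho^\alpha\circ\eta=\rho_0^\alpha/\eta_x^\alpha$ one sees that the top-order Eulerian quantities $u_{yyy}$ and $(\rho^\alpha)_{yyy}$ are rational combinations of $\rho_0^\alpha$, $\partial_x^j\eta$ ($0\le j\le 4$), and $\partial_x^j U$ ($0\le j\le 3$) over powers of $\eta_x$. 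The $L^2$ (resp.\ $L^1$) control of $U_{xxx}$ is furnished by $U\in C([0,T_*];H^3)$ in case i (resp.\ $C([0,T_*];W^{3,1})$ in case ii) stated in \eqref{continuous}--\eqref{continuous'}, while $\eta_{xxxx}(t,x)=\int_0^t U_{xxxx}(s,x)\,\ds$ becomes integrable only after multiplication by an appropriate power of $\rho_0$, which is precisely what the weighted bounds in $E$ (resp.\ $\widetilde E$) provide. The time derivatives $u_t$ and $(\rho^\alpha)_t$ are treated analogously using $u_t\circ\eta=U_t-(U_x/\eta_x)U$ and the transported continuity equation $\eqref{lagrange}_1$.

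The boundary identity \eqref{N111euler} is then immediate from $U_x|_\Gamma=0$ in \eqref{N111} together with $u_y\circ\eta=U_x/\eta_x$. Uniqueness of the Eulerian triple is inherited from Theorem \ref{theorem3.1}: any classical Eulerian solution generates through the ODE \eqref{flow-map} a Lagrangian triple $(U,\eta,H)$ in the class where Theorem \ref{theorem3.1} provides uniqueness, whence the corresponding Eulerian data must coincide.

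The main obstacle, I expect, is the systematic bookkeeping needed to convert the weighted Lagrangian norms in $E$ and $\widetilde E$ into the unweighted Sobolev or $W^{s,1}$ Eulerian norms \eqref{eulerregularity1}--\eqref{eulerregularity2}. For $\alpha$ close to $1/3$ the weight $\rho_0^\alpha$ is rather weak, so one must verify term by term that every high-order contribution produced by the chain rule either carries a compensating $\rho_0^\alpha$ factor inherited from $\eta_{xxxx}$ (or from the weighted $U_t$ estimates) or is already absorbed by an unweighted lower-order norm coming from \eqref{continuous}--\eqref{continuous'}.
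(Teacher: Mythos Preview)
Your proposal is correct and follows essentially the same route as the paper: the paper's proof of Theorem \ref{theorem1.3} simply defines $(\rho,u,\Gamma)$ through the inverse flow map $\tilde\eta$ (Appendix \ref{section-CT}, equation \eqref{equ1.28}) and then appeals to Theorem \ref{theorem3.1} together with the explicit chain-rule identities \eqref{uty}, which list all Eulerian derivatives up to $\partial_y^3(\rho^\alpha)$, $\partial_t(\rho^\alpha)_{yy}$, $\partial_y^3 u$, $u_{ty}$, and $\Gamma''(t)$ in terms of Lagrangian data. Your observation that the only potentially dangerous term $\partial_x^4\eta$ always appears multiplied by $\rho_0^\alpha$ (so that the weighted bound on $\phi_0\partial_x^4 U$ or $\phi_0^{3/2-\varepsilon_0}\partial_x^4 U$ suffices) is exactly what the formulas in \eqref{uty} confirm, and the paper does not elaborate further on this bookkeeping.
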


\begin{Theorem}\label{Theorem1.4} 
Assume that $\frac{1}{3}<\alpha\leq 1$,   \eqref{distance} and \eqref{a1'} hold. Then for arbitrarily large time $T>0$, there  exists a unique classical solution $(\rho(t,y),u(t,y),\Gamma(t))$ in $\overline{\mathbb{I}(T)}$  to the {\rm\textbf{VFBP}} \ef{shallow}. Moreover, \eqref{eulerregularity2}-\eqref{N111euler} hold with  $T_*$ replaced by  $T$.
\end{Theorem}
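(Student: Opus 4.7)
The strategy is to obtain Theorem \ref{Theorem1.4} as a direct corollary of the Lagrangian Theorem \ref{Theorem1.1}. For any prescribed $T>0$, Theorem \ref{Theorem1.1} produces a unique classical solution $(U,\eta)$ to \eqref{secondreformulation} on $[0,T]\times\bar I$ in the class \eqref{b1}, together with the two-sided bound $C(T)^{-1}\leq\eta_x\leq C(T)$. Since $\eta(t,\cdot)$ is $C^1$ with strictly positive derivative, for every $t\in[0,T]$ it is a $C^1$-diffeomorphism from $\bar I$ onto its image, and I would define $I(t):=\eta(t,I)$, $\Gamma(t):=\eta(t,\Gamma)$, and then
\begin{equation*}
\rho(t,y):=\frac{\rho_0(x)}{\eta_x(t,x)},\qquad u(t,y):=U(t,x),\quad\text{where}\quad x=\eta^{-1}(t,y).
\end{equation*}
Consistency $\rho\circ\eta=H$ follows from \eqref{HHH}, the kinematic condition $\cV(\Gamma(t))=u|_{\Gamma(t)}$ follows from $\eta_t=U$ restricted to $\Gamma$, and $\rho|_{\Gamma(t)}=0$ follows from $H|_\Gamma=0$.

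Next I would verify that $(\rho,u,\Gamma)$ is a classical solution in the Eulerian sense. The equations $\eqref{shallow}_1$--$\eqref{shallow}_2$ are obtained from $\eqref{lagrange}_1$--$\eqref{lagrange}_2$ by the chain rule $\partial_y=\eta_x^{-1}\partial_x$ and the identity $\partial_t^{\mathrm{Eul}}=\partial_t^{\mathrm{Lag}}-U\eta_x^{-1}\partial_x$. The $C^2$-in-time regularity of $\Gamma(t)$ is inherited from $\eta_t=U$ and $\eta_{tt}=U_t$, which lie in $C([0,T];C(\bar I))$ via the 1-D embedding $W^{3,1}\hookrightarrow C^2(\bar I)$ applied to \eqref{continuous'}. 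Uniqueness in Eulerian variables is automatic: any two classical Eulerian solutions on $\overline{\mathbb{I}(T)}$ induce, via their respective flow maps, two Lagrangian solutions in the class of Theorem \ref{Theorem1.1}, contradicting the Lagrangian uniqueness statement.

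The final step is the regularity transfer \eqref{b1}$\Rightarrow$\eqref{eulerregularity2}. Expanding the chain rule, $\partial_y^k=\eta_x^{-k}\partial_x^k$ plus lower-order polynomial combinations of $\partial_x^j\eta$ and $\partial_x^j U$, and applying the Jacobian change of variables $\dy=\eta_x\,\dx$, each Eulerian $L^1$-norm reduces to sums of Lagrangian integrals of the form $\int_I|\partial_x^k U|\,\eta_x^{1-k}\,\dx$ and $\int_I|\partial_x^k(\rho_0^\alpha/\eta_x^\alpha)|\,\eta_x\,\dx$, with $\eta_x^{\pm 1}$ factors harmless by the uniform bound. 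Each such unweighted $L^1(I)$-integral is then extracted from the weighted $L^2$-components of $\widetilde E$ by pairing its top-order factor against the negative power $\rho_0^{-(3/2-\varepsilon_0)\alpha}$ via Cauchy--Schwarz, or, when the weight is marginal, by exploiting the Neumann condition $U_x|_\Gamma=0$ and the pointwise decay $|U_x|\leq Cd$ of \eqref{AN111} together with Hardy-type inequalities near $\Gamma$. The main obstacle, and the source of the threshold $\alpha>\tfrac13$ together with the precise range of $\varepsilon_0$ in \eqref{varepsilon0}, is exactly this weight bookkeeping: every monomial arising in the Fa\`a di Bruno expansion of $\partial_y^k u$ and $\partial_y^k(\rho^\alpha)$ must leave, after the $L^2$-pairing, an integrable power of $\rho_0\sim d$ at $\Gamma$. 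Time derivatives $u_t\in W^{1,1}(I(t))$ and $(\rho^\alpha)_t\in W^{2,1}(I(t))$ are handled by the same template, after converting $\partial_t^{\mathrm{Eul}}$ into its Lagrangian form.
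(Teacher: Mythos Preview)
Your proposal is correct and follows essentially the same approach as the paper. The paper's proof is a one-line deduction from Theorem~\ref{Theorem1.1} together with the explicit coordinate-change formulas collected in Appendix~\ref{section-CT} (see \eqref{equ1.28} and \eqref{uty}), which are precisely the chain-rule expansions you describe; the $W^{3,1}$ regularity transfer is carried out via the Hardy-type inequality of Lemma~\ref{hardy-inequality} (as in \eqref{rrrggg}) rather than a separate appeal to the Neumann condition, but this is a minor presentational difference.
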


Furthermore, in contrast to the classical theory  \cites{KA2, mat}, it holds that the $L^\infty$ norm of $u$ of the solution in Theorem \ref{Theorem1.4} does not decay to zero as  $t\rightarrow \infty$.

\begin{Theorem}\label{th:2.20-c}
Assume that   $|\int_{I}\rho_0u_0\mathrm{d}y|>0$.
Then the  global classical solution $(\rho,u)$ to \eqref{shallow} obtained in Theorem \ref{Theorem1.4} does not 
satisfy 
\begin{equation}\label{eq:2.15}
\limsup_{t\rightarrow \infty} \sup_{y\in I(t)}|u(t,y)|=0.
\end{equation}
\end{Theorem}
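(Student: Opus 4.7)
The plan is to derive a contradiction by combining two conservation laws of the free-boundary problem: conservation of mass and conservation of total momentum. First I would verify that the classical solution produced in Theorem \ref{Theorem1.4} enjoys both conservation identities on the entire time interval. Since the moving boundary $\Gamma(t)$ is advected by the fluid velocity, the Reynolds transport theorem gives
\begin{equation*}
\frac{d}{dt}\int_{I(t)}\rho\,\dy=\int_{I(t)}\rho_t\,\dy+\bigl[\rho u\bigr]_{\partial I(t)}=-\int_{I(t)}(\rho u)_y\,\dy+\bigl[\rho u\bigr]_{\partial I(t)}=0,
\end{equation*}
because $\rho=0$ on $\Gamma(t)$ by $\eqref{shallow}_4$. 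Similarly,
\begin{equation*}
\frac{d}{dt}\int_{I(t)}\rho u\,\dy=\int_{I(t)}(\rho u)_t\,\dy+\bigl[\rho u^2\bigr]_{\partial I(t)}=\bigl[-\rho^2+\rho u_y\bigr]_{\partial I(t)}=0,
\end{equation*}
where both boundary contributions vanish thanks to $\rho=0$ on $\Gamma(t)$, the boundedness of $u_y$ (guaranteed by the regularity \eqref{eulerregularity2}), and in fact the stronger Neumann condition \eqref{N111euler}. Consequently, for every $t\in[0,\infty)$,
\begin{equation*}
\int_{I(t)}\rho(t,y)\,\dy=\int_{I}\rho_0(y)\,\dy=:M_0,\qquad \int_{I(t)}\rho(t,y)u(t,y)\,\dy=\int_{I}\rho_0(y)u_0(y)\,\dy=:P_0,
\end{equation*}
and by hypothesis $|P_0|>0$, while clearly $0<M_0<\infty$ from \eqref{distance}.

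Next, I would argue by contradiction and suppose \eqref{eq:2.15} holds. Then for each $\varepsilon>0$ there exists $T_\varepsilon>0$ such that $|u(t,y)|<\varepsilon$ for all $t\geq T_\varepsilon$ and all $y\in I(t)$. Combining with mass conservation gives
\begin{equation*}
|P_0|=\Bigl|\int_{I(t)}\rho u\,\dy\Bigr|\leq \sup_{y\in I(t)}|u(t,y)|\int_{I(t)}\rho\,\dy<\varepsilon M_0\qquad\text{for every }t\geq T_\varepsilon.
\end{equation*}
Letting $\varepsilon\to 0^+$ forces $P_0=0$, contradicting the assumption $|P_0|>0$. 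Therefore \eqref{eq:2.15} cannot hold, which is exactly the statement of Theorem \ref{th:2.20-c}.

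The only delicate point is justifying the two conservation identities rigorously for the classical solution at hand; this is where I would spend the most care. In Eulerian coordinates one must differentiate an integral over the moving domain $I(t)$, and I would instead translate the argument into Lagrangian coordinates using $\rho\,\dy=\rho_0\,\dx$ and $U=u\circ\eta$, so that mass conservation becomes the trivial identity $\int_I\rho_0\,\dx$ and momentum conservation reduces to showing $\frac{d}{dt}\int_I\rho_0 U\,\dx=0$. Multiplying $\eqref{secondreformulation}_1$ by $1$ and integrating over $I$, the flux term produces boundary contributions of the form $\rho_0^2/\eta_x^2$ and $\rho_0 U_x/\eta_x^2$ evaluated at $\Gamma$; both vanish because $\rho_0\equiv 0$ on $\Gamma$ by \eqref{distance} together with the uniform positive lower bound on $\eta_x$ from \eqref{b1}. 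Once this identity is in hand, the short contradiction argument above completes the proof.
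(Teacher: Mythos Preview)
Your proposal is correct and follows essentially the same strategy as the paper: establish conservation of total mass $m(t)$ and momentum $\mathbb{P}(t)$ (the paper does this in Lemma~\ref{lemmak}, working in Eulerian coordinates, while you suggest the equivalent Lagrangian verification), and then use $|\mathbb{P}(0)|\le m(0)\sup_{y\in I(t)}|u(t,y)|$ to force a uniform positive lower bound on $\|u(t,\cdot)\|_{L^\infty}$. The only cosmetic difference is that the paper routes this last bound through the kinetic energy via Cauchy--Schwarz ($|\mathbb{P}|\le\sqrt{2mE_k}$ and $E_k\le\tfrac12 m\|u\|_\infty^2$), whereas you apply the simpler H\"older bound directly; the conclusion and the underlying mechanism are identical.
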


We make some comments on the results of this paper.

\begin{Remark} It should be noted that \eqref{N111} in the case $\alpha=1$ has been observed in \cites{LWX,LWX2}. Now we show how to derive \eqref{N111}  in general case here. Take the case $\frac{1}{3}<\alpha\leq 1$ for example. First, it follows from \eqref{distance}, \eqref{flow-map}, \eqref{continuous'} and Lemma \ref{sobolev-embedding} that 
\begin{equation}\label{eq117}
\begin{gathered}
\rho_0^\alpha\in C^{2}(\bar I),\  U_x, \ U_{xx}, \ U_t, \ \eta_x,\ \eta_{xx}\in  C([0,T_*]\times \bar I).
\end{gathered}
\end{equation}
Next, multiplying both sides of $\eqref{secondreformulation}_1$ by $\eta_x^2\rho_0^{\alpha-1}$ gives
\begin{equation}\label{remark1.1}
\rho_0^\alpha U_{xx}= \frac{2}{\alpha} \rho_0(\rho_0^\alpha)_x+\rho_0^\alpha\eta_x^2 U_t -\frac{2\rho_0^{\alpha+1}\eta_{xx}}{\eta_x}-\frac{1}{\alpha}(\rho_0^\alpha)_xU_x+\frac{2\rho_0^\alpha \eta_{xx} U_x}{\eta_x}. 
\end{equation}
Then  letting x go to the  boundary $\Gamma$ in \eqref{remark1.1}, one obtains from \eqref{b111'} and \eqref{eq117} that
\begin{equation}
(\rho_0^\alpha)_xU_x=0\ \  \text{for }(t,x)\in (0,T_*]\times \Gamma,
\end{equation}
which, along with 
$(\rho_0^\alpha)_x|_{x\in\Gamma}\neq 0$,
yields that $U_x=0$ for $(t,x)\in (0,T_*]\times \Gamma$. It is worth noting that \eqref{N111} can be thought of as inheriting from the homogeneous Neumann boundary condition of $u_0$ which is shown in Lemma \ref{Reduction}, and \eqref{N111} will plays an important role in establishing the uniform lower and upper bounds of $\eta_x$ in \S \ref{Section6}. 
\end{Remark}

\begin{Remark}
For the  {\rm\textbf{VFBP}} \eqref{shallow}, it follows from \eqref{distance}, \eqref{N111} and   \eqref{continuous'}  that  the usual
stress free boundary  condition holds automatically, i.e.,
$$ \mathbb{S} = \rho^2 -\rho u_y=0 \ \ \text{for }t\in (0,T] \text{ and } y\in \Gamma(t).$$
\end{Remark}

\begin{Remark}\label{initialexample}
The initial assumptions \eqref{distance} and \eqref{a1} or \eqref{a1'} in Theorem \ref{theorem3.1} identify a class of admissible initial data that ensure unique solvability of \eqref{secondreformulation}. In Appendix \ref{AppB}, we give an equivalent form of \eqref{a1} or \eqref{a1'} in terms of $(\rho_0,u_0)$ themselves and their spatial derivatives in Lemma \ref{Reduction}. Indeed, it follows from  Lemma \ref{Reduction}  that for $0<\alpha \leq \frac{1}{3}$, the assumptions \eqref{distance} and \eqref{a1} in Theorem \ref{theorem3.1} can be fulfilled  by that  $\rho^\alpha_0\in H^3$ and $u_0$ stays in one weighted $H^4$ space, while for   $\frac{1}{3}<\alpha \leq 1$, the  assumptions \eqref{distance} and \eqref{a1'} can be fulfilled by that $\rho^\alpha_0\in H^3$,  $u_0$ stays in one  weighted $H^4$ space and also a special compatibility condition is satisfied. In particular,  Lemma \ref{Reduction} implies the following facts.

First, for the case  $0<\alpha<\frac{3}{5}$ or $\alpha=1$, \eqref{distance} and  \eqref{a1} are satisfied by  the class of initial data  given as:
\begin{equation}\label{initalexample1}
\rho_0(x):=C\left(x(1-x)\right)^\frac{1}{\alpha},\quad u_0(x)\in C^\infty_c,
\end{equation}
where $C>0$ denotes one generic constant.  The details can be found in Remark \ref{Reduction}.

Second, for $\frac{3}{5}\leq \alpha <1$, \eqref{distance} and   \eqref{a1'} are fulfilled by the set of initial data given by 
\begin{equation}\label{initalexample2}\rho_0(x):=C\left(x(1-x)\right)^\frac{1}{\alpha},\quad u_0(x):= \int_0^x \rho_0(z)\,\mathrm{d}z + f_0(x),
\end{equation}
for arbitrary  $f_0\in C_c^\infty(I)$.  The details can be found in Remark \ref{Reduction}.
\end{Remark}

We make some comments on the methodology of this paper.

\begin{Remark} We give some comments on the forms of energy functions  $(E(t,U),\widetilde{E}(t,U))$ in \eqref{E-1}. In order to get the solution which is classical uniformly up to the moving boundary, inspired by  the Sobolev embedding theorem and the Hardy inequality, we should establish some weighted $H^4$ estimates on $U$. So a natural energy function takes the form
\begin{equation}\label{E^*}
E^*(t,U)=\sum_{k=0}^2 \big|\rho_0^p\partial_t^k U(t)\big|_2^2+\sum_{k=0}^1 \big|\rho_0^p\partial_t^k U_x(t)\big|_2^2+\abs{\rho_0^q\partial_t U_{xx}(t)}_2^2+\sum_{k=2}^4 \big|\rho_0^q\partial_x^k U(t)\big|_2^2.
\end{equation}
To derive the highest order elliptic estimates, by formally applying $\rho_0^{q-\alpha}\partial_t$ and $\rho_0^{q-\alpha}\partial_x^2$, respectively, to both sides of \eqref{remark1.1}, that is,
\begin{equation}\label{Utxx-Uxxxx}
\begin{aligned}
\rho_0^q \partial_t U_{xx}&=\rho_0^q\eta_x^2 U_{tt}+(R_{q}^1),\\
\rho_0^q \partial_x^4 U &=\frac{2(1-\alpha)}{\alpha^3}\rho_0^{1-3\alpha+q}(\rho_0^\alpha)_x^{3}+\rho_0^q\eta_x^2 \partial_t U_{xx}+(R_{q}^2),
\end{aligned}
\end{equation}
which, by substituting $\eqref{Utxx-Uxxxx}_1$ into $\eqref{Utxx-Uxxxx}_2$, leads to
\begin{equation}\label{q-Uxxxx}
\rho_0^q \partial_x^4 U  = \underline{\frac{2(1-\alpha)}{\alpha^3}\rho_0^{1-3\alpha+q}(\rho_0^\alpha)_x^{3}}_{(\star_1)} +\underline{\rho_0^q\eta_x^4U_{tt}}_{(\star_2)}+(R_{q}^3),
\end{equation}
where $((R_q^1),(R_q^2),(R_q^3))$ denote the remaining terms. As can be checked, $(\star_1)$ is the most singular part of the derivatives of the pressure, $(\star_2)$ is the  highest order tangential derivatives, and $(R_q^3)$, compared with $(\star_1)$-$(\star_2)$, possesses either higher order weights or lower order derivatives of $U$. It terms out that, to control $|\rho_0^q\partial_x^4 U|_2$ by \eqref{q-Uxxxx}, the main obstacles are $(\star_1)$-$(\star_2)$. 
Hence it follows from \eqref{distance} that $(\star_1)$-$(\star_2)$ belong to $L^2$ whenever 
\begin{equation}\label{value-q-alpha}
q> \frac{5\alpha}{2}-1 \ \text{ or } \ \alpha=1,\ \ \text{ and } \ q\geq p.
\end{equation}
On the other hand, the embedding relation  $H^4_{\rho_0^{2\alpha}}\into H^3\into C^2(\bar I)$ (see Lemmas \ref{sobolev-embedding}-\ref{hardy-inequality} in Appendix \ref{appendix A}) implies that $q\leq \alpha$ in \eqref{q-Uxxxx}. Therefore, in order to get the maximum range of $\alpha$, we determine the energy function $E(t,U)$ in \eqref{E-1} by setting $p=q=\alpha$ in \eqref{E^*}, and finally  obtain from \eqref{value-q-alpha} that $\alpha\in\left(0,\frac{2}{3}\right)$ or $\alpha=1$. Fortunately, it follows from direct calculations that $|(R_q^3)|_2$ can be controlled by $E(t,U)$ with the above well-chosen weights. 

Similarly, for the case  $\alpha\in \left[\frac{2}{3},1\right)$, based on Lemmas \ref{sobolev-embedding}-\ref{hardy-inequality}, we consider the embedding relation  $H^4_{\rho_0^{2q}} \into W^{3,1} \into C^2(\bar I)$ for $q<\frac{3\alpha}{2}$. On the other hand, it seems that one can determine $p=\frac{1}{2}$ in \eqref{E^*} from establishing the tangential estimates via $\eqref{secondreformulation}_1$, and hence from \eqref{value-q-alpha} and $q<\frac{3\alpha}{2}$ that the maximum range of $\alpha$ is $\alpha\in \left(\frac{1}{3},1\right]$. As can be checked, $|(R_q^3)|_2$ is controlled by $\widetilde E(t,U)$ with such a weight. Actually, only from the perspective of the local well-posedness, one can still obtain the tangential estimates with $p\neq \frac{1}{2}$ via a simple reformulation on $\eqref{secondreformulation}_1$ by multiplying its both sides by $\rho_0^{2p-1}$, while such reformulation will break some intrinsic structure of $\eqref{secondreformulation}_1$ and makes it hard to establish the global-in-time energy estimates.  Therefore, we determine the energy function $\widetilde E(t,U)$ in \eqref{E-1} by setting $p=\frac{1}{2}$ and $q<\frac{3\alpha}{2}$ in \eqref{E^*}, then deduce from \eqref{value-q-alpha} that $\frac{1}{3}<\alpha\leq 1$.

Based on the above considerations, we will establish the desired solutions in two different energy functions: $E(t,U)$ when $\alpha\in\left(0,\frac{1}{3}\right]$, and $\widetilde E(t,U)$ when $\alpha\in\left(\frac{1}{3},1\right]$.
\end{Remark}

\begin{Remark}
For proving the local well-posedness of the nonlinear problem $\eqref{secondreformulation}$ stated in Theorem \ref{theorem3.1}, a key step is to establish the  well-posedness of the corresponding linearized problems \eqref{lp''}-\eqref{lp} in \S \ref{Section3} via the Galerkin method.  However, the standard Galerkin method (see \cites{evans}) is not applicable here, since there will be some issue that arises in the approximation of initial data $U_{tt}(0,x)$, which only belongs to a weighted $L^2$ space, via the standard Hilbert basis $\{e_j\}_{j=1}^\infty$ which is orthonormal in $L^2$ and orthogonal in $H^1$ generated by the Laplace operator. To solve this problem, we apply a modified Galerkin method by using a sequence of $\{\ddot U^\delta_0\}_{\delta>0}\subset C^\infty(\bar I)$ to approximate $U_{tt}(0,x)$, such that each $\ddot U^\delta_0$ can be expanded by $\{e_j\}_{j=1}^\infty$, then considering the linearized problem with the initial data $\ddot U^\delta_0$, and finally recovering the original linearized problems    \eqref{lp''}-\eqref{lp} by the standard density arguments. Note that this issue has been initially noticed by Li-Wang-Xin \cites{LWX,LWX2} when $\alpha=1$, where they developed a different approach to overcome the difficulties  by constructing a new Galerkin basis $\{w_j\}_{j=1}^\infty$ which is orthonormal in $L^2_{\rho_0}$ and orthogonal in $H^1_{\rho_0}$. More details on the methodology can be found in  \S\ref{subsection2.3}.
\end{Remark}

\begin{Remark} As mentioned before, the   BD entropy condition \eqref{BDcondition}  and physical vacuum  condition \eqref{PVcondition}  are not compatible in \eqref{distance}. Yet we can still establish the global well-posed theory of classical solutions with large data to the {\rm\textbf{VFBP}} \eqref{shallow} for both cases, i.e., Theorem \ref{Theorem1.4}. The key tool used here is the so-called effective velocity $
V=U+\frac{H_x}{\rho_0}$. By taking full advantage of the transport mechanism of the evolution equation \eqref{eq:effective2} of $V$, we exploit some new weighted  $L^p$  estimates of $V$ (see Lemmas \ref{boundv} and \ref{boundv'}) that are different from the BD entropy estimates, which enable one to  deal effectively with the terms related with  $H_x$ and $\eta_{xx}$ that appear in the lower order  estimates of  $U$  via these new estimates,  and then  establish the global-in-time upper bound of $\eta_x$ and the weighted energy estimates of $U$. More details on the methodology can be found in  \S\ref{subsection2.3}.

Furthermore, note that in general, a system endowed with a BD entropy has a stringent structural requirement. For example, it seems very difficult to obtain BD entropy estimates for the degenerate non-isentropic {\rm\textbf{CNS}} with \eqref{eq:1.6g}, due to the specific entropy in the viscous stress tensor $\mathbb{T}$. We hope that the methodology developed  in the current paper  could  share light on the {\rm\textbf{VFBP}} problem for non-isentropic flows.
\end{Remark}

The rest of this paper is organized as follows. In \S \ref{Section2}, we first  introduce a new reformulation of the problem \eqref{secondreformulation} in \S\ref{Subsection2.1}, which is compatible with the initial conditions \eqref{distance} with \eqref{a1} or \eqref{a1'}, and then outline the main strategy  to establish the local/global-in-time well-posedness theory. \S\ref{Section3}-\S\ref{Section5} are devoted to proving  the  local-in-time well-posedness of the classical solution to the  problem \eqref{secondreformulation} and hence the {\rm\textbf{VFBP}} \eqref{shallow} as follows: 
\begin{enumerate}
\item  construct global smooth approximate solutions  for the corresponding   degenerate linearized problems via  Galerkin method (\S \ref{Section3});
\item establish  the uniform weighted estimates for  the linearized problems (\S \ref{Section4});
\item  give the  local-in-time well-posedness  of classical solutions to the nonlinear problems \eqref{secondreformulation} and \eqref{shallow}, respectively, through the classical Picard iteration (\S \ref{Section5}).
\end{enumerate}
In \S \ref{Section6}-\S \ref{Section8}, we  show the global existence of classical solutions to the problem \eqref{secondreformulation} and the {\rm\textbf{VFBP}} \eqref{shallow} in the following three steps:    
\begin{enumerate}
\item  derive  the global-in-time a priori lower and upper bounds for  $\eta_x$, and the weighted boundedness of the effective velocity (\S \ref{Section6});
\item establish the global-in-time a priori  weighted estimates for the velocity (\S \ref{Section7});
\item obtain  the  global-in-time well-posedness  of classical solutions to the nonlinear problems \eqref{secondreformulation} and \eqref{shallow}, respectively, by  the standard continuity  method (\S \ref{Section8}).
\end{enumerate}  
The global non-existence of classical solutions to the {\rm\textbf{VFBP}} \eqref{shallow} stated in  Theorem \ref{th:2.20-c} is  proved in  \S\ref{section9}. Finally, for the convenience of readers, we  list some basic facts and auxiliary lemmas which have been  used frequently in this paper in  Appendixes A-E.

\section{Reformulations and the main strategy}\label{Section2}

In this section, we first reformulate  $\eqref{secondreformulation}_1$ according to the value of $\alpha$,  and then sketch the main strategy of the  analysis.  Throughout the rest of this paper, $C(\geq 1)$ will denote
a generic  constant which depends only on fixed constants $\alpha$, $\varepsilon_0$, $|I|$ and $(\rho_0,u_0)$, which may be different from line to line; we will also use $C(\nu_1,\cdots,\nu_k)$ to emphasize the dependency of $C$ on the additional parameters  $\nu_1,\cdots, \nu_k$;  $A\sim B$ means $C^{-1}A\leq B\leq CA$; and  $\langle\cdot,\cdot\rangle$ denotes the inner product in $L^2$. 

\subsection{Reformulations}\label{Subsection2.1}

Set  $\phi_0:=\rho_0^{\alpha}$, and rewrite \eqref{secondreformulation} into the following two forms: 
\begin{itemize}
\item for $0<\alpha\leq \frac{1}{3}$,
\begin{equation}\label{fp''}
\begin{cases}
\quad\displaystyle \phi_0^{2} U_t-\left(\frac{\phi_0^{2} U_{x}}{\eta_x^2}\right)_x\\
\displaystyle=\Big(\frac{1}{\alpha}-2\Big)\frac{\phi_0(\phi_0)_x U_x}{\eta_x^2}-\bigg(\frac{\phi_0^{2+\frac{1}{\alpha}}}{\eta_x^2}\bigg)_x+\Big(2-\frac{1}{\alpha}\Big)\frac{\phi_0^{1+\frac{1}{\alpha}} (\phi_0)_x}{\eta_x^2}&\text{in }(0,T]\times I,\\
\quad \displaystyle \eta_t= U&\text{in} \ \  (0,T]\times I,\\[2pt]
\quad (U,\eta)=(u_0,\mathrm{id})&\text{on }\{t=0\}\times I;
\end{cases}
\end{equation}
\item for  $\frac{1}{3}<\alpha\leq 1$,
\begin{equation}\label{fp}
\begin{cases}
\displaystyle \phi_0^{\frac{1}{\alpha}} U_t-\bigg(\frac{\phi_0^{\frac{1}{\alpha}} U_{x}}{\eta_x^2}\bigg)_x+\bigg(\frac{\phi_0^\frac{2}{\alpha}}{\eta_x^2}\bigg)_x=0 &\text{in }(0,T]\times I,\\
\displaystyle \displaystyle \eta_t= U&\text{in} \ \  (0,T]\times I,\\[2pt]
(U,\eta)=(u_0,\mathrm{id})&\text{on }\{t=0\}\times I.
\end{cases}
\end{equation}
\end{itemize}

Under the above  reformulations, the condition \eqref{distance} is equivalent to 
\begin{equation}\label{dist}
\phi_0\in H^3\ \ \text{and} \ \ \cC_1d(x)\leq \phi_0(x)\leq \cC_2d(x)\quad \text{for all }x\in I,
\end{equation}
and the energy functionals \eqref{E-1} can be rewritten as
\begin{equation}\label{energy2.5}
\begin{aligned}
E (t,U)&=\sum_{k=0}^2 \absf{\phi_0 \partial_t^k U(t)}_2^2+\sum_{k=0}^1 \absf{\phi_0 \partial_t^k U_x(t)}_2^2 +\abs{\phi_0 \partial_t U_{xx}(t)}_2^2+\sum_{k=2}^4 \absf{\phi_0 \partial_x^k U(t)}_2^2,\\
\widetilde E (t,U)&=\sum_{k=0}^2 \absb{\phi_0^\frac{1}{2\alpha}\partial_t^k U(t)}_2^2+\sum_{k=0}^1 \absb{\phi_0^\frac{1}{2\alpha}\partial_t^k U_x(t)}_2^2
\\
&\quad
+\absb{\phi_0^{\frac{3}{2}-\varepsilon_0}\partial_t U_{xx}(t)}_2^2
 +\sum_{k=2}^4 \absb{\phi_0^{\frac{3}{2}-\varepsilon_0}\partial_x^k U(t)}_2^2.
\end{aligned}
\end{equation}

\subsection{Main strategy}\label{subsection2.3} 
Our  main strategy will be stated as follows.

\subsubsection{Local-in-time well-posedness}
Based on the  reformulations \eqref{fp''} and \eqref{fp}, Theorem \ref{theorem3.1} will be proven  by a Galerkin method and  the Picard iteration. Due to the strong degeneracy of \eqref{lp''} and \eqref{lp}, some key points should be mentioned. Here, we take the case $\alpha=1$ for example, and the other cases can be dealt with similarly.

First, we need to make some necessary adjustments to the classical Galerkin scheme. Usually,  one may choose a smooth  and  orthogonal basis $\{e_j\}_{j=1}^\infty$ of $H^1$ with $(e_j)_x|_{x\in \Gamma}=0$, which is orthonormal in $L^2$, and then  the Galerkin approximate solutions  have the form $X^n(t,x)=\sum_{j=1}^n \mu_j^n(t) e_j(x)$ with $\mu^n_j(0)=\langle u_0,e_j\rangle$. In order to establish the desired well-posedness theory in $\widetilde E (t,U)$ to   \eqref{lp},  the classical  Galerkin method (see \cite{evans}) will  first establish the  uniform energy estimates for $X^n$, then obtain the unique  weak solution $U$ of  \eqref{lp}, and finally repeat this process sequentially for $(X^n_x,X^n_t,X^n_{tx},X^n_{tt})$ to improve the tangential regularities for $U$. However,
 a  question naturally arises when establishing such kind of  the energy estimate:
 does there exist a  constant $C>0$, independent of $n$, such that 
\begin{equation}\label{initialcon}
\sum_{k=0}^2 \big|\sqrt{\rho_0}\partial_t^k X^n(0)\big|_2^2+\sum_{k=0}^1 \big|\sqrt{\rho_0}\partial_t^k X^n_x(0)\big|_2^2\leq C\widetilde E(0,U)\,? 
\end{equation}
Take  the highest order tangential estimate for example,  $X^n_{tt}(0)=\sum_{j=1}^n (\mu_j^n)''(0) e_j$ is  defined  by the equation of $\mu_j^n$ at $t=0$, while the form of the compatibility condition for $(\mu_j^n)''(0)$ is too complicated to obtain \eqref{initialcon}. In fact, the similar issue has been initially noticed in  \cites{LWX,LWX2}, where they constructed a Hilbert basis that  is orthogonal in a suitable weighted Sobolev space, so that \eqref{initialcon} can be derived directly via Bessel's inequality. 

Now we developed a new way to achieve \eqref{initialcon}. Specifically, in contrast to the classic Galerkin scheme, we consider three
linear problems:  \eqref{lp}, the linear problem of  $U_t=w^{(1)}$: 
\begin{equation}\label{eq:U_t}
\begin{cases}
\displaystyle \rho_0 w^{(1)}_{t}-\bigg(\frac{\rho_0 w^{(1)}_{x}}{\bar\eta_x^2}\bigg)_x=\bigg(\frac{2\rho_0^2 \bar U_x}{\bar \eta_x^3}-\frac{2\rho_0 \bar U_x^2}{\bar \eta_x^3}\bigg)_x&\text{in }(0,T]\times I,\\
w^{(1)} = U_t(0,x)&\text{on }\{t=0\}\times I,
\end{cases}
\end{equation}
and the linear problem of  $U_{tt}=w^{(2)}$: 
\begin{equation}\label{eq:U_tt}
\begin{cases}
\displaystyle \quad \rho_0 w^{(2)}_{t}-\bigg(\frac{\rho_0 w^{(2)}_{x}}{\bar\eta_x^2}\bigg)_x\\[8pt]
\displaystyle=\left(\frac{2\rho_0^2 \bar U_{tx}}{\bar \eta_x^3}-\frac{6\rho_0^2 \bar U_x^2}{\bar\eta_x^4}-\frac{4\rho_0 \bar U_x \bar U_{tx}}{\bar\eta_x^3}+\frac{6\rho_0 \bar U_x^3}{\bar \eta_x^4}\right)_x&\text{in }(0,T]\times I,\\
\quad w^{(2)} = U_{tt}(0,x)&\text{on }\{t=0\}\times I,
\end{cases}
\end{equation}
where the equations of $(U_t,U_{tt})$ in \eqref{eq:U_t}-\eqref{eq:U_tt} can be obtained by formally applying $\partial_t$ and $\partial^2_t$ to both sides of $\eqref{lp}_1$, respectively. First of all, one can obtain the global existence of the weak solution $w^{(0)}=U$ to  \eqref{lp} by  the Galerkin method. Next, for establishing the regularities of $(U_t,U_{tt})$, rather than starting from the equation for $X^n$, we directly obtain the unique weak solution $(w^{(1)},w^{(2)})$ for \eqref{eq:U_t}-\eqref{eq:U_tt} by the analogous Galerkin approach as for $w^{(0)}$, respectively. Since $(w^{(1)},w^{(2)})$ enjoy the same regularity of $w^{(0)}$ as a weak solution, the problem for deriving the regularities of $(U_t,U_{tt})$ is naturally converted into proving $(w^{(1)},w^{(2)})=(U_t,U_{tt})$. Here we  notice that  $(w^{(0)},w^{(1)},w^{(2)})$ satisfy the equations in \eqref{lp}, \eqref{eq:U_t}, \eqref{eq:U_tt}  in the sense of distributions, respectively, i.e., for all $\varphi\in H^1_{\rho_0}$,
\begin{equation}\label{WF-X}
\big<\rho_0 w^{(i)}_t, \varphi\big>_{H^{-1}_{\rho_0}\times H^1_{\rho_0}}+\Big<\frac{\rho_0 w^{(i)}_{x}}{\bar\eta_x^2},\varphi_x\Big>=\big<\sqrt{\rho_0}\bar R^{(i)},\varphi_x\big>,\quad i=0,1,2,
\end{equation}
where $\bar R^{(i)}$ ($i=0,1,2$) denote the   remainders. Denoting 
$$Y^{(1)}=\int_0^t w^{(1)}\,\mathrm{d}s+u_0-U \ \ \text{and}  \ \ Y^{(2)}=\int_0^t w^{(2)}\,\mathrm{d}s+U_t(0,x)-U_{t},$$
integrating \eqref{WF-X} over $[0,t]$ for $i=1$ and $2$,  and  then subtracting the resulting equations from \eqref{WF-X} for $i=0$ and $1$, respectively, one has  that $(Y^{(1)},Y^{(2)})$ satisfy
\begin{equation}\label{WF-Y}
\big<\rho_0 Y^{(i)}_t, \varphi\big>_{H^{-1}_{\rho_0}\times H^1_{\rho_0}}+\Big<\frac{\rho_0 Y^{(i)}_{x}}{\bar\eta_x^2},\varphi_x\Big>=\big<R^{(i)}_1,\varphi_x\big>+\big<R^{(i)}_2,\varphi\big>,\quad i=1,2,
\end{equation}
where $(R^{(i)}_1,R^{(i)}_2)$ ($i=1,2$) denote the remainders. Choosing the test function $\varphi=Y^{(i)}$, one gets from the energy estimates that $Y^{(i)}=0$, and thus $(w^{(1)},w^{(2)})=(U_t,U_{tt})$ a.e..

However, there are still two technical issues that need  to be addressed: 
\begin{enumerate}
\item[$\mathrm{i)}$] for the problem \eqref{eq:U_tt}, the approximate solutions in the corresponding Galerkin scheme should have the form $Y^n(t,x)=\sum_{j=1}^n \lambda_j^n(t)e_j(x)$ with $\lambda_j^n(0)=\langle U_{tt}(0),e_j\rangle$, while $\lambda_j^n(0)$ may not be well defined for  $U_{tt}(0,x)\in L^2_{\rho_0}$ only;
\item[$\mathrm{ii)}$]  $\{e_j\}_{j=1}^\infty$ is not a Hilbert basis in $L^2_{\rho_0}$, which makes it difficult to check that $\absf{\sqrt{\rho_0} Y^n(0)}_2\leq C\absf{\sqrt{\rho_0}U_{tt}(0)}_2$ uniformly with respect to  $n$ by the classical  argument based on Bessel's inequality.
\end{enumerate} 
To overcome these two difficulties, we first choose a sequence of smooth functions $\{\ddot{U}_0^\delta\}_{\delta>0}$ that converges to $U_{tt}(0,x)$ under the $L^2_{\rho_0}$-norm by means of the density theory of smooth functions in weighted Sobolev spaces, namely, 
\begin{equation*}
\{\ddot{U}_0^\delta\}_{\delta>0}\subset C^\infty(\bar I) \ \ \text{and} \ \ \absb{\sqrt{\rho_0}\ddot{U}_0^\delta-\sqrt{\rho_0}U_{tt}(0)}_2 \to 0 \ \ \text{as }\delta\to 0.
\end{equation*}
Then rewriting the corresponding approximate solutions as $Y^{n,\delta}(t,x)=\sum_{j=1}^n \lambda_j^{n,\delta}(t) e_j(x)$ with $\lambda^{n,\delta}_j(0)=\langle\ddot{U}_0^\delta,e_j\rangle$, one can get from the density arguments that 
$$
\absf{\sqrt{\rho_0} Y^{n,\delta}(0)}_2\leq C\absf{\sqrt{\rho_0}U_{tt}(0)}_2
$$
holds uniformly with respect to  $(n,\delta)$ as what we have anticipated.

Finally, the key ideas to improve the elliptic regularities for $U$ in our analysis for the local well-posedness  theory is the introduction of  a useful tool called the cross-derivatives embedding theorem (see Proposition  \ref{prop2.1} in Appendix \ref{subsection2.2}). Specifically, it is shown that under some carefully chosen parameter $s$, 
\begin{equation}\label{corssa}
\underbrace{\absf{\rho_0^s \partial_x^{j+1} U}_2}_{\text{high order term}}  \leq C(s)\big(\underbrace{\absf{\rho_0^s \partial_x^{j+1} U+\kappa\rho_0^{s-1}(\rho_0)_x\partial_x^j U}_2}_{\text{crossing term}}+\underbrace{\absf{\rho_0^s \partial_x^{j} U}_2}_{\text{lower order term}}\!\!\big) \ \ \text{for }j\in \NN,
\end{equation}
which   means that  the  weighted $L^2$ estimate on the higher   order  spatial derivatives of $U$ can be dominated by the $L^2$ estimate  on  the crossing term and the   weighted $L^2$ estimate on the  lower   order  spatial derivatives of $U$  while keeping  weights unchanged. Take the third  order elliptic estimate for example,   suppose that the second order elliptic estimate $\rho_0^{\frac{3}{2}-\varepsilon_0}U_{xx}\in L^\infty([0,T];L^2)$ and the corresponding tangential estimates  have already been given,  we can only first obtain the crossing term $\rho_0^{\frac{3}{2}-\varepsilon_0} \partial_x^3U+2\rho_0^{\frac{1}{2}-\varepsilon_0}(\rho_0)_x U_{xx}\in L^\infty([0,T];L^2)$ from $\eqref{lp}_1$ by applying $\partial_x$ to its both sides. Then under the help of  \eqref{corssa} with $s=\frac{3}{2}-\varepsilon_0$ and $j=2$, one can get $\rho_0^{\frac{3}{2}-\varepsilon_0} \partial_x^3 U\in L^\infty([0,T];L^2)$ with the  same weight  as   the second order term. It should be pointed out that for the special $\alpha=1$, 
 the original version of \eqref{corssa}, i.e.,  
\begin{equation}\label{corssa1}
\absb{\rho_0^{s+\frac{1}{2}} \partial_x^{j+1} U}_2  \leq C(s)\Big( \absb{\rho_0^{s+\frac{1}{2}} \partial_x^{j+1} U+\kappa\rho_0^{s-\frac{1}{2}}(\rho_0)_x\partial_x^j U}_2 + \absf{\rho_0^s \partial_x^{j} U}_2 \Big) \ \ \text{for }j\in \NN,
\end{equation} 
that was first announced in \cites{coutand1,coutand3} was applied  to establish the  elliptic estimates for $U$ in \cites{LWX,LWX2}. Compared  with \eqref{corssa},  \eqref{corssa1} leads to the increasing of the power of weights when establishing the higher order elliptic estimate form the lower order one. Consequently, if one starts with  $\sqrt{\rho_0}U_x\in L^\infty([0,T];L^2)$, it follows from  \eqref{corssa1} that the optimal elliptic estimates are $\rho_0^\frac{j}{2}\partial_x^{j} U\in L^\infty([0,T];L^2)$ for $j\geq 2$, and thus it is required that $\rho_0\in H^5$ and $u_0$ stays in one  weighted $H^6$ space in \cite{LWX}. While in the current paper, with the help of Proposition  \ref{prop2.1}, we can reduce the initial condition to $\rho_0\in H^3$ and $u_0$ staying in one weighted $H^4$ space for establishing the well-posedness of classical solutions.

\subsubsection{Global-in-time boundedness of $\eta_x$}
For global energy estimates for the problem \eqref{fp} without any smallness assumption,  the key point is to get the uniform upper and lower bounds for $\eta_x$, especially when BD entropy estimates are not available for the case $\alpha=1$.  

First, the Neumann boundary condition \eqref{N111}  plays a crucial role here, which indicates that $\eta_x$ will not behave singularly near   the boundary, namely,
\begin{equation}\label{2,6}
U_x(t,x)=0 \ \ \text{for }(t,x)\in [0,T]\times \Gamma\implies \eta_x(t,x)=1 \ \ \text{for }(t,x)\in [0,T]\times \Gamma.
\end{equation}
Additionally, we still need  the uniform  boundedness of $H$ in $[0,T]\times \bar I$, which can be fulfilled by integrating $\eqref{secondreformulation}_1$ with respect to $x$ over $[0,x]$. Then  the uniform  lower bound of $\eta_x$ follows easily from  \eqref{HHH}, \eqref{2,6}, the upper bound of $H$ and  the fact that $\rho_0^\alpha\sim d(x)$. 

Next, in order to get the upper bound of $\eta_x$, one key idea is to introduce the so-called effective velocity $V=U+\frac{H_x}{\rho_0}$ (see \eqref{V-expression} in \S\ref{Section6}) and to establish its global-in-time weighted estimates, especially when  the BD entropy estimates fail.  On the one hand, we note that the BD entropy estimates are available for the case $0<\alpha<1$ so that it can provide us additional information with the first derivative of $H$, which makes it possible to derive the weighted estimates for $V$ via the method of characteristics, i.e.,
\begin{equation}\label{VVV}
\rho_0^{r\alpha} V\in L^\infty([0,T];L^p),\,\,r>p^{-1}(p-1),\,\,2\leq p<\infty \ \ \text{and} \ \  \rho_0^\alpha V \in L^\infty([0,T];L^\infty);
\end{equation}
on the other hand, for the case $\alpha=1$, since  the BD condition depends merely on the degeneracy of $\rho_0$ near the boundary, the failure of BD estimates only leads to $\sqrt{\rho_0}V\notin L^\infty([0,T];L^2)$, thus  it is still possible to obtain the weighted estimates for $V$ with other different weights. In fact, we find that \eqref{VVV} also holds for $\alpha=1$. To see this, we note first that the weighted $L^p$ estimates of $V$ in \eqref{VVV} follow from the $L^p$ energy estimates of $U$ and the method of characteristics, and second that it follows from the method of characteristics that the major task to obtain $\rho_0 V\in L^\infty([0,T];L^\infty)$ in \eqref{VVV} is to get $\rho_0^2 U\in L^1([0,T];L^\infty)$. To this end,  we need another key observation involving the Eulerian coordinates. Formally, denoting by $v=u+(\log\rho)_y$ the effective velocity in Eulerian coordinates, $x=\tilde\eta(t,y)$ the inverse of the  flow map and setting $\tilde\rho_0(t,y)=\rho_0(\tilde\eta(t,y))$, we rewrite  $\eqref{shallow}_2$ by substituting $\rho_y=\rho(v-u)$ and multiplying its both sides by $\eta_x(t,\tilde\eta(t,y))$ to get
\begin{equation}\label{21110}
\tilde\rho_0 u_t+\tilde\rho_0 uu_y -\tilde\rho_0 u_{yy}-\tilde\rho_0(v-u)u_y+2\tilde\rho_0\rho(v-u)=0.
\end{equation}
Then multiplying both sides of \eqref{21110} by $\tilde\rho_0^3 u$ and integrating the resulting equality over $I(t)$, one would get $\normf{\tilde\rho_0^2 u}_{L^2(I(t))}\in L^\infty(0,T)$ and $\normf{\tilde\rho_0^2 u_y}_{L^2(I(t))}\in L^2(0,T)$ from the energy estimate, and thus obtain $\normf{\tilde\rho_0^2 u}_{L^\infty(I(t))}\in L^1(0,T)$ or, equivalently, $\rho_0^2 U\in L^1([0,T];L^\infty)$ from the fundamental theorem of calculus and Sobolev embeddings. Even though the above discussion is not rigorous, it can be rigorously carried out in the Lagrangian coordinates by first multiplying both sides of $\eqref{secondreformulation}_1$ by $\rho_0^3 \eta_x U$ and then integrating the resulting equality over $I$. The detailed calculations will be given in \S \ref{subsection6.3}.

Finally, we indicate the idea of using Eulerian coordinates to obtain the upper bound of $\eta_x$. According to \eqref{2,6}, the fact that $\rho_0^\alpha\sim d(x)$, and the identity
\begin{equation}\label{008}
\log \eta_x(t,x)=\int_0^t \frac{U_x}{\eta_x}(s,x)\,\ds = \int_0^t u_y(s,\eta(s,x))\,\ds, 
\end{equation}
it suffices to deduce the upper bound of $\rho_0^K \log\eta_x$ for some constant $K>0$ or, equivalently, to get $\normf{\tilde\rho_0^K u_y}_{L^\infty(I(t))}\in L^1(0,T)$. 
Hence, if multiplying \eqref{21110} by $\tilde\rho_0^{K_1} u_y$ for some constant $K_1>0$ and integrating the resulting equality over $I(t)$, one may formally get $\normf{\tilde\rho_0^{K_2} u_y}_{L^2(I(t))}\in L^\infty(0,T)$ and $\normf{\tilde\rho_0^{K_2} u_{yy}}_{L^2(I(t))}\in L^2(0,T)$ for some constant $K_2>0$ from the energy estimate, which, along with the fundamental theorem of calculus and Sobolev embeddings, yields $\normf{\tilde\rho_0^K u_y}_{L^\infty(I(t))}\in L^1(0,T)$. Similarly, the above formal process can be rigorously carried out in Lagrangian coordinates by multiplying both sides of $\eqref{secondreformulation}_1$ by $\rho_0^{K_1}\eta_x U_x$ and integrating the resulting equality over $I$. The detailed calculations will be given in \S\ref{subsection6.4}.

\subsubsection{Global-in-time weighted estimates of the velocity}

Formally, we indicate how to obtain the global weighted estimates for the velocity. 

First, in \S \ref{subsection7.1}, all the following tangential estimates 
\begin{equation}\label{2211}
\sqrt{\rho_0}\partial_t^j U,\,\, \sqrt{\rho_0}\partial_t^k U_x \in L^\infty([0,T];L^2) \ \ \text{for }j=0,1,2 \text{ and }k=0,1,
\end{equation}
can be obtained by using \eqref{VVV}, the lower and upper bounds for $\eta_x$ and the time-space controls proved in Lemma \ref{U_x-U_t}. It turns out that, based on  Lemma \ref{U_x-U_t}, one can control the weighted norms of $(U_x,U_{tx})$ via the time derivatives $(U_t,U_{tt})$  instead of using Hardy's inequality (Lemma \ref{hardy-inequality}), which makes it possible to close the energy estimates.

Next, in \S \ref{subsection7.2}, we derive the following second and third order weighted elliptic estimates:
\begin{equation}\label{22-33}
\rho_0^{\left(\frac{3}{2}-\varepsilon_0\right)\alpha} U_{xx},\quad \rho_0^{\left(\frac{3}{2}-\varepsilon_0\right)\alpha} \partial_x^3 U\in L^\infty([0,T];L^2).
\end{equation}
The main difficulty here is to obtain the weighted estimates for $(\eta_{xx},\partial_x^3 \eta)$. To this end, based on \eqref{HHH}, the weighted estimates for the effective velocity \eqref{VVV}, the lower and upper bounds of $\eta_x$, the tangential estimates \eqref{2211} and other estimates, we first obtain the second and third order weighted elliptic estimates in the divergence form from the equations in \eqref{secondreformulation}, i.e., 
\begin{equation}\label{dv-1}
\rho_0^{\frac{1}{2}+\varepsilon-\alpha}\left(\eta_x^{-1} U_{x}\right)_x,\quad \rho_0^{\frac{1}{2}+\varepsilon}\left(\eta_x^{-1} U_{x}\right)_{xx} \in L^\infty([0,T];L^2) \ \ \text{for any }\varepsilon>0.
\end{equation}
Then according to Hardy's inequality and  the following two identities,
\begin{equation*}
\eta_{xx}=\eta_x \int_0^t\left(\eta_x^{-1} U_{x}\right)_x\,\ds \ \ \text{and} \ \  
\partial_x^3\eta=\eta_x^{-1}\eta_{xx}^2+\eta_x\int_0^t \left(\eta_x^{-1} U_{x}\right)_{xx}\,\ds,
\end{equation*}
which follows from applying $\partial_x$ and $\partial_x^2$ to \eqref{008}, respectively,  one can get the weighted estimates for $(\eta_{xx},\partial_x^3 \eta)$, i.e.,
\begin{equation}\label{dv-2}
 \rho_0^{\frac{1}{2}+\varepsilon-\alpha} \eta_{xx} ,\,\,\rho_0^{\frac{1}{2}+\varepsilon}\partial_x^3 \eta\in L^\infty([0,T];L^2) \ \ \text{and} \ \ \rho_0^{\frac{1-\alpha}{2}+\varepsilon} \eta_{xx} \in L^\infty([0,T];L^\infty).
\end{equation}
As a consequence, according to \eqref{dv-1}-\eqref{dv-2}, we deduce that $\rho_0^{\frac{1}{2}+\varepsilon-\alpha}U_{xx}$ and $\rho_0^{\frac{1}{2}+\varepsilon}\partial_x^3 U \in L^\infty([0,T];L^2)$, which, by carefully choosing $\varepsilon$ and applying Proposition \ref{prop2.1}, yields \eqref{22-33}.

Finally, in \S \ref{subsection7.3}, we derive the fourth order elliptic estimates 
\begin{equation}\label{4444}
\rho_0^{\left(\frac{3}{2}-\varepsilon_0\right)\alpha}\partial_t U_{xx},\quad \rho_0^{\left(\frac{3}{2}-\varepsilon_0\right)\alpha} \partial_x^4 U\in L^\infty([0,T];L^2),
\end{equation}
The estimate for $\partial_t U_{xx}$ in \eqref{4444} follows  from  \eqref{2211}-\eqref{dv-2}, the lower and upper bounds of $\eta_x$ and Proposition \ref{prop2.1}. While for the estimate of $\partial_x^4 U$ in \eqref{4444}, since there is no global-in-time a priori estimate for $\partial^4_x\eta$, one needs to additionally use Hardy's inequality to control $\normf{\eta_{xx}}_{1,1}$ and $\absb{\rho_0^{\left(\frac{1}{2}-\varepsilon_0\right)\alpha}\partial_x^3 \eta}_2$ by $\absb{\rho_0^{\left(\frac{3}{2}-\varepsilon_0\right)\alpha}\partial_x^4 \eta}_2$, and to get the following inequality 
\begin{equation*}
\absb{\rho_0^{\left(\frac{3}{2}-\varepsilon_0\right)\alpha} \partial_x^4 U}_2\leq C(T)\big(1+\absb{\rho_0^{\left(\frac{3}{2}-\varepsilon_0\right)\alpha}\partial_x^4 \eta}_2\big)\leq C(T)\Big(1+\int_0^t\absb{\rho_0^{\left(\frac{3}{2}-\varepsilon_0\right)\alpha}\partial_x^4 U}_2\,\ds\Big),
\end{equation*}
then Gr\"onwall's inequality can be applied to deduce the desired result.

\section{Global-in-time well-posedness of the linearized problems}\label{Section3}

We will give the proofs for  the local-in-time well-posedness stated in Theorem \ref{theorem3.1} in \S3-\S5. In this section, we first linearize the reformulated problems \eqref{fp''}-\eqref{fp}, and then establish  the global-in-time well-posedness of  classical solutions to the linearized problems by the Galerkin scheme.
For convenience, denote by $\left<\cdot,\cdot\right>_{X^*\times X}$ the pairing between the space $X$ and its dual space $X^*$, and $\langle\cdot,\cdot\rangle$ the inner product of $L^2$, that is,
\begin{equation*}
\left<F,f\right>_{X^*\times X}:=F(f) \ \ \text{for } F\in X^*,f\in X;\quad  \langle f,g\rangle:=\int fg\,\dx \ \ \text{for }f,g\in L^2.    
\end{equation*}
In particular, if $X\into L^2 \into X^*$, and $F\in L^2$, then $\left<F,f\right>_{X^*\times X}=\langle F,f\rangle$. In addition, we denote by $\left< F,f \right>_{X_t^*(Y^*)\times X_t(Y)}$  the pairing between the vector-valued space $X([0,T];Y)$ and its dual  space $X^*([0,T];Y^*)$.

\subsection{Linearization}\label{Subsection3.1}

In order to solve the nonlinear problems \eqref{fp''}-\eqref{fp}, one needs to consider  the following linearized ones:
\begin{itemize}
\item when $0<\alpha\leq \frac{1}{3}$,
\begin{equation}\label{lp''}
\begin{cases}
\quad\displaystyle \phi_0^{2} U_t-\left(\frac{\phi_0^{2} U_{x}}{\bar\eta_x^2}\right)_x\\[8pt]
\displaystyle=\Big(\frac{1}{\alpha}-2\Big)\frac{\phi_0(\phi_0)_x U_x}{\bar\eta_x^2}-\bigg(\frac{\phi_0^{2+\frac{1}{\alpha}}}{\bar\eta_x^2}\bigg)_x+\Big(2-\frac{1}{\alpha}\Big)\frac{\phi_0^{1+\frac{1}{\alpha}} (\phi_0)_x}{\bar\eta_x^2}&\text{in } (0,T]\times I,\\[8pt]
\quad U =u_0&\text{on }\{t=0\}\times I;
\end{cases}
\end{equation}
\item when $\frac{1}{3}<\alpha\leq 1$,
\begin{equation}\label{lp}
\begin{cases}
\displaystyle \phi_0^{\frac{1}{\alpha}} U_t-\bigg(\frac{\phi_0^{\frac{1}{\alpha}} U_{x}}{\bar\eta_x^2}\bigg)_x+\bigg(\frac{\phi_0^\frac{2}{\alpha}}{\bar\eta_x^2}\bigg)_x=0&\text{in } (0,T]\times I,\\[8pt]
U =u_0&\text{on }\{t=0\}\times I,
\end{cases}
\end{equation}
\end{itemize}
where $\bar \eta $ stands for the flow map corresponding to $\bar U$,
\begin{equation}\label{given-flow}
\bar \eta (t,x)=x+\int_0^t \bar U(s,x)\,\ds,\quad \bar \eta |_{t=0}=\mathrm{id},
\end{equation}
and $\bar U$ is a given function satisfying that $\bar U(0,x)=u_0(x)$ for $x\in I$, and for any $T>0$,
\begin{itemize}
\item if $0<\alpha\leq \frac{1}{3}$,
$$\bar U\in C([0,T];H^3)\cap C^1([0,T];H^1), \ \ \sup_{t\in[0,T]}E(t,\bar U)<\infty, \ \  \bar U\in \mathscr{C}([0,T];E);$$
\end{itemize}
\begin{itemize}
\item if $\frac{1}{3}<\alpha\leq 1$,
 $$ \ \ \ \ 
 \bar U\in C([0,T];W^{3,1})\cap C^1([0,T];W^{1,1}), \ \ \sup_{t\in[0,T]}\widetilde E(t,\bar U)<\infty, \ \ \bar U\in \mathscr{C}([0,T];\widetilde E).$$
\end{itemize}

Moreover, it will be assumed here that there exists a time $\widetilde T>0$ such that  $\frac{1}{2}\leq \bar\eta_x\leq \frac{3}{2}$ on $[0,\widetilde T]\times \bar I$, which  will be shown in \S \ref{subsection4.1}-\S \ref{subsection4.2} for our linearization procedure, and we assume also $T\in (0, \widetilde T]$. The main result in this section on the global-in-time well-posedness of the linear problems above can be stated in the following lemma.
\begin{Lemma}\label{existence-linearize}
Suppose that \eqref{dist} and \eqref{a1} or \eqref{a1'} hold. Then for any $0<T\leq \widetilde T$,
\begin{enumerate}
\item[$\mathrm{i)}$] if $0<\alpha\leq \frac{1}{3}$, \eqref{lp''} admits a unique classical solution $U$ in $[0,T]\times \bar I$ satisfying 
\begin{equation*}
U\in C([0,T];H^3)\cap C^1([0,T];H^1), \ \ \sup_{t\in [0,T]}E(t,U)<\infty, \ \ U\in \mathscr{C}([0,T];E);    
\end{equation*}
\item[$\mathrm{ii)}$] if $\frac{1}{3}<\alpha\leq 1$, \eqref{lp} admits a unique classical solution $U$ in $[0,T]\times \bar I$ satisfying 
\begin{equation*}
U\in C([0,T];W^{3,1})\cap C^1([0,T];W^{1,1}), \ \  \sup_{t\in [0,T]}\widetilde E(t,U)<\infty, \ \  U\in \mathscr{C}([0,T];\widetilde E).    
\end{equation*}
\end{enumerate}
Moreover, $U$ satisfies the Neumann boundary condition, $U_x(t,x)=0$ on $ [0,T]\times \Gamma$.
\end{Lemma}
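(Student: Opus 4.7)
The plan is to solve the linearized problems \eqref{lp''}–\eqref{lp} via a modified Galerkin scheme; I focus on the harder case $\tfrac{1}{3} < \alpha \leq 1$ with energy $\widetilde E(t,U)$, since the other case is completely analogous. First I would choose the $L^2$-orthonormal, $H^1$-orthogonal Neumann eigenbasis $\{e_j\}\subset C^\infty(\bar I)$ and form $X^n(t,x) = \sum_{j=1}^n \mu_j^n(t) e_j(x)$ with $\mu_j^n(0) = \langle u_0, e_j\rangle$, solving the finite-dimensional ODE system obtained by projecting $\eqref{lp}_1$ onto $\mathrm{span}\{e_1,\dots,e_n\}$. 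Testing with $X^n$ and using the assumed bounds $\tfrac{1}{2} \leq \bar\eta_x \leq \tfrac{3}{2}$ on $[0,\widetilde T]\times\bar I$ gives global-in-$n$ existence and uniform control of the basic weighted norm, so a weak-$*$ limit yields a weak solution $w^{(0)} := U$ of $\eqref{lp}_1$ in the $L^2_{\phi_0^{1/\alpha}}$ tangential space. Linearity makes uniqueness immediate by applying the same energy estimate to the difference of two solutions.

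The delicate step is upgrading to the tangential regularity encoded in $\widetilde E$. Differentiating the Galerkin ODE in time would force an implicit compatibility condition on $(\mu_j^n)''(0)$ that is incompatible with the weighted $L^2$ bound on $U_{tt}(0)$, so instead I would independently construct Galerkin weak solutions $w^{(1)}, w^{(2)}$ of the auxiliary linear problems \eqref{eq:U_t}–\eqref{eq:U_tt}, obtained by formally applying $\partial_t$ and $\partial_t^2$ to $\eqref{lp}_1$, with initial data $U_t(0)$ and $U_{tt}(0)$ read off from the PDE at $t=0$. The difficulty that $U_{tt}(0)$ lies only in a weighted $L^2$ space, so the Fourier coefficients $\langle U_{tt}(0), e_j\rangle$ may not make sense, is resolved by first choosing a sequence $\ddot U_0^\delta \in C^\infty(\bar I)$ with $|\phi_0^{1/(2\alpha)}(\ddot U_0^\delta - U_{tt}(0))|_2 \to 0$, solving the corresponding Galerkin problem for $w^{(2)}$ with initial data $\ddot U_0^\delta$, deriving bounds uniform in both $(n,\delta)$, and then passing to the limit in $n$ followed by $\delta$. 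To identify $w^{(1)} = U_t$ and $w^{(2)} = U_{tt}$, I would set
\[
Y^{(1)} = \int_0^t w^{(1)}\,\ds + u_0 - U, \qquad Y^{(2)} = \int_0^t w^{(2)}\,\ds + U_t(0,x) - U_t,
\]
integrate the weak form of the $w^{(i)}$-equation in time, subtract from the weak form at the previous level to land at the relation \eqref{WF-Y}, and test against $Y^{(i)}$ itself; the resulting closed Gr\"onwall inequality forces $Y^{(i)} \equiv 0$.

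The last stage is the elliptic bootstrap in $x$, together with the boundary condition. With the tangential bounds for $\partial_t^k U$ and $\partial_t^k U_x$ in hand, I would rewrite $\eqref{lp}_1$ as a weighted second-order equation for $U_{xx}$ and differentiate it successively in $x$ to obtain equations for $\partial_x^3 U$ and $\partial_x^4 U$. Direct $L^2$ manipulation only produces control of the crossing combination $\phi_0^s \partial_x^{j+1} U + \kappa \phi_0^{s-1} (\phi_0)_x \partial_x^j U$, not of the top derivative alone, so to extract $|\phi_0^s \partial_x^{j+1} U|_2$ without inflating the weight exponent I would invoke the cross-derivatives embedding theorem (Proposition \ref{prop2.1}) with $s = \tfrac{3}{2} - \varepsilon_0$ (respectively $s = \alpha$ in the easier case), bootstrapping $j = 2,3$ and handling $\phi_0^{3/2-\varepsilon_0}\partial_t U_{xx}$ similarly after one time differentiation. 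The main obstacle I anticipate is controlling the commutator terms produced at each bootstrap step, which involve $(\phi_0)_x$, $\bar\eta_{xx}$ and $\bar U_x$; the admissible range of $\varepsilon_0$ in \eqref{varepsilon0} is precisely what keeps these in $L^2$, and a loss of even a single power of $\phi_0$ would be fatal. With the full weighted $H^4$ estimate in hand, the Neumann condition $U_x|_\Gamma = 0$ follows by passing to the boundary in an identity of the form \eqref{remark1.1}: the preceding estimates make $U_{xx}$, $U_t$ and $\bar\eta_{xx}$ continuous up to $\Gamma$, and since \eqref{dist} guarantees $(\phi_0)_x|_\Gamma \neq 0$, the boundary trace of that identity reduces to $(\phi_0)_x U_x \equiv 0$ on $\Gamma$, which forces $U_x|_\Gamma = 0$.
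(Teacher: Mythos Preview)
Your proposal is correct and follows essentially the same route as the paper: the modified Galerkin scheme with Neumann eigenbasis and density approximation of the weighted-$L^2$ initial data, the independent construction of $w^{(1)},w^{(2)}$ for the formally differentiated equations with identification via the $Y^{(i)}$ energy argument, the elliptic bootstrap through Proposition~\ref{prop2.1}, and the boundary trace argument from the pointwise identity. The only minor omissions are the intermediate weak boundary condition $\phi_0^{1/\alpha}U_x|_\Gamma=0$ (obtained early from the weak formulation and needed for the integration-in-$x$ arguments like \eqref{3...136}) and the auxiliary claim $U_x\in L^2([0,T];L^\infty)$ used to verify that the source terms of the $U_{tt}$ problem lie in $L^2_tL^2$, but these are routine technicalities within your outline.
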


\subsection{The Galerkin method: weak solutions to some general degenerate systems}\label{section-galerkin}
Before proving Lemma \ref{existence-linearize}, we first show the global-in-time existence of weak solutions to the general problems \eqref{galerkin-w} and \eqref{galerkin-w'} via the Galerkin method (see  Propositions \ref{prop1}-\ref{prop042}). These results will be extensively used in \S \ref{subsection3.3}-\S\ref{subsection3.4} for the proof of Lemma \ref{existence-linearize}.

\subsubsection{Case $0<\alpha\leq \frac{1}{3}$}\label{subsubsection3.2.1}
In the first case $0<\alpha\leq \frac{1}{3}$, we consider the following initial boundary value problem:
\begin{equation}\label{galerkin-w}
\begin{cases}
\displaystyle \phi_0^2 w_t - \left(\frac{\phi_0^2 w_x}{\bar\eta_x^2}\right)_x = K_0\frac{\phi_0(\phi_0)_xw_x}{\bar\eta_x^2} - \left(\phi_0 P_1\right)_x+ P_2&\text{in } (0,T]\times I,\\
w =w_0&\text{on } \{t=0\}\times I,
\end{cases}
\end{equation}
where $P_1,P_2\in L^2([0,T];L^2)$, $w_0\in L^2_{\phi_0^2}$ and $K_0\geq 0$ is a given constant, and the definition of weak solutions of this problem can be given as follows.
\begin{Definition}\label{def3.1}
A function $w(t,x)$ is said to be a weak solution in $[0,T]\times I$ to \eqref{galerkin-w}, if
\begin{enumerate}
\item[$\mathrm{i)}$] $\phi_0 w\in C([0,T];L^2), \quad \phi_0 w_x\in L^2([0,T];L^2),\quad \phi_0^2 w_t\in L^2([0,T];H^{-1}_{\phi_0^2})$;
\item[$\mathrm{ii)}$]
the following equation holds for all $\varphi\in H^1_{\phi_0^2}$ and a.e. time $0<t\leq T$,
\begin{equation}\label{weak.F.}
\begin{aligned}
 \qquad \left<\phi_0^2 w_t, \varphi\right>_{H^{-1}_{\phi_0^2}\times H^1_{\phi_0^2}}\!+\left<\frac{\phi_0^2 w_{x}}{\bar\eta_x^2},\varphi_x\right>&=\left<K_0\frac{\phi_0(\phi_0)_xw_x}{\bar\eta_x^2}+P_2,\varphi\right>+\left<\phi_0 P_1, \varphi_x\right>;
\end{aligned}
\end{equation}
\item[$\mathrm{iii)}$] $w(0,x)=w_0(x)$ for a.e. $ x\in I$. 
\end{enumerate}
\end{Definition}

The  aim of \S \ref{subsubsection3.2.1} is to establish the following existence theory and the related estimates. 
\begin{Proposition}\label{prop1}
For all $0<T\leq \widetilde T$, there exists a unique weak solution $w$ in $[0,T]\times I$ to the problem \eqref{galerkin-w}, satisfying the following estimate: 
\begin{equation*}
\begin{aligned}
&\sup_{t\in[0,T]} \abs{\phi_0 w}_2^2+\int_0^T \big(\abs{\phi_0 w_x}_2^2+ \|\phi_0^2 w_t\|_{-1,\phi_0^2}^2\,\big)\dt
\leq   C\Big(\abs{\phi_0 w_0}_2^2+ \int_0^T  (\abs{P_1}_2^2+ \abs{P_2}_2^2 \,)\dt\Big).
\end{aligned}
\end{equation*}
\end{Proposition}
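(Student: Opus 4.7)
The plan is to follow a Galerkin approach adapted to the weighted functional setting $L^2_{\phi_0^2}$ and $H^1_{\phi_0^2}$, derive uniform energy estimates, pass to the limit by weak-$*$ compactness, and deduce uniqueness from linearity. Since \eqref{galerkin-w} is linear and the coefficient $\bar\eta_x$ is uniformly comparable to $1$ on $[0,\widetilde T]$, no smallness in $T$ is required.

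\textbf{Galerkin discretization.} I would choose a countable family $\{e_j\}_{j=1}^\infty\subset C^\infty(\bar I)$ that is dense in $H^1_{\phi_0^2}$ and orthonormal in $L^2_{\phi_0^2}$; such a basis can be built via the spectral decomposition of an appropriate weighted Sturm--Liouville operator, in the spirit of the construction referenced in \S\ref{subsection2.3}. Setting $w^n(t,x)=\sum_{j=1}^n c_j^n(t)e_j(x)$, the projection of \eqref{weak.F.} onto $\mathrm{span}\{e_1,\dots,e_n\}$ yields a linear ODE system for $(c_j^n)_{j=1}^n$ with $c_j^n(0)=\langle w_0,e_j\rangle_{L^2_{\phi_0^2}}$. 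Smoothness of $\phi_0$ and the bound $\tfrac12\le\bar\eta_x\le\tfrac32$ make the coefficient matrix continuous and non-degenerate, so Picard's theorem produces a solution on the whole interval $[0,T]$.

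\textbf{Uniform energy estimate and limit.} Testing with $\varphi=w^n$ in the projected equation gives
\begin{equation*}
\tfrac12\frac{d}{dt}|\phi_0 w^n|_2^2+\Big|\frac{\phi_0 w^n_x}{\bar\eta_x}\Big|_2^2 = K_0\Big\langle\frac{\phi_0(\phi_0)_x w^n_x}{\bar\eta_x^2},w^n\Big\rangle + \langle P_2,w^n\rangle + \langle\phi_0 P_1, w^n_x\rangle.
\end{equation*}
Using the bounds on $\bar\eta_x$, the control $|(\phi_0)_x|_\infty\le C\|\phi_0\|_3$ from \eqref{dist}, Young's inequality, and a weighted Hardy-type inequality to handle $\langle P_2,w^n\rangle$, I would bound the right-hand side by $\varepsilon|\phi_0 w^n_x|_2^2+C_\varepsilon(|\phi_0 w^n|_2^2+|P_1|_2^2+|P_2|_2^2)$; absorbing the $\varepsilon$-term into the dissipation and invoking Gr\"onwall yields the claimed estimate uniformly in $n$. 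The bound on $\|\phi_0^2 w^n_t\|_{L^2(H^{-1}_{\phi_0^2})}$ then follows by duality from \eqref{weak.F.}. By weak-$*$ compactness I extract a subsequence with $\phi_0 w^n\wsconverge \phi_0 w$ in $L^\infty(L^2)$, $\phi_0 w^n_x\wconverge \phi_0 w_x$ in $L^2(L^2)$, and $\phi_0^2 w^n_t\wconverge \phi_0^2 w_t$ in $L^2(H^{-1}_{\phi_0^2})$; an Aubin--Lions-type argument provides strong convergence of $\phi_0 w^n$ in $C([0,T];L^2)$, which preserves the initial condition and lets me pass to the limit in \eqref{weak.F.}. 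Uniqueness is then immediate: the difference of two weak solutions satisfies the homogeneous problem with zero data, so the energy estimate forces it to vanish in $L^2_{\phi_0^2}$.

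\textbf{Main obstacle.} The principal technical difficulty I expect is the treatment of $\langle P_2, w^n\rangle$: since $P_2$ is only in the unweighted $L^2$ while the weak solution naturally lives in $L^2_{\phi_0^2}$, a direct Cauchy--Schwarz is not available. This forces the use of an auxiliary weighted Hardy inequality that exploits $\phi_0\sim d(x)$ from \eqref{dist} to control the unweighted $L^2$-contribution by the weighted norms appearing on the left-hand side together with $|P_2|_2^2$. A secondary subtlety is that the Galerkin basis must be compatible with the degenerate weight so that the initialization $c_j^n(0)=\langle w_0, e_j\rangle_{L^2_{\phi_0^2}}$ is uniformly controlled in $n$; the standard unweighted Laplacian eigenfunctions are inadequate for this, and the weighted Sturm--Liouville construction above is what makes both the initialization and the a priori bound at $t=0$ work out cleanly.
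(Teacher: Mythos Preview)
Your overall architecture (Galerkin, uniform energy bound, weak compactness, duality for $\phi_0^2 w_t$, uniqueness by linearity) is correct, and your choice of a basis orthonormal in $L^2_{\phi_0^2}$ is a legitimate alternative to what the paper does: the paper works with the \emph{unweighted} Neumann eigenbasis and compensates by first mollifying $w_0$ to $w_0^\delta\in C^\infty(\bar I)$, so that $\langle w_0^\delta,e_j\rangle$ is well-defined and the initial datum of the Galerkin approximant is controlled uniformly in $(n,\delta)$ via a density argument. Your weighted Sturm--Liouville basis handles the initialization via Bessel's inequality directly, as in the approach of Li--Wang--Xin that the paper mentions.

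There is, however, a genuine gap in your energy estimate for the $K_0$-term. You propose to bound
\[
K_0\Big\langle \frac{\phi_0(\phi_0)_x w^n_x}{\bar\eta_x^2},\,w^n\Big\rangle
\]
by $\varepsilon|\phi_0 w^n_x|_2^2+C_\varepsilon|\phi_0 w^n|_2^2$ using only $|(\phi_0)_x|_\infty$, Young, and Hardy. This does not close: a direct Cauchy--Schwarz gives $CK_0|\phi_0 w^n_x|_2|w^n|_2$, and controlling $|w^n|_2$ by the weighted Hardy inequality $|w^n|_2\le C(|\phi_0 w^n|_2+|\phi_0 w^n_x|_2)$ reintroduces $|\phi_0 w^n_x|_2^2$ with a \emph{fixed} coefficient $CK_0$. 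Since $K_0=\tfrac{1}{\alpha}-2$ is large for small $\alpha$, this cannot be absorbed into the dissipation $\int \phi_0^2 (w^n_x)^2/\bar\eta_x^2$. The paper resolves this by integrating by parts first: writing $\phi_0(\phi_0)_x w^n w^n_x=\tfrac12\phi_0(\phi_0)_x(|w^n|^2)_x$ and moving the derivative off $|w^n|^2$ produces a sign-favorable term $-\tfrac{K_0}{2}\int((\phi_0)_x)^2|w^n|^2/\bar\eta_x^2\le 0$ plus a genuinely lower-order remainder $-\tfrac{K_0}{2}\int\phi_0\big((\phi_0)_x/\bar\eta_x^2\big)_x|w^n|^2$, which is controlled by $|\phi_0^{1/2}w^n|_2^2$ and hence, via the weighted interpolation of Lemma~\ref{GNinequality}, by $C_\varepsilon|\phi_0 w^n|_2^2+\varepsilon|\phi_0 w^n_x|_2^2$. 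Without this integration by parts your a priori bound does not close, and the rest of the argument (compactness, passage to the limit, uniqueness) cannot proceed. Your identification of $\langle P_2,w^n\rangle$ as the ``main obstacle'' is slightly off: that term \emph{does} yield to Cauchy--Schwarz plus Hardy, because $|\phi_0 w^n_x|_2$ enters only linearly there; it is the $K_0$-term that is structurally more delicate.
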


\begin{proof}
\underline{\textbf{Step 1: introduction of the Galerkin scheme.}}  First, it follows from Lemma \ref{W-space} that, for given $w_0\in L^2_{\phi_0^2}$, there exists a smooth sequence $\{w_0^\delta\}_{\delta>0}\subset C^\infty(\bar I)$ satisfying
\begin{equation}\label{wdelta-w}
\phi_0 w^\delta_0 \to \phi_0 w_0 \ \ \text{in }L^2 \quad \text{as} \quad \delta \rightarrow 0.
\end{equation}

Second, by solving the eigenvalue problem $-\Delta e +e =\lambda e$ with Neumann boundary condition (see Chapter 9 of \cite{Sayas}), one can  choose a Hilbert basis $\{e_j\}_{j=1}^\infty$ of $H^1$ with $e_j\in C^\infty(\bar I)$ and $(e_j)_x|_{x\in \Gamma}=0$ ($j\geq 1$), which is orthonormal in $L^2$ and orthogonal in $H^1$.

Next, given any $0<\delta<1$ and $n\in \NN^*$, set 
\begin{equation}\label{U^n}
X^{n,\delta}(t,x):=\sum_{k=1}^n \mu_k^{n,\delta}(t) e_k(x),
\end{equation}
where $\mu_k^{n,\delta}(t)$ are selected by solving the following initial value problem of the ODE system:
\begin{equation}\label{galerkin-n}
\begin{cases}
\quad\displaystyle\big<\phi_0^2 X^{n,\delta}_t, e_j\big>+\bigg<\frac{\phi_0^2 X^{n,\delta}_{x}}{\bar\eta_x^2},(e_j)_x\bigg>\\[10pt]
\displaystyle=K_0\bigg<\frac{\phi_0(\phi_0)_xX^{n,\delta}_x}{\bar\eta_x^2},e_j\bigg>+\left<\phi_0 P_1, (e_j)_x\right>+\left<P_2, e_j\right>\ \ \text{in } (0,T],\\[10pt]
\quad \mu_j^{n,\delta}(0)=\left<w_0^\delta,e_j\right>,\,\,j=1,2,\cdots,n.
\end{cases}
\end{equation}
For simplicity, one can rewrite \eqref{galerkin-n} as  
\begin{equation}\label{mu^n}
\begin{cases}
\displaystyle \AA\cdot\frac{\mathrm{d}}{\dt}\mu^{n,\delta}(t)+\BB(t)\cdot\mu^{n,\delta}(t)=\CC(t) \ \ \text{in } (0,T],\\[10pt]
\mu_j^{n,\delta}(0)=\left<w_0^\delta,e_j\right>,\,\,j=1,2,\cdots,n,
\end{cases}
\end{equation}
where
\begin{equation*}
\begin{split}
\mu^{n,\delta}(t):=&(\mu_j^{n,\delta}(t))_{j=1}^n,\quad \mathbb{A}=\left(\int \phi_0^2 e_k e_j\,\dx\right)_{k,  j=1}^n,\\
\BB(t)=&\left(\int \frac{\phi_0^2 (e_k)_x (e_j)_x}{\bar\eta_x^2}\,\dx-K_0\int \frac{\phi_0(\phi_0)_x (e_k)_xe_j}{\bar\eta_x^2}\,\dx\right)_{k, j=1}^n,\\
\CC(t)=&\left(\int\phi_0 P_1(e_j)_x\,\dx+\int P_2 e_j\,\dx\right)_{j=1}^n.
\end{split}
\end{equation*}

In order to solve \eqref{mu^n}, one needs first to show that the  matrix $\AA$ is  non-singular.
\begin{Lemma}\label{singular}
The vectors $\{\phi_0 e_j\}_{j=1}^n$, $n\in\NN^*$, are linearly independent. In particular, the matrix $\AA$ is non-singular for each $n\in \NN^*$.
\end{Lemma}
\begin{proof}
Since $\{e_j\}_{j=1}^{\infty}$ are linearly independent, so are  $\{\phi_0 e_j\}_{j=1}^\infty$.
Hence its Gram matrix $\AA$ is, of course, non-singular.
\end{proof}

In addition, one can check that $\BB(t)\in W^{2,\infty}(0,T)$, $\CC(t)\in L^2(0,T)$. Thus, based on standard ODEs' theory, one can show the following existence result.
\begin{Lemma}\label{existence-mu}
There exists a small time $0<T_n\leq \widetilde T$ which depends only on $n$, such that \eqref{mu^n} admits a unique solution $\mu_j^{n,\delta}\in AC[0,T_{n}]$, for each $j=1,2,\cdots, n$. Here, $AC$ stands for the space of absolutely continuous functions. Consequently, $X^{n,\delta}(t,x)$ belongs to $AC([0,T_{n}];C^\infty(\bar I))$, and $X^{n,\delta}$ is differentiable a.e. in $t$, for each $n\in \NN^*$ and $0<\delta<1$.
\end{Lemma}
\begin{proof}
 Consider the general recursive integral equations:
\begin{equation}\label{integral-equ}
\begin{aligned}
\mu^{k+1}(t)&=\mu_0+\int_0^t \big(\bar \CC(\tau)-\bar \BB(\tau)\mu^k(\tau)\big)\,\mathrm{d}\tau,\quad k\in \NN,
\end{aligned}
\end{equation}
where $ \mu^0(t)=\mu_0= (w_0^\delta,e_j)$, $\bar \BB(t):=\AA^{-1}\cdot \BB(t)$, $\bar \CC(t):=\AA^{-1}\cdot \CC(t)$. 

Next, set $\hat \mu^k(t)=\mu^{k}(t)-\mu^{k-1}(t)$. It follows from \eqref{integral-equ} that
\begin{equation*}
\hat\mu^{k+1}(t)=-\int_0^t \bar \BB(\tau)\hat\mu^k(\tau)\,\mathrm{d}\tau,  
\end{equation*}
which leads to
\begin{equation}\label{picard-mu}
\sup_{\tau\in[0,t]}|\hat\mu^{k+1}(\tau)|\leq t|\bar \BB|_\infty \sup_{\tau\in[0,t]}|\hat\mu^{k}(\tau)|\leq \frac{1}{2} \sup_{\tau\in[0,t]}|\hat\mu^{k}(\tau)|,
\end{equation}
provided that $t\leq T_n:=\min\{(2|\bar \BB|_\infty+1)^{-1},\widetilde T\}$. Note that, by induction, \eqref{picard-mu} implies that $\sum_{k=1}^{\infty} \sup_{t\in[0,T_n]}|\hat\mu^{k}(t)|<\infty$, and hence $\{\mu^k\}_{k=0}^{\infty}$ is a Cauchy sequence that converges uniformly to some limit $\mu\in L^\infty(0,T_n)$, namely,
\begin{equation*}
\mu^k \to \mu \ \ \text{in }L^\infty \ \ \text{as }k\to \infty.
\end{equation*}

Passing the limit $k\to \infty$ in \eqref{integral-equ} shows that
\begin{equation}\label{equ39}
\mu(t)=\mu_0+\int_0^t \left(\bar \CC(s)-\bar \BB(s)\mu(s)\right)\,\ds \ \ \text{for all }0\leq t\leq T_n,
\end{equation}
which yields that $\mu(t)\in AC[0,T_n]$ and $\eqref{mu^n}_1$ holds for a.e.  $t\in (0,T_n)$.  The uniqueness and continuity  are direct  consequences of \eqref{equ39}. The proof of Lemma \ref{existence-mu} is completed.
\end{proof}

\underline{\textbf{Step 2: Uniform estimates of $X^{n,\delta}$.}} First, multiplying both sides of \eqref{galerkin-n} by $\mu^{n,\delta}_j(t)$, and then summing $j$ from $1$ to $n$, according to Lemma \ref{hardy-inequality}, one  obtains that
\begin{equation}\label{I11I22}
\begin{aligned}
&\frac{1}{2}\frac{\mathrm{d}}{\dt}\int \phi_0^2|X^{n,\delta}|^2\,\dx+\int \frac{\phi_0^2|X^{n,\delta}_x|^2}{\bar\eta_x^2}\,\dx\\
=&\underline{K_0\int \frac{\phi_0(\phi_0)_x X^{n,\delta}_x X^{n,\delta}}{\bar\eta_x^2}\,\dx}_{:=I_1}+ \int \phi_0 P_1 X^{n,\delta}_x\,\dx+\int P_2 X^{n,\delta}\,\dx\\
\leq & I_1+C\big(\abs{P_1}_2^2+\abs{P_2}_2^2\big)+\frac{1}{8}|\phi_0 X^{n,\delta}_x|_2^2.\end{aligned}
\end{equation}
For $I_1$, it follows from integration by parts and Lemma \ref{GNinequality} that
\begin{equation}\label{equ311}
\begin{aligned}
I_1
&=\underline{-\frac{K_0}{2}\int \frac{((\phi_0)_x)^2 |X^{n,\delta}|^2}{\bar\eta_x^2}\,\dx}_{\leq 0}-\frac{K_0}{2}\int \left(\frac{(\phi_0)_x}{\bar\eta_x^2}\right)_x\phi_0|X^{n,\delta}|^2\,\dx\\
&\leq C(K_0) \left(\abs{(\phi_0)_{xx}}_\infty+ \abs{(\phi_0)_x}_\infty \abs{\bar\eta_{xx}}_\infty\right)\absb{\phi_0^\frac{1}{2}X^{n,\delta}}_2^2\\
&\leq C(K_0)|\phi_0 X^{n,\delta}|_2^2+\frac{1}{8}|\phi_0 X^{n,\delta}_x|_2^2,
\end{aligned}
\end{equation}
which, along with \eqref{I11I22} and Gr\"onwall's inequality, yields that 
\begin{equation}\label{33125}
\begin{aligned}
&\sup_{t\in[0,T_{n}]} |\phi_0 X^{n,\delta}|_2^2+\int_0^{T_{n}} |\phi_0 X^{n,\delta}_x|_2^2\,\dt
\\
\leq &C(K_0)\Big(|\phi_0 X^{n,\delta}(0)|_2^2+ \int_0^{T_n} \big(\abs{P_1}_2^2+\abs{P_2}_2^2\big)\,\dt\Big),
\end{aligned}
\end{equation}
where $
X^{n,\delta}(0,x)=\sum_{j=1}^n \mu_j^{n,\delta}(0)e_j=\sum_{j=1}^n \big<w_0^\delta,e_j\big> e_j$.

To get the uniform estimate of $\phi_0X^{n,\delta}(0,x)$, on the one hand, it follows from $w_0^\delta\in L^2$ and Lemma \ref{hilbert} (in Appendix \ref{appendix A}) that
\begin{equation*}
\sum_{j=1}^n \big<w_0^\delta,e_j\big>e_j \to w_0^\delta \ \ \text{in }L^2 \ \ \text{for any fixed  } \delta>0 \quad\text{as} \quad n\rightarrow \infty,
\end{equation*}
which yields that 
\begin{equation}\label{wndelta}
\phi_0 X^{n,\delta}(0,x)\to \phi_0  w_0^\delta \ \ \text{in }L^2  \quad\text{as} \quad n\rightarrow \infty.
\end{equation}

On the other hand, by \eqref{wdelta-w}, for any $\varepsilon>0$, there exists $\delta_0=\delta_0(\varepsilon)>0$, such that
\begin{equation*}
\absf{\phi_0w^\delta_0-\phi_0w_0}_2<\frac{\varepsilon}{2}\quad \text{ for any}\quad 0<\delta\leq\delta_0.\end{equation*}
As a consequence, for such $\varepsilon,\delta_0>0$, one can find a large $N_0=N_0(\varepsilon,\delta_0)\in \NN^*$, such that  
\begin{equation*}
\absf{\phi_0 X^{n,\delta}(0)-\phi_0w_0^\delta}_2<\frac{\varepsilon}{2},
\end{equation*}
for all $n\geq N_0$, and  hence 
\begin{equation*}
\begin{aligned}
\absf{\phi_0 X^{n,\delta}(0)-\phi_0w_0}_2&\leq \absf{\phi_0 X^{n,\delta}(0)-\phi_0w_0^\delta}_2+\absf{\phi_0 w_0^\delta-\phi_0w_0}_2<\varepsilon.
\end{aligned}
\end{equation*}
Thus, choose $\varepsilon:=\abs{\phi_0 w_0}_2$ (if $w_0=0$, set $\varepsilon=1$). Then there exist $\delta_0=\delta_0(\varepsilon)>0$ and $N_0=N_0(\varepsilon,\delta_0)\in \NN^*$, such that for all $\delta\leq \delta_0$ and $n\geq N_0$,
\begin{equation}\label{initial-converge}
|\phi_0 X^{n,\delta}(0)|_2\leq 2\abs{\phi_0 w_0}_2.
\end{equation}

Combining \eqref{33125} and \eqref{initial-converge}, one has for all $0\leq t\leq T_n$,
\begin{equation}\label{3...18}
\begin{aligned}
\absf{\phi_0 X^{n,\delta}(t)}_2^2+\int_0^{t} \absf{\phi_0 X^{n,\delta}_x}_2^2\,\dt
&\leq C(K_0)\Big(\abs{\phi_0 w_0}_2^2+ \int_0^{\widetilde T} \big(\abs{P_1}_2^2+\abs{P_2}_2^2\big)\,\dt\Big).
\end{aligned}
\end{equation}
Clearly, it follows from \eqref{3...18} that the local solution $\mu^{n,\delta}$ on $[0,T_n]$ in Lemma \ref{existence-mu} can be extended to a global one on $[0,T]$, for all $0<T\leq \widetilde T$. More precisely, assuming contrarily that $T_n<\widetilde T$ is the maximal life span of $\mu^{n,\delta}$, according to \eqref{3...18}, one has
\begin{equation}
\absf{\phi_0X^{n,\delta}(T_n)}_2\leq \limsup_{t\to T_n^-} \absf{\phi_0X^{n,\delta}(t)}_2\leq C\implies X^{n,\delta}(T_n,x)\in L^2_{\phi_0^2}.
\end{equation}
Then $\mu_j^{n,\delta}(T_n)=\left<X^{n,\delta}(T_n),e_j\right>$ can be regarded as a new initial value of  \eqref{mu^n}. Thanks to Lemma \ref{existence-mu}, there exists a small time $T_n'>0$, such that $\mu_j^{n,\delta}$ exists uniquely on $[0,T_n+T_n']$, which contradicts to our assumption. Therefore, \eqref{3...18} holds for any $0<T\leq \widetilde T$, that is,
\begin{equation}\label{3...18'}
\begin{aligned}
&\sup_{t\in[0,T]} \absf{\phi_0 X^{n,\delta}}_2^2+\int_0^{T} \absf{\phi_0 X^{n,\delta}_x}_2^2\,\dt
\leq C(K_0)\Big(\abs{\phi_0 w_0}_2^2+ \!\int_0^T\!\! \big(\abs{P_1}_2^2+\abs{P_2}_2^2\big)\,\dt\Big).
\end{aligned}
\end{equation}

\underline{\textbf{Step 3: Taking  the limit as $n,\delta^{-1}\rightarrow \infty$.}}
Based on \eqref{3...18'}, via the weak convergence arguments, one may extract a subsequence (still denoted by) $X^{n,\delta}$ satisfying
\begin{equation}\label{3..124}
\begin{aligned}
\phi_0 X^{n,\delta} \rightharpoonup X_1\quad  &\text{weakly* in }L^\infty([0,T];L^2),\\
\phi_0 X^{n,\delta}_x \rightharpoonup X_2\quad &\text{weakly in }L^2([0,T];L^2),
\end{aligned}
\end{equation}
for some limits $X_1,X_2$. By Lemma \ref{hardy-inequality}, it holds that $\{X^{n,\delta}\}\subset L^2([0,T];L^2)$ and 
\begin{equation*}
X^{n,\delta}\rightharpoonup w\quad  \text{weakly in }L^2([0,T];L^2),
\end{equation*}
which, by  the definition of weak derivatives, yields that 
$
X_1=\phi_0 w$ and $X_2=\phi_0 w_x$.   
Moreover,  the above  weak convergences imply that \eqref{3...18'} also holds for $w$.

Now one can pass the limit as $n,\delta^{-1}\to \infty$ in \eqref{galerkin-n}. Setting $\Phi^m(t,x)=\sum_{j=1}^m \xi_j(t)e_j$, where $\xi(t)\in C_c^\infty(0,T)$, then, for $n\geq m$, it follows from \eqref{galerkin-n} that 
\begin{equation}\label{equ3320}
\begin{aligned}
&\int_0^T\big<\phi_0^2 X^{n,\delta}_t,\Phi^m\big>\,\dt+\int_0^T\bigg<\frac{\phi_0^2 X^{n,\delta}_x}{\bar\eta_x^2},\Phi_x^m\bigg>\,\dt\\
=&K_0\int_0^T \bigg<\frac{\phi_0(\phi_0)_x X^{n,\delta}_x}{\bar\eta_x^2},\Phi^m\bigg>\,\dt+ \int_0^T \left<\phi_0 P_1,\Phi_x^m\right>\,\dt+\int_0^T \left<P_2,\Phi_x^m\right>\,\dt.
\end{aligned}
\end{equation}

Next, since $X^{n,\delta}$ and $\Phi^m$ are regular with respect to $t$, then in \eqref{equ3320}, one can transfer  $\partial_t$ from $X^{n,\delta}$ to $\Phi^m$, 
and then let $n,\delta^{-1}\to \infty$, which, along with  \eqref{3..124}, yields  that
\begin{equation}\label{3,,20}
\begin{aligned}
&-\int_0^T\left<\phi_0^2 w, \Phi^m_t\right>\,\dt+\int_0^T\left<\frac{\phi_0^2 w_x}{\bar\eta_x^2},\Phi_x^m\right>\,\dt\\
=&K_0\int_0^T \left<\frac{\phi_0(\phi_0)_x w_x}{\bar\eta_x^2},\Phi^m\right>\,\dt+ \int_0^T \left<\phi_0 P_1,\Phi_x^m\right>\,\dt+\int_0^T \left<P_2,\Phi^m_x\right>\,\dt.
\end{aligned}
\end{equation}

Since $\phi_0^2w\in L^2([0,T];L^2)$, $\phi_0^2w_t\in H^{-1}([0,T];L^2)\subset H^{-1}([0,T];(H^1)^*)$, 
it follows  from \eqref{3,,20},
Lemma \ref{hardy-inequality} and the definition of  distributional derivatives that
\begin{align}
&\absb{\big<\phi_0^2 w_t, \Phi^m\big>_{H_t^{-1}((H^{1})^*)\times H^1_{0,t}(H^1)}}
=\absB{\int_0^T\left<\phi_0^2 w, \Phi^m_t\right>\,\dt}\notag\\
\leq &\int_0^T\absB{\left<\frac{\phi_0^2 w_x}{\bar\eta_x^2},\Phi_x^m\right>}\,\dt+K_0\int_0^T \absB{\left<\frac{\phi_0(\phi_0)_x w_x}{\bar\eta_x^2},\Phi^m\right>}\,\dt\label{equ321}\\
&+\int_0^T \abs{\left<\phi_0 P_1,\Phi_x^m\right>}\,\dt+\int_0^T \abs{\left<P_2,\Phi^m\right>}\,\dt\notag\\
\leq &C(K_0)\Big(\norm{\phi_0 w_x}_{L^2_t(L^2)}+\sum_{i=1}^2\norm{P_i}_{L^2_t(L^2)}\Big)\norm{\Phi^m}_{L^2_t(H^1_{\phi_0^2})},\notag
\end{align}
for which we can use Lemma \ref{Banach} to extend $\phi_0^2w_t$ from  a functional defined on $H^1_0([0,T];H^1)$ to a functional defined on $L^2([0,T];H^1_{\phi_0^2})$, and obtain that
\begin{equation}\label{H-1}
\normf{\phi_0^2 w_t}_{L^2_t(H^{-1}_{\phi_0^2})}\leq C(K_0) \Big(\norm{\phi_0 w_x}_{L^2_t(L^2)}+\sum_{i=1}^2\norm{P_i}_{L^2_t(L^2)}\Big)\leq C(K_0).
\end{equation}

Here, according to the conditions of Lemma \ref{Banach}, one still has to check that $\left\{\Phi^m\right\}_{m\in\NN^*}$ is dense in $L^2([0,T];H^1_{\phi_0^2})$ to ensure the uniqueness of the extension. Indeed, it suffices to prove the density of  $\mathrm{span}\{e_i\}_{i=1}^{\infty}$ in $H^1_{\phi_0^2}$. First, one deduces from Lemma \ref{W-space} that, for any given $f\in H^1_{\phi_0^2}$, there exists a sequence $\{f^\delta\}_{\delta>0}\subset C^\infty(\bar I)$ satisfying
\begin{equation*}
f^\delta\to f \ \ \text{in }H^1_{\phi_0^2} \ \ \text{as }\delta\to 0.    
\end{equation*}
Next, for every $g\in H^1$, according to Lemma \ref{hilbert}, $\sum_{j=1}^m \langle g,e_j\rangle e_j$ converges to $g$ in $H^1$, and hence in $H^1_{\phi_0^2}$ as $m\to \infty$. Then setting $f^{m,\delta}:=\sum_{j=1}^m \langle f^\delta,e_j\rangle e_j$, one successfully constructs a sequence of functions $\{f^{m,\delta}\}\subset\mathrm{span}\{e_i\}_{i=1}^{\infty}$ that converges to $f$ in the sense of $H^1_{\phi_0^2}$, which shows the claim.

Analogously, taking  the limit as $m\to \infty$ in \eqref{3,,20}, according to  \eqref{H-1}, one has 
\begin{equation}\label{3..133}
\begin{aligned}
&\int_0^T\left<\phi_0^2 w_t,\Phi\right>_{H^{-1}_{\phi_0^2}\times H^{1}_{\phi_0^2}}\,\dt+\int_0^T\left<\frac{\phi_0^2 w_x}{\bar\eta_x^2},\Phi_x\right>\,\dt\\
=&K_0\int_0^T \left<\frac{\phi_0(\phi_0)_x w_x}{\bar\eta_x^2},\Phi\right>\,\dt+ \int_0^T \left<\phi_0 P_1,\Phi_x\right>\,\dt+\int_0^T \left<P_2,\Phi_x\right>\,\dt,  
\end{aligned} 
\end{equation}
for all $\Phi\in L^2([0,T];H^1_{\phi_0^2})$. Finally, choosing $\Phi(t,x)=\varphi(x)\in H^1_{\phi_0^2}$ and applying $\partial_t$ to both sides of \eqref{3..133}, one can show that the weak formulation \eqref{weak.F.} holds.

\underline{\textbf{Step 4: Uniqueness and time continuity.}}
Since $w\in L^2([0,T];H^1_{\phi_0^2})$, it follows from \eqref{H-1} and Lemma \ref{Aubin} that
\begin{equation}\label{C-w}
\phi_0 w\in C([0,T];L^2).
\end{equation}

It remains to show $w(0,x)=w_0$ a.e. $x\in I$. On the one hand, thanks to \eqref{weak.F.} and \eqref{C-w}, for any $\Phi\in C_c^1([0,T);H^1_{\phi_0^2})$, it holds that
\begin{equation}\label{3...125}
\begin{aligned}
&-\int_0^T \left<\Phi_t,\phi_0^2 w\right>\,\dt+\int_0^T \left<\frac{\phi_0^2w_x}{\bar\eta_x^2},\Phi_x\right>\,\dt-K_0\int_0^T \left<\frac{\phi_0(\phi_0)_x w_x}{\bar\eta_x^2},\Phi\right>\,\dt\\
=&\left<\phi_0^2 w(0),\Phi(0)\right>+\int_0^T \left<\phi_0 P_1,\Phi_x\right>\,\dt+\int_0^T \left<P_2,\Phi_x\right>\,\dt.
\end{aligned}
\end{equation}
On the other hand, choosing $\Phi^m(t,x)=\sum_{j=1}^m \xi_j(t) e_j$, $\xi(t)\in C^\infty_c[0,T)$ satisfying $\Phi^m\to \Phi$ in $C^1_c([0,T);H^1_{\phi_0^2})$, as $m\to \infty$, one gets from \eqref{equ3320} that for all $m\leq n$,
\begin{equation*}
\begin{aligned}
&-\int_0^T \big<\Phi^m_t,\phi_0^2 X^{n,\delta}\big>\,\dt+\int_0^T \bigg<\frac{\phi_0^2 X^{n,\delta}_x}{\bar\eta_x^2},\Phi^m_x\bigg>\,\dt-K_0\int_0^T \bigg<\frac{\phi_0(\phi_0)_x X^{n,\delta}_x}{\bar\eta_x^2},\Phi^m\bigg>\,\dt\\
=&\big<\phi_0^2 X^{n,\delta} (0),\Phi^m(0)\big>+\int_0^T \left<\phi_0 P_1,\Phi^m_x\right>\,\dt+\int_0^T \left<P_2,\Phi^m_x\right>\,\dt,
\end{aligned}
\end{equation*}
which, by taking the limit as $m, n\to \infty$, leads to
\begin{equation}\label{3...126}
\begin{aligned}
&-\int_0^T \left<\Phi_t,\phi_0^2 w\right>\,\dt+\int_0^T \left<\frac{\phi_0^2w_x}{\bar\eta_x^2},\Phi_x\right>\,\dt-K_0\int_0^T \left<\frac{\phi_0(\phi_0)_x w_x}{\bar\eta_x^2},\Phi\right>\,\dt\\
=&\left<\phi_0^2 w_0,\Phi(0)\right> +\int_0^T \left<\phi_0 P_1,\Phi_x\right>\,\dt+\int_0^T \left<P_2,\Phi_x\right>\,\dt.
\end{aligned}
\end{equation}
It follows from  \eqref{3...125}-\eqref{3...126} that  $w(0,x)=w_0$ for a.e. $x\in I$. Finally, setting $w_0=0$, $\varphi=w$ and $P_1=P_2=0$, one gets easily that $w=0$, which implies the uniqueness.

The proof of Proposition \ref{prop1} is completed.
\end{proof}

\begin{Remark}
\eqref{3..133} and \eqref{3...125} are equivalent to  \eqref{weak.F.} (see  Chapter 7 in \cite{evans} for details).
\end{Remark}

\subsubsection{Case $\frac{1}{3}<\alpha\leq 1$}\label{subsubsection3.2.2}
In this case, we  consider the following initial boundary value problem:
\begin{equation}\label{galerkin-w'}
\begin{cases}
\displaystyle \phi_0^\frac{1}{\alpha} w_t - \bigg(\frac{\phi_0^\frac{1}{\alpha} w_x}{\bar\eta_x^2}\bigg)_x = - \big(\phi_0^\frac{1}{2\alpha} P_3\big)_x&\text{in }(0,T]\times I,\\
w =w_0&\text{on }\{t=0\}\times I,
\end{cases}
\end{equation}
where $P_3\in L^2([0,T];L^2)$ and $w_0\in L^2_{\phi_0^{1/\alpha}}$,
and the definition of weak solutions of this problem can be given as follows.

\begin{Definition}
A function $w(t,x)$ is said to be a weak solution in $[0,T]\times I$, to  \eqref{galerkin-w'}, if
\begin{enumerate}
\item[$\mathrm{i)}$] $\phi_0^\frac{1}{2\alpha} w\in C([0,T];L^2), \quad \phi_0^\frac{1}{2\alpha}w_x\in L^2([0,T];L^2),\quad \phi_0^\frac{1}{\alpha}w_t\in L^2([0,T];H^{-1}_{\phi_0^{1/\alpha}})$;
\item[$\mathrm{ii)}$]
the following equation holds for all $\varphi\in H^1_{\phi_0^{1/\alpha}}$ and a.e. time $0<t\leq T$, \begin{equation*}
\begin{aligned}
&\big<\phi_0^\frac{1}{\alpha} w_t, \varphi\big>_{H^{-1}_{\phi_0^{1/\alpha}}\times H^1_{\phi_0^{1/\alpha}}}+\bigg< \frac{\phi_0^\frac{1}{\alpha}w_{x}}{\bar\eta_x^2},\varphi_x\bigg>=\big<\phi_0^\frac{1}{2\alpha}P_3, \varphi_x\big>;
\end{aligned}
\end{equation*}
\item[$\mathrm{iii)}$] $w(0,x)=w_0(x)$  for a.e. $x\in I$. 
\end{enumerate}
\end{Definition}

Observe that one can set $K_0=P_2=0$ and replace $(\phi_0,P_1)\mapsto (\phi_0^\frac{1}{2\alpha},P_3)$ in \eqref{galerkin-w} to obtain \eqref{galerkin-w'}. Compared with $\eqref{galerkin-w}_1$,  $\eqref{galerkin-w'}_1$ takes a simpler form since such kind of terms as $K_0\bar\eta_x^{-2}\phi_0(\phi_0)_x w_x$ and $P_2$ in $\eqref{galerkin-w}_1$ do not appear in $\eqref{galerkin-w'}_1$. Thus, following the proof of Proposition \ref{prop1} with $K_0=P_2=0$ and $(\phi_0,P_1)$ replaced by $(\phi_0^\frac{1}{2\alpha},P_3)$, 
one can get the following result.

\begin{Proposition}\label{prop042}
There exists a unique weak solutions $w$ to the problem \eqref{galerkin-w'}, satisfying the following estimate: for all $0<T\leq \widetilde T$,
\begin{equation*}
\begin{aligned}
&\sup_{t\in[0,T]} \absb{\phi_0^\frac{1}{2\alpha} w}_2^2+\int_0^T \Big(\absb{\phi_0^\frac{1}{2\alpha} w_x}_2^2+\big\|\phi_0^\frac{1}{\alpha} w_t\big\|_{-1,\phi_0^{1/\alpha}}^2\Big)\,\dt
\leq  C\absb{\phi_0^\frac{1}{2\alpha} w_0}_2^2+ C\int_0^T \abs{P_3}_2^2\,\dt.
\end{aligned}
\end{equation*}
\end{Proposition}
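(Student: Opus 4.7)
The plan is to mirror the four-step Galerkin argument used in the proof of Proposition~\ref{prop1}, carrying out the substitutions $(\phi_0, P_1) \mapsto (\phi_0^{1/(2\alpha)}, P_3)$ and $K_0 = P_2 = 0$ throughout, while checking that the weighted embeddings continue to apply.

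First I would approximate the initial datum $w_0 \in L^2_{\phi_0^{1/\alpha}}$ by a sequence $\{w_0^\delta\}_{\delta > 0} \subset C^\infty(\bar{I})$ with $\phi_0^{1/(2\alpha)} w_0^\delta \to \phi_0^{1/(2\alpha)} w_0$ in $L^2$, using the density of smooth functions in weighted Sobolev spaces (Lemma~\ref{W-space}). Using the same Hilbert basis $\{e_j\}_{j=1}^\infty$ of $H^1$ with Neumann data at $\Gamma$, I would set $X^{n,\delta}(t,x) = \sum_{k=1}^n \mu_k^{n,\delta}(t) e_k(x)$ and derive the ODE system by testing $\eqref{galerkin-w'}_1$ against $e_j$. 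The resulting system reads $\widetilde{\mathbb{A}} \cdot \tfrac{d}{dt}\mu^{n,\delta} + \widetilde{\mathbb{B}}(t) \cdot \mu^{n,\delta} = \widetilde{\mathbb{C}}(t)$ with $\widetilde{\mathbb{A}} = (\langle \phi_0^{1/\alpha} e_k, e_j \rangle)_{k,j}$. The matrix $\widetilde{\mathbb{A}}$ is non-singular since $\{\phi_0^{1/(2\alpha)} e_j\}_{j=1}^n$ are linearly independent (Gram matrix argument as in Lemma~\ref{singular}), so a unique absolutely continuous local solution exists by the Picard iteration used in Lemma~\ref{existence-mu}.

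Next, the uniform energy estimate comes by multiplying the ODE by $\mu_j^{n,\delta}$ and summing:
\[
\tfrac{1}{2} \tfrac{d}{dt} \absb{\phi_0^{1/(2\alpha)} X^{n,\delta}}_2^2 + \int \frac{\phi_0^{1/\alpha} |X^{n,\delta}_x|^2}{\bar\eta_x^2}\,dx = \int \phi_0^{1/(2\alpha)} P_3 X^{n,\delta}_x\,dx.
\]
Since the $I_1$-type term and the $P_2$-term are both absent here, Cauchy–Schwarz, Young's inequality, and the bound $\tfrac{1}{2} \leq \bar\eta_x \leq \tfrac{3}{2}$ on $[0,\widetilde{T}] \times \bar{I}$ immediately give the desired bound without invoking Gr\"onwall. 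Combined with the analogue of \eqref{initial-converge} (obtained by the same two-stage approximation argument), this yields
\[
\sup_{t \in [0,T]} \absb{\phi_0^{1/(2\alpha)} X^{n,\delta}}_2^2 + \int_0^T \absb{\phi_0^{1/(2\alpha)} X^{n,\delta}_x}_2^2\,dt \leq C\Big(\absb{\phi_0^{1/(2\alpha)} w_0}_2^2 + \int_0^T \abs{P_3}_2^2\,dt\Big)
\]
uniformly in $n$ and $\delta$, and the standard continuation argument extends $\mu^{n,\delta}$ to $[0,T]$ for any $T \leq \widetilde{T}$.

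Passing to the limit $n, \delta^{-1} \to \infty$ then proceeds verbatim as in Steps~3--4 of Proposition~\ref{prop1}: weak-$*$ and weak subsequential limits identify the limit $w$ with $\phi_0^{1/(2\alpha)} w \in L^\infty_t(L^2)$ and $\phi_0^{1/(2\alpha)} w_x \in L^2_t(L^2)$; the distributional time derivative $\phi_0^{1/\alpha} w_t$ is controlled in $L^2([0,T]; H^{-1}_{\phi_0^{1/\alpha}})$ by applying Lemma~\ref{Banach} to the functional defined by the weak formulation, after verifying that $\mathrm{span}\{e_j\}_{j=1}^\infty$ is dense in $H^1_{\phi_0^{1/\alpha}}$ (via Lemma~\ref{W-space} plus the Hilbert-basis expansion in $H^1$); time continuity of $\phi_0^{1/(2\alpha)} w$ follows from the Aubin–Lions-type Lemma~\ref{Aubin}; and attainment of the initial data together with uniqueness follow from the analogue of the computation $\eqref{3...125}$–$\eqref{3...126}$ by taking $w_0 = 0$ and $P_3 = 0$ in the energy identity. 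The only point requiring any separate verification is that the weighted density/embedding lemmas used with weight $\phi_0$ remain valid with weight $\phi_0^{1/(2\alpha)}$, but since $1/(2\alpha) \in [1/2, 3/2)$ when $\alpha \in (1/3, 1]$, this is within the range covered by the weighted Sobolev tools of Appendix~A; I expect no serious obstacle beyond bookkeeping of weights.
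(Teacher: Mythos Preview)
Your proposal is correct and follows essentially the same approach as the paper, which simply observes that setting $K_0=P_2=0$ and replacing $(\phi_0,P_1)\mapsto(\phi_0^{1/(2\alpha)},P_3)$ in the proof of Proposition~\ref{prop1} yields the result, noting that the absence of the $I_1$-type cross term and the $P_2$ term makes the argument strictly simpler. Your remark that Gr\"onwall is not needed here and your check that the weighted Sobolev tools of Appendix~A apply for the exponent $1/(2\alpha)$ are both accurate refinements of what the paper leaves implicit.
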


\subsection{Proof of Lemma \ref{existence-linearize} when  \texorpdfstring{$0<\alpha\leq \frac{1}{3}$}{}}\label{subsection3.3}

Now, we start to prove Lemma \ref{existence-linearize}. The proof for the case $0<\alpha\leq \frac{1}{3}$ is given by the following several steps.

\begin{proof}
\underline{\textbf{Step 1: Tangential estimate $\phi_0U\in C([0,T];L^2)$.}} Turn back to \eqref{lp''} and let 
\begin{equation*}
\begin{split}
\quad w_0^{(0)}:=&u_0,\ \  K_0:=\frac{1}{\alpha}-2>0,\ \ 
P_1^{(0)}:=\phi_0^\frac{1+\alpha}{\alpha}\bar\eta_x^{-2},\ \  P_2^{(0)}:=\Big(2-\frac{1}{\alpha}\Big)\phi_0^\frac{1+\alpha}{\alpha}(\phi_0)_x\bar\eta_x^{-2}.
\end{split}
\end{equation*}
It is clear that $P_1^{(0)},P_2^{(0)}\in L^2([0,T];L^2)$ and $w_0^{(0)}\in L^2_{\phi_0^2}$. Then it follows from Proposition \ref{prop1} that there exists a unique weak solution $w^{(0)}=U$ satisfying
\begin{equation}\label{3..142}
\phi_0 U\in C([0,T];L^2),\quad \phi_0 U_x\in L^2([0,T];L^2),\quad \phi_0^2 U_t\in L^2([0,T];H^{-1}_{\phi_0^2}).
\end{equation}

\underline{\textbf{Step 2: Tangential estimates $\phi_0U_x,\phi_0U_t\in C([0,T];L^2)$.}}
Applying $\partial_t$ to both sides of $\eqref{lp''}_1$ formally, one has
\begin{equation}\label{33151}
\phi_0^2 U_{tt}-\left(\frac{\phi_0^2 U_{tx}}{\bar\eta_x^2}\right)_x=\Big(\frac{1}{\alpha}-2\Big)\frac{\phi_0(\phi_0)_x U_{tx}}{\bar\eta_x^2}-\big(\phi_0P_1^{(1)}\big)_x+P_2^{(1)},
\end{equation}
where
\begin{equation*}
\begin{aligned}
P_1^{(1)}&:=\frac{2\phi_0 \bar U_x U_{x}}{\bar\eta_x^3}-\frac{2\phi_0^\frac{1+\alpha}{\alpha} \bar U_x}{\bar\eta_x^3},\\
P_2^{(1)}&:=\Big(\frac{1}{\alpha}-2\Big)\bigg(\frac{2\phi_0^\frac{1+\alpha}{\alpha}(\phi_0)_x\bar U_x}{\bar\eta_x^3}-\frac{2\phi_0(\phi_0)_x\bar U_x U_x}{\bar\eta_x^3}\bigg).
\end{aligned}
\end{equation*}

Then regard \eqref{33151} as the equation of $w^{(1)}:=U_t$ and consider the problem 
\begin{equation}\label{33152}
\begin{cases}
\displaystyle \quad\phi_0^2 w^{(1)}_t-\bigg(\frac{\phi_0^2 w^{(1)}_{x}}{\bar\eta_x^2}\bigg)_x\\[8pt]
\displaystyle
=\Big(\frac{1}{\alpha}-2\Big)\frac{\phi_0(\phi_0)_x w^{(1)}_{x}}{\bar\eta_x^2}-\big(\phi_0P_1^{(1)}\big)_x+P_2^{(1)}&\text{in }(0,T]\times I,\\[4pt]
\quad w^{(1)} = U_t(0,x)&\text{on }\{t=0\}\times I.
\end{cases}
\end{equation}
Similarly, it follows from \eqref{3..142} that $w^{(1)}(0,x)\in L^2_{\phi_0^2}$ and $P_1^{(1)},P_2^{(1)}\in L^2([0,T];L^2)$. Consequently, by Proposition \ref{prop1},  the weak solution $w^{(1)}$ to  \eqref{33152} exists uniquely satisfying
\begin{equation}\label{3.142'}
\phi_0 w^{(1)}\in C([0,T];L^2),\quad \phi_0 w^{(1)}_x\in L^2([0,T];L^2),\quad \phi_0^2 w^{(1)}_t\in L^2([0,T];H^{-1}_{\phi_0^2}).
\end{equation}
Now, we check that $w^{(1)}=U_t$. Indeed, since $U$ and $w^{(1)}$ are weak solutions to the problems \eqref{lp''} and \eqref{33152}, respectively, one gets from \eqref{weak.F.} that for all $\varphi\in H^1_{\phi_0^2}$ and a.e. time $0<t\leq T$,
\begin{equation}\label{equ1}
\begin{aligned}
&\left<\phi_0^2 U_t,\varphi\right>_{H^{-1}_{\phi_0^2}\times H^{1}_{\phi_0^2}}+\left<\frac{\phi_0^2 U_x}{\bar\eta_x^2},\varphi_x\right>\\
=&\Big(\frac{1}{\alpha}-2\Big)\left<\frac{\phi_0(\phi_0)_x U_x}{\bar\eta_x^2},\varphi\right>+ \big<\phi_0 P_1^{(0)},\varphi_x\big>+\big<P_2^{(0)},\varphi\big>,
\end{aligned} 
\end{equation}  
and
\begin{equation}\label{equ1'}
\begin{aligned}
&\big<\phi_0^2 w^{(1)}_t,\varphi\big>_{H^{-1}_{\phi_0^2}\times H^{1}_{\phi_0^2}} + \bigg<\frac{\phi_0^2 w^{(1)}_x}{\bar\eta_x^2},\varphi_x\bigg> \\
=&\Big(\frac{1}{\alpha}-2\Big) \bigg<\frac{\phi_0(\phi_0)_x w^{(1)}_x}{\bar\eta_x^2},\varphi\bigg> + \big<\phi_0 P_1^{(1)},\varphi_x\big>+ \big<P_2^{(1)},\varphi\big>.  
\end{aligned} 
\end{equation}
Next, define 
$$W(t,x):=\int_0^t w^{(1)}(s,x)\,\ds+ u_0(x) \ \ \text{and}\ \ Y:=W-U.$$
It suffices to show that $Y=0$ pointwisely. To get this, substituting $W$ into \eqref{equ1'} and integrating the resulting equality over $[0,t]$ for $0< t\leq T$, then according to the compatibility condition $\eqref{1..16}_1$, one has
\begin{align*}
&\big<\phi_0^2 W_t,\varphi\big>+\left<\frac{\phi_0^2 W_x}{\bar\eta_x^2},\varphi_x\right>-\Big(\frac{1}{\alpha}-2\Big)\left<\frac{\phi_0(\phi_0)_x W_x}{\bar\eta_x^2},\varphi\right>\\
=&-\int_0^t \left<\frac{2\phi_0^2 \bar U_x Y_x}{\bar\eta_x^3},\varphi_x\right>\,\ds+\Big(\frac{1}{\alpha}-2\Big)\int_0^t \left<\frac{2\phi_0(\phi_0)_x \bar U_x Y_x}{\bar\eta_x^3},\varphi\right>\,\ds\\
&+\big<\phi_0P_1^{(0)},\varphi_x\big>+\big<P_2^{(0)},\varphi\big>, 
\end{align*}
which, together with \eqref{equ1}, leads to
\begin{equation}\label{equ3}
\begin{aligned}
&\left<\phi_0^2 Y_t,\varphi\right>_{H^{-1}_{\phi_0^2}\times H^{1}_{\phi_0^2}}+\left<\frac{\phi_0^2 Y_x}{\bar\eta_x^2},\varphi_x\right>-\Big(\frac{1}{\alpha}-2\Big)\left<\frac{\phi_0(\phi_0)_x Y_x}{\bar\eta_x^2},\varphi\right>\\
=&-\int_0^t \left<\frac{2\phi_0^2 \bar U_x Y_x}{\bar\eta_x^3},\varphi_x\right>\,\ds +\Big(\frac{1}{\alpha}-2\Big)\int_0^t \left<\frac{2\phi_0(\phi_0)_x \bar U_x Y_x}{\bar\eta_x^3},\varphi\right>\,\ds. 
\end{aligned}
\end{equation}
Since $Y\in L^2([0,T];H^1_{\phi_0^2})$, one may set $\varphi=Y$ in \eqref{equ3}. Then it follows from the same calculations in \eqref{equ311}, Lemma \ref{hardy-inequality} and Young's inequality that
\begin{equation*}
\begin{aligned}
\frac{\mathrm{d}}{\dt}\absf{\phi_0 Y}_2^2+\absf{\phi_0 Y_x}_2^2\leq C\absf{\phi_0 Y}_2^2+C(\absf{\bar U_x}_\infty)\int_0^t \big(\absf{\phi_0Y}_2^2+\absf{\phi_0Y_x}_2^2\big)\,\ds,
\end{aligned}
\end{equation*}
which, along with the strong continuity of $Y$ at $t=0$, $Y|_{t=0}=0$ and Gr\"onwall's inequality, implies that for all $0< T\leq \widetilde T$,
\begin{equation*}
\begin{aligned}
&\sup_{t\in[0,T]} \abs{\phi_0 Y}_2^2+\int_0^T \abs{\phi_0 Y_x}_2^2\,\dt\leq C(\absf{\bar U_x}_\infty)Te^{CT}\Big(T\sup_{s\in[0,T]} \abs{\phi_0 Y}_2^2+\int_0^T \abs{\phi_0 Y_x}_2^2\,\dt\Big).
\end{aligned}
\end{equation*}
Note that, by choosing $0<T_0<1$ such that $C(\absf{\bar U_x}_\infty)T_0e^{CT_0}=\frac{1}{2}$ and using Lemma \ref{hardy-inequality}, one can obtain from the estimates above that $Y= 0$ a.e. on $(0,T_0)\times I$. Since $T_0$ depends only on $\alpha$ and $(\phi_0,\bar U)$, one can extend $T_0$ to $\widetilde T$ via the analogous arguments in \eqref{3...18}-\eqref{3...18'}, which yields $Y= 0$ a.e. on $(0,\widetilde T)\times I$.

As a consequence, $U_t$ satisfies \eqref{3.142'}, that is,
\begin{equation}\label{33153}
\phi_0 U_t\in C([0,T];L^2),\quad \phi_0 U_{tx}\in L^2([0,T];L^2),\quad \phi_0^2 U_{tt}\in L^2([0,T];H^{-1}_{\phi_0^2}),
\end{equation}
which, along  with \eqref{3..142}, yields that 
\begin{equation}\label{equ33.38}
\phi_0 U_x\in C([0,T];L^2).
\end{equation}

\underline{\textbf{Step 3: Boundary condition of $U$.}} By \eqref{lp''} and  \eqref{weak.F.}, one has the following lemma.
\begin{Lemma}\label{Lemma-point}
It holds that
\begin{equation}\label{div-Uxx}
\left(\frac{\phi_0^2 U_x}{\bar\eta_x^2}\right)_x\in C([0,T];L^2).
\end{equation}
Furthermore, the equation $\eqref{lp''}_1$ holds for a.e. $(t,x)\in (0,T)\times I$, and $U$ satisfies 
\begin{equation}\label{equ33.41}
\phi_0^2U_x=0 \ \ \text{for }x\in\Gamma.
\end{equation}
\end{Lemma}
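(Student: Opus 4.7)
\textbf{Proof plan for Lemma \ref{Lemma-point}.} The strategy is to read the PDE $\eqref{lp''}_1$ as an identity among functions in $C([0,T];L^2)$: once every term except the diffusion divergence is shown to live in this space, we get \eqref{div-Uxx} for free, and then the boundary condition \eqref{equ33.41} follows from 1D Sobolev embedding combined with the square-integrability of $\phi_0 U_x$.

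First I collect what is already available. From \eqref{3..142}, \eqref{33153} and \eqref{equ33.38} we have $\phi_0 U,\ \phi_0 U_x,\ \phi_0 U_t\in C([0,T];L^2)$. Since $\bar U\in C([0,T];H^3)\cap C^1([0,T];H^1)$ and $\bar\eta=\mathrm{id}+\int_0^t\bar U\,\ds$, the 1D embedding $H^1\hookrightarrow L^\infty$ gives $\bar\eta_x,\bar\eta_{xx}\in C([0,T];L^\infty)$ together with $\tfrac12\leq\bar\eta_x\leq\tfrac32$; also $\phi_0\in H^3\hookrightarrow C^2(\bar I)$ implies $(\phi_0)_x,(\phi_0)_{xx}\in L^\infty$.

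Next I verify term-by-term that everything in $\eqref{lp''}_1$ other than the diffusion divergence lies in $C([0,T];L^2)$: write $\phi_0^2U_t=\phi_0\cdot(\phi_0 U_t)$ and $\phi_0(\phi_0)_xU_x/\bar\eta_x^2=[(\phi_0)_x/\bar\eta_x^2]\cdot(\phi_0 U_x)$; expand
\[
\Big(\frac{\phi_0^{2+\frac{1}{\alpha}}}{\bar\eta_x^2}\Big)_x=\Big(2+\frac{1}{\alpha}\Big)\frac{\phi_0^{1+\frac{1}{\alpha}}(\phi_0)_x}{\bar\eta_x^2}-\frac{2\phi_0^{2+\frac{1}{\alpha}}\bar\eta_{xx}}{\bar\eta_x^3},
\]
and observe that every factor is either in $L^\infty$ or is one of the continuous-in-time $L^2$ quantities listed above; the remaining term $\phi_0^{1+\frac{1}{\alpha}}(\phi_0)_x/\bar\eta_x^2$ is bounded. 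Testing the weak formulation \eqref{weak.F.} against $\varphi=\psi\in C_c^\infty(I)\subset H^1_{\phi_0^2}$ shows that, for a.e.\ $t\in(0,T)$, the identity $\eqref{lp''}_1$ holds in $\mathcal{D}'(I)$. Since the sum of all the other terms is a representative in $C([0,T];L^2)$, the distributional derivative $\big(\phi_0^2U_x/\bar\eta_x^2\big)_x$ agrees with an element of $C([0,T];L^2)$, proving \eqref{div-Uxx}, and the PDE then holds a.e.\ in $(0,T)\times I$.

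For the boundary identity \eqref{equ33.41}, note that $\phi_0^2U_x/\bar\eta_x^2=\phi_0\bar\eta_x^{-2}\cdot(\phi_0 U_x)\in C([0,T];L^2)$ (bounded multipliers times a continuous $L^2$ function), and by \eqref{div-Uxx} its $x$-derivative belongs to $C([0,T];L^2)$; hence $\phi_0^2U_x/\bar\eta_x^2\in C([0,T];H^1)\hookrightarrow C([0,T]\times\bar I)$ in one dimension, so its trace on $\Gamma$ is pointwise defined. Suppose this trace equals $c\neq0$ at some $x_0\in\Gamma$; then, using $\cC_1 d(x)\leq\phi_0(x)\leq\cC_2d(x)$ and the uniform bounds on $\bar\eta_x$, we would have $|\phi_0 U_x|(t,x)\gtrsim|c|/d(x)$ in a neighborhood of $x_0$ for $t$ in a set of positive measure, contradicting $\phi_0 U_x\in L^2$ since $1/d(x)\notin L^2$ near $\Gamma$ in 1D. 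Therefore the trace vanishes, and multiplying by $\bar\eta_x^2\in L^\infty$ yields $\phi_0^2U_x=0$ on $(0,T]\times\Gamma$.

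The only mildly delicate point is the last argument: the $L^2$-integrability of $\phi_0 U_x$ forces the continuous function $\phi_0^2U_x/\bar\eta_x^2$ to vanish at $\Gamma$, since any nonzero trace would create a non-$L^2$ singularity of order $1/d(x)$ there. Everything else is bookkeeping on the regularity of products.
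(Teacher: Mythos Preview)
Your proof is correct. The first half---showing that $\big(\phi_0^2 U_x/\bar\eta_x^2\big)_x\in C([0,T];L^2)$ by testing the weak formulation against $\varphi\in C_c^\infty(I)$ and identifying the distributional derivative with the sum of the remaining (already continuous in $L^2$) terms---is essentially the same argument as in the paper, presented in a slightly different order.

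For the boundary identity \eqref{equ33.41} your route differs from the paper's. The paper tests the weak formulation against $\varphi\in C^\infty(\bar I)$ (not compactly supported), then integrates the now-pointwise equation $\eqref{lp''}_1$ against the same $\varphi$ with a genuine integration by parts; subtracting the two identities leaves only the boundary term $\bar\eta_x^{-2}\phi_0^2 U_x\,\varphi\big|_{x=0}^{x=1}$, which must vanish for all $\varphi$. You instead argue by contradiction from integrability: a nonzero trace of the continuous function $\phi_0^2 U_x/\bar\eta_x^2$ at $\Gamma$ would force $|\phi_0 U_x|\gtrsim 1/d(x)$ near the boundary (using $\phi_0\sim d(x)$), contradicting $\phi_0 U_x\in L^2$ since $1/d^2\notin L^1$ in one dimension. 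Both arguments are valid; the paper's is the standard variational device and does not rely on the degeneracy rate of $\phi_0$, while yours is shorter here but leans on the specific 1D fact that $1/d\notin L^2$ together with $\phi_0\sim d$.
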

\begin{proof}
Indeed,  it follows from \eqref{weak.F.} with $U$ that for all $\varphi\in C_c^\infty$,
\begin{align*}
&\absB{-\left<\frac{\phi_0^2 U_x}{\bar\eta_x^2},\varphi_x\right>} \\
=&\Big|\left<\phi_0^2 U_t,\varphi\right>-\Big(\frac{1}{\alpha}-2\Big)\left<\frac{\phi_0(\phi_0)_x U_x}{\bar\eta_x^2}, \varphi\right>-\big<\big(\phi_0 P_1^{(0)}\big)_x,\varphi\big>+\big<P_2^{(0)},\varphi\big>\Big|\\
\leq &C\big(1+\norm{\phi_0 U_t}_{C_t(L^2)}+\norm{\phi_0 U_x}_{C_t(L^2)}\big)\abs{\varphi}_{2}\leq C\abs{\varphi}_{2},
\end{align*}
which means that $\phi_0^2 \bar\eta_x^{-2} U_x$ admits the weak derivative $\left( \phi_0^2\bar\eta_x^{-2} U_x\right)_x\in L^2$ for a.e. $t\in (0,T)$. In addition, due to the time-independent bound in the right hand side of the above inequality and the time continuity of $(\phi_0 U_x,\phi_0U_t,\bar\eta)$, one obtains \eqref{div-Uxx}. Besides, it follows from \eqref{33153}-\eqref{div-Uxx} that $\eqref{lp''}_1$ holds for a.e. $(t,x)\in (0,T)\times I$.

It remains to prove \eqref{equ33.41}. On the one hand, thanks to the weak formulation \eqref{weak.F.}, it holds that for all $\varphi\in C^\infty(\bar I)$,
\begin{equation}\label{WF'}
\begin{aligned}
\langle \phi_0^2 U_t, \varphi\rangle+\left<\frac{\phi_0^2 U_{x}}{\bar\eta_x^2},\varphi_x\right>
=\Big(\frac{1}{\alpha}-2\Big)\left<\frac{\phi_0(\phi_0)_xU_x}{\bar\eta_x^2},\varphi\right>+\big<\phi_0 P_1^{(0)}, \varphi_x\big>+\big<P_2^{(0)}, \varphi\big>;
\end{aligned}
\end{equation}
on the other hand, it follows from Lemma \ref{sobolev-embedding} and \eqref{div-Uxx} that $\phi_0^2 U_x\in C([0,T]\times \bar I)$. Then  multiplying both sides of $\eqref{lp''}_1$ by such $\varphi$ and integrating the resulting equality over $I$ lead to
\begin{equation}\label{WF''}
\begin{aligned}
&\left<\phi_0^2 U_t, \varphi\right>+\left<\frac{\phi_0^2 U_{x}}{\bar\eta_x^2},\varphi_x\right>-\frac{\phi_0^2U_x}{\bar\eta_x^2}\varphi\Big|_{x=0}^{x=1}\\
=&\Big(\frac{1}{\alpha}-2\Big)\left<\frac{\phi_0(\phi_0)_xU_x}{\bar\eta_x^2},\varphi\right>+\big<\phi_0 P_1^{(0)}, \varphi_x\big>+\big<P_2^{(0)}, \varphi\big>.
\end{aligned}
\end{equation}

Comparing with \eqref{WF'}-\eqref{WF''} and using $\frac{1}{2}\leq \bar\eta_x\leq\frac{3}{2}$ for $(t,x)\in [0,\widetilde T]\times \bar I$, one gets
$\phi_0^2U_x\varphi\big|_{x=0}^{x=1}=0$ for all $\varphi\in C^\infty(\bar I)$,
which yields that $\phi_0^2 U_x|_{\Gamma}=0$. 

The proof of Lemma \ref{Lemma-point} is completed.
\end{proof}

\underline{\textbf{Step 4: Tangential estimates  $\phi_0U_{tx},\phi_0U_{tt}\in C([0,T];L^2)$.}}
We start with the following claim:
\begin{equation}\label{L2Linfty}
U_x\in L^2([0,T];L^\infty).
\end{equation}
Indeed, on the one hand, based on $\eqref{lp''}_1$ and Lemmas \ref{Leibniz}-\ref{qiudao}, one can reformulate $\eqref{lp''}_1$ by multiplying its both sides by $\phi_0^{\frac{1}{\alpha}-2}$ to deduce that
\begin{equation}\label{0428}
\phi_0^\frac{1}{\alpha} U_t-\bigg(\frac{\phi_0^\frac{1}{\alpha} U_{x}}{\bar\eta_x^2}\bigg)_x+\bigg(\frac{\phi_0^\frac{2}{\alpha}}{\bar\eta_x^2}\bigg)_x=0;
\end{equation}
on the other hand, since $\phi_0^2U_x\in C(\bar I)$, it follows from integrating \eqref{0428} over $[0,x]$, $0<x\leq \frac{1}{2}$, \eqref{33153}, Lemma \ref{hardy-inequality}, $\phi_0\sim d(x)$, $\phi_0^2 U_x|_{x\in\Gamma}=0$ and H\"older's inequality that
\begin{equation}\label{3...136}
\begin{aligned}
\phi_0^\frac{1}{\alpha} U_x(t,x)&= \phi_0^\frac{2}{\alpha}+\bar\eta_x^2 \int_0^x \phi_0^\frac{1}{\alpha} U_t\,\mathrm{d}z\\
\implies \abs{U_x(t,x)}&\leq C x^\frac{1}{\alpha}+ C x^\frac{1}{2}\abs{U_t}_2\\
&\leq C\phi_0^\frac{1}{\alpha}(x)+ C\phi_0^\frac{1}{2}(x)\left(\abs{\phi_0 U_t}_2+\abs{\phi_0 U_{tx}}_2\right).
\end{aligned}
\end{equation}
For $\frac{1}{2}<x\leq 1$, integrating  \eqref{0428} over $[x,1]$ yields that $\eqref{3...136}_2$ still holds. Thus, taking the square of $\eqref{3...136}_2$ and integrating the resulting inequality over $[0,T]$ shows the claim \eqref{L2Linfty}.

Now, we continue to improve the tangential regularities. Applying $\partial_t^2$ to both sides of $\eqref{lp''}_1$ yields formally that
\begin{equation}\label{33154}
\phi_0^2 \partial_t^3 U-\left(\frac{\phi_0^2 \partial_t^2 U_{x}}{\bar\eta_x^2}\right)_x=\Big(\frac{1}{\alpha}-2\Big)\frac{\phi_0(\phi_0)_x \partial_t^2 U_{x}}{\bar\eta_x^2}-\big(\phi_0P_1^{(2)}\big)_x+ P_2^{(2)},
\end{equation}
where
\begin{align}
P_1^{(2)}&:=\frac{2\phi_0 \bar U_{tx} U_{x}}{\bar\eta_x^3}+\frac{4\phi_0 \bar U_x U_{tx}}{\bar\eta_x^3}-\frac{6\phi_0 \bar U_x^2 U_{x}}{\bar\eta_x^4} -\frac{2\phi_0^\frac{1+\alpha}{\alpha} \bar U_{tx}}{\bar\eta_x^3}+\frac{6\phi_0^\frac{1+\alpha}{\alpha} \bar U_x^2}{\bar\eta_x^4},\notag\\
P_2^{(2)}&:=\Big(\frac{1}{\alpha}-2\Big) \bigg(\frac{2\phi_0^\frac{1+\alpha}{\alpha}(\phi_0)_x\bar U_{tx}}{\bar\eta_x^3}-\frac{6\phi_0^\frac{1+\alpha}{\alpha}(\phi_0)_x\bar U_x^2}{\bar\eta_x^4}+\frac{6\phi_0(\phi_0)_x\bar U_x^2 U_x}{\bar\eta_x^4}\bigg.\label{P1P2}\\
&\quad \left.-\frac{2\phi_0(\phi_0)_x\bar U_{tx} U_x}{\bar\eta_x^3}-\frac{4\phi_0(\phi_0)_x\bar U_x U_{tx}}{\bar\eta_x^3}\right).\notag
\end{align}

Then regard \eqref{33154} as the equation of $w^{(2)}:=U_{tt}$ and  consider the  problem 
\begin{equation}\label{33155}
\begin{cases}
\displaystyle \quad \phi_0^2 w^{(2)}_t-\bigg(\frac{\phi_0^2 w^{(2)}_{x}}{\bar\eta_x^2}\bigg)_x\\
\displaystyle=\Big(\frac{1}{\alpha}-2\Big)\frac{\phi_0(\phi_0)_x w^{(2)}_{x}}{\bar\eta_x^2}-\big(\phi_0P_1^{(2)}\big)_x+ P_2^{(2)}&\text{in }(0,T]\times I,\\
\quad w^{(2)} = U_{tt}(0,x)&\text{on }\{t=0\}\times I.
\end{cases}
\end{equation}
One can check from \eqref{33153} and \eqref{L2Linfty} that $P_1^{(2)},P_2^{(2)}\in L^2([0,T];L^2)$, $w^{(2)}(0,x)\in L^2_{\phi_0^2}$. Consequently, it follows from Proposition \ref{prop1} that there exists a unique weak solution $w^{(2)}$ to  \eqref{33155}, and based on the analogous arguments in \textbf{Step 2}, one has $w^{(2)}=U_{tt}$. Thus, 
\begin{equation}\label{33156}
\phi_0 U_{tt}\in C([0,T];L^2),\quad \phi_0 \partial_t^2U_{x}\in L^2([0,T];L^2).
\end{equation}
Certainly, it follows from \eqref{33153} and \eqref{33156} that
\begin{equation}\label{331577}
\phi_0 U_{tx}\in C([0,T];L^2),
\end{equation}
and one can deduce that $\phi_0^2U_{tx}|_{x\in\Gamma}=0$ by the similar discussions in Lemma \ref{Lemma-point}. 

Then it follows from \eqref{3..142}, \eqref{33153}-\eqref{equ33.38} and \eqref{33156}-\eqref{331577} that 
\begin{equation}\label{TETE}
\phi_0 \partial_t^j U\in C([0,T];L^2),\quad j=0,1,2;\quad 
\phi_0 \partial_t^j U_{x}\in C([0,T];L^2),\quad j=0,1.
\end{equation}

\underline{\textbf{Step 5: Elliptic estimate  $\phi_0 U_{xx}\in L^\infty([0,T];L^2)$.}}
First, it follows from \eqref{L2Linfty}-\eqref{3...136} and \eqref{TETE} that
\begin{equation}\label{0434}
U_x\in L^\infty([0,T]\times \bar I).
\end{equation}

Next, due to Lemma \ref{Leibniz}, multiplying both sides of $\eqref{lp''}_1$ by $\bar\eta_x^2\phi_0^{-1}$ yields 
\begin{equation}\label{0435}
\phi_0 U_{xx}+\frac{1}{\alpha} (\phi_0)_xU_x=\phi_0 \bar\eta_x^2 U_t+\frac{2\phi_0 \bar\eta_{xx}U_x}{\bar\eta_x}+\frac{2}{\alpha}\phi_0^{\frac{1}{\alpha}}(\phi_0)_x- \frac{2\phi_0^{\frac{1}{\alpha}+1}\bar\eta_{xx}}{\bar\eta_x},
\end{equation}
for a.e. $(t,x)\in (0,T)\times I$. Then, it follows from \eqref{TETE} that the right hand side of \eqref{0435} can be bounded in the norm of $L^\infty([0,T];L^2)$. Consequently,
\begin{equation}\label{0436}
\phi_0  U_{xx}+\frac{1}{\alpha} (\phi_0)_xU_x\in L^\infty([0,T];L^2),
\end{equation}
which, together with \eqref{0434}, implies that
\begin{equation}\label{0437}
\phi_0  U_{xx} \in L^\infty([0,T];L^2).
\end{equation}

\underline{\textbf{Step 6: Elliptic estimate $\phi_0\partial_t U_{xx}\in L^\infty([0,T];L^2)$.}}
First, we claim that
\begin{equation}\label{0438}
\phi_0^{\iota}U_{tx}\in L^\infty([0,T];L^2) \ \ \text{for all } \iota>0.
\end{equation}
Indeed, for $0<x\leq \frac{1}{2}$, applying $\partial_t$ to both sides of $\eqref{3...136}_1$, one then gets from \eqref{TETE}, Lemma \ref{hardy-inequality} and H\"older's inequality that
\begin{align}
\phi_0^\frac{1}{\alpha} U_{tx}&= 2\bar\eta_x\bar U_x \int_0^x \phi_0^\frac{1}{\alpha} U_t\,\mathrm{d}z+ \bar\eta_x^2\int_0^x \phi_0^\frac{1}{\alpha} U_{tt}\,\mathrm{d}z\notag\\
\implies \absb{\phi_0^\frac{1}{\alpha} U_{tx}(t,x)}&\leq C \Big(\int_0^x z^\frac{1}{\alpha}\abs{U_t}\,\mathrm{d}z+ \int_0^x z^\frac{1}{\alpha}\abs{U_{tt}}\,\mathrm{d}z\Big)\notag\\
&\leq C x^\frac{2-\alpha}{2\alpha}\big(x\abs{U_t}_2 + \abs{\phi_0 U_{tt}}_2\big)\label{00439}\\
&\leq C\phi_0^\frac{2-\alpha}{2\alpha}\big(\phi_0(\abs{\phi_0 U_t}_2+\abs{\phi_0 U_{tx}}_2)+1\big)
\leq C\big(\phi_0^\frac{2+\alpha}{2\alpha}+\phi_0^\frac{2-\alpha}{2\alpha}\big).\notag
\end{align}
Similar arguments can be applied to $[x,1]$, $\frac{1}{2}<x\leq 1$. Then, multiplying both sides of $\eqref{00439}_2$ by $\phi_0^{\iota-\frac{1}{\alpha}}$ and taking the $L^\infty([0,T];L^2)$ norm of the resulting inequality give \eqref{0438}.

Next, since  \eqref{0435} holds pointwisely, based on the estimates \eqref{TETE}, the regularities of $(\bar\eta,\bar U)$ and Lemma \ref{qiudao}, one can apply $\partial_t$ to both sides of \eqref{0435} to obtain that 
\begin{equation}\label{utxx-phi0}
\begin{aligned}
\phi_0 \partial_tU_{xx}+\frac{1}{\alpha} (\phi_0)_x U_{tx}
=&\phi_0 \bar\eta_x^2 U_{tt}+2\phi_0 \bar\eta_x \bar U_x U_{t}-\frac{2\phi_0^{1+\frac{1}{\alpha}} \bar U_{xx}}{\bar\eta_x}+\frac{2\phi_0^{1+\frac{1}{\alpha}}\bar\eta_{xx}\bar U_x}{\bar\eta_x^2}\\
&+\frac{2\phi_0 \bar U_{xx} U_x}{\bar\eta_x}-\frac{2\phi_0 \bar\eta_{xx} \bar U_x U_x}{\bar\eta_x^2}+\frac{2\phi_0 \bar\eta_{xx} U_{tx}}{\bar\eta_x}.
\end{aligned}
\end{equation}
Then, it follows from \eqref{TETE}, \eqref{utxx-phi0} and Lemma \ref{hardy-inequality} that 
\begin{equation}\label{0439}
\begin{aligned}
\phi_0  \partial_tU_{xx}+\frac{1}{\alpha} (\phi_0)_x U_{tx} \in L^\infty([0,T];L^2),
\end{aligned}
\end{equation}
which, together with \eqref{0438} and \eqref{0439} multiplied by $\phi_0^\iota$, yields $\phi_0^{1+\iota} \partial_t U_{xx}\in L^\infty([0,T];L^2)$, for all $\iota>0$. Hence, it follows from Proposition \ref{prop2.1} in Appendix \ref{subsection2.2}  that
\begin{equation}\label{0440}
\phi_0 \partial_t U_{xx}\in L^\infty([0,T];L^2).
\end{equation}

\underline{\textbf{Step 7: Elliptic estimate $\phi_0\partial_x^3 U \in L^\infty([0,T];L^2)$.}}
First, it follows from \eqref{TETE}, \eqref{0437} and Lemma \ref{qiudao} that one can apply $\partial_x$ to  \eqref{0435} to obtain
\begin{align}
&\phi_0 \partial_x^3 U +\Big(\frac{1}{\alpha}+1\Big) (\phi_0)_x U_{xx}\notag\\
=&-\frac{1}{\alpha} (\phi_0)_{xx} U_x+\phi_0 \bar\eta_x^2 U_{tx}+2\phi_0 \bar\eta_x\bar\eta_{xx}U_t+ (\phi_0)_x\bar\eta_x^2 U_t\label{0441}\\
&-\frac{2+2\alpha}{\alpha}\frac{\phi_0^{\frac{1}{\alpha}}(\phi_0)_x\bar\eta_{xx}}{\bar\eta_x}-\frac{2\phi_0^{1+\frac{1}{\alpha}}\partial_x^3\bar\eta}{\bar\eta_x}+\frac{2\phi_0^{1+\frac{1}{\alpha}}\bar\eta_{xx}^2}{\bar\eta_x^2}+\frac{2}{\alpha^2}\phi_0^{\frac{1}{\alpha}-1}((\phi_0)_x)^2 \notag\\
&+\frac{2}{\alpha}\phi_0^{\frac{1}{\alpha}}(\phi_0)_{xx} +\frac{2 (\phi_0)_x\bar\eta_{xx}U_x}{\bar\eta_x}+\frac{2\phi_0 \bar\eta_{xx} U_{xx}}{\bar\eta_x}+\frac{2\phi_0 \partial_x^3\bar\eta U_x}{\bar\eta_x}-\frac{2\phi_0 \bar\eta_{xx}^2U_x}{\bar\eta_x^2},\notag
\end{align}
which, along with \eqref{TETE}, \eqref{0437} and Lemma \ref{hardy-inequality}, yields that
\begin{equation}\label{0442}
\phi_0 \partial_x^3U+\Big(\frac{1}{\alpha}+1\Big) (\phi_0)_x U_{xx}\in L^\infty([0,T];L^2).
\end{equation}

Obviously, it follows from $\eqref{0442}\times \phi_0$ and \eqref{0437} that $\phi_0^2\partial_x^3 U\in L^\infty([0,T];L^2)$. Since $2<\frac{1}{2\alpha}+1$, one obtains from Proposition \ref{prop2.1} and \eqref{0437} that
\begin{equation}\label{0443}
\phi_0\partial_x^3 U\in L^\infty([0,T];L^2).
\end{equation}

\underline{\textbf{Step 8: Elliptic estimate $\phi_0\partial_x^4 U \in L^\infty([0,T];L^2)$.}}
 Analogously, based on \eqref{TETE}, \eqref{0437}, \eqref{0443} and Lemma \ref{qiudao}, one can apply $\partial_x$ to both sides of \eqref{0441} to deduce that
\begin{align}
&\phi_0 \partial_x^4 U+\Big(\frac{1}{\alpha}+2\Big) (\phi_0)_x\partial_x^3 U\notag\\
=&\phi_0 \bar\eta_x^2\partial_tU_{xx}+2\left((\phi_0)_x+2\phi_0\bar\eta_{xx}\right)\bar\eta_x U_{tx}+2\phi_0 (\bar\eta_{xx}^2+\bar\eta_x\partial_x^3\bar\eta)U_t\notag\\
&+ \left((\phi_0)_{xx}\bar\eta_x+4(\phi_0)_x\bar\eta_{xx}\right)\bar\eta_xU_t -\Big(\frac{2}{\alpha}+1\Big) (\phi_0)_{xx} U_{xx}\notag\\
&+\frac{1}{\alpha} \partial_x^3\phi_0 U_x+2 \Big((\phi_0)_{xx}\bar\eta_{xx}+2(\phi_0)_{x}\partial_x^3\bar\eta -\frac{2(\phi_0)_{x}\bar\eta_{xx}^2}{\bar\eta_x}\Big)\frac{U_x}{\eta_x}\notag\\
&+2\phi_0 \Big(\partial_x^4\bar\eta
-\frac{3\bar\eta_{xx}\partial_x^3\bar\eta}{\bar\eta_x}
+\frac{2\bar\eta_{xx}^3}{\bar\eta_x^2}\Big)\frac{U_x}{\bar\eta_x}\label{0444}\\
&+4 \Big(\phi_0\partial_x^3\bar\eta+(\phi_0)_{x}\bar\eta_{xx}-\frac{\phi_0\bar\eta_{xx}^2}{\bar\eta_x}\Big)\frac{U_{xx}}{\bar\eta_x}+\frac{2\phi_0 \bar\eta_{xx}\partial_x^3U}{\bar\eta_x}\notag\\
&-\frac{2+2\alpha}{\alpha^2}\phi_0^{\frac{1}{\alpha}-1}((\phi_0)_x)^2\frac{\bar\eta_{xx}}{\bar\eta_x}+2\phi_0^{1+\frac{1}{\alpha}}\Big(\frac{\partial_x^4\bar\eta}{\bar\eta_x}-\frac{3\bar\eta_{xx}\partial_x^3\bar\eta}{\bar\eta_x^2}+\frac{2\bar\eta_{xx}^3}{\bar\eta_x^3}\Big)\notag\\
&-\frac{2+2\alpha}{\alpha}\phi_0^{\frac{1}{\alpha}}\Big(\frac{(\phi_0)_{xx}\bar\eta_{xx}}{\bar\eta_x}+\frac{2(\phi_0)_x\partial_x^3\bar\eta}{\bar\eta_x}-\frac{2(\phi_0)_x\bar\eta_{xx}^2}{\bar\eta_x^2}\Big)\notag\\
&+\frac{2(1-\alpha)}{\alpha^3}\phi_0^{\frac{1}{\alpha}-2}((\phi_0)_x)^3+\frac{6}{\alpha^2}\phi_0^{\frac{1}{\alpha}-1}(\phi_0)_x(\phi_0)_{xx}+\frac{2}{\alpha}\phi_0^{\frac{1}{\alpha}}\partial_x^3\phi_0,\notag
\end{align}
which, along with \eqref{TETE}, \eqref{0437}, \eqref{0440}, \eqref{0443}, Lemma \ref{hardy-inequality} and $0<\alpha\leq\frac{1}{3}$, yields that 
\begin{equation}\label{0445}
\phi_0 \partial_x^4 U+\Big(\frac{1}{\alpha}+2\Big) (\phi_0)_x\partial_x^3 U\in L^\infty([0,T];L^2).    
\end{equation}

Similarly, one can get from  $\eqref{0445}\times \phi_0$ and \eqref{0443} that $\phi_0^2\partial_x^4 U\in L^\infty([0,T];L^2)$, which, along with Proposition \ref{prop2.1} and \eqref{0443}, leads to 
\begin{equation}\label{eequ3.66}
\phi_0 \partial_x^4 U\in L^\infty([0,T];L^2).
\end{equation}

In summary, collecting  \eqref{0437}, \eqref{0440}, \eqref{0443} and \eqref{eequ3.66} yields all the elliptic estimates:
\begin{equation}
\phi_0 \partial_x^j U,\,\,\phi_0\partial_t U_{xx}\in L^\infty([0,T];L^2),\quad j=2,3,4.
\end{equation}

\underline{\textbf{Step 9: Time continuity.}} To this end, it suffices to show the time continuity of $\phi_0U_{xx}$. Note that the time continuity of $\phi_0\partial_x^j U$ ($j=3,4$) and $\phi_0\partial_t U_{xx}$ can be proved analogously.

To see this, according to \eqref{TETE}, \eqref{0435}, and the following  regularities of $\bar\eta$,
\begin{equation}\label{regu-eta}
\bar\eta\in C^1([0,T];H^3)\cap C^2([0,T];H^1),\quad \bar\eta_t\in \mathscr{C}([0,T];E),
\end{equation}
one can first show that
\begin{equation}\label{equ357}
\widetilde F(t,x):=\phi_0 U_{xx}+\frac{1}{\alpha}(\phi_0)_x U_x\in C([0,T];L^2).
\end{equation}
Then, it follows from Proposition \ref{prop2.1} that for any $t,t_0\in [0,T]$,
\begin{equation*}
\begin{aligned}
\abs{\phi_0 U_{xx}(t)-\phi_0 U_{xx}(t_0)}_2
\leq  C\absf{\widetilde F(t)-\widetilde F(t_0)}_2+ C\abs{(\phi_0)_{xx}}_\infty \abs{\phi_0 U_{x}(t)-\phi_0 U_{x}(t_0)}_2,
\end{aligned}
\end{equation*}
which, by letting $t\to t_0$, along with \eqref{TETE} and \eqref{equ357}, yields
\begin{equation*}
\abs{\phi_0 U_{xx}(t)-\phi_0 U_{xx}(t_0)}_2\to 0.
\end{equation*}
Then one has $\phi_0 U_{xx}\in C([0,T];L^2)$. 

For $\phi_0\partial_t U_{xx}$ and $\phi_0\partial_x^j U$ ($j=3,4$), it follows from \eqref{0439}, \eqref{0442}, \eqref{0445} and Proposition \ref{prop2.1} that
\begin{equation*}
\begin{aligned}
\phi_0 \partial_tU_{xx}+\frac{1}{\alpha}(\phi_0)_x U_{tx}\in C([0,T];L^2)&\implies \phi_0 \partial_tU_{xx}\in C([0,T];L^2);\\
\phi_0 \partial_x^3U +\Big(\frac{1}{\alpha}+1\Big)(\phi_0)_x U_{xx}\in C([0,T];L^2)&\implies \phi_0 \partial_x^3U \in C([0,T];L^2);\\
\phi_0 \partial_x^4U +\Big(\frac{1}{\alpha}+2\Big)(\phi_0)_x\partial_x^3 U \in C([0,T];L^2)&\implies \phi_0 \partial_x^4 U \in C([0,T];L^2).
\end{aligned}
\end{equation*}

These, together with the tangential estimates \eqref{TETE}, show that $U\in \mathscr{C}([0,T];E)$, which, along with Lemma \ref{hardy-inequality}, yields 
\begin{equation}\label{regu-U}
U\in C([0,T];H^3)\cap C^1([0,T];H^1).
\end{equation}

\underline{\textbf{Step 10: Derivation of Neumann boundary condition.}}
First, multiplying both sides of \eqref{lp''} by  $\phi_0^{-1}$ shows that
\begin{equation}\label{equ37.2}
\phi_0 U_t +\frac{2}{\alpha}\frac{\phi_0^\frac{1}{\alpha}(\phi_0)_x}{\bar\eta_x^2}-\frac{2\phi_0^{1+\frac{1}{\alpha}}\bar\eta_{xx}}{\bar\eta_x^3}=\frac{\phi_0 U_{xx}}{\bar\eta_x^2}+\frac{1}{\alpha}\frac{(\phi_0)_xU_x}{\bar\eta_x^2}-\frac{2\phi_0 \bar\eta_{xx} U_x}{\bar\eta_x^3}.
\end{equation}

Then, according to $\phi_0\in H^3$, \eqref{regu-eta}, \eqref{regu-U} and Lemma \ref{sobolev-embedding}, one has 
\begin{equation}\label{equ37.5}
\phi_0\in C^2(\bar I), \ \ \bar\eta\in C^1([0,T];C^2(\bar I)), \ \ U\in C([0,T];C^2(\bar I))\cap C^1([0,T];C(\bar I)),
\end{equation} 
which, by letting $x\to \Gamma$ in \eqref{equ37.2}, along with $\phi_0|_{x\in\Gamma}=0$ and $\frac{1}{2}\leq \bar\eta_x\leq \frac{3}{2}$, yields that
\begin{equation*}
(\phi_0)_x U_x=0 \ \ \text{for } x\in \Gamma.
\end{equation*}
Since $\phi_0\sim d(x)$ and $(\phi_0)_x|_{\Gamma}\neq 0$, one can obtain from the above equality that $U_x|_{x\in \Gamma}=0$.

Therefore, the proof of Lemma \ref{existence-linearize} $\mathrm{i)}$ is completed. 
\end{proof}

\subsection{Proof of Lemma \ref{existence-linearize} when \texorpdfstring{$\frac{1}{3}<\alpha\leq 1$}{}}\label{subsection3.4}
The proof of Lemma \ref{existence-linearize} when $\frac{1}{3}<\alpha\leq 1$  can be done by following the analogous arguments used for the case $0<\alpha \leq \frac{1}{3}$. 

\begin{proof}
\underline{\textbf{Step 1: Tangential estimates of $U$.}} Compare \eqref{lp} with \eqref{galerkin-w'} and set $P_3$ in \eqref{galerkin-w'} as
$
P_3:= \phi_0^\frac{3}{2\alpha}\bar\eta_x^{-2}$.
Then, by   Proposition \ref{prop042}, the proof of this part can be done by exactly following that of \textbf{Step 1}-\textbf{Step 4} in \S \ref{subsection3.3}. After the repetitive calculation, one can  deduce that  \eqref{lp} admits a unique weak solution $U$ satisfying   the tangential estimates
\begin{equation}\label{TETE1}
\begin{aligned}
&\phi_0^\frac{1}{2\alpha} \partial_t^j U\in C([0,T];L^2),\quad j=0,1,2;\quad \phi_0^\frac{1}{2\alpha} \partial_t^j U_{x}\in C([0,T];L^2),\quad j=0,1.
\end{aligned}
\end{equation}
Moreover, $U$ satisfies the equation $\eqref{lp}_1$  for a.e. $(t,x)\in (0,T)\times I$, and 
\begin{equation}
\phi_0^\frac{1}{\alpha}U_x=\phi_0^\frac{1}{\alpha}U_{tx}=0 \ \ \text{for }x\in \Gamma.
\end{equation}

\underline{\textbf{Step 2: Elliptic estimate  $\phi_0^{\frac{3}{2}-\varepsilon_0}U_{xx}$.}}
First, following the arguments in \eqref{3...136} and integrating $\eqref{lp}_1$ over $[0,x]$ for $x\in \left(0,\frac{1}{2}\right]$ (or $[x,1]$ for $x\in \left(\frac{1}{2},1\right]$), one can deduce from \eqref{TETE1}, Lemma \ref{hardy-inequality} and H\"older's inequality that
\begin{equation}\label{esti-xx}
\begin{aligned}
\abs{U_x(t,x)}&\leq C \phi_0^\frac{1}{\alpha}+ C\phi_0^\frac{3\alpha-1}{2\alpha}\absb{\phi_0^{\frac{1}{2\alpha}-1}U_t}_2\\
&\leq C \phi_0^\frac{1}{\alpha}+ C\phi_0^\frac{3\alpha-1}{2\alpha}\big(\absb{\phi_0^\frac{1}{2\alpha} U_t}_2+\absb{\phi_0^\frac{1}{2\alpha} U_{tx}}_2\big)
\leq C \big(\phi_0^\frac{1}{\alpha}+ \phi_0^\frac{3\alpha-1}{2\alpha}\big).
\end{aligned}
\end{equation}
Multiplying both sides of \eqref{esti-xx} by $\phi_0^{\frac{1}{2\alpha}-2+\iota}$ ($\iota>0$) and noting that $\phi_0\sim d(x)$, one gets
\begin{equation}\label{Linfty-tx}
\phi_0^{\frac{1}{2\alpha}-2+\iota} U_x\in L^\infty([0,T];L^2) \ \ \text{for all }\iota>0.
\end{equation}

Next, according to Lemma \ref{Leibniz} and multiplying both sides of $\eqref{lp}_1$ by $\bar\eta_x^2\phi_0^{\frac{3}{2}-\frac{1}{\alpha}-\varepsilon_0}$, one has that for a.e. $(t,x)\in (0,T)\times I$, \begin{equation}\label{3...143}
\begin{aligned}
&\phi_0^{\frac{3}{2}-\varepsilon_0} U_{xx}+\frac{1}{\alpha}\phi_0^{\frac{1}{2}-\varepsilon_0}(\phi_0)_xU_x\\
=&\phi_0^{\frac{3}{2}-\varepsilon_0}\bar\eta_x^2 U_t+\frac{2\phi_0^{\frac{3}{2}-\varepsilon_0}\bar\eta_{xx}U_x}{\bar\eta_x}-\frac{2}{\alpha}\phi_0^{\frac{1}{2}+\frac{1}{\alpha}-\varepsilon_0}(\phi_0)_x+ \frac{2\phi_0^{\frac{3}{2}+\frac{1}{\alpha}-\varepsilon_0}\bar\eta_{xx}}{\bar\eta_x}.
\end{aligned}
\end{equation}
 Since $\frac{3}{2}-\varepsilon_0\geq \frac{1}{2\alpha}$ ($\frac{3}{2}-\varepsilon_0>\frac{1}{2}$ for $\alpha=1$), it follows from \eqref{TETE1} that 
\begin{equation}\label{3/2Uxx}
\phi_0^{\frac{3}{2}-\varepsilon_0} U_{xx}+\frac{1}{\alpha}\phi_0^{\frac{1}{2}-\varepsilon_0}(\phi_0)_xU_x\in L^\infty([0,T];L^2).
\end{equation}

Therefore, letting $\iota=\frac{5}{2}-\frac{1}{2\alpha}-\varepsilon_0$, then \eqref{Linfty-tx} and \eqref{3/2Uxx} imply that
\begin{equation}\label{3/2Uxx'}
\phi_0^{\frac{3}{2}-\varepsilon_0} U_{xx} \in L^\infty([0,T];L^2).
\end{equation}

\underline{\textbf{Step 3: Elliptic estimate $\phi_0^{\frac{3}{2}-\varepsilon_0}\partial_t U_{xx}$.}}
First, following the arguments in \eqref{00439}, integrating $\eqref{lp}_1$ over $[0,x]$ for $x\in \left(0, \frac{1}{2}\right]$ (or $[x,1]$ for $x\in \left(\frac{1}{2},1\right]$) and then applying $\partial_t$ to the resulting identity, one can get from \eqref{TETE1}, Lemma \ref{hardy-inequality} and H\"older's inequality that
\begin{align}
\absb{\phi_0^\frac{1}{\alpha} U_{tx}(t,x)}&\leq C \int_0^x z^\frac{1}{\alpha}\abs{U_t}\,\mathrm{d}z+ C\int_0^x z^\frac{1}{\alpha}\abs{U_{tt}}\,\mathrm{d}z\notag\\
&\leq C x^\frac{1+3\alpha}{2\alpha}\absb{\phi_0^{\frac{1}{2\alpha}-1}U_t}_2 +C x^\frac{1+\alpha}{2\alpha}\absb{\phi_0^\frac{1}{2\alpha}U_{tt}}_2\label{esti-tx}\\
&\leq C \phi_0^\frac{1+3\alpha}{2\alpha}\big(\absb{\phi_0^{\frac{1}{2\alpha}}U_t}_2+\absb{\phi_0^{\frac{1}{2\alpha}}U_{tx}}_2\big)+C \phi_0^\frac{1+\alpha}{2\alpha}\absb{\phi_0^\frac{1}{2\alpha}U_{tt}}_2\notag\\
&\leq  C\big(\phi_0^\frac{1+3\alpha}{2\alpha}+\phi_0^\frac{1+\alpha}{2\alpha}\big), \notag
\end{align}
which multiplied by $\phi_0^{-\frac{1}{2\alpha}-1+\iota}$ ($\iota>0$), along with $\phi_0\sim d(x)$, yields 
\begin{equation}\label{3...148}
\phi_0^{\frac{1}{2\alpha}-1+\iota}U_{tx}\in L^\infty([0,T];L^2),\quad \text{for all }\iota>0.
\end{equation}

Next, applying $\partial_t$ to both sides of \eqref{3...143} and using Lemma \ref{Leibniz}, one has
\begin{equation}\label{equ3.72}
\begin{aligned}
&\phi_0^{\frac{3}{2}-\varepsilon_0} \partial_tU_{xx}+\frac{1}{\alpha}\phi_0^{\frac{1}{2}-\varepsilon_0}(\phi_0)_x U_{tx}\\
= &\phi_0^{\frac{3}{2}-\varepsilon_0}\bar\eta_x^2 U_{tt}+2\phi_0^{\frac{3}{2}-\varepsilon_0}\bar\eta_x \bar U_x U_{t}-\frac{2\phi_0^{\frac{3}{2}+\frac{1}{\alpha}-\varepsilon_0} \bar U_{xx}}{\bar\eta_x}+\frac{2\phi_0^{\frac{3}{2}+\frac{1}{\alpha}-\varepsilon_0}\bar\eta_{xx}\bar U_x}{\bar\eta_x^2}\\
& +\frac{2\phi_0^{\frac{3}{2}-\varepsilon_0}\bar U_{xx} U_x}{\bar\eta_x}-\frac{2\phi_0^{\frac{3}{2}-\varepsilon_0}\bar\eta_{xx} \bar U_x U_x}{\bar\eta_x^2}+\frac{2\phi_0^{\frac{3}{2}-\varepsilon_0} \bar\eta_{xx} U_{tx}}{\bar\eta_x},
\end{aligned}
\end{equation}
which, together with the fact that $\varepsilon_0\in \left(0,\frac{3\alpha-1}{2\alpha}\right]$ (or $\varepsilon_0\in (0,1)$ for $\alpha=1$) and \eqref{TETE1}, implies that 
\begin{equation}\label{3...149}
\begin{aligned}
\phi_0^{\frac{3}{2}-\varepsilon_0} \partial_tU_{xx}+\frac{1}{\alpha}\phi_0^{\frac{1}{2}-\varepsilon_0}(\phi_0)_x U_{tx} \in L^\infty([0,T];L^2).
\end{aligned}
\end{equation}
Hence for $\varepsilon_0\in \left(0,\frac{3\alpha-1}{2\alpha}\right)$ ($\alpha \in \left(\frac{1}{3},1\right]$), setting $\iota:=\frac{3\alpha-1}{2\alpha}-\varepsilon_0$ and using  \eqref{3...148} and \eqref{3...149}, one has 
\begin{equation}\label{3/2Utxx}
\phi_0^{\frac{3}{2}-\varepsilon_0}\partial_t U_{xx}\in L^\infty([0,T];L^2);
\end{equation}
for $\varepsilon_0=\frac{3\alpha-1}{2\alpha}$ ($\alpha\in \left(\frac{1}{3},1\right)$), one can multiply \eqref{3...149} by $\phi_0^\iota$ and deduce from \eqref{3...148} that $\phi_0^{\frac{3}{2}-\varepsilon_0+\iota}\partial_t U_{xx}\in L^\infty([0,T];L^2)$, which, along with Proposition \ref{prop2.1} and \eqref{TETE1}, yields \eqref{3/2Utxx}.

\underline{\textbf{Step 4: Elliptic estimate $\phi_0^{\frac{3}{2}-\varepsilon_0}\partial_x^3 U$.}}
First, multiplying $\eqref{lp}_1$ by $\bar\eta_x^2 \phi_0^{1-\frac{1}{\alpha}}$ and applying $\phi_0^{\frac{1}{2}-\varepsilon_0}\partial_x$ to the resulting equality, which can be justified due to Lemmas \ref{Leibniz}-\ref{qiudao}, one gets that
\begin{align}
&\phi_0^{\frac{3}{2}-\varepsilon_0}\partial_x^3 U +\Big(\frac{1}{\alpha}+1\Big)\phi_0^{\frac{1}{2}-\varepsilon_0}(\phi_0)_x U_{xx}\notag\\
= &-\frac{1}{\alpha}\phi_0^{\frac{1}{2}-\varepsilon_0}(\phi_0)_{xx} U_x+\phi_0^{\frac{3}{2}-\varepsilon_0}\bar\eta_x^2 U_{tx}+2\phi_0^{\frac{3}{2}-\varepsilon_0}\bar\eta_x\bar\eta_{xx}U_t+\phi_0^{\frac{1}{2}-\varepsilon_0}(\phi_0)_x\bar\eta_x^2 U_t \notag\\
&+\frac{2\phi_0^{\frac{1}{2}-\varepsilon_0}(\phi_0)_x\bar\eta_{xx}U_x}{\bar\eta_x}+\frac{2\phi_0^{\frac{3}{2}-\varepsilon_0}\bar\eta_{xx} U_{xx}}{\bar\eta_x}+\frac{2\phi_0^{\frac{3}{2}-\varepsilon_0}\partial_x^3\bar\eta U_x}{\bar\eta_x}-\frac{2\phi_0^{\frac{3}{2}-\varepsilon_0}\bar\eta_{xx}^2U_x}{\bar\eta_x^2}\label{3...151}\\
&-\frac{2+2\alpha}{\alpha}\frac{\phi_0^{\frac{1}{2}+\frac{1}{\alpha}-\varepsilon_0}(\phi_0)_x\bar\eta_{xx}}{\bar\eta_x}-\frac{2\phi_0^{\frac{3}{2}+\frac{1}{\alpha}-\varepsilon_0}\partial_x^3\bar\eta}{\bar\eta_x} +\frac{2\phi_0^{\frac{3}{2}+\frac{1}{\alpha}-\varepsilon_0}\bar\eta_{xx}^2}{\bar\eta_x^2}\notag\\
&+\frac{2}{\alpha^2}\phi_0^{-\frac{1}{2}+\frac{1}{\alpha}-\varepsilon_0}((\phi_0)_x)^2 +\frac{2}{\alpha}\phi_0^{\frac{1}{2}+\frac{1}{\alpha}-\varepsilon_0}(\phi_0)_{xx}.\notag
\end{align}
Then, it follows from \eqref{TETE1}, \eqref{3/2Uxx'} and Lemma \ref{hardy-inequality} that
\begin{equation}\label{3...152}
\phi_0^{\frac{3}{2}-\varepsilon_0}\partial_x^3U+\Big(\frac{1}{\alpha}+1\Big)\phi_0^{\frac{1}{2}-\varepsilon_0}(\phi_0)_x U_{xx}\in L^\infty([0,T];L^2).
\end{equation}

Due to  Proposition \ref{prop2.1}, it remains to verify  $\phi_0^{\frac{1}{2\alpha}+1}\partial_x^3 U\in L^\infty([0,T];L^2)$. Indeed, by \eqref{TETE1}, one can reduce the power of weights in \eqref{3/2Uxx} from $\frac{3}{2}-\varepsilon_0$ to $\frac{1}{2\alpha}$, and obtains that
\begin{equation}\label{3...154}
\phi_0^\frac{1}{2\alpha}U_{xx}+\frac{1}{\alpha}{\phi_0^{\frac{1}{2\alpha}-1}}(\phi_0)_xU_x\in L^\infty([0,T];L^2),
\end{equation}
which, along with \eqref{Linfty-tx}, leads to 
\begin{equation}\label{equ378}
   \phi_0^\frac{1}{2\alpha}U_{xx}\in L^\infty([0,T];L^2). 
\end{equation}
Then, $\eqref{3...152}\times \phi_0^{\frac{1-\alpha}{2\alpha}+\varepsilon_0}$ and \eqref{equ378} imply that  $\phi_0^{\frac{1}{2\alpha}+1}\partial_x^3 U\in L^\infty([0,T];L^2)$. 

Hence, it follows from Proposition \ref{prop2.1} and \eqref{3/2Uxx'} that 
\begin{equation}\label{3/2Uxxx}
\phi_0^{\frac{3}{2}-\varepsilon_0}\partial_x^3 U\in L^\infty([0,T];L^2).
\end{equation}

\underline{\textbf{Step 5: Elliptic estimate $\phi_0^{\frac{3}{2}-\varepsilon_0}\partial_x^4 U$.}}
Analogously, it follows from multiplying $\eqref{lp}_1$ by $\bar\eta_x^2 \phi_0^{1-\frac{1}{\alpha}}$ and applying $\phi_0^{\frac{1}{2}-\varepsilon_0}\partial_x^2$ to the resulting equality (due to Lemmas \ref{Leibniz}-\ref{qiudao}) that
\begin{align}
&\phi_0^{\frac{3}{2}-\varepsilon_0}\partial_x^4 U+\Big(\frac{1}{\alpha}+2\Big)\phi_0^{\frac{1}{2}-\varepsilon_0}(\phi_0)_x\partial_x^3 U\notag\\
=&\phi_0^{\frac{3}{2}-\varepsilon_0}\bar\eta_x^2\partial_tU_{xx}+2\phi_0^{\frac{1}{2}-\varepsilon_0}\left((\phi_0)_x+2\phi_0\bar\eta_{xx}\right)\bar\eta_x U_{tx}+2\phi_0^{\frac{3}{2}-\varepsilon_0}\left(\bar\eta_{xx}^2+\bar\eta_x\partial_x^3\bar\eta\right)U_t\notag\\
&+\phi_0^{\frac{1}{2}-\varepsilon_0}\left((\phi_0)_{xx}\bar\eta_x+4(\phi_0)_x\bar\eta_{xx}\right)\bar\eta_xU_t -\Big(\frac{2}{\alpha}+1\Big)\phi_0^{\frac{1}{2}-\varepsilon_0}(\phi_0)_{xx} U_{xx}\notag\\
& +\frac{1}{\alpha}\phi_0^{\frac{1}{2}-\varepsilon_0}\partial_x^3\phi_0 U_x+2\phi_0^{\frac{1}{2}-\varepsilon_0}\Big((\phi_0)_{xx}\bar\eta_{xx}+2(\phi_0)_{x}\partial_x^3\bar\eta -\frac{2(\phi_0)_{x}\bar\eta_{xx}^2}{\bar\eta_x}\Big)\frac{U_x}{\bar\eta_x}\notag\\
&+2\phi_0^{\frac{3}{2}-\varepsilon_0}\Big(\partial_x^4\bar\eta
-\frac{3\bar\eta_{xx}\partial_x^3\bar\eta}{\bar\eta_x}
+\frac{2\bar\eta_{xx}^3}{\bar\eta_x^2}\Big)\frac{U_x}{\bar\eta_x}\label{3-154}\\
&+4\phi_0^{\frac{1}{2}-\varepsilon_0}\Big(\phi_0\partial_x^3\bar\eta+(\phi_0)_{x}\bar\eta_{xx}-\frac{\phi_0\bar\eta_{xx}^2}{\bar\eta_x}\Big)\frac{U_{xx}}{\bar\eta_x}+\frac{2\phi_0^{\frac{3}{2}-\varepsilon_0}\bar\eta_{xx}\partial_x^3U}{\bar\eta_x}\notag\\
&-\frac{2+2\alpha}{\alpha^2}\frac{\phi_0^{-\frac{1}{2}+\frac{1}{\alpha}-\varepsilon_0}((\phi_0)_x)^2\bar\eta_{xx}}{\bar\eta_x}+2\phi_0^{\frac{3}{2}+\frac{1}{\alpha}-\varepsilon_0}\Big(\frac{\partial_x^4\bar\eta}{\bar\eta_x}-\frac{3\bar\eta_{xx}\partial_x^3\bar\eta}{\bar\eta_x^2}+\frac{2\bar\eta_{xx}^3}{\bar\eta_x^3}\Big)\notag\\
& -\frac{2+2\alpha}{\alpha}\phi_0^{\frac{1}{2}+\frac{1}{\alpha}-\varepsilon_0}\Big(\frac{(\phi_0)_{xx}\bar\eta_{xx}}{\bar\eta_x}+\frac{2(\phi_0)_x\partial_x^3\bar\eta}{\bar\eta_x}-\frac{2(\phi_0)_x\bar\eta_{xx}^2}{\bar\eta_x^2}\Big) \notag\\
&+\boxed{\frac{2(1-\alpha)}{\alpha^3}\phi_0^{\frac{1}{\alpha}-\frac{3}{2}-\varepsilon_0}((\phi_0)_x)^3}+\frac{6}{\alpha^2}\phi_0^{\frac{1}{\alpha}-\frac{1}{2}-\varepsilon_0}(\phi_0)_x(\phi_0)_{xx}+\frac{2}{\alpha}\phi_0^{\frac{1}{\alpha}+\frac{1}{2}-\varepsilon_0}\partial_x^3\phi_0.\notag\end{align}
It is worth noting that the framed term in \eqref{3-154} vanishes whenever $\alpha=1$. 

Next, since $\varepsilon_0$ satisfies \eqref{varepsilon0}, it follows from \eqref{TETE1}, \eqref{3/2Uxx'}, \eqref{3/2Utxx}, \eqref{3/2Uxxx}-\eqref{3-154} and Lemma \ref{hardy-inequality} that 
\begin{equation}\label{3-156}
\phi_0^{\frac{3}{2}-\varepsilon_0}\partial_x^4 U+\Big(\frac{1}{\alpha}+2\Big)\phi_0^{\frac{1}{2}-\varepsilon_0}(\phi_0)_x\partial_x^3 U\in L^\infty([0,T];L^2).    
\end{equation}

In order to use Proposition \ref{prop2.1}, it still needs to check that $\phi_0^{\frac{1}{2\alpha}+\frac{3}{2}}\partial_x^4 U\in L^\infty([0,T];L^2)$. To this end, according to \eqref{TETE1}, Lemma \ref{hardy-inequality} and the assumption that $\alpha\leq 1$, one can change the power of weights in \eqref{3...152} from $\frac{3}{2}-\varepsilon_0$ to $\frac{1}{2\alpha}+\frac{1}{2}$, and  obtain that
\begin{equation}\label{equa387}
 \phi_0^{\frac{1}{2\alpha}+\frac{1}{2}}\partial_x^3 U+\Big(\frac{1}{\alpha}+1\Big)\phi_0^{\frac{1}{2\alpha}-\frac{1}{2}}(\phi_0)_xU_{xx}\in L^\infty([0,T];L^2).    
\end{equation}
Since one has already shown that $\phi_0^{\frac{1}{2\alpha}}U_{xx},\,\phi_0^{\frac{1}{2\alpha}+1}\partial_x^3 U\in L^\infty([0,T];L^2)$ in \textbf{Step 4}, it follows from \eqref{equa387} and Proposition \ref{prop2.1} that
\begin{equation}\label{equ3.83}
\phi_0^{\frac{1}{2\alpha}+\frac{1}{2}}\partial_x^3 U\in L^\infty([0,T];L^2).  
\end{equation}

As a consequence, multiplying \eqref{3-156} by $\phi_0^{\frac{1}{2\alpha}+\varepsilon_0}$, one can obtain from \eqref{equ3.83} that $\phi_0^{\frac{1}{2\alpha}+\frac{3}{2}}\partial_x^4 U\in L^\infty([0,T];L^2)$. Then according to Proposition \ref{prop2.1}, \eqref{3/2Uxxx} and \eqref{3-156}, one has 
$
\phi_0^{\frac{3}{2}-\varepsilon_0}\partial_x^4 U\in L^\infty([0,T];L^2)$.

\underline{\textbf{Step 6: Time continuity.}} Following the proof of \textbf{Step 9} in the first case, one can check the continuities of \eqref{3/2Uxx}, \eqref{3...149}, \eqref{3...152} and \eqref{3-156} in turn, and then make use of \eqref{TETE1} and Proposition \ref{prop2.1} to show that
\begin{align*}
\phi_0^{\frac{3}{2}-\varepsilon_0} U_{xx}+\frac{1}{\alpha}\phi_0^{\frac{1}{2}-\varepsilon_0}(\phi_0)_xU_x\in C([0,T];L^2)&\implies \phi_0^{\frac{3}{2}-\varepsilon_0} U_{xx}\in C([0,T];L^2),\\
\phi_0^{\frac{3}{2}-\varepsilon_0} \partial_t U_{xx}+\frac{1}{\alpha}\phi_0^{\frac{1}{2}-\varepsilon_0}(\phi_0)_xU_{tx}\in C([0,T];L^2)&\implies \phi_0^{\frac{3}{2}-\varepsilon_0}\partial_t U_{xx}\in C([0,T];L^2),\\
\phi_0^{\frac{3}{2}-\varepsilon_0}\partial_x^3 U +\Big(\frac{1}{\alpha}+1\Big)\phi_0^{\frac{1}{2}-\varepsilon_0}(\phi_0)_xU_{xx}\in C([0,T];L^2)&\implies \phi_0^{\frac{3}{2}-\varepsilon_0} \partial_x^3 U \in C([0,T];L^2),\\
\phi_0^{\frac{3}{2}-\varepsilon_0}\partial_x^4 U +\Big(\frac{1}{\alpha}+2\Big)\phi_0^{\frac{1}{2}-\varepsilon_0}(\phi_0)_x\partial_x^3 U \in C([0,T];L^2)&\implies \phi_0^{\frac{3}{2}-\varepsilon_0}\partial_x^4 U \in C([0,T];L^2).
\end{align*}

Collecting all these estimates, together with \eqref{TETE1}, shows that $U\in \mathscr{C}([0,T];\widetilde E)$, which, along with Lemma \ref{hardy-inequality}, yields $U\in C([0,T];W^{3,1})\cap C^1([0,T];W^{1,1})$.

\underline{\textbf{Step 7: Derivation of the Neumann boundary condition.}} One can deduce from Lemma \ref{sobolev-embedding} that \eqref{equ37.5} holds and then follow the same proof of \textbf{Step 10} in \S \ref{subsection3.3}  to show that $U_x|_{x\in \Gamma}=0$.

Therefore,  the proof of Lemma \ref{existence-linearize} $\mathrm{ii)}$ is completed. 
\end{proof}

\section{Uniform estimates to the linearized problems}\label{Section4}
 
With the help of  Lemma \ref{existence-linearize}, in \S \ref{subsection4.1}, we first consider the case for $0<\alpha\leq \frac{1}{3}$ and give a specific derivation of the uniform estimates on the classical solution $U$ to the linearized problem \eqref{lp''}. The proof for  the case $\frac{1}{3}<\alpha\leq 1$ is basically the same as the one for the case $0<\alpha\leq \frac{1}{3}$, and  we only give a sketch in \S \ref{subsection4.2}. 

\subsection{Uniform estimates for the case \texorpdfstring{$0<\alpha\leq \frac{1}{3}$}{}}\label{subsection4.1}
\begin{Lemma}\label{a-priori-estimates}
 Assume that  $U$ is  the  unique classical solution in $[0,T]\times \bar I$ to the problem \eqref{lp''} obtained in Lemma \ref{existence-linearize}, and the positive constant $c_0$ satisfies
\begin{equation}\label{38}
2+\norm{\phi_0}_{3}+E(0,U)\leq c_0.
\end{equation}
Then there exist a positive time $T_*\in (0,T]$ and constants $c_i$, $i=1,2$, $1<c_0\leq c_1\leq c_2$, which depend only on $c_0$, $\alpha$, $|I|$, $\cC_1$ and  $\cC_2$, such that if for all $0\leq t\leq T_*$,
\begin{equation}\label{39}
\begin{aligned}
\absf{\phi_0 \bar U(t)}_{2}+\absf{\phi_0 \bar U_x(t)}_{2}+\absf{\phi_0 \bar U_t(t)}_{2}+\absf{\phi_0 \bar U_{xx}(t)}_{2}\leq c_1,\\[4pt]
\absf{\phi_0 \bar U_{tx}(t)}_{2}+\absf{\phi_0 \partial^3_x\bar U(t)}_{2}+\absf{\phi_0 \bar U_{tt}(t)}_{2}+\absf{\phi_0 \partial_t\bar U_{xx}(t)}_{2}+\absf{\phi_0 \partial^4_x\bar U(t)}_{2}\leq c_2,
\end{aligned}
\end{equation}
it holds that for all $0\leq t\leq T_*$,
\begin{equation}\label{uniform bounds}
\begin{aligned}
\abs{\phi_0 U(t)}_{2}+\abs{\phi_0 U_x(t)}_{2}+\abs{\phi_0 U_t(t)}_{2}+\abs{\phi_0  U_{xx}(t)}_{2}\leq c_1,\\[4pt]
\abs{\phi_0 U_{tx}(t)}_{2}+\absf{\phi_0 \partial^3_x U(t)}_{2}+\abs{\phi_0 U_{tt}(t)}_{2}+\abs{\phi_0 \partial_t U_{xx}(t)}_{2}+\absf{\phi_0 \partial^4_x U(t)}_{2}\leq c_2.
\end{aligned}
\end{equation}
\end{Lemma}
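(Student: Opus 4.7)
The plan is to run the same hierarchy of estimates as in the proof of Lemma \ref{existence-linearize} for the case $0<\alpha\leq \tfrac{1}{3}$, but now quantitatively, tracking the dependence of constants on $c_0, c_1, c_2$ and $T_*$. The overall structure is: lowest-order tangential estimates and one elliptic step yield the $c_1$-tier bounds for $\phi_0 U$, $\phi_0 U_x$, $\phi_0 U_t$, $\phi_0 U_{xx}$; higher tangential estimates yield the $c_2$-tier bounds for $\phi_0 U_{tx}$, $\phi_0 U_{tt}$; and further reading of $\eqref{lp''}_1$ as a degenerate elliptic equation yields the remaining $c_2$-tier bounds for $\phi_0 \partial_x^3 U$, $\phi_0 \partial_t U_{xx}$, $\phi_0 \partial_x^4 U$.

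First, the working hypothesis $\tfrac{1}{2}\leq \bar\eta_x\leq \tfrac{3}{2}$ needed in Section \ref{Section3} follows from $\bar\eta_x(t,x)=1+\int_0^t\bar U_x(s,x)\,\ds$ together with the $c_2$-tier Sobolev control $|\bar U_x|_\infty\leq C(c_2)$ (via the weighted embeddings in Appendix \ref{appendix A}), provided $T_*\leq (4C(c_2))^{-1}$. Testing $\eqref{lp''}_1$ with $U$ and $U_t$ in succession, integrating by parts (boundary terms vanish thanks to \eqref{equ33.41}, and later to the Neumann condition \eqref{N111}), and handling the $K_0$-term exactly as in \eqref{equ311}, produces a differential inequality of the form
\begin{equation*}
\frac{\mathrm{d}}{\dt}\mathcal{E}_0(t) + |\phi_0 U_x|_2^2 + |\phi_0 U_t|_2^2 \leq C(c_1)\big(\mathcal{E}_0(t)+1\big),
\end{equation*}
where $\mathcal{E}_0 := |\phi_0 U|_2^2 + |\phi_0 U_x|_2^2 + |\phi_0 U_t|_2^2$; all coefficients involve products of $\phi_0, (\phi_0)_x, \bar\eta_x, \bar\eta_{xx}$ and $\bar U$-derivatives whose $L^\infty$-norms are bounded by $C(c_1)$ via \eqref{39}, Hardy's inequality (Lemma \ref{hardy-inequality}) and Sobolev embedding.

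For the higher tangential tier, we differentiate $\eqref{lp''}_1$ once and twice in time as in \eqref{33151} and \eqref{33154} and test with $U_{tt}$ and $\partial_t U_{tx}$; the corresponding forcings $P_1^{(i)}, P_2^{(i)}$ ($i=1,2$) involve $\bar U_{tx}$ and $\bar U_{tt}$, which lie in the $c_2$-tier, giving a second differential inequality $\tfrac{\mathrm{d}}{\dt}\mathcal{E}_1(t) + \dots \leq C(c_2)(\mathcal{E}_1(t)+1)$ with $\mathcal{E}_1$ collecting the time-tangential second-tier quantities. The elliptic estimates for $\phi_0 U_{xx}$, $\phi_0 \partial_x^3 U$, $\phi_0 \partial_t U_{xx}$ and $\phi_0 \partial_x^4 U$ are then read off the algebraic identities \eqref{0435}, \eqref{utxx-phi0}, \eqref{0441}, \eqref{0444} derived in Section \ref{subsection3.3}, each followed by the cross-derivatives embedding (Proposition \ref{prop2.1}) to absorb the crossing term $\kappa(\phi_0)_x\partial_x^j U$ into the full derivative with unchanged weight.

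Closure is via a continuity/bootstrap argument: since $\mathcal{E}_0(0)+\mathcal{E}_1(0)\leq C(c_0)$ by \eqref{38}, we choose $c_1\geq c_0$ first, depending only on $c_0$ (and $\alpha, |I|, \mathcal{C}_1, \mathcal{C}_2$), at least twice the output of the low-tier inequality, then $c_2\geq c_1$ similarly for the high tier; finally we shrink $T_*$, depending only on $c_2$, so that Gr\"onwall's inequality yields $\mathcal{E}_0(t)\leq c_1^2/4$ and $\mathcal{E}_1(t)\leq c_2^2/4$ on $[0,T_*]$, which, combined with the elliptic identities and Proposition \ref{prop2.1}, gives \eqref{uniform bounds}. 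The main obstacle I anticipate is the highest elliptic estimate for $\phi_0 \partial_x^4 U$: the identity \eqref{0444} contains the borderline-singular term $\tfrac{2(1-\alpha)}{\alpha^3}\phi_0^{1/\alpha-2}((\phi_0)_x)^3$, whose $L^2$-integrability with weight $\phi_0$ demands $1/\alpha-2\geq 1$, i.e. $\alpha\leq \tfrac{1}{3}$, precisely the range imposed here; one must further verify by term-by-term inspection that every remaining factor on the right-hand side of \eqref{0444} can be bounded by $C(c_2)(1+\mathcal{E}_1+|\phi_0\partial_x^4 U|_2)$ using Hardy's inequality and the previously established $c_1,c_2$-tier controls, before Proposition \ref{prop2.1} is applied to close on $|\phi_0\partial_x^4 U|_2$.
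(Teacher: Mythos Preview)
Your overall architecture (tangential energy estimates, then elliptic readings of \eqref{0435}, \eqref{utxx-phi0}, \eqref{0441}, \eqref{0444} with Proposition~\ref{prop2.1}, then bootstrap on $c_1,c_2,T_*$) matches the paper. There is, however, a genuine gap in the tangential part.

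The $K_0$-crossing terms are \emph{not} all handled ``exactly as in \eqref{equ311}''. The trick in \eqref{equ311} works because $\phi_0(\phi_0)_x U U_x=\tfrac12\phi_0(\phi_0)_x(U^2)_x$ is a total $x$-derivative, so integration by parts leaves a sign-favorable term plus a lower-order one. When you test $\eqref{lp''}_1$ with $U_t$, the crossing term is
\[
\cL_2=\Big(\tfrac{1}{\alpha}-2\Big)\int \phi_0(\phi_0)_x\,U_x U_t\,\dx,
\]
and $U_x U_t$ is not a total $x$-derivative; the \eqref{equ311}-move does not apply. The paper (see \eqref{cL6}) instead uses the weighted interpolation Lemma~\ref{GNinequality} on $|\phi_0^{1/2}U_x|_2$ and $|\phi_0^{1/2}U_t|_2$, which unavoidably produces extra small terms $\varepsilon_1|\phi_0 U_{xx}|_2^2+\varepsilon_2|\phi_0 U_{tx}|_2^2$. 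The same phenomenon recurs at the next level: testing $\partial_t\eqref{lp''}_1$ with $U_{tt}$ gives $\cL_4\sim\int\phi_0(\phi_0)_xU_{tx}U_{tt}\,\dx$, handled in \eqref{cL4} by interpolation with extra $\varepsilon_1|\phi_0\partial_tU_{xx}|_2^2+\varepsilon_2|\phi_0\partial_t^2U_x|_2^2$. These $\varepsilon$-terms must be absorbed, and this forces the estimates to be \emph{coupled}: the $|\phi_0 U_{xx}|_2^2$-term is removed via the elliptic identity \eqref{eq3329} expressing $|\phi_0 U_{xx}|_2$ back through $|\phi_0 U_t|_2$ (which is already dissipated), while the $|\phi_0 U_{tx}|_2^2$-term is absorbed by the dissipation coming from the next tangential estimate (testing $\partial_t\eqref{lp''}_1$ with $U_t$, where the \eqref{equ311}-trick \emph{does} work, cf.~\eqref{cL10}). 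This coupling is the reason Steps~2--3 of Lemma~\ref{c_0-c_1} and Steps~1--2 of Lemma~\ref{c_1-c_2} are run simultaneously before Gr\"onwall. Your proposal, treating each test independently with the $K_0$-term dispatched by \eqref{equ311}, would not close.

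A smaller point: the coefficients in your low-tier differential inequality are $C(c_2)$, not $C(c_1)$, because quantities like $|\bar U_x|_\infty$, $|\bar\eta_{xx}|_\infty$ require the $c_2$-tier control through $\phi_0\partial_x^3\bar U$, $\phi_0\partial_x^4\bar U$ (cf.~\eqref{useful1}, \eqref{useful3}). This does not break the bootstrap since you already let $T_*$ depend on $c_2$, but the resulting explicit choice of constants is $c_1=Cc_0^{2/\alpha}$, $c_2=Cc_1^{3+2/\alpha}$, $T_*\le(1+Cc_2)^{-(6\alpha+6)/\alpha}$ as in \eqref{3.84}.
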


\subsubsection{Basic estimates}\label{Remark-useful-bounds}
First, according to the assumption \eqref{39} and Lemmas \ref{sobolev-embedding}-\ref{hardy-inequality}, it holds that for all $0\leq t\leq T$,
\begin{equation}\label{useful1}
\begin{split}
\absf{\bar U_x(t)}_2\leq C \sum_{j=1}^2 \absf{\phi_0\partial_x^j\bar U (t)}_2\leq & C c_1,\\
\normf{\bar U_x(t)}_{1,\infty}\leq C \normf{\bar U_{x}(t)}_{2}\leq C \sum_{j=1}^4\absf{\phi_0\partial_x^j\bar U(t)}_{2} \leq& C c_2,\\
\absb{\phi_0^\frac{1}{2}\bar U_{tx}(t)}_{\infty}\leq  C \sum_{j=1}^2\absf{\phi_0\partial_x^j\bar U_t(t)}_{2} \leq & C c_2.
\end{split}
\end{equation}

Next, there exists a positive time $\widetilde T>0$, such that
\begin{equation}\label{useful2}
\frac{1}{2}\leq \bar \eta_x(t,x)\leq \frac{3}{2} \ \ \text{for all } (t,x)\in [0,\widetilde T]\times\bar I.
\end{equation}
Indeed, applying $\partial_x$ to both sides of \eqref{given-flow}, 
one gets from \eqref{useful1} that for $\widetilde T:= (1+2Cc_2)^{-1}$, \begin{equation*}
\begin{aligned}
\abs{\bar\eta_x(t,x) -1}&\leq \int_0^t \absf{\bar U_x}_\infty \,\ds\leq C\widetilde Tc_2\leq \frac{1}{2} \ \ \text{for all } (t,x)\in [0,\widetilde T]\times\bar I.
\end{aligned}
\end{equation*}

Finally, it follows from  \eqref{given-flow}, \eqref{useful1} and Lemma \ref{sobolev-embedding} that for all $0\leq t\leq T$,
\begin{equation}\label{useful3}
\begin{gathered}
\abs{\bar \eta_{xx}(t)}_{\infty}\leq C\norm{\bar \eta_{xx} (t)}_{1}\leq C \int_0^t \normf{\bar U_{xx}}_{1}\,\ds\leq C c_2 t,\\
\absf{\partial_x^3\bar \eta (t)}_{2}\leq C \sum_{j=3}^4\absf{\phi_0\partial_x^j\bar \eta (t)}_{2}\leq C \int_0^t \sum_{j=3}^4\absf{\phi_0\partial_x^j \bar U}_{2}\,\ds\leq Cc_2 t.
\end{gathered}
\end{equation}

\subsubsection{Proof of Lemma \ref{a-priori-estimates}}
The proof will be divided into the following several steps.

\begin{Lemma}\label{c_0-c_1}
Under the same assumptions of Lemma \ref{a-priori-estimates}, it holds that
\begin{equation*}
\absf{\phi_0 U(t)}_{2}+\absf{\phi_0 U_x(t)}_{2}+\absf{\phi_0 U_t(t)}_{2}+\absf{\phi_0 U_{xx}(t)}_{2}\leq C c_0^\frac{2}{\alpha},
\end{equation*}
for all $0\leq t\leq T_1=\min\{\widetilde T, (1+Cc_2)^{-\frac{6\alpha+2}{\alpha}}\}$.
\end{Lemma}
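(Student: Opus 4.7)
The plan is to close three nested weighted $L^2$ energy identities for $U$ satisfying the linearized problem \eqref{lp''}, using the a priori bounds \eqref{39} on $\bar U$ together with \eqref{useful1}--\eqref{useful3} to freeze the coefficients, and absorbing every nonlinear $\bar U$--contribution into a constant depending polynomially on $c_2$ that is rendered harmless by a sufficiently small choice of $T_1$.

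First I would test $\eqref{lp''}_1$ against $U$ itself. The drift term with coefficient $K_0=1/\alpha-2\geq 0$ is handled exactly as in \eqref{equ311}: integration by parts produces a non-positive square together with lower-order pieces controlled by $|\phi_0 U|_2^2$ thanks to $|(\phi_0)_{xx}|_\infty$ and $|\bar\eta_{xx}|_\infty$ from \eqref{useful3}. The two pressure sources $\bigl(\phi_0^{2+1/\alpha}/\bar\eta_x^2\bigr)_x$ and $\phi_0^{1+1/\alpha}(\phi_0)_x/\bar\eta_x^2$ are dominated in $L^2$ by a constant multiple of $c_0^{2/\alpha}$ since $|\phi_0|_\infty\leq C\|\phi_0\|_3\leq Cc_0$ and $1/2\leq \bar\eta_x\leq 3/2$ by \eqref{useful2}. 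A short Grönwall step on $[0,\widetilde T]$ then yields the bound on $|\phi_0 U(t)|_2$ and a time-integrated bound on $|\phi_0 U_x|_2$.

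To upgrade $|\phi_0 U_x|_2$ and $|\phi_0 U_t|_2$ to pointwise-in-$t$ estimates, I would carry out two further identities. Testing $\eqref{lp''}_1$ against $U_t$ gives $|\phi_0 U_t|_2^2+\tfrac12\tfrac{d}{dt}\int \phi_0^2 U_x^2/\bar\eta_x^2\,\dx=(\mathrm{RHS})$, with the antisymmetric surplus arising from $\partial_t(1/\bar\eta_x^2)$ absorbed via $|\bar U_x|_\infty\leq Cc_2$ from \eqref{useful1}. Then differentiating $\eqref{lp''}_1$ in $t$ as in \eqref{33151} and testing against $U_t$ produces $\tfrac12\tfrac{d}{dt}|\phi_0 U_t|_2^2+\int \phi_0^2 U_{tx}^2/\bar\eta_x^2\,\dx=(\mathrm{RHS})$, where the remainders $P_1^{(1)},P_2^{(1)}$ of \eqref{33152} are estimated using $|\bar U_x|_\infty$ and $|\phi_0^{1/2}\bar U_{tx}|_\infty\leq Cc_2$. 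The initial value $|\phi_0 U_t(0)|_2$ is under control because $E(0,U)\leq c_0$. Summing the two identities and running Grönwall produces the pointwise bounds on $[0,T_1]$ provided $\exp\bigl(C(1+c_2)^\beta t\bigr)\leq 2$, which is precisely what the choice $T_1\leq (1+Cc_2)^{-(6\alpha+2)/\alpha}$ is designed to ensure.

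Finally, the second-order bound $|\phi_0 U_{xx}|_2\leq Cc_0^{2/\alpha}$ comes straight from the pointwise identity \eqref{0435}, which writes $\phi_0 U_{xx}$ as a linear combination of $\phi_0\bar\eta_x^2 U_t$, $(\phi_0)_x U_x$, $\phi_0\bar\eta_{xx}U_x/\bar\eta_x$, $\phi_0^{1/\alpha}(\phi_0)_x$, and $\phi_0^{1/\alpha+1}\bar\eta_{xx}/\bar\eta_x$, each of which has just been bounded by $Cc_0^{2/\alpha}$ on $[0,T_1]$. The main delicacy is not any single energy estimate but the careful bookkeeping of powers: one must verify at every step that the forcing contributes exactly $c_0^{2/\alpha}$ while the drift coefficients coming from $\bar U$ contribute at most a polynomial power of $c_2$, which is what dictates both the exponent in the statement and the precise time $T_1=\min\{\widetilde T,(1+Cc_2)^{-(6\alpha+2)/\alpha}\}$ at which the Grönwall exponentials remain $O(1)$.
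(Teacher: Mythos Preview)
Your outline matches the paper's three-step structure (test with $U$, then with $U_t$, then differentiate in $t$ and test with $U_t$, finish with the elliptic identity \eqref{0435}), and Steps~1, 3, and the final elliptic step are correct. The gap is in your second step. When you test $\eqref{lp''}_1$ against $U_t$, the right-hand side contains not only the $\partial_t(1/\bar\eta_x^2)$ surplus you mention but also the cross term
\[
\cL_2=\Big(\frac{1}{\alpha}-2\Big)\int\phi_0(\phi_0)_xU_xU_t\,\dx.
\]
Unlike $\cL_1$ and $\cL_3$, this term has no $(F^2)_x$ structure and cannot be handled by the integration-by-parts trick of \eqref{equ311}. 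The paper bounds it via the weighted interpolation of Lemma~\ref{GNinequality} (see \eqref{cL6}), which unavoidably introduces $\varepsilon_1|\phi_0 U_{xx}|_2^2+\varepsilon_2|\phi_0 U_{tx}|_2^2$ on the right. Closing then requires two ingredients you omit: the $\varepsilon_2|\phi_0 U_{tx}|_2^2$ piece is absorbed into the dissipation produced in Step~3, while the $\varepsilon_1|\phi_0 U_{xx}|_2^2$ piece must be fed back \emph{inside} the energy argument via the elliptic relation \eqref{eq3329}, which controls $|\phi_0 U_{xx}|_2$ by $C\bigl(|\phi_0 U_t|_2+(c_0+c_2t)|\phi_0 U_x|_2\bigr)+Cc_0^{1+1/\alpha}$. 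Your ordering---first close $|\phi_0 U_x|_2$ and $|\phi_0 U_t|_2$ by pure energy, then recover $|\phi_0 U_{xx}|_2$ from \eqref{0435}---therefore does not close as written; the elliptic identity must already be invoked during the $U_x$--$U_t$ estimates in order to absorb $\cL_2$.
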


\begin{proof}
\textbf{\underline{Step 1: Estimate of $\phi_0 U$.}}
Multiplying $\eqref{lp''}_1$ by $\bar\eta_x^2U$  and integrating the resulting equality over $I$, then by \eqref{38}, \eqref{useful1}-\eqref{useful3}, H\"older's inequality and Young's inequality, one gets
\begin{align}
&\frac{1}{2}\frac{\mathrm{d}}{\dt}\int \phi_0^2\bar\eta_x^2 U^2\,\dx+\int \phi_0^2 U_x^2\,\dx\notag\\
=&\underline{\Big(\frac{1}{\alpha}-2\Big)\int \phi_0(\phi_0)_x U U_x\,\dx}_{:=\cL_1}+\int \phi_0^2 \bar\eta_x \bar U_x U^2-2\int \frac{\phi_0^2\bar\eta_{xx}U_x U}{\bar\eta_x}\notag\\
&+2\int \frac{\phi_0^{2+\frac{1}{\alpha}}\bar\eta_{xx}U}{\bar\eta_x} \,\dx-\frac{2}{\alpha}\int\phi_0^{1+\frac{1}{\alpha}}(\phi_0)_x U\,\dx\label{eq:cL1-cL5}\\
\leq &\cL_1+C\big(\absf{\bar U_x}_\infty \absf{\phi_0 \bar\eta_x U}_2^2+
 \absf{\bar\eta_{xx}}_\infty \abs{\phi_0 \bar\eta_x U}_2\abs{\phi_0 U_x}_2\big)\notag\\
&+
C\big(\abs{\phi_0}_\infty^{\frac{\alpha+1}{\alpha}}\abs{\bar\eta_{xx}}_\infty\abs{\phi_0 \bar\eta_x U}_2+
\abs{\phi_0}_\infty^{\frac{1}{\alpha}}\abs{(\phi_0)_x}_\infty \abs{\phi_0 \bar\eta_x U}_2\big)\notag\\
\leq& \cL_1+C(c_2+c_2^2t^2)\absf{\phi_0 \bar\eta_x U}_2^2+C(1+c_2^2t^2)c_0^{2+\frac{2}{\alpha}}+\frac{1}{8}\abs{\phi_0 U_x}_2^2.\notag
\end{align}

For $\cL_1$, it follows from integration by parts, $0<\alpha\leq\frac{1}{3}$, \eqref{38}, \eqref{useful2}, Lemma \ref{GNinequality}, H\"older's inequality and Young's inequality that
\begin{align}
\cL_1
&=\underline{-\frac{1-2\alpha}{2\alpha}\int  ((\phi_0)_x)^2U^2\,\dx}_{\leq 0}-\frac{1-2\alpha}{2\alpha}\int \phi_0 (\phi_0)_{xx} U^2\,\dx\label{cL1}\\
&\leq C\big(1+\abs{(\phi_0)_{xx}}_\infty^2\big)\abs{\phi_0 U}_2^2+\frac{1}{8}\abs{\phi_0 U_x}_2^2
\leq Cc_0^2\absf{\phi_0 \bar\eta_x U}_2^2+\frac{1}{8}\abs{\phi_0 U_x}_2^2.\notag
\end{align}


Thus, combining with \eqref{eq:cL1-cL5}-\eqref{cL1}, one can get from Gr\"onwall's inequality, \eqref{useful2} and Lemma \ref{hardy-inequality} that for all $0\leq t\leq T_1:= \min\{\widetilde T, (1+Cc_2)^{-\frac{6\alpha+2}{\alpha}}\}$,
\begin{equation}\label{1}
\abs{\phi_0 U(t)}_{2}^2+\int_0^t \abs{\phi_0 U_x}_{2}^2\,\ds
\leq  C e^{Cc_2^2t}\big(\abs{\phi_0 u_0}_{2}^2+c_0^{2+\frac{2}{\alpha}} c_2^2 t\big)\leq C c_0^2.
\end{equation}

\underline{\textbf{Step 2: Estimate of $\phi_0 U_{x}$.}} Multiplying $\eqref{lp''}_1$ by $\bar\eta_x^2 U_t$ and integrating the resulting equality over $I$, then by  \eqref{38}, \eqref{useful1}-\eqref{useful3}, H\"older's inequality and Young's inequality, one has
\begin{align}
&\frac{1}{2}\frac{\mathrm{d}}{\dt}\int \phi_0^2 U_x^2\,\dx+\int \phi_0^2\bar\eta_x^2 U_t^2\,\dx\notag\\
=&\underline{\Big(\frac{1}{\alpha}-2\Big)\int \phi_0 (\phi_0)_x U_x U_t\,\dx}_{:=\cL_2}-2 \int \frac{\phi_0^2 \bar\eta_{xx}U_x U_t}{\bar\eta_x}\,\dx\notag\\
&+2\int \frac{\phi_0^{2+\frac{1}{\alpha}}\bar\eta_{xx} U_t}{\bar\eta_x}\,\dx- \frac{2}{\alpha}\int \phi_0^{1+\frac{1}{\alpha}}(\phi_0)_x U_t\label{eq:cL6-cL9}\\
\leq &\cL_2+ C \big(\abs{\bar\eta_{xx}}_\infty \abs{\phi_0  U_x}_2+
\abs{\phi_0}_\infty^\frac{\alpha+1}{\alpha}\abs{\bar\eta_{xx}}_\infty+
C\abs{\phi_0}_\infty^{\frac{1}{\alpha}}\abs{(\phi_0)_x}_\infty\big) \abs{\phi_0 U_t}_2\notag\\
\leq &\cL_2+Cc_2^2t^2\abs{\phi_0 U_x}_2^2+C(1+c_2^2t^2)c_0^{2+\frac{2}{\alpha}}+\frac{1}{8}\abs{\phi_0 \bar\eta_x U_t}_2^2.\notag
\end{align}

For $\cL_2$, it follows from \eqref{38}, \eqref{useful2}, Lemma \ref{GNinequality}, H\"older's inequality and Young's inequality that
\begin{equation}\label{cL6}
\begin{aligned}
\cL_2&=\Big(\frac{1}{\alpha}-2\Big)\int \phi_0 (\phi_0)_x U_x U_t\,\dx
\leq C \abs{(\phi_0)_x}_\infty\absb{\phi_0^\frac{1}{2}U_x}_2\absb{\phi_0^\frac{1}{2}U_t}_2\\
&\leq C c_0\big(\abs{\phi_0 U_x}_2+\abs{\phi_0 U_x}_2^\frac{1}{2}\abs{\phi_0 U_{xx}}_2^\frac{1}{2}\big)\big(\abs{\phi_0 U_t}_2 +\abs{\phi_0 U_t}_2^\frac{1}{2}\abs{\phi_0 U_{tx}}_2^\frac{1}{2}\big)\\
&\leq C(\varepsilon_1,\varepsilon_2)c_0^2\abs{\phi_0 U_x}_2^2+\frac{1}{8}\abs{\phi_0 \bar\eta_x U_t}_2^2+\varepsilon_1\abs{\phi_0 U_{xx}}_2^2+\varepsilon_2\abs{\phi_0 U_{tx}}_2^2,
\end{aligned}
\end{equation}
where $\varepsilon_1,\varepsilon_2\in (0,1)$ are arbitrarily small constants that will be determined later.


Hence, it follows from \eqref{eq:cL6-cL9}-\eqref{cL6} that for all $\varepsilon_1,\varepsilon_2\in(0,1)$, 
\begin{equation}\label{step1}
\begin{aligned}
\frac{\mathrm{d}}{\dt}\abs{\phi_0 U_x}_{2}^2+\abs{\phi_0 U_t}_{2}^2\leq & C(\varepsilon_1,\varepsilon_2)c_2^2\abs{\phi_0 U_x}_{2}^2+\varepsilon_1\abs{\phi_0 U_{xx}}_2^2+\varepsilon_2\abs{\phi_0 U_{tx}}_2^2+Cc_2^{4+\frac{2}{\alpha}}.
\end{aligned}
\end{equation}

For $\phi_0 U_{xx}$, it follows from \eqref{0435}, \eqref{38} and \eqref{useful2}-\eqref{useful3} that
\begin{equation}\label{equ3.102}
\begin{aligned}
&\big|\phi_0 U_{xx} + \frac{1}{\alpha} (\phi_0)_x U_x\big|_2\\
\leq &C\big(\abs{\phi_0 U_t}_2 + \abs{\bar\eta_{xx}}_\infty\abs{\phi_0 U_x}_2+\abs{\phi_0}_\infty^\frac{1}{\alpha}\norm{\phi_0}_1+\abs{\phi_0}_\infty^\frac{\alpha+1}{\alpha}\abs{\bar\eta_{xx}}_\infty\big)\\
\leq &C\big(\abs{\phi_0 U_t}_2 + c_2 t\abs{\phi_0 U_x}_2\big)+ C\big(c_0^{1+\frac{1}{\alpha}}+c_0^{1+\frac{1}{\alpha}}c_2t\big),
\end{aligned}
\end{equation}
which, along with Proposition \ref{prop2.1}, leads to
\begin{equation}\label{eq3329}
\begin{aligned}
\abs{\phi_0 U_{xx}}_2&\leq C \big(\big|\phi_0 U_{xx}+ \frac{1}{\alpha} (\phi_0)_x U_x\big|_2+ \abs{(\phi_0)_{xx}}_\infty \abs{\phi_0 U_x}_2\big)\\
&\leq C\big(\abs{\phi_0 U_t}_2 + (c_0+c_2t)\abs{\phi_0 U_x}_2+c_0^{1+\frac{1}{\alpha}}(1+c_2t)\big).
\end{aligned}
\end{equation}

Substituting \eqref{eq3329} into \eqref{step1} and choosing $\varepsilon_1$ suitably small, one has for all $\varepsilon\in(0,1)$,
\begin{equation}\label{step11}
\frac{\mathrm{d}}{\dt}\abs{\phi_0 U_x}_{2}^2+\abs{\phi_0 U_t}_{2}^2\leq C(\varepsilon)c_2^2\abs{\phi_0 U_x}_{2}^2+\varepsilon\abs{\phi_0 U_{tx}}_2^2+Cc_2^{4+\frac{2}{\alpha}}.
\end{equation}

\underline{\textbf{Step 3: Estimates of $\phi_0 U_t$ and $\phi_0 U_{xx}$.}}
Multiplying $\eqref{lp''}_1$ by $\bar\eta_x^2$ first, applying $U_{t}\partial_t$ to the resulting equality and finally integrating it over $I$, then according to \eqref{38}, \eqref{useful1}-\eqref{useful3}, H\"older's inequality and Young's inequality, one has
\begin{align}
&\frac{1}{2}\frac{\mathrm{d}}{\dt}\int \phi_0^2 \bar\eta_x^2 U_t^2\,\dx+\int \phi_0^2  U_{tx}^2\,\dx\notag\\
=&\underline{\Big(\frac{1}{\alpha}-2\Big)\int \phi_0 (\phi_0)_x U_{tx}U_t\,\dx}_{:=\cL_3} -\int \phi_0^2\bar\eta_x \bar U_x U_t^2\,\dx\notag\\
&+ 2\int\frac{\phi_0^{2+\frac{1}{\alpha}}\bar U_{xx} U_t}{\bar\eta_x}\,\dx- 2\int \frac{\phi_0^{2+\frac{1}{\alpha}}\bar\eta_{xx}\bar U_{x} U_t}{\bar\eta_x^2} -2\int \frac{\phi_0^{2}\bar U_{xx} U_x U_t}{\bar\eta_x}\,\dx\notag\\
&-2\int\frac{\phi_0^{2}\bar\eta_{xx} U_{tx}U_t}{\bar\eta_x}\,\dx +2\int \frac{\phi_0^{2}\bar\eta_{xx} \bar U_x U_x U_t}{\bar\eta_x^2}\,\dx\label{2.34}\\
\leq &\cL_3+C \big(\absf{\bar U_x}_\infty \abs{\phi_0 U_t}_2+
\abs{\phi_0}_\infty^\frac{\alpha+1}{\alpha} \absf{\bar U_{xx}}_\infty + \abs{\phi_0}_\infty^\frac{\alpha+1}{\alpha}\absf{\bar U_x}_\infty \abs{\bar\eta_{xx}}_\infty\big) \abs{\phi_0 U_t}_2\notag\\
&+C\big(\absf{\bar U_{xx}}_\infty\abs{\phi_0 U_x}_2+
\abs{\bar\eta_{xx}}_\infty\abs{\phi_0 U_{tx}}_2+
\abs{\bar\eta_{xx}}_\infty\absf{\bar U_x}_\infty \abs{\phi_0 U_x}_2\big)\abs{\phi_0 U_t}_2\notag\\
\leq & \cL_3+ Cc_2^4\big(\abs{\phi_0 U_x}_2^2+\abs{\phi_0 \bar\eta_x U_t}_2^2+c_0^{2+\frac{2}{\alpha}}\big)+\frac{1}{8}\abs{\phi_0 U_{tx}}_2^2.\notag
\end{align}

For $\cL_{3}$,  in a similar way as for $\cL_1$, it follows from integration by parts, \eqref{38}, \eqref{useful2}, Lemma \ref{GNinequality}, H\"older's inequality and Young's inequality that
\begin{equation}\label{cL10}
\begin{aligned}
\cL_{3}
&=\underline{-\frac{1-2\alpha}{2\alpha}\int ((\phi_0)_x)^2 U_t^2\,\dx}_{\leq 0}-\frac{1-2\alpha}{2\alpha}\int \phi_0(\phi_0)_{xx}U_t^2\,\dx \\
&\leq -\frac{1-2\alpha}{2\alpha}\int \phi_0 (\phi_0)_{xx}U_t^2\,\dx \\
&\leq C\abs{(\phi_0)_{xx}}_\infty^2\abs{\phi_0 U_t}_2^2+\frac{1}{8}\abs{\phi_0 U_{tx}}_2^2
\leq Cc_0^2\abs{\phi_0\bar\eta_x U_t}_2^2+\frac{1}{8}\abs{\phi_0 U_{tx}}_2^2.
\end{aligned}
\end{equation}

Thus, it follows from \eqref{2.34}-\eqref{cL10} that
\begin{equation}\label{3}
\frac{\mathrm{d}}{\dt}\abs{\phi_0 \bar\eta_x U_t}_{2}^2+\abs{\phi_0 U_{tx}}_{2}^2\leq Cc_2^4\big(\abs{\phi_0 U_x}_2^2+\abs{\phi_0 \bar\eta_x U_t}_2^2+c_0^{2+\frac{2}{\alpha}}\big).
\end{equation}

Consequently, choosing $\varepsilon$ in \eqref{step11} sufficiently small, it follows from  \eqref{step11}, \eqref{3}, the Gr\"onwall inequality and \eqref{useful2} that for all $0\leq t\leq T_1$,
\begin{equation}\label{3'}
\begin{aligned}
\abs{\phi_0 U_x(t)}_{2}^2+\abs{\phi_0 U_t(t)}_{2}^2&\leq C e^{Cc_2^4t}\big(\abs{\phi_0 (u_0)_x}_{2}^2+\abs{\phi_0 U_t(0)}_{2}^2+c_2^{6+\frac{2}{\alpha}}t\big)\\
&\leq C e^{Cc_2^4t}\big(c_0^2+c_2^{6+\frac{2}{\alpha}}t\big)\leq C c_0^{2}.
\end{aligned}
\end{equation}

The estimate of $\phi_0 U_{xx}$ can be deduced from \eqref{eq3329} and \eqref{3'}, that is,
\begin{equation}\label{4''}
\abs{\phi_0 U_{xx}(t)}_2\leq C\big(c_0+c_0^2+c_0^{1+\frac{1}{\alpha}}\big)\leq C c_0^{\frac{2}{\alpha}} \ \ \text{for all }0\leq t\leq T_1.
\end{equation}

Collecting estimates \eqref{1}, \eqref{3'} and \eqref{4''} completes the proof of Lemma \ref{c_0-c_1}.
\end{proof}

\begin{Lemma}\label{c_1-c_2}
Under the same assumptions of Lemma \ref{a-priori-estimates}, it holds that
\begin{equation*}
\begin{aligned}
\absf{\phi_0 U_{tx}(t)}_2+\absf{\phi_0 \partial_x^3 U(t)}_2+\abs{\phi_0 U_{tt} (t)}_2+\abs{\phi_0\partial_t U_{xx}(t)}_2+\absf{\phi_0 \partial_x^4 U(t)}_2\leq Cc_1^{3+\frac{2}{\alpha}},
\end{aligned}
\end{equation*}
for all $0\leq t\leq T_2=\min\{\widetilde T, (1+Cc_2)^{-\frac{6\alpha+6}{\alpha}}\}$.
\end{Lemma}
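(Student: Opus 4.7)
\medskip

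\noindent\textbf{Proof proposal for Lemma \ref{c_1-c_2}.} The plan is to mimic the structure of Lemma \ref{c_0-c_1}, pushing every estimate up one order of differentiation while exploiting the Neumann/sign structure that made the $\cL_1$ and $\cL_3$ terms favorable. First I would carry out a tangential energy estimate for $U_{tt}$ by formally differentiating $\eqref{lp''}_1$ twice in $t$, which produces \eqref{33154}, testing the resulting equation against $\bar\eta_x^{2}U_{tt}$ and integrating over $I$. The only borderline term that appears on the left is the crossing contribution
\begin{equation*}
\cL:=\Big(\frac{1}{\alpha}-2\Big)\int\phi_0(\phi_0)_xU_{tx}U_{tt}\,\dx,
\end{equation*}
which, integrating by parts exactly as in \eqref{cL1} and \eqref{cL10}, splits into a manifestly non-positive piece $-\tfrac{1-2\alpha}{2\alpha}\int((\phi_0)_x)^2U_{tt}^2\,\dx$ (good sign since $0<\alpha\le\tfrac13$) plus a zeroth-order remainder controlled by $c_0^2|\phi_0\bar\eta_xU_{tt}|_2^2$ after using $|(\phi_0)_{xx}|_\infty\le Cc_0$. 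The forcing terms $P_1^{(2)}$, $P_2^{(2)}$ defined in \eqref{P1P2} are bounded in $L^2$ using \eqref{useful1}--\eqref{useful3}, the a priori bounds \eqref{39} on $\bar U$, and the already-proved bounds of Lemma \ref{c_0-c_1} for $U$; the most delicate contributions involve $\phi_0\bar U_{tx}U_x$ and $(\phi_0)_x\bar U_x U_{tx}$, which are handled via Lemma \ref{hardy-inequality}, $|\phi_0^{1/2}\bar U_{tx}|_\infty\le Cc_2$ and Young's inequality, absorbing $\varepsilon|\phi_0 U_{tx}|_2^2$ on the left-hand side after combining with a second tangential estimate (testing the once-time-differentiated equation against $U_t$ in the spirit of \eqref{step11}). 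Gr\"onwall's inequality on $[0,T_2]$, with $T_2$ chosen so that the accumulated polynomial factors in $c_2 t$ stay bounded, then yields $|\phi_0 U_{tx}(t)|_2+|\phi_0 U_{tt}(t)|_2\le Cc_1^{3+2/\alpha}$.

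Next I would deduce $|\phi_0\partial_t U_{xx}|_2$ directly from the pointwise identity \eqref{utxx-phi0}: the right-hand side there is estimated in $L^2$ by the Step~1 bounds, Lemma \ref{c_0-c_1}, $|\bar U_{xx}|_\infty+|\bar\eta_{xx}|_\infty\le Cc_2$ and Lemma \ref{hardy-inequality}, which yields
\begin{equation*}
\Big|\phi_0\partial_tU_{xx}+\tfrac1\alpha(\phi_0)_xU_{tx}\Big|_2\le Cc_1^{3+2/\alpha};
\end{equation*}
the cross-derivatives embedding of Proposition \ref{prop2.1} then removes the crossing term and gives $|\phi_0\partial_tU_{xx}(t)|_2\le Cc_1^{3+2/\alpha}$. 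The third-order spatial estimate $|\phi_0\partial_x^3U|_2$ is obtained in the same way from the identity \eqref{0441}: every term on its right-hand side is controlled via the bounds of Lemma \ref{c_0-c_1}, the tangential Step~1 bound on $|\phi_0 U_{tx}|_2$, the geometric quantities $|\partial_x^3\bar\eta|_2\le Cc_2 t$ from \eqref{useful3}, and Lemma \ref{hardy-inequality}; one more application of Proposition \ref{prop2.1} upgrades $|\phi_0\partial_x^3U+(1/\alpha+1)(\phi_0)_x U_{xx}|_2$ to $|\phi_0\partial_x^3U|_2$.

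Finally I would treat $|\phi_0\partial_x^4U|_2$ starting from \eqref{0444}. Under $0<\alpha\le\tfrac13$ every weight exponent appearing there is non-negative: the a priori worst term $\phi_0^{1/\alpha-2}((\phi_0)_x)^3$ is even bounded (since $1/\alpha-2\ge 1$), and all other singular-looking contributions with factors $1/\bar\eta_x^k$ are tame because of \eqref{useful2}. Combining \eqref{39} with \eqref{useful1}--\eqref{useful3}, Lemma \ref{c_0-c_1} and Steps~1--3 above, every term on the right-hand side is dominated in $L^2$ by $Cc_1^{3+2/\alpha}$, which bounds the crossing quantity $|\phi_0\partial_x^4U+(1/\alpha+2)(\phi_0)_x\partial_x^3U|_2$; Proposition \ref{prop2.1} together with the Step~3 bound then delivers $|\phi_0\partial_x^4U(t)|_2\le Cc_1^{3+2/\alpha}$ on $[0,T_2]$.

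The main obstacle is Step~1, namely the tangential estimate for $U_{tt}$. The difficulty is twofold: one must simultaneously close the coupled pair $(|\phi_0 U_{tx}|_2,|\phi_0 U_{tt}|_2)$ while treating the crossing contribution $\cL$, and one must bound the forcing $(P_1^{(2)},P_2^{(2)})$ which contains products like $\phi_0\bar U_{tx}U_x/\bar\eta_x^3$ whose $L^2$ norm is only finite thanks to the weighted $L^\infty$ control $|\phi_0^{1/2}\bar U_{tx}|_\infty\le Cc_2$. The sign assumption $0<\alpha\le\tfrac13$, which makes the coefficient $1/\alpha-2>0$ and yields the favorable boundary contribution in the integration by parts of $\cL$, is essential here and parallels its role in \eqref{cL1} and \eqref{cL10}. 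Once Step~1 is closed, Steps~2--4 are elliptic in nature and reduce to the cross-derivatives embedding Proposition \ref{prop2.1} applied to the explicit identities \eqref{utxx-phi0}, \eqref{0441}, \eqref{0444} inherited from Lemma \ref{existence-linearize}.
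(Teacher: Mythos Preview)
Your overall plan matches the paper's: two coupled tangential estimates for $(U_{tx},U_{tt})$, then three elliptic steps via \eqref{utxx-phi0}, \eqref{0441}, \eqref{0444} and Proposition~\ref{prop2.1}. However, your Step~1 conflates the two crossing terms. The term you display, $\cL=(\tfrac{1}{\alpha}-2)\int\phi_0(\phi_0)_xU_{tx}U_{tt}\,dx$, is the paper's $\cL_4$, which arises from testing the \emph{once}-differentiated equation against $U_{tt}$; here $U_{tx}U_{tt}$ is \emph{not} a perfect $x$-derivative, so the sign/IBP trick you invoke does not apply. The paper handles $\cL_4$ instead by the weighted interpolation Lemma~\ref{GNinequality}, paying $\varepsilon_1|\phi_0\partial_tU_{xx}|_2^2+\varepsilon_2|\phi_0\partial_t^2U_x|_2^2$ and then feeding back the elliptic bound \eqref{U_txx} for $|\phi_0\partial_tU_{xx}|_2$ before Gr\"onwall. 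The sign trick \emph{does} work for the other crossing term $\cL_5=(\tfrac{1}{\alpha}-2)\int\bar\eta_x^{-2}\phi_0(\phi_0)_x(\partial_t^2U_x)U_{tt}\,dx$ in the companion estimate (testing \eqref{33154} against $U_{tt}$), since there $\partial_t^2U_x\cdot U_{tt}=\tfrac12(U_{tt}^2)_x$.

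Relatedly, your ``second tangential estimate'' should test the once-differentiated equation against $U_{tt}$ (yielding $\tfrac{d}{dt}|\phi_0U_{tx}|_2^2+|\phi_0\bar\eta_xU_{tt}|_2^2$), not against $U_t$: the latter gives only $\int_0^t|\phi_0U_{tx}|_2^2\,ds$, which is insufficient for the pointwise-in-time bound on $|\phi_0U_{tx}(t)|_2$ that the lemma asserts and that Steps~2--4 rely on. With these two corrections your argument is precisely the paper's proof; the elliptic Steps~2--4 are exactly as you describe (the paper in fact runs $\partial_x^3U$ before $\partial_tU_{xx}$, but the order is immaterial).
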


\begin{proof}
\underline{\textbf{Step 1: Estimates of $\phi_0 U_{tx}$.}} Multiplying $\eqref{lp''}_1$ by $\bar\eta_x^2$, applying $U_{tt}\partial_t$ to the resulting equality and then integrating it over $I$, one can get from \eqref{38}, \eqref{useful1}-\eqref{useful3}, Lemma \ref{c_0-c_1}, H\"older's inequality, Young's inequality that for all $0\leq t\leq T_1$,
\begin{align}
&\frac{1}{2}\frac{\mathrm{d}}{\dt}\int \phi_0^{2} U_{tx}^2\,\dx +\int \phi_0^{2} \bar\eta_x^2 U_{tt}^2\,\dx\notag\\
=&\underline{\Big(\frac{1}{\alpha}-2\Big) \int \phi_0 (\phi_0)_x U_{tx}U_{tt}\,\dx}_{:=\cL_4} -2\int \phi_0^{2} \bar\eta_x \bar U_x U_t U_{tt}\,\dx\notag\\
&+2\int \frac{\phi_0^{2+\frac{1}{\alpha}} \bar U_{xx} U_{tt}}{\bar\eta_x}\,\dx-2\int \frac{\phi_0^{2+\frac{1}{\alpha}} \bar\eta_{xx} \bar U_x U_{tt}}{\bar\eta_x^2}\,\dx-2\int \frac{\phi_0^{2} \bar U_{xx} U_x U_{tt}}{\bar\eta_x}\,\dx\notag\\
& -2\int \frac{\phi_0^{2}\bar\eta_{xx} U_{tx} U_{tt}}{\bar\eta_x}\,\dx+2\int\frac{\phi_0^{2} \bar\eta_{xx} \bar U_x U_x U_{tt}}{\bar\eta_x^2}\,\dx\label{eq:cL17-cL23}\\
\leq & \cL_4+C\big(\absf{\bar U_x}_\infty \abs{\phi_0 U_t}_2+ \abs{\phi_0}_\infty^\frac{\alpha+1}{\alpha}\absf{\bar U_{xx}}_\infty+\abs{\phi_0}_\infty^\frac{\alpha+1}{\alpha}\absf{\bar U_x}_\infty\abs{\bar\eta_{xx}}_\infty\big)\abs{\phi_0 U_{tt}}_2\notag\\
&+C\big(\absf{\bar U_{xx}}_\infty \abs{\phi_0 U_x}_2 +\abs{\bar\eta_{xx}}_\infty \abs{\phi_0 U_{tx}}_2+\absf{\bar U_{x}}_\infty\abs{\bar\eta_{xx}}_\infty \abs{\phi_0 U_x}_2\big)\abs{\phi_0 U_{tt}}_2\notag\\
\leq &\cL_4 + Cc_2^2\big(\abs{\phi_0 U_{tx}}_2^2+ Cc_2^{4+\frac{4}{\alpha}}\big)+\frac{1}{8}\abs{\phi_0 \bar\eta_x U_{tt}}_2^2.\notag
\end{align}

For $\cL_{4}$, it follows from \eqref{38}, \eqref{useful2}, Lemma \ref{GNinequality}, H\"older's inequality and Young's inequality that for all $\varepsilon_1,\varepsilon_2\in(0,1)$,
\begin{align}
\cL_{4}&=\Big(\frac{1}{\alpha}-2\Big) \int \phi_0 (\phi_0)_xU_{tx}U_{tt}\,\dx\leq C \abs{(\phi_0)_x}_{\infty}\absb{\phi_0^\frac{1}{2} U_{tx}}_2\absb{\phi_0^\frac{1}{2} U_{tt}}_2\notag\\
&\leq C c_0\big(\abs{\phi_0 U_{tx}}_2+\abs{\phi_0 U_{tx}}_2^\frac{1}{2}\abs{\phi_0 \partial_tU_{xx}}_2^\frac{1}{2}\big)\big(\abs{\phi_0 U_{tt}}_2 +\abs{\phi_0 U_{tt}}_2^\frac{1}{2}\absf{\phi_0 \partial_t^2 U_{x}}_2^\frac{1}{2}\big)\label{cL4}\\
&\leq C(\varepsilon_1,\varepsilon_2)c_0^2\abs{\phi_0 U_{tx}}_2^2+\frac{1}{8}\abs{\phi_0 \bar\eta_x U_{tt}}_2^2 +\varepsilon_1\abs{\phi_0 \partial_t U_{xx}}_2^2+\varepsilon_2\absf{\phi_0 \partial_t^2 U_{x}}_2^2.\notag
\end{align}

Therefore, it follows from \eqref{eq:cL17-cL23}-\eqref{cL4} and  \eqref{useful2} that for all $\varepsilon_1,\varepsilon_2\in(0,1)$,
\begin{equation}\label{2.51}
\begin{aligned}
\frac{\mathrm{d}}{\dt}\abs{\phi_0 U_{tx}}^2_2+\abs{\phi_0 U_{tt}}_{2}^2&\leq C(\varepsilon_1,\varepsilon_2)c_2^2\abs{\phi_0 U_{tx}}^2_2  +\varepsilon_1\abs{\phi_0 \partial_t U_{xx}}_2^2\\
&\quad+\varepsilon_2\abs{\phi_0 \partial_t^2 U_{x}}_2^2+Cc_2^{4+\frac{4}{\alpha}}.
\end{aligned}
\end{equation}

To estimate $\phi_0 \partial_t U_{xx}$, one can take the $L^2$-norm of both sides of \eqref{utxx-phi0} and use \eqref{38}-\eqref{39}, \eqref{useful1}-\eqref{useful3}, and Lemmas \ref{c_0-c_1} and \ref{sobolev-embedding}-\ref{hardy-inequality} to conclude that for all $0\leq t\leq T_1$,
\begin{equation*}
\begin{aligned}
&\absb{\phi_0 \partial_tU_{xx}+\frac{1}{\alpha} (\phi_0)_x U_{tx}}_2\\
\leq &C\big(\abs{\phi_0 U_{tt}}_2+\absf{\bar U_x}_\infty \abs{\phi_0 U_{t}}_2+\abs{\phi_0}_\infty^\frac{1}{\alpha}\absf{\phi_0 \bar U_{xx}}_2+\abs{\phi_0}_\infty^\frac{1}{\alpha} \abs{\bar\eta_{xx}}_\infty \absf{\phi_0 \bar U_x}_2\big)\\
&+C\left(\absf{\bar U_{xx}}_\infty \abs{\phi_0 U_x}_2+\absf{\bar U_x}_\infty \abs{\bar \eta_{xx}}_\infty \abs{\phi_0  U_x}_2 +\abs{\bar\eta_{xx}}_\infty \abs{\phi_0 U_{tx}}_2\right)\\
\leq & C\big(\abs{\phi_0 U_{tt}}_2+c_2 \abs{\phi_0 U_{tx}}_2+c_0^\frac{2}{\alpha}c_2^2\big),
\end{aligned}
\end{equation*}
which, together with Proposition \ref{prop2.1}, leads to
\begin{equation}\label{U_txx}
\begin{aligned}
\abs{\phi_0 \partial_tU_{xx}}_2&\leq   C\Big(\absb{\phi_0 \partial_tU_{xx}+\frac{1}{\alpha}\phi_0 (\phi_0)_x U_{tx}}_2+\abs{(\phi_0)_{xx}}_\infty \abs{\phi_0  U_{tx}}_2\Big)\\
&\leq C\big( \abs{\phi_0 U_{tt}}_2+c_2 \abs{\phi_0 U_{tx}}_2+c_0^\frac{2}{\alpha}c_2^2\big).
\end{aligned}
\end{equation}

Hence, plugging \eqref{U_txx} into \eqref{2.51} and choosing $\varepsilon_1$ sufficiently small yield that
\begin{equation}\label{Utx-theta}
\begin{aligned}
\frac{\mathrm{d}}{\dt}\abs{\phi_0 U_{tx}}^2_2+\abs{\phi_0 U_{tt}}_{2}^2\leq C(\varepsilon)c_2^2\abs{\phi_0 U_{tx}}^2_2+\varepsilon\abs{\phi_0 \partial_t^2 U_{x}}_2^2+Cc_2^{4+\frac{4}{\alpha}},
\end{aligned}
\end{equation}
for all $\varepsilon\in (0,1)$ and all $0\leq t\leq T_1$.

\underline{\textbf{Step 2: Estimate of $\phi_0 U_{tt}$.}} Since one has already shown that $U_{tt}$ is the weak solution to the problem \eqref{33155}, then according to Proposition \ref{prop1}, $U_{tt}$ satisfies the weak formulation \eqref{weak.F.}. Thanks to \eqref{33156}, the test function $\varphi$ in \eqref{weak.F.} can be replaced by $U_{tt}$, then according to H\"older's inquality and Young's inequality, one deduces that 
\begin{equation}\label{2.58}
\begin{aligned}
&\frac{1}{2}\frac{\mathrm{d}}{\dt}\int \phi_0^{2} U_{tt}^2\,\dx+\int \frac{\phi_0^{2} (\partial_t^2 U_x)^2}{\bar\eta_x^2}\,\dx\\
=&\underline{\Big(\frac{1}{\alpha}-2\Big)\int \frac{\phi_0 (\phi_0)_x \partial_t^2 U_x U_{tt}}{\bar\eta_x^2}\,\dx}_{:=\cL_5}+\int \phi_0 P_1^{(2)}\partial_t^2 U_{x}\,\dx+\int \phi_0 P_2^{(2)} U_{tt}\,\dx\\
\leq & \cL_5+ C\big(\absf{P_1^{(2)}}_2^2+\absf{P_2^{(2)}}_2^2+\abs{\phi_0 U_{tt}}_2^2\big)+\frac{1}{8} \absf{\phi_0\partial_t^2 U_x}_2^2,
\end{aligned}
\end{equation}
where $(P_1^{(2)},P_2^{(2)})$ is defined as in \eqref{P1P2}.

For $\cL_{5}$,  it follows from \eqref{38}, \eqref{useful2}, Lemma \ref{GNinequality} and Young's inequality that
\begin{align}
\cL_{5}
&=\underline{-\frac{1-2\alpha}{2\alpha}\int \frac{((\phi_0)_x)^2 U^2_{tt}}{\bar\eta_x^2}\,\dx}_{\leq 0}-\frac{1-2\alpha}{2\alpha}\int \phi_0  \left(\frac{(\phi_0)_{x}}{\bar\eta_x^2}\right)_x U^2_{tt}\,\dx\notag\\
&\leq C\big(\abs{(\phi_0)_{xx}}_\infty^2+\abs{(\phi_0)_{x}}_\infty^2\abs{\bar\eta_{xx}}_\infty^2\big) \abs{\phi_0 U_{tt}}_2^2+\frac{1}{8}\abs{\phi_0 \partial_t^2 U_x}_2^2\label{cL24}\\
&\leq Cc_2^4\abs{\phi_0 U_{tt}}_2^2+\frac{1}{8}\abs{\phi_0 \partial_t^2 U_x}_2^2.\notag
\end{align}

For  the $L^2$-norms of $(P_1^{(2)},P_2^{(2)})$, it follows from \eqref{38}-\eqref{39}, \eqref{useful1}-\eqref{useful2}, Lemmas \ref{c_0-c_1} and \ref{sobolev-embedding}-\ref{hardy-inequality} that
\begin{align}
\absb{P_1^{(2)}}_2&\leq C\big(\abs{\phi_0}_\infty^\frac{1}{2}\absb{\phi_0^\frac{1}{2} \bar U_{tx}}_\infty \abs{U_{x}}_2+\absf{\bar U_x}_\infty \abs{\phi_0 U_{tx}}_2+\absf{\bar U_x}_\infty^2 \abs{\phi_0U_{x}}_2\big)\notag\\
&\quad +C\big(\abs{\phi_0}_\infty^\frac{1}{\alpha}\absf{\phi_0 \bar U_{tx}}_2+\abs{\phi_0}_\infty^\frac{1+\alpha}{\alpha}\absf{\bar U_x}_\infty^2\big)\\
&\leq Cc_2^2\big(\abs{\phi_0 U_{x}}_2+\abs{\phi_0 U_{xx}}_2+\abs{\phi_0 U_{tx}}_2+c_2^{1+\frac{1}{\alpha}}\big)\leq  C\big(c_2^2\abs{\phi_0 U_{tx}}_2+c_2^{3+\frac{2}{\alpha}}\big),\notag
\end{align}
\begin{align}
\absb{P_2^{(2)}}_2&\leq C\abs{(\phi_0)_x}_\infty\big(\abs{\phi_0}_\infty^\frac{1}{\alpha} \abs{\phi_0\bar U_{tx}}_2+\abs{\phi_0}_\infty^\frac{1+\alpha}{\alpha} \abs{\bar U_{x}}_\infty^2+\abs{\bar U_{x}}_\infty^2\abs{\phi_0 U_x}_2\big)\notag\\
&\quad +C\abs{(\phi_0)_x}_\infty\big(\abs{\phi_0}_\infty^\frac{1}{2}\absb{\phi_0^\frac{1}{2} \bar U_{tx}}_\infty \abs{U_{x}}_2+\abs{\bar U_x}_\infty \abs{\phi_0U_{tx}}_2\big)\label{cL29}\\
&\leq Cc_2^3\big(\abs{\phi_0 U_{x}}_2+\abs{\phi_0 U_{xx}}_2+\abs{\phi_0 U_{tx}}_2+c_2^{\frac{2}{\alpha}}\big)\leq  C\big(c_2^3\abs{\phi_0 U_{tx}}_2+c_2^{3+\frac{2}{\alpha}}\big).\notag
\end{align}

Thus, it follows from \eqref{useful2} and \eqref{2.58}-\eqref{cL29} that
\begin{equation*}
\frac{\mathrm{d}}{\dt}\abs{\phi_0 U_{tt}}_{2}^2+\absf{\phi_0 \partial_t^2U_x}_{2}^2\leq C\big(c_2^6\abs{\phi_0 U_{tx}}_2^2+\abs{\phi_0 U_{tt}}_2^2+c_2^{6+\frac{4}{\alpha}}\big),
\end{equation*}
which, along with \eqref{useful2}, \eqref{Utx-theta} with $\varepsilon$ suitably small, and Gr\"onwall inequality, yields that
\begin{equation}\label{2.66}
\begin{aligned}
\abs{\phi_0 U_{tx}(t)}_{2}^2+\abs{\phi_0 U_{tt}(t)}_{2}^2
&\leq C e^{Cc_2^6t}\big(\abs{\phi_0 U_{tx}(0)}_2^2+\abs{\phi_0 U_{tt}(0)}_2^2+c_2^{6+\frac{4}{\alpha}}t\big)\\
&\leq C e^{Cc_2^6t}\big(c_0^2+c_2^{6+\frac{4}{\alpha}}t\big)\leq C c_0^2,
\end{aligned}
\end{equation}
for all $0\leq t\leq T_2:=\min\{\widetilde T, (1+Cc_2)^{-\frac{6\alpha+6}{\alpha}}\}$.

\underline{\textbf{Step 3: Estimates of $\phi_0 \partial_x^3 U$.}} Taking $L^2$-norm of both sides of \eqref{0441}, one can get from \eqref{38}, \eqref{useful1}-\eqref{useful3}, \eqref{2.66}, Lemmas \ref{c_0-c_1} and \ref{hardy-inequality} that for all $0\leq t\leq T_2$,
\begin{align}
&\absb{\phi_0 \partial_x^3 U +\Big(\frac{1}{\alpha}+1\Big)  (\phi_0)_x U_{xx}}_2\notag\\
\leq& C\left(\abs{(\phi_0)_{xx}}_\infty\abs{U_x}_2+\abs{\phi_0 U_{tx}}_2+\abs{\bar\eta_{xx}}_\infty \abs{\phi_0 U_t}_2\right)\notag\\
& +C\big(\abs{(\phi_0)_x}_\infty \abs{U_t}_2+ \abs{\phi_0}_\infty^\frac{1}{\alpha}\abs{(\phi_0)_x}_\infty \abs{\bar\eta_{xx}}_\infty+ \abs{\phi_0}_\infty^\frac{1}{\alpha}\absf{\phi_0 \partial_x^3\bar\eta}_2\big)\notag\\
& + C\big(\abs{\phi_0}_\infty^\frac{\alpha+1}{\alpha}\abs{\bar\eta_{xx}}_\infty^2+\abs{\phi_0}_\infty^\frac{1-\alpha}{\alpha}\abs{(\phi_0)_x}_\infty^2+\abs{\phi_0}_\infty^\frac{1}{\alpha}\abs{(\phi_0)_{xx}}_2\big)\notag\\
& + C\left(\abs{(\phi_0)_x}_\infty\abs{\bar\eta_{xx}}_\infty\abs{U_x}_2+\abs{\bar\eta_{xx}}_\infty\abs{\phi_0 U_{xx}}_2\right)\label{3369}\\
& + C\big(\absf{\partial_x^3\bar\eta}_2\abs{\phi_0U_x}_\infty+\abs{\bar\eta_{xx}}_\infty^2\abs{\phi_0 U_{x}}_2\big)\notag\\
\leq &Cc_0\left(\abs{\phi_0 U_x}_2+\abs{\phi_0 U_{xx}}_2\right)+Cc_0\left(\abs{\phi_0 U_t}_2+\abs{\phi_0 U_{tx}}_2\right)+C\big(c_0^{1+\frac{1}{\alpha}}+c_0^{1+\frac{1}{\alpha}}c_2^2t^2\big)\notag\\
\leq &C\big(c_0^{1+\frac{2}{\alpha}}+c_0^{2}+c_0^{1+\frac{1}{\alpha}}\big) \leq Cc_0^{1+\frac{2}{\alpha}},\notag
\end{align}
which, along with Proposition \ref{prop2.1} and Lemma \ref{c_0-c_1}, leads to
\begin{equation}\label{3order}
\begin{aligned}
\absf{\phi_0 \partial_x^3 U}_2&\leq C \Big(\absb{\phi_0 \partial_x^3 U+\Big(\frac{1}{\alpha}+1\Big)(\phi_0)_x U_{xx}}_2+ \abs{(\phi_0)_{xx}}_\infty \abs{\phi_0 U_{xx}}_2\Big)\leq Cc_0^{1+\frac{2}{\alpha}}.
\end{aligned}
\end{equation}

\underline{\textbf{Step 4: Estimate of $\phi_0 \partial_tU_{xx}$.}} First, it follows from \eqref{2.66}, \eqref{3order}, Lemmas \ref{c_0-c_1} and \ref{sobolev-embedding}-\ref{hardy-inequality} that for all $t\in [0, T_2]$,
\begin{equation}\label{est-Ux}
\begin{aligned}
&\abs{\phi_0 U_t(t)}_\infty\leq C\sum_{j=0}^1\absb{\phi_0^\frac{3}{2} \partial_x^j U_{t}(t)}_2 \leq C\abs{\phi_0}_\infty^\frac{1}{2}\sum_{j=0}^1 \absf{\phi_0 \partial_x^j U_{t}(t)}_2 \leq C c_0^{1+\frac{2}{\alpha}},\\
&\abs{U_x(t)}_\infty \leq C \abs{U_{xx}(t)}_2\leq C\sum_{j=2}^3 \absf{\phi_0 \partial_x^j U(t)}_2 \leq C c_0^{1+\frac{2}{\alpha}}.
\end{aligned}
\end{equation}

Next, due to \eqref{utxx-phi0}, it follows from \eqref{38}-\eqref{39}, \eqref{useful1}-\eqref{useful3}, \eqref{2.66}, \eqref{est-Ux}, Lemmas \ref{c_0-c_1} and \ref{sobolev-embedding}-\ref{hardy-inequality} that for all $t\in [0,T_2]$,
\begin{equation}\label{equ3.128}
\begin{aligned}
&\absb{\phi_0 \partial_tU_{xx}+\frac{1}{\alpha} (\phi_0)_x U_{tx}}_2\\
\leq &C\big(\abs{\phi_0 U_{tt}}_2+\absf{\bar U_x}_2 \abs{\phi_0 U_{t}}_\infty+\abs{\phi_0}_\infty^\frac{1}{\alpha}\absf{\phi_0 \bar U_{xx}}_2+\abs{\phi_0}_\infty^\frac{1}{\alpha} \abs{\bar\eta_{xx}}_\infty \absf{\phi_0 \bar U_x}_2\big)\\
&+C\left(\absf{\phi_0 \bar U_{xx}}_2 \abs{U_x}_\infty+\absf{\bar U_x}_\infty \abs{\bar \eta_{xx}}_\infty \abs{\phi_0 U_x}_2 +\abs{\bar\eta_{xx}}_\infty \abs{\phi_0 U_{tx}}_2\right)\\
\leq  &C\big(c_0 +c_0^{1+\frac{2}{\alpha}}c_1+c_2^{2+\frac{2}{\alpha}}t\big)\leq C c_1^{2+\frac{2}{\alpha}},
\end{aligned}
\end{equation}
which, along with Proposition \ref{prop2.1} and \eqref{2.66}, leads to
\begin{equation}\label{U_txx'}
\begin{aligned}
\abs{\phi_0 \partial_tU_{xx}}_2&\leq C\Big( \absb{\phi_0 \partial_tU_{xx}+\frac{1}{\alpha} (\phi_0)_x U_{tx}}_2+\abs{(\phi_0)_{xx}}_\infty\abs{\phi_0 U_{tx}}_2\Big)\leq C c_1^{2+\frac{2}{\alpha}}.
\end{aligned}
\end{equation}

\underline{\textbf{Step 5: Estimate of $\phi_0\partial_x^4 U$.}} According to \eqref{0444}, it holds that
\begin{align}
&\phi_0 \partial_x^4 U+\Big(\frac{1}{\alpha}+2\Big) (\phi_0)_x\partial_x^3 U\notag\\
=&\underline{\phi_0 \bar\eta_x^2\partial_tU_{xx}+2\left((\phi_0)_x+2\phi_0\bar\eta_{xx}\right)\bar\eta_x U_{tx}+2\phi_0 \left(\bar\eta_{xx}^2+\bar\eta_x\partial_x^3\bar\eta\right)U_t}\notag\\
&\underline{+ \left((\phi_0)_{xx}\bar\eta_x+4(\phi_0)_x\bar\eta_{xx}\right)\bar\eta_xU_t}_{:=\cL_{6}}\underline{-\Big(\frac{2}{\alpha}+1\Big) (\phi_0)_{xx} U_{xx}+\frac{1}{\alpha} \partial_x^3\phi_0 U_x}_{:=\cL_{7}}\notag\\
&\underline{+2 \left((\phi_0)_{xx}\bar\eta_{xx}+2(\phi_0)_{x}\partial_x^3\bar\eta -\frac{2(\phi_0)_{x}\bar\eta_{xx}^2}{\bar\eta_x}\right)\frac{U_x}{\eta_x}}_{:=\cL_{8}}\notag\\
&+\underline{2\phi_0 \left(\partial_x^4\bar\eta
-\frac{3\bar\eta_{xx}\partial_x^3\bar\eta}{\bar\eta_x}
+\frac{2\bar\eta_{xx}^3}{\bar\eta_x^2}\right)\frac{U_x}{\bar\eta_x}}_{:=\cL_{9}}\label{cL32}\\
&+\underline{4 \left(\phi_0\partial_x^3\bar\eta+(\phi_0)_{x}\bar\eta_{xx}-\frac{\phi_0\bar\eta_{xx}^2}{\bar\eta_x}\right)\frac{U_{xx}}{\bar\eta_x}+\frac{2\phi_0 \bar\eta_{xx}\partial_x^3U}{\bar\eta_x}}_{:=\cL_{10}}\notag\\
&\underline{-\frac{2+2\alpha}{\alpha^2}\frac{\phi_0^{\frac{1}{\alpha}-1}((\phi_0)_x)^2\bar\eta_{xx}}{\bar\eta_x}+2\phi_0^{1+\frac{1}{\alpha}}\left(\frac{\partial_x^4\bar\eta}{\bar\eta_x}-\frac{3\bar\eta_{xx}\partial_x^3\bar\eta}{\bar\eta_x^2}+\frac{2\bar\eta_{xx}^3}{\bar\eta_x^3}\right)}_{:=\cL_{11}}\notag\\
& \underline{-\frac{2+2\alpha}{\alpha}\phi_0^{\frac{1}{\alpha}}\left(\frac{(\phi_0)_{xx}\bar\eta_{xx}}{\bar\eta_x}+\frac{2(\phi_0)_x\partial_x^3\bar\eta}{\bar\eta_x}-\frac{2(\phi_0)_x\bar\eta_{xx}^2}{\bar\eta_x^2}\right)}_{:=\cL_{12}}\notag\\
&+\underline{\frac{2(1-\alpha)}{\alpha^3}\phi_0^{\frac{1}{\alpha}-2}((\phi_0)_x)^3+\frac{6}{\alpha^2}\phi_0^{\frac{1}{\alpha}-1}(\phi_0)_x(\phi_0)_{xx}+\frac{2}{\alpha}\phi_0^{\frac{1}{\alpha}}\partial_x^3\phi_0}_{:=\cL_{13}}.\notag
\end{align}

It follows from $0<\alpha\leq \frac{1}{3}$, \eqref{38}-\eqref{39}, \eqref{useful1}-\eqref{useful3}, \eqref{2.66}, \eqref{3order}-\eqref{est-Ux}, \eqref{U_txx'}, Lemmas \ref{c_0-c_1} and \ref{sobolev-embedding}-\ref{hardy-inequality} that for all $t\in [0,T_2]$,  
\begin{align}
\abs{\cL_{6}}_2&\leq C\left( \abs{\phi_0 \partial_t U_{xx}}_2+\abs{(\phi_0)_x}_\infty\abs{U_{tx}}_2+\abs{\bar\eta_{xx}}_\infty \abs{\phi_0U_{tx}}_2\right)\notag\\
&\quad +C\left(\abs{(\phi_0)_{xx}}_\infty+\abs{(\phi_0)_{x}}_\infty\abs{\bar\eta_{xx}}_\infty\right)\abs{U_{t}}_2\notag\\
&\quad + C\big(\abs{\bar\eta_{xx}}_\infty^2 \abs{\phi_0 U_t}_2 + \absf{\partial_x^3\bar\eta}_2 \abs{\phi_0U_t}_\infty\big)\notag\\
&\leq C\big(\norm{\phi_0}_3+\abs{\bar\eta_{xx}}_\infty+\norm{\phi_0}_3^2+\abs{\bar\eta_{xx}}_\infty^2+\absf{\partial_x^3\bar\eta}_2\big)\left(\abs{\phi_0U_{t}}_2+\abs{\phi_0U_{tx}}_2\right) \notag\\
&\quad +C(1+\norm{\phi_0}_3)\abs{\phi_0\partial_t U_{xx}}_2\notag\\
&\leq C(c_0^2+c_2^4t)c_0^{1+\frac{2}{\alpha}}+Cc_0c_1^{2+\frac{2}{\alpha}}\leq C c_1^{3+\frac{2}{\alpha}}, \notag\\
\abs{\cL_{7}}_2&\leq C \norm{\phi_0}_3(\abs{U_x}_\infty+\abs{\phi_0U_{xx}}_2+\absf{\phi_0\partial_x^3 U}_2)\leq C c_0^{2+\frac{2}{\alpha}},\notag\\
\abs{\cL_{8}}_2&\leq C\big(\abs{\bar\eta_{xx}}_\infty +\abs{\bar\eta_{xx}}_\infty^2\big)\norm{\phi_0}_3\abs{U_x}_2+C\abs{(\phi_0)_x}_\infty\absf{\partial_x^3\bar\eta}_2\abs{U_x}_\infty\notag\\
&\leq C c_0c_2^2t(\absf{\phi_0 U_x}_2+\absf{\phi_0 U_{xx}}_2) +C c_0^{2+\frac{2}{\alpha}}c_2t\leq C c_0^{2+\frac{2}{\alpha}}c_2^2t\leq C c_0,\notag\\
\abs{\cL_{9}}_2&\leq C \big(\absf{\phi_0\partial_x^4 \bar\eta}_2+\abs{\bar\eta_{xx}}_\infty\abs{\phi_0\partial_x^3\bar\eta}_2+\abs{\bar\eta_{xx}}_\infty^3\big)\abs{U_x}_\infty\notag\leq C c_0^{1+\frac{2}{\alpha}}c_2^3t\leq C c_0, \notag\\
\abs{\cL_{10}}_2&\leq C\big(\absf{\partial_x^3\bar\eta}_2+\abs{\bar\eta_{xx}}_\infty^2\big)\abs{\phi_0 U_{xx}}_\infty + C\abs{(\phi_0)_x}_\infty \abs{\bar\eta_{xx}}_\infty^2 \abs{U_{xx}}_2\label{cL37}\\
&\quad +C\abs{\bar\eta_{xx}}_\infty \absf{\phi_0\partial_x^3 U}_2
\leq C c_0^{2+\frac{2}{\alpha}}c_2^2t\leq C c_0,\notag \\
\abs{\cL_{11}}_2&\leq C \abs{\phi_0}_\infty^\frac{1-\alpha}{\alpha}\norm{\phi_0}_3\abs{\bar\eta_{xx}}_\infty+C\abs{\phi_0}_\infty^\frac{1}{\alpha}\big(\absf{\phi_0 \partial_x^4\bar\eta}_2+\abs{\bar\eta_{xx}}_\infty\absf{\phi_0 \partial_x^3\bar\eta}_2\big)\notag\\
&\quad +C\abs{\phi_0}_\infty^\frac{1}{\alpha}\big(\abs{\bar\eta_{xx}}_\infty^2\abs{\phi_0 \bar\eta_{xx}}_2\big)\leq C c_0^\frac{1}{\alpha}c_2^3t\leq Cc_0,\notag\\
\abs{\cL_{12}}_2&\leq C \abs{\phi_0}_\infty^\frac{1}{\alpha}\norm{\phi_0}_3\big(\abs{\bar\eta_{xx}}_\infty+\absf{\partial_x^3\bar\eta}_2+\abs{\bar\eta_{xx}}_\infty^2\big)
\leq C c_0^{1+\frac{1}{\alpha}}c_2^2t\leq C c_0,\notag\\
\abs{\cL_{13}}_2
&\leq C\big(\abs{\phi_0}_{\infty}^{\frac{1-2\alpha}{\alpha}}\abs{(\phi_0)_x}_\infty^3+\abs{\phi_0}_\infty^\frac{1-\alpha}{\alpha}\abs{(\phi_0)_x}_\infty\abs{(\phi_0)_{xx}}_\infty\big)\notag\\
&\quad +C\abs{\phi_0}_\infty^\frac{1}{\alpha}\norm{\phi_0}_3
\leq C c_0^{1+\frac{1}{\alpha}}.\notag    
\end{align}

Therefore, substituting \eqref{cL37} into \eqref{cL32} yields
\begin{equation*}
\absb{\phi_0 \partial_x^4 U+\Big(\frac{1}{\alpha}+2\Big) (\phi_0)_x\partial_x^3 U}_2\leq C c_1^{3+\frac{2}{\alpha}}.
\end{equation*}
It follows from  Proposition \ref{prop2.1} and \eqref{3order} that 
\begin{equation*}
\begin{aligned}
\absf{\phi_0 \partial_x^4 U}_2&\leq C\Big(\absb{\phi_0 \partial_x^4 U+\Big(\frac{1}{\alpha}+2\Big) (\phi_0)_x\partial_x^3 U}_2+\abs{(\phi_0)_{xx}}_\infty \absf{\phi_0 \partial_x^3 U}_2\Big)
\leq  Cc_1^{3+\frac{2}{\alpha}},
\end{aligned}
\end{equation*}
which, along with  \eqref{2.66}, \eqref{3order} an \eqref{U_txx'}, completes the proof of Lemma \ref{c_1-c_2}.
\end{proof}

Now, we can turn back to prove Lemma \ref{a-priori-estimates}.
\begin{proof}
Collecting the bounds in Lemmas \ref{c_0-c_1}-\ref{c_1-c_2}, one has
\begin{equation*}
\begin{aligned}
\abs{\phi_0 U(t)}_{2}+\abs{\phi_0U_x(t)}_{2}+
\abs{\phi_0U_t(t)}_{2}+\abs{\phi_0 U_{xx}(t)}_{2}&\leq Cc_0^{\frac{2}{\alpha}},\\
\abs{\phi_0U_{tx}(t)}_{2}+\absf{\phi_0\partial^3_x U(t)}_{2} +\abs{\phi_0U_{tt}(t)}_{2}+\abs{\phi_0\partial_t U_{xx}(t)}_{2}+\absf{\phi_0\partial^4_x U(t)}_{2}&\leq Cc_1^{3+\frac{2}{\alpha}}.
\end{aligned}
\end{equation*}
Thus, defining the constants $c_1,c_2$ and the time $T$ as
\begin{equation}\label{3.84}
\begin{aligned}
c_1&=Cc_0^\frac{2}{\alpha},\ \  
c_2=Cc_1^{3+\frac{2}{\alpha}}=C^{4+\frac{2}{\alpha}}c_0^\frac{6\alpha+4}{\alpha^2},\ \
T=\min\{T_1,T_2\}\leq (1+Cc_2)^{-\frac{6\alpha+6}{\alpha}},
\end{aligned}
\end{equation}
one can arrive at the uniform estimates \eqref{uniform bounds}.
The proof of Lemma \ref{a-priori-estimates} is completed.
\end{proof}

\subsection{Uniform estimates for the case \texorpdfstring{$\frac{1}{3}<\alpha\leq 1$}{}}\label{subsection4.2}

Analogous to Lemma \ref{a-priori-estimates}, one can establish the following uniform bounds.
\begin{Lemma}\label{a-priori-estimates2}
Consider \eqref{lp}. Assume that  $U$ is  the  unique classical solution in $[0,T]\times \bar I$ to the problem \eqref{lp''} obtained in Lemma \ref{existence-linearize}, and the positive constant $c_0$ satisfies 
\begin{equation}\label{38''}
2+\norm{\phi_0}_{3}+\widetilde E(0,U)\leq c_0.
\end{equation}
Then there exist a positive time $T_*\in (0,T]$ and constants $c_i$, $i=1,2$, $1<c_0\leq c_1\leq c_2$, which depend only on $c_0$, $\alpha$, $\varepsilon_0$, $|I|$, $\cC_1$ and  $\cC_2$, such that if for all $0\leq t\leq T_*$,
\begin{equation}\label{39''}
\begin{aligned}
\absb{\phi_0^\frac{1}{2\alpha} \bar U(t)}_{2}+\absb{\phi_0^\frac{1}{2\alpha} \bar U_x(t)}_{2}+\absb{\phi_0^\frac{1}{2\alpha} \bar U_t(t)}_{2}+\absb{\phi_0^{\frac{3}{2}-\varepsilon_0} \bar U_{xx}(t)}_{2}\leq c_1,\\[4pt]
\absb{\phi_0^\frac{1}{2\alpha} \bar U_{tx}(t)}_{2}+\absb{\phi_0^{\frac{3}{2}-\varepsilon_0} \partial^3_x\bar U(t)}_{2}\leq c_2,\\[4pt]
\absb{\phi_0^\frac{1}{2\alpha} \bar U_{tt}(t)}_{2}+\absb{\phi_0^{\frac{3}{2}-\varepsilon_0} \partial_t\bar U_{xx}(t)}_{2}+\absb{\phi_0^{\frac{3}{2}-\varepsilon_0} \partial^4_x\bar U(t)}_{2}^2\leq c_2,
\end{aligned}
\end{equation}
it holds that for all $0\leq t\leq T_*$,\begin{equation}\label{uniform bounds''}
\begin{aligned}
\absb{\phi_0^\frac{1}{2\alpha} U(t)}_{2}+\absb{\phi_0^\frac{1}{2\alpha} U_x(t)}_{2}+\absb{\phi_0^\frac{1}{2\alpha} U_t(t)}_{2}+\absb{\phi_0^{\frac{3}{2}-\varepsilon_0}  U_{xx}(t)}_{2}\leq c_1,\\[4pt]
\absb{\phi_0^\frac{1}{2\alpha} U_{tx}(t)}_{2}+\absb{\phi_0^{\frac{3}{2}-\varepsilon_0} \partial^3_x U(t)}_{2}\leq c_2,\\[4pt]
\absb{\phi_0^\frac{1}{2\alpha} U_{tt}(t)}_{2}+\absb{\phi_0^{\frac{3}{2}-\varepsilon_0} \partial_t U_{xx}(t)}_{2}+\absb{\phi_0^{\frac{3}{2}-\varepsilon_0} \partial^4_x U(t)}_{2}\leq c_2.
\end{aligned}
\end{equation}
\end{Lemma}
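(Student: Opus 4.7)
The plan is to follow the architecture of the proof of Lemma \ref{a-priori-estimates}, replacing the weight $\phi_0$ by $\phi_0^{1/(2\alpha)}$ for the tangential part of the energy and by $\phi_0^{3/2-\varepsilon_0}$ for the elliptic part, and exploiting the simpler linearized equation \eqref{lp} (no analogue of the term $\cL_i$ generated by $K_0\phi_0(\phi_0)_xw_x/\bar\eta_x^2$ appears here). As a preparation, from \eqref{38''}-\eqref{39''} together with Lemmas \ref{sobolev-embedding}-\ref{hardy-inequality} and \eqref{given-flow}, I would first derive analogues of \eqref{useful1}-\eqref{useful3}: $\frac{1}{2}\leq\bar\eta_x\leq\frac{3}{2}$ on $[0,\widetilde T]\times\bar I$ for some $\widetilde T=\widetilde T(c_2)$, together with $\|\bar U_x\|_{1,\infty}+|\phi_0^{(1/(2\alpha))\wedge(1/2)}\bar U_{tx}|_\infty\leq Cc_2$, $|\bar\eta_{xx}|_\infty+|\partial_x^3\bar\eta|_2\leq Cc_2 t$, and $|\phi_0^{(1/(2\alpha)-1)\vee 0}\bar U_x|_\infty\leq Cc_2$, paying attention that Hardy's inequality requires the weight exponents in \eqref{39''} to be compatible with $L^\infty$-controls; here the constraint \eqref{varepsilon0} on $\varepsilon_0$ is exactly what is needed.

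Next, in the $c_0\to c_1$ step I would multiply $\eqref{lp}_1$ successively by $\bar\eta_x^2 U$, $\bar\eta_x^2 U_t$, and $\bar\eta_x^2 \partial_t$ composed with $U_t$, integrate over $I$, and use H\"older, Young and Gr\"onwall to obtain
\[
|\phi_0^{1/(2\alpha)} U|_2^2+|\phi_0^{1/(2\alpha)} U_x|_2^2+|\phi_0^{1/(2\alpha)} U_t|_2^2\leq Cc_0^{\kappa_1}
\]
on a short time $[0,T_1]$. The pressure term $(\phi_0^{2/\alpha}/\bar\eta_x^2)_x$ is handled directly by H\"older since $\phi_0\in L^\infty$. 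For the weighted second-order elliptic estimate $|\phi_0^{3/2-\varepsilon_0}U_{xx}|_2$, I would read off the pointwise identity \eqref{3...143} obtained by multiplying $\eqref{lp}_1$ by $\bar\eta_x^2\phi_0^{3/2-1/\alpha-\varepsilon_0}$, take $L^2$-norms, and invoke the cross-derivatives embedding Proposition \ref{prop2.1} (with $s=\tfrac{3}{2}-\varepsilon_0$ and $j=1$) combined with \eqref{Linfty-tx} to absorb the crossing term $\frac{1}{\alpha}\phi_0^{1/2-\varepsilon_0}(\phi_0)_xU_x$; the range \eqref{varepsilon0} ensures $\varepsilon_0\leq\tfrac{3\alpha-1}{2\alpha}$, which is what makes the weights fit in Hardy's inequality here.

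In the $c_1\to c_2$ step, I would first multiply $\eqref{lp}_1$ by $\bar\eta_x^2$, differentiate in $t$ and test against $U_{tt}$ to get $|\phi_0^{1/(2\alpha)}U_{tx}|_2^2+\int_0^t|\phi_0^{1/(2\alpha)}U_{tt}|_2^2$. Then, since by Lemma \ref{existence-linearize} $U_{tt}$ is the weak solution of the $\phi_0^2$-analogue of \eqref{33155} with $\phi_0^{1/\alpha}$ in place of $\phi_0^2$ (which one obtains by applying $\partial_t^2$ to $\eqref{lp}_1$ formally), I would insert $U_{tt}$ itself as a test function in that weak formulation to close the estimate for $|\phi_0^{1/(2\alpha)}U_{tt}|_2$. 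The higher elliptic estimates $|\phi_0^{3/2-\varepsilon_0}\partial_t U_{xx}|_2$, $|\phi_0^{3/2-\varepsilon_0}\partial_x^3 U|_2$ and $|\phi_0^{3/2-\varepsilon_0}\partial_x^4 U|_2$ would be obtained by reading off the pointwise identities \eqref{equ3.72}, \eqref{3...151}, \eqref{3-154} derived in \S\ref{subsection3.4}, taking $L^2$-norms, and applying Proposition \ref{prop2.1} at each step together with the auxiliary bounds \eqref{equ378} and \eqref{equ3.83} to pass from the ``crossing form" of the estimate to the pure weighted norm.

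The main obstacle will be the fourth-order identity \eqref{3-154}, where the most singular term is the framed contribution $\tfrac{2(1-\alpha)}{\alpha^3}\phi_0^{1/\alpha-3/2-\varepsilon_0}((\phi_0)_x)^3$; the restriction \eqref{varepsilon0} ($\varepsilon_0<1/\alpha-1$ for $\tfrac{1}{3}<\alpha<1$ and $\alpha=1$ making the term vanish) is precisely what keeps this term in $L^2$ via Hardy's inequality, and one must track these exponents carefully when bounding the remaining terms of \eqref{3-154} involving $\bar\eta_{xx}$, $\partial_x^3\bar\eta$, $\partial_x^4\bar\eta$. All those coefficients carry at least one factor of $t$ through \eqref{useful3}, so choosing $T_*=\min\{\widetilde T,(1+Cc_2)^{-K}\}$ for a suitable $K$ (depending on $\alpha,\varepsilon_0$) absorbs them into $c_2$, exactly as in \eqref{3.84}. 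Setting $c_1=Cc_0^{\kappa_1}$ and $c_2=Cc_1^{\kappa_2}$ with $\kappa_1,\kappa_2$ depending only on $(\alpha,\varepsilon_0)$ then closes the bootstrap and yields \eqref{uniform bounds''}.
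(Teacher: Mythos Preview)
Your proposal is correct and follows essentially the same architecture as the paper's proof. Two minor deviations worth noting: the paper multiplies \eqref{lp} directly by $U$, $U_t$, $U_{tt}$ (rather than by $\bar\eta_x^2 U$, $\bar\eta_x^2 U_t$, etc.), which keeps the divergence structure intact and makes the absence of the crossing term $\phi_0^{1/\alpha-1}(\phi_0)_x U_x U_t$ most transparent; and the paper establishes the $\partial_x^3 U$ estimate \emph{before} $\partial_t U_{xx}$, since its bound for $|U_x|_\infty$ in \eqref{equ3.152} uses $\partial_x^3 U$---your reversed order still works via the pointwise estimate \eqref{esti-xx}, so this is only a matter of bookkeeping.
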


\begin{proof}
We only  sketch the proof here. Denote by $\mathscr{P}(\cdot)$ the generic polynomial functions with the form $\mathscr{P}(s)=\sum_{j=0}^n p_j s^j$, $n\in \NN^*$, $0\leq p_j\in \RR$.

\underline{\textbf{Step 1: Some useful estimates.}} First, it is convenience to give some useful estimates as in \eqref{useful1}-\eqref{useful3}. According to  Lemmas \ref{sobolev-embedding}-\ref{hardy-inequality}, one has that  for all $0\leq t\leq T$, \begin{equation}\label{useful1'}
\begin{split}
\absb{\phi_0^{\frac{1}{2}-\varepsilon_0}\bar U_{x}(t)}_2\leq C \sum_{j=1}^2 \absb{\phi_0^{\frac{3}{2}-\varepsilon_0} \partial_x^j \bar U(t)}_2\leq & \mathscr{P}(c_1),\\
\normf{\bar U_x(t)}_{1,\infty} \leq C \normf{\bar U_x(t)}_{2,1}\leq C \sum_{j=1}^4 \absb{\phi_0^{\frac{3}{2}-\varepsilon_0} \partial_x^j \bar U(t)}_2\leq & \mathscr{P}(c_2),\\
\absf{\phi_0\partial_x^3 \bar U(t)}_\infty\leq C\sum_{j=3}^4 \absb{\phi_0^\frac{3}{2}\partial_x^j \bar U(t)}_2\leq & \mathscr{P}(c_2).
\end{split}
\end{equation}

Next, one can find a small constant $\widetilde T:= (1+\mathscr{P}(c_2))^{-1}$ such that
\begin{equation}\label{useful2'}
\frac{1}{2}\leq \bar\eta_x(t,x)\leq \frac{3}{2},\quad \text{for all }(t,x)\in [0,\widetilde T]\times \bar I.
\end{equation}

Finally, it holds that for all $0\leq t\leq T$, \begin{equation}\label{useful3'}
\begin{split}
\abs{\bar\eta_{xx}(t)}_\infty \leq C\norm{\bar\eta_{xx}(t)}_{1,1}\leq C \int_0^t\normf{\bar U_{xx}}_{1,1}\leq &\mathscr{P}(c_2)t,\\
\absb{\phi_0^{\frac{1}{2}-\varepsilon_0} \bar\eta_{xx}(t)}_2+\absb{\phi_0^{\frac{1}{2}-\varepsilon_0}\partial_x^3\bar\eta(t)}_2 \leq C\sum_{j=2}^4 \absb{\phi_0^{\frac{3}{2}-\varepsilon_0} \partial_x^j U(t)}_2\leq &\mathscr{P}(c_2)t.
\end{split}
\end{equation}

\underline{\textbf{Step 2: Analogy of Lemma \ref{c_0-c_1}.}}
First, following the proof in \textbf{Step 1} of Lemma \ref{c_0-c_1}, one obtains that
\begin{equation}\label{equ3.140}
\absb{\phi_0^\frac{1}{2\alpha} U(t)}_2^2+\int_0^t \absb{\phi_0^\frac{1}{2\alpha}U_x}_2^2\,\ds \leq \mathscr{P}(c_0),
\end{equation}
for all $0\leq t\leq T_1:=\min\{\widetilde T, (1+\mathscr{P}(c_2))^{-M}\}$, for some large $M>0$.

Next, based on the proof in \textbf{Step 2} of Lemma \ref{c_0-c_1}, multiplying \eqref{lp} by $U_t$ and integrating the resulting equality over $I$,  one has
\begin{equation}\label{equ3.141}
\begin{aligned}
&\frac{1}{2}\frac{\mathrm{d}}{\dt}\int \frac{\phi_0^\frac{1}{\alpha}U_x^2}{\bar\eta_x^2}\,\dx+ \int\phi_0^\frac{1}{\alpha} U_t^2\,\dx \\
= &-\int \frac{\phi_0^\frac{1}{\alpha}\bar U_x U_x^2}{\bar\eta_x^3}\,\dx-\frac{2}{\alpha}\int \frac{\phi_0^{\frac{2}{\alpha}-1}(\phi_0)_x U_t}{\bar\eta_x^2}\,\dx+2\int \frac{\phi_0^\frac{2}{\alpha}\bar\eta_{xx} U_t}{\bar\eta_x^3}\,\dx.
\end{aligned}
\end{equation}
Note that, comparing \eqref{equ3.141} with \eqref{eq:cL6-cL9}, there is no crossing term $\left(\frac{1}{\alpha}-2\right)\int \phi_0 (\phi_0)_x U_x U_t\,\dx$
in \eqref{equ3.141}. Thus, according \eqref{useful1'}-\eqref{useful2'}, Lemma \ref{hardy-inequality}, H\"older's inequality, Young's inequality and Gr\"onwall's inequality, one has
\begin{equation}\label{equ3.142}
\absb{\phi_0^\frac{1}{2\alpha}U_x(t)}_2^2+\int_0^t \absb{\phi_0^\frac{1}{2\alpha}U_t}_2^2\,\ds \leq \mathscr{P}(c_0) \ \ \text{for all }0\leq t\leq T_1.
\end{equation}

Similarly, applying $U_t\partial_t$ to both sides of \eqref{lp} and integrating the resulting equality over $I$ yield 
\begin{equation*}
\frac{1}{2}\frac{\mathrm{d}}{\dt}\int \phi_0^\frac{1}{\alpha} U_t^2\,\dx +\int \frac{\phi_0^\frac{1}{\alpha} U_{tx}^2}{\bar\eta_x^2}\,\dx= \int \frac{2\phi_0^\frac{1}{\alpha}\bar U_x U_x U_{tx}}{\bar \eta_x^3}\,\dx - \int \frac{2\phi_0^\frac{2}{\alpha} \bar U_x U_{tx}}{\bar \eta_x^3}\,\dx,
\end{equation*}
which, by the analogous calculations, along with \eqref{equ3.140} and \eqref{equ3.142}, implies that
\begin{equation}\label{equ3.143}
\absb{\phi_0^\frac{1}{2\alpha}U_t(t)}_2^2+\int_0^t \absb{\phi_0^\frac{1}{2\alpha}U_{tx}}_2^2\,\ds \leq\mathscr{P}(c_0) \ \ \text{for all }0\leq t\leq T_1.
\end{equation}

To get the estimate of $\phi_0^{\frac{3}{2}-\varepsilon_0}U_{xx}$, recall \eqref{3...143},
\begin{equation*}
\begin{aligned}
&\phi_0^{\frac{3}{2}-\varepsilon_0} U_{xx}+\frac{1}{\alpha}\phi_0^{\frac{1}{2}-\varepsilon_0}(\phi_0)_xU_x\\
=&\phi_0^{\frac{3}{2}-\varepsilon_0}\bar\eta_x^2 U_t+\frac{2\phi_0^{\frac{3}{2}-\varepsilon_0}\bar\eta_{xx}U_x}{\bar\eta_x}-\frac{2}{\alpha}\phi_0^{\frac{1}{2}+\frac{1}{\alpha}-\varepsilon_0}(\phi_0)_x+ \frac{2\phi_0^{\frac{3}{2}+\frac{1}{\alpha}-\varepsilon_0}\bar\eta_{xx}}{\bar\eta_x}.
\end{aligned}
\end{equation*}
One can take the $L^2$-norm of the above equality to get that
\begin{equation}\label{44500}
\begin{aligned}
\absB{\phi_0^{\frac{3}{2}-\varepsilon_0} U_{xx}+\frac{1}{\alpha}\phi_0^{\frac{1}{2}-\varepsilon_0}(\phi_0)_xU_x}_2
\leq &C\abs{\phi_0}_\infty^{\frac{3}{2}-\frac{1}{2\alpha}-\varepsilon_0}\big(\absb{\phi_0^\frac{1}{2\alpha}U_t}_2+\absb{\bar\eta_{xx}}_\infty\absb{\phi_0^\frac{1}{2\alpha}U_x}_2\big)\\
&+C\big(\abs{\phi_0}_\infty^{\frac{3}{2}+\frac{1}{\alpha}-\varepsilon_0}\abs{\bar\eta_{xx}}_\infty+ \abs{\phi_0}_\infty^{\frac{1}{2}+\frac{1}{\alpha}-\varepsilon_0}\abs{(\phi_0)_x}_\infty\big)\\
\leq &\mathscr{P}(c_0)\left(1+\mathscr{P}(c_2)t\right)\leq \mathscr{P}(c_0).
\end{aligned}
\end{equation}
Next, it follows from the arguments for \eqref{3...136} and the estimate \eqref{equ3.143} that
\begin{equation*}
\begin{aligned}
\abs{U_x(t,x)}&\leq C \phi_0^\frac{1}{\alpha}+ C\phi_0^\frac{\alpha-1}{2\alpha}\absb{\phi_0^{\frac{1}{2\alpha}}U_t}_2\leq \mathscr{P}(c_0)\big(\phi_0^\frac{1}{\alpha}+ \phi_0^\frac{\alpha-1}{2\alpha}\big),
\end{aligned}
\end{equation*} 
which, along with $\phi_0\sim d(x)$, implies that for all $\iota>0$,
\begin{equation}\label{4453}
\absb{\phi_0^{\frac{1}{2\alpha}-1+\iota}U_x(t)}_2 \leq C(\iota)\mathscr{P}(c_0)\ \ \text{for all }0\leq t\leq T_1.
\end{equation}
Hence, if $0<\varepsilon_0<\frac{3\alpha-1}{2\alpha}$, one can set $\iota=\frac{3\alpha-1}{2\alpha}-\varepsilon_0$ in \eqref{4453}, and get from \eqref{44500} that 
\begin{equation}\label{445555}
\absb{\phi_0^{\frac{3}{2}-\varepsilon_0} U_{xx}(t)}\leq \mathscr{P}(c_0) \ \ \text{for all }0\leq t\leq T_1; 
\end{equation}
while, if $\varepsilon_0=\frac{3\alpha-1}{2\alpha}$, \eqref{44500} can be reduced to 
\begin{equation}\label{44500'}
\begin{aligned}
\absB{\phi_0^\frac{1}{2\alpha} U_{xx}+\frac{1}{\alpha}\phi_0^{\frac{1}{2\alpha}-1}(\phi_0)_xU_x}_2\leq \mathscr{P}(c_0),
\end{aligned}
\end{equation}
and then one can deduce from Proposition \ref{prop2.1} and \eqref{equ3.142} that 
\begin{equation}
\absb{\phi_0^\frac{1}{2\alpha} U_{xx}(t)}_2\leq \mathscr{P}(c_0)\ \ \text{for all }0\leq t\leq T_1.
\end{equation}

In conclusion, it holds that for all $0\leq t\leq T_1$,
\begin{equation}\label{equu3145}
\absb{\phi_0^\frac{1}{2\alpha}U(t)}_2+\absb{\phi_0^\frac{1}{2\alpha}U_x(t)}_2+\absb{\phi_0^\frac{1}{2\alpha} U_t(t)}_2+\absb{\phi_0^{\frac{3}{2}-\varepsilon_0} U_{xx}(t)}_2\leq \mathscr{P}(c_0).
\end{equation}

\underline{\textbf{Step 3: Analogy of Lemma \ref{c_1-c_2}.}}  
\underline{\textbf{Step 3.1: Tangential estimates.}}  As \textbf{Steps 1}-\textbf{2} of Lemma \ref{c_1-c_2}, one can apply $U_{tt}\partial_t$ to both sides of \eqref{lp}  to get the 3rd order estimates, and use the weak formulation for the equation of $U_{tt}$ to get the 4th order estimates, that is, 
\begin{equation}\label{equ3.147}
\begin{aligned}
&\frac{1}{2}\frac{\mathrm{d}}{\dt}\int\frac{\phi_0^\frac{1}{\alpha}U_{tx}^2}{\bar\eta_x^2}\,\dx+ \int \phi_0^\frac{1}{\alpha}U_{tt}^2\,\dx\\
=&-\int\frac{\phi_0^\frac{1}{\alpha}\bar U_x U_{tx}^2}{\bar \eta_{x}^3}\,\dx- \int \bigg(\frac{2\phi_0^\frac{1}{\alpha}\bar U_x U_x}{\bar \eta_x^3}\bigg)_x U_{tt}\,\dx+ \int \bigg(\frac{2\phi_0^\frac{2}{\alpha}\bar U_x}{\bar \eta_x^3}\bigg)_x U_{tt}\,\dx,
\end{aligned}
\end{equation}
and 
\begin{equation}
\begin{aligned}
&\frac{1}{2}\frac{\mathrm{d}}{\dt}\int \phi_0^\frac{1}{\alpha}U_{tt}^2\,\dx+\int \frac{\phi_0^\frac{1}{\alpha}(\partial_t^2 U_x)^2}{\bar \eta_x^2}\,\dx\\
=&\int \frac{2\phi_0^\frac{2}{\alpha} \bar U_{tx}}{\bar\eta_x^3}\partial_t^2U_x\,\dx-\int \frac{6\phi_0^\frac{2}{\alpha} \bar U_x^2}{\bar \eta_x^4}\partial_t^2U_x\,\dx-\int \frac{2\phi_0^\frac{1}{\alpha}\bar U_{tx} U_{x}}{\bar\eta_x^3}\partial_t^2U_x\,\dx\\
&-\int \frac{2\phi_0^\frac{1}{\alpha}\bar U_x U_{tx}}{\bar\eta_x^3}\partial_t^2U_x\,\dx+\int \frac{6\phi_0^\frac{1}{\alpha} \bar U_x^2 U_x}{\bar \eta_x^4}\partial_t^2U_x\,\dx.
\end{aligned}
\end{equation}
Again, different from \eqref{eq:cL17-cL23}, there is no crossing term
$\left(\frac{1}{\alpha}-2\right) \int \phi_0 (\phi_0)_x U_{tx}U_{tt}\,\dx$
in \eqref{equ3.147}. Hence, it follows from similar computations as in \textbf{Step 1} and \textbf{Step 2} of Lemma \ref{c_1-c_2} that for all $0\leq t\leq T_2:=\min\{\widetilde T, (1+\mathscr{P}(c_2))^{-M'}\}$, for some large $M'>0$, 
\begin{equation}\label{equ3.148}
\absb{\phi_0^\frac{1}{2\alpha}U_{tx}(t)}_2+\absb{\phi_0^\frac{1}{2\alpha}U_{tt}(t)}_2\leq \mathscr{P}(c_0).
\end{equation}

\underline{\textbf{Step 3.2: Estimate of $\phi_0^{\frac{3}{2}-\varepsilon_0}\partial_x^3 U$.}}
According to \eqref{3...151}, one has
\begin{align}
&\phi_0^{\frac{3}{2}-\varepsilon_0}\partial_x^3 U +\Big(\frac{1}{\alpha}+1\Big)\phi_0^{\frac{1}{2}-\varepsilon_0}(\phi_0)_x U_{xx}\notag\\
=&\underline{-\frac{1}{\alpha}\phi_0^{\frac{1}{2}-\varepsilon_0}(\phi_0)_{xx} U_x+\phi_0^{\frac{3}{2}-\varepsilon_0}\bar\eta_x^2 U_{tx}+2\phi_0^{\frac{3}{2}-\varepsilon_0}\bar\eta_x\bar\eta_{xx}U_t}_{:=\cL_{14}}\notag\\
&\underline{+\phi_0^{\frac{1}{2}-\varepsilon_0}(\phi_0)_x\bar\eta_x^2 U_t +\frac{2\phi_0^{\frac{1}{2}-\varepsilon_0}(\phi_0)_x\bar\eta_{xx}U_x}{\bar\eta_x}+\frac{2\phi_0^{\frac{3}{2}-\varepsilon_0}\bar\eta_{xx} U_{xx}}{\bar\eta_x}}_{:=\cL_{15}}\label{5049}\\
& \underline{+\frac{2\phi_0^{\frac{3}{2}-\varepsilon_0}\partial_x^3\bar\eta U_x}{\bar\eta_x}-\frac{2\phi_0^{\frac{3}{2}-\varepsilon_0}\bar\eta_{xx}^2U_x}{\bar\eta_x^2}-\frac{2+2\alpha}{\alpha}\frac{\phi_0^{\frac{1}{2}+\frac{1}{\alpha}-\varepsilon_0}(\phi_0)_x\bar\eta_{xx}}{\bar\eta_x}}_{:=\cL_{16}}\notag\\
& \underline{-\frac{2\phi_0^{\frac{3}{2}+\frac{1}{\alpha}-\varepsilon_0}\partial_x^3\bar\eta}{\bar\eta_x}+\frac{2\phi_0^{\frac{3}{2}+\frac{1}{\alpha}-\varepsilon_0}\bar\eta_{xx}^2}{\bar\eta_x^2}+\frac{2}{\alpha^2}\phi_0^{-\frac{1}{2}+\frac{1}{\alpha}-\varepsilon_0}((\phi_0)_x)^2 +\frac{2}{\alpha}\phi_0^{\frac{1}{2}+\frac{1}{\alpha}-\varepsilon_0}(\phi_0)_{xx}}_{:=\cL_{17}}.\notag
\end{align}

Taking the $L^2$-norm of both sides of the above equality, one gets from \eqref{varepsilon0}, \eqref{useful1'}-\eqref{useful3'}, \eqref{equu3145}, \eqref{equ3.148} and Lemma \ref{hardy-inequality} that for all $0\leq t\leq T_2$,
\begin{align}
\abs{\cL_{14}}_2&\leq C \abs{(\phi_0)_{xx}}_\infty \absb{\phi_0^{\frac{1}{2}-\varepsilon_0}U_x}_2+C\abs{\phi_0}_\infty^{\frac{3}{2}-\frac{1}{2\alpha}-\varepsilon_0}\big(\absb{\phi_0^\frac{1}{2\alpha}U_{tx}}_2+\abs{\bar\eta_{xx}}_\infty\absb{\phi_0^\frac{1}{2\alpha}U_t}_2\big)\notag\\
&\leq \mathscr{P}(c_0)(1+\mathscr{P}(c_2)t)\bigg(\sum_{j=1}^2\absb{\phi_0^{\frac{3}{2}-\varepsilon_0}\partial_x^j U}_2+\sum_{j=0}^1\absb{\phi_0^\frac{1}{2\alpha} \partial_x^j U_t}_2\bigg)\leq \mathscr{P}(c_0),\notag\\
\abs{\cL_{15}}_2&\leq C\abs{(\phi_0)_x}_\infty\big(\absb{\phi_0^{\frac{1}{2}-\varepsilon_0}U_t}_2 +\abs{(\phi_0)_x}_\infty\abs{\bar\eta_{xx}}_\infty\absb{\phi_0^{\frac{1}{2}-\varepsilon_0}U_x}_2\big)\notag\\
&\quad +C\abs{\bar\eta_{xx}}_\infty\absb{\phi_0^{\frac{3}{2}-\varepsilon_0} U_{xx}}_2\notag\\
&\leq \mathscr{P}(c_0)(1+\mathscr{P}(c_2)t)\bigg(\sum_{j=1}^2\absb{\phi_0^{\frac{3}{2}-\varepsilon_0}\partial_x^j U}_2+\sum_{j=0}^1\absb{\phi_0^\frac{1}{2\alpha} \partial_x^j U_t}_2\bigg)\leq  \mathscr{P}(c_0),\notag\\
\abs{\cL_{16}}_2&\leq C\big(\absb{\phi_0^{\frac{1}{2}-\varepsilon_0}\partial_x^3\bar\eta}_2\abs{\phi_0 U_x}_\infty+\abs{\bar\eta_{xx}}_\infty^2\absb{\phi_0^{\frac{3}{2}-\varepsilon_0} U_x}_2\big)\label{equ3.1500}\\
&\quad +C\abs{\phi_0}_\infty^{\frac{1}{2}+\frac{1}{\alpha}-\varepsilon_0}\abs{(\phi_0)_x}_\infty \abs{\bar\eta_{xx}}_\infty\notag\\
&\leq \mathscr{P}(c_0)(1+\mathscr{P}(c_2)t)\bigg(\sum_{j=1}^2\absb{\phi_0^{\frac{3}{2}-\varepsilon_0}\partial_x^j U}_2+1\bigg) \leq \mathscr{P}(c_0),\notag\\
\abs{\cL_{17}}_2&\leq C\big( \abs{\phi_0}_\infty^\frac{1}{\alpha}\absb{\phi_0^{\frac{3}{2}-\varepsilon_0}\partial_x^3\bar\eta}_2+\abs{\phi_0}_\infty^\frac{1}{\alpha} \abs{\bar\eta_{xx}}_\infty \absb{\phi_0^{\frac{3}{2}-\varepsilon_0}\bar\eta_{xx}}_2\big)\notag\\
&\quad +C\big(\absb{\phi_0^{-\frac{1}{2}+\frac{1}{\alpha}-\varepsilon_0}}_2\abs{(\phi_0)_x}_\infty^2+ \abs{\phi_0}_\infty^{\frac{1}{2}+\frac{1}{\alpha}-\varepsilon_0}\abs{(\phi_0)_{xx}}_\infty\big)\notag\\
&\leq \mathscr{P}(c_0)(1+\mathscr{P}(c_2)t)\leq \mathscr{P}(c_0).\notag
\end{align}

Then, it follows from \eqref{5049}-\eqref{equ3.1500} that
\begin{equation*}
\absB{\phi_0^{\frac{3}{2}-\varepsilon_0}\partial_x^3 U +\Big(\frac{1}{\alpha}+1\Big)\phi_0^{\frac{1}{2}-\varepsilon_0}(\phi_0)_x U_{xx}}_2\leq \mathscr{P}(c_0),
\end{equation*}
which, together with \eqref{equu3145} and Proposition \ref{prop2.1}, leads to
\begin{equation}\label{equ3.151}
\absb{\phi_0^{\frac{3}{2}-\varepsilon_0}\partial_x^3 U(t)}_2\leq \mathscr{P}(c_0) \ \ \text{for all }0\leq t\leq T_2.
\end{equation}

\underline{\textbf{Step 3.3: Estimate of $\phi_0^{\frac{3}{2}-\varepsilon_0}\partial_t U_{xx}$.}}
First, according to $\frac{1}{3}<\alpha\leq 1$, \eqref{equu3145}, \eqref{equ3.148}, \eqref{equ3.151} and Lemma \ref{sobolev-embedding}-\ref{hardy-inequality}, one gets that for all $0\leq t\leq T_2$,
\begin{equation}\label{equ3.152}
\begin{aligned}
\abs{\phi_0 U_t(t)}_\infty &\leq C \sum_{j=0}^1 \absb{\phi_0^\frac{3}{2} \partial_x^j U_t(t)}_2\\
&\leq C\big(\abs{\phi_0}_\infty^{\frac{3}{2}-\frac{1}{2\alpha}}\absb{\phi_0^{\frac{1}{2\alpha}} U_{t}(t)}_2+\abs{\phi_0}_\infty^{\varepsilon_0}\absb{\phi_0^{\frac{3}{2}-\varepsilon_0} U_{tx}(t)}_2\big)\leq  \mathscr{P}(c_0),\\
\abs{U_x(t)}_\infty &\leq C \norm{U_x(t)}_{1,1}\leq C \sum_{j=1}^3\absb{\phi_0^{\frac{3}{2}-\varepsilon_0}\partial_x^j U(t)}_2\leq \mathscr{P}(c_0).
\end{aligned}
\end{equation}

Then it follows from \eqref{equ3.72}, \eqref{39''}, \eqref{useful1'}-\eqref{useful3'}, \eqref{equu3145}, \eqref{equ3.148}, \eqref{equ3.152} and Lemma \ref{hardy-inequality}  that for all $0\leq t\leq T_2$,
\begin{align}
&\absB{\phi_0^{\frac{3}{2}-\varepsilon_0} \partial_tU_{xx}+\frac{1}{\alpha}\phi_0^{\frac{1}{2}-\varepsilon_0}(\phi_0)_x U_{tx}}_2\notag\\
\leq  &C\abs{\phi_0}_\infty^{\frac{3}{2}-\frac{1}{2\alpha}-\varepsilon_0}\absb{\phi_0^\frac{1}{2\alpha}U_{tt}}_2 +C\absb{\phi_0^{\frac{1}{2}-\varepsilon_0}\bar U_x}_2\abs{\phi_0U_{t}}_\infty+C\abs{\phi_0}_\infty^\frac{1}{\alpha}\absb{\phi_0^{\frac{3}{2}-\varepsilon_0} \bar U_{xx}}_2\notag\\
&+C\abs{\phi_0}_\infty^\frac{1}{\alpha}\abs{\bar\eta_{xx}}_\infty\absb{\phi_0^{\frac{3}{2}-\varepsilon_0} \bar U_{x}}_2+C\absb{\phi_0^{\frac{3}{2}-\varepsilon_0}\bar U_{xx}}_2\abs{U_x}_\infty\label{equ3.153}\\
&+C\abs{\bar\eta_{xx}}_\infty \absb{\phi_0^{\frac{3}{2}-\varepsilon_0}\bar U_x}_2 \abs{U_x}_\infty+C\abs{\phi_0}_\infty^{\frac{3}{2}-\frac{1}{2\alpha}-\varepsilon_0}\abs{\bar\eta_{xx}}_\infty\absb{\phi_0^\frac{1}{2\alpha}U_{tx}}_2\notag\\
\leq &\mathscr{P}(c_0)(\mathscr{P}(c_1)+\mathscr{P}(c_2)t) \leq \mathscr{P}(c_1)\notag.
\end{align}
Next, as for  \eqref{esti-tx}, one gets from \eqref{equ3.143}, \eqref{equ3.148} and Lemma \ref{hardy-inequality} that
\begin{equation*}
|U_{tx}(t,x)|\leq C\phi_0^\frac{\alpha-1}{2\alpha}\absf{\phi_0\bar U_{x}}_\infty\big(\absb{\phi_0^\frac{1}{2\alpha}U_{t}}_2+\absb{\phi_0^\frac{1}{2\alpha}U_{tx}}_2\big)+C\phi_0^\frac{\alpha-1}{2\alpha}\absb{\phi_0^\frac{1}{2\alpha}U_{tt}}_2\leq \mathscr{P}(c_1)\phi_0^\frac{\alpha-1}{2\alpha},
\end{equation*}
which, along with $\phi_0\sim d(x)$, implies that for all $\iota>0$,
\begin{equation}\label{4453'}
\absb{\phi_0^{\frac{1}{2\alpha}-1+\iota}U_{tx}(t)}_2 \leq C(\iota)\mathscr{P}(c_1)\ \ \text{for all }0\leq t\leq T_2.
\end{equation}
Hence, if $0<\varepsilon_0<\frac{3\alpha-1}{2\alpha}$, one can set $\iota=\frac{3\alpha-1}{2\alpha}-\varepsilon_0$ in \eqref{4453'}, and get from \eqref{equ3.153} that 
\begin{equation}\label{equ3.154}
\absb{\phi_0^{\frac{3}{2}-\varepsilon_0} \partial_tU_{xx}(t)}\leq \mathscr{P}(c_1) \ \ \text{for all }0\leq t\leq T_2; 
\end{equation}
while, if $\varepsilon_0=\frac{3\alpha-1}{2\alpha}$, \eqref{equ3.153} can be reduced to 
\begin{equation}\label{44500''}
\begin{aligned}
\absB{\phi_0^\frac{1}{2\alpha}\partial_t U_{xx}+\frac{1}{\alpha}\phi_0^{\frac{1}{2\alpha}-1}(\phi_0)_xU_{tx}}_2\leq \mathscr{P}(c_1),
\end{aligned}
\end{equation}
and then one can deduce from Proposition \ref{prop2.1} and \eqref{equ3.148} that 
\begin{equation}
\absb{\phi_0^\frac{1}{2\alpha}\partial_t U_{xx}(t)}_2\leq \mathscr{P}(c_1)\ \ \text{for all }0\leq t\leq T_2.
\end{equation}

\underline{\textbf{Step 3.4: Estimate of $\phi_0^{\frac{3}{2}-\varepsilon_0}\partial_x^4 U$.}}
According to \eqref{3-154}, one has
\begin{align}
&\phi_0^{\frac{3}{2}-\varepsilon_0}\partial_x^4 U+\Big(\frac{1}{\alpha}+2\Big)\phi_0^{\frac{1}{2}-\varepsilon_0}(\phi_0)_x\partial_x^3 U\notag\\
=&\underline{\phi_0^{\frac{3}{2}-\varepsilon_0}\bar\eta_x^2\partial_tU_{xx}+2\phi_0^{\frac{1}{2}-\varepsilon_0}\left((\phi_0)_x+2\phi_0\bar\eta_{xx}\right)\bar\eta_x U_{tx}+2\phi_0^{\frac{3}{2}-\varepsilon_0}\left(\bar\eta_{xx}^2+\bar\eta_x\partial_x^3\bar\eta\right)U_t}_{:=\cL_{18}}\notag\\
&\underline{+\phi_0^{\frac{1}{2}-\varepsilon_0}\left((\phi_0)_{xx}\bar\eta_x+4(\phi_0)_x\bar\eta_{xx}\right)\bar\eta_xU_t -\Big(\frac{2}{\alpha}+1\Big)\phi_0^{\frac{1}{2}-\varepsilon_0}(\phi_0)_{xx} U_{xx}}_{:=\cL_{19}}\notag\\
&\underline{+\frac{1}{\alpha}\phi_0^{\frac{1}{2}-\varepsilon_0}\partial_x^3\phi_0 U_x+2\phi_0^{\frac{1}{2}-\varepsilon_0}\Big((\phi_0)_{xx}\bar\eta_{xx}+2(\phi_0)_{x}\partial_x^3\bar\eta -\frac{2(\phi_0)_{x}\bar\eta_{xx}^2}{\bar\eta_x}\Big)\frac{U_x}{\bar\eta_x}}_{:=\cL_{20}}\notag\\
&\underline{+2\phi_0^{\frac{3}{2}-\varepsilon_0}\Big(\partial_x^4\bar\eta
-\frac{3\bar\eta_{xx}\partial_x^3\bar\eta}{\bar\eta_x}
+\frac{2\bar\eta_{xx}^3}{\bar\eta_x^2}\Big)\frac{U_x}{\bar\eta_x}}_{:=\cL_{21}}\label{4068}\\
&\underline{+4\phi_0^{\frac{1}{2}-\varepsilon_0}\Big(\phi_0\partial_x^3\bar\eta+(\phi_0)_{x}\bar\eta_{xx}-\frac{\phi_0\bar\eta_{xx}^2}{\bar\eta_x}\Big)\frac{U_{xx}}{\bar\eta_x}+\frac{2\phi_0^{\frac{3}{2}-\varepsilon_0}\bar\eta_{xx}\partial_x^3U}{\bar\eta_x}}_{:=\cL_{22}}\notag\\
&\underline{-\frac{2+2\alpha}{\alpha^2}\frac{\phi_0^{-\frac{1}{2}+\frac{1}{\alpha}-\varepsilon_0}((\phi_0)_x)^2\bar\eta_{xx}}{\bar\eta_x}+2\phi_0^{\frac{3}{2}+\frac{1}{\alpha}-\varepsilon_0}\Big(\frac{\partial_x^4\bar\eta}{\bar\eta_x}-\frac{3\bar\eta_{xx}\partial_x^3\bar\eta}{\bar\eta_x^2}+\frac{2\bar\eta_{xx}^3}{\bar\eta_x^3}\Big)}_{:=\cL_{23}}\notag\\
&\underline{-\frac{2+2\alpha}{\alpha}\phi_0^{\frac{1}{2}+\frac{1}{\alpha}-\varepsilon_0}\Big(\frac{(\phi_0)_{xx}\bar\eta_{xx}}{\bar\eta_x}+\frac{2(\phi_0)_x\partial_x^3\bar\eta}{\bar\eta_x}-\frac{2(\phi_0)_x\bar\eta_{xx}^2}{\bar\eta_x^2}\Big)}_{:=\cL_{24}}\notag\\
&\underline{+\frac{2(1-\alpha)}{\alpha^3}\phi_0^{\frac{1}{\alpha}-\frac{3}{2}-\varepsilon_0}((\phi_0)_x)^3+\frac{6}{\alpha^2}\phi_0^{\frac{1}{\alpha}-\frac{1}{2}-\varepsilon_0}(\phi_0)_x(\phi_0)_{xx}+\frac{2}{\alpha}\phi_0^{\frac{1}{\alpha}+\frac{1}{2}-\varepsilon_0}\partial_x^3\phi_0}_{:=\cL_{25}}.\notag
\end{align}

It follows from \eqref{varepsilon0}, \eqref{useful1'}-\eqref{useful3'}, \eqref{equu3145}, \eqref{equ3.148}, \eqref{equ3.151}-\eqref{equ3.152}, \eqref{equ3.154} and Lemma \ref{hardy-inequality} that for all $0\leq t\leq T_2$,
\begin{align}
\abs{\cL_{18}}_2&\leq C \absb{\phi_0^{\frac{3}{2}-\varepsilon_0} \partial_t U_{xx}}_2+ C\left(\abs{(\phi_0)_x}_\infty+ \abs{\phi_0}_\infty \abs{\bar\eta_{xx}}_\infty\right)\absb{\phi_0^{\frac{1}{2}-\varepsilon_0}U_{tx}}_2\notag\\
&\quad + C\Big(\abs{\phi_0}_\infty^{\frac{1}{2}-\varepsilon_0}\abs{\bar\eta_{xx}}_\infty^2+\absb{\phi_0^{\frac{1}{2}-\varepsilon_0}\partial_x^3\bar\eta}_2\Big)\abs{\phi_0U_t}_\infty \notag\\
&\leq \mathscr{P}(c_1)+\mathscr{P}(c_0)\left(1+\mathscr{P}(c_2)t\right)\leq \mathscr{P}(c_1),\notag\\
\abs{\cL_{19}}_2&\leq C\left(\abs{(\phi_0)_{xx}}_\infty+\abs{(\phi_0)_x}_\infty \abs{\bar\eta_{xx}}_\infty \right) \absb{\phi_0^{\frac{1}{2}-\varepsilon_0}U_t}_2+ C\abs{(\phi_0)_{xx}}_\infty \absb{\phi_0^{\frac{1}{2}-\varepsilon_0}U_{xx}}_2\notag\\
&\leq \mathscr{P}(c_0)\left(1+\mathscr{P}(c_2)t\right)\leq \mathscr{P}(c_1),\notag\\
\abs{\cL_{20}}_2&\leq C\abs{\partial_x^3\phi_0}_2 \absb{\phi_0^{\frac{1}{2}-\varepsilon_0}U_x}_\infty + C\norm{\phi_0}_3\big(\abs{\bar\eta_{xx}}_\infty+\abs{\bar\eta_{xx}}_\infty^2\big) \absb{\phi_0^{\frac{1}{2}-\varepsilon_0} U_x}_2 \notag\\
&\quad + C\abs{(\phi_0)_x}_\infty \absb{\phi_0^{\frac{1}{2}-\varepsilon_0} \partial_x^3 \bar\eta}_2\abs{U_x}_\infty\notag\\
&\leq \mathscr{P}(c_0)\left(1+\mathscr{P}(c_2)t\right)\leq \mathscr{P}(c_1),\notag\\
\abs{\cL_{21}}_2&\leq C \Big(\absb{\phi_0^{\frac{3}{2}-\varepsilon_0}\partial_x^4\bar\eta}_2+\abs{\bar\eta_{xx}}_\infty\absb{\phi_0^{\frac{3}{2}-\varepsilon_0}\partial_x^3\bar\eta}_2+\abs{\bar\eta_{xx}}_\infty^2\absb{\phi_0^{\frac{3}{2}-\varepsilon_0} \bar\eta_{xx}}_2\Big) \abs{U_x}_\infty \label{equ3.156}\\
&\leq \mathscr{P}(c_0)\mathscr{P}(c_2)t\leq \mathscr{P}(c_1),\notag\\
\abs{\cL_{22}}_2&\leq C \big(\absf{\phi_0\partial_x^3 \bar\eta}_\infty + \abs{(\phi_0)_x}_\infty \abs{\bar\eta_{xx}}_\infty + \abs{\phi_0}_\infty\abs{\bar\eta_{xx}}_\infty^2\big)\absb{\phi_0^{\frac{1}{2}-\varepsilon_0} U_{xx}}_2\notag\\
&\quad +C\abs{\bar\eta_{xx}}_\infty \absb{\phi_0^{\frac{3}{2}-\varepsilon_0}\partial_x^3 U}_2 
\leq \mathscr{P}(c_0)\mathscr{P}(c_2)t\leq \mathscr{P}(c_1),\notag\\
\abs{\cL_{23}}_2&\leq C \absb{\phi_0^{\frac{1}{\alpha}-\frac{1}{2}-\varepsilon_0}}_2\abs{(\phi_0)_x}_\infty^2 \abs{\bar\eta_{xx}}_\infty+C\abs{\phi_0}_\infty^\frac{1}{\alpha}\Big(\absb{\phi_0^{\frac{3}{2}-\varepsilon_0}\partial_x^4 \bar\eta}_2+\abs{\bar\eta_{xx}}_\infty \absb{\phi_0^{\frac{3}{2}-\varepsilon_0}\partial_x^3\bar\eta}_2\Big)\notag\\
&\quad +C\abs{\phi_0}_\infty^\frac{1}{\alpha}\abs{\bar\eta_{xx}}_\infty^2\absb{\phi_0^{\frac{3}{2}-\varepsilon_0} \bar\eta_{xx}}_2\leq \mathscr{P}(c_0)\mathscr{P}(c_2)t\leq \mathscr{P}(c_1),\notag\\
\abs{\cL_{24}}_2&\leq C\abs{\phi_0}_\infty^\frac{1}{\alpha}\norm{\phi_0}_3\Big(\absb{\phi_0^{\frac{1}{2}-\varepsilon_0}\bar\eta_{xx}}_2+\absb{\phi_0^{\frac{1}{2}-\varepsilon_0}\partial_x^3\bar\eta}_2+\abs{\bar\eta_{xx}}_\infty\absb{\phi_0^{\frac{1}{2}-\varepsilon_0}\bar\eta_{xx}}_2\Big)\notag\\
&\leq C\mathscr{P}(c_0)\mathscr{P}(c_2)t\leq C\mathscr{P}(c_1),\notag\\
\abs{\cL_{25}}_2&\leq C\underline{\absb{\phi_0^{\frac{1}{\alpha}-\frac{3}{2}-\varepsilon_0}}_2\abs{(\phi_0)_x}_\infty^3}_{(=0,\text{ if }\alpha=1)}+C\absb{\phi_0^{\frac{1}{\alpha}-\frac{1}{2}-\varepsilon_0}}_2\abs{(\phi_0)_x}_\infty\abs{(\phi_0)_{xx}}_\infty\notag\\
&\quad+C\abs{\phi_0}_\infty^{\frac{1}{\alpha}+\frac{1}{2}-\varepsilon_0}\absf{\partial_x^3\phi_0}_2
\leq \mathscr{P}(c_0).\notag
\end{align}

Then, it follows from \eqref{4068}-\eqref{equ3.156} that
\begin{equation*}
\absB{\phi_0^{\frac{3}{2}-\varepsilon_0}\partial_x^4 U +\Big(\frac{1}{\alpha}+2\Big)\phi_0^{\frac{1}{2}-\varepsilon_0}(\phi_0)_x\partial_x^3 U}_2\leq \mathscr{P}(c_0),
\end{equation*}
which, together with \eqref{equ3.151} and Proposition \ref{prop2.1}, implies that
\begin{equation}\label{equ3.157}
\absb{\phi_0^{\frac{3}{2}-\varepsilon_0}\partial_x^4 U(t)}_2\leq \mathscr{P}(c_1) \ \ \text{for all }0\leq t\leq T_2.
\end{equation}

Collecting all estimates \eqref{equ3.148}, \eqref{equ3.151}, \eqref{equ3.154} and \eqref{equ3.157} yields that for all $0\leq t\leq T_2$,
\begin{equation}\label{equ3.158}
\begin{aligned}
\absb{\phi_0^\frac{1}{2\alpha} U_{tx}(t)}_{2}+\absb{\phi_0^{\frac{3}{2}-\varepsilon_0} \partial^3_x U(t)}_{2}\leq \mathscr{P}(c_1),\\
\absb{\phi_0^\frac{1}{2\alpha} U_{tt}(t)}_{2}+\absb{\phi_0^{\frac{3}{2}-\varepsilon_0} \partial_t U_{xx}(t)}_{2}+\absb{\phi_0^{\frac{3}{2}-\varepsilon_0} \partial^4_x U(t)}_{2}\leq \mathscr{P}(c_1).
\end{aligned}
\end{equation}

\underline{\textbf{Step 4: Choices of $c_1$, $c_2$ and $T$.}} 
Defining the constants $c_1,c_2$ and the time $T$ as
\begin{equation}
c_1=\sqrt{\mathscr{P}(c_0)},\quad c_2=\sqrt{\mathscr{P}(c_1)},\quad T=\min\{T_1,T_2\},
\end{equation}
according to \eqref{equu3145} and \eqref{equ3.158}, one then obtain the uniform estimates \eqref{uniform bounds''}.

The proof of Lemma \ref{a-priori-estimates2} is completed.
\end{proof}

\section{Local-in-time well-posedness of the nonlinear problem}\label{Section5}

\S \ref{Section5} is devoted to proving the local well-posedness of classical solutions for the problems \eqref{fp''}-\eqref{fp}, i.e., Theorem \ref{theorem3.1}, and Theorem \ref{theorem1.3} follows as a consequence. For simplicity, we only prove the case of $0<\alpha\leq \frac{1}{3}$, and the case of $\frac{1}{3}<\alpha\leq 1$ can be treated analogously.

\subsection{Proof of Theorem \ref{theorem3.1}}
This will be divided into the following several steps:

\underline{\textbf{Step 1: Construction of the iterative sequence.}}
We use the same notations as in  Lemma \ref{a-priori-estimates}. Set 
\begin{equation*}
U^0(t,x):= u_0,\quad \eta^0(t,x)=x+tu_0.
\end{equation*}
Then, for given $c_0$ as in Lemma \ref{a-priori-estimates} and $c_i$, $i=1,2$, defined by \eqref{3.84}, there exists a small positive time $T^\prime\leq T:=\min\{\widetilde T, (1+Cc_2)^{-\frac{6\alpha+6}{\alpha}}\}$, such that  for all $t\in[0,T']$,
\begin{equation}\label{395}
\begin{aligned}
\absf{\phi_0 U^0(t)}_{2}+\absf{\phi_0 U_x^0(t)}_{2}+
\absf{\phi_0 U_t^0(t)}_{2}+\absf{\phi_0 U_{xx}^0(t)}_{2}\leq c_1,\quad \absf{\eta_x^0(t)-1}_\infty\leq \frac{1}{2},\\
\absf{\phi_0 U^0_{tx}(t)}_{2}+\absf{\phi_0 \partial^3_xU^0(t)}_{2}+\absf{\phi_0 U_{tt}^0(t)}_{2}+\absf{\phi_0 \partial_t  U_{xx}^0(t)}_{2}+\absf{\phi_0 \partial^4_xU^0(t)}_{2}\leq c_2.
\end{aligned}
\end{equation}

Next, let $(\bar U,\bar \eta )=(U^0,\eta^0)$ in \eqref{lp''} be the first generation of solutions. According to Lemma \ref{existence-linearize}, there exists a unique classical solution $(U^1,\eta^1)$ to the problem \eqref{lp''}. Clearly, one can deduce from Lemma \ref{uniform bounds} that $U^1$ satisfies \eqref{395}, which implies that $|\eta_x^1-1|_\infty\leq \frac{1}{2}$ on $t\in [0,T']$ for the same $T'$. 

Then, the approximate sequence $(U^{k+1},\eta^{k+1})$, $k\geq 1$, can be constructed as follows: given $(U^k,\eta^k)$, define $(U^{k+1},  \eta^{k+1})$ by solving the following problem,
\begin{equation}\label{396}
\begin{cases}
\quad\displaystyle\phi_0^2 U_{t}^{k+1}-\left(\frac{\phi_0^2 U_{x}^{k+1}}{(\eta^k_x)^2}\right)_x\\
\displaystyle=\Big(\frac{1}{\alpha}-2\Big)\frac{\phi_0 (\phi_0)_x U_x^{k+1}}{(\eta^k_x)^2}- \bigg(\frac{\phi_0^{2+\frac{1}{\alpha}}}{(\eta^k_x)^2}\bigg)_x+\Big(2-\frac{1}{\alpha}\Big)\frac{\phi_0^{1+\frac{1}{\alpha}}(\phi_0)_x}{(\eta^k_x)^2}&\text{in }(0,T]\times I,\\
\quad  \eta^{k+1}_t = U^{k+1} &\text{in }(0,T]\times I,\\[4pt]
\quad (U^{k+1},\eta^{k+1}) =(u_0,\mathrm{id})&\text{on }\{t=0\}\times I.
\end{cases}
\end{equation}
It follows from Lemma \ref{a-priori-estimates} that one can successfully obtain an iterative solution sequence $(U^k,\eta^k)$ satisfying \eqref{395}, that is, for all $t\in[0,T']$ and all $k\geq 0$,
\begin{equation}\label{395k}
\begin{aligned}
\absf{\phi_0 U^k(t)}_{2}+\absf{\phi_0 U_x^k(t)}_{2}+
\absf{\phi_0 U_t^k(t)}_{2}+\absf{\phi_0 U_{xx}^k(t)}_{2}\leq c_1,\quad\absf{\eta_x^k(t)-1}_\infty\leq \frac{1}{2},\\
\absf{\phi_0 U^k_{tx}(t)}_{2}+\absf{\phi_0 \partial^3_xU^k(t)}_{2}+\absf{\phi_0 U_{tt}^k(t)}_{2}+\absf{\phi_0 \partial_t  U_{xx}^k(t)}_{2}+\absf{\phi_0 \partial^4_xU^k(t)}_{2}\leq c_2.
\end{aligned}
\end{equation}

\underline{\textbf{Step 2: Convergence of $(U^k,\eta^k)$.}}
Set
\begin{equation}\label{picarddifference}
\wU^{k+1}:=U^{k+1}-U^k,\quad \weta^{k+1}:=\eta^{k+1}-\eta^k=\int_0^t \wU^{k+1}(s,x)\,\ds,
\end{equation}
and introduce the following basic energy function:
\begin{equation*}
\begin{aligned}
\wE^k(t)&:= \sup_{s\in[0,t]}\absf{\phi_0 \wU^{k}}_2^2+\int_0^t\absf{\phi_0 \wU^{k}_{x}}_2^2\,\ds.   
\end{aligned}
\end{equation*}

It follows from \eqref{396} that
\begin{equation}\label{398}
\begin{cases}
\displaystyle \phi_0^2 \wU_t^{k+1}\!\!-\bigg(\frac{\phi_0^2 \wU_{x}^{k+1}}{(\eta_x^k)^2}\bigg)_x\!\! \!=\!\Big(\frac{1}{\alpha}-2\Big)\frac{\phi_0 (\phi_0)_x \wU_x^{k+1}}{(\eta_x^k)^2} +(\phi_0 \cR_1^k)_x+ \cR_2^k&\text{in }(0,T']\times I,\\
 \weta^{k+1}_t= \wU^{k+1},&\text{in }(0,T']\times I,\\[4pt]
(\wU^{k+1},\weta^{k+1})=(0,0)&\text{on }\{t=0\}\times I,
\end{cases}
\end{equation}
where
\begin{equation*}
\begin{aligned}
&\cR_1^k=  \big(\phi_0^{1+\frac{1}{\alpha}}-\phi_0U_{x}^{k}\big)\frac{(\eta_x^k+\eta_x^{k-1})\weta_x^k}{(\eta_x^k \eta_x^{k-1})^2},\\
&\cR_2^k=\Big(\frac{1}{\alpha}-2\Big) \phi_0(\phi_0)_x\big(\phi_0^\frac{1}{\alpha}-U_x^{k}\big) \frac{(\eta_x^k+\eta_x^{k-1})\weta_x^k}{(\eta_x^k \eta_x^{k-1})^2}.
\end{aligned}
\end{equation*}
Then, one can conclude from \eqref{395k}-\eqref{picarddifference} and $\weta_{tx}^k=\wU^k_x$ that 
\begin{equation}\label{basic}
\begin{aligned}
\absf{\cR_1^k(t)}_2^2+\absf{\cR_2^k(t)}_2^2
\leq Ct\int_0^t\absf{\phi_0 \wU_x^k}_2^2\,\ds.
\end{aligned}
\end{equation}

Next, multiplying $\eqref{398}_1$ by $\wU^{k+1}$ and integrating the resulting equality over $I$, then following the proofs of \eqref{eq:cL1-cL5}-\eqref{cL1}, one can get from \eqref{basic}, Lemma \ref{hardy-inequality}, H\"older's inequality and Young's inequality that
\begin{equation*}
\begin{aligned}
\frac{\mathrm{d}}{\dt}\absf{\phi_0 \wU^{k+1}}_2^2+\absf{\phi_0 \wU_x^{k+1}}_2^2&\leq C\absf{\phi_0 \wU^{k+1}}_2^2+\absf{\cR_1^k(t)}_2^2+\absf{\cR_2^k(t)}_2^2\\
&\leq C\absf{\phi_0 \wU^{k+1}}_2^2+Ct\int_0^t\absf{\phi_0 \wU_x^k}_2^2\,\ds,
\end{aligned}
\end{equation*}
which, along with Gr\"onwall's inequality, leads to  
\begin{equation}\label{3104}
\wE^{k+1}(t)\leq C t^2e^{Ct}\wE^k(t) \ \  \text{for all }0\leq t\leq T^\prime \text{ and all }k\geq 1.
\end{equation}

Choosing $t=T_*$ such that $CT_*^2 e^{CT_*}\leq \frac{1}{2}$ and $T_*\leq T'$, then one can get 
\begin{equation*}
\wE^{k+1}(T_*)\leq \frac{1}{2} \wE^k(T_*) \ \ \text{for all } k\geq 1,
\end{equation*}
which yields
\begin{equation}\label{total-bound}
\sum_{k=1}^{\infty} \wE^{k}(T_*) \leq \bigg(\sum_{k=0}^{\infty}\frac{1}{2^k}\bigg) \wE^{1}(T_*)\leq C(c_0).
\end{equation}

Then, it follows from \eqref{total-bound} that 
\begin{equation*}
\begin{aligned}
\phi_0 U^{m}-\phi_0 U^{n}\to  0\quad \text{in } C([0,T_*];L^2),\quad   \phi_0  U^{m}_x-\phi_0 U^{n}_x \to 0\quad \text{in } L^2([0,T_*];L^2),
\end{aligned}
\end{equation*}
as $m,n\to \infty$, which implies that $\{U^k\}_{k\geq 0}$ converges to a unique limit $U$ as $k\to\infty$ in the following sense:
\begin{equation}\label{3112}
\begin{aligned}
\phi_0 U^k\to \phi_0 U \quad \text{in } C([0,T_*];L^2),\quad \phi_0 U^k_x\to \phi_0 U_x \quad \text{in } L^2([0,T_*];L^2).
\end{aligned}
\end{equation}
This, together with \eqref{395k}, \eqref{398}, Lemmas \ref{GNinequality'}-\ref{GNinequality} and \ref{hardy-inequality},  shows that
\begin{equation}\label{3126}
\begin{aligned}
U^k\to U \quad &\text{in } C([0,T_*];H^{s_1}),\quad \text{for all }s_1\in [0,3),\\
U_t^k\to U_t \quad &\text{in } L^2([0,T_*];H^{s_2}),\ \ \text{for all }s_2\in [0,1).
\end{aligned}
\end{equation}
Since $\{U^k\}_{k\geq 0}\subset C([0,T_*];H^3)$ by Lemma \ref{existence-linearize}, one gets  from $\eqref{3126}$ that $U\in C([0,T_*];H^{s_1})$, $U_t \in L^2([0,T_*];H^{s_2})$. Then, letting $k\to\infty$ in $\eqref{396}_1$-$\eqref{396}_2$, one gets that $\eqref{fp''}_1$ hold for a.e. $(t,x)\in (0,T_*)\times I$ and $\eqref{fp''}_2$ holds  continuously. 

Next, letting $k\to\infty$ in $\eqref{396}_1$, one deduces from $\eqref{fp''}_1$ and \eqref{3126} that  
\begin{equation}\label{equ5.15}
\begin{aligned}
\phi_0^2 U_t^k
\to &\left(\frac{\phi_0^{2} U_{x}}{\eta_x^2}\right)_x+\Big(\frac{1}{\alpha}-2\Big)\frac{\phi_0(\phi_0)_x U_x}{\eta_x^2}-\bigg(\frac{\phi_0^{2+\frac{1}{\alpha}}}{\eta_x^2}\bigg)_x\\
&\quad +\Big(2-\frac{1}{\alpha}\Big)\frac{\phi_0^{1+\frac{1}{\alpha}} (\phi_0)_x}{\eta_x^2}\quad \text{in }C([0,T]\times \bar I).  
\end{aligned} 
\end{equation}
which, along with $\eqref{3126}_2$ and the uniqueness of the limits, implies that
$$\phi_0^2U_t^k\to \phi_0^2 U_t\quad \text{in }C([0,T]\times \bar I),\quad \phi_0^2 U_t\in C([0,T]\times \bar I).$$
Hence, $\eqref{fp''}_1$ holds  continuously.

Moreover, it follows from the lower semi-continuity of weak convergence that \eqref{395k} still holds for $U$, that is, $\sup_{t\in[0,T_*]}E(t,U)<\infty$, which, along with Lemma \ref{hardy-inequality} leads to 
\begin{equation}\label{rrggg}
\begin{aligned}
\sup_{t\in[0,T_*]}\absf{\partial_x^j U}_2&\leq C \sup_{t\in[0,T_*]}\left(\absf{\phi_0\partial_x^j U}_2+\absf{\phi_0\partial_x^{j+1} U}_2\right)\leq  C\sup_{t\in[0,T_*]} E(t,U)<\infty;\\
\sup_{t\in[0,T_*]}\absf{\partial_x^k U_t}_2&\leq C\sup_{t\in[0,T_*]} \big(\absf{\phi_0\partial_x^k U_t}_2+\absf{\phi_0\partial_x^{k+1} U_t}_2\big)\leq C \sup_{t\in[0,T_*]} E(t,U)<\infty,
\end{aligned}
\end{equation}
for $j=0,1,2,3$ and $k=0,1$, that is, $U\in  L^\infty([0,T_*];H^3)\cap  W^{1,\infty}([0,T_*];H^1)$.
 
The proof of the existence is completed.

\underline{\textbf{Step 3: Uniqueness, time continuity and \eqref{AN111}.}}
Suppose that there exist two solutions $U_1$ and $U_2$ on $[0,T_*]\times \bar I$. Define
\begin{equation*}
\begin{split}
\eta_i(t,x):=&x+\int_0^t U_i(s,x)\,\ds,\quad \weta:=\eta_2-\eta_1,\quad \wU:=U_2-U_1,\\
\wE(t):=&\sup_{s\in[0,t]}\absf{\phi_0 \wU}_2^2+\int_0^t\absf{\phi_0 \wU_{x}}_2^2\,\ds.   
\end{split}
\end{equation*}

Then, by \eqref{fp''}, $(\wU,\weta)$ solves the following problem
\begin{equation}\label{fp'}
\begin{cases}
\displaystyle \phi_0^2 \wU_t-\bigg(\frac{\phi_0^2 \wU_{x}}{(\eta_2)_x^2}\bigg)_x =\Big(\frac{1}{\alpha}-2\Big)\frac{\phi_0 (\phi_0)_x \wU_x}{(\eta_2)_x^2} +(\phi_0 \widetilde\cR_1)_x+ \widetilde\cR_2&\text{in }(0,T']\times I,\\
\weta_t=\wU&\text{in }(0,T']\times I,\\[4pt]
(\wU,\weta)=(0,0)&\text{on }\{t=0\}\times I,
\end{cases}
\end{equation}
where
\begin{equation*}
\begin{aligned}
&\widetilde\cR_1=  \big(\phi_0^{1+\frac{1}{\alpha}}-\phi_0\wU_{x}\big)\frac{((\eta_1)_x+(\eta_2)_x)\weta_x}{((\eta_1)_x(\eta_2)_x)^2},\\
&\widetilde\cR_2=\Big(\frac{1}{\alpha}-2\Big) \phi_0(\phi_0)_x\big(\phi_0^\frac{1}{\alpha}-\wU_x\big) \frac{((\eta_1)_x+(\eta_2)_x)\weta_x}{((\eta_1)_x(\eta_2)_x)^2}.
\end{aligned}
\end{equation*}
Clearly, $\widetilde\cR_i$ $(i=1,2)$ satisfies \eqref{basic} with $\wU^{k}$ replaced by $\wU$, that is,
\begin{equation}\label{basic'}
\begin{aligned}
\absf{\widetilde\cR_1(t)}_2^2+\absf{\widetilde\cR_2(t)}_2^2
\leq Ct\int_0^t\absf{\phi_0 \wU_x}_2^2\,\ds.
\end{aligned}
\end{equation}

Hence, following the proof of \textbf{Step 2}, multiplying $\eqref{fp'}_1$ by $\wU$ and integrating the resulting equality over $I$, one can get from  \eqref{basic'} that 
\begin{equation*}
\begin{aligned}
\frac{\mathrm{d}}{\dt}\absf{\phi_0 \wU}_2^2+\absf{\phi_0 \wU_x}_2^2 \leq C\absf{\phi_0 \wU}_2^2+Ct\int_0^t\absf{\phi_0 \wU_x}_2^2\,\ds,
\end{aligned}
\end{equation*}
which, along with Gr\"onwall's inequality and the definition of $T_*$, leads to $\wE(t) \equiv 0$ for all $0\leq t\leq T_*$.
Therefore, it follows from Lemma \ref{hardy-inequality} that $U_1\equiv U_2$.

The time continuity can be shown by following the proofs in Lemma \ref{existence-linearize}, which is omitted here for simplicity. Finally, \eqref{AN111} is a direct consequence of Lemmas \ref{sobolev-embedding}-\ref{hardy-inequality}. Therefore, one completes the proof of Theorem \ref{theorem3.1}.

\begin{Remark}
For the case $\frac{1}{3}<\alpha\leq 1$, the corresponding estimates in \eqref{rrggg} become
\begin{equation}\label{rrrggg}
\begin{aligned}
\sup_{t\in[0,T_*]}\absf{\partial_x^j U}_1&\leq C\sup_{t\in[0,T_*]}\big(\absb{\phi_0^{\frac{3}{2}-\varepsilon_0}\partial_x^j U}_2+\absb{\phi_0^{\frac{3}{2}-\varepsilon_0}\partial_x^{j+1} U}_2\big)\leq  C\sup_{t\in[0,T_*]}\widetilde E(t,U);\\
\sup_{t\in[0,T_*]}\absf{\partial_x^k U_t}_1&\leq C\sup_{t\in[0,T_*]}\big(\absb{\phi_0^{\frac{3}{2}-\varepsilon_0}\partial_x^k U_t}_2+\absb{\phi_0^{\frac{3}{2}-\varepsilon_0}\partial_x^{k+1} U_t}_2\big)\leq C \sup_{t\in[0,T_*]}\widetilde E(t,U),
\end{aligned}
\end{equation}
for $j=0,1,2,3$ and $k=0,1$, that is, $U\in  L^\infty([0,T_*];W^{3,1})\cap  W^{1,\infty}([0,T_*];W^{1,1})$.
\end{Remark}

\subsection{Proof of Theorem \ref{theorem1.3}}\label{proof1.3}
Define $(\rho(t,y),u(t,y))$ as \eqref{equ1.28} in Appendix \ref{section-CT}. Then it follows from Theorem \ref{theorem3.1} and \eqref{uty} that $(\rho(t,y),u(t,y),\Gamma(t))$ becomes the unique classical solution in $\overline{\II(T_*)}$ to the \textbf{VFBP} \eqref{shallow} satisfying \eqref{eulerregularity1} or \eqref{eulerregularity2}, which completes the proof of Theorem \ref{theorem1.3}.

\section{Global-in-time boundedness  of the effective velocity and \texorpdfstring{$\eta_x$}{}}\label{Section6}

According to Theorem \ref{theorem3.1} ii), there exists a unique local-in-time classical solution $U$ in $[0,T_*]\times \bar I$ to the problem \ef{secondreformulation} for some positive time $T_*$, which satisfies \eqref{b111'} and the homogeneous Neumann boundary condition, $U_x(t,x)|_{\Gamma}=0$ for $t\in  [0,T_*]$. Hereinafter, it is always assumed that $0<T\leq T_*$. We will give the proof for  the global-in-time well-posedness stated in Theorem \ref{Theorem1.1} in \S6-\S8, and the aim of this section is to show the global-in-time boundedness  of the effective velocity and $\eta_x$.


\subsection{The upper bound of the depth}\label{subsection6.1}
First, the so-called effective velocity is defined as follows.
\begin{Definition}
Let $U$, $H$, $\eta$, $\rho_0$, $\alpha$ be defined as in \S \ref{section1}. $V$ is said to be the effective velocity if
\begin{equation}\label{V-expression}
V=U+\frac{H_x}{\rho_0}=U+\frac{1}{\alpha}\frac{(\rho_0^\alpha)_x}{\rho_0^\alpha\eta_x}-\frac{\eta_{xx}}{\eta_x^2}.
\end{equation}
\end{Definition}

Next, we give the  fundamental energy estimates and BD entropy estimates.
\begin{Lemma}\label{BD-entropy}
For any $T>0$, it holds that for all $0\leq t\leq T$,
\begin{itemize}
\item fundamental energy estimates $(0<\alpha\leq 1)$
\begin{equation}\label{energy-e}
\int \Big(\rho_0 U^2+\frac{\rho_0^2}{\eta_x}\Big)\,\dx+\int_0^t\int \frac{\rho_0U_x^2}{\eta_x^2}\,\dx\ds\leq C;
\end{equation}
\item BD entropy estimates $(0<\alpha<1)$
\begin{equation}\label{BD-e}
\int \Big(\rho_0\Big|U+\frac{H_x}{\rho_0}\Big|^2+\frac{\rho_0^2}{\eta_x}\Big)\,\dx+\int_0^t\int \frac{H_x^2}{\eta_x}\,\dx\ds\leq C.
\end{equation}
\end{itemize}
\end{Lemma}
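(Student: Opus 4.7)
The plan is to establish both bounds by energy-type computations carried out directly on the Lagrangian form \eqref{secondreformulation}, being careful with the integration-by-parts contributions at the vacuum boundary $\Gamma$, where $\rho_0=0$.

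For the fundamental energy estimate \eqref{energy-e}, I would multiply $\eqref{secondreformulation}_1$ by $U$ and integrate over $I$. Using $H=\rho_0/\eta_x$, the identity $U_x/\eta_x^2=-\partial_t(1/\eta_x)$, and the fact that $\rho_0|_\Gamma=0$ kills the boundary contributions from integration by parts, one obtains the pointwise-in-time identity
\begin{equation*}
\frac{1}{2}\frac{\mathrm{d}}{\dt}\int\rho_0 U^2\,\dx+\frac{\mathrm{d}}{\dt}\int\frac{\rho_0^2}{\eta_x}\,\dx+\int\frac{\rho_0U_x^2}{\eta_x^2}\,\dx=0.
\end{equation*}
Integrating in time yields \eqref{energy-e}.

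For the BD entropy \eqref{BD-e} in the range $0<\alpha<1$, I would follow the Bresch--Desjardins strategy adapted to the Lagrangian coordinate. The key algebraic identity is
\begin{equation*}
\int\rho_0\Big(U+\frac{H_x}{\rho_0}\Big)^2\,\dx=\int\rho_0 U^2\,\dx+2\int UH_x\,\dx+\int\frac{H_x^2}{\rho_0}\,\dx,
\end{equation*}
so it suffices to track $\int UH_x\,\dx$ in time and combine with the fundamental energy. Differentiating $\int UH_x\,\dx$, I would substitute the momentum equation in the $\int U_tH_x\,\dx$ piece and the mass equation $H_t=-HU_x/\eta_x$ in the $\int UH_{tx}\,\dx$ piece (after an integration by parts in $x$). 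Schematically: the pressure contribution produces $-2\int H_x^2/\eta_x\,\dx$ via $(H^2)_x=2HH_x$ and $H/\rho_0=1/\eta_x$; the viscous contribution together with the $\int UH_{tx}\,\dx$ term regenerates $\int\rho_0U_x^2/\eta_x^2\,\dx$, which is absorbed by the dissipation from the fundamental energy identity; the remaining terms are controlled by $\int\rho_0^2/\eta_x\,\dx$ via Cauchy--Schwarz. Summing the fundamental energy identity with the identity just derived and integrating in time then gives \eqref{BD-e}, provided that $\int H_x(0)^2/\rho_0\,\dx<\infty$; this last integrability follows from \eqref{distance} with $\alpha<1$, which is exactly \eqref{BDcondition}.

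The main technical obstacle is to make the integration by parts rigorous at the vacuum boundary, where $H_x/\rho_0$ blows up like $d(x)^{-1}$ and $V=U+H_x/\rho_0$ only lies in the weighted space $L^2_{\rho_0}$, so the individual terms $\int UH_x\,\dx$ and $\int H_x^2/\rho_0\,\dx$ must be handled with care rather than split carelessly. I would perform all the above computations on the cut-off domain $I_\varepsilon=\{x\in I:d(x)>\varepsilon\}$, use the regularity afforded by Theorem \ref{theorem3.1} to show that the boundary contributions at $\partial I_\varepsilon$ vanish as $\varepsilon\to 0^+$, and then pass to the limit by monotone/dominated convergence. The restriction to $0<\alpha<1$ is essential: for $\alpha=1$ the initial BD functional $\int H_x(0)^2/\rho_0\,\dx$ is infinite, which is precisely the physical-vacuum case and the reason why the remainder of this paper develops an alternative approach based on the transport structure of $V$ rather than on the BD entropy.
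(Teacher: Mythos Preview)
Your fundamental energy argument is the same as the paper's (the paper omits it as standard).

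For the BD entropy, your route is correct but differs from the paper's. The paper does not expand $\rho_0 V^2$ into three pieces and track $\int UH_x\,\dx$ separately; instead it first differentiates the continuity equation $H_t+HU_x/\eta_x=0$ in $x$, adds the result to the momentum equation, and observes that this yields the clean evolution
\[
\rho_0 V_t+(H^2)_x=0,\qquad V=U+H_x/\rho_0.
\]
Multiplying by $V$ and integrating gives in one line the exact identity
\[
\frac{\mathrm{d}}{\dt}\int\Big(\tfrac12\rho_0 V^2+\eta_x H^2\Big)\,\dx+2\int\frac{H_x^2}{\eta_x}\,\dx=0,
\]
and since $\eta_xH^2=\rho_0^2/\eta_x$ this is \eqref{BD-e}. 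Your approach recovers the same identity after the cancellation between the viscous piece of $\int U_tH_x\,\dx$ and $\tfrac12\frac{\mathrm d}{\dt}\int H_x^2/\rho_0\,\dx$ (note $\rho_0U_x/\eta_x^2=HU_x/\eta_x$), which your sketch does not make explicit; your mention of ``remaining terms controlled by Cauchy--Schwarz'' suggests you were anticipating an inequality, whereas the computation in fact closes as an equality with no leftover terms. The paper's derivation is not only shorter but also isolates the $V$-equation itself, which is then rewritten as $V_t+2H(V-U)=0$ and used repeatedly in the weighted $L^p$ and $L^\infty$ estimates of $V$ that follow (Lemmas \ref{boundv}, \ref{estimates-V_x}, \ref{boundv'}); in your framework you would have to rederive that equation separately.
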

\begin{proof}
For simplicity, we only give the proof of the BD entropy estimates. It follows from the equation $\eqref{lagrange}_1$ by applying $\partial_x$ that
\begin{equation*}
H_{tx}+\left(H\frac{U_x}{\eta_x}\right)_x=0,
\end{equation*}
which, together with $\eqref{lagrange}_2$ and \eqref{V-expression}, leads to
\begin{equation}\label{B3}
\rho_0 V_t+(H^2)_x=0.
\end{equation}
Thus, multiplying \eqref{B3} by $V$ and integrating the resulting equality yield that
\begin{equation}
\frac{\mathrm{d}}{\dt}\int \Big(\frac{1}{2}\rho_0 V^2+\eta_x H^2\Big)\,\dx+2\int \frac{H_x^2}{\eta_x}\,\dx=0.
\end{equation}
Integrating above over $[0,T]$ gives the desired conclusion.
\end{proof}

\begin{Remark}
It follows from \eqref{distance} that  $\rho_0$ satisfies the initial requirement of the  BD entropy estimate, i.e.,
\begin{equation*}
\begin{aligned}
\int \rho_0(\log\rho_0)_x^2\,\dx&=\frac{1}{\alpha^2}\int \rho_0^{1-2\alpha}(\rho_0^\alpha)_x^2\,\dx\leq C\int d(x)^{\frac{1}{\alpha}-2}\,\dx\leq C.
\end{aligned}
\end{equation*}
\end{Remark}

Clearly, by \eqref{V-expression} and \eqref{B3}, one can deduce the following corollary.
\begin{Corollary}
The effective velocity $V$ satisfies the following equation:
\begin{equation}\label{eq:effective2}
 V_t+ 2H(V-U)=0.
\end{equation}
\end{Corollary}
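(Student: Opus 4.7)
The plan is to derive \eqref{eq:effective2} as a direct algebraic consequence of the identity \eqref{V-expression} and the evolution equation \eqref{B3}. The key observation is that, in the interior where $\rho_0>0$, the definition \eqref{V-expression} may be rearranged as
\[
\rho_0(V-U)=H_x,
\]
from which it follows immediately that
\[
(H^2)_x=2H\,H_x=2H\rho_0(V-U).
\]

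Substituting this expression for $(H^2)_x$ into \eqref{B3} yields
\[
\rho_0 V_t+2\rho_0 H(V-U)=0,
\]
and dividing through by $\rho_0$ produces the claimed identity \eqref{eq:effective2} pointwise in $(0,T]\times I$. The division by $\rho_0$ is legitimate on the open interval $I$, since \eqref{distance} ensures $\rho_0(x)>0$ strictly in the interior; the identity is naturally interpreted away from the moving vacuum boundary $\Gamma$, which is consistent with the role of $V$ throughout the subsequent global analysis. There is no genuine obstacle: all the regularity required to differentiate $H$ and $V$ in time and to carry out the manipulation above is supplied by Theorem \ref{theorem3.1} (ii) together with the derivation of \eqref{B3} in Lemma \ref{BD-entropy}, so the corollary follows by inspection rather than by any further estimate.
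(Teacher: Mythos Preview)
Your proof is correct and follows exactly the approach the paper has in mind: the paper simply states that the corollary follows ``clearly, by \eqref{V-expression} and \eqref{B3},'' and your argument spells out precisely this computation---rewriting $H_x=\rho_0(V-U)$, substituting into $(H^2)_x$, and dividing by $\rho_0$ in the interior.
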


Based on the above discussions,  one can derive the upper bound of depth.
\begin{Lemma}\label{bound of density}
For any $T>0$ and $0<\alpha\leq 1$, it holds that
\begin{equation*}
\abs{H(t)}_\infty\leq C \ \ \text{for all }0\leq t\leq T.
\end{equation*}
\end{Lemma}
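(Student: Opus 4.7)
The plan is to convert the momentum equation $\eqref{secondreformulation}_1$ into a pointwise Riccati-type identity for $H$ by integrating in $x$ from the vacuum boundary. Specifically, integrating $\eqref{secondreformulation}_1$ from $0$ to $x$ and using $\rho_0(0)=0$ together with the boundedness of $U_x$ and $\eta_x$ on $[0,T_*]\times\bar I$ (Theorem \ref{theorem3.1}) to annihilate the boundary contributions at $z=0$, I would obtain
\begin{equation*}
\int_0^x \rho_0(z)U_t(t,z)\,\mathrm{d}z+\frac{\rho_0^2}{\eta_x^2}(t,x)-\frac{\rho_0 U_x}{\eta_x^2}(t,x)=0.
\end{equation*}
Setting $K(t,x):=\int_0^x \rho_0(z)U(t,z)\,\mathrm{d}z$, the first term equals $K_t$ (since $\rho_0$ is time-independent) and the second equals $H^2$. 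For the third, the transport identity $\eqref{lagrange}_1$ gives $H_t=-H\,U_x/\eta_x$, so $\rho_0 U_x/\eta_x^2=H\cdot(U_x/\eta_x)=-H_t$. Collecting these yields
\begin{equation*}
(H+K)_t=-H^2 \quad \text{pointwise in } (t,x).
\end{equation*}

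Integrating in $t$ produces the pointwise identity
\begin{equation*}
H(t,x)+\int_0^t H^2(s,x)\,\ds=H(0,x)+K(0,x)-K(t,x).
\end{equation*}
I would then bound each term on the right: $H(0,x)=\rho_0(x)$ is controlled by $\abs{\rho_0}_\infty\leq C$, and by Cauchy--Schwarz combined with the fundamental energy estimate \eqref{energy-e},
\begin{equation*}
\abs{K(t,x)}\leq\Big(\int\rho_0\,\dx\Big)^{1/2}\Big(\int\rho_0 U^2\,\dx\Big)^{1/2}\leq C.
\end{equation*}
Since $H=\rho_0/\eta_x\geq 0$ (using $\rho_0\geq 0$ and the strict positivity of $\eta_x$ furnished by the local existence theorem) and $\int_0^t H^2(s,x)\,\ds\geq 0$, discarding the nonnegative time integral on the left yields the desired one-sided estimate $H(t,x)\leq C$.

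The point worth emphasizing is that this argument invokes only the fundamental energy bound $\eqref{energy-e}$ and never uses the BD entropy \eqref{BD-e}; this is essential, because \eqref{BD-e} is unavailable in the physical-vacuum regime $\alpha=1$, while Lemma \ref{bound of density} is asserted uniformly for $0<\alpha\leq 1$. The main technical care needed is justifying that the boundary term $\rho_0 U_x/\eta_x^2$ vanishes at $z=0$, which is immediate from the bounds on $U_x$ and $\eta_x$ in Theorem \ref{theorem3.1} together with $\rho_0(0)=0$. The Riccati structure paired with the sign condition $H\geq 0$ then delivers the upper bound essentially for free, without invoking any higher-order information about the solution.
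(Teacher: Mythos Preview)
Your proposal is correct and is essentially identical to the paper's own proof: both integrate $\eqref{secondreformulation}_1$ in $x$ from the vacuum boundary, use $\eqref{lagrange}_1$ to recognize $\rho_0 U_x/\eta_x^2=-H_t$, obtain $(H+K)_t+H^2=0$, integrate in $t$, drop the nonnegative $\int_0^t H^2\,\ds$, and bound $K$ via Cauchy--Schwarz and the fundamental energy estimate \eqref{energy-e}. The only cosmetic difference is in how Cauchy--Schwarz is applied to $K$.
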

\begin{proof}
Integrating $\eqref{secondreformulation}_1$ over $(0,x)$ for $x\in I$ shows that
\begin{equation*}
\frac{\mathrm{d}}{\dt}\int_0^x \rho_0U\,\mathrm{d}z-\frac{\rho_0 U_x}{\eta_x^2}+\frac{\rho_0^2}{\eta_x^2}=0,
\end{equation*}
which, along with $\eqref{lagrange}_1$, yields 
\begin{equation*}
\frac{\mathrm{d}}{\dt}\left(\int_0^x \rho_0U\,\mathrm{d}z+H\right)+\frac{\rho_0^2}{\eta_x^2}=0.
\end{equation*}

Then, integrating above over $[0,t]$ gives
\begin{equation}\label{eq:411}
\int_0^x \rho_0U(t,z)\,\mathrm{d}z+H(t,x)\leq \int_0^x \rho_0 u_0\,\mathrm{d}z+\rho_0(x).
\end{equation}

Finally, it follows from \eqref{energy-e} in Lemma \ref{BD-entropy} that
\begin{equation*}
\abs{\int_0^x \rho_0U(t,z)\,\mathrm{d}z}\leq \abs{\rho_0^\alpha}_\infty^\frac{1}{2\alpha}\big|\rho_0^\frac{1}{2}U\big|_2\leq C \ \ \text{for all } 0\leq t\leq T,
\end{equation*}
which, along  with \eqref{eq:411}, yields the boundedness of $H$.

Thus, the proof of Lemma \ref{bound of density} is completed.
\end{proof}

Moreover, thanks to Lemma \ref{bound of density}, one can get the following lower bound of $\eta_x$.
\begin{Lemma}\label{L.B.J}
For any $T>0$ and $0<\alpha\leq 1$, it holds that
\begin{equation*}
\inf_{(t,x)\in [0,T]\times \bar I} \eta_x(t,x)\geq C^{-1}(T) >0.
\end{equation*}
\end{Lemma}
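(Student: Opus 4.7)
The plan is to combine the mass identity \eqref{HHH}, $\eta_x=\rho_0/H$, with the upper bound $H\le C$ from Lemma \ref{bound of density} and the Neumann boundary condition $U_x|_{\Gamma}=0$ established in Theorem \ref{theorem3.1}. The bound $H\le C$ alone gives $\eta_x(t,x)\ge \rho_0(x)/C$, which is strictly positive on the interior of $I$ but degenerates at $\Gamma$; what the Neumann condition supplies is a matching bound in a fixed neighborhood of the boundary.

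First, differentiating $\eta_t=U$ in $x$ and integrating in $t$ yields the identity
\[
\eta_x(t,x)=1+\int_0^t U_x(s,x)\,\ds.
\]
Letting $x\to\Gamma$ and invoking $U_x|_{\Gamma}=0$ from Theorem \ref{theorem3.1} shows $\eta_x(t,x)\equiv 1$ on $[0,T]\times\Gamma$. Next, the regularity of $U$ established in Theorem \ref{theorem3.1}, together with Sobolev embedding, puts $U_x$ in $C([0,T]\times\bar I)$ with $U_x|_{\Gamma}=0$, so $U_x$ is uniformly continuous on the compact set $[0,T]\times\bar I$. This lets one pick $\delta_0=\delta_0(T)>0$ such that $|U_x(s,x)|\le 1/(2T)$ whenever $d(x)\le\delta_0$ and $s\in[0,T]$, which in turn forces $\eta_x(t,x)\ge 1/2$ on the strip $\{d(x)\le\delta_0\}$.

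On the complementary region $\{d(x)\ge\delta_0\}$, the lower bound \eqref{distance} gives $\rho_0(x)\ge(\cC_1\delta_0)^{1/\alpha}$, and combining with $\eta_x=\rho_0/H$ and Lemma \ref{bound of density} yields $\eta_x(t,x)\ge (\cC_1\delta_0)^{1/\alpha}/C$. Taking the minimum of the two regional bounds produces the desired conclusion with $C^{-1}(T):=\min\{1/2,(\cC_1\delta_0)^{1/\alpha}/C\}>0$.

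The main obstacle, though mild, is making sure the neighborhood on which $|U_x|$ is small is genuinely $T$-uniform, since Lemma \ref{L.B.J} will be invoked inside the global-in-time bootstrap of \S\ref{Section8}. The soft uniform-continuity step above is enough for a qualitative positive bound, but for an explicit dependence on $T$ one can instead appeal to the sharper pointwise decay $|U_x(t,x)|\le Cd(x)$ of \eqref{AN111}: once this is propagated to $[0,T]$ by the global weighted estimates of \S\ref{Section7}, it gives $|\eta_x(t,x)-1|\le CTd(x)$ directly, rendering $\delta_0$ explicitly comparable to $(CT)^{-1}$.
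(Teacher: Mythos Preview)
Your main argument is correct and uses the same two ingredients as the paper: the interior lower bound $\eta_x=\rho_0/H\ge\rho_0/C$ coming from Lemma~\ref{bound of density}, and the boundary value $\eta_x|_{\Gamma}=1$ coming from $U_x|_{\Gamma}=0$. The difference is purely in packaging: the paper argues by contradiction (a minimizing sequence with $\eta_x(t_k,x_k)\to 0$ is forced by $\eta_x\ge\rho_0/C$ to have $d(x_k)\to 0$, contradicting continuity of $\eta_x$ and $\eta_x|_{\Gamma}=1$), whereas you split $\bar I$ into a boundary strip $\{d\le\delta_0\}$ and its complement and bound each piece directly. Your version is slightly more constructive, producing $C^{-1}(T)=\min\{1/2,(\cC_1\delta_0)^{1/\alpha}/C\}$, but in both cases the constant depends on the solution through the modulus of continuity of $U_x$ (or $\eta_x$), so neither argument yields an explicit $C(T)$ in terms of data alone.

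One caution about your final paragraph: invoking the weighted estimates of \S\ref{Section7} to propagate \eqref{AN111} and thereby quantify $\delta_0$ is circular, since those estimates themselves repeatedly use Lemma~\ref{L.B.J}. This does not affect your main proof, which you correctly note already gives the qualitative positive lower bound, but the suggested refinement cannot be carried out at this stage of the argument.
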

\begin{proof}
Otherwise, there exists a $T>0$ and, for every $k\in\NN^*$, one may find a sequence of 
$$\{(t_k,x_k)\}_{k=1}^{\infty}\subset [0,T]\times \bar I$$ such that 
\begin{equation}\label{4.29}
0\leq \eta_x(t_k,x_k)<\frac{1}{k}\to 0 \ \  \text{as }k\to \infty.
\end{equation}

It follows from \eqref{HHH} and  Lemma \ref{bound of density} that 
\begin{equation}\label{4.30}
 \eta_x(t,x) \geq \frac{\rho_0(x)}{C} \ \ \text{for all }(t,x)\in [0,T]\times \bar I.
\end{equation}
Thus, it follows from \eqref{4.29}-\eqref{4.30} and $\rho_0\sim d(x)^\frac{1}{\alpha}$ that
\begin{equation*}
\frac{d(x_k)^\frac{1}{\alpha}}{C}\leq\eta_x(t_k,x_k)\to 0 \ \ \text{as }k\to \infty,
\end{equation*}
which implies that $$x_k\to x_0 \in \Gamma\quad  \text{as}\quad  k\to \infty.$$
However, this contradicts to the fact that $\eta_x|_{x_0\in\Gamma} =1$ since $U_x|_{x_0\in \Gamma}=0$. 

The proof Lemma \ref{L.B.J} is completed.
\end{proof}

\subsection{\texorpdfstring{$L^p$}{} estimates of the effective velocity}\label{subsection6.2}

This subsection is devoted  to obtaining  the $L^p$-boundedness of the effective velocity. The first auxiliary lemma concerns the weighted $L^p$-estimates of the velocity.
\begin{Lemma}\label{W.E.I}
For any $T>0$, $0<\alpha\leq 1$ and  $0\leq p<\infty$, it holds that
\begin{equation*}
\big|\rho_0^\frac{1}{p+2}U(t)\big|_{p+2}^{p+2}+\int_0^t \Big|\rho_0^{\frac{1}{2}}\frac{|U|^{\frac{p}{2}}U_x}{\eta_x}\Big|_{2}^2\,\ds\leq C(p,T) \ \ \text{for all }0\leq t\leq T.\end{equation*}
\end{Lemma}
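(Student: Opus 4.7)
The strategy is a direct weighted $L^{p+2}$ energy estimate: multiply the momentum equation $\eqref{secondreformulation}_1$ by $|U|^{p}U$ and integrate over $I$. The first term gives, after using $\eqref{secondreformulation}_3$-type reasoning on the time derivative,
\[
\int \rho_0 U_t |U|^{p}U \,\dx = \frac{1}{p+2}\frac{\mathrm{d}}{\dt}\int \rho_0 |U|^{p+2}\,\dx,
\]
which is exactly $(p+2)^{-1}\frac{\mathrm{d}}{\dt}|\rho_0^{1/(p+2)}U|_{p+2}^{p+2}$. For the dissipative and pressure terms I will integrate by parts; the boundary contributions at $x\in\Gamma$ vanish because $\rho_0$ vanishes on $\Gamma$ (and independently because of \eqref{N111}). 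This yields
\[
\frac{1}{p+2}\frac{\mathrm{d}}{\dt}\int \rho_0 |U|^{p+2}\,\dx + (p+1)\int \frac{\rho_0|U|^{p}U_x^2}{\eta_x^2}\,\dx = (p+1)\int \frac{\rho_0^2 |U|^{p}U_x}{\eta_x^2}\,\dx.
\]

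The key step is to absorb the right-hand side. By Cauchy--Schwarz/Young's inequality,
\[
(p+1)\int \frac{\rho_0^2 |U|^{p}U_x}{\eta_x^2}\,\dx \leq \frac{p+1}{2}\int \frac{\rho_0|U|^{p}U_x^2}{\eta_x^2}\,\dx + C(p)\int \frac{\rho_0^{3}|U|^{p}}{\eta_x^2}\,\dx.
\]
Using the uniform lower bound $\eta_x\geq C^{-1}(T)>0$ from Lemma \ref{L.B.J}, the uniform upper bound $|H|_\infty\leq C$ from Lemma \ref{bound of density} (which via $\eqref{HHH}$ controls $\rho_0/\eta_x$), together with $|\rho_0|_\infty\leq C$ and the elementary Young inequality $\rho_0^{2}|U|^{p}\leq \frac{p}{p+2}|U|^{p+2}+\frac{2}{p+2}\rho_0^{p+2}$, the remainder is controlled by
\[
C(p,T)\int \rho_0|U|^{p+2}\,\dx + C(p,T)\int \rho_0^{p+3}\,\dx \leq C(p,T)\int \rho_0|U|^{p+2}\,\dx + C(p,T),
\]
since $\rho_0$ is bounded and $|I|<\infty$.

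Combining these estimates gives the Gr\"onwall-type inequality
\[
\frac{\mathrm{d}}{\dt}\int \rho_0|U|^{p+2}\,\dx + \int \frac{\rho_0|U|^{p}U_x^2}{\eta_x^2}\,\dx \leq C(p,T)\int \rho_0|U|^{p+2}\,\dx + C(p,T),
\]
and Gr\"onwall's inequality, together with the initial bound $\int \rho_0|u_0|^{p+2}\,\dx \leq C$ (from the assumed regularity of $u_0$ and boundedness of $\rho_0$), yields the desired conclusion. The main technical point to be careful about is the justification of $|U|^{p}U$ as a multiplier for non-integer $p$: since $U\in C^1([0,T];C(\bar I))\cap C([0,T];C^2(\bar I))$ by Theorem \ref{theorem3.1}, the function $|U|^{p}U$ is $C^1$ in $x$ for $p\geq 0$ with $(|U|^{p}U)_x=(p+1)|U|^{p}U_x$, so all the manipulations above are pointwise legitimate; the only genuine obstacle is the need to absorb the pressure term into the dissipation without picking up factors that blow up as the power $p$ grows, which is handled by the simple Young splitting above at the cost of a constant $C(p,T)$ depending on $p$.
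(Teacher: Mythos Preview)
Your proof is correct and follows essentially the same approach as the paper: multiply $\eqref{secondreformulation}_1$ by $|U|^pU$, integrate by parts, absorb half of the pressure term into the dissipation via Young's inequality, bound the remainder $\int \rho_0^3|U|^p\eta_x^{-2}\,\dx$ using the lower bound on $\eta_x$ from Lemma \ref{L.B.J}, and close with Gr\"onwall. The only cosmetic difference is that the paper controls the remainder by H\"older's inequality as $C(p,T)|\rho_0^{1/(p+2)}U|_{p+2}^{p}$ and then Young, whereas you apply Young directly to $\rho_0^2|U|^p$; both routes produce the same Gr\"onwall structure.
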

\begin{proof}
Multiplying $\eqref{secondreformulation}_1$ by $\abs{U}^p U$ $(0\leq p<\infty)$ yields
\begin{equation*}
\begin{aligned}
&\frac{1}{p+2} \left(\rho_0\abs{U}^{p+2}\right)_t+\left(\frac{\rho_0^2\abs{U}^pU}{\eta_x^2}-\frac{\rho_0 \abs{U}^pUU_x}{\eta_x^2}\right)_x\\
=&(p+1)\left(\frac{\rho_0^2}{\eta_x^2}-\frac{\rho_0U_x}{\eta_x^2}\right)\abs{U}^pU_x,
\end{aligned}
\end{equation*}
which,  along with Lemma \ref{L.B.J}, H\"older's inequality and Young's inequality, implies that
\begin{align*}
&\frac{1}{p+2}\frac{\mathrm{d}}{\dt}\big|\rho_0^{\frac{1}{p+2}}U\big|_{p+2}^{p+2}+(p+1)\Big|\rho_0^{\frac{1}{2}}\frac{\abs{U}^{\frac{p}{2}}U_x}{\eta_x}\Big|_{2}^2\\
=&(p+1)\int \frac{\rho_0^2\abs{U}^pU_x}{\eta_x^2}\,\dx\\
\leq &C(p)\int \frac{\rho_0^3\abs{U}^p}{\eta_x^2}\,\dx+\frac{p+1}{8}\Big|\rho_0^{\frac{1}{2}}\frac{\abs{U}^{\frac{p}{2}}U_x}{\eta_x}\Big|_{2}^2\\
\leq& C(p,T)\abs{\rho^\alpha_0}_{\infty}^\frac{2p+6}{\alpha(p+2)}\big|\rho_0^{\frac{1}{p+2}}U\big|_{p+2}^{p}+\frac{p+1}{8}\Big|\rho_0^{\frac{1}{2}}\frac{\abs{U}^{\frac{p}{2}}U_x}{\eta_x}\Big|_{2}^2\\
\leq& C(p,T)+\big|\rho_0^{\frac{1}{p+2}}U\big|_{p+2}^{p+2}+\frac{p+1}{8}\Big|\rho_0^{\frac{1}{2}}\frac{\abs{U}^{\frac{p}{2}}U_x}{\eta_x}\Big|_{2}^2.
\end{align*}

Then, it follows from the Gr\"onwall inequality that
\begin{equation}\label{equ67}
\big|\rho_0^\frac{1}{p+2}U(t)\big|_{p+2}^{p+2}+\int_0^t \Big|\rho_0^{\frac{1}{2}}\frac{|U|^{\frac{p}{2}}U_x}{\eta_x}\Big|_{2}^2\,\ds\leq C(p,T)\big(\big|\rho_0^\frac{1}{p+2}u_0\big|_{p+2}^{p+2}+1\big),
\end{equation}
for all $0\leq t\leq T$. For the initial data, one can get from Lemmas \ref{sobolev-embedding}-\ref{hardy-inequality} that
\begin{equation*}
\begin{aligned}
\big|\rho_0^{\frac{1}{p+2}}u_0\big|^{p+2}_{p+2}&\leq \abs{\rho_0}_\infty\abs{u_0}_\infty^{p+2}\leq C(p).
\end{aligned}
\end{equation*}
which, along with \eqref{equ67}, yields the desired estimate in this lemma.

The proof of Lemma \ref{W.E.I} is completed.
\end{proof}

Now, one can derive the $L^p$-estimates of $V$.

\begin{Lemma}\label{boundv}
For any $T>0$, $0<\alpha\leq 1$, $r>\frac{p-1}{p}$ and $2\leq p<\infty$, it holds that
\begin{equation*}
\abs{\rho_0^{r\alpha} V(t)}_p\leq C(r,p,T)\ \ \text{for all }0\leq t\leq T.
\end{equation*}
\end{Lemma}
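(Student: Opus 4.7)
The plan is to exploit the transport structure of the evolution equation \eqref{eq:effective2}, which rearranges to
\[
V_t + 2HV = 2HU.
\]
Since we are in Lagrangian coordinates, this contains no spatial derivatives of $V$ and is thus a pointwise linear ODE in $t$ at each fixed $x\in I$. Solving by Duhamel's formula gives
\[
V(t,x)=e^{-2\int_0^t H(\tau,x)\,d\tau}V(0,x)+2\int_0^t e^{-2\int_s^t H(\tau,x)\,d\tau}H(s,x)U(s,x)\,ds.
\]
Because $H\geq 0$ and $|H(t)|_\infty\leq C$ by Lemma \ref{bound of density}, both exponential factors lie in $[0,1]$, yielding the pointwise bound
\[
|V(t,x)|\leq |V(0,x)|+C\int_0^t |U(s,x)|\,ds.
\]
Multiplying by $\rho_0^{r\alpha}(x)$ and taking the $L^p(I)$-norm via Minkowski's integral inequality reduces the proof to controlling $|\rho_0^{r\alpha}V(0)|_p$ and $\sup_{s\in[0,T]}|\rho_0^{r\alpha}U(s)|_p$ uniformly, after which the lemma follows directly, without invoking Gr\"onwall.

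For the initial term, \eqref{V-expression} evaluated at $t=0$ gives $V(0)=u_0+(\rho_0)_x/\rho_0$; the assumption \eqref{distance} implies $\rho_0\sim d(x)^{1/\alpha}$, so that $\rho_0^{r\alpha-1}(\rho_0)_x$ behaves like $d(x)^{r-1}$ near $\Gamma$. This quantity lies in $L^p$ precisely when $p(r-1)>-1$, i.e., $r>(p-1)/p$, the stated hypothesis. This step identifies the sharpness of the threshold on $r$.

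For the $U$ term, Lemma \ref{W.E.I} provides $\int\rho_0|U|^q\,dx\leq C(q,T)$ for every $q\geq 2$. When $pr\alpha\geq 1$, one writes
\[
\int\rho_0^{pr\alpha}|U|^p\,dx\leq |\rho_0|_\infty^{pr\alpha-1}\int\rho_0|U|^p\,dx\leq C.
\]
When $0<pr\alpha<1$, H\"older's inequality with conjugate exponents $1/(pr\alpha)$ and $1/(1-pr\alpha)$, applied to the factorization $\rho_0^{pr\alpha}|U|^p=(\rho_0|U|^{p/(pr\alpha)})^{pr\alpha}\cdot 1^{1-pr\alpha}$, gives
\[
\int\rho_0^{pr\alpha}|U|^p\,dx\leq \bigg(\int\rho_0|U|^{p/(pr\alpha)}\,dx\bigg)^{pr\alpha}|I|^{1-pr\alpha}\leq C,
\]
where $p/(pr\alpha)\geq p\geq 2$ falls within the applicable range of Lemma \ref{W.E.I}. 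Combining this uniform bound on $|\rho_0^{r\alpha}U(s)|_p$ with the initial bound and the Duhamel estimate yields $|\rho_0^{r\alpha}V(t)|_p\leq C(r,p,T)$.

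The main subtle point is the sharp identification of the admissible range $r>(p-1)/p$ from the inverse-distance singularity of $V(0)$; the rest of the proof is a direct consequence of the pointwise ODE structure of \eqref{eq:effective2}, the $L^\infty$-bound on $H$, and the previously established weighted $L^p$-estimates on $U$ from Lemma \ref{W.E.I}.
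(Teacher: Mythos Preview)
Your proof is correct and follows essentially the same approach as the paper: both solve the pointwise ODE \eqref{eq:effective2} via Duhamel, take the $L^p$-norm using Minkowski's integral inequality, bound the initial data using the threshold condition $r>(p-1)/p$, and control the source via Lemma \ref{W.E.I}. The only minor difference is that the paper keeps $HU=\rho_0 U/\eta_x$ intact (using the lower bound on $\eta_x$ from Lemma \ref{L.B.J}), so the source term carries the weight $\rho_0^{r\alpha+1}$; since $r\alpha+1-1/p>0$ always holds, the excess weight can be pulled out in $L^\infty$ without a case split, whereas your bound $|H|_\infty\leq C$ leaves only $\rho_0^{r\alpha}$ and forces the dichotomy on $pr\alpha\gtrless 1$.
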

\begin{proof}
Multiplying \eqref{eq:effective2} by $\rho_0^{r\alpha}$ ($r>\frac{p-1}{p}$, $2\leq p<\infty$) yields
\begin{equation*}
\left(\rho_0^{r\alpha} V(t,x)\right)_t+2H\left(\rho_0^{r\alpha} V(t,x)\right)=\frac{2\rho_0^{r\alpha+1} U(t,x)}{\eta_x(t,x)}.
\end{equation*}

Then, one can solve the above ODE to deduce that
\begin{equation}\label{solutionv}
\rho_0^{r\alpha} V(t,x)=e^{-\int_0^t 2H(s,x)\,\ds}\left(\rho_0^{r\alpha} V(0,x)+ \int_0^t \frac{2\rho_0^{r\alpha+1} U(\tau,x)}{\eta_x(\tau,x)}e^{\int_0^\tau 2H(s,x)\,\ds}\,\mathrm{d}\tau\right).
\end{equation}
Taking the $L^p$-norm ($2\leq p<\infty$) of both sides of \eqref{solutionv}, one gets from   Lemmas \ref{bound of density}-\ref{W.E.I} and the Minkowski integral inequality that 
\begin{equation}\label{4431}
\begin{aligned}
\abs{\rho_0^{r\alpha} V(t)}_p&\leq C\abs{\rho_0^{r\alpha} V(0)}_p+C\int_0^t \Big|\frac{\rho_0^{r\alpha+1}U}{\eta_x}\Big|_p\,\ds\\
&\leq C \abs{\rho_0^{r\alpha}V(0)}_p+C\abs{\rho_0^\alpha}_\infty^{r+\frac{1}{\alpha}-\frac{1}{\alpha p}}\int_0^t \big|\rho_0^{\frac{1}{p}}U\big|_p\,\ds\\
&\leq C \abs{\rho_0^{r\alpha} V(0)}_p+C(p,T).
\end{aligned}
\end{equation}
For the initial data, since $(r-1)p>-1$, one has
\begin{equation*}
\begin{aligned}
\abs{\rho_0^{r\alpha} V(0)}_p&=\absB{\rho_0^{r\alpha}\left(u_0+\frac{1}{\alpha}\frac{(\rho_0^\alpha)_x}{\rho_0^\alpha}\right)}_p\\
&\leq C\big( \abs{\rho_0^\alpha}_\infty^r \abs{u_0}_p+ \big|\rho_0^{(r-1)\alpha}\big|_p\abs{(\rho_0^\alpha)_x}_\infty\big)\\
&\leq C(r) \widetilde E(0,U)+C\left(\int d^{(r-1)p}\,\dx\right)^\frac{1}{p}
\leq C(r,p),
\end{aligned}
\end{equation*}
which, along with \eqref{4431}, yields the desired estimate in this lemma.

The proof of Lemma \ref{boundv} is completed.
\end{proof}

\subsection{\texorpdfstring{$L^\infty$}{} estimates of the effective velocity}\label{subsection6.3}
The goal is to show the  $L^\infty$-boundedness of $V$ in this subsection.
First, one  needs to refine the power of weights in Lemma \ref{W.E.I}.  

\begin{Lemma}\label{L.P.E}
For any $T>0$, $0<\alpha\leq 1$, $\beta>-\alpha$ and $0\leq p<\infty$, it holds that
\begin{equation*}
\begin{gathered}
\sup_{t\in[0,T]}\int \rho_0^{\beta} \abs{U}^p(t,x)\,\dx \leq C(\beta,p,T).
\end{gathered}
\end{equation*}
\end{Lemma}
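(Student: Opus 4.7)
The plan is to deduce Lemma \ref{L.P.E} as a fairly direct consequence of the weighted $L^p$-estimates on $U$ already given by Lemma \ref{W.E.I}, combined with a single Hölder interpolation that exploits the assumption \eqref{distance} in the form $\rho_0\sim d(x)^{1/\alpha}$. The key observation is that $\rho_0^\gamma\in L^1(I)$ iff $\gamma>-\alpha$, so the threshold $\beta>-\alpha$ is exactly the condition for the pure weight $\rho_0^\beta$ to be integrable. In particular, the trivial case $p=0$ reduces to $\int\rho_0^\beta\,\dx\leq C\int d(x)^{\beta/\alpha}\,\dx\leq C(\beta)$, and the only task is to reach positive $p$ without losing integrability of the weight.

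For $p>0$, I would split
\begin{equation*}
\rho_0^\beta|U|^p=\big(\rho_0^{1/q}|U|\big)^p\cdot\rho_0^{\beta-p/q}
\end{equation*}
for a parameter $q>p$ to be chosen, and apply Hölder's inequality with exponents $q/p$ and $q/(q-p)$ to obtain
\begin{equation*}
\int\rho_0^\beta|U|^p\,\dx\leq\Big(\int\rho_0|U|^q\,\dx\Big)^{p/q}\Big(\int\rho_0^{(\beta q-p)/(q-p)}\,\dx\Big)^{(q-p)/q}.
\end{equation*}
The first factor is uniformly bounded by $C(q,T)$ by Lemma \ref{W.E.I} (rewriting the bound there as $\int\rho_0|U|^q\,\dx\leq C(q,T)$ via $q=p+2$ and Hölder for $0\leq q<2$). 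The second factor is finite provided $(\beta q-p)/(q-p)>-\alpha$, which rearranges to $q(\beta+\alpha)>p(1+\alpha)$. Since $\beta+\alpha>0$ by hypothesis, any choice $q>p(1+\alpha)/(\beta+\alpha)$ works, and for such $q$ the exponent of $\rho_0$ in the second integral is strictly greater than $-\alpha$, so that factor is bounded by a constant depending on $\beta$ and $p$ alone. Combining the two estimates yields the claimed bound $C(\beta,p,T)$.

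Essentially no technical obstacle is anticipated: the whole argument is a one-step Hölder interpolation and Lemma \ref{W.E.I} already carries the entire $t$-dependence. The only point requiring mild care is the degeneration as $\beta\to -\alpha^+$: the required $q$ blows up, and the constants in both factors grow accordingly, which is reflected correctly in the stated dependence $C=C(\beta,p,T)$. I would also briefly note that this refinement of Lemma \ref{W.E.I} is precisely what is needed to upgrade the $L^p$-bound of the effective velocity in Lemma \ref{boundv} to an $L^\infty$-bound in the next step, where one integrates \eqref{eq:effective2} along characteristics and must control $\int_0^t|\rho_0^{K}U|_\infty\,\ds$ via a weighted $L^q$-norm of $U$ with weight-power arbitrarily close to $-\alpha$.
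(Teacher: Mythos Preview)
Your proof is correct and uses the same idea as the paper—H\"older interpolation against the bound $\int\rho_0|U|^q\,\dx\leq C(q,T)$ from Lemma~\ref{W.E.I}—carried out in a single step for all $\beta>-\alpha$. The paper instead treats $\beta>0$ first (bounding the pure-weight factor trivially by $|\rho_0|_\infty$) and then reduces $-\alpha<\beta\leq 0$ to the positive case by a second H\"older split; your unified choice of exponents is marginally cleaner, and since you may always take $q\geq 2$ the parenthetical about $0\leq q<2$ is unnecessary.
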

\begin{proof}
According to Lemma \ref{W.E.I}, $\rho_0^\alpha\in H^3$ and $\rho_0\sim d(x)^\frac{1}{\alpha}$, it suffices to prove the lemma for the case when $-\alpha<\beta\leq 1$ and $0<p<\infty$. The proof is divided into the following two steps.

\underline{\textbf{Step 1: $\beta>0$.}}
For every $\beta>0$, $0<p<\infty$, and any given $\varepsilon$ such that $0<\varepsilon<\min\left\{1,\beta,\frac{p}{2}\right\}$, it follows from Lemma \ref{W.E.I} and H\"older's inequality that for all $0\leq t\leq T$,
\begin{equation}\label{equ68}
\begin{aligned}
\int \rho_0^\beta \abs{U}^p\,\dx
&\leq \left(\int \rho_0^\frac{\beta-\varepsilon}{1-\varepsilon}\,\dx\right)^{1-\varepsilon} \left(\int \rho_0 \abs{U}^\frac{p}{\varepsilon}\,\dx\right)^\varepsilon\\
&\leq \abs{\rho_0^\alpha}_\infty^\frac{\beta-\varepsilon}{\alpha} \left(\int \rho_0 \abs{U}^\frac{p}{\varepsilon}\,\dx\right)^\varepsilon
\leq C(\beta,p,T).
\end{aligned}
\end{equation}

\underline{\textbf{Step 2: $-\alpha<\beta\leq 0$.}} 
Now, suppose that $\varepsilon,r$ are fixed constants depending only on $\alpha,\beta$ such that
$$0<\varepsilon<\alpha- \abs{\beta}\quad \text{and}\quad 1< r< \frac{\alpha}{\abs{\beta}+\varepsilon}.$$
Then, it follows from $\rho_0^\alpha\sim d(x)$, \eqref{equ68} and H\"older's inequality that
\begin{equation*}
\begin{aligned}
\int \rho_0^{- \abs{\beta}} \abs{U}^p\,\dx
&\leq \left(\int \rho_0^{-(\abs{\beta}+\varepsilon)r}\,\dx\right)^\frac{1}{r}\left(\int \rho_0^{\frac{\varepsilon r}{r-1}}\abs{U}^\frac{pr}{r-1}\,\dx\right)^\frac{r-1}{r}\\
&\leq C(\beta,p,T) \left(\int d(x)^{-\frac{(\abs{\beta}+\varepsilon)r}{\alpha}}\,\dx\right)^\frac{1}{r}
\leq C(\beta,p,T),
\end{aligned}
\end{equation*}
for all $0\leq t\leq T$. Therefore, the proof of Lemma \ref{L.P.E} is completed.
\end{proof}

Next, the following lemma deals with the first order estimate of the effective velocity.
\begin{Lemma}\label{estimates-V_x}
For any $T>0$, $0<\alpha\leq 1$ and $\beta>3\alpha$, it holds that
\begin{equation*}
\big|\rho_0^{\frac{\beta}{2}}V_x(t)\big|_2\leq C(\beta,T) \ \ \text{for all }0\leq t\leq T.
\end{equation*}
\end{Lemma}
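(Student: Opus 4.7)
The plan is to exploit the fact that the effective velocity equation \eqref{eq:effective2} contains no spatial derivatives of $V$, so it is really a family of ODEs in $t$ indexed by the Lagrangian label $x$. Differentiating \eqref{eq:effective2} in $x$ and invoking the identity $H_x = \rho_0(V-U)$ (which is immediate from the definition $V = U + H_x/\rho_0$) yields
\begin{equation*}
(V_x)_t + 2H V_x = -2\rho_0(V-U)^2 + 2H U_x.
\end{equation*}
Since $H = \rho_0/\eta_x \geq 0$ by Lemma \ref{L.B.J}, the integrating factor $\exp\bigl(2\int_0^t H\,ds\bigr)$ lies in $[1, e^{CT}]$ by Lemma \ref{bound of density}, so solving the ODE explicitly at each $x$, multiplying by $\rho_0^{\beta/2}$ and applying Minkowski's integral inequality in $L^2_x$ gives
\begin{equation*}
\bigl|\rho_0^{\beta/2} V_x(t)\bigr|_2 \leq C \bigl|\rho_0^{\beta/2} V_x(0)\bigr|_2 + C\!\int_0^t \bigl|\rho_0^{\beta/2+1}(V-U)^2\bigr|_2 ds + C\!\int_0^t \bigl|\rho_0^{\beta/2} H U_x\bigr|_2 ds.
\end{equation*}

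The two time-integral source terms will be handled with the already established auxiliary estimates. For the $(V-U)^2$ piece, squaring and expanding produces integrals of $\rho_0^{\beta+2} |V|^4$ and $\rho_0^{\beta+2} |U|^4$; the former is controlled by Lemma \ref{boundv} with $p=4$ as soon as $(\beta+2)/(4\alpha) > 3/4$, i.e.\ $\beta > 3\alpha - 2$, which is implied by the hypothesis, and the latter is immediate from Lemma \ref{L.P.E}. For the $HU_x$ piece, writing $H = \rho_0/\eta_x$ and using that $\rho_0^{(\beta+1)/2}$ is uniformly bounded gives $|\rho_0^{\beta/2} H U_x|_2 \leq C |\rho_0^{1/2} U_x/\eta_x|_2$, and Cauchy--Schwarz in time combined with Lemma \ref{W.E.I} at $p=0$ then yields a $C(T)$ bound on its time integral. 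No Gr\"onwall argument is needed since the Duhamel representation is already closed.

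The delicate ingredient, and the one that pinpoints the hypothesis $\beta > 3\alpha$, is the initial bound $|\rho_0^{\beta/2} V_x(0)|_2 \leq C$. Since $\eta|_{t=0} = \mathrm{id}$ one has $V(0) = u_0 + (\rho_0)_x/\rho_0$, hence
\begin{equation*}
V_x(0) = (u_0)_x + \frac{(\rho_0)_{xx}}{\rho_0} - \frac{(\rho_0)_x^2}{\rho_0^2}.
\end{equation*}
Setting $\phi_0 = \rho_0^\alpha \sim d(x)$ near $\Gamma$ and using that $(\phi_0)_x$ is continuous and nonvanishing at $\Gamma$, the dominant singularity of $V_x(0)$ is of order $(\phi_0)_x^2/\phi_0^2 \sim d(x)^{-2}$; squaring and weighting by $\rho_0^\beta \sim d(x)^{\beta/\alpha}$ produces an integrand of size $d(x)^{\beta/\alpha - 4}$, integrable on $I$ precisely when $\beta/\alpha > 3$. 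The subleading contributions from $(\phi_0)_{xx}/\phi_0$ and $(u_0)_x$ are controlled under the same constraint via $\phi_0 \in H^3$ and the bound $\widetilde E(0,U) < \infty$. This sharp singularity at the moving boundary is the main obstacle, and the assumption $\beta > 3\alpha$ is exactly what renders the initial weighted energy finite.
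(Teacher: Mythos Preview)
Your proof is correct and reaches the same conclusion with the same auxiliary ingredients (Lemmas \ref{bound of density}, \ref{boundv}, \ref{L.P.E}, \ref{W.E.I} for the source terms; the singularity count $d(x)^{\beta/\alpha-4}$ for the initial data), but the mechanism is different from the paper's. The paper derives the same $\partial_x$-equation for $V_x$, then multiplies by $\rho_0^\beta V_x$, integrates over $I$, and closes with Gr\"onwall on $|\rho_0^{\beta/2}V_x|_2^2$; the good-sign term $-2\int \rho_0^\beta H V_x^2$ is simply absorbed into the Gr\"onwall constant via $|H|_\infty\leq C$. You instead exploit that the $V_x$-equation is, at each fixed $x$, a linear scalar ODE with nonnegative coefficient $2H$, solve it by Duhamel, and then pass to $L^2_x$ via Minkowski --- exactly parallel to how the paper itself proves Lemma \ref{boundv} for $V$. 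Your route avoids Gr\"onwall altogether and makes transparent why no feedback of $V_x$ appears in the source (the representation is closed). The paper's energy approach is slightly more robust if one later needs estimates where the characteristic picture is less clean, but for this lemma your argument is at least as direct. One cosmetic point: the integrating factor you invoke is $e^{2\int_0^t H}\in[1,e^{CT}]$, but what actually multiplies the data and source in the Duhamel formula is $e^{-2\int_\tau^t H}\in(0,1]$; this is what you use, so the bound is fine.
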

\begin{proof}
Applying $\rho_0^\beta V_x \partial_x$ to both sides of  \eqref{eq:effective2} and integrating the resulting equality over $I$, along with \eqref{V-expression}, one gets
\begin{equation*}
\begin{aligned}
\frac{1}{2}\frac{\mathrm{d}}{\dt}\int \rho_0^\beta V_x^2\,\dx&=2\int \rho_0^\beta H (U_x- V_x) V_x\,\dx-2\int \rho_0^{\beta} H_x (V-U) V_x\,\dx\\
&=2\int \rho_0^{\beta+1}\frac{U_x}{\eta_x} V_x\,\dx-2\int \rho_0^\beta H V_x^2\,\dx-2\int \rho_0^{\beta+1} (V-U)^2 V_x\,\dx.
\end{aligned}
\end{equation*}
Then it follows from $\beta>3\alpha$, Lemmas \ref{bound of density} and  \ref{boundv}-\ref{L.P.E}, H\"older's inequality and Young's inequality that
\begin{equation*}
\begin{aligned}
\frac{\mathrm{d}}{\dt}\int \rho_0^\beta V_x^2\,\dx&\leq C\Big(\abs{\rho_0^\alpha}_\infty^{\frac{\beta+1}{2\alpha}}\Big|\rho_0^\frac{1}{2}\frac{U_x}{\eta_x}\Big|_2 \big|\rho_0^\frac{\beta}{2} V_x\big|_2+\abs{H}_\infty \big|\rho_0^\frac{\beta}{2}V_x\big|_2^2\Big)\\
&\quad+C\Big(\big|\rho_0^\frac{\beta+2}{4} V\big|_4^2 \big|\rho_0^\frac{\beta}{2} V_x\big|_2+\big|\rho_0^\frac{\beta+2}{4}U\big|_4^2 \big|\rho_0^\frac{\beta}{2} V_x\big|_2\Big)\\
&\leq C(\beta,T)\Big(1+\big|\rho_0^\frac{\beta}{2}V_x\big|_2^2+\Big|\rho_0^\frac{1}{2}\frac{U_x}{\eta_x}\Big|_2^2\Big),
\end{aligned}
\end{equation*}
which, along with  Gr\"onwall's inequality and  Lemma \ref{BD-entropy}, yields that 
\begin{equation}\label{V_x}
 \big|\rho_0^\frac{\beta}{2}V_x(t)\big|_2\leq C(\beta,T)\big(\big|\rho_0^\frac{\beta}{2}V_x(0)\big|_2+1\big) \ \ \text{for }0\leq t\leq T.
\end{equation}
For the  initial data, since $\beta>3\alpha$, $\rho_0\sim d(x)^\frac{1}{\alpha}$ and
\begin{equation*}
V_x(0,x)=\frac{1}{\alpha}\frac{(\rho_0^\alpha)_{xx}}{\rho_0^\alpha}-\frac{1}{\alpha}\frac{(\rho_0^\alpha)_x^2}{\rho_0^{2\alpha}}+(u_0)_x,
\end{equation*}
it follows that
\begin{equation*}
\begin{aligned}
\int \rho_0^{\beta} (V_x(0,x))^2\,\dx&\leq C\int \rho_0^{\beta-2\alpha}(\rho_0^\alpha)_{xx}^2+C\int \rho_0^{\beta-4\alpha}(\rho_0^\alpha)_x^4\,\dx+C\int \rho_0^\beta(u_0)_x^2\,\dx\\
&\leq C\big(\abs{(\rho_0^\alpha)_{xx}}_\infty^2\big|\rho_0^{\beta-2\alpha}\big|_1+\abs{(\rho_0^\alpha)_{x}}_\infty^4\big|\rho_0^{\beta-4\alpha}\big|_1+\abs{\rho_0^\alpha}_\infty^\frac{\beta}{\alpha}\norm{u_0}_1^2\big)\\
&\leq C \big(\big|d^{\frac{\beta}{\alpha}-2}\big|_1+\big|d^{\frac{\beta}{\alpha}-4}\big|_1\big)+C(\beta)\leq C(\beta),
\end{aligned}
\end{equation*}
which, along with \eqref{V_x}, yields the desired estimate in this lemma.

The proof of Lemma \ref{estimates-V_x} is completed.
\end{proof}

With the help of Lemmas \ref{L.P.E}-\ref{estimates-V_x}, one can improve the order of $\eta_x$ in Lemma \ref{W.E.I}. 
\begin{Lemma}\label{W.E.IV}
For any $T>0$, $0<\alpha\leq 1$, $0\leq p<\infty$, it holds that
\begin{equation*}
\big|\rho_0^\frac{1+\iota}{p+2} \eta_x^\frac{1}{p+2} U(t)\big|_{p+2}^{p+2}+\int_0^t\Big|\rho_0^\frac{1+\iota}{2}\frac{\abs{U}^\frac{p}{2} U_x}{\sqrt{\eta_x}}\Big|_2^2\,\ds\leq C(\iota, p, T),
\end{equation*}
for all $0\leq t\leq T$, where $\iota=0$  if $0<\alpha<1$ and $\iota>0$ if $\alpha=1$.
\end{Lemma}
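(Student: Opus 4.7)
The plan is to test the momentum equation $\eqref{secondreformulation}_1$ against $\rho_0^{\iota}\eta_x |U|^p U$ and integrate over $I$. The factor $\eta_x$ is chosen because
\[
\frac{1}{p+2}\frac{\mathrm{d}}{\dt}\!\int\!\rho_0^{1+\iota}\eta_x|U|^{p+2}\,\dx = \frac{1}{p+2}\!\int\!\rho_0^{1+\iota}U_x|U|^{p+2}\,\dx +\!\int\!\rho_0^{1+\iota}\eta_xU_t U|U|^p\,\dx,
\]
so the test against $\rho_0^\iota\eta_x|U|^p U$ reproduces $\frac{\mathrm d}{\dt}F$, where $F:=\int\rho_0^{1+\iota}\eta_x|U|^{p+2}\,\dx$ is precisely the quantity to be controlled. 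Integration by parts on the pressure and dissipation terms (legitimate since $\rho_0|_\Gamma=0$) yields the identity
\[
\frac{1}{p+2}\frac{\mathrm{d}}{\dt}F + (p+1)\!\int\!\frac{\rho_0^{1+\iota}|U|^pU_x^2}{\eta_x}\,\dx = \sum_{k=1}^{6} R_k,
\]
where the $R_k$'s involve: $\int\rho_0^{1+\iota}U_x|U|^{p+2}$ (from $(\eta_x)_t=U_x$), $\int\rho_0^{\iota+2}|U|^p U_x/\eta_x$ (pressure), two terms with the prefactor $\iota\rho_0^{\iota-1}(\rho_0)_x\eta_x$ (vanishing when $\iota=0$), and two terms carrying $\eta_{xx}$.

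The key manipulation will be to eliminate $\eta_{xx}$ via the identity
\[
\frac{\eta_{xx}}{\eta_x^2} = \frac{(\rho_0)_x}{\rho_0\,\eta_x} - (V-U),
\]
which comes straight from $V=U+H_x/\rho_0$ and $H=\rho_0/\eta_x$. The apparent singularity $\rho_0^{-1}(\rho_0)_x$ is harmless here because each occurrence of $\eta_{xx}$ in the $R_k$'s carries a factor of at least $\rho_0^{\iota+1}$, which absorbs the $\rho_0^{-1}$ and produces the regular quantity $\rho_0^\iota(\rho_0)_x=\tfrac{1}{\alpha}\rho_0^{\iota+1-\alpha}(\rho_0^\alpha)_x$. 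After the substitution, the $(\rho_0)_x$-contributions combine with the $\iota$-terms into integrals that, by Lemmas \ref{L.B.J} and \ref{L.P.E}, are bounded by $C$ or, when they carry $U_x$, are absorbed into the dissipation $D:=(p+1)\big|\rho_0^{(1+\iota)/2}|U|^{p/2}U_x/\sqrt{\eta_x}\big|_2^2$ by Young's inequality.

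The remaining terms are the $(V-U)$-pieces: $\int\rho_0^{\iota+2}(V-U)U|U|^p\,\dx$ and $\int\rho_0^{\iota+1}(V-U)U_xU|U|^p\,\dx$. The first is bounded directly by H\"older using Lemma \ref{boundv} for the $V$ part (choose $r\alpha\le \iota+2$ with $r>(p+1)/(p+2)$) and Lemma \ref{L.P.E} for the $U$ part. The second term is the main obstacle: a naive Cauchy--Schwarz with the dissipation produces $\int\rho_0^{1+\iota}\eta_xV^2|U|^{p+2}$, whose treatment would require a pointwise bound on $V$ that we do not yet have. To circumvent this I will first integrate by parts, writing
\[
\int \rho_0^{\iota+1}(V-U)U_x U|U|^p\,\dx=\frac{1}{p+2}\int\rho_0^{\iota+1}(V-U)\partial_x|U|^{p+2}\,\dx,
\]
so that the derivative lands on $\rho_0^{\iota+1}(V-U)$; the resulting $V_x$ term is controlled by Lemma \ref{estimates-V_x} (choosing $\beta=2(\iota+1)>3\alpha$, which requires $\iota$ slightly positive for $\alpha=1$, matching the hypothesis), while the $U_x$ contribution reproduces a term already handled in $R_1$, and the $(\rho_0)_x$ contribution is treated as above.

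Assembling these bounds yields, for every $\varepsilon>0$,
\[
\frac{\mathrm d}{\dt}F + D \;\le\; \varepsilon D + C(\varepsilon,\iota,p,T)\bigl(1+F\bigr),
\]
with the additive $1$ controlled via Lemmas \ref{BD-entropy}--\ref{W.E.I} and \ref{L.P.E}. Absorbing $\varepsilon D$ to the left and applying Gr\"onwall's inequality, together with $F(0)\le|\rho_0|_\infty^{1+\iota}|u_0|_{p+2}^{p+2}\le C$, completes the proof for both $F(t)$ and the time integral $\int_0^t D\,\ds$.
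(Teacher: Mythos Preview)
Your approach is essentially the paper's: both multiply the momentum equation by $\rho_0^\iota\eta_x|U|^pU$, eliminate $\eta_{xx}$ via the effective-velocity identity, and handle the dangerous $\int\rho_0^{1+\iota}V\,U_xU|U|^p\,\dx$ by integrating by parts so that the derivative lands on $V$ and Lemma~\ref{estimates-V_x} applies. There is one numerical slip: your choice $\beta=2(\iota+1)$ does \emph{not} satisfy $\beta>3\alpha$ when $\tfrac{2}{3}\le\alpha<1$ with $\iota=0$, nor when $\alpha=1$ with $0<\iota\le\tfrac{1}{2}$; the correct window is $3\alpha<\beta<2(\iota+1)+\alpha$, which is nonempty exactly when $\iota>\alpha-1$ (so $\iota=0$ works for $\alpha<1$ and any $\iota>0$ for $\alpha=1$), and the paper implements this by splitting the weight with a small parameter $0<\varepsilon<1-\alpha+\iota$.
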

\begin{proof}
Let $\iota=0$  if $0<\alpha<1$ and $\iota>0$ if $\alpha=1$. Multiplying $\eqref{secondreformulation}_1$ by $\rho_0^{\iota}\eta_x\abs{U}^p U$ and integrating the resulting equality over $I$ give that
\begin{equation}\label{cJ}
\begin{aligned}
&\frac{1}{p+2}\frac{\mathrm{d}}{\dt}\int \rho_0^{1+\iota} \eta_x\abs{U}^{p+2}\,\dx+(p+1)\int  \frac{\rho_0^{1+\iota}\abs{U}^p U_x^2}{\eta_x}\,\dx\\
=&\frac{1}{p+2}\int \rho_0^{1+\iota}  \abs{U}^{p+2}U_x\,\dx-\frac{\iota}{\alpha}\int \frac{\rho_0^{1+\iota-\alpha}(\rho_0^\alpha)_x\abs{U}^pUU_x}{\eta_x}\,\dx \\
&-\int\frac{\rho_0^{1+\iota} \eta_{xx}}{\eta_x^2}  \abs{U}^p UU_x\,\dx+2\int \rho_0^{1+\iota}H_x \abs{U}^p U\,\dx:=\sum_{i=1}^4 \cJ_i.
\end{aligned}
\end{equation}

For $\cJ_1$, one deduces from integration by parts and Lemma \ref{L.P.E} that
\begin{align}
\cJ_1&=\frac{1}{p+2}\int\rho_0^{1+\iota}  \abs{U}^{p+2}U_x \,\dx=\frac{1}{(p+2)(p+3)}\int \rho_0^{1+\iota}  \big(U \abs{U}^{p+2}\big)_x\,\dx\notag\\
&=-\frac{1+\iota}{\alpha(p+2)(p+3)}\int \rho_0^{1+\iota-\alpha}(\rho_0^\alpha)_x U \abs{U}^{p+2}\,\dx\label{cJ1}\\
&\leq C(\iota,p) \abs{(\rho_0^\alpha)_x}_\infty\big|\rho_0^\frac{1+\iota-\alpha}{p+3}U\big|_{p+3}^{p+3}
\leq C(\iota,p,T).\notag
\end{align}

For $\cJ_2$, it follows from Lemmas \ref{L.B.J}, \ref{L.P.E}, H\"older's inequality and Young's inequality that
\begin{equation}\label{JJ2}
\begin{aligned}
\cJ_2&=-\frac{\iota}{\alpha}\int \frac{\rho_0^{1+\iota-\alpha}(\rho_0^\alpha)_x\abs{U}^pUU_x}{\eta_x}\,\dx\\
&\leq C(\iota,T)\abs{(\rho_0^\alpha)_x}_\infty\Big|\rho_0^\frac{1+\iota}{2}\frac{\abs{U}^\frac{p}{2} U_x}{\sqrt{\eta_x}}\Big|_2\big|\rho_0^{\frac{1+\iota-2\alpha}{p+2}} \eta_x^{-\frac{1}{p+2}} U\big|_{p+2}^\frac{p+2}{2}\\
&\leq C(\iota,p,T)+\frac{1}{8}\Big|\rho_0^\frac{1+\iota}{2}\frac{\abs{U}^\frac{p}{2} U_x}{\sqrt{\eta_x}}\Big|_2^2.
\end{aligned}
\end{equation}

For $\cJ_3$, it follows from \eqref{V-expression}  that
\begin{equation}\label{cJ3}
\begin{aligned}
\cJ_3&=-\int\frac{\rho_0^{1+\iota} \eta_{xx}}{\eta_x^2}  \abs{U}^p UU_x\,\dx\\
&= \int \rho_0^{1+\iota}(V-U) \abs{U}^p U U_x \,\dx- \frac{1}{\alpha}\int \frac{\rho_0^{1+\iota-\alpha}(\rho_0^\alpha)_x\abs{U}^p UU_x}{\eta_x}\,\dx\\
&=\int \rho_0^{1+\iota} V |U|^{p}UU_x\,\dx- \frac{1}{\alpha}\int \frac{\rho_0^{1+\iota-\alpha}(\rho_0^\alpha)_x\abs{U}^p UU_x}{\eta_x}\,\dx-(p+2)\cJ_1\\
&:=\cJ_{31}+\cJ_{32}-(p+2)\cJ_1.
\end{aligned}
\end{equation}

cFor $\cJ_{31}$, letting $0<\varepsilon< 1-\alpha+\iota$, one gets from integration by parts, Lemmas \ref{boundv}-\ref{estimates-V_x}, H\"older's inequality and Young's inequality that
\begin{equation}\label{cJ31}
\begin{aligned}
\cJ_{31}&=\int \rho_0^{1+\iota} V |U|^{p}UU_x\,\dx=\frac{1}{p+2}\int \rho_0^{1+\iota} V \left(|U|^{p+2}\right)_x\,\dx\\
&=- \frac{1}{p+2}\int \rho_0^{1+\iota}  V_x \abs{U}^{p+2}\,\dx- \frac{1+\iota}{\alpha(p+2)}\int \rho_0^{1+\iota-\alpha}(\rho_0^\alpha)_x V \abs{U}^{p+2}\,\dx\\
&\leq C(\iota,p) \big(\big|\rho_0^{\frac{3\alpha}{2}+\varepsilon} V_x\big|_2+ \abs{(\rho_0^\alpha)_x}_\infty\big|\rho_0^{\frac{\alpha}{2}+\varepsilon} V\big|_2\big)\big|\rho_0^\frac{2-3\alpha+2\iota-2\varepsilon}{2p+4}U\big|_{2p+4}^{p+2}\\
&\leq C(\iota,p,T),
\end{aligned}
\end{equation}
where one has used the fact that $2-3\alpha+2\iota-2\varepsilon>-\alpha$.

To handle $\cJ_{32}$, one can obtain from the same calculations of $\cJ_2$ that
\begin{equation}\label{cJ32'}
\cJ_{32}\leq C(\iota,p,T)+\frac{1}{8}\Big|\rho_0^\frac{1+\iota}{2}\frac{\abs{U}^\frac{p}{2} U_x}{\sqrt{\eta_x}}\Big|_2^2.
\end{equation}

Substituting \eqref{cJ1}, \eqref{cJ31}-\eqref{cJ32'} into \eqref{cJ3} yields
\begin{equation}\label{cJJ3'}
\cJ_{3}\leq C(\iota,p,T)+\frac{1}{8}\Big|\rho_0^\frac{1+\iota}{2}\frac{\abs{U}^\frac{p}{2} U_x}{\sqrt{\eta_x}}\Big|_2^2.    
\end{equation}

Finally, we treat $\cJ_4$. For any fixed $\varepsilon\in (0,1)$, it follows from Lemmas \ref{L.B.J} and  \ref{boundv}-\ref{L.P.E}, \eqref{V-expression} and H\"older's inequality that
\begin{equation}\label{cJ2}
\begin{aligned}
\cJ_4&=2\int \rho_0^{1+\iota} H_x\abs{U}^p U\,\dx
=2\int \rho_0^{2+\iota}(V-U)\abs{U}^p U\,\dx\\
&\leq 2\big|\rho_0^\frac{2+\iota}{p+2} U\big|_{p+2}^{p+2}+2\big|\rho_0^{\frac{\alpha}{2}+\varepsilon} V\big|_2\big|\rho_0^{\frac{4+2\iota-\alpha-2\varepsilon}{2p+2}}U\big|_{2p+2}^{p+1}
\leq C(\iota,p,T),
\end{aligned}
\end{equation}
which, along  with \eqref{cJ}-\eqref{JJ2}, \eqref{cJJ3'}-\eqref{cJ2} and Gr\"onwall's inequality, yields  the desired estimates.

The proof of Lemma \ref{W.E.IV} is completed.
\end{proof}

\begin{Corollary}\label{coro-U}
For any $T>0$ and $0<\alpha\leq 1$, it holds that 
\begin{equation*}
\int_0^t \big|\rho_0^\frac{1+\iota}{2} U\big|_{\infty}^2\,\ds\leq C(\iota,T) \ \ \text{for all }0\leq t\leq T,
\end{equation*}
where $\iota=0$  if $0<\alpha<1$ and $\iota>0$ if $\alpha=1$.
\end{Corollary}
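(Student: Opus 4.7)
The plan is to estimate $\abs{\rho_0^{(1+\iota)/2}U(t)}_\infty^2$ pointwise in $t$ and then integrate. Observe first that $g(t,x):=\rho_0^{(1+\iota)/2}(x)U(t,x)$ vanishes on $\Gamma$ because $\rho_0$ does, so the fundamental theorem of calculus yields the one-dimensional inequality
\[
\abs{g}_\infty^2\leq 2\int_I \abs{g\,g_x}\,\dx.
\]
Expanding $g_x=\tfrac{1+\iota}{2}\rho_0^{(\iota-1)/2}(\rho_0)_x U+\rho_0^{(1+\iota)/2}U_x$ and using the elementary bound $\abs{(\rho_0)_x}\leq C\rho_0^{1-\alpha}\abs{(\rho_0^\alpha)_x}$ gives
\[
\abs{g\,g_x}\leq C\rho_0^{1+\iota-\alpha}U^2+\rho_0^{1+\iota}\abs{U\,U_x}.
\]
The first term integrates to a quantity uniformly bounded in $t$ by Lemma~\ref{L.P.E} with $\beta=1+\iota-\alpha>-\alpha$.

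The main obstacle is controlling the second term $\int\rho_0^{1+\iota}\abs{U\,U_x}\,\dx$ without recourse to any $L^\infty$ upper bound on $\eta_x$, which is not yet available at this stage of the argument. The resolution is a Cauchy-Schwarz pairing that preserves the $\sqrt{\eta_x}$ and $1/\sqrt{\eta_x}$ weights appearing naturally in both factors bounded by Lemma~\ref{W.E.IV}:
\[
\int\rho_0^{1+\iota}\abs{U\,U_x}\,\dx\leq \Big(\int\rho_0^{1+\iota}\eta_x U^2\,\dx\Big)^{1/2}\Big(\int\rho_0^{1+\iota}\frac{U_x^2}{\eta_x}\,\dx\Big)^{1/2}.
\]
By Lemma~\ref{W.E.IV} applied with $p=0$, the first factor is uniformly bounded in $t$ and the square of the second factor lies in $L^1(0,T)$. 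Combining the two bounds produces the pointwise-in-$t$ estimate
\[
\abs{\rho_0^{(1+\iota)/2}U(t)}_\infty^2\leq C+C\Big(\int\rho_0^{1+\iota}\frac{U_x^2}{\eta_x}\,\dx\Big)^{1/2}.
\]

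Integrating over $[0,t]$ and applying Cauchy-Schwarz in time converts the $L^1_t$ control of $\int\rho_0^{1+\iota}U_x^2/\eta_x\,\dx$ into the desired $L^2_t$ control of $\abs{\rho_0^{(1+\iota)/2}U}_\infty^2$, giving $\int_0^t\abs{\rho_0^{(1+\iota)/2}U}_\infty^2\,\ds\leq C(\iota,T)$. The constraints $\iota=0$ when $0<\alpha<1$ and $\iota>0$ when $\alpha=1$ are inherited directly from the hypotheses of Lemma~\ref{W.E.IV}, so no further restriction on $\iota$ appears. The only delicate point in the execution is therefore the $\sqrt{\eta_x}$-$1/\sqrt{\eta_x}$ pairing noted above, which is precisely what allows one to bypass the missing upper bound on $\eta_x$ and close the estimate using only the lower bound from Lemma~\ref{L.B.J}.
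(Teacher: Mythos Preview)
Your proof is correct and follows essentially the same route as the paper: both arguments bound $\abs{\rho_0^{(1+\iota)/2}U}_\infty^2$ via the fundamental theorem of calculus (the paper phrases this as the $W^{1,1}$ Sobolev embedding applied to $\rho_0^{1+\iota}U^2$, you use the vanishing of $\rho_0$ on $\Gamma$ to drop the lower-order term), split off the $\rho_0^{1+\iota-\alpha}U^2$ piece via Lemma~\ref{L.P.E}, and then use the identical $\sqrt{\eta_x}$--$1/\sqrt{\eta_x}$ Cauchy--Schwarz pairing together with Lemma~\ref{W.E.IV} (with $p=0$) to handle the $\rho_0^{1+\iota}\abs{UU_x}$ term. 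The only cosmetic difference is that the paper applies Young's inequality before integrating in time, whereas you keep the product and apply Cauchy--Schwarz in time afterwards; both close the estimate in the same way.
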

\begin{proof}
Let $\iota=0$ if $0<\alpha<1$ and $\iota>0$ if $\alpha=1$. It follows from Lemmas \ref{L.P.E} and \ref{W.E.IV}, Lemma \ref{sobolev-embedding}, H\"older's inequality and Young's inequality that
\begin{equation}\label{yinli68}
\begin{split}
\big|\rho_0^\frac{1+\iota}{2} U\big|_{\infty}^2&\leq C\big|\rho_0^\frac{1+\iota}{2} U\big|_{2}^2+C\int \abs{\left(\rho_0^{1+\iota} U^2\right)_x}\,\dx\\
&\leq C(\iota,T)+\int \rho_0^{1+\iota-\alpha}\abs{(\rho_0^\alpha)_x} U^2\,\dx+2\int \rho_0^{1+\iota} \sqrt{\eta_x}\abs{U}\frac{\abs{U_x}}{\sqrt{\eta_x}}\,\dx\\
&\leq C(\iota,T)+\abs{(\rho_0^\alpha)_x}_\infty\big|\rho_0^\frac{1+\iota-\alpha}{2}U\big|_2^2+2\big|\rho_0^{\frac{1+\iota}{2}} \sqrt{\eta_x}U\big|_2\Big|\rho_0^{\frac{1+\iota}{2}}\frac{U_x}{\sqrt{\eta_x}}\Big|_2\\
&\leq C(\iota,T)+C\Big|\rho_0^{\frac{1+\iota}{2}}\frac{U_x}{\sqrt{\eta_x}}\Big|_2^2.
\end{split}
\end{equation}
Thus, integrating \eqref{yinli68} over $[0,t]$, one gets from Lemma \ref{W.E.IV} that the conclusion holds.
\end{proof}

Finally, one can obtain the weighted  $L^\infty$-boundedness of $V$.  
\begin{Lemma}\label{boundv'}
For any $T>0$ and $0<\alpha\leq 1$, it holds that
\begin{equation*}
\abs{\rho_0^\alpha V(t)}_\infty\leq C(T)\ \ \text{for all }0\leq t\leq T.
\end{equation*}
\end{Lemma}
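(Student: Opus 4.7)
The plan is to treat \eqref{eq:effective2} as a pointwise ODE along each Lagrangian particle $x\in I$, just as was done for Lemma \ref{boundv}, but now aiming for the $L^\infty_x$ bound of the weighted quantity $\rho_0^{\alpha}V$. Multiplying \eqref{eq:effective2} by $\rho_0^{\alpha}$ and using $H = \rho_0/\eta_x$, one gets
\begin{equation*}
(\rho_0^{\alpha}V)_t + 2H\,(\rho_0^{\alpha}V) = 2\rho_0^{\alpha}HU = \frac{2\rho_0^{\alpha+1}U}{\eta_x},
\end{equation*}
which integrates to
\begin{equation*}
\rho_0^{\alpha}V(t,x)=e^{-\int_0^{t}2H(s,x)\,\ds}\left(\rho_0^{\alpha}V(0,x)+\int_0^{t}\frac{2\rho_0^{\alpha+1}U(\tau,x)}{\eta_x(\tau,x)}e^{\int_0^{\tau}2H(s,x)\,\ds}\,\mathrm{d}\tau\right).
\end{equation*}

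Taking the $L^\infty_x$-norm and invoking Lemma \ref{bound of density} to control the exponential factors, the estimate reduces to two items: the initial contribution $|\rho_0^{\alpha}V(0)|_\infty$ and the time integral of $|\rho_0^{\alpha+1}\eta_x^{-1}U|_\infty$. The initial piece is easy: since $V(0,x)=u_0+\tfrac{1}{\alpha}\rho_0^{-\alpha}(\rho_0^{\alpha})_x$, we have $\rho_0^{\alpha}V(0,x)=\rho_0^{\alpha}u_0+\tfrac{1}{\alpha}(\rho_0^{\alpha})_x$, and Lemma \ref{sobolev-embedding} together with \eqref{distance} gives $\rho_0^{\alpha}\in C^2(\bar I)$ and $u_0\in L^\infty$, so $|\rho_0^{\alpha}V(0)|_\infty\le C$.

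The main work is to bound $\int_0^{t}|\rho_0^{\alpha+1}\eta_x^{-1}U|_\infty\,\mathrm{d}\tau$. Here the lower bound $\eta_x(t,x)\geq C^{-1}(T)$ from Lemma \ref{L.B.J} removes $\eta_x$ from the denominator, so it remains to estimate $\int_0^{t}|\rho_0^{\alpha+1}U|_\infty\,\mathrm{d}\tau$. Pick $\iota\in[0,1)$ with $\iota=0$ when $0<\alpha<1$ and $\iota>0$ small when $\alpha=1$, so that $\alpha+1\ge\tfrac{1+\iota}{2}$; writing $\rho_0^{\alpha+1}=\rho_0^{\alpha+1-(1+\iota)/2}\rho_0^{(1+\iota)/2}$ and using $|\rho_0|_\infty\le C$ gives $|\rho_0^{\alpha+1}U|_\infty\le C\,|\rho_0^{(1+\iota)/2}U|_\infty$. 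Cauchy--Schwarz in time and Corollary \ref{coro-U} then yield
\begin{equation*}
\int_0^{t}|\rho_0^{\alpha+1}U|_\infty\,\mathrm{d}\tau\le C\sqrt{t}\Big(\int_0^{t}\bigl|\rho_0^{(1+\iota)/2}U\bigr|_\infty^{2}\,\mathrm{d}\tau\Big)^{1/2}\le C(T),
\end{equation*}
which combined with the display above finishes the proof.

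The only potentially delicate point is the borderline case $\alpha=1$, where the natural weight $\rho_0^{1/2}$ from Lemma \ref{W.E.I} is just insufficient to absorb $\rho_0^{\alpha+1}=\rho_0^{2}$ through Sobolev embedding in the crude way; this is exactly why one needs the refined $L^p$ bound of Lemma \ref{W.E.IV} and the resulting Corollary \ref{coro-U} with an $\iota>0$ weight, rather than Lemma \ref{W.E.I} alone. Everything else is a direct computation from the explicit ODE solution formula.
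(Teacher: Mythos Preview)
Your proposal is correct and follows essentially the same approach as the paper: set $r=1$ in the ODE formula \eqref{solutionv}, take the $L^\infty$-norm, control the exponential factors via Lemma \ref{bound of density}, bound the initial data directly from $V(0,x)=u_0+\tfrac{1}{\alpha}\rho_0^{-\alpha}(\rho_0^\alpha)_x$, and handle the time integral by stripping off a power of $\rho_0$ and invoking Corollary \ref{coro-U} with Cauchy--Schwarz in time. Your remark on why the case $\alpha=1$ forces the use of Lemma \ref{W.E.IV} and Corollary \ref{coro-U} (rather than Lemma \ref{W.E.I}) matches the paper's own commentary in Remark \ref{BDhelp}.
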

\begin{proof}
Set $r=1$ in \eqref{solutionv}. Then it holds that
\begin{equation*}
\rho_0^\alpha V(t,x)=e^{-\int_0^t 2H(s,x)\,\ds}\left(\rho_0^\alpha V(0,x)+ \int_0^t \frac{2\rho_0^{1+\alpha} U(\tau,x)}{\eta_x(\tau,x)}e^{\int_0^\tau 2H(s,x)\,\ds}\,\mathrm{d}\tau\right).
\end{equation*}
Taking the $L^\infty$-norm of both sides of the above equality, then one can conclude  from Lemmas \ref{bound of density}-\ref{L.B.J} and Corollary \ref{coro-U} that
\begin{equation}\label{equ4431}
\begin{aligned}
\abs{\rho_0^\alpha V(t)}_\infty&\leq C\abs{\rho_0^\alpha V(0)}_\infty+C\int_0^t \Big|\frac{\rho_0^{1+\alpha}U}{\eta_x}\Big|_\infty\,\ds\\
&\leq C \abs{\rho_0^\alpha V(0)}_\infty+C(T)t^\frac{1}{2}\abs{\rho_0^\alpha}_\infty^{\frac{1-\iota}{2\alpha}+1}\left(\int_0^t \big|\rho_0^\frac{1+\iota}{2}U\big|_\infty^2\,\ds\right)^\frac{1}{2}\\
&\leq C \abs{\rho_0^\alpha V(0)}_\infty+C(T),
\end{aligned}
\end{equation}
where $\iota$ is defined as in Corollary \ref{coro-U} with $\iota=\frac{1}{2}$, provided that $\alpha=1$.

For the initial data, it holds that
\begin{equation*}
\begin{aligned}
\abs{\rho_0^\alpha V(0)}_\infty&= \Big|\rho_0^\alpha\left(u_0+\frac{1}{\alpha}\frac{(\rho_0^\alpha)_x}{\rho_0^\alpha}\right)\Big|_\infty
\leq C\left( \abs{\rho_0^\alpha}_\infty \abs{u_0}_\infty+ \abs{(\rho_0^\alpha)_x}_\infty\right)\\
&\leq C \widetilde E(0,U)+ C\leq C.
\end{aligned}
\end{equation*}
which, along with \eqref{equ4431}, yields the desired estimate.

The proof of Lemma \ref{boundv'} is completed.
\end{proof}

\begin{Remark}\label{BDhelp}
If one applies the BD entropy estimates \eqref{BD-e} here for the case $0<\alpha<1$, then Lemma \ref{boundv'} can be obtained without the help of Lemma \ref{W.E.IV}. Indeed, according to \eqref{BD-e}, Lemma \ref{L.B.J}, Lemma \ref{sobolev-embedding} and H\"older's inequality, one has
\begin{align*}
\absB{\frac{\rho_0^{1+\alpha}U}{\eta_x}}_\infty&=\absf{\rho_0^\alpha H U}_\infty\leq C\big(\absf{\rho_0^\alpha H U}_1+\abs{(\rho_0^\alpha)_x HU}_1+\abs{\rho_0^\alpha H_xU}_1+\abs{\rho_0^\alpha HU_x}_1\big)\\
&\leq C(T)\Big(\abs{\rho_0^\alpha}_\infty^{1+\frac{1}{2\alpha}}+\abs{(\rho_0^\alpha)_x}_\infty\abs{\rho_0^\alpha}_\infty^{\frac{1}{2\alpha}}+\abs{\rho_0^\alpha}_\infty\absb{\rho_0^{-\frac{1}{2}}H_x}_2\Big)\absb{\rho_0^\frac{1}{2} U}_2\\
&\quad +C\abs{\rho_0^\alpha}_\infty^{1+\frac{1}{2\alpha}}\absB{\rho_0^\frac{1}{2}\frac{U_x}{\eta_x}}_2\leq C(T)\Big(1+\absB{\rho_0^\frac{1}{2}\frac{U_x}{\eta_x}}_2\Big).
\end{align*}
Integrating above over $[0,t]$, along with \eqref{energy-e}, implies that
\begin{equation}\label{6223}
\int_0^t \absB{\frac{\rho_0^{1+\alpha}U}{\eta_x}}_\infty\,\ds\leq C(T) \ \ \text{for all }0\leq t\leq T.    
\end{equation}
which, together with \eqref{equ4431}, yields Lemma \ref{boundv'}.
\end{Remark}

\subsection{The upper bound of $\eta_x$}\label{subsection6.4}
This subsection is devoted to showing the  upper bound of $\eta_x$. The first auxiliary lemma relates to the weighted $L^2$-estimates of $U_x$.\begin{Lemma}\label{W.E.V}
For any $T>0$, $0<\alpha\leq 1$, it holds that
\begin{equation*}
\begin{aligned}
\Big|\rho_0^{\alpha+\frac{1+\iota}{2}} \frac{U_x}{\sqrt{\eta_x}}(t)\Big|_{2}^2&+\int_0^t\Big|\rho_0^{\alpha+\frac{1+\iota}{2}}\sqrt{\eta_x}U_t\Big|_{2}^2\,\ds\\
&+\int_0^t\Big|\rho_0^{\alpha+\frac{1+\iota}{2}}\frac{1}{\sqrt{\eta_x}}\left(\frac{U_x}{\eta_x}\right)_x\Big|_{2}^2\,\ds \leq C(\iota,T),
\end{aligned}
\end{equation*}
for all $0\leq t\leq T$, where $\iota=0$  if $0<\alpha<1$ and $\iota>0$ if $\alpha=1$.
\end{Lemma}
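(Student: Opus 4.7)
The plan is to test $\eqref{secondreformulation}_1$ against the multiplier $\rho_0^{2\alpha+\iota}\eta_x U_t$ and integrate over $I$. After integration by parts on the viscous term (the boundary contributions vanish since $\rho_0|_\Gamma=0$ and $U_x|_\Gamma=0$ by \eqref{N111}) and using the identity $\partial_t(U_x^2/\eta_x)=2U_xU_{tx}/\eta_x-U_x^3/\eta_x^2$, the left-hand side produces exactly
\[
\Big|\rho_0^{\alpha+\frac{1+\iota}{2}}\sqrt{\eta_x}\,U_t\Big|_2^2+\frac{1}{2}\frac{d}{dt}\Big|\rho_0^{\alpha+\frac{1+\iota}{2}}\frac{U_x}{\sqrt{\eta_x}}\Big|_2^2,
\]
i.e.\ the sum of the desired dissipation and the time derivative of the target energy. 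On the right-hand side one collects the pressure contribution coming from $-(\rho_0^2/\eta_x^2)_x\rho_0^{2\alpha+\iota}\eta_xU_t$, a commutator term proportional to $(\rho_0)_xU_xU_t/\eta_x$ from the integration by parts, a factor involving $\eta_{xx}$, and the cubic correction $\tfrac12\rho_0^{2\alpha+1+\iota}U_x^3/\eta_x^2$.

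The decisive reduction is to eliminate the only uncontrolled object, $\eta_{xx}$, by inverting \eqref{V-expression} into
\[
\frac{\eta_{xx}}{\eta_x^2}=(U-V)+\frac{1}{\alpha}\frac{(\rho_0^\alpha)_x}{\rho_0^\alpha\eta_x}.
\]
Every $\eta_{xx}$-factor is thereby traded for combinations of $U$, $V$ and the fixed function $(\rho_0^\alpha)_x$, all controlled by Lemma \ref{boundv'} (the $L^\infty$-bound on $\rho_0^\alpha V$), by Lemmas \ref{bound of density} and \ref{L.B.J}, and by the weighted $L^p$-estimates in Lemmas \ref{W.E.I} and \ref{L.P.E}. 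The commutator is handled by observing that $\rho_0^{2\alpha+\iota}(\rho_0)_x\sim\rho_0^{\alpha+1+\iota}$ is pointwise bounded; the residual weight $\rho_0^{-1}(\rho_0)_x$ that would arise after symmetric H\"older splitting is absorbed for $0<\alpha<1$ via the BD entropy bound \eqref{BD-e}, while at $\alpha=1$, where the BD entropy fails, the cushion $\iota>0$ is exactly what renders the integrals $\int\rho_0^{-1+\iota}((\rho_0^\alpha)_x)^2\,dx$ convergent. The cubic term is estimated using the vanishing $|U_x|\leq Cd(x)$ from \eqref{AN111}. Each residual integrand thus splits into $\varepsilon\,|\rho_0^{\alpha+(1+\iota)/2}\sqrt{\eta_x}U_t|_2^2+C(\varepsilon,T)|\rho_0^{\alpha+(1+\iota)/2}U_x/\sqrt{\eta_x}|_2^2+g(t)$ with $g\in L^1(0,T)$; absorbing the dissipation on the left and applying Gr\"onwall's inequality yields the first two bounds. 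The initial energy $|\rho_0^{\alpha+(1+\iota)/2}(u_0)_x|_2$ is finite since Theorem \ref{theorem3.1} embeds $u_0$ in $C^2(\bar I)$ and $\rho_0^{\alpha+(1+\iota)/2}\in L^2$.

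For the third quantity I would rewrite $\eqref{secondreformulation}_1$ directly as
\[
\Big(\frac{U_x}{\eta_x}\Big)_x=\eta_x\,U_t+2\rho_0(V-U)-(V-U)U_x,
\]
after invoking the identity $H_x=\rho_0(V-U)$. Multiplying by $\rho_0^{\alpha+(1+\iota)/2}/\sqrt{\eta_x}$ and squaring, each piece lies in $L^2_tL^2$: the first is the dissipation just established; the second is even in $L^\infty_tL^2$ by Lemmas \ref{boundv'} and \ref{W.E.I}; and the $(V-U)U_x$-term splits into a $VU_x$-piece bounded by $|\rho_0^\alpha V|_\infty\cdot|\rho_0^{(1+\iota)/2}U_x/\sqrt{\eta_x}|_2$, time-integrable via \eqref{energy-e}, plus a $UU_x$-piece covered directly by Lemma \ref{W.E.IV} with $p=2$.

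The main obstacle is the absence of any pointwise or $L^\infty_tL^1$-bound on $\eta_{xx}$ that is required to close the energy identity; this is precisely where the weighted $L^\infty$-boundedness of the effective velocity in Lemma \ref{boundv'} becomes indispensable. Since that lemma is proved via the transport equation \eqref{eq:effective2} rather than through BD entropy, it works uniformly across $\alpha\in(1/3,1]$, and the small cushion $\iota>0$ is exactly the device that compensates for the failure of the BD condition at $\alpha=1$.
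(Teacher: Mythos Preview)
Your overall strategy---multiplying by $\rho_0^{2\alpha+\iota}\eta_x U_t$, trading $\eta_{xx}$ for $(U-V)$ via \eqref{V-expression}, and reading off the elliptic bound from the rewritten equation---is the same as the paper's. The serious gap is your treatment of the cubic term $\tfrac12\int\rho_0^{2\alpha+1+\iota}U_x^3/\eta_x^2\,\dx$. Invoking $|U_x|\leq Cd(x)$ from \eqref{AN111} is circular in the a priori framework of \S6--\S7: that bound is obtained by Sobolev embedding from $\|U\|_{W^{3,1}}$, hence from $\widetilde E(t,U)$, which is precisely the quantity the global estimates are meant to control without reference to itself. Any constant you extract this way depends on $\sup_t\widetilde E(t,U)$, and the continuation argument in \S8 then fails to close.

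The paper handles the cubic term differently and this is the key structural point you are missing: it integrates by parts in $x$ (using $U_x|_\Gamma=0$), producing a term controlled by Lemma~\ref{W.E.IV} together with a term carrying the elliptic factor $\big(U_x/\eta_x\big)_x$ with a small parameter $\varepsilon$ in front. The elliptic quantity is then bounded pointwise in time (your Step~3, the paper's Step~2) by $|\rho_0^{\alpha+(1+\iota)/2}\sqrt{\eta_x}\,U_t|_2$ plus lower-order pieces from Lemma~\ref{W.E.IV}. Substituting this back into the tangential inequality and choosing $\varepsilon$ small closes the loop. In other words, the tangential and elliptic estimates are genuinely coupled through the cubic term; they cannot be decoupled without an independent $L^\infty$ bound on $U_x$, which is not yet available at this stage. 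A minor secondary point: the commutator term $\int\rho_0^{\alpha+1+\iota}(\rho_0^\alpha)_x U_xU_t/\eta_x\,\dx$ does not require BD entropy at all---after the natural H\"older split it is controlled directly by $|(\rho_0^\alpha)_x|_\infty$, the energy $|\rho_0^{(1+\iota)/2}U_x/\sqrt{\eta_x}|_2$ from Lemma~\ref{W.E.IV}, and the dissipation, so your distinction between $\alpha<1$ and $\alpha=1$ for that term is unnecessary.
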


\begin{proof}
\underline{\textbf{Step 1: Tangential estimates.}}
Let $\iota=0$  if $0<\alpha<1$ and $\iota>0$ if $\alpha=1$. Multiplying $\eqref{secondreformulation}_1$ by $\rho_0^{2\alpha+\iota}\eta_x U_t$ and integrating the resulting equality over $I$, one gets
\begin{equation}\label{cJ'}
\begin{aligned}
&\frac{1}{2}\frac{\mathrm{d}}{\dt}\int \frac{\rho_0^{2\alpha+1+\iota}U_x^2}{\eta_x}\,\dx+\int \rho_0^{2\alpha+1+\iota}\eta_x U_t^2\,\dx\\
=&-\frac{1}{2}\int \frac{\rho_0^{2\alpha+1+\iota} U_x^3}{\eta_x^2}\,\dx- 2\int \rho_0^{2\alpha+1+\iota}H_x U_t\,\dx\\
&\underline{-\frac{2\alpha+\iota}{\alpha}\int \frac{\rho_0^{\alpha+1+\iota}(\rho_0^\alpha)_x U_xU_t}{\eta_x}\,\dx-\int \frac{\rho_0^{2\alpha+1+\iota}\eta_{xx}U_xU_t}{\eta_x^2}\,\dx}_{:=\cJ_7}:= \sum_{i=5}^{7} \cJ_i. 
\end{aligned}
\end{equation}

For $\cJ_5$, one can deduce from integration by parts, $U_x|_{x\in\Gamma}=0$, Lemma \ref{L.B.J}, H\"older's inequality and Young's inequality that for all $0<\varepsilon<1$, 
\begin{align}
\cJ_5&=-\frac{1}{2}\int \frac{\rho_0^{2\alpha+1+\iota} U_x^3}{\eta_x^2}\,\dx\notag\\
&=-\left.\frac{1}{2}\frac{\rho_0^{2\alpha+1+\iota}UU_x^2}{\eta_x^2}\right|_{x=0}^{x=1}+\frac{2\alpha+1+\iota}{2\alpha}\int \frac{\rho_0^{\alpha+1+\iota}(\rho_0^\alpha)_x UU_x^2}{\eta_x^2}\,\dx\notag\\
&\quad +\int \rho_0^{2\alpha+1+\iota} \frac{UU_x}{\eta_x}\left(\frac{U_x}{\eta_x}\right)_x\,\dx\notag\\
&=\frac{2\alpha+1+\iota}{2\alpha}\int \frac{\rho_0^{\alpha+1+\iota}(\rho_0^\alpha)_x UU_x^2}{\eta_x^2}\,\dx
+\int \rho_0^{2\alpha+1+\iota} \frac{UU_x}{\eta_x}\left(\frac{U_x}{\eta_x}\right)_x\,\dx\label{cJ5}\\
&\leq C(\iota,T) \abs{\rho_0^\alpha}_\infty\abs{(\rho_0^\alpha)_x}_\infty\Big|\rho_0^\frac{1+\iota}{2}\frac{U_x}{\sqrt{\eta_x}}\Big|_2 \Big|\rho_0^\frac{1+\iota}{2}\frac{UU_x}{\sqrt{\eta_x}}\Big|_2\notag\\
&\quad +C(\iota)\abs{\rho_0^\alpha}_\infty\Big|\rho_0^\frac{1+\iota}{2}\frac{U U_x}{\sqrt{\eta_x}}\Big|_2\Big|\rho_0^{\alpha+\frac{1+\iota}{2}}\frac{1}{\sqrt{\eta_x}}\left(\frac{U_x}{\eta_x}\right)_x\Big|_2\notag\\
&\leq C(\iota,\varepsilon,T)\sum_{j=0}^1\Big|\rho_0^\frac{1+\iota}{2}\frac{\abs{U}^jU_x}{\sqrt{\eta_x}}\Big|_2^2+\varepsilon\Big|\rho_0^{\alpha+\frac{1+\iota}{2}}\frac{1}{\sqrt{\eta_x}}\left(\frac{U_x}{\eta_x}\right)_x\Big|_2^2.\notag
\end{align}

For $\cJ_6$, it follows from \eqref{V-expression}, Lemmas \ref{bound of density}-\ref{L.B.J} and  \ref{boundv}-\ref{L.P.E}, H\"older's inequality and Young's inequality that
\begin{equation}\label{cJ6}
\begin{aligned}
\cJ_6&=- 2\int \rho_0^{2\alpha+1+\iota}H_x U_t\,\dx
=- 2\int \rho_0^{2\alpha+2+\iota}(V-U) U_t\,\dx\\
&\leq 2  \big(\big|\rho_0^{\alpha+\frac{3+\iota}{2}}V\big|_2+\big|\rho_0^{\alpha+\frac{3+\iota}{2}}U\big|_2\big)\big|\rho_0^{\alpha+\frac{1+\iota}{2}}U_t\big|_2\\
&\leq C(\iota,T)+\frac{1}{8}\big|\rho_0^{\alpha+\frac{1+\iota}{2}}\sqrt{\eta_x}U_t\big|_2^2.
\end{aligned}
\end{equation}

At last, for $\cJ_7$, it follows from  \eqref{V-expression}, Lemmas \ref{L.B.J} and  \ref{boundv'}, H\"older's inequality and Young's inequality  that 
\begin{align}
\cJ_7&=-\frac{2\alpha+\iota}{\alpha}\int \frac{\rho_0^{\alpha+1+\iota}(\rho_0^\alpha)_x U_xU_t}{\eta_x}\,\dx-\int \frac{\rho_0^{2\alpha+1+\iota}\eta_{xx}U_xU_t}{\eta_x^2}\,\dx\notag\\
&=-\frac{2\alpha+1+\iota}{\alpha}\int \frac{\rho_0^{\alpha+1+\iota}(\rho_0^\alpha)_x U_xU_t}{\eta_x}\,\dx+\int \rho_0^{2\alpha+1+\iota}(V-U)U_xU_t\,\dx\notag\\
&\leq C(\iota,T) \left(\abs{(\rho_0^\alpha)_x}_\infty+\abs{\rho_0^\alpha V}_\infty\right)\Big|\rho_0^\frac{1+\iota}{2}\frac{U_x}{\sqrt{\eta_x}}\Big|_2 \big|\rho_0^{\alpha+\frac{1+\iota}{2}}\sqrt{\eta_x}U_t\big|_2\label{cJ7}\\
&\quad +C(\iota)\Big|\rho_0^\frac{1+\iota}{2}\frac{U U_x}{\sqrt{\eta_x}}\Big|_2 \big|\rho_0^{\alpha+\frac{1+\iota}{2}}\sqrt{\eta_x}U_t\big|_2\notag\\
&\leq C(\iota,T)\sum_{j=0}^1\Big|\rho_0^\frac{1+\iota}{2}\frac{\abs{U}^jU_x}{\sqrt{\eta_x}}\Big|_2^2+\frac{1}{8}\big|\rho_0^{\alpha+\frac{1+\iota}{2}}\sqrt{\eta_x}U_t\big|_2^2.\notag
\end{align}

Thus, it follows from \eqref{cJ'}-\eqref{cJ7} that for all $0<\varepsilon<1$, 
\begin{equation}\label{equ4.24}
\begin{aligned}
&\frac{\mathrm{d}}{\dt}\Big|\rho_0^{\alpha+\frac{1+\iota}{2}} \frac{U_x}{\sqrt{\eta_x}}\Big|_{2}^2+\big|\rho_0^{\alpha+\frac{1+\iota}{2}}\sqrt{\eta_x}U_t\big|_{2}^2\\
\leq &C(\iota,\varepsilon,T)\Big(1+\sum_{j=0}^1\Big|\rho_0^\frac{1+\iota}{2}\frac{\abs{U}^jU_x}{\sqrt{\eta_x}}\Big|_2^2\Big)+\varepsilon\Big|\rho_0^{\alpha+\frac{1+\iota}{2}}\frac{1}{\sqrt{\eta_x}}\left(\frac{U_x}{\eta_x}\right)_x\Big|_2^2.
\end{aligned}
\end{equation}

\underline{\textbf{Step 2: Elliptic estimates.}}
For the elliptic term, one can multiply both sides  of  $\eqref{secondreformulation}_1$ by $\rho_0^{\alpha+\frac{\iota-1}{2}}\sqrt{\eta_x}$ and apply  \eqref{V-expression} to the resulting equality to get that
\begin{equation*}
\begin{aligned}
&\rho_0^{\alpha+\frac{1+\iota}{2}}\frac{1}{\sqrt{\eta_x}}\left(\frac{U_x}{\eta_x}\right)_x
=\rho_0^{\alpha+\frac{1+\iota}{2}} \sqrt{\eta_x}U_t - \rho_0^{\alpha+\frac{1+\iota}{2}}(V-U)\frac{U_x}{\sqrt{\eta_x}}+2\rho_0^{\alpha+\frac{3+\iota}{2}}\frac{1}{\sqrt{\eta_x}}(V-U).
\end{aligned}
\end{equation*}
Then taking the $L^2$-norm of both sides of the above equality, one gets from Lemmas \ref{L.B.J}, \ref{L.P.E} and  \ref{boundv'} that
\begin{equation}\label{equ4.25}
\begin{aligned}
&\Big|\rho_0^{\alpha+\frac{1+\iota}{2}}\frac{1}{\sqrt{\eta_x}}\left(\frac{U_x}{\eta_x}\right)_x\Big|_{2}\\
\leq &\big|\rho_0^{\alpha+\frac{1+\iota}{2}} \sqrt{\eta_x}U_t\big|_{2} + \abs{\rho_0^\alpha V}_\infty\absB{\rho_0^\frac{1+\iota}{2}\frac{U_x}{\sqrt{{\eta_x}}}}_2+\abs{\rho_0^\alpha}_\infty\absB{\rho_0^\frac{1+\iota}{2}\frac{UU_x}{\sqrt{{\eta_x}}}}_2 \\
&+C(T)\big(\abs{\rho_0^\alpha}_\infty^\frac{3+\iota}{2\alpha}\abs{\rho_0^\alpha V}_\infty+\absb{\rho_0^{\alpha+\frac{3+\iota}{2}}U}_2\big) \\
\leq& \absb{\rho_0^{\alpha+\frac{1+\iota}{2}} \sqrt{\eta_x}U_t}_{2} + C(T)\sum_{j=0}^1\absB{\rho_0^\frac{1+\iota}{2}\frac{\abs{U}^jU_x}{\sqrt{{\eta_x}}}}_2+C(\iota,T).
\end{aligned}
\end{equation}

Plugging \eqref{equ4.25} into \eqref{equ4.24} and then choosing $\varepsilon$ sufficiently small, one gets that
\begin{equation*}
\frac{\mathrm{d}}{\dt}\absB{\rho_0^{\alpha+\frac{1+\iota}{2}} \frac{U_x}{\sqrt{\eta_x}}}_{2}^2+\absb{\rho_0^{\alpha+\frac{1+\iota}{2}}\sqrt{\eta_x}U_t}_{2}^2\leq C(\iota,T)\Big(1+\sum_{j=0}^1\absB{\rho_0^\frac{1+\iota}{2}\frac{\abs{U}^jU_x}{\sqrt{\eta_x}}}_2^2\Big).
\end{equation*}
Integrating above over $[0,t]$, one can obtain from  Lemma \ref{W.E.IV} that
\begin{equation}\label{equ4.26}
\begin{aligned}
\absB{\rho_0^{\alpha+\frac{1+\iota}{2}} \frac{U_x}{\sqrt{\eta_x}}(t)}_{2}^2\!+\!\!\int_0^t\absb{\rho_0^{\alpha+\frac{1+\iota}{2}}\sqrt{\eta_x}U_t}_{2}^2 \ds\!\leq \!C(\iota,T)\big(\absb{\rho_0^{\alpha+\frac{1+\iota}{2}}(u_0)_x}_2^2+1\big) \!\leq \!C(\iota,T),
\end{aligned}
\end{equation}
where one has used the fact that 
\begin{equation*}
\absb{\rho_0^{\alpha+\frac{1+\iota}{2}}(u_0)_x}_2^2\leq \abs{\rho_0^\alpha}_\infty^{1+\frac{1+\iota}{2\alpha}}\abs{(u_0)_x}_2^2\leq C(\iota) \widetilde E(0,U)\leq C(\iota).
\end{equation*}

Finally, it follows from \eqref{equ4.25}-\eqref{equ4.26} and Lemma \ref{W.E.IV} that for all $0\leq t\leq T$,
\begin{equation*}
\int_0^t\absB{\rho_0^{\alpha+\frac{1+\iota}{2}}\frac{1}{\sqrt{\eta_x}}\left(\frac{U_x}{\eta_x}\right)_x}_{2}^2\,\ds\leq C(\iota,T).  
\end{equation*}

The proof of Lemma \ref{W.E.V} is completed.
\end{proof}

Consequently,  the following corollary holds.
\begin{Corollary}\label{W.E.VI}
For any $T>0$ and $0<\alpha\leq 1$, it holds that
\begin{equation*}
\int_0^t \absB{\rho_0^{\frac{\alpha+1+\iota}{2}} \frac{U_x}{\eta_x}}_{\infty}^2\,\ds \leq C(\iota, T) \ \ \text{for all }0\leq t\leq T,
\end{equation*}
where $\iota=0$  if $0<\alpha<1$ and $\iota>0$ if $\alpha=1$.
\end{Corollary}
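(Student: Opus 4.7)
The plan is to imitate the proof of Corollary \ref{coro-U}: apply the one-dimensional interpolation inequality
$$\abs{f}_\infty^2 \leq C\abs{f}_2^2 + C\int \absf{(f^2)_x}\,\dx$$
to $f := \rho_0^{\frac{\alpha+1+\iota}{2}}\frac{U_x}{\eta_x}$, then integrate in time. Everything should be reduced to the weighted bounds already established in Lemmas \ref{W.E.IV} and \ref{W.E.V}, together with the lower bound of $\eta_x$ from Lemma \ref{L.B.J}; crucially, no upper bound on $\eta_x$ will be needed at this stage.

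For the $\abs{f}_2^2$ part, using $\rho_0^\alpha \leq \abs{\rho_0^\alpha}_\infty$ one bounds
$\abs{f}_2^2 \leq C\int \rho_0^{1+\iota}\frac{U_x^2}{\eta_x^2}\,\dx$,
and then the lower bound of $\eta_x$ converts one factor of $\eta_x^{-1}$ into a harmless constant, giving
$\abs{f}_2^2 \leq C(T)\absB{\rho_0^{\frac{1+\iota}{2}}\frac{U_x}{\sqrt{\eta_x}}}_2^2$.
The time integral of the right-hand side is controlled by Lemma \ref{W.E.IV} taken at $p=0$.

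For the derivative term, I would expand
$$(f^2)_x = (\alpha+1+\iota)\rho_0^{\alpha+\iota}(\rho_0)_x\frac{U_x^2}{\eta_x^2} + 2\rho_0^{\alpha+1+\iota}\frac{U_x}{\eta_x}\Big(\frac{U_x}{\eta_x}\Big)_x.$$
The key algebraic observation is that $\rho_0^\alpha\in H^3$ makes $(\rho_0^\alpha)_x$ bounded, so writing $(\rho_0)_x = \alpha^{-1}\rho_0^{1-\alpha}(\rho_0^\alpha)_x$ one gets the pointwise bound $\rho_0^{\alpha+\iota}\absf{(\rho_0)_x}\leq C\rho_0^{1+\iota}$. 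The first summand is therefore absorbed exactly as in the $\abs{f}_2^2$ estimate above. For the cross term I would split the weights symmetrically,
$$\rho_0^{\alpha+1+\iota}\frac{\absf{U_x}}{\eta_x}\absB{\Big(\frac{U_x}{\eta_x}\Big)_x} = \rho_0^{\frac{1+\iota}{2}}\frac{\absf{U_x}}{\sqrt{\eta_x}}\cdot \rho_0^{\alpha+\frac{1+\iota}{2}}\frac{1}{\sqrt{\eta_x}}\absB{\Big(\frac{U_x}{\eta_x}\Big)_x},$$
apply Cauchy--Schwarz in $x$ and then Cauchy--Schwarz in $t$. The two resulting factors in the time integral are precisely the quantities controlled by Lemma \ref{W.E.IV} at $p=0$ and by the third term in Lemma \ref{W.E.V}, so everything is bounded by $C(\iota,T)$.

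I do not anticipate a real obstacle: the only thing to verify carefully is the bookkeeping of weights, namely that the identity $\rho_0^{\alpha+\iota}\absf{(\rho_0)_x} \lesssim \rho_0^{1+\iota}$ is what exactly absorbs the $\rho_0^\alpha$ gap between the power $\alpha+\frac{1+\iota}{2}$ available in Lemma \ref{W.E.V} and the smaller power $\frac{\alpha+1+\iota}{2}$ requested in the corollary. The dichotomy between $0<\alpha<1$ ($\iota=0$) and $\alpha=1$ ($\iota>0$) does not require any case analysis here, since both Lemmas \ref{W.E.IV} and \ref{W.E.V} are stated uniformly in $\iota$.
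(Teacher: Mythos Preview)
Your proposal is correct and follows essentially the same route as the paper: apply the one-dimensional Sobolev inequality to $f=\rho_0^{\frac{\alpha+1+\iota}{2}}U_x/\eta_x$, expand $(f^2)_x$, split the weights exactly as you do, and close using Lemmas \ref{W.E.IV} (at $p=0$) and \ref{W.E.V} together with Lemma \ref{L.B.J}. The only cosmetic difference is that the paper invokes the boundary condition $U_x|_{\Gamma}=0$ to drop the $\abs{f}_2^2$ term entirely, whereas you keep it and bound it directly; both are fine.
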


\begin{proof}
Let $\iota=0$  if $0<\alpha<1$ and $\iota>0$ if $\alpha=1$. According to Lemmas \ref{L.B.J} and \ref{sobolev-embedding}, $U_x|_{x\in\Gamma}=0$ and H\"older's inequality, one has
\begin{align*}
\absB{\rho_0^{\frac{\alpha+1+\iota}{2}}\frac{U_x}{\eta_x}}_{\infty}^2&\leq \int \absB{\left(\rho_0^{\alpha+1+\iota} \frac{U_x^2}{\eta_x^2}\right)_x}\,\dx\\
&\leq 2\int \rho_0^{\alpha+1+\iota} \absB{\frac{U_x}{\sqrt{\eta_x}}} \absB{\frac{1}{\sqrt{\eta_x}}\left(\frac{U_x}{\eta_x}\right)_x}\,\dx  +C(\iota)\int \rho_0^{1+\iota} \abs{(\rho_0^\alpha)_x}\frac{U_x^2}{\eta_x^2}\,\dx\\
&\leq 2 \absB{\rho_0^{\frac{1+\iota}{2}} \frac{U_x}{\sqrt{\eta_x}}}_{2}\absB{\rho_0^{\alpha+\frac{1+\iota}{2}}\frac{1}{\sqrt{\eta_x}}\left(\frac{U_x}{\eta_x}\right)_x}_{2} +C(\iota,T)\abs{(\rho_0^\alpha)_x}_\infty \absB{\rho_0^{\frac{1+\iota}{2}} \frac{U_x}{\sqrt{\eta_x}}}_{2}^2\\
&\leq C \absB{\rho_0^{\alpha+\frac{1+\iota}{2}}\frac{1}{\sqrt{\eta_x}}\left(\frac{U_x}{\eta_x}\right)_x}_{2}^2+C(\iota,T)\absB{\rho_0^{\frac{1+\iota}{2}} \frac{U_x}{\sqrt{\eta_x}}}_{2}^2.
\end{align*}
Thus, integrating above over $[0,t]$ and using Lemmas \ref{W.E.IV} and  \ref{W.E.V} lead to the desired conclusion.

The proof of Corollary \ref{W.E.VI} is completed.
\end{proof}

Now, one can establish the upper bound of $\eta_x$.
\begin{Lemma}\label{U.B.J}
For every $T>0$, it holds that
\begin{equation}
\sup_{(t,x)\in[0,T]\times\bar I} \eta_x(t,x) \leq C(T).
\end{equation}
\end{Lemma}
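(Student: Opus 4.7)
The starting point is the pointwise identity obtained by differentiating $\eta_t=U$ in $x$ and dividing by $\eta_x>0$ (which is legitimate by Lemma \ref{L.B.J}): since $\partial_t(\log\eta_x)=U_x/\eta_x$ and $\eta_x|_{t=0}=1$,
\begin{equation*}
\log\eta_x(t,x)=\int_0^t\frac{U_x}{\eta_x}(s,x)\,\ds, \qquad (t,x)\in[0,T]\times\bar I.
\end{equation*}
Multiplying by $\rho_0^K(x)$ with $K=\tfrac{\alpha+1+\iota}{2}$ (where $\iota=0$ if $0<\alpha<1$ and $\iota>0$ if $\alpha=1$), then taking the spatial supremum and applying the Cauchy--Schwarz inequality in time together with Corollary \ref{W.E.VI} yields
\begin{equation*}
\sup_{(t,x)\in[0,T]\times\bar I}\rho_0^K(x)\absb{\log\eta_x(t,x)}\;\leq\;T^{1/2}\Bigl(\int_0^T\absB{\rho_0^K\frac{U_x}{\eta_x}}_\infty^{\!2}\ds\Bigr)^{\!1/2}\leq C(T).
\end{equation*}
This is the main quantitative input.

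To convert this weighted bound into a uniform pointwise bound on $\eta_x$, I would argue by contradiction, in the same spirit as the proof of Lemma \ref{L.B.J}. Suppose there is no $C(T)<\infty$ with $\eta_x(t,x)\leq C(T)$ on $[0,T]\times\bar I$; then one may extract a sequence $\{(t_k,x_k)\}\subset[0,T]\times\bar I$ with $\eta_x(t_k,x_k)\to\infty$, and, after passing to a subsequence, $(t_k,x_k)\to(t_*,x_*)\in[0,T]\times\bar I$. Two cases: if $x_*\in I$, then $\rho_0^K(x_k)\to\rho_0^K(x_*)>0$, and the weighted bound forces
\begin{equation*}
\log\eta_x(t_k,x_k)\;\leq\;\frac{C(T)}{\rho_0^K(x_k)}\;\longrightarrow\;\frac{C(T)}{\rho_0^K(x_*)}<\infty,
\end{equation*}
contradicting $\eta_x(t_k,x_k)\to\infty$. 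If $x_*\in\Gamma$, the Neumann-type boundary condition (the already-established consequence $U_x|_\Gamma=0$, together with $\eta_x|_{t=0}=1$) gives $\eta_x(t,x)=1$ on $[0,T]\times\Gamma$; since $\eta_x\in C([0,T]\times\bar I)$ by the classical regularity of the solution constructed so far, continuity yields $\eta_x(t_k,x_k)\to\eta_x(t_*,x_*)=1$, again a contradiction.

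The core obstacle is exactly this last step: the weighted estimate $\rho_0^K\log\eta_x\in L^\infty$ is not, on its own, a uniform bound on $\log\eta_x$ because the weight degenerates at $\Gamma$. The resolution is the interplay between three ingredients that are all in hand at this stage: (i) the quantitative weighted bound above, which controls the interior; (ii) the boundary identity $\eta_x|_\Gamma\equiv 1$ coming from $U_x|_\Gamma=0$, which prevents bad behaviour on $\Gamma$; and (iii) the compactness/continuity of $\eta_x$ on $[0,T]\times\bar I$, which glues the two regions together. Combining (i)--(iii) through the contradiction argument above delivers a finite, data-dependent bound $C(T)$, completing the proof.
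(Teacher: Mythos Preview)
Your proof is correct and follows essentially the same approach as the paper: both use the identity $\log\eta_x=\int_0^t(U_x/\eta_x)\,\ds$, multiply by a weight $\rho_0^K$ and invoke Corollary~\ref{W.E.VI} with Cauchy--Schwarz to obtain $\rho_0^K\log\eta_x\in L^\infty$, and then argue by contradiction using the boundary identity $\eta_x|_\Gamma\equiv 1$. The only cosmetic differences are that the paper fixes $\iota=1$ (valid for all $\alpha\in(0,1]$ since $\rho_0\in L^\infty$) and organizes the contradiction more tersely---showing directly that $d(x_k)\to 0$ forces $x_k\to\Gamma$---whereas you split into the interior and boundary cases explicitly and appeal to continuity of $\eta_x$; the logical content is the same.
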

\begin{proof}
Otherwise, there exists a $T>0$ and, for each $k\in \NN^*$, one can find a sequence of $\{(t_k,x_k)\}_{k=1}^{\infty}$ in $[0,T]\times \bar I$  satisfying
\begin{equation}\label{contradiction2}
\eta_x(t_k,x_k)>k\to \infty  \ \ \text{as }k\to \infty.
\end{equation}

Solving the equation $\eqref{lagrange}_1$ gives
\begin{equation*}
H(t,x)=\rho_0(x)\exp \left(-\int_0^t \frac{U_x}{\eta_x}(s,x)\,\ds\right),
\end{equation*}
which implies that
\begin{equation*}
\log\eta_x(t,x) =\log\left(\frac{\rho_0}{H}\right)= \int_0^t \frac{U_x}{\eta_x}(s,x)\,\ds.
\end{equation*}

Then, according to Corollary \ref{W.E.VI} and the notations therein and setting $\iota=1$, along with H\"older's inequality, one gets that for all $(t,x)\in [0,T]\times \bar I$,
\begin{equation}\label{ab}
\begin{aligned}
\rho_0^\frac{\alpha+2}{2} \log\eta_x(t,x)&=\int_0^t \rho_0^\frac{\alpha+2}{2}\frac{U_x}{\eta_x}(s,x)\,\ds
\leq \int_0^t \absB{\rho_0^\frac{\alpha+2}{2}\frac{U_x}{\eta_x}}_\infty\,\ds\\
&\leq t^\frac{1}{2}\left(\int_0^t \absB{\rho_0^\frac{\alpha+2}{2}\frac{U_x}{\eta_x}}_\infty^2\,\ds\right)^\frac{1}{2}\leq C(T).
\end{aligned}
\end{equation}

Thus, setting $(t,x)=(t_k,x_k)$ in \eqref{ab}, one gets from $\rho_0\sim d(x)^\frac{1}{\alpha}$ and \eqref{contradiction2} that
\begin{equation}\label{log-eta}
d(x_k)^{\frac{\alpha+2}{2\alpha}}\leq \frac{C(T)}{\log \eta_x(t_k,x_k)}\to 0 \ \ \text{as }k\to\infty,
\end{equation}
which leads to 
$$x_k\to x_0 \ \  \text{for some } x_0\in \Gamma  \ \ \text{as } k\rightarrow \infty.$$ 
However, this contradicts to the fact that $\eta_x|_{x_0\in\Gamma}=1$, since $U_x|_{x_0\in \Gamma}=0$. 

The proof of Lemma \ref{U.B.J} is completed.
\end{proof}

\section{Global-in-time weighted energy estimates on the velocity}\label{Section7}

In this section, we aim to establish the global-in-time weighted estimates of $U$. The tangential estimates are derived in \S\ref{subsection7.1}, and the elliptic estimates are derived in \S\ref{subsection7.2}-\S \ref{subsection7.3}. 

\subsection{Tangential estimates on the velocity}\label{subsection7.1}
We first give a lemma to built  a bridge between the spatial derivatives and the temporal ones.

\begin{Lemma}\label{U_x-U_t}
For any $T>0$, $0<\alpha\leq 1$ and $ 0\leq\iota<3\alpha+1$, it holds that
\begin{equation*}
\begin{aligned}
\abs{U_x(t,x)}&\leq C \rho_0(x)+C(T)\rho_0(x)^\frac{\alpha-1}{2}\absb{\rho_0^\frac{1}{2} U_t(t)}_2;\\
\abs{U_x(t,x)}&\leq C \rho_0(x)+C(\iota,T)\rho_0(x)^\frac{3\alpha-1-\iota}{2}\absb{\rho_0^{\frac{1+\iota}{2}-\alpha} U_t(t)}_2;\\
\abs{U_{tx}(t,x)}&\leq C\rho_0^2(x)+ C(\iota,T)\big(\rho_0(x)^{3\alpha-1-\iota}\absb{\rho_0^{\frac{1+\iota}{2}-\alpha} U_t(t)}_2^2+\rho_0(x)^\frac{\alpha-1}{2}\absb{\rho_0^\frac{1}{2}U_{tt}(t)}_2\big),
\end{aligned}
\end{equation*}
for all $(t,x)\in [0,T]\times \bar I$.
\end{Lemma}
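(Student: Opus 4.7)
The plan is to derive the three estimates from a single integrated identity for $U_x$, obtained by integrating the reformulated momentum equation \eqref{0428} in space. Writing $\phi_0^{1/\alpha}=\rho_0$, the equation takes the form
\[
\rho_0 U_t=\Big(\frac{\rho_0 U_x}{\eta_x^2}\Big)_x-\Big(\frac{\rho_0^2}{\eta_x^2}\Big)_x.
\]
Integrating over $[0,x]$ for $x\in(0,\tfrac12]$ (and over $[x,1]$ for $x\in(\tfrac12,1]$ analogously) and using $\rho_0|_\Gamma=0$ together with the Neumann condition $U_x|_\Gamma=0$ established in Theorem \ref{theorem3.1}, the boundary contributions vanish and one arrives at the pointwise identity
\[
U_x(t,x)=\rho_0(x)+\frac{\eta_x^2(t,x)}{\rho_0(x)}\int_0^x\rho_0(z)U_t(t,z)\,\mathrm{d}z.
\]
Since $\eta_x$ is bounded above and below in terms of $T$ by Lemmas \ref{L.B.J} and \ref{U.B.J}, the task reduces to estimating the right-hand integral with suitable weights.

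To get the first inequality, I would apply Hölder with weight $\rho_0^{1/2}$ splitting $\rho_0=\rho_0^{1/2}\cdot\rho_0^{1/2}$, giving $|\int_0^x\rho_0U_t|\leq(\int_0^x\rho_0\,\mathrm{d}z)^{1/2}|\rho_0^{1/2}U_t|_2$; using $\rho_0\sim d(x)^{1/\alpha}$ the prefactor becomes $C\rho_0(x)^{(\alpha+1)/(2\alpha)}$, and after dividing by $\rho_0(x)$ the resulting factor collapses to $\rho_0(x)^{(\alpha-1)/2}$, which is exactly the first bound. For the second inequality, I would split $\rho_0U_t=\rho_0^{(1+2\alpha-\iota)/2}\cdot\rho_0^{(1+\iota)/2-\alpha}U_t$ and apply Hölder to put the weighted $U_t$-factor in $L^2$; the constraint $\iota<3\alpha+1$ is exactly what is needed to guarantee the integrability $\int_0^x\rho_0^{1+2\alpha-\iota}\,\mathrm{d}z<\infty$, and it yields the prefactor $d(x)^{(1+3\alpha-\iota)/(2\alpha)}$. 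Dividing by $\rho_0(x)\sim d(x)^{1/\alpha}$ produces $d(x)^{(3\alpha-1-\iota)/(2\alpha)}=\rho_0(x)^{(3\alpha-1-\iota)/2}$, matching the statement.

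For the third estimate, I would differentiate the identity in time, using $\eta_{tx}=U_x$, to obtain
\[
U_{tx}=\frac{2\eta_x U_x}{\rho_0}\int_0^x\rho_0 U_t\,\mathrm{d}z+\frac{\eta_x^2}{\rho_0}\int_0^x\rho_0 U_{tt}\,\mathrm{d}z.
\]
The second term is handled exactly as in the first estimate but with $U_t$ replaced by $U_{tt}$, producing the desired $C(T)\rho_0(x)^{(\alpha-1)/2}|\rho_0^{1/2}U_{tt}|_2$ contribution. For the first term I would insert the second estimate for $U_x$ and the bound $\rho_0^{-1}|\int_0^x\rho_0 U_t|\leq C\rho_0^{(3\alpha-1-\iota)/2}|\rho_0^{(1+\iota)/2-\alpha}U_t|_2$ just obtained; this gives two pieces, namely $C\rho_0^{(3\alpha+1-\iota)/2}|\rho_0^{(1+\iota)/2-\alpha}U_t|_2$ and $C\rho_0^{3\alpha-1-\iota}|\rho_0^{(1+\iota)/2-\alpha}U_t|_2^2$. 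The second already matches a term in the lemma; for the first I would use Young's inequality $ab\leq \tfrac12 a^2+\tfrac12 b^2$ with $a=\rho_0$ and $b=\rho_0^{(3\alpha-1-\iota)/2}|\rho_0^{(1+\iota)/2-\alpha}U_t|_2$, absorbing it into the $\rho_0^2$ and $\rho_0^{3\alpha-1-\iota}|\cdot|_2^2$ terms of the target bound.

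No step presents a serious obstacle; the argument is essentially a careful fundamental-theorem-of-calculus plus Hölder bookkeeping. The only subtle point is the range restriction $\iota<3\alpha+1$, which must be invoked precisely to secure the convergence of $\int_0^x\rho_0^{1+2\alpha-\iota}\,\mathrm{d}z$ near the boundary, and the bookkeeping of weights between $\rho_0$ and $d(x)$ via $\rho_0\sim d(x)^{1/\alpha}$.
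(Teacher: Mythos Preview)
Your proposal is correct and follows essentially the same approach as the paper: integrate $\eqref{secondreformulation}_1$ over $[0,x]$ (or $[x,1]$) to obtain the identity $U_x=\rho_0+\eta_x^2\rho_0^{-1}\int_0^x\rho_0 U_t\,\mathrm{d}z$, then apply H\"older with the two weight splittings you describe, and finally differentiate in time and absorb the cross term $\rho_0^{(3\alpha+1-\iota)/2}|\rho_0^{(1+\iota)/2-\alpha}U_t|_2$ via Young's inequality. The paper's proof is organized identically, and your identification of the role of the constraint $\iota<3\alpha+1$ matches the paper's use of it.
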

\begin{proof}
Integrating $\eqref{secondreformulation}_1$ over $[0,x]$ for $0\leq x\leq \frac{1}{2}$ yields
\begin{equation}\label{4.4.36}
U_x(t,x)= \rho_0(x)+ \eta_x^2 \rho_0^{-1}\int_0^x \rho_0(z) U_t(t,z)\,\mathrm{d}z.
\end{equation}
Then it follows from $\rho_0\sim d(x)^\frac{1}{\alpha}$, Lemma \ref{U.B.J} and H\"older's inequality that
\begin{equation}\label{equ72}
\begin{aligned}
\abs{U_x(t,x)}&\leq C x^\frac{1}{\alpha}+ C(T) x^{-\frac{1}{\alpha}}\left(\int_0^x z^\frac{1}{\alpha}\,\mathrm{d}z\right)^\frac{1}{2}\left(\int\rho_0 U_t^2\,\mathrm{d}z\right)^\frac{1}{2}\\
&\leq C d(x)^\frac{1}{\alpha}+ C(T) d(x)^\frac{\alpha-1}{2\alpha}\absb{\rho_0^\frac{1}{2}U_t(t)}_2.
\end{aligned}
\end{equation}
One can perform the same calculation for $\frac{1}{2}< x\leq 1$ by integrating $\eqref{secondreformulation}_1$ over $[x,1]$ to get \eqref{equ72}. 

Similarly, it also holds that
\begin{equation}\label{equ73}
\abs{U_x(t,x)}\leq C d(x)^\frac{1}{\alpha}+ C(\iota,T) d(x)^\frac{3\alpha-1-\iota}{2\alpha}\absb{\rho_0^{\frac{1+\iota}{2}-\alpha} U_t(t)}_2,
\end{equation}
for all $(t,x)\in [0,T]\times \bar I$, where $0\leq \iota<3\alpha+1$.

Finally, differentiating \eqref{4.4.36} with respect to  $t$ gives
\begin{equation*}
U_{tx}(t,x)=2\eta_x U_x \rho_0^{-1} \int_0^x \rho_0(z) U_t(t,z)\,\mathrm{d}z+\eta_x^2\rho_0^{-1}\int_0^x \rho_0(z) U_{tt}(t,z)\,\mathrm{d}z,
\end{equation*}
which, together with an analogous computation, \eqref{equ73} and Young's inequality, leads to
\begin{align*}
\abs{U_{tx}(t,x)}&\leq C(\iota,T) \abs{U_x(t,x)} d(x)^{\frac{3\alpha-1-\iota}{2\alpha}}\big|\rho_0^{\frac{1+\iota}{2}-\alpha} U_t(t)\big|_2+C(T) d(x)^{\frac{\alpha-1}{2\alpha}}\big|\rho_0^\frac{1}{2}U_{tt}(t)\big|_2\\
&\leq Cd(x)^{\frac{2}{\alpha}}+ C(\iota,T) d(x)^{\frac{3\alpha-1-\iota}{\alpha}}\big|\rho_0^{\frac{1+\iota}{2}-\alpha} U_t(t)\big|_2^2+C(T)d(x)^{  \frac{\alpha-1}{2\alpha}  }   \big|\rho_0^\frac{1}{2}U_{tt}(t)\big|_2.
\end{align*}

The proof of Lemma \ref{U_x-U_t} is completed.
\end{proof}

Now, we are going to derive the tangential estimates in the following three lemmas.

\begin{Lemma}\label{TTE1}
For any $T>0$, $0<\alpha\leq 1$ and $2\leq p<\infty$, it holds that
\begin{equation*}
\absb{\rho_0^\frac{\beta}{p}U(t)}_p^p+\int_0^t\absb{\rho_0^\frac{\beta}{2}\abs{U}^\frac{p-2}{2} U_x}_2^2\,\ds\leq C(\beta,p,T),
\end{equation*}
for all $0\leq t\leq T$, where $\beta>\alpha$ if $0<\alpha<1$ and $\beta=1$ if $\alpha=1$. 
\end{Lemma}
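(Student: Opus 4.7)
The plan is to multiply the momentum equation $\eqref{secondreformulation}_1$ by the test multiplier $\rho_0^{\beta-1}|U|^{p-2}U$ and integrate over $I$. The time derivative contribution is $\frac{1}{p}\frac{\mathrm{d}}{\dt}\int \rho_0^\beta|U|^p\,\dx$, and after integrating the spatial derivative terms by parts (the boundary contributions vanishing because $\rho_0|_\Gamma=0$), I obtain the identity
\begin{equation*}
\frac{1}{p}\frac{\mathrm{d}}{\dt}\int \rho_0^\beta|U|^p\,\dx + (p-1)\int \frac{\rho_0^\beta|U|^{p-2}U_x^2}{\eta_x^2}\,\dx = \cK_1+\cK_2+\cK_3,
\end{equation*}
where
\begin{equation*}
\cK_1 = -(\beta-1)\!\int\! \frac{\rho_0^{\beta-1}(\rho_0)_x|U|^{p-2}UU_x}{\eta_x^2}\,\dx,\,\,\, \cK_2 = (\beta-1)\!\int\! \frac{\rho_0^\beta(\rho_0)_x|U|^{p-2}U}{\eta_x^2}\,\dx,
\end{equation*}
and $\cK_3 = (p-1)\int \rho_0^{\beta+1}|U|^{p-2}U_x/\eta_x^2\,\dx$.

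Next I would estimate the three remainder terms. Using $(\rho_0)_x=\alpha^{-1}\rho_0^{1-\alpha}(\rho_0^\alpha)_x$ together with $(\rho_0^\alpha)_x\in L^\infty$ (from $\rho_0^\alpha\in H^3$), Young's inequality applied to $\cK_1$ yields
\begin{equation*}
|\cK_1| \leq \varepsilon \int \frac{\rho_0^\beta|U|^{p-2}U_x^2}{\eta_x^2}\,\dx + C(\varepsilon,\beta,p,T)\int \rho_0^{\beta-2\alpha}|U|^p\,\dx,
\end{equation*}
the second integral being bounded by Lemma \ref{L.P.E} because $\beta>\alpha$ forces $\beta-2\alpha>-\alpha$. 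The crucial observation for the borderline case is that when $\alpha=1$ and $\beta=1$, the prefactor $(\beta-1)=0$ makes $\cK_1$ and $\cK_2$ vanish identically, so no further condition on $\beta$ is needed there. For $\cK_2$, Hölder's inequality combined with integrability of $\rho_0^{\beta+p(1-\alpha)}$ gives $|\cK_2|\leq C(\beta,p,T)\absb{\rho_0^{\beta/p}U}_p^{p-1}$, and Young's inequality upgrades this to $\absb{\rho_0^{\beta/p}U}_p^p+C$. For $\cK_3$, Young's inequality produces a small fraction of the dissipation plus $C\int \rho_0^{\beta+2}|U|^{p-2}\,\dx$, which is again controlled by Lemma \ref{L.P.E} (the weight exponent $\beta+2>0>-\alpha$ is safely in the admissible range).

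Combining these estimates with $C(T)^{-1}\leq \eta_x\leq C(T)$ from Lemmas \ref{L.B.J} and \ref{U.B.J} (used to pass between $\eta_x$-weighted and unweighted dissipation) and absorbing the $\varepsilon$-terms into the dissipation leads to a differential inequality of the form
\begin{equation*}
\frac{\mathrm{d}}{\dt}\int \rho_0^\beta|U|^p\,\dx + C(T)^{-1}\int \rho_0^\beta|U|^{p-2}U_x^2\,\dx \leq C(\beta,p,T)\Big(1+\int \rho_0^\beta|U|^p\,\dx\Big).
\end{equation*}
Gr\"onwall's inequality delivers the pointwise-in-time bound, and integrating in time produces the dissipation estimate. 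The initial contribution $\int \rho_0^\beta|u_0|^p\,\dx$ is finite because $u_0\in L^\infty$ (from $\widetilde E(0,U)<\infty$ and Sobolev embeddings) and $\rho_0^\beta$ is integrable.

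The main obstacle is the bookkeeping for the crossing term $\cK_1$: the Young split must leave a residual weight $\rho_0^{\beta-2\alpha}$ whose exponent lies strictly above the $-\alpha$ threshold of Lemma \ref{L.P.E}, and this is exactly the content of the hypothesis $\beta>\alpha$ in the case $0<\alpha<1$. The case $\alpha=1$ escapes this threshold only through the algebraic cancellation $(\beta-1)=0$ at $\beta=1$, which is why no strict inequality is imposed there.
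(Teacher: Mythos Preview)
Your proposal is correct and follows essentially the same scheme as the paper: multiply $\eqref{secondreformulation}_1$ by $\rho_0^{\beta-1}|U|^{p-2}U$, integrate, and control the cross term coming from $\partial_x(\rho_0^{\beta-1})$ via Lemma~\ref{L.P.E} using $\beta-2\alpha>-\alpha$. Your term $\cK_1$ is exactly the paper's $\cG_1$.

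There is one minor difference worth noting. The paper first rewrites the pressure term via the effective velocity, $(\rho_0^2/\eta_x^2)_x=2\rho_0^2(V-U)/\eta_x$, so that only a single remainder $\cG_2$ appears; this is then bounded using the $L^\infty$ estimate $|\rho_0^\alpha V|_\infty\leq C(T)$ from Lemma~\ref{boundv'}. You instead integrate the pressure term by parts directly, producing $\cK_2+\cK_3$, and control both pieces with Lemma~\ref{L.P.E} and Young's inequality alone---no appeal to the effective velocity is needed. Your route is therefore slightly more elementary for this particular lemma, though the paper's rewriting is consistent with how $V$ is used throughout \S\ref{Section6}--\S\ref{Section7}. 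For the case $\alpha=1$, $\beta=1$, the paper observes that the statement is already contained in Lemma~\ref{W.E.I} combined with Lemma~\ref{U.B.J}; your observation that $(\beta-1)=0$ kills $\cK_1$ and $\cK_2$ outright is an equally valid (and arguably cleaner) way to dispose of that case.
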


\begin{proof}
One notes that, due to  Lemmas \ref{W.E.I} and \ref{U.B.J}, it remains only to show the case for $\beta>\alpha$, $0<\alpha<1$. To this end, one can rewrite $\eqref{lagrange}_2$ by \eqref{V-expression} as,
\begin{equation}\label{equ4.30}
\rho_0 U_t -\left(\frac{\rho_0U_x}{\eta_x^2}\right)_x+2\frac{\rho_0^2}{\eta_x}(V-U)=0.
\end{equation}
Multiply \eqref{equ4.30} by $\rho_0^{\beta-1}\abs{U}^{p-2}U$ and integrate the resulting equality over $I$ to get
\begin{equation}\label{GG1-GG3}
\begin{aligned}
&\frac{1}{p}\frac{\mathrm{d}}{\dt}\int \rho_0^{\beta}\abs{U}^p\,\dx+(p-1)\int \frac{\rho_0^{\beta} \abs{U}^{p-2} U_x^2}{\eta_x^2}\,\dx\\
=&\frac{1-\beta}{\alpha}\int \frac{\rho_0^{\beta-\alpha}(\rho_0^\alpha)_x\abs{U}^{p-2} UU_x}{\eta_x^2}\,\dx-2\int \frac{\rho_0^{\beta+1}}{\eta_x}(V-U)\abs{U}^{p-2}U\,\dx
:=\sum_{i=1}^2\cG_i.
\end{aligned}
\end{equation}
Then  it follows from $\beta>\alpha$, Lemmas  \ref{L.B.J}, \ref{L.P.E} and  \ref{boundv'}, H\"older's inequality and Young's inequality that
\begin{align}
\cG_1&=\frac{1-\beta}{\alpha}\int \frac{\rho_0^{\beta-\alpha}(\rho_0^\alpha)_x\abs{U}^{p-2} UU_x}{\eta_x^2}\,\dx\notag\\
&\leq C(\beta,T) \abs{(\rho_0^\alpha)_x}_\infty\absb{\rho_0^\frac{\beta-2\alpha}{p}\eta_x^{-\frac{2}{p}}U}_{p}^\frac{p}{2}\absB{\frac{\rho_0^\frac{\beta}{2}\abs{U}^\frac{p-2}{2}U_x}{\eta_x}}_{2}\notag\\
&\leq C(\beta,p,T)+\frac{p-1}{8}\absB{\frac{\rho_0^\frac{\beta}{2}\abs{U}^\frac{p-2}{2}U_x}{\eta_x}}_{2}^2,\label{cG1-cG2}\\
\cG_2&=-2\int \frac{\rho_0^{\beta+1}}{\eta_x}(V-U)\abs{U}^{p-2}U\,\dx\notag\\
&\leq C(T)\big(\abs{\rho_0^\alpha V}_{\infty}\absb{\rho_0^\frac{\beta+1-\alpha}{p-1}U}_{p-1}^{p-1}+2\absb{\rho_0^\frac{\beta+1}{p}U}_{p}^{p}\big)\leq C(\beta,p,T),\notag
\end{align}
which, along  with \eqref{GG1-GG3}, Lemma \ref{U.B.J} and Gr\"onwall's inequality, yields the conclusion.

The proof of Lemma \ref{TTE1} is completed.
\end{proof}

\begin{Lemma}\label{TE2}
For any $T>0$ and $0<\alpha\leq 1$, it holds that
\begin{equation*}
\absb{\rho_0^\frac{1}{2} U_x(t)}_2^2+\int_0^t \absb{\rho_0^\frac{1}{2} U_t}_2^2 \,\ds\leq C(T) \ \ \text{for all }0\leq t\leq T.\end{equation*}
\end{Lemma}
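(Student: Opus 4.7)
\medskip

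\textbf{Plan of proof for Lemma \ref{TE2}.}
The natural route is an $H^1$-type energy estimate obtained by testing the momentum equation against $U_t$. Concretely, I would start from the reformulated equation \eqref{equ4.30} (which cleanly isolates the effective-velocity contribution),
\[
\rho_0 U_t-\Big(\frac{\rho_0 U_x}{\eta_x^2}\Big)_x+\frac{2\rho_0^2}{\eta_x}(V-U)=0,
\]
multiply by $U_t$, and integrate over $I$. Because $\rho_0\vert_{\Gamma}=0$ and $U_x\vert_{\Gamma}=0$ (Theorem \ref{theorem3.1}), all boundary terms from the integration by parts on the viscous term vanish. Using $\partial_t(1/\eta_x^2)=-2U_x/\eta_x^3$, the identity becomes
\[
\frac12\frac{d}{dt}\int\frac{\rho_0 U_x^2}{\eta_x^2}\,\dx+\int\rho_0 U_t^2\,\dx
=-\int\frac{\rho_0 U_x^3}{\eta_x^3}\,\dx-2\int\frac{\rho_0^2}{\eta_x}(V-U)U_t\,\dx.
\]
By Lemmas \ref{L.B.J}--\ref{U.B.J}, the LHS is equivalent to $\tfrac12\frac{d}{dt}|\rho_0^{1/2}U_x|_2^2+|\rho_0^{1/2}U_t|_2^2$ (up to positive constants depending on $T$), which is exactly what the lemma demands.

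\medskip

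For the pressure-coupling term $-2\int\rho_0^2(V-U)U_t/\eta_x\,\dx$, I would split into a $V$-piece and a $U$-piece. The $V$-piece is bounded by $C|\rho_0^{3/2}V|_2\,|\rho_0^{1/2}U_t|_2$; Lemma \ref{boundv} with $(r,p)=(3/(2\alpha),2)$ (noting $3/(2\alpha)>1/2$ for $\alpha\le 1$) gives $|\rho_0^{3/2}V|_2\le C$, and the $U$-piece is controlled by $|\rho_0^{3/2}U|_2|\rho_0^{1/2}U_t|_2$ with $|\rho_0^{3/2}U|_2\le C$ from Lemma \ref{L.P.E} with $\beta=3$. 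Young's inequality then absorbs a factor of $\tfrac14|\rho_0^{1/2}U_t|_2^2$ into the LHS, leaving an integrable-in-time remainder.

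\medskip

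The genuinely delicate contribution is the cubic term $\int\rho_0 U_x^3/\eta_x^3$. The key idea is to turn Lemma \ref{U_x-U_t} into a weighted $L^\infty$ bound: squaring the first pointwise inequality gives $\rho_0 U_x^2\le C\rho_0^3+C(T)\rho_0^{\alpha}|\rho_0^{1/2}U_t|_2^2$, so $|\rho_0^{1/2}U_x|_\infty\le C(T)(1+|\rho_0^{1/2}U_t|_2)$ (both $\rho_0^{3/2}$ and $\rho_0^{\alpha/2}$ are bounded). Writing $\int\rho_0 U_x^3/\eta_x^3\,\dx\le C(T)|\rho_0^{1/2}U_x|_\infty\int \rho_0^{1/2}U_x^2\,\dx$ and using $\int\rho_0^{1/2}U_x^2\,\dx\le C|\rho_0^{1/2}U_x|_2\cdot|\rho_0^{-1/2}\cdot\rho_0^{1/2}U_x|$-type split together with Lemma \ref{U_x-U_t} (again to control any unbounded weight) produces an estimate of the schematic form
\[
\Big|\int\frac{\rho_0 U_x^3}{\eta_x^3}\,\dx\Big|
\le \varepsilon\,|\rho_0^{1/2}U_t|_2^2+C(T)\bigl(1+|\rho_0^{1/2}U_x|_2^2\bigr)\,h(t),
\]
where $h\in L^1(0,T)$ can be taken as a combination of $|\rho_0^{1/2}U_t|_2^2$ (whose time-integral is to be bounded jointly) and time-integrable quantities coming from Lemmas \ref{W.E.V}--\ref{W.E.VI}. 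Once this is in hand, one absorbs the $\varepsilon|\rho_0^{1/2}U_t|_2^2$ into the LHS and applies Gr\"onwall's inequality to
\[
\frac{d}{dt}|\rho_0^{1/2}U_x|_2^2+\frac12|\rho_0^{1/2}U_t|_2^2
\le C(T)\bigl(1+|\rho_0^{1/2}U_x|_2^2\bigr)h(t),
\]
using that the initial data satisfy $|\rho_0^{1/2}(u_0)_x|_2^2\le C\widetilde{E}(0,U)<\infty$. The main obstacle throughout is keeping the cubic term under control without raising the weight above $\rho_0^{1/2}$; the $U_x\vert_\Gamma=0$ boundary condition and the fact that $\rho_0^{\alpha/2}$ remains bounded are what make the weighted $L^\infty$ bound $|\rho_0^{1/2}U_x|_\infty\le C(T)(1+M)$ available, and this is really the linchpin of the argument.
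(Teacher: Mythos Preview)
Your setup is correct and matches the paper: multiply \eqref{equ4.30} by $U_t$, integrate, and split the right-hand side into the cubic term $\cG_3=-\int\rho_0 U_x^3/\eta_x^3\,\dx$ and the pressure term $\cG_4$. Your treatment of $\cG_4$ is fine. The gap is in $\cG_3$.

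For $\alpha=1$ your weighted $L^\infty$ bound is the right idea but the wrong weight. When $\alpha=1$ the first inequality in Lemma \ref{U_x-U_t} gives the \emph{unweighted} bound $|U_x|_\infty\le C(T)(1+M)$ with $M:=|\rho_0^{1/2}U_t|_2$, and the paper uses exactly this to write $|\cG_3|\le C(T)(1+M)\,|\rho_0^{1/2}U_x/\eta_x|_2^2$, which closes by Young and Gr\"onwall using $\int_0^T|\rho_0^{1/2}U_x|_2^2\,\dt\le C$ from Lemma \ref{TTE1}. Your split $|\rho_0^{1/2}U_x|_\infty\int\rho_0^{1/2}U_x^2\,\dx$ instead produces, after your ``$|\rho_0^{1/2}U_x|_2\cdot|U_x|_2$-type'' step, a bound of order $(1+M)^2|\rho_0^{1/2}U_x|_2$, and the $M^2A$ piece cannot be absorbed by $\varepsilon M^2$ for large data. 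Shifting the half-power of $\rho_0$ off the $L^\infty$ factor and onto the $L^2$ factor is what makes the difference.

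For $0<\alpha<1$ the $L^\infty$ route does not work at all: $\rho_0^{(\alpha-1)/2}$ blows up, so neither $|U_x|_\infty$ nor any fixed-weight variant is controlled by $C(1+M)$ in a way that leaves a closable remainder. Your ``schematic'' estimate with $h$ containing $|\rho_0^{1/2}U_t|_2^2$ leads to $C(T)(1+|\rho_0^{1/2}U_x|_2^2)M^2$ on the right, which cannot be absorbed unless $|\rho_0^{1/2}U_x|_2$ is already small. The paper's actual argument for $\alpha<1$ is structurally different: first derive the elliptic estimate $|\rho_0^{(1+\varepsilon)/2}(U_x/\eta_x)_x|_2\le C(\varepsilon,T)(1+M)$ from \eqref{equ4.30}; then integrate $\cG_3$ by parts in $x$ to obtain
\[
\cG_3=\int\rho_0(V-U)\frac{UU_x^2}{\eta_x^2}\,\dx+2\int\frac{\rho_0 UU_x}{\eta_x^2}\Big(\frac{U_x}{\eta_x}\Big)_x\,\dx,
\]
and control these using $|\rho_0^\alpha V|_\infty\le C$ (Lemma \ref{boundv'}), the elliptic estimate, and $|\rho_0^{(1-\varepsilon)/2}|U|^jU_x|_2$ with $0<\varepsilon<1-\alpha$, whose time-square-integral is bounded by Lemma \ref{TTE1}. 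This yields $|\cG_3|\le C(\varepsilon,T)\big(1+\sum_{j=0}^1|\rho_0^{(1-\varepsilon)/2}|U|^jU_x|_2^2\big)+\tfrac18 M^2$, which is linear in $M^2$ with an $L^1_t$ coefficient and therefore closes.
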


\begin{proof}
\noindent\underline{\textbf{Step 1.}} 
First, multiplying \eqref{equ4.30} by $\eta_x\rho_0^\frac{\varepsilon-1}{2} (\varepsilon>0)$, along with \eqref{V-expression},  one  gets that 
\begin{equation}\label{Uxx-q}
\rho_0^\frac{1+\varepsilon}{2} \left(\frac{U_x}{\eta_x}\right)_x=\rho_0^\frac{1+\varepsilon}{2}\eta_x U_t +2 \rho_0^{\frac{3+\varepsilon}{2}}(V-U) - \rho_0^\frac{1+\varepsilon}{2} (V-U)  U_x,
\end{equation}
which, along with Lemmas \ref{boundv}-\ref{L.P.E},  \ref{U.B.J}  and  \ref{U_x-U_t}, leads to
\begin{equation}\label{4466}
\begin{aligned}
\absB{\rho_0^\frac{1+\varepsilon}{2} \left(\frac{U_x}{\eta_x}\right)_x}_2&\leq C(T)\abs{\rho_0^\alpha}_\infty^\frac{\varepsilon}{2\alpha}\absb{\rho_0^\frac{1}{2} U_t}_2 + C\absf{\rho_0^\alpha}_\infty^{\frac{3-\alpha}{2\alpha}}\absb{\rho_0^{\frac{\alpha+\varepsilon}{2}}V}_2+C\absb{\rho_0^{\frac{3+\varepsilon}{2}}U}_2\\
&\quad+C\big(\absb{\rho_0^\frac{\alpha+\varepsilon}{2}V}_2+\absb{\rho_0^\frac{\alpha+\varepsilon}{2}U}_2\big)\absb{\rho_0^{\frac{1-\alpha}{2}}U_x}_\infty\\
&\leq C(\varepsilon,T)\big(1+\absb{\rho_0^\frac{1}{2} U_t}_2\big).
\end{aligned}
\end{equation}

\underline{\textbf{Step 2.}} Multiplying \eqref{equ4.30} by $U_t$ and integrating the resulting equality over $I$ yield
\begin{equation}\label{G3-G4}
\frac{1}{2}\frac{\mathrm{d}}{\dt}\int \frac{\rho_0 U_x^2}{\eta_x^2}\,\dx + \int \rho_0 U_t^2\,\dx=-\int \frac{\rho_0 U_x^3}{\eta_x^3}\,\dx-2\int \frac{\rho_0^2 (V-U) U_t}{\eta_x}\,\dx:=\sum_{i=3}^4 \cG_i.
\end{equation}

For $\cG_3$, if $0<\alpha<1$, it follows from integration by parts, \eqref{V-expression}, \eqref{4466},  Lemmas \ref{L.B.J}, \ref{boundv'} and  \ref{hardy-inequality}, H\"older's inequality and Young's inequality that for any $\varepsilon>0$,
\begin{align}
\cG_3&=-\int \frac{\rho_0 U_x^3}{\eta_x^3}\,\dx
=\left.-\frac{\rho_0 U U_x^2}{\eta_x^3}\right|_{x=0}^{x=1}+\int H_x\frac{UU_x^2}{\eta_x^2}\,\dx+2\int \frac{\rho_0UU_x}{\eta_x^2}\left(\frac{U_x}{\eta_x}\right)_x\,\dx\notag\\
&=\int \rho_0(V-U)\frac{UU_x^2}{\eta_x^2}\,\dx+2\int \frac{\rho_0UU_x}{\eta_x^2}\left(\frac{U_x}{\eta_x}\right)_x\,\dx\notag\\
&\leq C(T) \abs{\rho_0^\alpha V}_\infty\absB{\rho_0^{\frac{1+\varepsilon}{2}-\alpha}\frac{U_x}{\eta_x}}_2\absb{\rho_0^{\frac{1-\varepsilon}{2}}UU_x}_2+C(T)\abs{\rho_0^\alpha}_\infty^\frac{\varepsilon}{\alpha}\absb{\rho_0^\frac{1-\varepsilon}{2}UU_x}_2^2\label{G3}\\
&\quad + C(T)\absb{\rho_0^{\frac{1-\varepsilon}{2}}UU_x}_2\absB{\rho_0^{\frac{1+\varepsilon}{2}}\left(\frac{U_x}{\eta_x}\right)_x}_2\notag\\
&\leq C(\varepsilon,T)\Big(1+\sum_{j=0}^1\absb{\rho_0^{\frac{1-\varepsilon}{2}}|U|^jU_x}_2^2\Big)+\frac{1}{8}\absb{\rho_0^{\frac{1}{2}}U_t}_2^2;\notag
\end{align}
while, if $\alpha=1$, it follows from Lemmas \ref{L.B.J} and \ref{U_x-U_t}, and Young's inequality that
\begin{equation}\label{G3'}
\begin{aligned}
\cG_3&=-\int \frac{\rho_0 U_x^3}{\eta_x^3}\,\dx
\leq C(T)\abs{U_x}_\infty \absB{\rho_0^\frac{1}{2}\frac{U_x}{\eta_x}}_2^2\\
&\leq  C(T)\big(1+\absb{\rho_0^\frac{1}{2}U_t}_2\big)\absB{\rho_0^\frac{1}{2}\frac{U_x}{\eta_x}}_2^2\\
&\leq C(T)\big(1+\absb{\rho_0^\frac{1}{2}U_x}_2^2\big)\absB{\rho_0^\frac{1}{2}\frac{U_x}{\eta_x}}_2^2+\frac{1}{8}\absb{\rho_0^\frac{1}{2}U_t}_2^2.
\end{aligned}
\end{equation}

For $\cG_4$, it follows from Lemmas \ref{L.B.J}, \ref{L.P.E} and  \ref{boundv'}, H\"older's inequality and Young's inequality that
\begin{equation}\label{G4}
\begin{aligned}
\cG_4&=-2\int \frac{\rho_0^2(V-U)U_t}{\eta_x}\,\dx\\
&\leq C(T)\big(\abs{\rho_0^\alpha}_\infty^{\frac{3}{2\alpha}-1} \abs{\rho_0^\alpha V}_\infty+\absb{\rho_0^\frac{3}{2}U}_2\big)\absb{\rho_0^\frac{1}{2}U_t}_2
\leq C(T)+\frac{1}{8}\absb{\rho_0^\frac{1}{2}U_t}_2^2.
\end{aligned}
\end{equation}

Substituting \eqref{G3}-\eqref{G4} into \eqref{G3-G4} and choosing $0<\varepsilon<1-\alpha$ in \eqref{G3} when $0<\alpha<1$, one can get from the Gr\"onwall inequality, Lemmas \ref{U.B.J} and \ref{TTE1} that
\begin{equation*}
\absb{\rho_0^\frac{1}{2}U_x(t)}_2^2+\int_0^t\absb{\rho_0^\frac{1}{2}U_t}_2^2\,\ds\leq C(T) \ \ \text{for all }0\leq t\leq T.
\end{equation*}

The proof of Lemma \ref{TE2} is completed.
\end{proof}

\begin{Lemma}\label{TE3}
For any $T>0$ and $\frac{1}{3}<\alpha\leq 1$, it holds that for all $0\leq t\leq T$, \begin{equation*}
\sum_{j=1}^2\absb{\rho_0^\frac{1}{2}\partial_t^jU(t)}_{2}^2+\absb{\rho_0^\frac{1}{2} U_{tx}(t)}_{2}^2+\int_0^t\absb{\rho_0^\frac{1}{2} \partial_t^2 U_x }_{2}^2\,\ds\leq C(T).
\end{equation*}

\end{Lemma}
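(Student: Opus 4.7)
The plan is to obtain the four claimed bounds via three successive energy identities, each derived by differentiating the reformulated momentum equation \eqref{equ4.30} in time and pairing with an appropriate multiplier. Throughout I will use Lemmas \ref{L.B.J} and \ref{U.B.J} (the two-sided bounds on $\eta_x$), Lemma \ref{boundv'} ($\rho_0^\alpha V\in L^\infty_tL^\infty_x$), Lemma \ref{TTE1} (weighted $L^p$ bounds on $U$), Lemma \ref{TE2} (the lower-order estimate $\rho_0^{1/2}U_x\in L^\infty_tL^2$, $\rho_0^{1/2}U_t\in L^2_tL^2$), and critically Lemma \ref{U_x-U_t}, whose pointwise bound $|U_x(t,x)|\leq C\rho_0+C(T)\rho_0^{(\alpha-1)/2}|\rho_0^{1/2}U_t|_2$ is square-integrable up to the vacuum boundary precisely when $3\alpha>1$; this is the structural reason for the restriction $\alpha>\tfrac{1}{3}$.

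First, applying $\partial_t$ to \eqref{equ4.30}, together with $\eta_{tx}=U_x$ and $V_t=-2H(V-U)$ from \eqref{eq:effective2}, produces
$$\rho_0 U_{tt}-\Big(\frac{\rho_0U_{tx}}{\eta_x^2}\Big)_x+\Big(\frac{2\rho_0U_x^2}{\eta_x^3}\Big)_x+2\partial_t\!\Big(\frac{\rho_0^2(V-U)}{\eta_x}\Big)=0.$$
Pairing this identity with $U_t$ and integrating by parts (the boundary contributions vanish because $\rho_0=0$ on $\Gamma$) yields, after isolating the dissipation,
$$\tfrac{1}{2}\tfrac{d}{dt}\!\int\rho_0U_t^2+\int\frac{\rho_0U_{tx}^2}{\eta_x^2}=\int\frac{2\rho_0U_x^2U_{tx}}{\eta_x^3}-2\!\int\partial_t\!\Big(\frac{\rho_0^2(V-U)}{\eta_x}\Big)U_t.$$
The cubic commutator term is absorbed by substituting the pointwise bound of Lemma \ref{U_x-U_t} for one copy of $U_x$ and using Young's inequality against the dissipation; the $(V-U)$ contributions, expanded via $V_t=-2H(V-U)$, are controlled by Lemmas \ref{boundv'} and \ref{TTE1}. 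Gr\"onwall's inequality, combined with $|\rho_0^{1/2}U_t(0)|_2^2\leq\widetilde E(0,U)$, then delivers $\rho_0^{1/2}U_t\in L^\infty_tL^2$ and $\rho_0^{1/2}U_{tx}\in L^2_tL^2$.

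Next, I will test the same $\partial_t$-differentiated equation instead against $U_{tt}$; after integration by parts this produces
$$\tfrac{1}{2}\tfrac{d}{dt}\!\int\frac{\rho_0U_{tx}^2}{\eta_x^2}+\int\rho_0U_{tt}^2=-\!\int\frac{\rho_0U_xU_{tx}^2}{\eta_x^3}-\!\int\Big(\frac{2\rho_0U_x^2}{\eta_x^3}\Big)_xU_{tt}-2\!\int\partial_t\!\Big(\frac{\rho_0^2(V-U)}{\eta_x}\Big)U_{tt},$$
where the coefficient $U_x/\eta_x^3$ in the first right-hand term arises from $\partial_t(\eta_x^{-2})=-2U_x/\eta_x^3$. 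The dangerous cubic factor $U_xU_{tx}^2$ is again tamed by Lemma \ref{U_x-U_t}, now using the bound from Step 1 on $|\rho_0^{1/2}U_t|_2$ in the time-integrated Gr\"onwall; the second term on the right is reorganised by moving $\partial_x$ onto $U_{tt}$ and writing $U_{ttx}=\partial_tU_{tx}$, producing either time-total-derivative pieces (absorbed into the $\frac{d}{dt}$) or lower-order contributions handled by Step 1. This step yields $\rho_0^{1/2}U_{tx}\in L^\infty_tL^2$ and $\rho_0^{1/2}U_{tt}\in L^2_tL^2$, with the initial datum bounded by $\widetilde E(0,U)$. Finally, applying $\partial_t^2$ to \eqref{equ4.30} and testing against $U_{tt}$ gives
$$\tfrac{1}{2}\tfrac{d}{dt}\!\int\rho_0U_{tt}^2+\int\frac{\rho_0(\partial_t^2U_x)^2}{\eta_x^2}=\mathrm{RHS},$$
where the right-hand side collects all terms from $\partial_t^2(\rho_0U_x/\eta_x^2)_x$ and $\partial_t^2(2\rho_0^2(V-U)/\eta_x)$; these are quadratic or cubic polynomials in $U_x,U_{tx},U_{tt},V,V_t$, each of which is controlled by Steps 1--2 and the preceding lemmas (in particular, using Lemma \ref{U_x-U_t} in its second and third forms to bound $U_{tx}$ pointwise by $\rho_0^{1/2}U_{tt}$). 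A final Gr\"onwall with $|\rho_0^{1/2}U_{tt}(0)|_2^2\leq\widetilde E(0,U)$ closes the estimate.

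The main obstacle will be controlling the commutator-type cubic terms $\int\rho_0U_x^2U_{tx}/\eta_x^3$ and $\int\rho_0U_xU_{tx}^2/\eta_x^3$ (and their $\partial_t^2$ analogues), all of which require placing one factor of $U_x$ or $U_{tx}$ in $L^\infty_x$; the pointwise bounds of Lemma \ref{U_x-U_t} provide exactly this at the price of the singular weight $\rho_0^{(\alpha-1)/2}$, whose square-integrability near the vacuum boundary forces $\alpha>\tfrac{1}{3}$. A secondary technical difficulty is that differentiating the $(V-U)/\eta_x$ flux in time reintroduces $V$ through $V_t=-2H(V-U)$, so each right-hand side must be unwound using the $L^\infty$ bound $|\rho_0^\alpha V|_\infty\leq C(T)$ of Lemma \ref{boundv'} in tandem with the weighted $L^p$ estimates of Lemma \ref{TTE1}, so as not to lose any weight. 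Once this is handled, the three Gr\"onwall estimates chain together: the $L^2_t$ dissipation produced in Step 1 drives the source of Step 2, and likewise Step 2 feeds Step 3, yielding the full set of bounds in the stated norm.
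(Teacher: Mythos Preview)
Your three-step scheme (pair $\partial_t$-equation with $U_t$, then with $U_{tt}$, then pair $\partial_t^2$-equation with $U_{tt}$) is exactly the paper's strategy, and your diagnosis of why $\alpha>\tfrac13$ enters is right. Two points of comparison are worth recording. First, in Step~2 the paper does \emph{not} integrate $-\int(2\rho_0U_x^2/\eta_x^3)_xU_{tt}$ by parts onto $U_{ttx}$; instead it expands the $x$-derivative via $H_x=\rho_0(V-U)$ and the elliptic bound $\absb{\rho_0^{(1+\varepsilon)/2}(U_x/\eta_x)_x}_2\leq C(T)$ already obtained in Step~1 (your \eqref{4466}). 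Your time-total-derivative absorption also closes, but it generates additional cubic remainders ($\int\rho_0 U_xU_{tx}^2$ and $\int\rho_0U_x^3U_{tx}$) that require the same weighted-$L^\infty$ machinery, so the paper's route is a bit cleaner.

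Second, and more importantly, when you write that $\int\rho_0U_xU_{tx}^2/\eta_x^3$ is ``tamed by Lemma~\ref{U_x-U_t}, now using the bound from Step~1 on $|\rho_0^{1/2}U_t|_2$'', be careful: the \emph{first} pointwise form $|U_x|\lesssim\rho_0^{(\alpha-1)/2}|\rho_0^{1/2}U_t|_2$ gives only $\int\rho_0^{(\alpha+1)/2}U_{tx}^2$, which does \emph{not} close for $\alpha<1$. You must use the \emph{second} form (with $\iota=0$), i.e.\ $|U_x|\lesssim\rho_0^{(3\alpha-1)/2}|\rho_0^{1/2-\alpha}U_t|_2$, combined with Hardy to reach $|\rho_0^{1/2}U_{tx}|_2$; this is what the paper does via \eqref{U_infty-U_tx}. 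The resulting Gr\"onwall coefficient $(1+|\rho_0^{1/2}U_{tx}|_2^2)$ is then integrable in time by the dissipation bound from Step~1. Since you invoke the second and third forms explicitly in Step~3, this is presumably what you intend, but the phrasing in Step~2 needs sharpening.
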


\begin{proof}
\noindent\underline{\textbf{Step 1: Estimate of $\rho_0^\frac{1}{2}U_t$.}}
First, Lemma \ref{U_x-U_t} implies that
\begin{equation}\label{equ4.39}
\begin{aligned}
\absb{\rho_0^\frac{1}{4}U_x(t)}_4&\leq C\abs{\rho_0^\alpha}_\infty^\frac{5}{4\alpha}+ C(T)\left(\int \rho_0^{2\alpha-1}\,\dx\right)^\frac{1}{4} \absb{\rho_0^\frac{1}{2}U_t(t)}_2\\
&\leq C(T)\big(1+\absb{\rho_0^\frac{1}{2}U_t(t)}_2\big),
\end{aligned}
\end{equation}
where one has used the facts that  $\alpha>\frac{1}{3}$ and $\rho_0\sim d(x)^\frac{1}{\alpha}$.

Next, applying $\partial_t$ to both sides of $\eqref{secondreformulation}_1$ shows that
\begin{equation}\label{partial_t}
\rho_0 U_{tt}-\left(\frac{\rho_0 U_{tx}}{\eta_x^2}\right)_x = \left(\frac{2\rho_0^2 U_x}{\eta_x^3}-\frac{2\rho_0 U_x^2}{\eta_x^3}\right)_x.
\end{equation}
Multiplying \eqref{partial_t} by $U_t$ and integrating the resulting equality over $I$ lead to
\begin{equation}\label{461}
\frac{1}{2}\frac{\mathrm{d}}{\dt}\int \rho_0 U_t^2\,\dx +\int \frac{\rho_0 U_{tx}^2}{\eta_x^2}\,\dx= \int \frac{2\rho_0 U_x^2 U_{tx}}{\eta_x^3}\,\dx - \int \frac{2\rho_0^2 U_x  U_{tx}}{\eta_x^3}\,\dx:=\sum_{i=5}^{6} \cG_i.
\end{equation}

For $\cG_{5}$-$\cG_6$, since $\alpha>\frac{1}{3}$, it follows from Lemmas \ref{L.B.J} and \ref{TE2}, \eqref{equ4.39}, H\"older's inequality and Young's inequality that
\begin{align}
\cG_{5}&=\int \frac{2\rho_0 U_x^2 U_{tx}}{\eta_x^3}\,\dx
\leq C(T)\absb{\rho_0^\frac{1}{4} U_x}_4^2 \absB{\rho_0^\frac{1}{2}\frac{U_{tx}}{\eta_x}}_2\notag\\
&\leq C(T) \big(1+\absb{\rho_0^\frac{1}{2} U_t}_2^2\big)\absB{\rho_0^\frac{1}{2}\frac{U_{tx}}{\eta_x}}_2\label{G5-G6}\notag\\
&\leq C(T)\big(1+\absb{\rho_0^\frac{1}{2} U_t}_2^4 \big)+\frac{1}{8}\absB{\rho_0^\frac{1}{2} \frac{U_{tx}}{\eta_x}}_2^2,\\
\cG_{6}& = - \int \frac{2\rho_0^2 U_x  U_{tx}}{\eta_x^3}\,\dx\notag\\
&\leq C(T) \absb{\rho_0^\frac{3}{2} U_x}_2 \absB{\rho_0^\frac{1}{2}\frac{U_{tx}}{\eta_x}}_2\leq C(T) +\frac{1}{8}\absB{\rho_0^\frac{1}{2}\frac{U_{tx}}{\eta_x}}_2^2.\notag
\end{align}

Thus, it follows from  \eqref{461}-\eqref{G5-G6},  Gr\"onwall's inequality, and  Lemmas \ref{L.B.J} and  \ref{TE2} that
\begin{equation}\label{4463}
\absb{\rho_0^\frac{1}{2}U_t(t)}_{2}^2+\int_0^t\absb{\rho_0^\frac{1}{2} U_{tx}}_{2}^2\,\ds\leq C(T) \ \ \text{for all }0\leq t\leq T.
\end{equation}
which, along with  \eqref{4466}, implies  that for all $\varepsilon>0$,
\begin{equation}\label{equ4.44}
\absB{\rho_0^\frac{1+\varepsilon}{2} \left(\frac{U_x}{\eta_x}\right)_x(t)}_2\leq C(\varepsilon,T)\big(1+\absb{\rho_0^\frac{1}{2} U_t(t)}_2\big)\leq C(\varepsilon,T).
\end{equation}

\underline{\textbf{Step 2: Estimate of $\rho_0^\frac{1}{2}U_{tx}$.}} First, since $\frac{1}{3}<\alpha\leq 1$, it follows  from \eqref{4463}, Lemmas \ref{U_x-U_t} and \ref{hardy-inequality}, and  $\rho_0\sim d(x)^\frac{1}{\alpha}$ that
\begin{equation}\label{U_infty-U_tx}
\begin{aligned}
\absb{\rho_0^\frac{1-3\alpha+\iota}{2} U_x(t)}_\infty&\leq C\abs{\rho_0^\alpha}_\infty^\frac{3-3\alpha+\iota}{2\alpha}+C(\iota,T)\absb{\rho_0^{\frac{1+\iota}{2}-\alpha}U_t(t)}_2\\
&\leq C(\iota) +C(\iota,T)\abs{\rho_0^\alpha}_\infty^\frac{\iota}{2\alpha}  \sum_{j=0}^1\absb{\rho_0^{\frac{1}{2}}\partial_x^j U_{t}(t)}_2 \\
&\leq C(\iota,T)\big(1+ \absb{\rho_0^{\frac{1}{2}}U_{tx}}_2\big),
\end{aligned}
\end{equation}
where $\iota$ satisfies that  $0\leq \iota<3\alpha+1$ if $\frac{1}{3}<\alpha<1$ and $0<\iota<4$ if $\alpha=1$. However, since $\rho_0\in L^\infty$, \eqref{U_infty-U_tx} actually holds for all $\iota\geq 0$ when $\frac{1}{3}<\alpha<1$ and for all $\iota>0$ when $\alpha=1$.

Next, multiplying \eqref{partial_t} by $U_{tt}$ and then integrating the resulting equality over $I$ yield
\begin{equation}\label{480}
\begin{aligned}
&\frac{1}{2}\frac{\mathrm{d}}{\dt}\int \rho_0 \frac{U_{tx}^2}{\eta_x^2}\,\dx+ \int \rho_0 U_{tt}^2\,\dx\\
=&-\int \rho_0 \frac{U_x U_{tx}^2}{\eta_{x}^3}\,\dx- \int \left(\frac{2\rho_0 U_x^2}{\eta_x^3}\right)_x U_{tt}\,\dx+ \int \left(\frac{2\rho_0^2 U_x}{\eta_x^3}\right)_x U_{tt}\,\dx
:=\sum_{i=7}^{9} \cG_i.
\end{aligned}
\end{equation}

For $\cG_{7}$-$\cG_{9}$, since $\frac{1}{3}<\alpha\leq 1$, choosing $\iota=0$ if $\frac{1}{3}<\alpha<1$ and $\iota=1$ if $\alpha=1$, one gets from \eqref{V-expression}, Lemmas \ref{L.B.J},  \ref{boundv}-\ref{L.P.E}, \ref{boundv'} and  \ref{TE2}, \eqref{4463}-\eqref{U_infty-U_tx}, H\"older's inequality and Young's inequality that
\begin{align}
\cG_{7}&=-\int \rho_0 \frac{U_x U_{tx}^2}{\eta_{x}^3}\,\dx
\leq C(T) \abs{\rho_0^\alpha}_\infty^{\frac{3\alpha-1-\iota}{2\alpha}} \absb{\rho_0^\frac{1-3\alpha+\iota}{2}U_x}_\infty \absb{\rho_0^\frac{1}{2} U_{tx}}_2^2\notag\\
&\leq C(T) \big(1+\absb{\rho_0^\frac{1}{2}U_{tx}}_2^2\big)\absb{\rho_0^\frac{1}{2} U_{tx}}_2^2,\notag\\
\cG_{8}&=- \int \left(\frac{2\rho_0 U_x^2}{\eta_x^3}\right)_x U_{tt}\,\dx\notag\\
&=-2\int H_x \frac{U_x^2}{\eta_x^2}  U_{tt}\,\dx- 4\int H\frac{U_x}{\eta_x}\left(\frac{U_x}{\eta_x}\right)_x U_{tt}\,\dx\notag\\
&=-2\int \rho_0(V-U) \frac{U_x^2}{\eta_x^2}  U_{tt}\,\dx- 4\int \frac{\rho_0 U_x}{\eta_x^2}\left(\frac{U_x}{\eta_x}\right)_x U_{tt}\,\dx\notag\\
&\leq C(T)\big(\absb{\rho_0^{3\alpha-\frac{1}{2}-\iota}V}_2+\absb{\rho_0^{3\alpha-\frac{1}{2}-\iota}U}_2\big)\absb{\rho_0^\frac{1-3\alpha+\iota}{2}U_x}_\infty^2\absb{\rho_0^\frac{1}{2} U_{tt}}_2\label{G7-G9}\\
&\quad+ C(T)\absb{\rho_0^\frac{1-3\alpha+\iota}{2}U_x}_\infty\absB{\rho_0^\frac{3\alpha-\iota}{2}\left(\frac{U_x}{\eta_x}\right)_x}_2\absb{\rho_0^\frac{1}{2} U_{tt}}_2\notag\\
&\leq C(T)\big(1+\absb{\rho_0^\frac{1-3\alpha+\iota}{2}U_x}_\infty^2\big)\absb{\rho_0^\frac{1-3\alpha+\iota}{2}U_x}_\infty^2+\frac{1}{8}\absb{\rho_0^\frac{1}{2} U_{tt}}_2^2\notag\\
&\leq C(T)\big(1+\absb{\rho_0^\frac{1}{2}U_{tx}}_2^4\big)+\frac{1}{8}\absb{\rho_0^\frac{1}{2} U_{tt}}_2^2,\notag\\
\cG_{9}&= \int \left(\frac{2\rho_0^2 U_x}{\eta_x^3}\right)_x U_{tt}\,\dx
=4 \int H H_x\frac{U_x}{\eta_x} U_{tt}\,\dx+2\int H^2\left(\frac{U_x}{\eta_x}\right)_x U_{tt}\,\dx\notag\\
&=4 \int \rho_0^2 (V-U) \frac{U_x}{\eta_x^2} U_{tt}\,\dx+2\int \frac{\rho_0^2}{\eta_x^2}\left(\frac{U_x}{\eta_x}\right)_x U_{tt}\,\dx\notag\\
&\leq C(T) \left(\abs{\rho_0^\alpha V}_\infty+\abs{\rho_0^\alpha U}_\infty\right)\absB{\rho_0^{\frac{3}{2}-\alpha}\frac{U_x}{\eta_x}}_2\absb{\rho_0^\frac{1}{2} U_{tt}}_2+C(T)\absB{\rho_0^\frac{3}{2}\left(\frac{U_x}{\eta_x}\right)_x}_2\absb{\rho_0^\frac{1}{2} U_{tt}}_2\notag\\
&\leq C(T) \Big(\absB{\rho_0^{\frac{3}{2}}\frac{U_x}{\eta_x}}_2+\absB{\rho_0^\frac{3}{2}\left(\frac{U_x}{\eta_x}\right)_x}_2\Big)\absb{\rho_0^\frac{1}{2} U_{tt}}_2
\leq C(T)+\frac{1}{8}\absb{\rho_0^\frac{1}{2} U_{tt}}_2^2.\notag
\end{align}

Then it follows from  \eqref{480}-\eqref{G7-G9} and Lemma  \ref{U.B.J} that
\begin{equation*}
\frac{\mathrm{d}}{\dt}\absB{\rho_0^\frac{1}{2}\frac{U_{tx}}{\eta_x}}_2^2+\absb{\rho_0^\frac{1}{2}U_{tt}}_2^2\leq C(T)\big(1+\absb{\rho_0^\frac{1}{2}U_{tx}}_2^2\big) \absB{\rho_0^\frac{1}{2}\frac{U_{tx}}{\eta_x}}_2^2,
\end{equation*}
which, along with the Gr\"onwall inequality, Lemma \ref{U.B.J} and \eqref{4463}, yields that 
\begin{equation}\label{4455}
\absb{\rho_0^\frac{1}{2} U_{tx}(t)}_2^2+\int_0^t \absb{\rho_0^\frac{1}{2} U_{tt}}_2^2\,\ds\leq C(T) \ \ \text{for all }0\leq t\leq T.
\end{equation}

\underline{\textbf{Step 3: Estimate of $\rho_0^\frac{1}{2}U_{tt}$.}}
First, it follows from \eqref{U_infty-U_tx} and \eqref{4455} that
\begin{equation}\label{eq450}
\absb{\rho_0^\frac{1-3\alpha+\iota}{2}U_x(t)}_\infty\leq C(\iota,T)\big(1+\absb{\rho_0^\frac{1}{2}U_{tx}(t)}_2\big)\leq C(\iota,T),
\end{equation}
for all $0\leq t\leq T$, where $\iota\geq 0$ if $\frac{1}{3}<\alpha<1$ and $\iota>0$ if $\alpha=1$.

Next, one can formally apply $\partial_t$ to both sides of \eqref{partial_t} to obtain that
\begin{equation}\label{partial_tt}
\rho_0 \partial_t^3U -\left(\frac{\rho_0 \partial_t^2U_{x}}{\eta_x^2}\right)_x= \left(\frac{2\rho_0^2 U_{tx}}{\eta_x^3}-\frac{6\rho_0^2 U_x^2}{\eta_x^4}-\frac{6\rho_0 U_x U_{tx}}{\eta_x^3}+\frac{6\rho_0 U_x^3}{\eta_x^4}\right)_x.
\end{equation}
Multiplying \eqref{partial_tt} by $U_{tt}$ and integrating the resulting equality over $I$ imply that
\begin{equation}\label{498}
\begin{aligned}
&\frac{1}{2}\frac{\mathrm{d}}{\dt}\int \rho_0U_{tt}^2\,\dx+\int \rho_0 \frac{(\partial_t^2 U_x)^2}{\eta_x^2}\,\dx\\
=&-\int \frac{2\rho_0^2 U_{tx}}{\eta_x^3}\partial_t^2U_x\,\dx+\int \frac{6\rho_0^2 U_x^2}{\eta_x^4}\partial_t^2U_x\,\dx+\int \frac{6\rho_0 U_x U_{tx}}{\eta_x^3}\partial_t^2U_x\,\dx\\
&-\int \frac{6\rho_0 U_x^3}{\eta_x^4}\partial_t^2U_x\,\dx:=\sum_{i=10}^{13}\cG_i.
\end{aligned}
\end{equation}

It should be noted that the above energy equality can be verified by the standard functional method. Indeed, using the notations given at the beginning of \S \ref{Section3}, for a.e. $t\in (0,T)$ and all test function $\varphi\in H^1_{\rho_0}$, one can get from \eqref{partial_t} that
\begin{equation*}
(\rho_0 U_{tt},\varphi)=\left(-\frac{\rho_0 U_{tx}}{\eta_x^2}-\frac{2\rho_0^2 U_x}{\eta_x^3}+\frac{2\rho_0 U_x^2}{\eta_x^3},\varphi_x\right),
\end{equation*}
which, together with the facts that $U$ satisfies \eqref{b111'} and $\partial_t^2 U_x \in L^2([0,T];L^2_{\rho_0})$, leads to
\begin{equation*}
\begin{aligned}
\frac{\mathrm{d}}{\dt}(\rho_0 U_{tt},\varphi)&=\left(-\frac{\rho_0 \partial_t^2U_{x}}{\eta_x^2}-\frac{2\rho_0^2 U_{tx}}{\eta_x^3}+\frac{6\rho_0^2 U_x^2}{\eta_x^4}+\frac{6\rho_0 U_x U_{tx}}{\eta_x^3}-\frac{6\rho_0 U_x^3}{\eta_x^4},\varphi_x\right)\\
&\leq \big(C(T)\absb{\rho_0^\frac{1}{2}\partial_t^2U_x(t)}_2+A_1(t)\big)\norm{\varphi}_{1,\rho_0}\leq A_2(t)\norm{\varphi}_{1,\rho_0},
\end{aligned}
\end{equation*}
for some positive functions $A_1(t),A_2(t)\in L^2(0,T)$. Thus, it follows from the Lemma 1.1 on page 250 of \cite{temam} that $$(\rho_0 U_{tt})_t =\rho_0\partial_t^3 U\in L^2([0,T];H^{-1}_{\rho_0}),$$ and
\begin{equation*}
\frac{\mathrm{d}}{\dt}(\rho_0  U_{tt}, \varphi)=(\rho_0\partial_t^3 U, \varphi) \ \ \text{for all }\varphi\in H^1_{\rho_0}.
\end{equation*}
Consequently, setting $\varphi=U_{tt}$, one can deduce \eqref{498} from the above formula.

For $\cG_{10}$-$\cG_{13}$, due to $\frac{1}{3}<\alpha\leq 1$, setting $\iota=0$ if $\frac{1}{3}<\alpha<1$ and $\iota=1$ if $\alpha=1$, then one gets from  Lemmas \ref{L.B.J} and \ref{TE2}, \eqref{4455}-\eqref{eq450}, H\"older's inequality and Young's inequality that 
\begin{align}
\cG_{10}&=-\int \frac{2\rho_0^2 U_{tx}}{\eta_x^3}\partial_t^2U_x\,\dx\notag\\
&\leq C(T) \abs{\rho_0^\alpha}_\infty^\frac{1}{\alpha}\absb{\rho_0^\frac{1}{2}U_{tx}}_2\absb{\rho_0^\frac{1}{2}\partial_t^2 U_{x}}_2\leq C(T)+\frac{1}{8}\absb{\rho_0^\frac{1}{2}\partial_t^2 U_{x}}_2^2,\notag\\
\cG_{11}&=\int \frac{6\rho_0^2 U_x^2}{\eta_x^4}\partial_t^2U_x\,\dx\notag\\
&\leq C(T) \absb{\rho_0^\frac{1-3\alpha+\iota}{2} U_{x}}_\infty\absb{\rho_0^\frac{3\alpha+2-\iota}{2} U_{x}}_2\absb{\rho_0^\frac{1}{2}\partial_t^2 U_{x}}_2\leq C(T)+\frac{1}{8}\absb{\rho_0^\frac{1}{2}\partial_t^2 U_{x}}_2^2,\notag\\
\cG_{12}&=\int \frac{4\rho_0 U_x U_{tx}}{\eta_x^3}\partial_t^2U_x\,\dx\label{G13}\\
&\leq C(T) \absb{\rho_0^\frac{1-3\alpha+\iota}{2} U_x}_\infty\absb{\rho_0^\frac{3\alpha-\iota}{2}U_{tx}}_2\absb{\rho_0^\frac{1}{2}\partial_t^2 U_{x}}_2\leq C(T)+\frac{1}{8}\absb{\rho_0^\frac{1}{2}\partial_t^2 U_{x}}_2^2,\notag\\
\cG_{13}&=-\int \frac{6\rho_0 U_x^3}{\eta_x^4}\partial_t^2U_x\,\dx\notag\\
&\leq C(T)\absb{\rho_0^\frac{1-3\alpha+\iota}{2} U_{x}}_\infty^2\absb{\rho_0^{3\alpha-\iota-\frac{1}{2}} U_{x}}_2\absb{\rho_0^\frac{1}{2}\partial_t^2 U_{x}}_2\leq C(T)+\frac{1}{8}\absb{\rho_0^\frac{1}{2}\partial_t^2 U_{x}}_2^2.\notag
\end{align}

Thus, according to  \eqref{498}-\eqref{G13} and Gr\"onwall's inequality, one can get
\begin{equation*}
\absb{\rho_0^\frac{1}{2} U_{tt}(t)}_{2}^2+\int_0^t\absb{\rho_0^\frac{1}{2}\partial_t^2 U_{x}}_{2}^2\,\ds\leq C(T)\ \ \text{for all }0\leq t\leq T.\end{equation*}

The proof of Lemma \ref{TE3} is completed.
\end{proof}

\subsection{The second and third order elliptic estimates on the velocity}\label{subsection7.2}

We first prove the following estimates.
\begin{Lemma}\label{D.V.E}
For any $T>0$, $\frac{1}{3}<\alpha\leq 1$ and $\varepsilon>0$, it holds that for all $0\leq t\leq T$,
\begin{equation*}
\absB{\rho_0^{\frac{1}{2}+\varepsilon-\alpha}\left(\frac{U_{x}}{\eta_x}\right)_x(t)}_{2}+\absB{\rho_0^{\frac{1}{2}+\varepsilon}\left(\frac{U_{x}}{\eta_x}\right)_{xx}(t)}_{2}\leq C(\varepsilon,T).
\end{equation*}
  
\end{Lemma}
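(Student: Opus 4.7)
The proof rests on the reformulated elliptic identity obtained by multiplying \eqref{equ4.30} by $\eta_x\rho_0^{q-1}$, which for any real $q$ yields (essentially \eqref{Uxx-q})
\begin{equation*}
\rho_0^{q}\left(\frac{U_x}{\eta_x}\right)_x = \rho_0^{q}\eta_x U_t + 2\rho_0^{q+1}(V-U) - \rho_0^{q}(V-U)U_x.
\end{equation*}
I plan to apply this pointwise identity with $q=\tfrac12+\varepsilon-\alpha$ for the first bound, and with $q=\tfrac12+\varepsilon$ followed by a further $\partial_x$-differentiation for the second. Every RHS term will then be bounded using the already-established controls on the effective velocity (Lemmas \ref{boundv}, \ref{estimates-V_x}, \ref{boundv'}), the lower/upper bounds of $\eta_x$ (Lemmas \ref{L.B.J}, \ref{U.B.J}), the tangential estimates (Lemmas \ref{TE2}--\ref{TE3}), and the pointwise bridge $|U_x(t,x)|\le C\rho_0+C(T)\rho_0^{(\alpha-1)/2}|\rho_0^{1/2}U_t|_2$ from Lemma \ref{U_x-U_t}.

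For the first estimate, taking $L^2$-norms in the identity with $q=\tfrac12+\varepsilon-\alpha$, the term $\rho_0^{q}\eta_x U_t$ is handled by $|\eta_x|_\infty\le C(T)$ together with $|\rho_0^{1/2}U_t|_2\le C(T)$ when $q\ge\tfrac12$, or, when $q<\tfrac12$, by invoking Hardy's inequality (Lemma \ref{hardy-inequality}) on $U_t$ combined with the pair $|\rho_0^{1/2}U_t|_2+|\rho_0^{1/2}U_{tx}|_2\le C(T)$ from Lemma \ref{TE3}. The source term $2\rho_0^{q+1}(V-U)$ is controlled by factoring out $\rho_0^\alpha V\in L^\infty_T L^\infty$ and exploiting $L^2$-integrability of $\rho_0^{3/2+\varepsilon-2\alpha}$, which holds because $\alpha>\tfrac13$ and $\rho_0\sim d(x)^{1/\alpha}$. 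The last term $\rho_0^{q}(V-U)U_x$ uses the pointwise estimate of $U_x$ from Lemma \ref{U_x-U_t} to produce an integrable weight that compensates the singularity of $\rho_0^q$.

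For the third-order bound, differentiating the same identity with $q=\tfrac12+\varepsilon$ once more in $x$ produces a crossing term $(\tfrac12+\varepsilon)\rho_0^{q-1}(\rho_0)_x(U_x/\eta_x)_x$, which, after rewriting $(\rho_0)_x=\alpha^{-1}\rho_0^{1-\alpha}(\rho_0^\alpha)_x$, is precisely of the form $\rho_0^{1/2+\varepsilon-\alpha}(U_x/\eta_x)_x$ controlled by the first estimate. The differentiated RHS splits into three groups: (i) terms inheriting the bounds of the first step multiplied by an extra factor $(\rho_0)_x$, hence controlled by $|(\rho_0^\alpha)_x|_\infty$; (ii) $\eta_{xx}$-terms, which I rewrite via $\eta_{xx}/\eta_x^2=(\rho_0)_x/(\rho_0\eta_x)-(V-U)$ from \eqref{V-expression} so that the singular algebraic factor separates from the effective velocity; and (iii) the genuinely new derivative terms $\rho_0^{q}\eta_x U_{tx}$ and $\rho_0^{q+1}V_x$ (plus $\rho_0^{q}V_xU_x$). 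In (iii), $U_{tx}$ is absorbed by $|\rho_0^{1/2}U_{tx}|_2\le C(T)$ (Lemma \ref{TE3}), while $V_x$ is covered by Lemma \ref{estimates-V_x} precisely because the effective weight $\rho_0^{3/2+\varepsilon}$ exceeds the threshold $\rho_0^{3\alpha/2}$ for every $\varepsilon>0$ and $\alpha\le 1$.

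The main obstacle I expect is handling the first estimate in the regime $\varepsilon<\alpha$, where the weight $\rho_0^{1/2+\varepsilon-\alpha}$ is genuinely singular at the free boundary, so no RHS term is automatically in $L^2$ from the tangential estimates alone. Overcoming this requires simultaneously exploiting the boundary vanishing $U_x|_{\Gamma}=0$ at the rate provided by Lemma \ref{U_x-U_t} and Hardy's inequality applied to $U_t$, so that each apparently singular factor is absorbed by matching boundary decay coming from the factor it multiplies; the hypothesis $\alpha>\tfrac13$ (together with the weight threshold in Lemma \ref{estimates-V_x}) is exactly what makes this absorption balance out for every $\varepsilon>0$.
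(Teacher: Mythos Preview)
Your plan is essentially the paper's: Step~1 is identical to what the paper does in \eqref{4465}, and Step~2 follows the same elliptic-identity route. The only organizational difference is that the paper differentiates the \emph{unweighted} identity $\big(U_x/\eta_x\big)_x=\eta_xU_t+2H_x-(H_x/H)(U_x/\eta_x)$ and then multiplies by $\rho_0^{1/2+\varepsilon}$, whereas you multiply first and then differentiate; this produces the crossing term you already correctly identify and dispose of via Step~1.

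There is, however, one term you have not accounted for. Differentiating $\rho_0^{q}(V-U)U_x$ yields, in addition to the pieces in your groups (i)--(iii), the term $\rho_0^{q}(V-U)U_{xx}$, which is neither an $\eta_{xx}$-term nor a $V_x$- or $U_{tx}$-term. It can be handled---write $U_{xx}=\eta_x(U_x/\eta_x)_x+U_x\eta_{xx}/\eta_x$, bound the first piece by $|\rho_0^{\alpha}(V-U)|_\infty$ times your Step~1 estimate, and feed the second piece through your $\eta_{xx}$-identity (ii)---but you should make this explicit. The paper avoids the issue altogether by writing $(V-U)U_x=(H_x/H)(U_x/\eta_x)$ \emph{before} differentiating, so that the product rule delivers $(H_x/H)(U_x/\eta_x)_x$ directly, which is immediately controlled by Step~1 without any reference to $U_{xx}$. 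Adopting this grouping would make your Step~2 both shorter and clearly complete.
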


\begin{proof}
\underline{\textbf{Step 1: Estimate of $\rho_0^{\frac{1}{2}+\varepsilon-\alpha}\left(U_{x}\eta_x^{-1}\right)_x$.}} 
Multiplying \eqref{Uxx-q} by $\rho_0^{\frac{\varepsilon}{2}-\alpha}$ ($\varepsilon>0$) gives 
\begin{equation}
\rho_0^{\frac{1}{2}+\varepsilon-\alpha} \left(\frac{U_x}{\eta_x}\right)_x=\rho_0^{\frac{1}{2}+\varepsilon-\alpha}\eta_x U_t +2 \rho_0^{\frac{3}{2}+\varepsilon-\alpha}(V-U) - \rho_0^{\frac{1}{2}+\varepsilon-\alpha} (V-U)  U_x,
\end{equation}
which, along with  \eqref{eq450}, Lemmas \ref{boundv}-\ref{L.P.E}, \ref{U.B.J},  \ref{TE3} and  \ref{hardy-inequality}, yields that for all $0\leq t\leq T$, 
\begin{equation}\label{4465}
\begin{aligned}
\absB{\rho_0^{\frac{1}{2}+\varepsilon-\alpha} \left(\frac{U_x}{\eta_x}\right)_x}_2&\leq C(T)\absb{\rho_0^{\frac{1}{2}+\varepsilon-\alpha} U_t}_2+ 2\absb{\rho_0^{\frac{3}{2}+\varepsilon-\alpha} V}_2+ 2\absb{\rho_0^{\frac{3}{2}+\varepsilon-\alpha} U}_2\\
&\quad +  \big(\absb{\rho_0^{\frac{\alpha+\varepsilon}{2}} V}_2+\absb{\rho_0^{\frac{\alpha+\varepsilon}{2}} U}_2\big) \absb{\rho_0^{\frac{1-3\alpha+\varepsilon}{2}} U_x}_\infty\\
&\leq C(T)\absb{\rho_0^{\frac{1}{2}+\varepsilon-\alpha} U_t}_2+C(\varepsilon,T)\\
&\leq C(T)\big(\absb{\rho_0^{\frac{1}{2}+\varepsilon} U_t}_2+\absb{\rho_0^{\frac{1}{2}+\varepsilon} U_{tx}}_2\big)+C(\varepsilon,T)
\leq C(\varepsilon,T).
\end{aligned}
\end{equation}

\underline{\textbf{Step 2: Estimate of $\rho_0^{\frac{1}{2}+\varepsilon}\left(U_{x}\eta_x^{-1}\right)_{xx}$.}}  
Multiplying $\eqref{lagrange}_2$ by $\frac{1}{H}$ and then applying  $\rho_0^{\frac{1}{2}+\varepsilon}\partial_x$ ($\varepsilon>0$) to the resulting equality, one gets
\begin{equation}\label{4.4.63}
\begin{aligned}
\rho_0^{\frac{1}{2}+\varepsilon} \left(\frac{U_x}{\eta_x}\right)_{xx}&=\rho_0^{\frac{1}{2}+\varepsilon}(\eta_x U_t)_x+ 2\rho_0^{\frac{1}{2}+\varepsilon}H_{xx}\\
&\quad-\rho_0^{\frac{1}{2}+\varepsilon}\frac{H_x}{H}\left(\frac{U_x}{\eta_x}\right)_x-\rho_0^{\frac{1}{2}+\varepsilon}\left(\frac{H_x}{H}\right)_x \frac{U_x}{\eta_x}:=\sum_{i=14}^{17} \cG_i.
\end{aligned}
\end{equation}

For $\cG_{14}$, since $\frac{1}{3}<\alpha\leq 1$, according to  \eqref{V-expression}, Lemmas \ref{boundv'},  \ref{U.B.J},  \ref{TE3} and  \ref{hardy-inequality}, one has
\begin{align}
\abs{\cG_{14}}_2&=\absb{\rho_0^{\frac{1}{2}+\varepsilon}(\eta_x U_t)_x}_2=\absB{\rho_0^{\frac{1}{2}+\varepsilon} \eta_{xx} U_t+\eta_x\rho_0^{\frac{1}{2}+\varepsilon} U_{tx}}_2\notag\\
&=\absB{\rho_0^{\frac{1}{2}+\varepsilon} \eta_{x}^2(U-V) U_t+\frac{1}{\alpha} \eta_{x}\rho_0^{\frac{1}{2}+\varepsilon-\alpha}(\rho_0^\alpha)_x U_t+\eta_x\rho_0^{\frac{1}{2}+\varepsilon}U_{tx}}_2\\
&\leq C(T) \left(\abs{\rho_0^\alpha V}_\infty+\abs{\rho_0^\alpha U}_\infty+\abs{(\rho_0^\alpha)_x}_\infty\right)\absb{\rho_0^{\frac{1}{2}+\varepsilon-\alpha}U_t}_2+C(T)\absb{\rho_0^{\frac{1}{2}+\varepsilon} U_{tx}}_2\notag\\
&\leq C(\varepsilon,T)\big(\absb{\rho_0^\frac{3\alpha}{2} U}_2+\absb{\rho_0^\frac{3\alpha}{2} U_x}_2+1\big)\big(\absb{\rho_0^{\frac{1}{2}}U_t}_2+\absb{\rho_0^\frac{1}{2} U_{tx}}_2\big)
\leq C(\varepsilon,T).\notag
\end{align}

For $\cG_{15}$, it follows from \eqref{V-expression}, Lemmas \ref{boundv}-\ref{estimates-V_x} and  \ref{TE2} that
\begin{equation}
\begin{aligned}
\abs{\cG_{15}}_2&=\absb{2\rho_0^{\frac{1}{2}+\varepsilon}H_{xx}}_2=\absB{\frac{2}{\alpha}\rho_0^{\frac{3}{2}+\varepsilon-\alpha}(\rho_0^\alpha)_x(V-U)+2\rho_0^{\frac{3}{2}+\varepsilon}(V_x-U_x)}_2\\
&\leq C \abs{(\rho_0^\alpha)_x}_\infty\big(\absb{\rho_0^{\frac{3}{2}+\varepsilon-\alpha} V}_2+\absb{\rho_0^{\frac{3}{2}+\varepsilon-\alpha} U}_2\big)+2\big(\absb{\rho_0^{\frac{3}{2}+\varepsilon}V_x}_2+\absb{\rho_0^{\frac{3}{2}+\varepsilon}U_x}_2\big)\\
&\leq C(\varepsilon,T).
\end{aligned}
\end{equation}

For $\cG_{16}$, it follows from $\frac{1}{3}<\alpha\leq 1$, \eqref{V-expression}, \eqref{4465}, Lemmas \ref{L.P.E},  \ref{boundv'}, \ref{U.B.J},  \ref{TE2} and  \ref{hardy-inequality} that
\begin{align}
\abs{\cG_{16}}_2&=\absB{\rho_0^{\frac{1}{2}+\varepsilon}\frac{H_x}{H}\left(\frac{U_x}{\eta_x}\right)_x}_2=\absB{\eta_x\rho_0^{\frac{1}{2}+\varepsilon} (V-U)\left(\frac{U_x}{\eta_x}\right)_x}_2\notag\\
&\leq C(T)\left(\abs{\rho_0^\alpha V}_\infty+\abs{\rho_0^\alpha U}_\infty\right)\absB{\rho_0^{\frac{1}{2}+\varepsilon-\alpha}\left(\frac{U_x}{\eta_x}\right)_x}_2\\
&\leq C(\varepsilon,T)\left(1+\abs{\rho_0^\alpha U}_\infty\right)\notag\\
&\leq C(\varepsilon,T)\big(1+\absb{\rho_0^{\frac{3\alpha}{2}} U}_2+\absb{\rho_0^{\frac{3\alpha}{2}} U_x}_2\big)
\leq C(\varepsilon,T).\notag
\end{align}

At last, for $\cG_{17}$, it follows from $\frac{1}{3}<\alpha\leq 1$,  \eqref{V-expression}, Lemmas \ref{boundv}-\ref{estimates-V_x},  \ref{boundv'}, \ref{U.B.J},  \ref{TE2}, \ref{hardy-inequality} and \eqref{eq450}   that 
\begin{align}
\abs{\cG_{17}}_2&=\absB{\rho_0^{\frac{1}{2}+\varepsilon}\left(\frac{H_x}{H}\right)_x \frac{U_x}{\eta_x}}_2\notag\\
&=\absB{\rho_0^{\frac{1}{2}+\varepsilon}(V_x-U_x) U_x+\frac{1}{\alpha}\rho_0^{\frac{1}{2}+\varepsilon-\alpha}(\rho_0^\alpha)_x (V-U) U_x-\eta_x\rho_0^{\frac{1}{2}+\varepsilon}(V-U)^2U_x}_2\notag\\
&\leq C \big(\absb{\rho_0^{\frac{3\alpha+\varepsilon}{2}}V_x}_2+\absb{\rho_0^{\frac{3\alpha+\varepsilon}{2}}U_x}_2\big)\absb{\rho_0^\frac{1-3\alpha+\varepsilon}{2} U_x}_\infty\notag\\
&\quad+C\abs{(\rho_0^\alpha)_x}_\infty\big(\absb{\rho_0^\frac{\alpha+\varepsilon}{2}V}_2+\absb{\rho_0^\frac{\alpha+\varepsilon}{2}U}_2\big)\absb{\rho_0^\frac{1-3\alpha+\varepsilon}{2} U_x}_\infty\label{G17}\\
&\quad+ C(T)\left(\abs{\rho_0^\alpha V}_\infty+\abs{\rho_0^\alpha U}_\infty\right)\big(\absb{\rho_0^{\frac{\alpha+\varepsilon}{2}}V}_2+\absb{\rho_0^{\frac{\alpha+\varepsilon}{2}}U}_2\big)\absb{\rho_0^\frac{1-3\alpha+\varepsilon}{2} U_x}_\infty\notag\\
&\leq C(\varepsilon,T)(1+\abs{\rho_0^\alpha U}_\infty)\notag\\
&\leq C(\varepsilon,T)\big(1+\absb{\rho_0^{\frac{3\alpha}{2}} U}_2+\absb{\rho_0^{\frac{3\alpha}{2}} U_x}_2\big)
\leq C(\varepsilon,T).\notag
\end{align}

Therefore, collecting \eqref{4.4.63}-\eqref{G17} leads to
\begin{equation}\label{equ4.63}
\absB{\rho_0^{\frac{1}{2}+\varepsilon}\left(\frac{U_x}{\eta_x}\right)_{xx}(t)}_2\leq C(\varepsilon,T),
\end{equation}
for all $0\leq t\leq T$, $\frac{1}{3}<\alpha\leq 1$ and  $\varepsilon>0$. The proof of Lemma \ref{D.V.E} is completed.
\end{proof}

Now  one can  improve the regularities of $\eta_x$ with the help of Lemma \ref{D.V.E}.
\begin{Corollary}\label{H.E.J}
For any $T>0$, $\frac{1}{3}<\alpha\leq 1$ and $\varepsilon>0$, it holds that for all $0\leq t\leq T$,
\begin{equation*}
\absb{\rho_0^{\frac{1}{2}+\varepsilon-\alpha} \eta_{xx}(t)}_{2}+\absb{\rho_0^{\frac{1-\alpha}{2}+\varepsilon} \eta_{xx}(t)}_{\infty}+\absb{\rho_0^{\frac{1}{2}+\varepsilon}\partial_x^3 \eta(t)}_{2}\leq C(\varepsilon,T).
\end{equation*}
\end{Corollary}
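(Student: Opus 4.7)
The plan is to reduce all three bounds to the weighted $L^2$ controls on $(U_x/\eta_x)_x$ and $(U_x/\eta_x)_{xx}$ provided by Lemma \ref{D.V.E}, by means of the pointwise representations
\begin{equation*}
\eta_{xx}(t,x)=\eta_x(t,x)\int_0^t\Big(\frac{U_x}{\eta_x}\Big)_x(s,x)\,\ds,
\end{equation*}
\begin{equation*}
\partial_x^3\eta(t,x)=\eta_x^{-1}(t,x)\eta_{xx}^2(t,x)+\eta_x(t,x)\int_0^t\Big(\frac{U_x}{\eta_x}\Big)_{xx}(s,x)\,\ds,
\end{equation*}
both of which are obtained by differentiating once and twice in $x$ the pointwise identity $\log\eta_x(t,x)=\int_0^t(U_x/\eta_x)(s,x)\,\ds$, which itself follows from $\eta_t=U$ together with $\eta_x|_{t=0}=1$.

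First, multiplying the representation of $\eta_{xx}$ by $\rho_0^{\frac12+\varepsilon-\alpha}$, using the uniform bounds $C^{-1}(T)\leq \eta_x\leq C(T)$ from Lemmas \ref{L.B.J} and \ref{U.B.J}, and applying Minkowski's integral inequality, one obtains
\begin{equation*}
\absb{\rho_0^{\frac12+\varepsilon-\alpha}\eta_{xx}(t)}_{2}\leq C(T)\int_0^t\absb{\rho_0^{\frac12+\varepsilon-\alpha}(U_x/\eta_x)_x}_{2}\,\ds\leq C(\varepsilon,T)
\end{equation*}
directly from Lemma \ref{D.V.E}. The weighted $L^2$ bound on $\partial_x^3\eta$ is then handled by the same Minkowski argument for the integral piece in the second representation, while the quadratic piece is estimated via
\begin{equation*}
\absb{\rho_0^{\frac12+\varepsilon}\eta_x^{-1}\eta_{xx}^2}_{2}\leq C(T)\absb{\rho_0^{\frac{1-\alpha}{2}+\varepsilon}\eta_{xx}}_{\infty}\absb{\rho_0^{\alpha/2}\eta_{xx}}_{2},
\end{equation*}
whose second factor is recovered from the first step above with $\varepsilon$ replaced by $\frac{3\alpha-1}{2}>0$ (admissible precisely because $\alpha>\frac13$), and whose first factor is the $L^\infty$ bound to be proved next; this is what forces the three estimates to be proved in an interlocked fashion.

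The principal difficulty is the $L^\infty$ bound. Taking $L^\infty_x$ of the $\eta_{xx}$-representation multiplied by $\rho_0^{\frac{1-\alpha}{2}+\varepsilon}$, the task reduces to proving
\begin{equation*}
\int_0^t\absb{\rho_0^{\frac{1-\alpha}{2}+\varepsilon}(U_x/\eta_x)_x(s)}_{\infty}\,\ds\leq C(\varepsilon,T),
\end{equation*}
for which I would apply the one-dimensional Sobolev embedding $H^1(I)\hookrightarrow L^\infty(I)$ to $g:=\rho_0^{\frac{1-\alpha}{2}+\varepsilon}(U_x/\eta_x)_x$. The $L^2$-norm of $g$ is dominated by $|\rho_0|_\infty^{\alpha/2}\absb{\rho_0^{\frac12+\varepsilon-\alpha}(U_x/\eta_x)_x}_{2}$ and hence controlled by Lemma \ref{D.V.E}; the Leibniz expansion of $g_x$ produces the principal term $\rho_0^{\frac{1-\alpha}{2}+\varepsilon}(U_x/\eta_x)_{xx}$ together with a lower-order contribution proportional to $(\rho_0)_x\rho_0^{\frac{1-\alpha}{2}+\varepsilon-1}(U_x/\eta_x)_x$. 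The delicate point, and the main obstacle of the whole proof, is to control these two pieces against the weights supplied by Lemma \ref{D.V.E}: one re-invokes that lemma with its free small parameter shifted to $\varepsilon+\alpha/2$ (always admissible, since the lemma holds for every positive value of it) so as to realign the weights with $\rho_0^{\frac{1-\alpha}{2}+\varepsilon}$, while the lower-order term is absorbed using Lemma \ref{hardy-inequality} together with $|(\rho_0^\alpha)_x|_\infty<\infty$. Once this $L^\infty$ bound is secured, substituting back into the splitting for $\eta_x^{-1}\eta_{xx}^2$ closes the estimate on $\partial_x^3\eta$, and the corollary follows.
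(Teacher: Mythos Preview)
Your overall strategy---the two pointwise representations for $\eta_{xx}$ and $\partial_x^3\eta$, the Minkowski argument, and the splitting of the quadratic piece---is exactly the paper's. The first $L^2$ bound and the architecture of the $\partial_x^3\eta$ estimate are correct.

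The gap is in your $L^\infty$ step. Applying the unweighted embedding $H^1\hookrightarrow L^\infty$ to $g=\rho_0^{\frac{1-\alpha}{2}+\varepsilon}(U_x/\eta_x)_x$ forces you to control $\absb{\rho_0^{\frac{1-\alpha}{2}+\varepsilon}(U_x/\eta_x)_{xx}}_2$ in the principal term of $g_x$. But Lemma~\ref{D.V.E} only supplies $\absb{\rho_0^{\frac{1}{2}+\varepsilon'}(U_x/\eta_x)_{xx}}_2$ for $\varepsilon'>0$; since $\rho_0$ vanishes at the boundary, a bound with the \emph{heavier} weight $\frac{1}{2}+\varepsilon'$ does not imply one with the \emph{lighter} weight $\frac{1-\alpha}{2}+\varepsilon$. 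Your proposed shift to $\varepsilon+\alpha/2$ goes in the wrong direction: it yields weight $\frac{1}{2}+\varepsilon+\alpha/2=\frac{1+\alpha}{2}+\varepsilon$, which is heavier still. To match you would need parameter $\varepsilon-\alpha/2$, which is admissible only when $\varepsilon>\alpha/2$; for $0<\varepsilon\le\alpha/2$ the argument fails, and the same obstruction recurs in the lower-order term after one application of Hardy.

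The fix is to replace the unweighted Sobolev step by the weighted $L^\infty$ Hardy inequality \eqref{A8} (stated for $\phi_0=\rho_0^\alpha$ in Lemma~\ref{hardy-inequality}): with $F=(U_x/\eta_x)_x$ it gives directly
\[
\absb{\rho_0^{\frac{1-\alpha}{2}+\varepsilon}F}_\infty\le C\,\absb{\rho_0^{\frac{1}{2}+\varepsilon}(F+F_x)}_2,
\]
and the right-hand side is exactly what Lemma~\ref{D.V.E} controls for every $\varepsilon>0$. This is precisely what the paper does, and it closes the estimate without any restriction on $\varepsilon$.
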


\begin{proof}
Note that 
\begin{equation}\label{equ4.68}
\begin{gathered}
\eta_{xx}=\eta_x \int_0^t\left(\frac{U_x}{\eta_x}\right)_x\,\ds,\quad 
\partial_x^3\eta=\frac{\eta_{xx}^2}{\eta_x}+\eta_x\int_0^t \left(\frac{U_x}{\eta_x}\right)_{xx}\,\ds.
\end{gathered}
\end{equation}

Multiplying the first identity in \eqref{equ4.68} by $\rho_0^{\frac{1}{2}+\varepsilon-\alpha}$ and $\rho_0^{\frac{1-\alpha}{2}+\varepsilon}$, respectively, then according to  Lemmas \ref{U.B.J}, \ref{D.V.E} and  \ref{hardy-inequality}, one can get 
\begin{align*}
\absb{\rho_0^{\frac{1}{2}+\varepsilon-\alpha}\eta_{xx}}_2&\leq C(T) \int_0^t\absB{\rho_0^{\frac{1}{2}+\varepsilon-\alpha}\left(\frac{U_x}{\eta_x}\right)_{x}}_2\,\ds\leq C(\varepsilon,T),\\
\absb{\rho_0^{\frac{1-\alpha}{2}+\varepsilon}\eta_{xx}}_\infty&\leq C(T) \int_0^t\absB{\rho_0^{\frac{1-\alpha}{2}+\varepsilon}\left(\frac{U_x}{\eta_x}\right)_{x}}_\infty\,\ds\\
&\leq C(T) \int_0^t \sum_{j=1}^2 \absB{\rho_0^{\frac{1}{2}+\varepsilon}\partial_x^j \left(\frac{U_x}{\eta_x}\right)}_2 \,\ds
\leq C(\varepsilon,T).
\end{align*}

Consequently, multiplying the second identity in \eqref{equ4.68} by $\rho_0^{\frac{1}{2}+\varepsilon}$, one gets from the above estimates, Lemmas \ref{L.B.J}, \ref{U.B.J} and  \ref{D.V.E} that 
\begin{align*}
\absb{\rho_0^{\frac{1}{2}+\varepsilon}\partial_x^3\eta}_2&\leq C(T)\absb{\rho_0^\frac{1+\varepsilon-\alpha}{2}\eta_{xx}}_\infty \absb{\rho_0^\frac{\alpha}{2}\eta_{xx}}_2 + C(T)\int_0^t \absB{\rho_0^{\frac{1}{2}+\varepsilon}\left(\frac{U_x}{\eta_x}\right)_{xx}}_2\,\ds
\leq C(\varepsilon,T).
\end{align*}

Therefore, the proof of Corollary \ref{H.E.J} is completed.
\end{proof}

Using Lemma \ref{D.V.E} and Corollary \ref{H.E.J}, one can deduce the following elliptic estimates.
\begin{Lemma}\label{E.E.I}
For any $T>0$, $\frac{1}{3}<\alpha\leq 1$ and $\varepsilon>0$, it holds that
\begin{equation*}
\absb{\rho_0^{\frac{1}{2}+\varepsilon-\alpha} U_{xx}(t)}_{2}+\absb{\rho_0^{\frac{1}{2}+\varepsilon}\partial_x^3 U(t)}_{2}\leq C(\varepsilon,T)\ \ \text{for all }0\leq t\leq T.\end{equation*}
\end{Lemma}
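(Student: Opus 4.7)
The plan is to recover the ordinary spatial derivatives $U_{xx}$ and $\partial_x^3 U$ from the divergence-form quantities $(U_x/\eta_x)_x$ and $(U_x/\eta_x)_{xx}$ controlled by Lemma \ref{D.V.E}, using the regularity of the flow map from Corollary \ref{H.E.J} to absorb the lower-order remainders. The starting point is the pair of algebraic identities obtained by expanding the derivatives:
\begin{equation*}
U_{xx}=\eta_x\left(\frac{U_x}{\eta_x}\right)_x+\frac{U_x\,\eta_{xx}}{\eta_x},
\end{equation*}
\begin{equation*}
\partial_x^3 U=\eta_x\left(\frac{U_x}{\eta_x}\right)_{xx}+\frac{2\,U_{xx}\,\eta_{xx}}{\eta_x}+\frac{U_x\,\partial_x^3\eta}{\eta_x}-\frac{2\,U_x\,\eta_{xx}^2}{\eta_x^2}.
\end{equation*}
Multiplying the first identity by $\rho_0^{\frac12+\varepsilon-\alpha}$ and the second by $\rho_0^{\frac12+\varepsilon}$ reduces everything to $L^2$ estimates of a few weighted products.

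For $\rho_0^{\frac12+\varepsilon-\alpha}U_{xx}$, the leading term is dominated directly by Lemma \ref{D.V.E} (together with the upper bound on $\eta_x$ from Lemma \ref{U.B.J}). For the remainder $\rho_0^{\frac12+\varepsilon-\alpha}U_x\eta_{xx}/\eta_x$, I would split the weight as $\rho_0^{\frac12+\varepsilon-\alpha}=\rho_0^{(1-3\alpha+\iota)/2}\cdot\rho_0^{\alpha+\varepsilon/2-\iota/2}\cdot\rho_0^{\varepsilon/2+\alpha/2-\iota/2-?}$ and invoke the $L^\infty$ bound $|\rho_0^{(1-3\alpha+\iota)/2}U_x|_\infty\le C(T)$ from \eqref{eq450} (valid for $\iota\ge0$ when $\alpha<1$ and $\iota>0$ when $\alpha=1$) together with $|\rho_0^{\frac12+\varepsilon'-\alpha}\eta_{xx}|_2\le C(\varepsilon',T)$ from Corollary \ref{H.E.J}. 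Since $\alpha>\tfrac{1}{3}$, one has $\tfrac{3\alpha-1}{2}>0$, which gives enough slack in the exponents to pick the parameters $\iota,\varepsilon'>0$ small so that the product weight matches $\tfrac12+\varepsilon-\alpha$ with room to spare.

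For $\rho_0^{\frac12+\varepsilon}\partial_x^3 U$ the first term is controlled by Lemma \ref{D.V.E}. The term involving $\partial_x^3\eta$ is handled by pairing the $L^\infty$ bound $|\rho_0^{(1-3\alpha+\iota)/2}U_x|_\infty\le C(T)$ with $|\rho_0^{\frac12+\varepsilon'}\partial_x^3\eta|_2\le C(\varepsilon',T)$ from Corollary \ref{H.E.J}, again using $\alpha>\tfrac13$ to balance weights with strictly positive margin. The quadratic-in-$\eta_{xx}$ term $\rho_0^{\frac12+\varepsilon}U_x\eta_{xx}^2/\eta_x^2$ is treated by placing one $\eta_{xx}$ factor in $L^\infty$ via the bound $|\rho_0^{(1-\alpha)/2+\varepsilon'}\eta_{xx}|_\infty\le C(\varepsilon',T)$ and the other in $L^2$ via $|\rho_0^{\frac12+\varepsilon''-\alpha}\eta_{xx}|_2\le C(\varepsilon'',T)$, with $U_x$ placed in $L^\infty$ with the remaining weight. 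Finally, the mixed term $\rho_0^{\frac12+\varepsilon}U_{xx}\eta_{xx}/\eta_x$ is the place where the bootstrap closes: using the $L^\infty$ bound on $\rho_0^{(1-\alpha)/2+\varepsilon'}\eta_{xx}$ and the $L^2$ bound on $\rho_0^{\frac12+\varepsilon-\alpha}U_{xx}$ already proven in the first part, the weight matches $\tfrac12+\varepsilon$ exactly when the auxiliary $\varepsilon'$ is chosen below $\alpha/2$.

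The principal obstacle I anticipate is precisely this bookkeeping of weights: one must verify that every product of three quantities (one in $L^\infty$, one or two in $L^2$) admits a decomposition of the total weight $\tfrac12+\varepsilon$ (respectively $\tfrac12+\varepsilon-\alpha$) into three positive-margin pieces compatible with the bounds of Corollary \ref{H.E.J}, \eqref{eq450}, and the tangential estimates of Lemmas \ref{TE2}--\ref{TE3}. The hypothesis $\alpha>\tfrac13$ is used exactly to ensure the strictly positive margin $3\alpha-1>0$ that makes these splittings feasible for arbitrarily small $\varepsilon>0$; in the borderline case $\alpha=1$ the small parameter $\iota>0$ must be chosen strictly positive, as in \eqref{eq450}. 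Once the weight arithmetic is carried out, each individual estimate is an application of H\"older's inequality followed by the previously established uniform bounds.
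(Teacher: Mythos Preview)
Your proposal is correct and follows essentially the same approach as the paper: recover $U_{xx}$ and $\partial_x^3 U$ from the divergence-form quantities of Lemma~\ref{D.V.E} via the product identities you wrote, pairing the weighted $L^\infty$ bound \eqref{eq450} on $U_x$ with the $L^2$ and $L^\infty$ bounds on $\eta_{xx},\partial_x^3\eta$ from Corollary~\ref{H.E.J}, with the margin $3\alpha-1>0$ making the weight splitting feasible. The only cosmetic difference is that the paper keeps the middle term of $\partial_x^3 U$ in the unexpanded form $2\eta_{xx}(U_x/\eta_x)_x$ and bounds it by $\absb{\rho_0^{(1-\alpha+\varepsilon)/2}\eta_{xx}}_\infty\absb{\rho_0^{(\alpha+\varepsilon)/2}(U_x/\eta_x)_x}_2$ (the second factor via Hardy and Lemma~\ref{D.V.E}), so Steps~1 and~2 are independent rather than bootstrapped as in your version; also note your final threshold should read $\varepsilon'\le (3\alpha-1)/2$ rather than $\alpha/2$.
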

\begin{proof}
\underline{\textbf{Step 1: Estimate of $\rho_0^{\frac{1}{2}+\varepsilon-\alpha} U_{xx}$.}} 
Note that
\begin{equation*}
U_{xx}=\eta_x\left(\frac{U_x}{\eta_x}\right)_x+\frac{U_x\eta_{xx}}{\eta_x}.
\end{equation*}
Then, for all $\varepsilon>0$, $\frac{1}{3}<\alpha\leq 1$, it follows from \eqref{eq450}, Lemmas \ref{L.B.J},  \ref{U.B.J} and  \ref{D.V.E},  and Corollary \ref{H.E.J} that
\begin{equation}\label{equ4.65}
\begin{aligned}
\absb{\rho_0^{\frac{1}{2}+\varepsilon-\alpha} U_{xx}}_2&=\absB{\rho_0^{\frac{1}{2}+\varepsilon-\alpha} \eta_x\left(\frac{U_{x}}{\eta_x}\right)_x+\rho_0^{\frac{1}{2}+\varepsilon-\alpha} \frac{U_{x}\eta_{xx}}{\eta_x}}_2\\
&\leq C(T)\absB{\rho_0^{\frac{1}{2}+\varepsilon-\alpha} \left(\frac{U_{x}}{\eta_x}\right)_x}_2+ C(T)\absb{\rho_0^{\frac{\alpha+\varepsilon}{2}}\eta_{xx}}_2\absb{\rho_0^\frac{1-3\alpha+\varepsilon}{2}U_x}_\infty\\
&\leq C(\varepsilon,T)\big(1+\absb{\rho_0^\frac{1-3\alpha+\varepsilon}{2}U_x}_\infty\big)
\leq C(\varepsilon,T),
\end{aligned}
\end{equation}
where one has used the fact that  $\frac{\alpha}{2}>\frac{1}{2}-\alpha$ if $\alpha>\frac{1}{3}$.

\underline{\textbf{Step 2: Estimate of $\rho_0^{\frac{1}{2}+\varepsilon}\partial_x^3 U$.}}  
Note that
\begin{equation*}
\partial_x^3 U=\eta_x\left(\frac{U_x}{\eta_x}\right)_{xx}+2\eta_{xx}\left(\frac{U_x}{\eta_x}\right)_{x}+\frac{U_x\partial_x^3\eta}{\eta_x}.
\end{equation*}
Then for all $\varepsilon>0$, $\frac{1}{3}<\alpha\leq 1$, it follows from \eqref{eq450}, Lemmas \ref{L.B.J},  \ref{U.B.J},  \ref{D.V.E} and \ref{hardy-inequality}, and  Corollary \ref{H.E.J}  that
\begin{align}
\absb{\rho_0^{\frac{1}{2}+\varepsilon} \partial_x^3 U}_2&=\absB{\rho_0^{\frac{1}{2}+\varepsilon} \eta_x\left(\frac{U_{x}}{\eta_x}\right)_{xx}+2\rho_0^{\frac{1}{2}+\varepsilon} \eta_{xx}\left(\frac{U_x}{\eta_x}\right)_{x}+\rho_0^{\frac{1}{2}+\varepsilon}\frac{U_x\partial_x^3\eta}{\eta_x}}_2\notag\\
&\leq C(T)\absB{\rho_0^{\frac{1}{2}+\varepsilon} \left(\frac{U_{x}}{\eta_x}\right)_{xx}}_2+C\absb{\rho_0^{\frac{1-\alpha+\varepsilon}{2}}\eta_{xx}}_\infty\absB{\rho_0^{\frac{\alpha+\varepsilon}{2}}\left(\frac{U_{x}}{\eta_x}\right)_x}_2\notag\\
&\quad + C(T)\absb{\rho_0^\frac{3\alpha+\varepsilon}{2} \partial_x^3 \eta}_2\absb{\rho_0^\frac{1-3\alpha+\varepsilon}{2} U_x}_\infty\\
&\leq C(\varepsilon,T)\Big(1+\absB{\rho_0^{\frac{3\alpha+\varepsilon}{2}}\left(\frac{U_{x}}{\eta_x}\right)_x}_2+\absB{\rho_0^{\frac{3\alpha+\varepsilon}{2}}\left(\frac{U_{x}}{\eta_x}\right)_{xx}}_2\Big)\notag\\
&\leq C(\varepsilon,T).\notag
\end{align}

The proof of Lemma \ref{E.E.I} is completed.
\end{proof}

\begin{Lemma}\label{E.E.II}
For any $T>0$ and $\frac{1}{3}<\alpha\leq 1$, it holds that
\begin{equation*}
\absb{\rho_0^{\left(\frac{3}{2}-\varepsilon_0\right)\alpha}U_{xx}(t)}_2+\absb{\rho_0^{\left(\frac{3}{2}-\varepsilon_0\right)\alpha}\partial_x^3 U(t)}_2\leq C(T) \ \ \text{for all }0\leq t\leq T, 
\end{equation*}
where $\varepsilon_0$ is defined as in \eqref{varepsilon0}.
\end{Lemma}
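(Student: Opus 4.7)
The strategy is to extract the target weighted bounds from divergence-form identities at the weight $\rho_0^{(3/2-\varepsilon_0)\alpha}$ derived directly from the momentum equation $\eqref{secondreformulation}_1$, using all the global-in-time a priori bounds already at our disposal (Lemmas \ref{U.B.J}, \ref{L.B.J}, \ref{boundv'}, \ref{TE2}, \ref{TE3}, Corollary \ref{H.E.J}, and Lemma \ref{E.E.I}), and then applying the cross-derivatives embedding Proposition \ref{prop2.1} to pass from the divergence estimate to the pointwise weighted one. This parallels the derivations of \eqref{3/2Uxx} and \eqref{3...152} for the linearized problem, but now the right-hand sides must be controlled globally in time.

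For $U_{xx}$, I would multiply $\eqref{secondreformulation}_1$ by $\eta_x^2\rho_0^{(3/2-\varepsilon_0)\alpha-1}$ and rearrange as in the derivation of \eqref{3...143} to obtain
\begin{equation*}
\rho_0^{(3/2-\varepsilon_0)\alpha}U_{xx}+\rho_0^{(3/2-\varepsilon_0)\alpha-1}(\rho_0)_xU_x=F_1,
\end{equation*}
where $F_1$ consists of the time-derivative term $\rho_0^{(3/2-\varepsilon_0)\alpha}\eta_x^2U_t$, a purely algebraic pressure contribution, and two $\eta_{xx}$-type geometric terms. Each piece lies in $L^\infty([0,T];L^2)$: the tangential term by Lemma \ref{TE3} combined with Lemma \ref{U.B.J}; the pressure term trivially; and the geometric terms via Corollary \ref{H.E.J} together with the pointwise bound on $U_x$ that follows from Lemma \ref{U_x-U_t} and Lemma \ref{TE3}. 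Proposition \ref{prop2.1} with $s=(3/2-\varepsilon_0)\alpha$, $j=1$, $\kappa=1$ then yields
\begin{equation*}
\absb{\rho_0^{(3/2-\varepsilon_0)\alpha}U_{xx}}_2\leq C\big(\absb{F_1}_2+\absb{\rho_0^{(3/2-\varepsilon_0)\alpha}U_x}_2\big)\leq C(T),
\end{equation*}
the lower-order term being dominated via Lemma \ref{TE2}.

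For $\partial_x^3U$, I would multiply $\eqref{secondreformulation}_1$ by $\eta_x^2\rho_0^{1-1/\alpha}$ and then apply $\rho_0^{(1/2-\varepsilon_0)\alpha}\partial_x$, in the spirit of \eqref{3...151}, to obtain
\begin{equation*}
\rho_0^{(3/2-\varepsilon_0)\alpha}\partial_x^3U+(1+\alpha)\rho_0^{(3/2-\varepsilon_0)\alpha-1}(\rho_0)_xU_{xx}=F_2,
\end{equation*}
with $F_2$ a structured sum of products involving $U_{tx}$, $\eta_{xx}$, $\partial_x^3\eta$, $U_x$, $U_{xx}$, and pure powers of $\rho_0$. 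Each of these products is $L^2$-bounded uniformly on $[0,T]$ via Lemma \ref{TE3}, Lemma \ref{D.V.E}, Corollary \ref{H.E.J}, Lemma \ref{E.E.I}, and the $U_{xx}$ bound from the previous step. A second application of Proposition \ref{prop2.1} with $s=(3/2-\varepsilon_0)\alpha$, $j=2$, $\kappa=1+\alpha$ then delivers the desired $\partial_x^3U$ bound.

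The main obstacle will be the bookkeeping required to control $F_2$. Several terms on the right-hand side of the analogue of \eqref{3...151} have purely algebraic behaviour in $\rho_0$, most notably $\rho_0^{1/\alpha-3/2-\varepsilon_0\alpha}((\rho_0)_x)^3$ arising from the second $\partial_x$ of the pressure. Its square-integrability near the vacuum boundary $\Gamma$, given $\rho_0\sim d(x)^{1/\alpha}$, forces the upper bound $\varepsilon_0<1/\alpha-1$ in \eqref{varepsilon0} for $\alpha<1$; this singular term simply disappears when $\alpha=1$. Carefully matching the singular behaviour of these algebraic contributions to the admissible range of $\varepsilon_0$ in \eqref{varepsilon0} is the delicate point of the argument.
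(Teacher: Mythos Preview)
Your template—derive a cross-derivative identity from $\eqref{secondreformulation}_1$ and then invoke Proposition~\ref{prop2.1}—is correct in spirit, but the paper's route is both shorter and avoids a genuine gap in your sketch. The $U_{xx}$ bound and the non-endpoint $\partial_x^3U$ bound are already contained in Lemma~\ref{E.E.I}: choosing $\varepsilon=(\tfrac52-\varepsilon_0)\alpha-\tfrac12>0$ gives $\tfrac12+\varepsilon-\alpha=(\tfrac32-\varepsilon_0)\alpha$, and choosing $\varepsilon=\bigl(\tfrac{3\alpha-1}{2\alpha}-\varepsilon_0\bigr)\alpha$ covers $\partial_x^3U$ whenever $\varepsilon_0<\tfrac{3\alpha-1}{2\alpha}$. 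Redoing these via the identity plus Proposition~\ref{prop2.1} is redundant and in fact breaks for $U_{xx}$: that proposition is stated for the weight $\phi_0=\rho_0^{\alpha}$, so in $\phi_0$-notation your parameters are $s=\tfrac32-\varepsilon_0$, $\kappa=\tfrac1\alpha$, and the hypothesis $s\le(\kappa+1)/2$ fails when $\alpha>\tfrac12$ and $\varepsilon_0<\tfrac{2\alpha-1}{2\alpha}$ (for instance $\alpha=1$, $\varepsilon_0<\tfrac12$).

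The more serious omission is the endpoint $\varepsilon_0=\tfrac{3\alpha-1}{2\alpha}$, which \eqref{varepsilon0} permits precisely for $\tfrac13<\alpha<\tfrac35$. There the target weight for $\partial_x^3U$ is $\rho_0^{1/2}$, and the term $2\rho_0^{1/2}\eta_x^{-1}(\partial_x^3\eta)U_x$ inside $F_2$ is \emph{not} controlled by Corollary~\ref{H.E.J}, which only yields $\absf{\rho_0^{1/2+\varepsilon}\partial_x^3\eta}_2\le C(\varepsilon,T)$ for strictly positive $\varepsilon$. The paper closes the loop by writing $\absf{\rho_0^{1/2}\partial_x^3\eta}_2\le\int_0^t\absf{\rho_0^{1/2}\partial_x^3U}_2\,\ds$, obtaining an integral inequality for $\absf{\rho_0^{1/2}\partial_x^3U}_2$ and finishing with Gr\"onwall; your sketch lacks this step. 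Finally, the cubic pressure term you flag as the main obstacle, $\phi_0^{1/\alpha-3/2-\varepsilon_0}((\phi_0)_x)^3$, does not appear in the $\partial_x^3U$ identity \eqref{3...151} at all—it enters only at the $\partial_x^4U$ level in \eqref{3-154}. At third order the worst pressure contribution is $\phi_0^{1/\alpha-1/2-\varepsilon_0}((\phi_0)_x)^2$, which lies in $L^2$ for every admissible $\varepsilon_0$.
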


\begin{proof}
\noindent\underline{\textbf{Step 1: Estimate of $\rho_0^{\left(\frac{3}{2}-\varepsilon_0\right)\alpha}U_{xx}$.}}
This is a direct consequence of Lemma \ref{E.E.I}. Indeed, define $\varepsilon_0$ as in \eqref{varepsilon0} and set $\varepsilon>0$ as
\begin{equation*}
\varepsilon=\Big(\frac{5}{2}-\varepsilon_0\Big)\alpha-\frac{1}{2}>0.    
\end{equation*}
Then one has $\frac{1}{2}+\varepsilon-\alpha=\left(\frac{3}{2}-\varepsilon_0\right)\alpha$, and hence according to Lemma \ref{E.E.I}, it holds that
\begin{equation}\label{equ739}
\absb{\rho_0^{\left(\frac{3}{2}-\varepsilon_0\right)\alpha}U_{xx}(t)}_2\leq C(T) \ \ \text{for all }0\leq t\leq T.
\end{equation}

\underline{\textbf{Step 2: Estimate of $\rho_0^{\left(\frac{3}{2}-\varepsilon_0\right)\alpha}\partial_x^3 U$.}}
If $0<\varepsilon_0<\frac{3\alpha-1}{2\alpha}$, $0<\alpha\leq 1$, one can set $\varepsilon>0$ in Lemma \ref{E.E.I} as
\begin{equation*}
\varepsilon:=\Big(\frac{3\alpha-1}{2\alpha}-\varepsilon_0\Big)\alpha>0,
\end{equation*}
then the conclusion holds automatically. Thus, it suffices to show the case when $\varepsilon_0=\frac{3\alpha-1}{2\alpha}$ and establish the following estimate:
\begin{equation}\label{critial-case}
\big|\rho_0^\frac{1}{2}\partial_x^3 U(t)\big|_2\leq C(T) \ \ \text{for all }0\leq t\leq T.
\end{equation}
Actually, for  the parameter $\varepsilon_0$ defined in \eqref{varepsilon0},  if $\varepsilon_0=\frac{3\alpha-1}{2\alpha}$, then it holds  that $\frac{3\alpha-1}{2\alpha}<\frac{1}{\alpha}-1$, i.e., $\frac{1}{3}<\alpha<\frac{3}{5}$; otherwise, one has  $\varepsilon_0<\frac{1}{\alpha}-1\leq \frac{3\alpha-1}{2\alpha}$, and then the desired estimate holds automatically as discussed above. Thus, it suffices to show \eqref{critial-case} under $\frac{1}{3}<\alpha<\frac{3}{5}$.

To get \eqref{critial-case} under $\frac{1}{3}<\alpha<\frac{3}{5}$, one may follow the proofs in \eqref{3...151}. Multiplying $\eqref{secondreformulation}_1$ by $\eta_x^2 \rho_0^{\alpha-1}$ and  applying $\rho_0^{\frac{1}{2}-\alpha}\partial_x$ to the resulting equality, one gets
\begin{align}
&\rho_0^\frac{1}{2}\partial_x^3 U +\Big(\frac{1}{\alpha}+1\Big)\rho_0^{\frac{1}{2}-\alpha}(\rho_0^\alpha)_x U_{xx}\notag\\
=&\underline{-\frac{1}{\alpha}\rho_0^{\frac{1}{2}-\alpha}(\rho_0^\alpha)_{xx} U_x+\rho_0^\frac{1}{2}\eta_x^2 U_{tx}+2\rho_0^\frac{1}{2}\eta_x\eta_{xx}U_t}_{:=\cG_{18}}\notag\\
&\underline{+\rho_0^{\frac{1}{2}-\alpha}(\rho_0^\alpha)_x\eta_x^2 U_t +\frac{2\rho_0^{\frac{1}{2}-\alpha}(\rho_0^\alpha)_x\eta_{xx}U_x}{\eta_x}+\frac{2\rho_0^\frac{1}{2}\eta_{xx} U_{xx}}{\eta_x}}_{:=\cG_{19}}\label{G18-G21}\\
&\underline{+\frac{2\rho_0^\frac{1}{2}\partial_x^3\eta U_x}{\eta_x}-\frac{2\rho_0^\frac{1}{2}\eta_{xx}^2U_x}{\eta_x^2}-\frac{2+2\alpha}{\alpha}\frac{\rho_0^{\frac{3}{2}-\alpha}(\rho_0^\alpha)_x\eta_{xx}}{\eta_x}}_{:=\cG_{20}}\notag\\
&\underline{-\frac{2\rho_0^{\frac{3}{2}}\partial_x^3\eta}{\eta_x}+\frac{2\rho_0^{\frac{3}{2}}\eta_{xx}^2}{\eta_x^2}+\frac{2}{\alpha^2}\rho_0^{\frac{3}{2}-2\alpha}(\rho_0^\alpha)_x^2 +\frac{2}{\alpha}\rho_0^{\frac{3}{2}-\alpha}(\rho_0^\alpha)_{xx}}_{:=\cG_{21}}.\notag
\end{align}

For $\cG_{18}$-$\cG_{19}$, it follows from \eqref{eq450}, Lemmas \ref{L.B.J}, \ref{U.B.J}, \ref{TE2}-\ref{TE3}, \ref{E.E.I} and \ref{hardy-inequality}, and  Corollary \ref{H.E.J} that
\begin{align}
\abs{\cG_{18}}_2&=\absB{-\frac{1}{\alpha}\rho_0^{\frac{1}{2}-\alpha}(\rho_0^\alpha)_{xx} U_x+\rho_0^\frac{1}{2}\eta_x^2 U_{tx}+2\rho_0^\frac{1}{2}\eta_x\eta_{xx}U_t}_2\notag\\
&\leq C \abs{(\rho_0^\alpha)_{xx}}_\infty \absb{\rho_0^{\frac{1}{2}-\alpha} U_x}_2+ C(T)\absb{\rho_0^\frac{1}{2}U_{tx}}_2+C(T)\absb{\rho_0^{\frac{1-\alpha}{2}+\varepsilon}\eta_{xx}}_\infty \absb{\rho_0^{\frac{\alpha}{2}-\varepsilon}U_t}_2\notag\\
&\leq  C\sum_{j=1}^2\absb{\rho_0^{\frac{1}{2}}\partial_x^j U}_2 + C(T)\bigg(1+ \sum_{j=0}^1\absb{\rho_0^{\frac{3\alpha}{2}-\varepsilon}\partial_x^j U_t}_2\bigg)\leq C(T),\notag\\
\abs{\cG_{19}}_2&=\absB{\rho_0^{\frac{1}{2}-\alpha}(\rho_0^\alpha)_x\eta_x^2 U_t +\frac{2\rho_0^{\frac{1}{2}-\alpha}(\rho_0^\alpha)_x\eta_{xx}U_x}{\eta_x}+\frac{2\rho_0^\frac{1}{2}\eta_{xx} U_{xx}}{\eta_x}}_2\label{G18-G19}\\
&\leq C(T)\abs{(\rho_0^\alpha)_{x}}_\infty \absb{\rho_0^{\frac{1}{2}-\alpha} U_t}_2+ C(T) \abs{(\rho_0^\alpha)_x}_\infty\sum_{j=0}^1\absb{\rho_0^{\frac{1}{2}}\partial_x^j(\eta_{xx}U_x)}_2\notag\\
&\quad+C(T) \absb{\rho_0^{\frac{1-\alpha}{2}+\varepsilon}\eta_{xx}}_\infty \absb{\rho_0^{\frac{\alpha}{2}-\varepsilon}U_{xx}}_2\notag\\
&\leq C(T)\bigg(\sum_{j=0}^1\absb{\rho_0^{\frac{1}{2}} \partial_x^j U_t}_2+\sum_{j=1}^3 \absb{\rho_0^{\frac{3\alpha}{2}-\varepsilon}\partial_x^j U}_2+1\bigg)\leq C(T),\notag
\end{align}
where $\varepsilon$ shall be chosen such that $0<\varepsilon< \frac{3\alpha-1}{2}$.

For $\cG_{20}$-$\cG_{21}$, setting $\varepsilon$ as above, one gets from \eqref{eq450}, Lemma \ref{L.B.J}, Corollary \ref{H.E.J} and  $\frac{1}{3}<\alpha<\frac{3}{5}$ that
\begin{align}
\abs{\cG_{20}}_2&=\absB{\frac{2\rho_0^\frac{1}{2}\partial_x^3\eta U_x}{\eta_x}-\frac{2\rho_0^\frac{1}{2}\eta_{xx}^2U_x}{\eta_x^2}-\frac{2+2\alpha}{\alpha}\frac{\rho_0^{\frac{3}{2}-\alpha}(\rho_0^\alpha)_x\eta_{xx}}{\eta_x}}_2\notag\\
&\leq C(T) \big(\abs{U_x}_\infty \absb{\rho_0^{\frac{1}{2}}\partial_x^3\eta}_2+ \absb{\rho_0^{\frac{1-\alpha}{2}+\varepsilon}\eta_{xx}}_\infty\absb{\rho_0^{\frac{1}{2}-\alpha+\varepsilon}\eta_{xx}}_2 \absb{\rho_0^{\frac{3\alpha-1}{2}-2\varepsilon}U_x}_\infty\big)\notag\\
&\quad+C(T)\abs{(\rho_0^\alpha)_x}_\infty\absb{\rho_0^{\frac{3}{2}-\alpha}\eta_{xx}}_2\notag\\
&\leq C(T)\Big(1+ \int_0^t\absb{\rho_0^{\frac{1}{2}}\partial_x^3 U}_2\,\ds\Big),\label{G20-G21}\\
\abs{\cG_{21}}_2&=\absB{-\frac{2\rho_0^{\frac{3}{2}}\partial_x^3\eta}{\eta_x}+\frac{2\rho_0^{\frac{3}{2}}\eta_{xx}^2}{\eta_x^2}+\frac{2}{\alpha^2}\rho_0^{\frac{3}{2}-2\alpha}(\rho_0^\alpha)_x^2 +\frac{2}{\alpha}\rho_0^{\frac{3}{2}-\alpha}(\rho_0^\alpha)_{xx}}_2\notag\\
&\leq C(T) \big(\abs{\rho_0^\alpha}_\infty^\frac{1}{\alpha} \absb{\rho_0^{\frac{1}{2}}\partial_x^3\eta}_2+ \abs{\rho_0^\alpha}_\infty^\frac{1}{2\alpha}\absb{\rho_0^{\frac{1}{2}}\eta_{xx}}_\infty^2\big)\notag\\
&\quad+C(T) \big(\abs{(\rho_0^{\alpha})_x}_\infty^2\abs{\rho_0^\alpha}_\infty^{\frac{3-4\alpha}{2\alpha}} + \abs{(\rho_0^\alpha)_{xx}}_\infty \abs{\rho_0^\alpha}_\infty^{\frac{3-2\alpha}{2\alpha}}\big)\notag\\
&\leq C(T)\Big(1+ \int_0^t\absb{\rho_0^{\frac{1}{2}}\partial_x^3 U}_2\,\ds\Big).\notag
\end{align}

Collecting \eqref{G18-G21}-\eqref{G20-G21} shows that for any $\frac{1}{3}<\alpha<\frac{3}{5}$, 
\begin{equation*}
\absB{\rho_0^\frac{1}{2}\partial_x^3 U +\Big(\frac{1}{\alpha}+1\Big)\rho_0^{\frac{1}{2}-\alpha}(\rho_0^\alpha)_x U_{xx}}_2\leq C(T)\Big(1+ \int_0^t\absb{\rho_0^{\frac{1}{2}}\partial_x^3 U}_2\,\ds\Big),
\end{equation*}
for all $0\leq t\leq T$, which, along with Proposition \ref{prop2.1} and \eqref{equ739}, yields  that
\begin{equation}\label{equ7.43}
\begin{aligned}
\absb{\rho_0^\frac{1}{2}\partial_x^3 U(t)}_2&\leq C\Big(\absB{\rho_0^\frac{1}{2}\partial_x^3 U +\Big(\frac{1}{\alpha}+1\Big)\rho_0^{\frac{1}{2}-\alpha}(\rho_0^\alpha)_x U_{xx}}_2+\abs{(\rho_0^\alpha)_{xx}}_\infty \absb{\rho_0^\frac{1}{2}U_{xx}}_2\Big) \\
&\leq C(T)\Big(1+ \int_0^t\absb{\rho_0^{\frac{1}{2}}\partial_x^3 U}_2\,\ds\Big).
\end{aligned}
\end{equation}

Finally, it follows from \eqref{equ7.43}  and Gr\"onwall's inequality that \eqref{critial-case} holds

The proof of Lemma \ref{E.E.II} is completed.
\end{proof}

It follows from Lemmas \ref{E.E.II} and  \ref{hardy-inequality} that the following corollary holds.
\begin{Corollary}\label{H.E.J.II}
For any $T>0$ and $\frac{1}{3}<\alpha\leq 1$, it holds that for all $0\leq t\leq T$,
\begin{equation*}
\absb{\rho_0^{\left(\frac{1}{2}-\varepsilon_0\right)\alpha} \eta_{xx}(t)}_{2}+\absb{\rho_0^{\left(1-\varepsilon_0\right)\alpha} \eta_{xx}(t)}_{\infty}+\absb{\rho_0^{\left(\frac{3}{2}-\varepsilon_0\right)\alpha}\partial_x^3 \eta(t)}_{2}\leq C(T),
\end{equation*}
where $\varepsilon_0$ is defined as in \eqref{varepsilon0}.
\end{Corollary}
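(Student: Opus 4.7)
The proof is the natural analogue of Corollary \ref{H.E.J}, now feeding the \emph{sharper} elliptic bounds from Lemma \ref{E.E.II} into the same two kinematic identities that were derived by differentiating $\log\eta_x=\int_0^t (U_x/\eta_x)\,\ds$ in $x$:
\begin{equation*}
\eta_{xx}=\eta_x\int_0^t\Bigl(\frac{U_x}{\eta_x}\Bigr)_x \ds,\qquad \partial_x^3\eta=\frac{\eta_{xx}^2}{\eta_x}+\eta_x\int_0^t\Bigl(\frac{U_x}{\eta_x}\Bigr)_{xx}\ds.
\end{equation*}
The substantive work is therefore to upgrade the weighted bounds on $(U_x/\eta_x)_x$ and $(U_x/\eta_x)_{xx}$ from those in Lemma \ref{D.V.E} (used for Corollary \ref{H.E.J}) to ones of the same strength as the estimates proved in Lemma \ref{E.E.II}.

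Step 1. Using the reformulation $(U_x/\eta_x)_x = \eta_x U_t-(V-U)U_x+2\rho_0(V-U)$ (equivalent to $\eqref{lagrange}_2$ divided by $H$), I would derive
\begin{equation*}
\Bigl\|\rho_0^{(3/2-\varepsilon_0)\alpha}(U_x/\eta_x)_x\Bigr\|_2+\Bigl\|\rho_0^{(3/2-\varepsilon_0)\alpha}(U_x/\eta_x)_{xx}\Bigr\|_2\leq C(T),
\end{equation*}
bounding the $\eta_x U_t$ piece by Lemma \ref{TE3}, the lower-order transport pieces by Lemmas \ref{boundv}--\ref{boundv'} and \ref{L.B.J}--\ref{U.B.J}, and, after one more $\partial_x$, the newly appearing $\partial_x^3 U$ piece directly by Lemma \ref{E.E.II}. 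The pointwise estimate $|U_x|\leq C(T)\rho_0^{(3\alpha-1-\iota)/(2\alpha)\alpha}\cdot(\cdots)$ from \eqref{eq450} is repeatedly used to match powers of $\rho_0$.

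Step 2. For the $\eta_{xx}$ estimates I would take the $L^2$ (respectively $L^\infty$) norm of the first identity multiplied by $\rho_0^{(1/2-\varepsilon_0)\alpha}$ (resp.\ $\rho_0^{(1-\varepsilon_0)\alpha}$), use Minkowski's integral inequality and Lemma \ref{U.B.J}, and then invoke Step 1 — together with Lemma \ref{hardy-inequality} and the Sobolev embedding $H^1\hookrightarrow L^\infty$ for the $L^\infty$ part, so that $\|\rho_0^{(1-\varepsilon_0)\alpha}(U_x/\eta_x)_x\|_\infty$ is dominated by the $L^2$ norms of $(U_x/\eta_x)_x$ and $(U_x/\eta_x)_{xx}$ from Step 1. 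For $\partial_x^3\eta$ I would multiply the second identity by $\rho_0^{(3/2-\varepsilon_0)\alpha}$, control the quadratic term $\eta_{xx}^2/\eta_x$ by the product of the $L^\infty$ and $L^2$ bounds on $\eta_{xx}$ from Step 2, and dominate the integral term using Step 1's bound on $(U_x/\eta_x)_{xx}$.

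The delicate point — the part I expect to be the main obstacle — is the critical case $\varepsilon_0=(3\alpha-1)/(2\alpha)$ (possible only for $1/3<\alpha<3/5$), in which the target weight $\rho_0^{(1/2-\varepsilon_0)\alpha}=\rho_0^{1/2-\alpha}$ on $\eta_{xx}$ exactly saturates the limit $\rho_0^{1/2+\varepsilon-\alpha}$ of Corollary \ref{H.E.J} as $\varepsilon\to 0^+$. Here one cannot absorb the $U_x\eta_{xx}$-type contributions in Step 1 via H\"older using a strictly subcritical split of weights. I would handle this exactly as in the critical-exponent closure inside Step 2 of Lemma \ref{E.E.II}: bound the offending term by $C(T)\int_0^t\|\rho_0^{(1/2-\varepsilon_0)\alpha}\eta_{xx}(s)\|_2\,\ds$ using the pointwise estimate $|U_x|\leq C(T)\rho_0$ available at the critical $\iota=3\alpha-1$ in \eqref{eq450}, and then close by Gr\"onwall's inequality in time.
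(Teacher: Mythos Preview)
Your approach would likely work, but it is far more complicated than necessary and misses the one-line observation that makes this corollary trivial. Since $\eta_t=U$ and $\eta(0,\cdot)=\mathrm{id}$, one has $\partial_x^j\eta(t)=\int_0^t\partial_x^jU\,\ds$ for every $j\geq 2$. Hence Lemma~\ref{E.E.II} immediately gives
\[
\absb{\rho_0^{\left(\frac{3}{2}-\varepsilon_0\right)\alpha}\eta_{xx}(t)}_2+\absb{\rho_0^{\left(\frac{3}{2}-\varepsilon_0\right)\alpha}\partial_x^3\eta(t)}_2\leq \int_0^t\Big(\absb{\rho_0^{\left(\frac{3}{2}-\varepsilon_0\right)\alpha}U_{xx}}_2+\absb{\rho_0^{\left(\frac{3}{2}-\varepsilon_0\right)\alpha}\partial_x^3U}_2\Big)\,\ds\leq C(T),
\]
and then Hardy's inequality (Lemma~\ref{hardy-inequality}, inequalities \eqref{A7} and \eqref{A8} applied with $\phi_0$ in place of $d$) yields the remaining two bounds on $\eta_{xx}$ from the pair $\rho_0^{\left(\frac{3}{2}-\varepsilon_0\right)\alpha}\eta_{xx},\,\rho_0^{\left(\frac{3}{2}-\varepsilon_0\right)\alpha}\partial_x^3\eta\in L^2$. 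That is the entire proof in the paper.

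By contrast, you reproduce the machinery of Corollary~\ref{H.E.J} at the sharper weight level, going through $(U_x/\eta_x)_x$ and $(U_x/\eta_x)_{xx}$. This reintroduces $\eta_{xx}$- and $\partial_x^3\eta$-dependent terms (e.g.\ the $\eta_{xx}U_t$ contribution after one more $\partial_x$), forcing you into a bootstrap and, at the critical exponent, a Gr\"onwall closure. None of that is needed: the kinematic identities $\eta_{xx}=\eta_x\int_0^t(U_x/\eta_x)_x\,\ds$ etc.\ were used in Corollary~\ref{H.E.J} only because at that stage no direct bound on $U_{xx},\,\partial_x^3U$ was yet available; once Lemma~\ref{E.E.II} is in hand, the straightforward time integration of $\partial_x^jU$ bypasses all of this. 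Your ``delicate point'' is therefore an artefact of the detour, not a genuine obstacle.
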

\begin{proof}
Note that Lemma \ref{E.E.II} yields
\begin{equation}\label{equ7745}
\begin{aligned}
\absb{\rho_0^{\left(\frac{3}{2}-\varepsilon_0\right)\alpha} \eta_{xx}(t)}_{2}&\leq \int_0^t\absb{\rho_0^{\left(\frac{3}{2}-\varepsilon_0\right)\alpha} U_{xx}}_{2}\,\ds\leq C(T), \\
\absb{\rho_0^{\left(\frac{3}{2}-\varepsilon_0\right)\alpha}\partial_x^3 \eta (t)}_{2}&\leq \int_0^t\absb{\rho_0^{\left(\frac{3}{2}-\varepsilon_0\right)\alpha} \partial_x^3 U}_{2}\,\ds\leq C(T).
\end{aligned}
\end{equation}

In addition, \eqref{equ7745} and Lemma \ref{hardy-inequality} imply that
\begin{equation*}
\absb{\rho_0^{\left(\frac{1}{2}-\varepsilon_0\right)\alpha} \eta_{xx}}_{2}+\absb{\rho_0^{\left(1-\varepsilon_0\right)\alpha} \eta_{xx}}_{\infty}\leq C\sum_{j=2}^3\absb{\rho_0^{\left(\frac{3}{2}-\varepsilon_0\right)\alpha}\partial_x^j \eta}_{2}\leq C(T).
\end{equation*}

The proof of Corollary \ref{H.E.J.II} is completed.
\end{proof}

\subsection{The fourth order elliptic estimates on the velocity}\label{subsection7.3}

\begin{Lemma}\label{E.E.III}
For any $T>0$ and $\frac{1}{3}<\alpha\leq 1$, it holds that
\begin{equation*}
\absb{\rho_0^{\left(\frac{3}{2}-\varepsilon_0\right)\alpha}\partial_tU_{xx}(t)}_2\leq C(T) \ \  \text{for all }0\leq t\leq T,
\end{equation*}
where $\varepsilon_0$ is defined as in \eqref{varepsilon0}.
\end{Lemma}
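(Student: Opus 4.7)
The plan is to mimic the local elliptic argument of Step 3 in \S\ref{subsection3.4} (specifically equations \eqref{equ3.72}--\eqref{3/2Utxx}), but now feeding in all the \emph{global} a priori controls obtained so far: the uniform lower and upper bounds of $\eta_x$ (Lemmas \ref{L.B.J} and \ref{U.B.J}), the weighted bounds for the effective velocity (Lemmas \ref{boundv} and \ref{boundv'}), the global $L^p$ bounds on $U$ (Lemma \ref{L.P.E}), the tangential estimates $\sqrt{\rho_0}\,\partial_t^j U,\sqrt{\rho_0}\,\partial_t^k U_x\in L^\infty_t L^2$ (Lemmas \ref{TE2}--\ref{TE3}), the second-order elliptic bound $\rho_0^{(\frac{3}{2}-\varepsilon_0)\alpha}U_{xx}\in L^\infty_t L^2$ (Lemma \ref{E.E.II}), and the $\eta$-regularity in Corollaries \ref{H.E.J} and \ref{H.E.J.II}.

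First, I would multiply $\eqref{secondreformulation}_1$ by $\eta_x^2\rho_0^{\alpha-1}$ and then apply $\rho_0^{(\frac{1}{2}-\varepsilon_0)\alpha}\partial_t$ to the resulting pointwise identity (this is legal by the local existence and Lemma \ref{qiudao}). This yields an equation of the schematic form
\begin{equation*}
\rho_0^{\left(\frac{3}{2}-\varepsilon_0\right)\alpha}\partial_t U_{xx}+\frac{1}{\alpha}\rho_0^{\left(\frac{1}{2}-\varepsilon_0\right)\alpha}(\rho_0^\alpha)_x U_{tx}=\rho_0^{\left(\frac{3}{2}-\varepsilon_0\right)\alpha}\eta_x^2 U_{tt}+R,
\end{equation*}
where the remainder $R$ collects terms of the shape
\begin{equation*}
\rho_0^{\left(\frac{3}{2}-\varepsilon_0\right)\alpha}\eta_x\bar U_x U_t,\quad \frac{\rho_0^{\left(\frac{3}{2}-\varepsilon_0\right)\alpha+1}U_{xx}}{\eta_x},\quad \frac{\rho_0^{\left(\frac{3}{2}-\varepsilon_0\right)\alpha+1}\eta_{xx}U_x}{\eta_x^2},\quad \frac{\rho_0^{\left(\frac{3}{2}-\varepsilon_0\right)\alpha}\bar U_{xx}U_x}{\eta_x},\text{ etc.}
\end{equation*}
Taking the $L^2$ norm and using $\frac{1}{2}\leq C^{-1}(T)\leq\eta_x\leq C(T)$, the term with $U_{tt}$ is bounded by $|\sqrt{\rho_0}U_{tt}|_2\leq C(T)$ from Lemma \ref{TE3}; the terms involving $U_t,U_{tx}$ reduce by Lemma \ref{U_x-U_t} and Lemma \ref{TE3} (together with $\varepsilon_0$ satisfying \eqref{varepsilon0}, which ensures the relevant weights are dominated by $\sqrt{\rho_0}$); the $U_{xx}$ contribution uses Lemma \ref{E.E.II}; and the $\eta_{xx},U_x$-type terms are controlled by $\|\rho_0^{(1-\varepsilon_0)\alpha}\eta_{xx}\|_\infty$ from Corollary \ref{H.E.J.II} combined with $L^\infty$ bounds on suitable weighted $U_x$ from Lemma \ref{U_x-U_t} and Lemma \ref{E.E.II}. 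This produces
\begin{equation*}
\Big|\rho_0^{\left(\frac{3}{2}-\varepsilon_0\right)\alpha}\partial_t U_{xx}+\tfrac{1}{\alpha}\rho_0^{\left(\frac{1}{2}-\varepsilon_0\right)\alpha}(\rho_0^\alpha)_x U_{tx}\Big|_2\leq C(T).
\end{equation*}

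The only remaining step is to remove the crossing term $\rho_0^{(\frac{1}{2}-\varepsilon_0)\alpha}(\rho_0^\alpha)_x U_{tx}$. When $0<\varepsilon_0<\frac{3\alpha-1}{2\alpha}$, Lemma \ref{U_x-U_t} directly gives $\rho_0^{(\frac{1}{2}-\varepsilon_0)\alpha}|(\rho_0^\alpha)_x U_{tx}|$ in $L^\infty_t L^2$ from $\sqrt{\rho_0}U_{tt},\sqrt{\rho_0}U_{tx},\sqrt{\rho_0}U_t\in L^\infty_t L^2$ (exactly as in \eqref{esti-tx}--\eqref{4453'}), finishing the proof immediately. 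The genuinely delicate case is the borderline $\varepsilon_0=\frac{3\alpha-1}{2\alpha}$: here Lemma \ref{U_x-U_t} only gives $\rho_0^{\frac{1}{2\alpha}-1+\iota}U_{tx}\in L^\infty_tL^2$ for every $\iota>0$ but \emph{not} for $\iota=0$. The fix, as in the local proof, is to multiply the crossing identity by an extra $\rho_0^{\iota\alpha}$, obtaining a bound on $\rho_0^{(\frac{3}{2}-\varepsilon_0+\iota)\alpha}\partial_t U_{xx}$, and then appeal to the cross-derivatives embedding (Proposition \ref{prop2.1}) with the already-known tangential control $|\sqrt{\rho_0}U_{tx}|_2\leq C(T)$ from Lemma \ref{TE3} to recover the full weight $(\frac{3}{2}-\varepsilon_0)\alpha$ without loss. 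Gathering the estimates yields the desired inequality. The hardest piece, as in the local theory, will be organizing the $\eta_{xx}$, $\partial_x^3\eta$ contributions so that every factor lands in a weighted $L^\infty$ or $L^2$ space already established in \S\ref{Section6}--\S\ref{subsection7.2}; the potentially dangerous factor $\rho_0^{(\frac{3}{2}-\varepsilon_0)\alpha}\bar U_{xx}U_x/\eta_x$ is handled by pulling out $\|U_x\|_\infty$ via Lemma \ref{U_x-U_t} and reserving the weight on $\bar U_{xx}$, while the $\eta_{xx}U_{tx}$ contribution needs $\|\rho_0^{(1-\varepsilon_0)\alpha}\eta_{xx}\|_\infty$ paired with $|\sqrt{\rho_0}U_{tx}|_2$, which is precisely compatible thanks to $\varepsilon_0<\frac{1}{\alpha}-1$ for $\alpha\in(\frac{1}{3},1)$ and $\varepsilon_0\in(0,1)$ for $\alpha=1$.
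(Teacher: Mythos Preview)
Your proposal is correct and follows essentially the same route as the paper. The paper derives the identical crossing relation
\[
\rho_0^{\left(\frac{3}{2}-\varepsilon_0\right)\alpha}\partial_t U_{xx}+\tfrac{1}{\alpha}\rho_0^{\left(\frac{1}{2}-\varepsilon_0\right)\alpha}(\rho_0^\alpha)_x U_{tx}=\cG_{22}+\cdots+\cG_{27},
\]
bounds the right-hand side by $C(T)$, and then removes the crossing term exactly as you describe: direct use of the pointwise $U_{tx}$ estimate from Lemma~\ref{U_x-U_t} when $\varepsilon_0<\frac{3\alpha-1}{2\alpha}$, and Proposition~\ref{prop2.1} together with $|\sqrt{\rho_0}\,U_{tx}|_2\le C(T)$ in the borderline case $\varepsilon_0=\frac{3\alpha-1}{2\alpha}$.

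Two cosmetic remarks. First, your remainder terms carry leftover bars ($\bar U_x$, $\bar U_{xx}$) from the linearized setting; in the nonlinear problem these are simply $U_x$, $U_{xx}$. Second, the paper organizes the remainder through the effective velocity $V$ (writing $H_x=\rho_0(V-U)$ and keeping $(U_x/\eta_x)_x$ in divergence form), so that Lemmas~\ref{boundv}, \ref{boundv'} and \ref{D.V.E} apply directly; your decomposition is the expanded version of the same identity and is equally valid once Lemma~\ref{E.E.II} and Corollary~\ref{H.E.J.II} are in hand. For the $\eta_{xx}U_{tx}$ term the paper pairs $\rho_0^{(\frac{1}{2}-\varepsilon_0)\alpha}\eta_{xx}\in L^2$ with $\rho_0^\alpha U_{tx}\in L^\infty$ (the latter from a pointwise bound in Lemma~\ref{U_x-U_t}), whereas you pair $\rho_0^{(1-\varepsilon_0)\alpha}\eta_{xx}\in L^\infty$ with $\sqrt{\rho_0}\,U_{tx}\in L^2$; both splittings close since $\alpha\le 1$.
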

\begin{proof}
Applying $\eta_x^2\rho_0^{\left(\frac{3}{2}-\varepsilon_0\right)\alpha-1}\partial_t$ to both sides of $\eqref{secondreformulation}_1$, along with \eqref{V-expression}, yields
\begin{equation}\label{Utxx-q}
\begin{aligned}
&\rho_0^{\left(\frac{3}{2}-\varepsilon_0\right)\alpha}\partial_t U_{xx}+\frac{1}{\alpha}\rho_0^{\left(\frac{1}{2}-\varepsilon_0\right)\alpha}(\rho_0^\alpha)_xU_{tx}\\
=&\eta_x^2\rho_0^{\left(\frac{3}{2}-\varepsilon_0\right)\alpha}U_{tt}- 4\rho_0^{\left(\frac{3}{2}-\varepsilon_0\right)\alpha+1}(V-U) U_x- 2\rho_0^{\left(\frac{3}{2}-\varepsilon_0\right)\alpha+1}\left(\frac{ U_x}{\eta_x}\right)_x\\
&+2\rho_0^{\left(\frac{3}{2}-\varepsilon_0\right)\alpha}(V-U) U_x^2 +4\rho_0^{\left(\frac{3}{2}-\varepsilon_0\right)\alpha}U_x\left(\frac{U_x}{\eta_x}\right)_x+2\rho_0^{\left(\frac{3}{2}-\varepsilon_0\right)\alpha}\frac{\eta_{xx}U_{tx}}{\eta_x}
:= \sum_{i=22}^{27} \cG_i.
\end{aligned}
\end{equation}

First, for $\cG_{22}$-$\cG_{26}$, it follows from \eqref{eq450}, Lemmas \ref{boundv}-\ref{L.P.E},  \ref{boundv'}, \ref{U.B.J}, \ref{U_x-U_t}-\ref{D.V.E} and \ref{hardy-inequality},  and Corollary \ref{H.E.J.II} that 
\begin{align}
\abs{\cG_{22}}_2&\leq C(T)\absb{\rho_0^{\left(\frac{3}{2}-\varepsilon_0\right)\alpha} U_{tt}}_2\leq C(T),\notag\\
\abs{\cG_{23}}_2&\leq C\left(\abs{\rho_0^\alpha V}_\infty+\abs{\rho_0^\alpha U}_\infty\right)\absb{\rho_0^{\left(\frac{1}{2}-\varepsilon_0\right)\alpha+1} U_x}_2\notag\\
&\leq C\bigg(\abs{\rho_0^\alpha V}_\infty+\sum_{j=0}^1\absb{\rho_0^\frac{3\alpha}{2} \partial_x^j U}_2\bigg)\absb{\rho_0^{\left(\frac{1}{2}-\varepsilon_0\right)\alpha+1} U_x}_2\leq C(T),\notag\\
\abs{\cG_{24}}_2&\leq C\abs{\rho_0^\alpha}_\infty^\frac{1}{\alpha}\absB{\rho_0^{\left(\frac{3}{2}-\varepsilon_0\right)\alpha}\left(\frac{U_x}{\eta_x}\right)_x}_2\leq C(T),\label{G27}\\
\abs{\cG_{25}}_2&\leq C\left(\abs{\rho_0^\alpha V}_\infty+\abs{\rho_0^\alpha U}_\infty\right)\abs{U_x}_\infty\absb{\rho_0^{\left(\frac{1}{2}-\varepsilon_0\right)\alpha} U_x}_2,\notag\\
&\leq C\bigg(\abs{\rho_0^\alpha V}_\infty+\sum_{j=0}^1\absb{\rho_0^\frac{3\alpha}{2} \partial_x^j U}_2\bigg)\abs{U_x}_\infty\sum_{j=1}^2\absb{\rho_0^{\left(\frac{3}{2}-\varepsilon_0\right)\alpha} \partial_x^j U}_2\leq C(T),\notag\\
\abs{\cG_{26}}_2&\leq \abs{U_x}_\infty\absB{\rho_0^{\left(\frac{3}{2}-\varepsilon_0\right)\alpha}\left(\frac{U_x}{\eta_x}\right)_x}_2\leq C(T).\notag
\end{align}

Next, for $\cG_{27}$, since $\alpha>\frac{1}{3}$, according to Lemmas \ref{U_x-U_t}, \ref{TE3} and \ref{hardy-inequality}, choosing $0<\iota\leq \frac{5\alpha-1}{2}$ in Lemma \ref{U_x-U_t}, one gets
\begin{equation}\label{equ745}
\absb{\rho_0^\frac{1-\alpha}{2} U_{tx}}_\infty\leq C(T)\bigg(1+\sum_{j=0}^1\absb{\rho_0^\frac{1}{2}\partial_x^j U_t}_2^2+\absb{\rho_0^\frac{1}{2} U_{tt}}_2\bigg)\leq C(T).
\end{equation}
Thus, it follows from Lemma  \ref{L.B.J}, Corollary \ref{H.E.J.II}, \eqref{equ745} and $\alpha>\frac{1-\alpha}{2}$ that
\begin{equation}\label{G277}
\abs{\cG_{27}}_2=\absB{2\rho_0^{\left(\frac{3}{2}-\varepsilon_0\right)\alpha}\frac{\eta_{xx}U_{tx}}{\eta_x}}_2\leq C\absb{\rho_0^{\left(\frac{1}{2}-\varepsilon_0\right)\alpha} \eta_{xx}}_{2}\abs{\rho_0^\alpha U_{tx}}_\infty\leq C(T).
\end{equation}

Then, according to  \eqref{Utxx-q}-\eqref{G27} and \eqref{G277}, one has
\begin{equation}\label{abbove}
\absB{\rho_0^{\left(\frac{3}{2}-\varepsilon_0\right)\alpha}\partial_t U_{xx}+\frac{1}{\alpha}\rho_0^{\left(\frac{1}{2}-\varepsilon_0\right)\alpha}(\rho_0^\alpha)_xU_{tx}}_2\leq C(T).
\end{equation}
Moreover,  if $0<\iota<3\alpha+1$ and $\frac{1}{3}<\alpha\leq 1$,  it follows from     Lemmas \ref{U_x-U_t}, \ref{TE3} and \ref{hardy-inequality} that 
\begin{equation}\label{above}
\absb{\rho_0^{\frac{1}{2}-\alpha+\iota}U_{tx}}_2\leq C(\iota,T)\Big(1+\sum_{j=0}^1\absb{\rho_0^\frac{1}{2}\partial_x^j U_t}_2^2+\absb{\rho_0^\frac{1}{2}U_{tt}}_2\Big)\leq C(\iota,T).
\end{equation}
Since $\rho_0\in L^\infty$, \eqref{above} actually holds for all $\iota> 0$ and $\frac{1}{3}<\alpha\leq 1$. Hence,  if $0<\varepsilon_0<\frac{3\alpha-1}{2\alpha}$, one can set $\iota=\frac{3\alpha-1}{2}-\varepsilon_0\alpha$ in \eqref{above}, and get from \eqref{abbove} that
\begin{equation}
\absb{\rho_0^{\left(\frac{3}{2}-\varepsilon_0\right)\alpha}\partial_t U_{xx}}_2\leq C(T);
\end{equation}
while, if $\varepsilon_0=\frac{3\alpha-1}{2\alpha}$, \eqref{abbove} can be reduced to
\begin{equation}\label{abbove'}
\absB{\rho_0^{\frac{1}{2}}\partial_t U_{xx}+\frac{1}{\alpha}\rho_0^{\frac{1}{2}-\alpha}(\rho_0^\alpha)_xU_{tx}}_2\leq C(T),
\end{equation}
which, along with Proposition \ref{prop2.1} and Lemma \ref{TE3}, implies that
\begin{equation}\label{equ7.47}
\begin{aligned}
\absb{\rho_0^{\frac{1}{2}}\partial_t U_{xx}}_2&\leq C\absB{\rho_0^{\frac{1}{2}}\partial_t U_{xx}+\frac{1}{\alpha}\rho_0^{\frac{1}{2}-\alpha}(\rho_0^\alpha)_xU_{tx}}_2+C\abs{(\rho_0^\alpha)_{xx}}_\infty \absb{\rho_0^{\frac{1}{2}}U_{tx}}_2
\leq C(T).
\end{aligned}
\end{equation}

The proof of Lemma \ref{E.E.III} is completed.
\end{proof}

\begin{Lemma}\label{E.E.IV}
For any $T>0$ and $\frac{1}{3}<\alpha\leq 1$, it holds that
\begin{equation*}
\absb{\rho_0^{\left(\frac{3}{2}-\varepsilon_0\right)\alpha}\partial_x^4 U(t)}_2\leq C(T) \ \ \text{for all }0\leq t\leq T,
\end{equation*}
where $\varepsilon_0$ is defined as in \eqref{varepsilon0}.
\end{Lemma}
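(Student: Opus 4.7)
The plan is to mimic the identity \eqref{3-154} from Section 3.4, but now at the global level, and close the resulting estimate by Gr\"onwall's inequality in the quantity $\absb{\rho_0^{(\frac{3}{2}-\varepsilon_0)\alpha}\partial_x^4 U}_2$. Concretely, multiply $\eqref{secondreformulation}_1$ by $\eta_x^2 \rho_0^{\alpha-1}$ and then apply $\rho_0^{(\frac{1}{2}-\varepsilon_0)\alpha}\partial_x^2$ to the resulting equation; as in \eqref{3-154} this produces
\[
\rho_0^{(\frac{3}{2}-\varepsilon_0)\alpha}\partial_x^4 U+\Big(\frac{1}{\alpha}+2\Big)\rho_0^{(\frac{1}{2}-\varepsilon_0)\alpha}(\rho_0^\alpha)_x\partial_x^3 U = \mathcal{R}(t,x),
\]
where $\mathcal{R}$ collects (i) the highest tangential term $\rho_0^{(\frac{3}{2}-\varepsilon_0)\alpha}\eta_x^2\partial_tU_{xx}$, (ii) terms with at most three spatial derivatives of $U$ and at most second derivatives of $\eta$, (iii) terms involving $\partial_x^3\eta$ or $\partial_x^4\eta$ multiplied by low-order factors in $U$, and (iv) purely algebraic terms in $\rho_0^\alpha$ (note that when $\alpha=1$ the singular-looking $\rho_0^{\frac{1}{\alpha}-\frac{3}{2}-\varepsilon_0}((\rho_0^\alpha)_x)^3$ piece vanishes, and for $\alpha\in(\frac{1}{3},1)$ it is integrable by the choice \eqref{varepsilon0}).

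First, all terms in $\mathcal{R}$ not involving $\partial_x^4\eta$ will be controlled by $C(T)$ using the already-established bounds: Lemma~\ref{E.E.III} for $\partial_t U_{xx}$, Lemma~\ref{E.E.II} and Corollary~\ref{H.E.J.II} for the third-order spatial quantities, Lemmas~\ref{TE2}--\ref{TE3} together with Lemma~\ref{U_x-U_t} and \eqref{eq450}, \eqref{equ745} for tangential derivatives of $U$, and Lemmas~\ref{boundv'} and \ref{U.B.J} for the lower-order factors; here the weight choice \eqref{varepsilon0} is exactly what makes each such term $L^2$-integrable when we distribute the weight $\rho_0^{(\frac{1}{2}-\varepsilon_0)\alpha}$ from the differentiation onto the lowest-regularity factor (as was already carried out in \eqref{equ3.156} for the linear problem).

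The only non-autonomous piece is the contribution of $\partial_x^4\eta$, appearing through terms of the form $\rho_0^{(\frac{3}{2}-\varepsilon_0)\alpha}\partial_x^4\eta\cdot U_x/\eta_x$ and $\rho_0^{(\frac{3}{2}-\varepsilon_0)\alpha+1}\partial_x^4\eta/\eta_x$. For these, use
\[
\partial_x^4\eta(t,x)=\int_0^t \partial_x^4 U(s,x)\,\ds,
\]
together with Lemma~\ref{U.B.J}, Lemma~\ref{L.B.J}, \eqref{eq450}, and Lemma~\ref{bound of density}, to bound their $L^2$ norms by $\int_0^t \absb{\rho_0^{(\frac{3}{2}-\varepsilon_0)\alpha}\partial_x^4 U}_2\,\ds$. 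As indicated in the strategy, Hardy's inequality (Lemma~\ref{hardy-inequality}) is the tool that lets us absorb $\absb{\rho_0^{(\frac{1}{2}-\varepsilon_0)\alpha}\partial_x^3\eta}_2$ and $\normf{\eta_{xx}}_{1,1}$ by $\absb{\rho_0^{(\frac{3}{2}-\varepsilon_0)\alpha}\partial_x^4\eta}_2$ whenever such pieces appear inside $\mathcal{R}$ after the coefficient factors have been accounted for.

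Combining these bounds yields
\[
\absB{\rho_0^{(\frac{3}{2}-\varepsilon_0)\alpha}\partial_x^4 U+\Big(\frac{1}{\alpha}+2\Big)\rho_0^{(\frac{1}{2}-\varepsilon_0)\alpha}(\rho_0^\alpha)_x\partial_x^3 U}_2 \leq C(T)\Big(1+\int_0^t\absb{\rho_0^{(\frac{3}{2}-\varepsilon_0)\alpha}\partial_x^4 U}_2\,\ds\Big).
\]
Now apply Proposition~\ref{prop2.1} (the cross-derivatives embedding) to pass from the crossing combination to the pure fourth-order term, using the already-proven bound on $\absb{\rho_0^{(\frac{3}{2}-\varepsilon_0)\alpha}\partial_x^3 U}_2$ from Lemma~\ref{E.E.II} to control the resulting lower-order piece. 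This gives
\[
\absb{\rho_0^{(\frac{3}{2}-\varepsilon_0)\alpha}\partial_x^4 U(t)}_2\leq C(T)\Big(1+\int_0^t\absb{\rho_0^{(\frac{3}{2}-\varepsilon_0)\alpha}\partial_x^4 U}_2\,\ds\Big),
\]
and Gr\"onwall's inequality closes the estimate. The main obstacle is precisely the $\partial_x^4\eta$ terms: unlike $\partial_x^3\eta$ or $\eta_{xx}$, we have no a priori weighted $L^2$ bound on them, so the whole argument must be set up as an integral inequality for $\absb{\rho_0^{(\frac{3}{2}-\varepsilon_0)\alpha}\partial_x^4 U}_2$ itself, with the weight power $(\frac{3}{2}-\varepsilon_0)\alpha$ chosen (via \eqref{varepsilon0}) just small enough for the algebraic $\rho_0^{\frac{1}{\alpha}-\frac{3}{2}-\varepsilon_0}((\rho_0^\alpha)_x)^3$ term in the case $\alpha<1$ to be integrable and large enough so that the lower-order pieces of $\mathcal{R}$ are all uniformly bounded in time.
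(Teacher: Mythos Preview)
Your proposal is correct and follows essentially the same route as the paper: derive the analogue of \eqref{3-154} with $\eta$ in place of $\bar\eta$, bound each piece of the right-hand side by $C(T)\big(1+\int_0^t\absb{\rho_0^{(\frac{3}{2}-\varepsilon_0)\alpha}\partial_x^4 U}_2\,\ds\big)$, apply Proposition~\ref{prop2.1} together with Lemma~\ref{E.E.II}, and close by Gr\"onwall. One small clarification: your claim that ``all terms in $\mathcal{R}$ not involving $\partial_x^4\eta$ will be controlled by $C(T)$'' is not quite accurate---several of those terms contain factors like $\abs{\eta_{xx}}_\infty$ or $\absb{\rho_0^{(\frac{1}{2}-\varepsilon_0)\alpha}\partial_x^3\eta}_2$ that are \emph{not} covered by Corollary~\ref{H.E.J.II} and must themselves be bounded via Hardy by $\absb{\rho_0^{(\frac{3}{2}-\varepsilon_0)\alpha}\partial_x^4\eta}_2$ (the paper records this as \eqref{w11}), so the integral term appears in more places than just the explicit $\partial_x^4\eta$ contributions; you acknowledge this mechanism later, so the argument still closes.
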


\begin{proof}
Similar to the derivation of \eqref{3-154}, one can replace  $\bar\eta$ in \eqref{3-154} with  $\eta$ to obtain that
\begin{align}
&\rho_0^{\left(\frac{3}{2}-\varepsilon_0\right)\alpha}\partial_x^4 U+\Big(\frac{1}{\alpha}+2\Big)\rho_0^{\left(\frac{1}{2}-\varepsilon_0\right)\alpha}(\rho_0^\alpha)_x\partial_x^3 U\notag\\
=&\underline{\rho_0^{\left(\frac{3}{2}-\varepsilon_0\right)\alpha} \eta_x^2\partial_tU_{xx}+2\rho_0^{\left(\frac{1}{2}-\varepsilon_0\right)\alpha}\left((\rho_0^\alpha)_x+2\rho_0^\alpha \eta_{xx}\right) \eta_x U_{tx}+2\rho_0^{\left(\frac{3}{2}-\varepsilon_0\right)\alpha}\left(\eta_{xx}^2+\eta_x\partial_x^3\eta\right)U_t}_{:=\cG_{28}}\notag\\
&\underline{+\rho_0^{\left(\frac{1}{2}-\varepsilon_0\right)\alpha}\left((\rho_0^\alpha)_{xx}\eta_x+4(\rho_0^\alpha)_x\eta_{xx}\right)\eta_xU_t -\Big(\frac{2}{\alpha}+1\Big)\rho_0^{\left(\frac{1}{2}-\varepsilon_0\right)\alpha}(\rho_0^\alpha)_{xx} U_{xx}}_{:=\cG_{29}}\notag\\
& \underline{+\frac{1}{\alpha}\rho_0^{\left(\frac{1}{2}-\varepsilon_0\right)\alpha}\partial_x^3\rho_0^\alpha U_x+2\rho_0^{\left(\frac{1}{2}-\varepsilon_0\right)\alpha}\Big((\rho_0^\alpha)_{xx}\eta_{xx}+2(\rho_0^\alpha)_x\partial_x^3\eta -\frac{2(\rho_0^\alpha)_x\eta_{xx}^2}{\eta_x}\Big)\frac{U_x}{\eta_x}}_{:=\cG_{30}}\notag\\
&\underline{+2\rho_0^{\left(\frac{3}{2}-\varepsilon_0\right)\alpha}\Big(\partial_x^4\eta
-\frac{3\eta_{xx}\partial_x^3\eta}{\eta_x}
+\frac{2\eta_{xx}^3}{\eta_x^2}\Big)\frac{U_x}{\eta_x}}_{:=\cG_{31}}\label{xxxx}\\
& \underline{+4\rho_0^{\left(\frac{1}{2}-\varepsilon_0\right)\alpha}\Big(\rho_0^\alpha\partial_x^3\eta+(\rho_0^\alpha)_x\eta_{xx}-\frac{\rho_0^\alpha\eta_{xx}^2}{\eta_x}\Big)\frac{U_{xx}}{\eta_x}+2\rho_0^{\left(\frac{3}{2}-\varepsilon_0\right)\alpha}\frac{\eta_{xx}\partial_x^3U}{\eta_x}}_{:=\cG_{32}}\notag\\
&\underline{-\frac{2+2\alpha}{\alpha^2}\frac{\rho_0^{1-\frac{\alpha}{2}-\varepsilon_0\alpha}(\rho_0^\alpha)_x^2\eta_{xx}}{\eta_x}+2\rho_0^{\left(\frac{3}{2}-\varepsilon_0\right)\alpha+1}\Big(\frac{\partial_x^4\eta}{\eta_x}-\frac{3\eta_{xx}\partial_x^3\eta}{\eta_x^2}+\frac{2\eta_{xx}^3}{\eta_x^3}\Big)}_{:=\cG_{33}}\notag\\
&\underline{-\frac{2+2\alpha}{\alpha}\rho_0^{\left(\frac{1}{2}-\varepsilon_0\right)\alpha+1}\Big(\frac{(\rho_0^\alpha)_{xx}\eta_{xx}}{\eta_x}+\frac{2(\rho_0^\alpha)_x\partial_x^3\eta}{\eta_x}-\frac{2(\rho_0^\alpha)_x\eta_{xx}^2}{\eta_x^2}\Big)}_{:=\cG_{34}}\notag\\
&\underline{+\frac{2(1-\alpha)}{\alpha^3}\rho_0^{1-\frac{3\alpha}{2}-\varepsilon_0\alpha}(\rho_0^\alpha)_x^3+\frac{6}{\alpha^2}\rho_0^{1-\frac{\alpha}{2}-\varepsilon_0\alpha}(\rho_0^\alpha)_x(\rho_0^\alpha)_{xx}+\frac{2}{\alpha}\rho_0^{1+\frac{\alpha}{2}-\varepsilon_0\alpha}\partial_x^3\rho_0^\alpha}_{:=\cG_{35}}.\notag
\end{align}

For $\cG_{28}$, since $\alpha>\frac{1}{3}$, one gets from Lemmas \ref{U.B.J}, \ref{TE2}-\ref{TE3}, \ref{E.E.II}-\ref{E.E.III} and \ref{sobolev-embedding}-\ref{hardy-inequality}, and Corollary \ref{H.E.J.II} that
\begin{align}
\abs{\cG_{28}}_2&\leq C(T)\absb{\rho_0^{\left(\frac{3}{2}-\varepsilon_0\right)\alpha} \partial_tU_{xx}}_2+C(T)\left(\abs{(\rho_0^\alpha)_x}_\infty+\abs{\rho_0^\alpha\eta_{xx}}_\infty\right)\absb{\rho_0^{\left(\frac{1}{2}-\varepsilon_0\right)\alpha}U_{tx}}_2\notag\\
&\quad +C(T)\Big(\absb{\rho_0^{\left(\frac{1}{2}-\varepsilon_0\right)\alpha}\eta_{xx}}_2\abs{\eta_{xx}}_\infty+\absb{\rho_0^{\left(\frac{1}{2}-\varepsilon_0\right)\alpha}\partial_x^3 \eta}_2\Big)\abs{\rho_0^\alpha U_t}_\infty\notag\\
&\leq C(T)\Big(1+\norm{\eta_{xx}}_{1,1}+\absb{\rho_0^{\left(\frac{3}{2}-\varepsilon_0\right)\alpha}\partial_x^4 \eta}_2\Big)\sum_{j=0}^2\absb{\rho_0^\frac{3\alpha}{2}\partial_x^j U_t}_2\label{equ7.49}\\
&\leq C(T)\big(1+\absb{\rho_0^{\left(\frac{3}{2}-\varepsilon_0\right)\alpha}\partial_x^4 \eta}_2\big)\leq C(T)\Big(1+\int_0^t\absb{\rho_0^{\left(\frac{3}{2}-\varepsilon_0\right)\alpha}\partial_x^4 U}_2\,\ds\Big),\notag
\end{align}
where one has used the fact that 
\begin{equation}\label{w11}
\normf{\eta_{xx}}_{1,1}\leq C\sum_{j=2}^4\absb{\rho_0^{\left(\frac{3}{2}-\varepsilon_0\right)\alpha}\partial_x^j \eta}_2\leq C(T)+ C\absb{\rho_0^{\left(\frac{3}{2}-\varepsilon_0\right)\alpha}\partial_x^4 \eta}_2.
\end{equation}

Similar to  $\cG_{28}$, for $\cG_{29}$-$\cG_{32}$, according to \eqref{eq450}, \eqref{w11}, Lemmas \ref{L.B.J}, \ref{U.B.J}, \ref{TE2}-\ref{TE3}, \ref{E.E.II}-\ref{E.E.III} and \ref{sobolev-embedding}-\ref{hardy-inequality}, Corollary \ref{H.E.J.II}, the range of $\varepsilon_0$ defined in \eqref{varepsilon0}, and $\alpha>\frac{1}{3}$, one gets that
\begin{align}
\abs{\cG_{29}}_2&\leq C(T)\left(\abs{(\rho_0^\alpha)_{xx}}_\infty +\abs{(\rho_0^\alpha)_x}_\infty\abs{\eta_{xx}}_\infty\right)\absb{\rho_0^{\left(\frac{1}{2}-\varepsilon_0\right)\alpha}U_t}_2\notag\\
&\quad + C\abs{(\rho_0^\alpha)_{xx}}_\infty \absb{\rho_0^{\left(\frac{1}{2}-\varepsilon_0\right)\alpha}U_{xx}}_2\notag\\
&\leq C(T)\big(1+\norm{\eta_{xx}}_{1,1}\big)\leq C(T)\Big(1+\int_0^t\absb{\rho_0^{\left(\frac{3}{2}-\varepsilon_0\right)\alpha}\partial_x^4 U}_2\,\ds\Big),\notag\\
\abs{\cG_{30}}_2&\leq C\norm{\rho_0^\alpha}_3\absb{\rho_0^{\left(\frac{1}{2}-\varepsilon_0\right)\alpha}U_x}_\infty+C(T)\abs{(\rho_0^\alpha)_{xx}}_\infty\abs{\eta_{xx}}_\infty\absb{\rho_0^{\left(\frac{1}{2}-\varepsilon_0\right)\alpha}U_x}_2\notag\\
&\quad + C(T)\abs{(\rho_0^\alpha)_x}_\infty\absb{\rho_0^{\left(\frac{1}{2}-\varepsilon_0\right)\alpha}\partial_x^3\eta}_2\abs{U_x}_\infty\notag\\
&\quad + C(T)\abs{(\rho_0^\alpha)_x}_\infty\abs{\eta_{xx}}_\infty\absb{\rho_0^{\left(\frac{1}{2}-\varepsilon_0\right)\alpha}\eta_{xx}}_2\abs{U_x}_\infty\notag\\
&\leq C(T)\Big(1+\norm{\eta_{xx}}_{1,1}+\absb{\rho_0^{\left(\frac{3}{2}-\varepsilon_0\right)\alpha}\partial_x^4\eta}_2\Big)\notag\\
&\leq C(T)\Big(1+\int_0^t\absb{\rho_0^{\left(\frac{3}{2}-\varepsilon_0\right)\alpha}\partial_x^4 U}_2\,\ds\Big),\label{equ7.50}\\
\abs{\cG_{31}}_2&\leq C(T) \absb{\rho_0^{\left(\frac{3}{2}-\varepsilon_0\right)\alpha}\partial_x^4\eta}_2\abs{U_x}_\infty+C(T)\absb{\rho_0^{\left(\frac{3}{2}-\varepsilon_0\right)\alpha}\partial_x^3\eta}_2\abs{\eta_{xx}}_\infty\abs{U_x}_\infty\notag\\
&\quad +C(T)\absb{\rho_0^{\left(\frac{1}{2}-\varepsilon_0\right)\alpha} \eta_{xx}}_2\abs{\rho_0^\alpha\eta_{xx}}_\infty\abs{\eta_{xx}}_\infty\abs{U_x}_\infty\notag\\
&\leq C(T)\Big(1+\norm{\eta_{xx}}_{1,1}+\absb{\rho_0^{\left(\frac{3}{2}-\varepsilon_0\right)\alpha}\partial_x^4\eta}_2\Big)\notag\\
&\leq C(T)\Big(1+\int_0^t\absb{\rho_0^{\left(\frac{3}{2}-\varepsilon_0\right)\alpha}\partial_x^4 U}_2\,\ds\Big),\notag\\
\abs{\cG_{32}}_2&\leq C(T) \left(\abs{\rho_0^{\alpha}\partial_x^3 \eta}_\infty+\abs{(\rho_0^\alpha)_x}_\infty\abs{\eta_{xx}}_\infty+\abs{\rho_0^{\alpha}\eta_{xx}}_\infty\abs{\eta_{xx}}_\infty\right)\absb{\rho_0^{\left(\frac{1}{2}-\varepsilon_0\right)\alpha}U_{xx}}_2\notag\\
&\quad + C(T) \abs{\eta_{xx}}_\infty\absb{\rho_0^{\left(\frac{3}{2}-\varepsilon_0\right)\alpha}\partial_x^3 U}_2\notag\\
&\leq C(T)\Big(1+\norm{\eta_{xx}}_{1,1}+\absb{\rho_0^{\left(\frac{3}{2}-\varepsilon_0\right)\alpha}\partial_x^4\eta}_2\Big)\notag\\
&\leq C(T)\Big(1+\int_0^t\absb{\rho_0^{\left(\frac{3}{2}-\varepsilon_0\right)\alpha}\partial_x^4 U}_2\,\ds\Big).\notag
\end{align}

For $\cG_{33}$-$\cG_{35}$, via the similar arguments for dealing with $\cG_{28}$-$\cG_{32}$, one can obtain 
\begin{equation}\label{equ7.51}
\abs{\cG_{33}}_2+\abs{\cG_{34}}_2+\abs{\cG_{35}}_2\leq C(T)\Big(1+\int_0^t\absb{\rho_0^{\left(\frac{3}{2}-\varepsilon_0\right)\alpha}\partial_x^4 U}_2\,\ds\Big).
\end{equation}
It should be pointed out here that, when $\frac{1}{3}<\alpha<1$, the condition $\varepsilon_0<\frac{1}{\alpha}-1$ has been used to ensure that 
$\rho_0^{1-\frac{3\alpha}{2}-\varepsilon_0\alpha}\in L^2$ in obtaining the $L^2$-norm of $\cG_{35}$ .

Thus, it follows from \eqref{xxxx}-\eqref{equ7.49} and \eqref{equ7.50}-\eqref{equ7.51} that
\begin{equation*}
\begin{aligned}
&\absB{\rho_0^{\left(\frac{3}{2}-\varepsilon_0\right)\alpha}\partial_x^4 U+\Big(\frac{1}{\alpha}+2\Big)\rho_0^{\left(\frac{1}{2}-\varepsilon_0\right)\alpha}(\rho_0^\alpha)_x\partial_x^3 U}_2
\leq C(T)\Big(1+\int_0^t\absb{\rho_0^{\left(\frac{3}{2}-\varepsilon_0\right)\alpha}\partial_x^4 U}_2\,\ds\Big),
\end{aligned}
\end{equation*}
which, along with Proposition \ref{prop2.1} and Lemma \ref{E.E.II}, implies that

\begin{align}
\absb{\rho_0^{\left(\frac{3}{2}-\varepsilon_0\right)\alpha}\partial_x^4 U}_2&\leq C\absB{\rho_0^{\left(\frac{3}{2}-\varepsilon_0\right)\alpha}\partial_x^4 U+\Big(\frac{1}{\alpha}+2\Big)\rho_0^{\left(\frac{1}{2}-\varepsilon_0\right)\alpha}(\rho_0^\alpha)_x\partial_x^3 U}_2\notag\\
&\quad + C\abs{(\rho_0^\alpha)_{xx}}_\infty\absb{\rho_0^{\left(\frac{3}{2}-\varepsilon_0\right)\alpha}\partial_x^3 U}_2\label{equ7.52}\\
&\leq C(T)\Big(1+\int_0^t\absb{\rho_0^{\left(\frac{3}{2}-\varepsilon_0\right)\alpha}\partial_x^4 U}_2\,\ds\Big).\notag
\end{align}

Finally, it follows from Gr\"onwall's inequality and \eqref{equ7.52} that
\begin{equation}
\big|\rho_0^{\left(\frac{3}{2}-\varepsilon_0\right)\alpha}\partial_x^4 U(t)\big|_2\leq C(T) \ \ \text{for all }0\leq t\leq T.
\end{equation}

The proof of Lemma \ref{E.E.IV} is completed.
\end{proof}

\section{Global-in-time well-posedness of the nonlinear problem}\label{Section8}

Based on the local well-posedness  in Theorem \ref{theorem3.1} ii) and the global estimates established in \S 6-\S 7, now we are ready to prove Theorems \ref{Theorem1.1} and \ref{Theorem1.4}.

\subsection{Proof of Theorem \ref{Theorem1.1}}
Assume that $\overline T_{*}$ is the  life span of the local-in-time classical solution $U$ obtained in Theorem \ref{theorem3.1} ii). Of course, $\overline T_{*}\geq T_*$. 

Now, we claim that $\overline T_{*}=\infty$.
Otherwise, if $\overline T_{*}<\infty$, collecting Lemmas \ref{TTE1}-\ref{TE3} and \ref{E.E.II}-\ref{E.E.IV} yields that
\begin{equation*}
\sup_{t\in[0,\overline T_{*})}\widetilde E(t,U)\leq C(\overline T_{*}),
\end{equation*}
where $ C(\overline T_{*})$ is a positive constant depending  on  $\alpha$, $\varepsilon_0$, $|I|$,  $(\rho_0,u_0)$ and $\overline T_{*}$.

Then, it follows from the weak compactness arguments that for any time sequence $0<t_k<\overline T_{*}$ with $t_k\to \overline T_{*}^-$, there exists a subsequence $t_{k_\ell}$ and function $U(\overline T_{*},x)$ such that
\begin{equation*}
\begin{aligned}
\rho_0^\frac{1}{2} \partial_t^j U(t_{k_\ell},x) \rightharpoonup \rho_0^\frac{1}{2} \partial_t^j U(\overline T_{*},x) \quad &\text{weakly in } L^2,\quad j=0,1,2;\\
\rho_0^\frac{1}{2} U_{tx} (t_{k_\ell},x) \rightharpoonup \rho_0^\frac{1}{2} U_{tx} (\overline T_{*},x) \quad &\text{weakly in }L^2;\\
\rho_0^{\left(\frac{3}{2}-\varepsilon_0\right)\alpha} \partial_x^j U(t_{k_\ell},x) \rightharpoonup \rho_0^{\left(\frac{3}{2}-\varepsilon_0\right)\alpha}\partial_x^j U(\overline T_{*},x) \quad &\text{weakly in } L^2,\quad j=2,3,4;\\
\rho_0^{\left(\frac{3}{2}-\varepsilon_0\right)\alpha} \partial_t U_{xx}(t_{k_\ell},x) \rightharpoonup \rho_0^{\left(\frac{3}{2}-\varepsilon_0\right)\alpha} \partial_t U_{xx}(\overline T_{*},x) \quad &\text{weakly in }L^2.
\end{aligned}
\end{equation*}
Thus, by the lower semi-continuity of the weak convergence,  one has
\begin{equation*}
\widetilde E (\overline T_{*},U)\leq \liminf_{t_{k_\ell}\to \overline{T}_*^-}\widetilde E(t_{k_\ell},U)<\infty,
\end{equation*}
which implies that $U(\overline T_{*},x)$ satisfies all the initial assumptions in Theorem \ref{Theorem1.1}. Consequently, using Theorem \ref{theorem3.1} ii), there exists a positive time $T_0$ such that $U$ becomes the unique classical solution of \eqref{fp} on time interval $[0,\overline T_{*}+T_0]$, which contradicts to the maximality of $\overline T_{*}$. Therefore, $\overline{T}_*=\infty$. The proof of Theorem \ref{Theorem1.1} is completed.

\subsection{Proof of Theorem \ref{Theorem1.4}}

Based on the proof in \S \ref{proof1.3}, one can deduce from Theorem \ref{Theorem1.1} and \eqref{uty} that Theorem \ref{Theorem1.4} holds.

\section{Non-existence of global solutions with \texorpdfstring{$L^\infty$}{} decay on the velocity}\label{section9}

This section will be devoted to  the  proof of Theorem \ref{th:2.20-c}.  For some positive time $T$, let $ (\rho,u)(t,y)$ in $\overline{\mathbb{I}(T)}$ be the global  classical solution obtained in Theorem \ref{Theorem1.4}. Define the following physical quantities:
\begin{alignat*}{2}
m(t)&=\intt \rho(t,y)\,\dy \ \ \textrm{(total mass)},\ \ 
\PP(t)=\intt \rho(t,y)u(t,y)\,\dy\ \ \textrm{(momentum)},\\
E_k(t)&=\frac{1}{2}\intt \rho(t,y)u^2(t,y)\,\dy \ \ \textrm{(total kinetic energy)}.
\end{alignat*}

First, one shows that $ (\rho,u)$ satisfies  the
laws of conservation of $m(t)$, and $\PP(t)$.
\begin{Lemma}
\label{lemmak} For any $T>0$, it holds that 
$$ m(t)=m(0),\ \  \mathbb{P}(t)=\mathbb{P}(0)\ \  and \ \  E_k(t)<\infty \ \  {\rm{for}} \quad t\in [0,T]. $$
\end{Lemma}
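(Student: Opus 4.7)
The plan is to recast all three quantities in Lagrangian coordinates via the flow map $\eta(t,x)$ and the identity $H\eta_x = \rho_0$ from \eqref{HHH}, and then exploit $\eqref{secondreformulation}_1$ together with the Neumann boundary condition \eqref{N111} already obtained for the global classical solution.

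First, I would handle mass conservation. By the change of variables $y = \eta(t,x)$ and \eqref{HHH},
\begin{equation*}
m(t) = \int_{I(t)} \rho(t,y)\,\dy = \int_I H(t,x)\,\eta_x(t,x)\,\dx = \int_I \rho_0(x)\,\dx = m(0),
\end{equation*}
so $m(t)$ is constant in time without any calculation involving the momentum equation. For the kinetic energy, the same change of variables gives $E_k(t) = \tfrac12\int_I \rho_0 U^2\,\dx$, and the fundamental energy estimate \eqref{energy-e} in Lemma \ref{BD-entropy} (which was established for all $0<\alpha\le1$) immediately yields $E_k(t)\le C$ for $t\in[0,T]$.

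The main step is momentum conservation. Again by the change of variables, $\PP(t) = \int_I H U \eta_x\,\dx = \int_I \rho_0 U\,\dx$. Using the regularity $U\in \mathscr{C}([0,T];\widetilde E)$ from Theorem \ref{Theorem1.1} to differentiate under the integral, and then applying $\eqref{secondreformulation}_1$ in the form
\begin{equation*}
\rho_0 U_t = \left(\frac{\rho_0 U_x}{\eta_x^2} - \frac{\rho_0^2}{\eta_x^2}\right)_x
\end{equation*}
(valid pointwise by Theorem \ref{theorem3.1}), I would integrate over $I$ to obtain
\begin{equation*}
\frac{\mathrm{d}}{\dt}\PP(t) = \left.\left(\frac{\rho_0 U_x}{\eta_x^2} - \frac{\rho_0^2}{\eta_x^2}\right)\right|_{x=0}^{x=1}.
\end{equation*}
The key observation is then that both boundary terms vanish: $\rho_0$ vanishes on $\Gamma$ by \eqref{distance}, $U_x$ vanishes on $\Gamma$ by the Neumann condition \eqref{N111} established in Theorem \ref{theorem3.1} (and inherited by the global solution), and $\eta_x$ is bounded below uniformly in $[0,T]\times\bar I$ by Lemma \ref{L.B.J}. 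Hence $\frac{\mathrm{d}}{\dt}\PP(t)\equiv0$, giving $\PP(t)=\PP(0)$.

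The potentially delicate point is that the boundary evaluation requires the quantities $\rho_0 U_x/\eta_x^2$ and $\rho_0^2/\eta_x^2$ to be continuous up to $\bar I$; but this is guaranteed by the regularity $U\in C([0,T];W^{3,1})\subset C([0,T];C^2(\bar I))$ together with $\rho_0^\alpha\in C^2(\bar I)$ and $1/\eta_x\in C([0,T]\times\bar I)$, all of which have been established. No further global estimates are needed beyond those already in hand from Sections 6--8 and the statement of Theorem \ref{Theorem1.1}.
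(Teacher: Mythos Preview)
Your proof is correct and follows essentially the same idea as the paper's: both exploit the divergence structure of the equations so that integration reduces to boundary terms which vanish because the density is zero on the free boundary. The only difference is that the paper carries this out in Eulerian coordinates (integrating $(\rho u^2+\rho^2-\rho u_y)_y$ over $I(t)$), while you pass to Lagrangian coordinates first; your route has the minor advantage that mass conservation becomes the pure identity $\int_I H\eta_x\,\dx=\int_I\rho_0\,\dx$ and the kinetic-energy bound reads off immediately from \eqref{energy-e}, whereas the paper treats these by the ``similar argument'' in Eulerian form. Note also that invoking \eqref{N111} for the boundary term is harmless but unnecessary: since $\rho_0=0$ on $\Gamma$ and $U_x,\eta_x^{-1}\in C(\bar I)$, the factor $\rho_0$ alone already kills $\rho_0 U_x/\eta_x^2$ at the boundary.
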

\begin{proof}

First, one can obtain that 
\begin{equation}\label{finite}
\mathbb{P}(t)=\intt  \rho u\,\dy \leq \sup_{y\in I(t)}|u(t,y)|\intt \rho\,\dy<\infty.
\end{equation}

Second, the momentum equation $\eqref{shallow}_2$ implies  that
\begin{equation}\label{deng1}
\mathbb{P}_t=-\intt (\rho u^2 )_y\,\dy-\intt (\rho^2)_y\,\dy+\intt (\rho u_y)_y\,\dy=0.
\end{equation}

The conservation of the total mass and the boundedness of the kinetic energy can be proved via the similar argument. The proof of Lemma \ref{lemmak} is completed.
\end{proof}

Then it  follows from the definitions of  $m(t)$, $\mathbb{P}(t)$ and $E_k(t)$ that
$$
 |\mathbb{P}(t)|\leq \intt \rho(t,y)|u(t,y)|\,\dy\leq  \sqrt{2m(t)E_k(t)},
$$
which, along  with Lemma \ref{lemmak}, implies that
$$
0<\frac{|\mathbb{P}(0)|^2}{2m(0)}\leq E_k(t)\leq \frac{1}{2} m(0)\sup_{y\in I(t)}|u(t,y)|^2 \quad \text{for} \quad t\in [0,T].
$$
Therefore, the proof of Theorem \ref{th:2.20-c} is completed.


\appendix
\section{Some basic lemmas}\label{appendix A}

For the convenience of readers, we list some basic facts which have been used frequently in this paper. Through out of Appendixes A-E, let $I$, $\Gamma=\partial I$, $d=d(x)=\text{dist}(x,\Gamma)$ denotes the distance  function from $x\in \bar I$ to $\Gamma$ and $0\leq \phi_0=\rho_0^\alpha\sim d(x)$ satisfying  $\phi_0\in H^3$ be defined as in \S \ref{section1}, and  $\abs{I}$ denotes the Lebesgue measure of $I$.

The first one is on the separability and density of the weighted Sobolev spaces. 
\begin{Lemma}\cite{kufner}\label{W-space}
Let $k\in \ZZ$ and $s>0$. Then 
\begin{enumerate}
\item[$\mathrm{i)}$] $H^k_{d^s}$ is a reflexive separable Banach space;
\item[$\mathrm{ii)}$] $C^\infty(\bar I)$ is dense in $H^k_{d^s}$ with respect to the norm $\norm{\cdot}_{k,d^s}$ for $k\geq 0$.
\end{enumerate}\end{Lemma}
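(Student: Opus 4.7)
The plan is to treat the two parts separately, exploiting that $d^s$ is a continuous weight on $\bar I$ which vanishes only on $\Gamma$, so that on any set $\{x : d(x) \ge \delta\}$ the weighted and unweighted $L^2$-norms are equivalent. This reduces most questions to classical statements plus a careful control of behavior near $\Gamma$.

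For part i), I would isometrically embed $H^k_{d^s}$ into the product Hilbert space $X := \prod_{j=0}^{k} L^2_{d^s}$ by the map $f \mapsto (f, \partial_x f, \ldots, \partial_x^k f)$ (with weak derivatives when $k > 0$, and with an analogous treatment using duality for $k < 0$). Each factor $L^2_{d^s}$ is isometrically isomorphic to $L^2(I, d^s\,\dx)$, a Hilbert $L^2$-space over a $\sigma$-finite (in fact finite, since $d^s$ is bounded on the bounded interval $I$) measure, hence separable and reflexive. A finite product of separable reflexive Hilbert spaces is again separable and reflexive. The image of $H^k_{d^s}$ in $X$ is closed — this is just the statement that weak derivatives pass to limits in weighted $L^2$ — and closed subspaces of separable reflexive Banach spaces inherit both properties. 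Completeness of $H^k_{d^s}$ follows from the same embedding. The only slightly delicate point is verifying closedness of the image, which amounts to checking that if $d^{s/2}f_n \to d^{s/2}f$ and $d^{s/2}\partial_x^j f_n \to d^{s/2}g_j$ in $L^2$, then $g_j = \partial_x^j f$ in the distributional sense; this is immediate because $d^{s/2}$ is strictly positive on every compact $K \Subset I$, so convergence on interior test functions is preserved.

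For part ii), the strategy is the classical two-step cutoff-and-mollify procedure. Fix $f \in H^k_{d^s}$ with $k \ge 0$. Choose a family of cutoffs $\chi_\varepsilon \in C^\infty(\bar I)$ with $\chi_\varepsilon \equiv 0$ on $\{d \le \varepsilon/2\}$, $\chi_\varepsilon \equiv 1$ on $\{d \ge \varepsilon\}$, and $|\partial_x^j \chi_\varepsilon| \le C_j \varepsilon^{-j}$. Set $f_\varepsilon := \chi_\varepsilon f$, which is supported in a compact subset of $I$, and mollify it to obtain $f_\varepsilon^{(\delta)} \in C_c^\infty(I) \subset C^\infty(\bar I)$ with mollification parameter $\delta \ll \varepsilon$. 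Standard mollification bounds give $f_\varepsilon^{(\delta)} \to f_\varepsilon$ in $H^k$ (and hence in $H^k_{d^s}$, since the supports are fixed away from $\Gamma$ where $d^s$ is bounded above and below) as $\delta \to 0$. It therefore suffices to show $f_\varepsilon \to f$ in $H^k_{d^s}$ as $\varepsilon \to 0$.

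The main obstacle, as I see it, is the latter convergence: one must control the commutator terms $\partial_x^j(\chi_\varepsilon f) - \chi_\varepsilon \partial_x^j f$, which by the Leibniz rule produce sums of the form $\binom{j}{i}\partial_x^{j-i}\chi_\varepsilon \cdot \partial_x^i f$ with $j-i \ge 1$. These terms are supported in the annulus $A_\varepsilon := \{\varepsilon/2 \le d \le \varepsilon\}$ and carry the dangerous factor $\varepsilon^{-(j-i)}$. One estimates their weighted $L^2$-norm by bounds of the form
\[
\int_{A_\varepsilon} d^s \varepsilon^{-2(j-i)} |\partial_x^i f|^2\,\dx \le C \varepsilon^{s - 2(j-i)} \int_{A_\varepsilon} |\partial_x^i f|^2\,\dx,
\]
and the key issue is that if $s < 2(j-i)$ this prefactor blows up. This is remedied by weighted Hardy inequalities (cf.\ Lemma \ref{hardy-inequality}), which convert the bad factor into a convergent weighted integral of a higher derivative and allow absorption by $\int_{A_\varepsilon} d^s |\partial_x^j f|^2\,\dx \to 0$ as $\varepsilon \to 0$. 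This is precisely the type of weighted embedding/Hardy estimate in which $s > 0$ is essential and where Kufner's framework does the heavy lifting. The remaining term $\chi_\varepsilon \partial_x^j f - \partial_x^j f$ tends to $0$ in $L^2_{d^s}$ by dominated convergence. Combining these two steps completes the density argument.
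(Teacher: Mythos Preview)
The paper does not prove this lemma; it simply cites Kufner's monograph \cite{kufner}. Your argument for part i) is standard and correct. Part ii), however, has a genuine gap: your cutoff-and-mollify scheme produces approximants lying in $C_c^\infty(I)$, but $C_c^\infty(I)$ is \emph{not} dense in $H^k_{d^s}$ for small $s$, so no argument along these lines can succeed. Concretely, take $k=1$, $0<s<1$, and $f\equiv 1\in H^1_{d^s}$. If $g_n\in C_c^\infty(I)$ satisfied $g_n\to 1$ and $g_n'\to 0$ in $L^2_{d^s}$, then for $0<x<\tfrac12$ Cauchy--Schwarz would give
\[
|g_n(x)|^2 \le \Big(\int_0^x t^{-s}\,dt\Big)\int_0^x t^s|g_n'(t)|^2\,dt = \frac{x^{1-s}}{1-s}\cdot o(1)\to 0,
\]
contradicting $g_n\to 1$ a.e. The Hardy inequality you invoke (inequality \eqref{A7} of Lemma \ref{hardy-inequality}) only lowers the weight exponent by $2$ provided the resulting exponent stays above $-1$; in the present situation that means it helps only when $s>1$. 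For $0<s\le 1$ there is no such control on the commutator terms, and the estimate you sketch cannot close---exactly as the constant-function example shows.

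A correct proof of density of $C^\infty(\bar I)$ (as opposed to $C_c^\infty(I)$) must use approximants that are permitted to be nonzero on $\Gamma$: typically one extends across the boundary or uses a one-sided/translated mollification so that no boundary cutoff is needed. This is the mechanism in Kufner's treatment that the paper defers to.
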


The next lemma concerns the well-known interpolation inequality of Sobolev spaces.
\begin{Lemma}\cite{leoni}\label{GNinequality'} 
Suppose that $F\in H^p\cap H^q$ for $p,q\geq 0$. Then $F\in H^s$ for all $s=p\varepsilon+q(1-\varepsilon)$ and  $0\leq \varepsilon\leq 1$, and the following inequality holds,
\begin{equation*}
\norm{F}_{s}\leq C(p,q,\varepsilon) \norm{F}_{p}^\varepsilon\norm{F}_{q}^{1-\varepsilon},
\end{equation*}
where $C(p,q,\varepsilon)>0$ is a constant depending only on $(p, q,\varepsilon)$.
\end{Lemma}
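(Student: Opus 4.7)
The plan is to reduce the interpolation inequality on the bounded interval $I$ to the analogous statement on the whole real line, where it becomes a transparent consequence of Plancherel's identity together with H\"older's inequality. Concretely, I would first invoke the standard Sobolev extension theorem on $I=(0,1)$: there exists a linear extension operator $E\colon H^r(I)\to H^r(\mathbb{R})$, bounded uniformly for $0\leq r\leq \max(p,q)$, such that $(EF)|_I=F$ and $\|EF\|_{H^r(\mathbb{R})}\leq C_r\|F\|_{H^r(I)}$. For an interval one can construct $E$ quite explicitly (e.g.\ by higher-order reflection across $\{0\}$ and $\{1\}$ followed by a smooth cutoff), so no Lipschitz-domain machinery is needed.

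Once $\widetilde F:=EF$ is defined on $\mathbb{R}$, I would use the Fourier-side characterization $\|\widetilde F\|_{H^r(\mathbb{R})}^{2}=\int_{\mathbb{R}}(1+|\xi|^{2})^{r}|\widehat{\widetilde F}(\xi)|^{2}\,\mathrm{d}\xi$. The key algebraic observation is that, since $s=p\varepsilon+q(1-\varepsilon)$,
\begin{equation*}
(1+|\xi|^{2})^{s}=\bigl[(1+|\xi|^{2})^{p}\bigr]^{\varepsilon}\,\bigl[(1+|\xi|^{2})^{q}\bigr]^{1-\varepsilon}.
\end{equation*}
Writing the integrand as a product of $\bigl[(1+|\xi|^{2})^{p}|\widehat{\widetilde F}|^{2}\bigr]^{\varepsilon}$ and $\bigl[(1+|\xi|^{2})^{q}|\widehat{\widetilde F}|^{2}\bigr]^{1-\varepsilon}$ and applying H\"older's inequality with conjugate exponents $1/\varepsilon$ and $1/(1-\varepsilon)$ yields
\begin{equation*}
\|\widetilde F\|_{H^{s}(\mathbb{R})}^{2}\leq \|\widetilde F\|_{H^{p}(\mathbb{R})}^{2\varepsilon}\,\|\widetilde F\|_{H^{q}(\mathbb{R})}^{2(1-\varepsilon)}.
\end{equation*}
The endpoint cases $\varepsilon\in\{0,1\}$ are trivial, so only $0<\varepsilon<1$ requires H\"older.

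Finally, combining the restriction inequality $\|F\|_{H^{s}(I)}\leq \|\widetilde F\|_{H^{s}(\mathbb{R})}$ with the extension bounds $\|\widetilde F\|_{H^{p}(\mathbb{R})}\leq C_{p}\|F\|_{H^{p}(I)}$ and $\|\widetilde F\|_{H^{q}(\mathbb{R})}\leq C_{q}\|F\|_{H^{q}(I)}$ yields $\|F\|_{s}\leq C(p,q,\varepsilon)\|F\|_{p}^{\varepsilon}\|F\|_{q}^{1-\varepsilon}$, which is the claim. The principal technical obstacle is producing a single extension operator that works simultaneously for the two endpoint regularities $p$ and $q$; for non-integer $p,q$ this is handled either by constructing an extension operator bounded on $H^{N}(I)$ for some integer $N\geq\max(p,q)$ (Stein-type extension suffices and descends to fractional orders by interpolation on $\mathbb{R}$) or, for the integer case, by elementary reflection. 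Everything else is soft: Plancherel, a convex-combination identity, and one application of H\"older's inequality.
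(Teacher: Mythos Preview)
Your argument is correct and is one of the standard proofs of this classical interpolation inequality: extend to $\mathbb{R}$ by a bounded Sobolev extension operator, use the Fourier-side identity $(1+|\xi|^{2})^{s}=[(1+|\xi|^{2})^{p}]^{\varepsilon}[(1+|\xi|^{2})^{q}]^{1-\varepsilon}$, and apply H\"older's inequality with exponents $1/\varepsilon$ and $1/(1-\varepsilon)$. The only point requiring care, as you note, is to have a single extension operator bounded on all $H^{r}(I)$ for $0\leq r\leq \max(p,q)$; on a bounded interval this is elementary (higher-order reflection plus cutoff, or Stein's universal extension).

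There is nothing to compare your proof against in the paper: the lemma is simply quoted from \cite{leoni} without proof, as a standard fact from the literature. Your proposal supplies a complete and self-contained argument where the paper gives none.
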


The third lemma gives some  weighted interpolation inequalities, which is useful for the analysis in the current paper.
\begin{Lemma}\label{GNinequality}
It holds that for all $k>-1$ and all $F\in 
H^1_{d^{k+1}}$,
\begin{equation}\label{G-N1}
\absb{d^\frac{k}{2}F}_{2} \leq C\left(k\right)\left(\absb{d^\frac{k+1}{2}F}_{2}+\absb{d^\frac{k+1}{2}F}_{2}^\frac{1}{2}\absb{d^\frac{k+1}{2}F_x}_{2}^\frac{1}{2}\right),
\end{equation}
and, as a consequence, for all $\varepsilon\in (0,1)$,
\begin{equation}\label{G-N2}
\absb{d^\frac{k}{2}F}_{2} \leq C(k,\varepsilon)\absb{d^\frac{k+1}{2}F}_{2}+\varepsilon\absb{d^\frac{k+1}{2}F_x}_{2}.
\end{equation}
Here, $C(k)$ and $C(k,\varepsilon)$ are positive constants depending only on $(k, \abs{I})$ and $(k,\varepsilon,\abs{I})$, respectively. In particular, the above conclusions still hold when $d$ is replaced by $\phi_0$.
\end{Lemma}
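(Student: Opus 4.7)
The plan is to prove \eqref{G-N1} via a weighted integration by parts, then deduce \eqref{G-N2} from \eqref{G-N1} by Young's inequality; the passage from $d$ to $\phi_0$ is handled at the end using $\phi_0\sim d$. By the density result in Lemma \ref{W-space}, it suffices to verify the inequalities for $F\in C^\infty(\bar I)$ and pass to the limit. Without loss of generality take $I=(0,1)$, so that $d(x)=\min(x,1-x)$ and $d$ is piecewise linear with $|d'|=1$ away from $x=\tfrac12$.

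The first step is to split $\int_I d^k F^2\,\dx$ into integrals over $(0,\tfrac12)$ and $(\tfrac12,1)$, and on each half use the identity $d^k=\frac{\pm 1}{k+1}\frac{d}{\dx}(d^{k+1})$. Since $k+1>0$, the boundary contributions at $x=0$ and $x=1$ vanish (for smooth $F$, because $d^{k+1}\to 0$ there, and for general $F\in H^1_{d^{k+1}}$ by density). This produces
\[
\int_I d^k F^2\,\dx=\frac{2(1/2)^{k+1}}{k+1}F(1/2)^2-\frac{2}{k+1}\Bigl(\int_0^{1/2}x^{k+1}FF_x\,\dx-\int_{1/2}^1(1-x)^{k+1}FF_x\,\dx\Bigr).
\]
The last group of terms is bounded by $\frac{2}{k+1}|d^{(k+1)/2}F|_2|d^{(k+1)/2}F_x|_2$ via Cauchy--Schwarz. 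The remaining task is to estimate the interior value $F(1/2)^2$.

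To handle $F(1/2)^2$, I would use a one-dimensional mean-value/Sobolev estimate on the compact subinterval $J=(\tfrac14,\tfrac12)$ where $d\ge\tfrac14$. Writing $F(1/2)^2=F(x)^2+2\int_x^{1/2}FF_y\,\mathrm{d}y$ and averaging over $x\in J$ yields
\[
F(1/2)^2\le 4|F|_{L^2(J)}^2+2|F|_{L^2(J)}|F_x|_{L^2(J)}.
\]
Because $d\ge\tfrac14$ on $J$, the unweighted norms on $J$ are controlled by the weighted norms on all of $I$ with a constant $C(k)$ depending only on $k$ (and $|I|$). Combining these bounds gives
\[
|d^{k/2}F|_2^{\,2}\le C(k)\Bigl(|d^{(k+1)/2}F|_2^{\,2}+|d^{(k+1)/2}F|_2\,|d^{(k+1)/2}F_x|_2\Bigr),
\]
and taking square roots (using $\sqrt{a+b}\le\sqrt a+\sqrt b$) yields \eqref{G-N1}. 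Inequality \eqref{G-N2} then follows by applying Young's inequality to the cross term $|d^{(k+1)/2}F|_2^{1/2}|d^{(k+1)/2}F_x|_2^{1/2}$ in the form $XY\le\frac{1}{2\varepsilon'}X^2+\frac{\varepsilon'}{2}Y^2$ with a properly chosen $\varepsilon'$.

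The main technical obstacle is ensuring the vanishing of the boundary contributions when performing the integration by parts for a general $F\in H^1_{d^{k+1}}$ that need not be smooth on $\bar I$; this will be dispatched by approximating $F$ in the $H^1_{d^{k+1}}$ norm by $C^\infty(\bar I)$ functions via Lemma \ref{W-space}, noting that both sides of \eqref{G-N1} are continuous with respect to this norm. Finally, the replacement of $d$ by $\phi_0$ is immediate: since $\mathcal{C}_1 d\le\phi_0\le\mathcal{C}_2 d$, the weighted norms $|\phi_0^{k/2}F|_2$, $|\phi_0^{(k+1)/2}F|_2$ and $|\phi_0^{(k+1)/2}F_x|_2$ are comparable to their $d$-counterparts (with equivalence constants depending only on $\mathcal{C}_1,\mathcal{C}_2,k$), so the same inequality holds with possibly modified constants $C(k)$ and $C(k,\varepsilon)$.
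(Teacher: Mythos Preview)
Your proof is correct and shares the same overall architecture as the paper's: integration by parts on each half-interval to expose the boundary value $F(\tfrac12)^2$, density via Lemma~\ref{W-space} to pass from $C^\infty(\bar I)$ to $H^1_{d^{k+1}}$, and Young's inequality for \eqref{G-N2}. The one genuine point of divergence is how the interior value $F(\tfrac12)^2$ is handled. The paper performs a \emph{second} integration by parts, now on $\int_0^{1/2} x^{k+1}F^2\,\dx$, obtaining another identity that also contains $F(\tfrac12)^2$; it then takes the right linear combination of the two identities so that the boundary terms cancel exactly, leaving only weighted $L^2$ quantities and a cross term $\int (2x-1)x^{k+1}FF_x\,\dx$. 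Your approach instead bounds $F(\tfrac12)^2$ directly by a mean-value/Sobolev estimate on the interior interval $J=(\tfrac14,\tfrac12)$ where $d$ is bounded below, then absorbs the unweighted $L^2(J)$ norms into the weighted norms on $I$. Both routes are elementary; the paper's algebraic cancellation is slightly slicker and avoids introducing an auxiliary subinterval, while your Sobolev estimate is perhaps more transparent and generalizes more readily to settings where a second integration by parts is less convenient. One small wording issue: your density step asserts that ``both sides of \eqref{G-N1} are continuous'' in the $H^1_{d^{k+1}}$ norm, but continuity of the \emph{left} side is not a priori obvious---it follows only after applying the smooth-case inequality to differences $F^\delta-F^{\delta'}$, exactly as the paper spells out.
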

\begin{proof}
 First, we consider the case that  $F\in C^\infty(\bar I)$. Based on  the symmetry of the distance function, in order to obtain \eqref{G-N1}, it suffices to derive the following inequality,
\begin{equation}\label{A..5}
\begin{aligned}
\int_0^\frac{1}{2} x^k F^2\,\dx&\leq C(k) \int_0^\frac{1}{2} x^{k+1} F^2\,\dx\\
&\quad +C(k)\Big(\int_0^\frac{1}{2} x^{k+1} F^2\,\dx\Big)^\frac{1}{2}\Big(\int_0^\frac{1}{2} x^{k+1} F_x^2\,\dx\Big)^\frac{1}{2}.
\end{aligned}
\end{equation}

Actually, it follows from integration by parts that
\begin{equation}\label{A..6}
\begin{split}
\int_0^\frac{1}{2} x^k F^2\,\dx=& \frac{2^{-k-1}}{k+1}F^2\left(\frac{1}{2}\right)- \frac{2}{k+1} \int_0^\frac{1}{2} x^{k+1} FF_x\,\dx,\\
\int_0^\frac{1}{2} x^{k+1} F^2\,\dx=& \frac{2^{-k-2}}{k+2}F^2\left(\frac{1}{2}\right)- \frac{2}{k+2} \int_0^\frac{1}{2} x^{k+2} F F_x\,\dx,
\end{split}
\end{equation}
which, along with H\"older's inequality, leads to
\begin{equation*}\label{AA4}
\begin{aligned}
\int_0^\frac{1}{2} x^k F^2\,\dx
&= \frac{2k+4}{k+1} \int_0^\frac{1}{2} x^{k+1} F^2\,\dx+ \frac{2}{k+1} \int_0^\frac{1}{2} (2x-1)x^{k+1} FF_x\,\dx\\
&\leq C(k) \Big(\int_0^\frac{1}{2}x^{k+1} F^2\,\dx\Big)+C(k)\Big(\int_0^\frac{1}{2} x^{k+1} F^2\,\dx\Big)^\frac{1}{2}\Big(\int_0^\frac{1}{2} x^{k+1} F_x^2\,\dx\Big)^\frac{1}{2}.
\end{aligned}
\end{equation*}
The proof of \eqref{A..5} is completed. 

Next we consider the case that  $F\in H^1_{d^{k+1}}$. According to Lemma \ref{W-space}, for every $F\in H^1_{d^{k+1}}$, there exists a sequence $\{F^\delta\}_{\delta>0}\subset C^\infty(\bar I)$, such that
\begin{equation}\label{AA5}
\absb{d^\frac{k+1}{2} F^\delta-d^\frac{k+1}{2} F}_2+\absb{d^\frac{k+1}{2} F_x^\delta-d^\frac{k+1}{2} F_x}_2\to 0 \quad \text{as }\delta\to 0.
\end{equation}
Then, by \eqref{G-N1}, for any $\delta,\varepsilon>0$,
\begin{equation*}
\begin{aligned}
\absb{d^\frac{k}{2}F^\delta-d^\frac{k}{2}F^\varepsilon}_{2} &\leq C \absb{d^\frac{k+1}{2}F^\delta-d^\frac{k+1}{2}F^\varepsilon}_{2}\\
&\quad +C\absb{d^\frac{k+1}{2}F^\delta-d^\frac{k+1}{2}F^\varepsilon}_{2}^\frac{1}{2}\absb{d^\frac{k+1}{2}F_x^\delta-d^\frac{k+1}{2}F_x^\varepsilon}_{2}^\frac{1}{2}\to 0,
\end{aligned}
\end{equation*}
as $(\delta,\varepsilon)\to (0,0)$, which implies that $\{d^\frac{k}{2}F^\delta\}_{\delta>0}$ is a Cauchy sequence in $L^2$, and hence converges to some limit $G$ in $L^2$. However, by \eqref{AA5}, $F^\delta$ converges to $F$ in $L^1_{\text{loc}}$, then one can extract a subsequence $F^{\delta_\ell}$  which converges to $F$ a.e. in $I$. Therefore one has  $G=d^\frac{k}{2}F$. The proof of \eqref{G-N1} for $F\in H^1_{d^{k+1}}$ is completed.

Finally, it follows  from \eqref{G-N1} and Young's inequality that \eqref{G-N2} holds.
\end{proof}

The fourth lemma is the classical Sobolev embedding theorem.
\begin{Lemma}\cite{leoni}\label{sobolev-embedding}
It holds that
\begin{equation*}
\begin{aligned}
&\abs{F}_\infty \leq s_0\abs{F}_{1}+C\abs{F_x}_1 \ \ \text{for all }F\in W^{1,1};\\
&\abs{F}_\infty \leq s_0\abs{F}_{2}+C\abs{F_x}_2 \ \ \text{for all }F\in H^1,
\end{aligned}
\end{equation*}
where $s_0$ and $C$ are positive constants depending only on $\abs{I}$. In particular, $W^{1,1},H^1\into C(\bar I)$ continuously. Moreover, if $F|_{x\in\Gamma}=0$, one can choose $s_0=0$.
\end{Lemma}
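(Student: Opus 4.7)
The plan is to reduce the estimate to a one-line consequence of the fundamental theorem of calculus, and then handle the two $L^p$ norms ($p = 1, 2$) by H\"older's inequality, with a separate (easier) argument when $F$ vanishes on $\Gamma$. Since the conclusion is a pointwise bound but the hypothesis is $F \in W^{1,1}$ or $H^1$, a short density step will be needed at the end to pass from smooth functions to the natural class, and to produce the claimed continuous representative in $C(\bar I)$.

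First, I would work with $F \in C^1(\bar I)$. For any $x, y \in \bar I$, the identity $F(x) = F(y) + \int_y^x F'(s)\,\mathrm{d}s$ gives
\begin{equation*}
|F(x)| \leq |F(y)| + |F_x|_1.
\end{equation*}
For the $W^{1,1}$ bound I would average this inequality in $y$ over $I$ to obtain $|F(x)| \leq |I|^{-1}|F|_1 + |F_x|_1$, which, after taking the supremum in $x$, yields the first estimate with $s_0 = |I|^{-1}$ and $C = 1$. For the $H^1$ bound I would apply Cauchy--Schwarz to both $|F_x|_1 \leq |I|^{1/2}|F_x|_2$ and (after averaging over $y$) to $|I|^{-1}|F|_1 \leq |I|^{-1/2}|F|_2$, producing the second estimate with $s_0 = |I|^{-1/2}$ and $C = |I|^{1/2}$. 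When $F|_\Gamma = 0$, I would instead pick $y \in \Gamma$ in the fundamental theorem of calculus identity, dropping the boundary term entirely and leaving $|F(x)| \leq |F_x|_1 \leq |I|^{1/2}|F_x|_2$, which gives the stated $s_0 = 0$ versions.

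To upgrade from $C^1$ to $W^{1,1}$ or $H^1$, I would use the standard density of $C^\infty(\bar I) \subset W^{1,1}(I)$ and $\subset H^1(I)$ (this is classical and does not even require Lemma~\ref{W-space}, which handles the weighted case). For a Cauchy sequence $\{F^\delta\}\subset C^\infty(\bar I)$ approximating $F$ in the relevant Sobolev norm, the already proved inequality shows that $\{F^\delta\}$ is Cauchy in $C(\bar I)$, hence converges uniformly to a continuous representative of $F$, which gives both the embedding $W^{1,1}, H^1 \hookrightarrow C(\bar I)$ and the claimed pointwise bound for that representative. For the boundary-vanishing case, I would note that $F \in W^{1,1}_0$ or $H^1_0$ admits an approximating sequence vanishing on $\Gamma$ (standard for bounded intervals), so the improved estimate with $s_0 = 0$ passes to the limit as well.

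There is no substantive obstacle here, since everything is classical in one spatial dimension; the only mildly delicate point is making sure that the continuous representative produced by the density argument is what is being estimated pointwise on the left-hand side, which is handled by uniform convergence of the smooth approximants.
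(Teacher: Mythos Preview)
Your argument is correct and is precisely the standard one-dimensional proof via the fundamental theorem of calculus plus density; the paper does not supply its own proof of this lemma but simply cites the textbook \cite{leoni}, so there is nothing further to compare against.
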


The fifth one is on the  Hardy inequalities.

\begin{Lemma}\cites{coutand1,kufner}\label{hardy-inequality}
It holds that  
\begin{alignat}{2}
\absb{d^\frac{k}{2} F}_2&\leq C(k) \absb{d^{\frac{k}{2}+1} (F+F_x)}_2 \ \ &&\text{for all }F\in H^1_{d^{k+2}}\text{ and }k>-1;\label{A7}\\
\absf{d^k F}_1&\leq C(k,\varepsilon) \absb{d^{k+\frac{3}{2}-\varepsilon}(F+F_x)}_2 \ \ &&\text{for all }F\in H^1_{d^{2k+3-2\varepsilon}}\text{ and }k+1>\varepsilon>0;\label{A6}\\
\absf{d^k F}_\infty&\leq C(k) \absb{d^{k+\frac{1}{2}} (F+ F_x)}_2 \ \ &&\text{for all }F\in H^1_{d^{2k+1}}\text{ and }k>0,\label{A8}
\end{alignat}
where $C(k)$ and $C(k,\varepsilon)$ are constants depending only on $(k,\abs{I})$ and $(k,\varepsilon,\abs{I})$, respectively.
If, additionally, $\bar d(x)=\bar d(x)>0$ for $x\in I$, $\bar d(x)\in H^3$ and $\bar d(x)\sim d(x)$, $F|_{x\in \Gamma}=0$, then for all $F\in W^{s+1,p}$, $0\leq s\leq 3$ and  $1\leq p\leq 2$, one has
\begin{equation}\label{a..8}
\normf{\bar d^{-1} F}_{s,p}\leq C(s,p) \norm{F}_{s+1,p},
\end{equation}
where $C(s,p)$ is a constant depending only on $(s,p,\abs{I})$. In particular, \eqref{A7}-\eqref{a..8} still hold when  $d$ and $\bar d$ are  replaced by $\phi_0$.
\end{Lemma}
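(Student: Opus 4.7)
The plan is to prove the four weighted inequalities in turn by first reducing to smooth test functions via the density statement in Lemma \ref{W-space} (since $C^\infty(\bar I)$ is dense in each $H^1_{d^s}$), and then exploiting the symmetry $d(x) = \min(x, 1-x)$ to localize every estimate on the half-interval $[0, 1/2]$ where $d(x) = x$; the estimate on $[1/2, 1]$ follows by an identical argument after the change of variables $x \mapsto 1-x$. The single algebraic identity that threads through all four proofs is $2FF_x = 2F(F+F_x) - 2F^2$, which lets us reintroduce the combination $F+F_x$ after an integration by parts without creating an uncontrolled $F_x^2$ term. I will then transfer the estimates from $d$ to $\phi_0 = \rho_0^\alpha$ at the very end, using $\phi_0 \sim d$ to absorb the comparison constant.

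For \eqref{A7}, I would integrate $\int_0^{1/2}(x^{k+1})' F^2\, dx$ by parts and substitute the identity to get
\begin{equation*}
(k+1)\int_0^{1/2} x^k F^2\,dx - 2\int_0^{1/2} x^{k+1} F^2\,dx = \bigl[x^{k+1}F^2\bigr]_0^{1/2} - 2\int_0^{1/2} x^{k+1} F(F+F_x)\,dx.
\end{equation*}
The boundary trace at $x = 0$ vanishes for $F\in H^1_{d^{k+2}}$ with $k>-1$ because $x^{k+1}F^2$ is absolutely continuous with integrable derivative, forcing $\liminf_{x\to 0^+}x^{k+1}F^2 = 0$; the trace at $x=1/2$ is absorbed by the companion estimate on $[1/2,1]$ through Sobolev embedding $H^1([0,1]) \hookrightarrow C(\bar I)$. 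Cauchy--Schwarz bounds the last integral by $\lvert d^{k/2}F\rvert_2\lvert d^{k/2+1}(F+F_x)\rvert_2$, and the $\int x^{k+1}F^2$ term on the left can be absorbed since $x^{k+1}\le\tfrac12 x^k$ on $[0,1/2]$; after absorption one reaches \eqref{A7}. For the borderline regime where the coefficients degenerate, I would first establish the inequality for sufficiently large $k$ and then descend by applying the large-$k$ result to a weighted modification, iterating finitely many times to cover all $k>-1$.

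For \eqref{A6}, by Cauchy--Schwarz,
\begin{equation*}
\absf{d^k F}_1 = \int d^{\varepsilon-1/2}\cdot d^{k+1/2-\varepsilon}|F|\,dx \le \absb{d^{\varepsilon-1/2}}_2\,\absb{d^{k+1/2-\varepsilon}F}_2,
\end{equation*}
and the first factor is finite precisely when $\varepsilon>0$; I then apply \eqref{A7} with exponent $2(k+1/2-\varepsilon)>-1$ to estimate the second factor by $\lvert d^{k+3/2-\varepsilon}(F+F_x)\rvert_2$, which also requires $\varepsilon<k+1$. For \eqref{A8}, I would use the fundamental theorem of calculus,
\begin{equation*}
x^{2k}F^2(x) = \int_0^x \bigl(2k\, y^{2k-1}F^2 + 2y^{2k}FF_y\bigr)\,dy \quad (x\in[0,1/2]),
\end{equation*}
valid because $k>0$ and $F\in H^1_{d^{2k+1}}$ ensure $y^{2k}F^2(y)\to 0$ as $y\to 0^+$. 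Taking the supremum over $x$, substituting $FF_y = F(F+F_y) - F^2$, and bounding the two resulting integrals with Cauchy--Schwarz followed by \eqref{A7} yields the desired $L^\infty$ bound.

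For \eqref{a..8}, I would argue by induction on $s\in\{0,1,2,3\}$. The base case $s=0$ is a standard one-dimensional Hardy inequality in $L^p$ for $F$ vanishing on $\Gamma$, following from $F(x) = \int_0^x F_y\,dy$ on $[0,1/2]$ and duality/interpolation for $p\in[1,2]$. For $s\ge 1$, I expand $\partial_x^j(\bar d^{-1} F)$ via the Leibniz rule, use the regularity $\bar d\in H^3$ and $\bar d\sim d$ to control $\partial_x^i(\bar d^{-1})$ pointwise by $\bar d^{-1-i}$, and invoke the induction hypothesis together with standard Sobolev embeddings and the weighted estimates already established. The main obstacle will be the rigorous verification that the boundary traces at $x=0$ vanish under the mere membership $F\in H^1_{d^{k+2}}$ --- this requires proving that the absolute continuity of $x^{k+1}F^2$ combined with its integrable derivative rules out a nonzero limit at the singularity --- and handling the borderline values of $k$ in \eqref{A7} where the absorption constant degenerates and a bootstrap (or the substitution $v = Fe^x$ reducing to classical Hardy on $v$) becomes necessary.
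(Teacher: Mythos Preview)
Your approach is essentially correct and, where the paper actually argues rather than cites, matches it: your derivation of \eqref{A6} via the H\"older splitting $d^kF=d^{\varepsilon-1/2}\cdot d^{k+1/2-\varepsilon}F$ followed by \eqref{A7} is exactly the paper's proof, and your treatment of \eqref{A8} through the fundamental theorem of calculus on $y^{2k}F^2$ is the same as the paper's use of $\lvert d^kF\rvert_\infty^2\le C\lvert d^{2k}F^2\rvert_1+C\lvert(d^{2k}F^2)_x\rvert_1$ followed by \eqref{A7}. For \eqref{A7} and \eqref{a..8} the paper simply cites Kufner and Coutand--Shkoller, so your integration-by-parts argument and your induction on $s$ are more explicit than what the paper records.

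One caution on your borderline fix for \eqref{A7} when $-1<k\le 0$: the substitution $v=Fe^x$ reduces the inequality to $\int d^k e^{-2x}v^2\le C\int d^{k+2}e^{-2x}v_x^2$, which is the classical Hardy form requiring $v$ to vanish on $\Gamma$; since $v=Fe^x$ carries no such condition, this route fails as stated. The cleaner repair is to run your integration-by-parts identity on $[0,\delta]$ with $\delta=\delta(k)$ small enough that $(k+1)-2\delta\ge(k+1)/2>0$, so the absorption goes through, and to estimate the remaining piece on $[\delta,1/2]$ trivially via $x^k\le\delta^{-2}x^{k+2}$. Incidentally, the boundary term at $x=1/2$ is most cleanly handled not by Sobolev embedding but by writing a second integrated identity at one higher power (as the paper does in its proof of Lemma~\ref{GNinequality}) and subtracting to cancel $F^2(1/2)$ exactly.
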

\begin{proof}
The proof for  \eqref{A7} can be found in   Chapter 1 of Kufner \cite{kufner}. \eqref{a..8} can be proved by following the proof of Lemma 3.1 in \cite{coutand1}. For any $k+1>\varepsilon>0$, \eqref{A6} can be  derived from \eqref{A7} by H\"older's inequality,
\begin{equation*}
\begin{aligned}
\absf{d^k F}_1\leq \absb{d^{-\frac{1}{2}+\varepsilon}}_2\absb{d^{k+\frac{1}{2}-\varepsilon} F}_2\leq C(k,\varepsilon) \absb{d^{k+\frac{3}{2}-\varepsilon}(F+F_x)}_2.
\end{aligned}
\end{equation*}

For  \eqref{A8}, according to   Lemma \ref{sobolev-embedding}, \eqref{A7}, H\"older's inequality and Young's inequality, for all $k>0$,
\begin{equation*}
\begin{aligned}
\absf{d^k F}_\infty^2&\leq C \absf{d^{2k} F^2}_1+C\absf{\big(d^{2k} F^2\big)_x}_1\\
&\leq C\absf{d^{k} F}_2^2+C\absb{d^{k-\frac{1}{2}} F}_2^2+C\absb{d^{k-\frac{1}{2}}F}_2\absb{d^{k+\frac{1}{2}}F_x}_2\\
&\leq C(k)\big(\absb{d^{k+\frac{1}{2}} F}_2^2+\absb{d^{k+\frac{1}{2}} F_x}_2^2\big).
\end{aligned}
\end{equation*}
Thus, the proof is completed.
\end{proof}

The sixth one  is on  some basic  properties of    the Hilbert basis $\{e_j\}_{j=1}^\infty$ of $H^1$,  and the corresponding proof can be found in Chapter 9 of \cite{Sayas}.

\begin{Lemma}\cite{Sayas}\label{hilbert}
Let   $\{e_j\}_{j=1}^\infty$ be  the Hilbert basis of $H^1$, which is constructed by solving the eigenvalue problem $-\Delta e +e =\lambda e$ with Neumann boundary condition $e_x|_{x\in \Gamma}=0$. Then $\{e_j\}_{j=1}^\infty$ is orthonormal in $L^2$ and orthogonal in $H^1$,  $e_j\in C^\infty(\bar I)$ for $j\in \NN^*$, and 
\begin{enumerate}
\item[$\mathrm{i)}$]  for all $f\in L^2$, it holds that
\begin{equation*}
\sum_{j=1}^n \langle f,e_j\rangle e_j \to f \ \ \text{in }L^2;
\end{equation*}
\item[$\mathrm{ii)}$]  for all $f\in H^1$, it holds that
\begin{equation*}
\sum_{j=1}^n \langle f,e_j\rangle e_j \to f \ \ \text{in }H^1.
\end{equation*}
\end{enumerate}
Here, $\langle f,g\rangle$ stands for the $L^2$-inner product of functions $f$ and $g$, i.e., $\langle f,g\rangle :=\int fg \,\dx$.
\end{Lemma}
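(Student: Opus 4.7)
The plan is to treat this as a standard application of the spectral theorem for a compact self-adjoint operator together with elliptic regularity, and then to transfer the $L^{2}$-convergence statement into an $H^{1}$-convergence statement by exploiting the variational characterization of the eigenvalues.

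First, I would set up the variational framework. Consider the symmetric bilinear form
\[
a(u,v):=\int_{I}\!(u_{x}v_{x}+uv)\,\dx \quad\text{on } H^{1},
\]
which is continuous and coercive (in fact it equals $\langle u,v\rangle_{H^{1}}$). By the Lax–Milgram theorem, for every $f\in L^{2}$ there is a unique $u=Tf\in H^{1}$ with $a(u,v)=\langle f,v\rangle$ for all $v\in H^{1}$; integration by parts shows $u$ solves $-u_{xx}+u=f$ in $I$ with $u_{x}|_{\Gamma}=0$. The operator $T:L^{2}\to L^{2}$ obtained by composing with the inclusion $H^{1}\hookrightarrow L^{2}$ is self-adjoint (by symmetry of $a$), positive, injective, and compact (by the Rellich–Kondrachov compact embedding $H^{1}\Subset L^{2}$ in one dimension, which here is even Arzelà–Ascoli). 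Hence the spectral theorem for compact self-adjoint operators on the separable Hilbert space $L^{2}$ produces an $L^{2}$-orthonormal basis of eigenfunctions $\{e_{j}\}_{j=1}^{\infty}$ with positive eigenvalues $\mu_{j}\downarrow 0$; setting $\lambda_{j}=1/\mu_{j}\to\infty$, each $e_{j}$ satisfies $-(e_{j})_{xx}+e_{j}=\lambda_{j}e_{j}$ with $(e_{j})_{x}|_{\Gamma}=0$ in the weak sense, which gives the first stated assertion and, after a standard elliptic bootstrap ($e_{j}\in L^{2}\Rightarrow e_{j}\in H^{2}\Rightarrow e_{j}\in H^{3}\Rightarrow\cdots$), the smoothness $e_{j}\in C^{\infty}(\bar I)$.

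Next I would verify the orthogonality in $H^{1}$. Testing the weak formulation $a(e_{j},v)=\lambda_{j}\langle e_{j},v\rangle$ with $v=e_{i}$ yields
\[
\langle e_{i},e_{j}\rangle_{H^{1}}=a(e_{i},e_{j})=\lambda_{j}\,\langle e_{i},e_{j}\rangle=\lambda_{j}\delta_{ij},
\]
so $\{e_{j}\}$ is $H^{1}$-orthogonal and $\{e_{j}/\sqrt{\lambda_{j}}\}$ is $H^{1}$-orthonormal. The $L^{2}$-convergence statement (i) is then the standard Hilbert-basis expansion.

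For (ii) — which I expect to be the only non-cosmetic step — I would argue as follows. Given $f\in H^{1}$, set $c_{j}=\langle f,e_{j}\rangle$ and $S_{n}=\sum_{j=1}^{n}c_{j}e_{j}$. Using the weak equation for $e_{j}$ with test function $f$,
\[
\langle f,e_{j}\rangle_{H^{1}}=a(f,e_{j})=\lambda_{j}\langle f,e_{j}\rangle=\lambda_{j}c_{j},
\]
so the $H^{1}$-Fourier coefficients of $f$ with respect to the $H^{1}$-orthonormal system $\{e_{j}/\sqrt{\lambda_{j}}\}$ are $\sqrt{\lambda_{j}}\,c_{j}$. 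Bessel's inequality in $H^{1}$ gives $\sum_{j}\lambda_{j}c_{j}^{2}\le\|f\|_{H^{1}}^{2}<\infty$, and hence
\[
\|S_{m}-S_{n}\|_{H^{1}}^{2}=\sum_{j=n+1}^{m}\lambda_{j}c_{j}^{2}\xrightarrow[m,n\to\infty]{}0,
\]
so $S_{n}$ is Cauchy in $H^{1}$ and converges to some $g\in H^{1}$. The continuous embedding $H^{1}\hookrightarrow L^{2}$ together with part (i) forces $g=f$, which completes the proof of (ii).

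The main obstacle is really just the passage from $L^{2}$- to $H^{1}$-completeness of $\{e_{j}\}$; the key trick there is that the eigenfunctions are simultaneously orthogonal in both $L^{2}$ and the energy inner product $a(\cdot,\cdot)=\langle\cdot,\cdot\rangle_{H^{1}}$, so that the same expansion coefficients $c_{j}=\langle f,e_{j}\rangle$ yield convergence in both topologies once $f\in H^{1}$.
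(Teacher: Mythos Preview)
Your argument is correct and is the standard textbook route via the spectral theorem for the compact self-adjoint solution operator of $-\Delta+I$ with Neumann conditions, followed by the simultaneous $L^2$/$H^1$ orthogonality to upgrade convergence. Note, however, that the paper does not give its own proof of this lemma: it is stated in Appendix~A as a quoted basic fact with the remark that ``the corresponding proof can be found in Chapter~9 of \cite{Sayas}''. Your proposal is precisely the kind of argument one finds there, so there is nothing to compare against beyond confirming that your write-up is sound.
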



In addition, to get the weighted time continuity for the velocity in our analysis, one needs the following evolution triple embedding.
\begin{Lemma}\label{Aubin}
Let $T>0$, $s>0$, $F\in L^2([0,T];H^1_{\phi_0^s})$ and  $\phi_0^sF_t\in L^2([0,T];H^{-1}_{\phi_0^s})$. Then, $\phi_0^\frac{s}{2}F\in C([0,T];L^2)$, and the mapping $t\mapsto \absb{\phi_0^\frac{s}{2}F(t)}_2^2$ is absolutely continuous, with 
\begin{equation*}
\frac{\mathrm{d}}{\dt} \absb{\phi_0^\frac{s}{2}F(t)}_2^2=2\left<\phi_0^sF_t, F\right>_{H^{-1}_{\phi_0^s}\times H^1_{\phi_0^s}}.
\end{equation*}

Furthermore, it holds that
\begin{equation*}
\big\|\phi_0^\frac{s}{2}F\big\|_{C_t(L^2)}\leq C(T)\norm{F}_{L^2_t(H^1_{\phi_0^s})}+\norm{\phi_0^sF_t}_{L^2_t(H^{-1}_{\phi_0^s})}.
\end{equation*}
\end{Lemma}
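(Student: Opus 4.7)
My approach is to recognize this as the standard Lions--Magenes evolution-triple theorem applied to the weighted Gelfand triple $V\hookrightarrow H\hookrightarrow V^*$ built from $\phi_0^s$. Set $V:=H^1_{\phi_0^s}$ with norm $\|f\|_V^2=\int \phi_0^s(|f|^2+|f_x|^2)\dx$, and $H:=L^2_{\phi_0^s}$ with inner product $(f,g)_H:=\int \phi_0^s fg\,\dx$, so that $|\phi_0^{s/2}F|_2^2=|F|_H^2$. Identify $H$ with its dual via this inner product. Then $V\hookrightarrow H$ continuously, and by Lemma \ref{W-space} the subspace $C^\infty(\bar I)$ is dense in both $V$ and $H$, hence $V$ is dense in $H$. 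This gives a genuine Gelfand triple $V\hookrightarrow H\hookrightarrow V^*$, and under this identification the dual $V^*$ is precisely $H^{-1}_{\phi_0^s}$ acting by the extension of $(f,\varphi)\mapsto \int \phi_0^s f\varphi\,\dx$.

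The key bookkeeping step is to check that the hypothesis $\phi_0^sF_t\in L^2([0,T];H^{-1}_{\phi_0^s})$ is nothing but the statement that $F_t$, taken in the sense of $V^*$-valued distributions on $(0,T)$, lies in $L^2([0,T];V^*)$, with the correspondence
\[
\langle F_t,\varphi\rangle_{V^*\times V}=\langle \phi_0^sF_t,\varphi\rangle_{H^{-1}_{\phi_0^s}\times H^1_{\phi_0^s}}\qquad\text{for a.e. }t\in(0,T),\ \varphi\in V.
\]
This is the identification that reroutes the weight from the right-hand side into the duality pairing; verifying it is just a matter of testing against $\varphi\otimes\chi$ for $\chi\in C_c^\infty(0,T)$ and using the definition of distributional time derivative.

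Once the evolution triple is in place, I would invoke the classical theorem (see e.g.\ Temam \cite{temam}, Chapter III, Lemma 1.2): if $F\in L^2([0,T];V)$ and $F_t\in L^2([0,T];V^*)$, then $F$ coincides a.e. with a function in $C([0,T];H)$, the map $t\mapsto|F(t)|_H^2$ is absolutely continuous, and
\[
\frac{\mathrm{d}}{\dt}|F(t)|_H^2=2\langle F_t(t),F(t)\rangle_{V^*\times V}\quad\text{for a.e. }t\in(0,T).
\]
Translating back to the weighted notation yields exactly $\tfrac{\mathrm{d}}{\dt}|\phi_0^{s/2}F|_2^2=2\langle \phi_0^sF_t,F\rangle_{H^{-1}_{\phi_0^s}\times H^1_{\phi_0^s}}$.

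For the sup-norm bound, I would integrate the derivative identity from any $t_0$ to $t$, apply Cauchy--Schwarz to the duality pairing, and average in $t_0$ over $[0,T]$ to handle the base point: this gives
\[
|\phi_0^{s/2}F(t)|_2^2\le \tfrac{1}{T}\|F\|_{L^2_t(H)}^2+2\|\phi_0^sF_t\|_{L^2_t(V^*)}\|F\|_{L^2_t(V)},
\]
and the embedding $H\hookrightarrow V$ combined with $|F|_H\le \|F\|_V$ yields the stated estimate with $C(T)\sim(1+T^{-1/2})$.

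The main obstacle is the first two paragraphs: convincing oneself that the weighted $L^2$/$H^1$ pair genuinely forms a Gelfand triple in the same shape as the classical one, so that one can quote the Lions--Magenes result verbatim. The density assertion from Lemma \ref{W-space} and the fact that $L^2_{\phi_0^s}$ is a Hilbert space with the weighted inner product make this work, but one has to be careful that the ``$\phi_0^s$'' appearing inside the duality notation $\langle \phi_0^sF_t,\cdot\rangle$ is a bookkeeping artifact of how the triple is normalized, not an additional regularity requirement on $F_t$. Once that identification is made, the remainder is a direct citation.
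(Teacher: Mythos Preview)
Your approach is correct and is the natural abstract packaging of the argument: recognize $V=H^1_{\phi_0^s}\hookrightarrow H=L^2_{\phi_0^s}\hookrightarrow V^*$ as a Gelfand triple (density from Lemma \ref{W-space}), identify the hypothesis $\phi_0^sF_t\in L^2_t(H^{-1}_{\phi_0^s})$ with $F_t\in L^2_t(V^*)$, and cite Lions--Magenes/Temam. The paper takes a slightly more hands-on route: rather than invoking the abstract theorem, it reproduces the Evans mollification proof directly in the weighted setting, the key observation being that since $\phi_0$ is time-independent, time-mollification commutes with multiplication by $\phi_0^{s/2}$, i.e.\ $(\phi_0^{s/2}F)*\omega_\delta=\phi_0^{s/2}(F*\omega_\delta)$; this lets one run the standard Cauchy-sequence argument for $\phi_0^{s/2}F^\delta$ in $C([0,T];L^2)$ without ever naming the Gelfand triple. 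The sup-norm bound is derived exactly as you propose (integrate the derivative identity from $\tau$ to $t$, average in $\tau$ over $[0,T]$). Your route is cleaner if one is comfortable with the bookkeeping you flag in your final paragraph; the paper's route makes the role of the time-independence of the weight explicit and sidesteps that identification entirely. (Minor slip: you write ``the embedding $H\hookrightarrow V$'' near the end where you mean $V\hookrightarrow H$; the inequality $|F|_H\le\|F\|_V$ you use is the correct one.)
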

\begin{proof}
This lemma can be obtained by basically following the proof of Theorem 3 on page 303 in Chapter 5 of \cite{evans}, and  we only  sketch it here. The key observation is that, since $\phi_0$ is independent of the time variable $t$, we can mollify $\phi_0^\frac{s}{2}F$ with respect to $t$ without any impact on $\phi_0$, that is, denoting by $\omega_\delta$ the standard mollifiers, $\phi_0^\frac{s}{2}F^\delta:=(\phi_0^\frac{s}{2}F)*\omega_\delta=\phi_0^\frac{s}{2}(F*\omega_\delta)$. Thus, after extension and the regularizations, for any $\varepsilon,\delta>0$, it holds that
\begin{equation*}
\begin{aligned}
\frac{\mathrm{d}}{\dt} \absb{\phi_0^\frac{s}{2}F^\delta(t)-\phi_0^\frac{s}{2}F^\varepsilon(t)}_2^2&=2\big<\phi_0^\frac{s}{2}F_t^\delta-\phi_0^\frac{s}{2}F_t^\varepsilon,\phi_0^\frac{s}{2}F^\delta-\phi_0^\frac{s}{2} F^\varepsilon\big>\\
&=2\big<\phi_0^sF_t^\delta-\phi_0^sF_t^\varepsilon, F^\delta- F^\varepsilon\big>_{H^{-1}_{\phi_0^s}\times H^1_{\phi_0^s}}.
\end{aligned}
\end{equation*}
Integrating above over $[0,T]$ implies that
\begin{equation*}
\begin{aligned}
\sup_{t\in[0,T]}\absb{\phi_0^\frac{s}{2}F^\delta(t)-\phi_0^\frac{s}{2}F^\varepsilon(t)}_2^2&\leq \absb{\phi_0^\frac{s}{2}F^\delta(0)-\phi_0^\frac{s}{2}F^\varepsilon(0)}_2^2\\
&\quad + \int_0^T \big(\|F^\delta-F^\varepsilon\|^2_{1,\phi_0^s}+\|\phi_0^sF_t^\delta-\phi_0^sF_t^\varepsilon\|^2_{-1,\phi_0^s}\big)\,\dt.
\end{aligned}
\end{equation*}

Next, $L^2_{\phi_0^s}$, $H^1_{\phi_0^s}$ and $H^{-1}_{\phi_0^s}$ are all separable reflexive Banach spaces due to Lemma \ref{W-space}, it follows from the Theorem 8.20 in Chapter  8 of \cite{leoni} that for all $g_1(0)\in L^2_{\phi_0^s}$, $g_2\in L^2([0,T];H^1_{\phi_0^s})$ and $g_3\in L^2([0,T];H^{-1}_{\phi_0^s})$, 
\begin{equation*}
\begin{gathered}
\lim_{\delta\to 0} \absf{g_1^\delta(0)-g_1(0)}_{2,\phi_0^s}+\int_0^T \big(\|g_2^\delta-g_2\|_{1,\phi_0^s}+\|g_3^\delta-g_3\|_{-1,\phi_0^s}\big)\,\dt=0.
\end{gathered}
\end{equation*}
Therefore, letting $(\varepsilon,\delta)\to (0,0)$, together with the fact that $(\phi_0^s F)*\omega_\delta=\phi_0^s F^\delta$, yields
\begin{equation*}
\begin{aligned}
&\limsup_{(\varepsilon,\delta)\to (0,0)}\sup_{t\in[0,T]}\absb{\phi_0^\frac{s}{2}F^\delta(t)-\phi_0^\frac{s}{2}F^\varepsilon(t)}_2^2\\
\leq &\lim_{(\varepsilon,\delta)\to (0,0)}\absb{\phi_0^\frac{s}{2}F^\delta(0)-\phi_0^\frac{s}{2}F^\varepsilon(0)}_2^2\\
&\quad +\lim_{(\varepsilon,\delta)\to (0,0)}\int_0^T \big(\|F^\delta-F^\varepsilon\|^2_{1,\phi_0^s}+\|\phi_0^sF_t^\delta-\phi_0^sF_t^\varepsilon\|^2_{-1,\phi_0^s}\big)\,\dt=0,
\end{aligned}
\end{equation*}
which shows that $\phi_0^\frac{s}{2}F^\delta$ converges  to $\phi_0^\frac{s}{2}F\in C([0,T];L^2)$  in $C([0,T];L^2)$.

Similarly, one has
\begin{equation*}
\absb{\phi_0^\frac{s}{2}F^\delta(t)}_2^2=\absb{\phi_0^\frac{s}{2}F^\delta(\tau)}_2^2+2\int_\tau^t\big<\phi_0^sF_t^\delta, F^\delta\big>_{H^{-1}_{\phi_0^s}\times H^1_{\phi_0^s}}\,\mathrm{d}t',
\end{equation*}
for all $0\leq \tau,t\leq T$. Taking the limit as $\delta\to 0$ shows
\begin{equation}\label{A10}
\absb{\phi_0^\frac{s}{2}F(t)}_2^2=\absb{\phi_0^\frac{s}{2}F(\tau)}_2^2+2\int_\tau^t\big<\phi_0^sF_t, F\big>_{H^{-1}_{\phi_0^s}\times H^1_{\phi_0^s}}\,\mathrm{d}t',
\end{equation}
which implies that the mapping $t\mapsto \absb{\phi_0^\frac{s}{2}F(t)}_2^2$ is absolutely continuous.  Applying $\partial_t$ to \eqref{A10} yields
\begin{equation*}
\frac{\mathrm{d}}{\dt} \absb{\phi_0^\frac{s}{2}F(t)}_2^2=2\left<\phi_0^sF_t, F\right>_{H^{-1}_{\phi_0^s}\times H^1_{\phi_0^s}}.
\end{equation*}

Finally, integrating \eqref{A10} with respect to  $\tau$ over $[0,T]$ gives
\begin{equation*}
\begin{aligned}
T\absb{\phi_0^\frac{s}{2}F(t)}_2^2&=\int_0^T\absb{\phi_0^\frac{s}{2}F(\tau)}_2^2\,\mathrm{d}\tau+2\int_0^T\int_\tau^t\left<\phi_0^sF_t, F\right>_{H^{-1}_{\phi_0^s}\times H^1_{\phi_0^s}}\,\mathrm{d}t'\mathrm{d}\tau\\
&\leq \int_0^T\absb{\phi_0^\frac{s}{2}F(\tau)}_2^2\,\mathrm{d}\tau+2T\int_0^T\norm{\phi_0^sF_t}_{-1,\phi_0^s}\cdot\norm{F}_{1,\phi_0^s}\,\mathrm{d}t',
\end{aligned}
\end{equation*}
which, along with the Young inequality, yields that for all $0\leq t\leq T$,
\begin{equation*}
\begin{aligned}
T\absb{\phi_0^\frac{s}{2}F(t)}_2^2&\leq T\norm{\phi_0^sF_t}_{L^2_t(H^{-1}_{\phi_0^s})}^2+(1+T)\norm{F}_{L^2_t(H^1_{\phi_0^s})}^2.
\end{aligned}
\end{equation*}

The proof of Lemma \ref{Aubin} is completed.
\end{proof}

Finally, in \S\ref{Section3}, one needs the following well-known Hahn-Banach Theorem.
\begin{Lemma}\cite{brezis}\label{Banach}
Suppose that $X$ is a normed vector space and $Y\subset X$ is a linear subspace. If $g : Y\to \RR$ is a continuous linear functional and $Y$ is dense in $X$, then there exists a unique $\bar g\in X^*$ that extends $g$, namely, $\bar g(x)=g(x)$, for all $x\in Y$, satisfying
\begin{equation*}
\norm{\bar g}_{X^*}=\sup_{\stackrel{x\in Y}{\norm{x}_X\leq 1}} \abs{g(x)}=\norm{g}_{Y^*}.
\end{equation*}
\end{Lemma}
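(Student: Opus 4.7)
The plan is to exploit the density of $Y$ in $X$ together with the completeness of $\RR$ to construct $\bar g$ directly as a pointwise limit of values of $g$, bypassing the transfinite induction (Zorn's lemma) that is needed for the general Hahn--Banach extension theorem; this is the ``bounded linear transformation'' extension argument. The uniqueness assertion in the statement already strongly hints at this approach, since a Zorn-type extension is typically not unique.

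First, I would define $\bar g$ pointwise. For each $x\in X$, choose by density a sequence $\{y_n\}\subset Y$ with $y_n\to x$ in $X$. The continuity of $g$ on $Y$ gives the Lipschitz estimate
\[
|g(y_n)-g(y_m)|=|g(y_n-y_m)|\leq \norm{g}_{Y^*}\norm{y_n-y_m}_X,
\]
so $\{g(y_n)\}$ is Cauchy in $\RR$ and converges. Set $\bar g(x):=\lim_{n\to\infty} g(y_n)$. Independence from the choice of approximating sequence follows by applying the same Lipschitz estimate to two sequences $y_n,y_n'$ both converging to $x$: the interleaved sequence $y_1,y_1',y_2,y_2',\ldots$ also converges to $x$, forcing the two candidate limits to coincide.

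Second, I would verify linearity by combining approximating sequences: if $y_n^{(1)}\to x_1$ and $y_n^{(2)}\to x_2$ in $X$, then $\alpha y_n^{(1)}+\beta y_n^{(2)}\to \alpha x_1+\beta x_2$, and the linearity of $g$ on $Y$ passes to the limit. For the norm identity, the inequality $\norm{g}_{Y^*}\leq \norm{\bar g}_{X^*}$ is immediate because $\bar g$ restricts to $g$ on $Y\subset X$. For the reverse direction, passing to the limit in $|g(y_n)|\leq \norm{g}_{Y^*}\norm{y_n}_X$ yields $|\bar g(x)|\leq \norm{g}_{Y^*}\norm{x}_X$ for every $x\in X$, so $\norm{\bar g}_{X^*}\leq \norm{g}_{Y^*}$. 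Uniqueness is immediate: any two continuous extensions of $g$ agree on the dense subset $Y$ and hence on all of $X$.

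Since the argument rests only on elementary completeness and density, rather than any nontrivial functional-analytic machinery, there is no genuine obstacle here; the only point requiring minor care is the norm identity, where both inequalities must be established separately, and the construction must be shown to be independent of the approximating sequence before one may legitimately call $\bar g$ a well-defined function.
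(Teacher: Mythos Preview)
Your proof is correct and is the standard density/BLT extension argument. Note that the paper does not supply its own proof of this lemma: it is stated in Appendix~A as a known fact with a citation to Brezis's textbook, so there is nothing to compare against beyond observing that what you wrote is exactly the expected argument.
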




\section{Remarks on the compatibility conditions}\label{AppB}
This appendix is devoted to giving one equivalent form of the initial condition \eqref{a1} or \eqref{a1'} in terms of $(\rho_0,u_0)$ themselves and their spatial derivatives.

First, according to the time evolution equation of $U$ in  \eqref{secondreformulation}, one has that all the desired initial values of time derivatives of $U$ in \eqref{a1} or \eqref{a1'} can be completely expressed by those of $(\rho_0,u_0)$ themselves and their spatial derivatives, namely,
\begin{align}
\phi_0^KU_t(0,x)&=\phi_0^K(u_0)_{xx}+\frac{1}{\alpha}\phi_0^{K-1}(\phi_0)_x (u_0)_x+\phi_0^K\cR_0,\notag\\
\phi_0^KU_{tx}(0,x)&=\phi_0^K\partial_x^3 u_0+\frac{1}{\alpha}\phi_0^{K-1}(\phi_0)_x(u_0)_{xx}-\frac{1}{\alpha}\phi_0^{K-2}((\phi_0)_x)^2(u_0)_x+\phi_0^K(\cR_0)_{x},\notag
\end{align}
\begin{align}
\phi_0^L\partial_tU_{xx}(0,x)
&=\phi_0^L \partial_x^4 u_0 +\frac{1}{\alpha}\phi_0^{L-1}\partial_x^3\phi_0(u_0)_x+\frac{2}{\alpha}\phi_0^{L-1}(\phi_0)_{xx} (u_0)_{xx}\notag\\
&\quad +\frac{1}{\alpha}\phi_0^{L-1}(\phi_0)_x \partial_x^3 u_0-\frac{2}{\alpha} \phi_0^{L-2}((\phi_0)_x)^2 (u_0)_{xx} \label{1..16}\\
&\quad -\frac{3}{\alpha} \phi_0^{L-2} (\phi_0)_x (\phi_0)_{xx} (u_0)_x+ \frac{2}{\alpha} \phi_0^{L-3} ((\phi_0)_x)^3 (u_0)_x+\phi_0^L(\cR_0)_{xx},\notag\\
\phi_0^K U_{tt}(0,x)&= \phi_0^K\partial_t U_{xx}(0,x)+\frac{1}{\alpha} \phi_0^{K-1}(\phi_0)_x U_{tx}(0,x) -4\phi_0^K(u_0)_x(u_0)_{xx}\notag\\
&\quad +\frac{2}{\alpha}\phi_0^{K-1}(\phi_0)_x(u_0)_x^2+\frac{4}{\alpha}\phi_0^{K+\frac{1}{\alpha}-1}(\phi_0)_x (u_0)_x+2\phi_0^{K+\frac{1}{\alpha}}(u_0)_{xx},\notag
\end{align}
where $\phi_0:=\rho_0^\alpha\sim d(x)$; $K=L=1$, if $0<\alpha\leq \frac{1}{3}$; $K=\frac{1}{2\alpha}$, $L=\frac{3}{2}-\varepsilon_0$, if $\frac{1}{3}<\alpha\leq 1$; and
\begin{align*}
\cR_0&=-\frac{2}{\alpha}\phi_0^{\frac{1}{\alpha}-1} (\phi_0)_x,\ \ 
(\cR_0)_x=- \frac{2}{\alpha}\Big(\frac{1}{\alpha}-1\Big)\phi_0^{\frac{1}{\alpha}-2}((\phi_0)_x)^2 - \frac{2}{\alpha} \phi_0^{\frac{1}{\alpha}-1}(\phi_0)_{xx},\\
(\cR_0)_{xx}&=-\frac{2}{\alpha}\Big(\frac{1}{\alpha}-1\Big)\Big(\frac{1}{\alpha}-2\Big)\phi_0^{\frac{1}{\alpha}-3}((\phi_0)_x)^3-\frac{6}{\alpha}\Big(\frac{1}{\alpha}-1\Big) \phi_0^{\frac{1}{\alpha}-2} (\phi_0)_x (\phi_0)_{xx}  -\frac{2}{\alpha} \phi_0^{\frac{1}{\alpha}-1} \partial_x^3\phi_0.
\end{align*}

Second, one can show that \eqref{a1} or \eqref{a1'} implies  the homogeneous  Neumann boundary condition of $u_0$. 
Indeed, setting $K=0$ in $\eqref{1..16}_1$, 
\begin{equation*}
U_t(0,x)=(u_0)_{xx}+\frac{1}{\alpha}\phi_0^{-1}(\phi_0)_x(u_0)_x -\frac{2}{\alpha}\phi_0^{\frac{1}{\alpha}-1}(\phi_0)_x,
\end{equation*}
then according to \eqref{a1} or \eqref{a1'}, and Lemmas \ref{sobolev-embedding}-\ref{hardy-inequality}, one can obtain that 
\begin{equation*}
\begin{aligned}
\absf{\phi_0^{-1}(\phi_0)_x(u_0)_x}_\infty&\leq C\big(\abs{U_t(0)}_\infty+\abs{(u_0)_{xx}}_\infty+\abs{\phi_0}_\infty^{\frac{1}{\alpha}-1}\abs{(\phi_0)_x}_\infty\big)\\
&\leq C  E(0,U) (\text{or }\widetilde E(0,U))+C\leq C,
\end{aligned}
\end{equation*}
which, along with $(\phi_0)_x\neq 0$, yields that 
\begin{equation}\label{Neu-inital}
\abs{(u_0)_x(x)}\leq C d(x)\quad \text{for all }x\in \bar I.
\end{equation}

Next, based on \eqref{1..16}-\eqref{Neu-inital}, one has the following auxiliary lemma which gives an equivalent form of \eqref{a1} or \eqref{a1'} in terms of $(\rho_0,u_0)$ themselves and their spatial derivatives.
\begin{Lemma}\label{Reduction} Assume that $(\rho_0,u_0)$ is the initial data of the problem \eqref{secondreformulation}. Then  
\begin{enumerate}
\item[$\mathrm{i)}$] if $0<\alpha\leq \frac{1}{3}$, then \eqref{a1} holds if and only if
\begin{equation}\label{compat1}
(u_0)_x|_{x\in\Gamma}=0,\quad \phi_0\partial_x^j u_0\in L^2,\,\,0\leq j\leq 4,\,\,j\in \NN;
\end{equation}
\item[$\mathrm{ii)}$] if $\frac{1}{3}<\alpha< \frac{3}{5}$ or $\alpha  =1$, then \eqref{a1'} holds if and only if
\begin{equation}\label{compat2}
\begin{cases}
(u_0)_x|_{x\in \Gamma}=0,\quad \phi_0^{\frac{3}{2}-\varepsilon_0}\partial_x^j u_0,\,\,0\leq j\leq 4,\,\,j\in \NN,\\[4pt]
\displaystyle\phi_0^\frac{1}{2\alpha}\partial_x^4 u_0 + \frac{2}{\alpha} \phi_0^{\frac{1}{2\alpha}-1}(\phi_0)_x\partial_x^3 u_0 +\frac{1-2\alpha}{\alpha^2}\phi_0^{\frac{1}{2\alpha}-1}((\phi_0)_x)^2 \left(\phi_0^{-1}(u_0)_{x}\right)_x\in L^2;
\end{cases}
\end{equation}
\item[$\mathrm{iii)}$] if $\frac{3}{5}\leq \alpha<1$, then \eqref{a1'} holds if and only if
\begin{equation}\label{compat3}
\begin{cases}
(u_0)_x|_{x\in \Gamma}=0,\quad \phi_0^{\frac{3}{2}-\varepsilon_0}\partial_x^j u_0,\,\,0\leq j\leq 4,\,\,j\in \NN,\\[4pt]
\displaystyle\phi_0^\frac{1}{2\alpha}\partial_x^4 u_0+\frac{2}{\alpha}\phi_0^{\frac{1}{2\alpha}-1}(\phi_0)_x\partial_x^3 u_0+\frac{1-2\alpha}{\alpha^2}\phi_0^{\frac{1}{2\alpha}-1}((\phi_0)_x)^2 \left(\phi_0^{-1}(u_0)_{x}\right)_x\\[4pt]
\displaystyle-\frac{4}{\alpha}\Big(\frac{1}{\alpha}-1\Big)^2\phi_0^{\frac{3}{2\alpha}-3}((\phi_0)_x)^3\in L^2.
\end{cases}
\end{equation}
\end{enumerate}
\end{Lemma}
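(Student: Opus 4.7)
The plan is to establish this equivalence by directly manipulating the algebraic identities \eqref{1..16}, which already express $\phi_0^{K}\partial_t^k U(0)$, $\phi_0^K\partial_t^k U_x(0)$, $\phi_0^L\partial_t U_{xx}(0)$, and $\phi_0^K U_{tt}(0)$ in terms of spatial derivatives of $u_0$ and $\phi_0=\rho_0^\alpha$. For both directions of the equivalence, the key tools will be Lemma \ref{sobolev-embedding}, the Hardy inequalities of Lemma \ref{hardy-inequality} (especially \eqref{a..8} with $\bar d=\phi_0$), the boundary identity $(u_0)_x|_\Gamma=0$ established in \eqref{Neu-inital}, and Proposition \ref{prop2.1} to extract individual terms from weighted combinations when the conditions are read in reverse. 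The three cases differ only in the bookkeeping of coefficients and powers of $\phi_0$.

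For case i), $0<\alpha\le\frac13$, the common weight $\phi_0$ across every entry of $E(0,U)$ makes the reduction essentially term-by-term: inspecting $\eqref{1..16}_1$--$\eqref{1..16}_3$ and the expression for $U_{tt}(0)$, every summand is either of the form $\phi_0\,\partial_x^j u_0$ with $0\le j\le 4$, or of the form $\phi_0^m ((\phi_0)_x)^n\cdot (\partial_x^k u_0)^q$ with $m\ge 0$, plus terms involving only $\phi_0$ and its derivatives. The vanishing boundary condition for $(u_0)_x$ together with \eqref{a..8} lets one absorb any factor of $(u_0)_x/\phi_0$ into a bound on $\|(u_0)_x\|_{1,2}$, and the purely geometric pieces such as $\phi_0^{1/\alpha-2}((\phi_0)_x)^3$ are in $L^2$ precisely because $2(1/\alpha-2)>-1$ when $\alpha\le 1/3$. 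Reversing the identities with Proposition \ref{prop2.1} then recovers the individual weighted bounds $\phi_0\,\partial_x^j u_0\in L^2$.

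For cases ii) and iii), the delicate computation is $\phi_0^{1/(2\alpha)} U_{tt}(0)$, obtained by substituting $\eqref{1..16}_2$ into $\eqref{1..16}_4$. A careful expansion reveals that the two contributions to the $((\phi_0)_x)^2 (u_0)_{xx}$ coefficient (namely $-2/\alpha$ from $\phi_0^K\partial_t U_{xx}(0)$ and $+1/\alpha^2$ from $(1/\alpha)\phi_0^{K-1}(\phi_0)_x U_{tx}(0)$) combine to $(1-2\alpha)/\alpha^2$, and the corresponding contributions to $((\phi_0)_x)^3 (u_0)_x$ combine to $-(1-2\alpha)/\alpha^2$. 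These two borderline terms regroup precisely into the single expression $\frac{1-2\alpha}{\alpha^2}\phi_0^{1/(2\alpha)-1}((\phi_0)_x)^2\bigl(\phi_0^{-1}(u_0)_x\bigr)_x$ that appears in \eqref{compat2}. Every other piece of $\phi_0^{1/(2\alpha)} U_{tt}(0)$, including the ones involving $\partial_x^3\phi_0$, $(\phi_0)_{xx}(u_0)_{xx}$, $(\phi_0)_x(\phi_0)_{xx}(u_0)_x$, and the subleading terms of $\phi_0^L(\cR_0)_{xx}$, is bounded by the assumption $\phi_0^{3/2-\varepsilon_0}\partial_x^j u_0\in L^2$ for $j=0,\dots,4$ together with the boundary vanishing of $(u_0)_x$, provided $\varepsilon_0$ lies in the range \eqref{varepsilon0}.

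The main obstacle, and the reason for the split between \eqref{compat2} and \eqref{compat3}, is the single term in $\phi_0^{1/(2\alpha)}\partial_t U_{xx}(0)$ of the form $-(2/\alpha)(1/\alpha-1)(1/\alpha-2)\phi_0^{3/(2\alpha)-3}((\phi_0)_x)^3$ (from $\phi_0^K(\cR_0)_{xx}$), together with its partner $-(2/\alpha^2)(1/\alpha-1)\phi_0^{3/(2\alpha)-3}((\phi_0)_x)^3$ (from $(1/\alpha)\phi_0^{K-1}(\phi_0)_x(\cR_0)_x$). Adding these gives exactly $-\frac{4}{\alpha}(\frac{1}{\alpha}-1)^2\phi_0^{3/(2\alpha)-3}((\phi_0)_x)^3$. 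When $\alpha=1$ the prefactor vanishes, and when $\alpha<3/5$ the weight $\phi_0^{3/(2\alpha)-3}$ already lies in $L^2$ (as $2(3/(2\alpha)-3)>-1$ is equivalent to $\alpha<3/5$), so the term is automatically controlled and \eqref{compat2} suffices. For the remaining range $3/5\le\alpha<1$, this combination must be added to the compatibility condition, yielding exactly the extra term in \eqref{compat3}. With this identification in hand, the converse in each case is obtained by reading the same chain of identities backwards, isolating $\phi_0^{3/2-\varepsilon_0}\partial_x^4 u_0$ via Proposition \ref{prop2.1}, and using the lower-order compatibility to recover the weighted bounds on $\phi_0^{3/2-\varepsilon_0}\partial_x^j u_0$ for $j=2,3$.
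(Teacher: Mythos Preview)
Your approach is essentially the same as the paper's: both work directly with the identities \eqref{1..16}, regroup the borderline $(u_0)_{xx}$ and $(u_0)_x$ contributions into $\frac{1-2\alpha}{\alpha^2}\phi_0^{\frac{1}{2\alpha}-1}((\phi_0)_x)^2\bigl(\phi_0^{-1}(u_0)_x\bigr)_x$, and compute the total $\cR_0$ residue $-\frac{4}{\alpha}\bigl(\frac{1}{\alpha}-1\bigr)^2\phi_0^{\frac{3}{2\alpha}-3}((\phi_0)_x)^3$ that forces the split at $\alpha=\frac{3}{5}$. One small point: your appeal to Proposition~\ref{prop2.1} to ``recover'' the bounds $\phi_0\partial_x^j u_0\in L^2$ (resp.\ $\phi_0^{\frac{3}{2}-\varepsilon_0}\partial_x^j u_0\in L^2$) in the necessity direction is unnecessary, since these are already contained verbatim in $E(0,U)$ (resp.\ $\widetilde E(0,U)$) via $U(0)=u_0$; the only nontrivial content of necessity is \eqref{Neu-inital} together with the extraction of the single compatibility expression in $\eqref{compat2}_2$ or $\eqref{compat3}_2$.
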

\begin{proof}
This lemma can be proved in the following three steps.

\underline{\textbf{Step 1: Case $0<\alpha\leq \frac{1}{3}$.}} According to \eqref{Neu-inital}, it suffices to show the sufficiency and prove that $\phi_0\partial_t U_{xx}(0,x)\in L^2$. Indeed, it follows from $\eqref{1..16}_3$, \eqref{compat1} and Lemma \ref{hardy-inequality} that
\begin{equation*}
\phi_0\partial_t U_{xx}(0,x)+\frac{2}{\alpha} \phi_0^{-1}((\phi_0)_x)^2 (u_0)_{xx} -\frac{2}{\alpha} \phi_0^{-2}((\phi_0)_x)^3 (u_0)_x\in L^2.  
\end{equation*}
then based on \eqref{Neu-inital} and Lemma \ref{hardy-inequality}, one has
\begin{equation*}
\begin{aligned}
&\frac{2}{\alpha} \absb{\phi_0^{-1}((\phi_0)_x)^2 (u_0)_{xx} -\phi_0^{-2}((\phi_0)_x)^3 (u_0)_x}_2
=\frac{2}{\alpha}\absb{((\phi_0)_x)^2\left(\phi_0^{-1} (u_0)_x\right)_x}_2\\
\leq & C\normf{\phi_0^{-1} (u_0)_x}_1\leq C \norm{(u_0)_x}_2\leq C \sum_{j=1}^4 \absf{\phi_0\partial_x^4 u_0}_2\leq C,
\end{aligned}
\end{equation*}
which implies that $\phi_0\partial_t U_{xx}(0,x)\in L^2$.

\underline{\textbf{Step 2: Case $\frac{1}{3}<\alpha<\frac{3}{5}$ or $\alpha=1$.}} We first prove the necessity. It follows from 
 $\eqref{1..16}_4$, Lemma \ref{hardy-inequality} 
and $\phi_0^\frac{1}{2\alpha} U_{tt}(0,x)\in L^2$ that
\begin{equation}\label{1...23}
\phi_0^\frac{1}{2\alpha}\partial_t U_{xx}(0,x)+\frac{1}{\alpha}\phi_0^{\frac{1}{2\alpha}-1} (\phi_0)_x U_{tx}(0,x)\in L^2.
\end{equation}

Next, one can obtain from $\eqref{1..16}_2$-$\eqref{1..16}_3$ that
\begin{equation}\label{1...24}
\begin{aligned}
&\phi_0^\frac{1}{2\alpha}\partial_t U_{xx}(0,x)+\frac{1}{\alpha}\phi_0^{\frac{1}{2\alpha}-1} (\phi_0)_x U_{tx}(0,x)\\
=&\phi_0^\frac{1}{2\alpha}\partial_x^4 u_0 + \frac{2}{\alpha} \phi_0^{\frac{1}{2\alpha}-1}(\phi_0)_x\partial_x^3 u_0 +\frac{1-2\alpha}{\alpha^2}\phi_0^{\frac{1}{2\alpha}-1}((\phi_0)_x)^2 \left(\phi_0^{-1}(u_0)_{x}\right)_x\\
&\underline{-\frac{3}{\alpha}\phi_0^{\frac{1}{2\alpha}-2} (\phi_0)_x(\phi_0)_{xx} (u_0)_x+\frac{1}{\alpha} \phi_0^{\frac{1}{2\alpha}-1} \partial_x^3\phi_0 (u_0)_x+\frac{2}{\alpha}\phi_0^{\frac{1}{2\alpha}-1}(\phi_0)_{xx}(u_0)_{xx}}\\
& \underline{+\phi_0^\frac{1}{2\alpha}(\cR_0)_{xx}+\phi_0^{\frac{1}{2\alpha}-1}(\phi_0)_x(\cR_0)_x.}
\end{aligned}
\end{equation}

Noting that, since $\phi_0\sim d(x)$, $(u_0)_x|_{x\in\Gamma}=0$, it follows from the mean value theorem and Lemmas \ref{sobolev-embedding}-\ref{hardy-inequality} that
\begin{equation}\label{1...25}
\absf{\phi_0^{-1}(u_0)_x}_\infty \leq  C \norm{(u_0)_{x}}_{1,\infty}\leq C\norm{(u_0)_x}_{2,1}\leq C \sum_{j=1}^4 \absb{\phi_0^{\frac{3}{2}-\varepsilon_0} \partial_x^j u_0}_2\leq C.
\end{equation}
Hence, one can check from \eqref{1...25} that the underlined terms in \eqref{1...24} belong to $L^2$, and deduce from \eqref{1...23} that
\begin{equation}
\phi_0^\frac{1}{2\alpha}\partial_x^4 u_0 + \frac{2}{\alpha} \phi_0^{\frac{1}{2\alpha}-1}(\phi_0)_x\partial_x^3 u_0 +\frac{1-2\alpha}{\alpha^2}\phi_0^{\frac{1}{2\alpha}-1}((\phi_0)_x)^2 \left(\phi_0^{-1}(u_0)_{x}\right)_x\in L^2,
\end{equation}
which shows \eqref{compat2}.

Conversely, for  the sufficiency, one can first obtain from Lemma \ref{hardy-inequality} that $\phi_0^\frac{1}{2\alpha} U_{t}$, $\phi_0^\frac{1}{2\alpha} U_{tx}$ and  $\phi_0^{\frac{3}{2}-\varepsilon_0}\partial_t U_{xx}(0,x)\in L^2$. Next, it follows from $\eqref{1..16}_4$, $\eqref{compat2}_1$ and Lemma \ref{hardy-inequality} that
\begin{equation}\label{1..21}
\phi_0^\frac{1}{2\alpha}U_{tt}(0,x)-\phi_0^\frac{1}{2\alpha}\partial_t U_{xx}(0,x)-\frac{1}{\alpha}\phi_0^{\frac{1}{2\alpha}-1} (\phi_0)_x U_{tx}(0,x)\in L^2.
\end{equation}
However, according to \eqref{compat2}, \eqref{1...24} and \eqref{1...25}, one can deduce that \eqref{1...23} holds and, hence, together with \eqref{1..21}, one has $\phi_0^\frac{1}{2\alpha}U_{tt}(0,x)\in L^2$, which completes the proof.

\underline{\textbf{Step 3: Case $\frac{3}{5}\leq \alpha<1$.}}
For $\frac{3}{5}\leq \alpha<1$, the most striking difference here is that $\phi_0^{1/2\alpha}(\cR_0)_{xx}$ no longer belongs to $L^2$, and that is why we add an extra term in \eqref{compat3} comparing with \eqref{compat2}. Moreover, this fact will extremely narrow our choice of $u_0$. Since the proof of the equivalence between \eqref{a1'} and \eqref{compat3} is basically the same as that of \textbf{Step 2}, we omit the proof here and leave it to readers.
\end{proof}

Finally, based on Lemma  \ref{Reduction},  we give one remark  to show that the examples of the initial data given by \eqref{initalexample1}-\eqref{initalexample2} in     Remark \ref{initialexample} satisfy the initial assumptions \eqref{distance} and  \eqref{a1} or \eqref{a1'} in Theorem \ref{theorem3.1}.
\begin{Remark}\label{initalexample3}
 First, we show that  for the case  $0<\alpha<\frac{3}{5}$ or $\alpha=1$, \eqref{distance} and  \eqref{a1} are satisfied by  the class of initial data  given in \eqref{initalexample1}.
 We only check that $\eqref{compat2}_2$ holds. Indeed, setting $\phi_0=\rho_0^\alpha$, it follows from Lemma \ref{hardy-inequality} in Appendix \ref{appendix A} that
\begin{equation*}
\begin{split}
\absb{\phi_0^{\frac{1}{2\alpha}-1}(\phi_0)_x \partial_x^3 u_0}_2
\leq  C \abs{(\phi_0)_x}_\infty\sum_{j=3}^4 \absb{\phi_0^\frac{1}{2\alpha}\partial_x^j u_0}_2\leq C \normf{\partial_x^3 u_0}_{1,\infty}\leq C,
\end{split}
\end{equation*}
and, by Lemma \ref{sobolev-embedding},
\begin{equation*}
\begin{split}
&\absb{\phi_0^{\frac{1}{2\alpha}-1}((\phi_0)_x)^2 \left(\phi_0^{-1}(u_0)_{x}\right)_x}_2
\leq  C \absf{\left(\phi_0^{-1}(u_0)_{x}\right)_x}_\infty
\leq  C\normf{\phi_0^{-1}(u_0)_{x}}_2\leq C\norm{(u_0)_{x}}_3\leq C.
\end{split}
\end{equation*}

Second, we show that for $\frac{3}{5}\leq \alpha <1$, \eqref{distance} and   \eqref{a1'} are fulfilled by the set of initial data given by \eqref{initalexample2}. According to Lemma \ref{Reduction}, one needs only to show that $\eqref{compat3}_2$ holds.
Indeed, applying $\partial_x(\phi_0^{-1}\partial_x(\cdot))$, $\partial_x^3$ and $\partial_x^4$ to the expression of $u_0$ defined in \eqref{initalexample2}, respectively, one gets 
\begin{equation*}
\begin{aligned}
\left(\phi_0^{-1}(u_0)_x\right)_x&= \Big(\frac{1}{\alpha}-1\Big) \phi_0^{\frac{1}{\alpha}-2}(\phi_0)_x +\left(\phi_0^{-1}(f_0)_x\right)_x\\
\partial_x^3 u_0 &= \frac{1}{\alpha}\Big(\frac{1}{\alpha}-1\Big) \phi_0^{\frac{1}{\alpha}-2}((\phi_0)_x)^2+\frac{1}{\alpha} \phi_0^{\frac{1}{\alpha}-1}(\phi_0)_{xx}+\partial_x^3 f_0\\
\partial_x^4 u_0 &= \frac{1}{\alpha}\Big(\frac{1}{\alpha}-1\Big)\Big(\frac{1}{\alpha}-2\Big) \phi_0^{\frac{1}{\alpha}-3}((\phi_0)_x)^3+\frac{3}{\alpha}\Big(\frac{1}{\alpha}-1\Big) \phi_0^{\frac{1}{\alpha}-2}(\phi_0)_x(\phi_0)_{xx}\\
&\quad +\frac{1}{\alpha} \phi_0^{\frac{1}{\alpha}-1}\partial_x^3\phi_0+\partial_x^4 f_0.
\end{aligned}
\end{equation*}
Then, collecting above quantities gives
\begin{equation*}
\begin{aligned}
&\phi_0^\frac{1}{2\alpha}\partial_x^4 u_0+\frac{2}{\alpha}\phi_0^{\frac{1}{2\alpha}-1}(\phi_0)_x\partial_x^3 u_0+\frac{1-2\alpha}{\alpha^2}\phi_0^{\frac{1}{2\alpha}-1}((\phi_0)_x)^2 \left(\phi_0^{-1}(u_0)_{x}\right)_x \\
&-\frac{4}{\alpha}\Big(\frac{1}{\alpha}-1\Big)^2\phi_0^{\frac{3}{2\alpha}-3}((\phi_0)_x)^3\\
=&\frac{5-3\alpha}{\alpha^2} \phi_0^{\frac{3}{2\alpha}-2}(\phi_0)_x(\phi_0)_{xx}+\frac{1}{\alpha} \phi_0^{\frac{3}{2\alpha}-1}\partial_x^3\phi_0 +\phi_0^\frac{1}{2\alpha}\partial_x^4 f_0\\
& +\frac{2}{\alpha}\phi_0^{\frac{1}{2\alpha}-1}(\phi_0)_x \partial_x^3 f_0+\frac{1-2\alpha}{\alpha^2}\phi_0^{\frac{1}{2\alpha}-1}((\phi_0)_x)^2\left(\phi_0^{-1}(f_0)_x\right)_x.
\end{aligned}
\end{equation*}
Therefore, one can get from \eqref{distance}, $f_0\in C^\infty_c(I)$ and Lemmas 
\ref{sobolev-embedding}-\ref{hardy-inequality} that the right hand side of the above equality belongs to $L^2$, which shows $\eqref{compat3}_2$.
\end{Remark}

\section{Cross-derivatives embedding}\label{subsection2.2}

The following  embedding theorem  will be frequently used throughout the whole paper.
\begin{Proposition}\label{prop2.1}
Let $s>0$ and $\kappa>0$ be given constants, such that
\begin{equation}\label{con2.9}
   \frac{1}{2}<s\leq \frac{\kappa+1}{2}, 
\end{equation}
and $F\in L^1_{\mathrm{loc}}$ be a function defined on $I$. If $F$ satisfies
\begin{equation}\label{con2.10}
\abs{\phi_0^sF_x+\kappa\phi_0^{s-1}(\phi_0)_xF}_2+\abs{\phi_0^s F}_2\leq C(s,\kappa),
\end{equation}
and $\phi_0^rF_x\in L^2$ for some $r\in \big[s, \frac{\kappa+1}{2}\big]$, then it holds that
\begin{equation}\label{con2.11}
\abs{\phi_0^sF_x}_2\leq C(s,\kappa)\left(\abs{\phi_0^sF_x+\kappa\phi_0^{s-1}(\phi_0)_xF}_2+\abs{(\phi_0)_{xx}}_\infty\abs{\phi_0^s F}_2\right)\leq C(s,\kappa),
\end{equation}
where $C(s,\kappa)>0$ is a constant depending only on $(s,\kappa)$.
\end{Proposition}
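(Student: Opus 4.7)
\smallskip

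\noindent\textbf{Proof Proposal.}
The plan is to derive a single algebraic identity for $\abs{\phi_0^s F_x}_2^2$ that isolates a non-positive term exactly when $s\le(\kappa+1)/2$, and then close the bound by weighted interpolation. Set $G:=\phi_0^s F_x+\kappa\phi_0^{s-1}(\phi_0)_x F$, so that $\phi_0^s F_x=G-\kappa\phi_0^{s-1}(\phi_0)_x F$. Squaring and integrating yields
\begin{equation*}
\abs{\phi_0^s F_x}_2^2=\abs{G}_2^2-2\kappa\int G\,\phi_0^{s-1}(\phi_0)_xF\,\dx+\kappa^2\abs{\phi_0^{s-1}(\phi_0)_xF}_2^2.
\end{equation*}
Substituting $G$ into the cross term produces $\int\phi_0^{2s-1}(\phi_0)_x FF_x\,\dx=\tfrac12\int\phi_0^{2s-1}(\phi_0)_x(F^2)_x\,\dx$, onto which I would integrate by parts, moving the derivative off $F^2$. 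The resulting identity is
\begin{equation*}
\abs{\phi_0^s F_x}_2^2=\abs{G}_2^2+\kappa(2s-1-\kappa)\abs{\phi_0^{s-1}(\phi_0)_xF}_2^2+\kappa\int\phi_0^{2s-1}(\phi_0)_{xx}F^2\,\dx.
\end{equation*}

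The structural hypothesis $s\le(\kappa+1)/2$ is exactly what makes the coefficient $\kappa(2s-1-\kappa)\le 0$, so the (potentially singular) middle term can simply be discarded. What remains is the pressure-like term $\kappa\int\phi_0^{2s-1}(\phi_0)_{xx}F^2\,\dx$, which is bounded by $\kappa\abs{(\phi_0)_{xx}}_\infty\abs{\phi_0^{s-1/2}F}_2^2$. Since $s>1/2$ gives $k:=2s-1>-1$, Lemma~\ref{GNinequality} applies and I would use it (followed by Young's inequality) to bound $\abs{\phi_0^{s-1/2}F}_2^2\le C(s)\abs{\phi_0^s F}_2^2+C(s)\abs{\phi_0^s F}_2\abs{\phi_0^s F_x}_2$, then absorb the resulting $\tfrac12\abs{\phi_0^s F_x}_2^2$ into the left-hand side. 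Taking square roots delivers the first inequality in \eqref{con2.11}, and the second inequality then follows from \eqref{con2.10} together with $\phi_0\in H^3\hookrightarrow C^2(\bar I)$ (Lemma~\ref{sobolev-embedding}), which makes $\abs{(\phi_0)_{xx}}_\infty$ a bounded constant absorbed into $C(s,\kappa)$.

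The hard part will be rigorously justifying the integration by parts, because we only know $\phi_0^r F_x\in L^2$ for some $r\in[s,(\kappa+1)/2]$, not that $F$ is smooth or has a vanishing trace. The plan is to use a family of smooth cutoffs $\chi_\delta\in C^\infty(\bar I)$ with $\chi_\delta\equiv 1$ on $\{d(x)\ge 2\delta\}$ and $\chi_\delta\equiv 0$ on $\{d(x)\le\delta\}$, apply the identity to $\chi_\delta F$ (where mollification is allowed since everything is supported in the interior), and then let $\delta\to 0^+$. The boundary contribution $[\phi_0^{2s-1}(\phi_0)_x F^2]_\Gamma$ that would otherwise survive is controlled on the annulus $\{\delta\le d(x)\le 2\delta\}$ by $C\delta^{2s-1}\int_{\{d\le 2\delta\}}F^2\,\dx$; since $2s-1>0$ and $\phi_0\sim d$, combined with $\abs{\phi_0^s F}_2\le C$ and the assumed weighted regularity $\phi_0^r F_x\in L^2$, this contribution vanishes in the limit by dominated convergence. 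The role of the auxiliary assumption $\phi_0^r F_x\in L^2$ is precisely to make the cross terms $\chi_\delta' F\cdot(\phi_0^s F_x)$ and the full integrand pass to the limit in $L^1$.

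Assembling these steps gives \eqref{con2.11}; the balance $s\le(\kappa+1)/2$ is the source of the key cancellation, the threshold $s>1/2$ is needed both for the interpolation step and for the boundary vanishing, and the flexibility in $r$ is exactly enough to make the approximation argument rigorous.
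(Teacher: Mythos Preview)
Your algebraic identity and the absorption via Lemma~\ref{GNinequality} are correct and coincide with the paper's Step~1. The gap is in justifying integration by parts when $r>s$. Applying the identity to $\chi_\delta F$ produces commutators involving $\chi_\delta'\sim\delta^{-1}$ on the annulus $\{\delta\le d\le2\delta\}$; your bound $C\delta^{2s-1}\int_{\{d\le2\delta\}}F^2\,dx$ omits this factor. After a careful expansion (the $\abs{\phi_0^s\chi_\delta'F}_2^2$ contributions on the two sides do cancel), the surviving commutator is $2\kappa\int\chi_\delta\chi_\delta'\phi_0^{2s-1}(\phi_0)_xF^2\,dx$, of order $\delta^{2s-2}\int_{\{\delta\le d\le2\delta\}}F^2\,dx$. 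From $\phi_0^rF_x\in L^2$ and $\phi_0^sF\in L^2$ alone, Lemma~\ref{hardy-inequality} yields only $\abs{F(x)}\le C\phi_0(x)^{1/2-r}$, giving a contribution of order $\delta^{2(s-r)}\to\infty$; dominated convergence does not apply since nothing dominates the family in $L^1$. The cutoff route can be repaired only after first establishing the sharper decay $\abs{F(x)}=o(\phi_0(x)^{1/2-s})$, obtainable by integrating $(\phi_0^\kappa F)_x=\phi_0^{\kappa-s}G$ from the boundary, but this step is absent from your sketch.

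The paper's Step~3 handles $r>s$ by regularizing the \emph{weight} rather than the function: it replaces $\phi_0^s$ by $\phi_0^{(\kappa+1)/2}/(\phi_0+\varepsilon)^\iota$ with $\iota=(\kappa+1)/2-s$. For each fixed $\varepsilon>0$ this weight is comparable to $\phi_0^{(\kappa+1)/2}$ near the boundary, so (after reducing to $r=(\kappa+1)/2$) one has $F\in H^1_{\phi_0^{\kappa+1}}$ and integration by parts is justified by density; yet the weight converges pointwise to $\phi_0^s$ as $\varepsilon\to0$. The paper proves an $\varepsilon$-uniform variant of Lemma~\ref{GNinequality} for this weight, obtains a uniform bound on $\bigl|\phi_0^{(\kappa+1)/2}(\phi_0+\varepsilon)^{-\iota}F_x\bigr|_2$, and passes to the limit by weak $L^2$ compactness. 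No commutator terms ever appear.
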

\begin{proof}
\underline{\textbf{Step 1: Case $F\in C^\infty(\bar I)$.}}
We first consider  the case when $F\in C^\infty(\bar I)$. Via integration by parts, it follows from \eqref{con2.9}-\eqref{con2.10}, Lemma \ref{GNinequality} and Young's inequality that
\begin{align}
\abs{\phi_0^sF_x}_2^2=&\abs{\phi_0^sF_x+\kappa\phi_0^{s-1}(\phi_0)_xF}_2^2-\kappa^2\abs{\phi_0^{s-1}(\phi_0)_xF}_2^2 - \kappa\int \phi_0^{2s-1}(\phi_0)_x (F^2)_x\,\dx\notag\\
=&\abs{\phi_0^sF_x+\kappa\phi_0^{s-1}(\phi_0)_xF}_2^2 \underline{- \kappa\phi_0^{2s-1}(\phi_0)_xF^2\Big|_{x=0}^{x=1}}_{=0}\notag\\
&\underline{+ (2s-1-\kappa)\kappa\int \phi_0^{2s-2}((\phi_0)_x)^2 F^2\,\dx}_{\leq 0} + \kappa\int \phi_0^{2s-1}(\phi_0)_{xx} F^2\,\dx\label{2004}\\
\leq & \abs{\phi_0^sF_x+\kappa\phi_0^{s-1}(\phi_0)_xF}_2^2+\kappa\abs{(\phi_0)_{xx}}_\infty\absb{\phi_0^{s-\frac{1}{2}}F}_2^2\notag\\
\leq & C(s,\kappa)\left(\abs{\phi_0^sF_x+\kappa\phi_0^{s-1}(\phi_0)_xF}_2^2+\abs{(\phi_0)_{xx}}_\infty^2\abs{\phi_0^s F}_2^2\right)+\frac{1}{2}\abs{\phi_0^{s}F_x}_2^2,\notag
\end{align}
which yields that 
\begin{equation}
\abs{\phi_0^sF_x}_2\leq C(s,\kappa)\big(\abs{\phi_0^sF_x+\kappa\phi_0^{s-1}(\phi_0)_xF}_2+\abs{(\phi_0)_{xx}}_\infty\abs{\phi_0^s F}_2\big)\leq C(s,\kappa).
\end{equation}

\underline{\textbf{Step 2: Case $r=s$.}}
Note that for $F\in H^1_{\phi_0^{2s}}$, one can repeat the above calculation, but still needs to check the rationality of integration by parts of $\int \phi_0^{2s-1}(\phi_0)_x (F^2)_x\,\dx$. Indeed, via Lemma \ref{W-space}, there exists a smooth sequence $\{F^\delta\}_{\delta>0}\subset C^\infty(\bar I)$, such that
\begin{equation}\label{b6b}
\absf{\phi_0^s F^\delta-\phi_0^s F}_2+\absf{\phi_0^s F_x^\delta-\phi_0^s F_x}_2\to 0 \ \ \text{as }\delta\to 0,
\end{equation}
which, along with Lemma \ref{hardy-inequality}, yields
\begin{equation}\label{b7b}
\absf{\phi_0^{s-1}F^\delta-\phi_0^{s-1}F}_2+\absb{\phi_0^{s-\frac{1}{2}}F^\delta-\phi_0^{s-\frac{1}{2}}F}_\infty \to 0 \ \ \text{as }\delta\to 0.
\end{equation}
Thus, according to \eqref{b6b}-\eqref{b7b}  and integration by parts for $F^\delta\in C^\infty(\bar I)$,
\begin{equation*}
\begin{aligned}
&-2\int \phi_0^{2s-1}(\phi_0)_x F^\delta F^\delta_x\,\dx
=  (2s-1)\int \phi_0^{2s-2}((\phi_0)_x)^2 (F^\delta)^2\,\dx
 + \int \phi_0^{2s-1}(\phi_0)_{xx} (F^\delta)^2\,\dx,  
\end{aligned}
\end{equation*}
one has that for $F\in H^1_{\phi_0^{2s}}$,
\begin{equation*}
\begin{aligned}
&-2\int \phi_0^{2s-1}(\phi_0)_x F F_x\,\dx
=  (2s-1)\int \phi_0^{2s-2}((\phi_0)_x)^2 F^2\,\dx
 + \int \phi_0^{2s-1}(\phi_0)_{xx} F^2\,\dx.
\end{aligned}
\end{equation*}

The proof of \eqref{con2.11} when $r=s$ is completed.

\underline{\textbf{Step 3: General case.}}
For general $r$, it suffices to show the case when $s<r=\frac{\kappa+1}{2}$, since $H^1_{\phi_0^p}\subset H^1_{\phi_0^q}$ whenever $p\leq q$. Note that, in this case,  integration by parts in \eqref{2004} fails  due to the fact that $\phi_0^{s-1}(\phi_0)_x F\notin L^2$. 

To overcome this difficulty, denoting by $\iota:=\frac{\kappa+1}{2}-s$ and setting $0<\varepsilon<1$, we first show a variant of Lemma \ref{GNinequality}, that is,
\begin{equation}\label{2..8}
\absB{\frac{\phi_0^\frac{\kappa}{2}}{(\phi_0+\varepsilon)^{\iota}}F}_2^2\leq C(\kappa,s)\bigg(\absB{\frac{\phi_0^\frac{\kappa+1}{2}}{(\phi_0+\varepsilon)^{\iota}}F}_2^2+\absB{\frac{\phi_0^\frac{\kappa+1}{2}}{(\phi_0+\varepsilon)^{\iota}}F}_2\absB{\frac{\phi_0^\frac{\kappa+1}{2}}{(\phi_0+\varepsilon)^{\iota}}F_x}_2\bigg).
\end{equation}

Based on the proof in Lemma \ref{GNinequality}, it suffices to show \eqref{2..8} holds for $F\in C^\infty(\bar I)$ and $\phi_0=x$, $0<x\leq \frac{1}{2}$. Via integration by parts, it holds that
\begin{align*}
\int_0^\frac{1}{2} \frac{x^\kappa F^2}{(x+\varepsilon)^{2\iota}}\,\dx
&=\frac{1}{\kappa+1}\frac{2^{2\iota}}{(1+2\varepsilon)^{2\iota}2^{\kappa+1}} F^2\Big(\frac{1}{2}\Big)-\frac{2}{\kappa+1} \int_0^\frac{1}{2} \frac{x^{\kappa+1} FF_x}{(x+\varepsilon)^{2\iota}}\,\dx\\
&\quad +\frac{2\iota}{\kappa+1} \int_0^\frac{1}{2} \frac{x^{\kappa+1}F^2}{(x+\varepsilon)^{2\iota+1}}\,\dx\\
\leq & \frac{1}{\kappa+1}\frac{2^{2\iota}}{(1+2\varepsilon)^{2\iota}2^{\kappa+1}} F^2\Big(\frac{1}{2}\Big)-\frac{2}{\kappa+1} \int_0^\frac{1}{2} \frac{x^{\kappa+1} FF_x}{(x+\varepsilon)^{2\iota}}\,\dx\\
& +\frac{2\iota}{\kappa+1} \int_0^\frac{1}{2} \frac{x^{\kappa}F^2}{(x+\varepsilon)^{2\iota}}\,\dx,
\end{align*}
which gives
\begin{equation}\label{2..9}
\int_0^\frac{1}{2}\frac{x^\kappa F^2}{(x+\varepsilon)^{2\iota}}\,\dx\leq \frac{1}{s}\frac{2^{2\iota}}{(1+2\varepsilon)^{2\iota}2^{\kappa+1}} F^2\Big(\frac{1}{2}\Big)-\frac{2}{s} \int_0^\frac{1}{2} \frac{x^{\kappa+1} FF_x}{(x+\varepsilon)^{2\iota}}\,\dx.
\end{equation}

The same calculation implies that
\begin{equation}\label{2..10}
\begin{aligned}
\int_0^\frac{1}{2} \frac{x^{\kappa+1} F^2}{(x+\varepsilon)^{2\iota}}\,\dx&=\frac{1}{\kappa+2}\frac{2^{2\iota}}{(1+2\varepsilon)^{2\iota}2^{\kappa+2}} F^2\Big(\frac{1}{2}\Big)-\frac{2}{\kappa+2} \int_0^\frac{1}{2} \frac{x^{\kappa+2} FF_x}{(x+\varepsilon)^{2\iota}}\,\dx\\
&\quad +\frac{2\iota}{\kappa+2} \int_0^\frac{1}{2} \frac{x^{\kappa+2}F^2}{(x+\varepsilon)^{2\iota+1}}\,\dx.
\end{aligned}
\end{equation}

Multiplying \eqref{2..10} by $\frac{2(\kappa+2)}{s}$, then substituting \eqref{2..10} into \eqref{2..9} to cancel the constant term, one obtains that
\begin{equation*}
\begin{aligned}
\int_0^\frac{1}{2}\frac{x^\kappa F^2}{(x+\varepsilon)^{2\iota}}\,\dx&\leq \frac{2(\kappa+2)}{s}\int_0^\frac{1}{2} \frac{x^{\kappa+1} F^2}{(x+\varepsilon)^{2\iota}}\,\dx-\frac{4\iota}{s}\int_0^\frac{1}{2} \frac{x^{\kappa+2}F^2}{(x+\varepsilon)^{2\iota+1}}\,\dx\\
&\quad +\frac{4}{s}\int_0^\frac{1}{2} \frac{x^{\kappa+2} FF_x}{(x+\varepsilon)^{2\iota}}\,\dx-\frac{2}{s}\int_0^\frac{1}{2} \frac{x^{\kappa+1} FF_x}{(x+\varepsilon)^{2\iota}}\,\dx\\
&\leq C(\kappa,s)\bigg(\absB{\frac{\phi_0^\frac{\kappa+1}{2}}{(\phi_0+\varepsilon)^{\iota}}F}_2^2+\absB{\frac{\phi_0^\frac{\kappa+1}{2}}{(\phi_0+\varepsilon)^{\iota}}F}_2\absB{\frac{\phi_0^\frac{\kappa+1}{2}}{(\phi_0+\varepsilon)^{\iota}}F_x}_2\bigg),
\end{aligned}
\end{equation*}
which completes the proof of \eqref{2..8}.

Now, we continue to prove \eqref{con2.11}. It follows from multiplying \eqref{con2.10} by $\frac{\phi_0^\iota}{(\phi_0+\varepsilon)^\iota}$ that
\begin{equation}\label{con2.15}
\absB{\frac{\phi_0^{\frac{\kappa+1}{2}}}{(\phi_0+\varepsilon)^\iota}F_x+\kappa\frac{\phi_0^{\frac{\kappa-1}{2}}}{(\phi_0+\varepsilon)^\iota}(\phi_0)_x F}_2\leq C(s,\kappa).
\end{equation}
It is worth noting that, according to Lemma \ref{hardy-inequality}, each term in \eqref{con2.15} is meaningful for every $\varepsilon>0$. Moreover, similar to the density arguments in \textbf{Step 2}, integration by parts still holds in this case, that is, for all $F\in H^1_{\phi_0^{\kappa+1}}$ and $\varepsilon>0$,
\begin{equation*}
\begin{aligned}
-2\int \frac{\phi_0^{\kappa}(\phi_0)_x F F_x}{(\phi_0+\varepsilon)^{2\iota}}\,\dx&=  \kappa\int \frac{\phi_0^{\kappa-1}((\phi_0)_x)^2 F^2}{(\phi_0+\varepsilon)^{2\iota}}\,\dx-2\iota\int \frac{\phi_0^{\kappa}((\phi_0)_x)^2 F^2}{(\phi_0+\varepsilon)^{2\iota+1}}\,\dx \\
&\quad+ \int \frac{\phi_0^{\kappa}(\phi_0)_{xx} F^2}{(\phi_0+\varepsilon)^{2\iota}}\,\dx.
\end{aligned}
\end{equation*}
Via integration by parts, one can deduce from \eqref{2..8}, \eqref{con2.15} and Young's inequality that
\begin{align*}
\absB{\frac{\phi_0^{\frac{\kappa+1}{2}}}{(\phi_0+\varepsilon)^\iota}F_x}_2^2&=\absB{\frac{\phi_0^{\frac{\kappa+1}{2}}}{(\phi_0+\varepsilon)^\iota}F_x+\kappa\frac{\phi_0^{\frac{\kappa-1}{2}}}{(\phi_0+\varepsilon)^\iota}(\phi_0)_x F}_2^2-\kappa^2\absB{\frac{\phi_0^{\frac{\kappa-1}{2}}}{(\phi_0+\varepsilon)^\iota}(\phi_0)_x F}_2^2\notag\\
&\quad - 2\kappa\int \frac{\phi_0^{\kappa}(\phi_0)_x F F_x}{(\phi_0+\varepsilon)^{2\iota}}\,\dx\notag\\
&=\absB{\frac{\phi_0^{\frac{\kappa+1}{2}}}{(\phi_0+\varepsilon)^\iota}F_x+\kappa\frac{\phi_0^{\frac{\kappa-1}{2}}}{(\phi_0+\varepsilon)^\iota}(\phi_0)_x F}_2^2-\kappa^2\absB{\frac{\phi_0^{\frac{\kappa-1}{2}}}{(\phi_0+\varepsilon)^\iota}(\phi_0)_x F}_2^2\notag\\
&\quad  \underline{-2\iota\kappa\int \frac{\phi_0^{\kappa}((\phi_0)_x)^2 F^2}{(\phi_0+\varepsilon)^{2\iota+1}}\,\dx}_{\leq 0}+\kappa^2\int \frac{\phi_0^{\kappa-1}((\phi_0)_x)^2 F^2}{(\phi_0+\varepsilon)^{2\iota}}\,\dx\\
&\quad + \kappa\int \frac{\phi_0^{\kappa}(\phi_0)_{xx} F^2}{(\phi_0+\varepsilon)^{2\iota}}\,\dx\notag\\
&\leq C(s,\kappa)+\kappa\abs{(\phi_0)_{xx}}_\infty\absB{\frac{\phi_0^\frac{\kappa}{2}}{(\phi_0+\varepsilon)^{\iota}}F}_2^2\notag\\
&\leq C(s,\kappa)\bigg(1+\abs{(\phi_0)_{xx}}_\infty^2\absB{\frac{\phi_0^\frac{\kappa+1}{2}}{(\phi_0+\varepsilon)^{\iota}}F}_2^2\bigg)+\frac{1}{2}\absB{\frac{\phi_0^\frac{\kappa+1}{2}}{(\phi_0+\varepsilon)^{\iota}}F_x}_2^2,\notag
\end{align*}
which yields that 
\begin{equation*}
\absB{\frac{\phi_0^{\frac{\kappa+1}{2}}}{(\phi_0+\varepsilon)^\iota}F_x}_2\leq C(s,\kappa)\left(1+\abs{(\phi_0)_{xx}}_{\infty}\abs{\phi_0^sF}_2\right)\leq C(s,\kappa).
\end{equation*}
Then one can extract a subsequence (still denoted by $\varepsilon$)  such that
\begin{equation}\label{equ213}
\frac{\phi_0^{\frac{\kappa+1}{2}}}{(\phi_0+\varepsilon)^\iota}F_x\to  \cQ\quad \text{weakly in }L^2 \quad \text{as }\varepsilon\to 0,
\end{equation}
for a function $\cQ\in L^2$, and 
\begin{equation*}
\abs{\cQ}_2\leq \liminf_{\varepsilon\to 0}\absB{\frac{\phi_0^{\frac{\kappa+1}{2}}}{(\phi_0+\varepsilon)^\iota}F_x}_2\leq C(s,\kappa). 
\end{equation*}

However, it follows from $F_x\in L^1_{\mathrm{loc}}$ that
\begin{equation}\label{equ214}
\frac{\phi_0^{\frac{\kappa+1}{2}}}{(\phi_0+\varepsilon)^\iota}F_x\to \phi_0^s F_x\quad \text{in }L^1_{\mathrm{loc}} \ \ \text{as }\varepsilon\to 0.
\end{equation}
Then, comparing with \eqref{equ213}-\eqref{equ214}, one gets from the uniqueness of the limits that $\cQ=\phi_0^s F_x$, which completes the proof.
\end{proof}

\section{Leibniz formula in Sobolev spaces}\label{difference-quotient}
In this appendix, we give a brief summary about the operations of derivatives that will be used in \S \ref{Section3}. 
First, we give the Leibniz formula in Sobolev space.
\begin{Lemma}\cite{evans}\label{Leibniz}
Let $F\in H^1_{\mathrm{loc}}$ and  $G\in H^1_{\mathrm{loc}}$. Then $FG\in H^1_{\mathrm{loc}}$, and the Leibniz formula holds, that is,
\begin{equation*}
(FG)_x=F_x G+ F G_x \ \ \text{for a.e. }x\in I.
\end{equation*}  
\end{Lemma}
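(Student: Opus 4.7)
The plan is to prove the identity by mollification. Since the claim is local, I would fix a bounded open subinterval $J\subset\subset I$ and work on $\bar J$. The starting observation is that in one space dimension the Sobolev embedding $H^1(J)\hookrightarrow C(\bar J)$ (see Lemma \ref{sobolev-embedding}) gives $F,G\in C(\bar J)$, so both are bounded on $\bar J$. Consequently $FG\in L^\infty(J)\subset L^2(J)$, and $F_xG+FG_x\in L^2(J)$ since $F_x,G_x\in L^2(J)$ while $F,G\in L^\infty(J)$. Thus both sides of the claimed identity live naturally in $L^2(J)$, and the only remaining task is to identify $F_xG+FG_x$ as the distributional derivative of $FG$ on $J$.

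Next, I would introduce standard mollifiers $\omega_\varepsilon$ and set $F^\varepsilon=F*\omega_\varepsilon$, $G^\varepsilon=G*\omega_\varepsilon$ on a slightly larger interval. For $\varepsilon$ small enough, $F^\varepsilon,G^\varepsilon\in C^\infty(\bar J)$, and the classical product rule gives
\[
(F^\varepsilon G^\varepsilon)_x=F^\varepsilon_x G^\varepsilon+F^\varepsilon G^\varepsilon_x\quad\text{pointwise on }\bar J.
\]
By the standard convergence properties of mollifiers, $F^\varepsilon\to F$ and $G^\varepsilon\to G$ in $H^1(J)$ as $\varepsilon\to 0$, and hence (again by the 1D Sobolev embedding applied to $F^\varepsilon-F$ and $G^\varepsilon-G$) uniformly on $\bar J$. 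In particular $\|F^\varepsilon\|_{L^\infty(\bar J)}$ and $\|G^\varepsilon\|_{L^\infty(\bar J)}$ are uniformly bounded in $\varepsilon$.

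Now I would pass to the limit against test functions. For any $\varphi\in C_c^\infty(J)$, integration by parts on the smooth approximations yields
\[
\int_J F^\varepsilon G^\varepsilon\,\varphi_x\,\dx=-\int_J\bigl(F^\varepsilon_x G^\varepsilon+F^\varepsilon G^\varepsilon_x\bigr)\varphi\,\dx.
\]
On the left, uniform convergence of $F^\varepsilon G^\varepsilon$ to $FG$ on $\bar J$ allows passing to the limit. On the right, the uniform $L^\infty$-bounds combined with $L^2$-convergence of $F^\varepsilon_x$ to $F_x$ and of $G^\varepsilon_x$ to $G_x$, together with the uniform convergence of $F^\varepsilon,G^\varepsilon$, give convergence of the integrand in $L^2(J)$. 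Taking $\varepsilon\to 0$ produces
\[
\int_J FG\,\varphi_x\,\dx=-\int_J(F_xG+FG_x)\varphi\,\dx\quad\text{for all }\varphi\in C_c^\infty(J),
\]
which by definition says that the weak derivative of $FG$ on $J$ is $F_xG+FG_x$. Since both this expression and $FG$ lie in $L^2(J)$, $FG\in H^1(J)$, and because $J\subset\subset I$ was arbitrary, the conclusion $FG\in H^1_{\mathrm{loc}}$ with the stated Leibniz identity a.e.\ in $I$ follows.

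There is no genuine obstacle here: the only point that might fail in higher dimensions — controlling the product $F^\varepsilon G^\varepsilon$ and its derivatives in a strong enough topology — is immediate in one dimension thanks to the embedding $H^1\hookrightarrow L^\infty$ on bounded intervals. Everything else is a routine mollification argument.
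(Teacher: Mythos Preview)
Your proposal is correct and is exactly the standard mollification argument one finds in textbooks. The paper itself does not supply a proof of this lemma; it simply cites Evans \cite{evans}, so there is nothing to compare against beyond noting that your argument is the expected one.
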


Lemma \ref{Leibniz} has the following applications.
\begin{Lemma}\label{qiudao}
Suppose that $\phi_0(x)$, $F(t,x)$ and $\cQ(t,x)$ satisfy the following equation:
\begin{equation}\label{equBB0}
\phi_0^s F_x + \kappa \phi_0^{s-1}(\phi_0)_x F = \cQ \ \ \text{for a.e. } (t,x)\in (0,T)\times I,
\end{equation}
where $s,\kappa\in \RR$, $\cQ,\, F,\,F_x\in L^2_{\mathrm{loc}}$ for a.e. $t\in (0,T)$. It holds that
\begin{enumerate}
\item[$\mathrm{i)}$]
if $\cQ_t,\,F_t\in L^2_{\mathrm{loc}}$ for a.e. $t\in (0,T)$, then $F_{tx}\in L^2_{\mathrm{loc}}$ and for a.e. $(t,x)\in (0,T)\times I$,
\begin{equation}\label{equBB1}
\phi_0^s F_{tx} + \kappa \phi_0^{s-1}(\phi_0)_x F_t = \cQ_t;
\end{equation}
\item[$\mathrm{ii)}$]
if $\cQ_x\in L^2_{\mathrm{loc}}$ for a.e. $t\in (0,T)$, then $F_{xx}\in L^2_{\mathrm{loc}}$ and for a.e. $(t,x)\in (0,T)\times I$,
\begin{equation}\label{equBB2}
\phi_0^s F_{xx} + (\kappa+s) \phi_0^{s-1}(\phi_0)_x F_{x}+\kappa(s-1)\phi_0^{s-2}((\phi_0)_x)^2 F+\kappa\phi_0^{s-1}(\phi_0)_{xx} F = \cQ_x.
\end{equation}
\end{enumerate}
\end{Lemma}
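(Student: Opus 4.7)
The plan is to reduce both parts to algebraic manipulations on compact subsets $K \Subset I$, where $\phi_0$ is bounded below by a strictly positive constant (recall $\phi_0 \in H^3 \hookrightarrow C^2(\bar I)$ with $\phi_0>0$ on $I$ and $\phi_0\sim d(x)$). On such a $K$, the factor $\phi_0^s$ is a smooth invertible multiplier, so the apparently degenerate identity \eqref{equBB0} can be solved for $F_x$ in terms of $\cQ$, $F$, and smooth coefficients. This removes all issues that a vanishing weight could cause, and the conclusions on $I$ will follow by exhausting $I$ with a countable nested sequence $K_n \Subset I$.

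For part $\mathrm{i)}$, the key observation is that $\phi_0$ does not depend on $t$. Rewriting \eqref{equBB0} on $K$ as
$$F_x = \phi_0^{-s}\cQ - \kappa \phi_0^{-1}(\phi_0)_x F,$$
and differentiating in $t$ in the distributional sense on $(0,T)\times K$, I obtain
$$F_{tx} = \phi_0^{-s}\cQ_t - \kappa \phi_0^{-1}(\phi_0)_x F_t,$$
where I used the equality of mixed distributional derivatives $(F_x)_t=(F_t)_x$. By the hypotheses $\cQ_t, F_t\in L^2_{\mathrm{loc}}$ (for a.e.\ $t$), the right-hand side lies in $L^2(K)$, so $F_{tx}\in L^2_{\mathrm{loc}}$. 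Multiplying back by $\phi_0^s$ yields \eqref{equBB1}.

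For part $\mathrm{ii)}$, I would invoke the Leibniz formula in Lemma~\ref{Leibniz}. Since $\phi_0\in H^3\hookrightarrow C^2(\bar I)$ and $F,F_x\in L^2_{\mathrm{loc}}$ gives $F\in H^1_{\mathrm{loc}}$, both $\phi_0^s F_x$ and $\kappa\phi_0^{s-1}(\phi_0)_x F$ have well-defined $L^2_{\mathrm{loc}}$ weak derivatives on compact subsets of $I$, computed termwise. Differentiating \eqref{equBB0} in $x$ and applying the product rule yields
$$s\phi_0^{s-1}(\phi_0)_x F_x + \phi_0^s F_{xx} + \kappa(s-1)\phi_0^{s-2}((\phi_0)_x)^2 F + \kappa\phi_0^{s-1}(\phi_0)_{xx} F + \kappa\phi_0^{s-1}(\phi_0)_x F_x = \cQ_x,$$
where $F_{xx}$ is defined a priori only as a distribution, but the equation forces it to coincide with an $L^2_{\mathrm{loc}}$ function on $K$ after dividing by $\phi_0^s$. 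Collecting the two $F_x$-terms into the single coefficient $s+\kappa$ produces exactly \eqref{equBB2}.

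The proof is essentially bookkeeping, and there is no substantial obstacle; the only point requiring care is justifying the distributional differentiations and the application of Lemma~\ref{Leibniz}, which we handle by restricting to $K\Subset I$ so that $\phi_0$ is uniformly positive and smooth, then passing to $I$ via exhaustion. No approximation of the weight $\phi_0$ near $\Gamma$ is needed, since the desired identities \eqref{equBB1}--\eqref{equBB2} are required only in the a.e.\ sense on $(0,T)\times I$.
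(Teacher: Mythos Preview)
Your proposal is correct and follows essentially the same approach as the paper: both arguments exploit that on any compact $K\Subset I$ the weight $\phi_0$ is bounded away from zero, rewrite \eqref{equBB0} as $F_x=\phi_0^{-s}\cQ-\kappa\phi_0^{-1}(\phi_0)_x F$, and then differentiate (in $t$ for part~$\mathrm{i)}$, in $x$ via Lemma~\ref{Leibniz} for part~$\mathrm{ii)}$). The only cosmetic difference is that in part~$\mathrm{ii)}$ the paper differentiates the divided equation termwise and then substitutes back, whereas you differentiate \eqref{equBB0} directly and solve for $F_{xx}$; both routes are equivalent, though your justification that ``the equation forces $F_{xx}$ to coincide with an $L^2_{\mathrm{loc}}$ function'' implicitly uses the Leibniz rule for a smooth function times a distribution, which is standard but worth naming explicitly.
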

\begin{proof}
To get $\mathrm{i)}$, multiplying \eqref{equBB0} by $\phi_0^{-s}$, one has
\begin{equation}\label{equBB3}
F_x = \phi_0^{-s}\cQ -\kappa \phi_0^{-1}(\phi_0)_x F,
\end{equation}
which, along with the facts that  $\cQ_t,\,F_t\in L^2_{\mathrm{loc}}$ and $\phi_0\sim d(x)$, yield that $F_x$ is differentiable in $t$,  and hence \eqref{equBB1} holds.

To get $\mathrm{ii)}$, since $\cQ,F\in H^1_{\mathrm{loc}}$, $\phi_0\sim d(x)$ and $\phi_0\in H^3$, it follows from Lemma \ref{Leibniz} that each term in the right hand side of \eqref{equBB3} belongs to $H^1_{\mathrm{loc}}$ and the following identities hold for a.e. $(t,x)\in (0,T)\times I$,
\begin{equation*}
\begin{aligned}
&\left(\phi_0^{-s}\cQ\right)_x=-s\phi_0^{-s-1}(\phi_0)_x \cQ+\phi_0^{-s} \cQ_x;\\
&\left(\kappa \phi_0^{-1}(\phi_0)_x F\right)_x= -\kappa\phi_0^{-2}((\phi_0)_x)^2 F+\kappa\phi_0^{-1}(\phi_0)_{xx} F+\kappa\phi_0^{-1}(\phi_0)_x F_x,
\end{aligned}
\end{equation*}
which, along with  \eqref{equBB3}, yield that  $F_x\in H^1_{\mathrm{loc}}$ and
\begin{equation}\label{equBB5}
F_{xx}= -s\phi_0^{-s-1}(\phi_0)_x \cQ+\phi_0^{-s} \cQ_x+\kappa\phi_0^{-2}((\phi_0)_x)^2 F-\kappa\phi_0^{-1}(\phi_0)_{xx} F+\kappa\phi_0^{-1}(\phi_0)_x F_x.
\end{equation}

Finally, substituting \eqref{equBB0} into \eqref{equBB5}, one obtains \eqref{equBB2}, which completes the proof.
\end{proof}

\section{Coordinate Transformation}\label{section-CT}
In this appendix, we give the transformation relations between $(\rho(t,y),u(t,y),\Gamma(t))$ and $(H(t,x),U(t,x))$. First,  for every $t$ and $y\in I(t)$, define the inverse flow mapping $\tilde\eta$ by
\begin{equation*}
x=\tilde\eta(t,y):\quad I(t)\to I,\quad (t,y)\mapsto (t,x),
\end{equation*}
and set
\begin{equation}\label{equ1.28}
\rho(t,y)=\frac{\rho_0(\tilde \eta(t,y))}{\eta_x(t,\tilde\eta(t,y))},\quad u(t,y)=U(t,\tilde\eta(t,y)),\quad \Gamma(t)=\{\eta(t,0),\eta(t,1)\}.
\end{equation}
Note that $(\eta,\tilde\eta)$ satisfies the following relations:
\begin{equation}\label{relation-matrix}
\frac{\partial(x,t)}{\partial(y,t)}=
\begin{bmatrix}
\eta_x^{-1}(t,\tilde\eta(t,y))& -u\eta_x^{-1}(t,\tilde\eta(t,y))\\
\quad\\
0&1
\end{bmatrix}.
\end{equation}

Then it follows from \eqref{equ1.28}-\eqref{relation-matrix} and the Leibniz formula that
\begin{align}
(\rho^\alpha)_y(t,y)&=\left(\frac{(\rho_0^\alpha)_x}{\eta_x^{\alpha+1}}-\frac{\alpha\rho_0^\alpha\eta_{xx}}{\eta_x^{\alpha+2}}\right)(t,\tilde\eta(t,y)),\notag\\
(\rho^\alpha)_t(t,y)&=\left(-\frac{\alpha\rho_0^\alpha U_x}{\eta_x^{\alpha+1}}-\frac{(\rho_0^\alpha)_xU}{\eta_x^{\alpha+1}}+\frac{\alpha \rho_0^\alpha\eta_{xx}U}{\eta_x^{\alpha+2}}\right)(t,\tilde\eta(t,y)),\notag\\
(\rho^\alpha)_{yy}(t,y)&=\left(\frac{(\rho_0^\alpha)_{xx}}{\eta_x^{\alpha+2}}-\frac{(2\alpha+1)(\rho_0^\alpha)_x\eta_{xx}}{\eta_x^{\alpha+3}}-\frac{\alpha\rho_0^\alpha\partial_x^3\eta}{\eta_x^{\alpha+3}} +\frac{\alpha(\alpha+2)\rho_0^\alpha \eta_{xx}^2}{\eta_x^{\alpha+4}}\right)(t,\tilde\eta(t,y)),\notag\\
(\rho^\alpha)_{ty}(t,y)&=\left(-\frac{(\alpha+1)(\rho_0^\alpha)_xU_x}{\eta_x^{\alpha+2}}-\frac{\alpha\rho_0^\alpha U_{xx}}{\eta_x^{\alpha+2}}+\frac{(\alpha+1)\rho_0^\alpha U_{x}\eta_{xx}}{\eta_x^{\alpha+3}}-\frac{(\rho_0^\alpha)_{xx} U}{\eta_x^{\alpha+2}}\right.\notag\\
&\quad \left.+\frac{(2\alpha+1)(\rho_0^\alpha)_x U \eta_{xx}}{\eta_x^{\alpha+3}}+\frac{\alpha\rho_0^\alpha \partial_x^3 \eta U}{\eta_x^{\alpha+3}}+\frac{\alpha\rho_0^\alpha \eta_{xx} U_x}{\eta_x^{\alpha+3}}\right.\notag\\
&\quad \left.-\frac{\alpha(\alpha+2)\rho_0^\alpha \eta_{xx}^2 U}{\eta_x^{\alpha+4}}\right)(t,\tilde\eta(t,y)),\label{uty}\\
\partial_y^3(\rho^\alpha)(t,y)&=\left(\frac{\partial_x^3\rho_0^\alpha}{\eta_x^{\alpha+3}}-\frac{(3\alpha+3)(\rho_0^\alpha)_{xx}\eta_{xx}}{\eta_x^{\alpha+4}}-\frac{(3\alpha+1)(\rho_0^\alpha)_{x}\partial_x^3 \eta}{\eta_x^{\alpha+4}}\right.\notag\\
&\quad\left.+\frac{(3\alpha^2+9\alpha+3)(\rho_0^\alpha)_x\eta_{xx}^2}{\eta_x^{\alpha+5}}-\frac{\alpha\rho_0^\alpha \partial_x^4\eta}{\eta_x^{\alpha+4}}\right.\notag\\
&\quad \left.+\frac{(3\alpha^2+7\alpha)\rho_0^\alpha \eta_{xx}\partial_x^3\eta}{\eta_x^{\alpha+5}}-\frac{(\alpha^3+6\alpha^2+8\alpha)\rho_0^\alpha \eta_{xx}^3}{\eta_x^{\alpha+6}}\right)(t,\tilde\eta(t,y)),\notag\\
\partial_t(\rho^\alpha)_{yy}(t,y)&=\left(-\frac{(\alpha+1)(\rho_0^\alpha)_{xx}U_x}{\eta_x^{\alpha+3}}-\frac{(2\alpha+1)(\rho_0^\alpha)_x U_{xx}}{\eta_x^{\alpha+3}}-\frac{\alpha \rho_0^\alpha \partial_x^3 U}{\eta_x^{\alpha+3}}-\frac{(\rho_0^\alpha)_{xx}U_x}{\eta_x^{\alpha+3}}\right.\notag\\
&\quad \left.+\frac{(\alpha^2+7\alpha+4)(\rho_0^\alpha)_x U_x\eta_{xx}}{\eta_x^{\alpha+4}}+\frac{(\alpha^2+4\alpha+1)\rho_0^\alpha U_{xx}\eta_{xx}}{\eta_x^{\alpha+4}}\right.\notag\\
&\quad \left. +\frac{(3\alpha+1)\rho_0^\alpha U_x \partial_x^3\eta}{\eta_x^{\alpha+4}}-\frac{3(\alpha^2+3\alpha+1)\rho_0^\alpha U_x\eta_{xx}^2}{\eta_x^{\alpha+5}}-\frac{\partial_x^3\rho_0^\alpha U}{\eta_x^{\alpha+3}}\right.\notag\\
&\quad \left. +\frac{3(\alpha+1)(\rho_0^\alpha)_{xx} U\eta_{xx}}{\eta_x^{\alpha+4}}+\frac{(3\alpha+1)(\rho_0^\alpha)_x U \partial_x^3\eta}{\eta_x^{\alpha+4}}+\frac{\alpha\rho_0^\alpha U \partial_x^4\eta}{\eta_x^{\alpha+4}}\right.\notag\\
&\quad\left. -\frac{3(\alpha^2+3\alpha+1)(\rho_0^\alpha)_x U \eta_{xx}^2}{\eta_x^{\alpha+5}}-\frac{(\alpha^3+6\alpha^2+8\alpha)\rho_0^\alpha U \eta_{xx}\partial_x^3\eta}{\eta_x^{\alpha+6}}\right)(t,\tilde\eta(t,y)),\notag\\
u_y(t,y)&=\frac{U_x}{\eta_x}(t,\tilde\eta(t,y)),\quad u_t(t,y)=\left(U_t-\frac{UU_x}{\eta_x}\right)(t,\tilde\eta(t,y)),\notag\\
u_{yy}(t,y)&=\left(\frac{U_{xx}}{\eta_x^2}-\frac{U_x\eta_{xx}}{\eta_x^3}\right)(t,\tilde\eta(t,y)),\notag\\
u_{ty}(t,y)&=\left(\frac{U_{tx}}{\eta_x}-\frac{U_{x}^2}{\eta_x^2}-\frac{UU_{xx}}{\eta_x^2}+\frac{UU_{x}\eta_{xx}}{\eta_x^3}\right)(t,\tilde\eta(t,y)),\notag\\
\partial_y^3u(t,y)&=\left(\frac{\partial_x^3 U}{\eta_x^3}-\frac{3U_{xx}\eta_{xx}}{\eta_x^4}-\frac{U_x\partial_x^3\eta}{\eta_x^4}+\frac{3U_{x}\eta_{xx}^2}{\eta_x^5}\right)(t,\tilde\eta(t,y)),\notag\\
\Gamma'(t)&=\{U(t,0),U(t,1)\},\quad \Gamma''(t)=\{U_{t}(t,0),U_{t}(t,1)\}.\notag
\end{align}

\bigskip

{\bf Acknowledgement:} 
This research is partially supported by National Key R$\&$D Program of China (No. 2022YFA1007300), Zheng Ge Ru Foundation, Hong Kong RGC Earmarked Research Grants CUHK-14301421, CUHK-14301023, CUHK-14302819, and CUHK-14300819. Xin's research is also supported in part by the key projects of National Natural Science Foundation of China (No.12131010 and No.11931013) and Guangdong Province Basic and Applied Basic Research Foundation 2020B1515310002. Zhu's research is also  supported in part by  National Natural Science Foundation of China under the Grant  12471212.
\bigskip

{\bf Conflict of Interest:} The authors declare  that they have no conflict of
interest. 
The authors also  declare that this manuscript has not been previously  published, and will not be submitted elsewhere before your decision. 

\bigskip

{\bf Data availability:}
Data sharing is  not applicable to this article as no data sets were generated or analysed during the current study. 

\bigskip

\end{document}